\newcommand{\co}{\mbox{$\mathcal{O}$}}
\newcommand{\BQ}{\begin{quote}}
\newcommand{\EQ}{\end{quote}}
\newcommand{\ITEM}{\begin{description}}
\newcommand{\offITEM}{\end{description}}
\newtheorem{definition}{Definition}
\newtheorem{Theorem}{Theorem}
\newtheorem*{theorem}{Theorem}
\newtheorem{maintheorem}{Theorem}
\newtheorem{maincorollary}[maintheorem]{Corollary}
\newtheorem{T}{Theorem}[section]
\newtheorem{Corollary}[T]{Corollary}
\newtheorem{Proposition}[T]{Proposition}
\newtheorem{Lemma}[T]{Lemma}
\newtheorem{Notation}[T]{Notation}
\newtheorem{Remark}[T]{Remark}
\newtheorem{Definition}[T]{Definition}
\newtheorem{Example}[T]{Example}
\newtheorem*{claim}{Claim}
\newtheorem{Claim}{Claim}
\def \AA {{\mathbb A}}
\def \RR {{\mathbb R}}
\def \ZZ {{\mathbb Z}}
\def \NN {{\mathbb N}}
\def \EE {{\mathbb E}}
\def \UU {{\mathbb U}}
\def \XX {{\mathbb X}}
\def \YY {{\mathbb Y}}
\def \QQ {{\mathbb Q}}
\def \cl {\mathcal{L}}
\def \cf {\mathcal{F}}
\def \cp {\mathcal{P}}
\def \cc {\mathcal{C}}
\def \cu {\mathcal{U}}
\def \co {\mathcal{O}}
\def \cn {\mathcal{N}}
\def \ck {\mathcal{K}}
\def \cw {\mathcal{W}}
\def \cR {\mathscr{R}}
\newcommand{\dem}{\begin{proof}}
\newcommand{\cqd}{\end{proof}}
\newcommand{\leb}{\operatorname{Leb}}
\newcommand{\period}{\operatorname{period}}
\newcommand{\interior}{\operatorname{interior}}
\newcommand{\diameter}{\operatorname{diameter}}
\begin{document}

\author{Paulo Brand\~ao}
\address{Impa, Estrada Dona Castorina, 110, Rio de Janeiro, Brazil.} \email{paulo@impa.br}

\date{\today}

\thanks{Work carried out at IMPA. Partially supported by CNPq, IMPA and PROCAD/CAPES}


\title{On The Structure of Lorenz Maps}

\maketitle

\begin{abstract}
%


We study the non-wandering set of $C^3$ contracting Lorenz maps $f$ with negative Schwarzian derivative. We show that if $f$ doesn't have   attracting periodic orbit, then there is a unique topological attractor. Precisely, there is a transitive compact set $\Lambda$ such that $\omega_f(x)=\Lambda$ for a residual set of points $x \in [0,1]$. We also develop in the context of Lorenz maps the classical theory of spectral decomposition constructed for Axiom A maps by Smale.

\end{abstract}

\tableofcontents

%
%
%
%
%
%
%
%
%
%
%
%
%
%
%
%
%
%
%
%
%
%
%

\newpage

\section{Introduction}

In the famous work of Lorenz \cite{Lor}, studying the solution of the following system of differential equations~(\ref{eqlorenz}) in $\mathbb{R}^3$ originated by truncating Navier-Stokes equations for modeling atmospheric conditions

\begin{eqnarray}
\label{eqlorenz}
\dot{x} & = & -10 x  + 10 y  \\
\dot{y} & = & 28 x  -y -xz \nonumber\\
\dot{z} & = & - \frac{8}{3} z  + x y  \nonumber 
\end{eqnarray}

\noindent
it appears for the first time what was thought to be an attractor with characteristics that became, a posteriori, the ones that defines a
``strange attractor'' (Nevertheless, for a long time, no one proved that this attractor in the original Lorenz work exhibits in fact these characteristics. It was later shown by Tucker \cite{Tu99} that it does).

Then, V.S. Afraimovich, V.V. Bykov, L.P. Shil'nikov, in \cite{ABS} and Guckenheimer and  Williams, in \cite{GW}, introduced Lorenz-like attractors that are models similar to Lorenz's one that exhibited, in fact, the same peculiar characteristics of the Lorenz attractor.

This model consists of considering a hyperbolic singularity with
one dimensional unstable manifold such that, in a linearizable neighborhood, the separatrices can be considered as one of the coordinate axes, say $x$, and that returns to this neighborhood cutting transversally the plane $z = constant$, with the eigenvalues $\lambda_2 < \lambda_3< 0 < \lambda_1 $ (see Figure~\ref{LorenzFluxo} ), and the expanding condition $ \lambda_3 + \lambda_1 > 0 $.

In \cite{GW} Guckenheimer and Williams show that if you fix a system with such an attractor then, in a neighborhood $U$ you have structural stability in $ codim \, 2 $, and in any representative family of types you have always one attractor attracting the neighborhood constructed in figure~\ref{LorenzFluxo},  so the finitude of attractors is guaranteed, as required by Palis conjecture.

\begin{figure}
  \begin{center}\includegraphics[scale=.25]{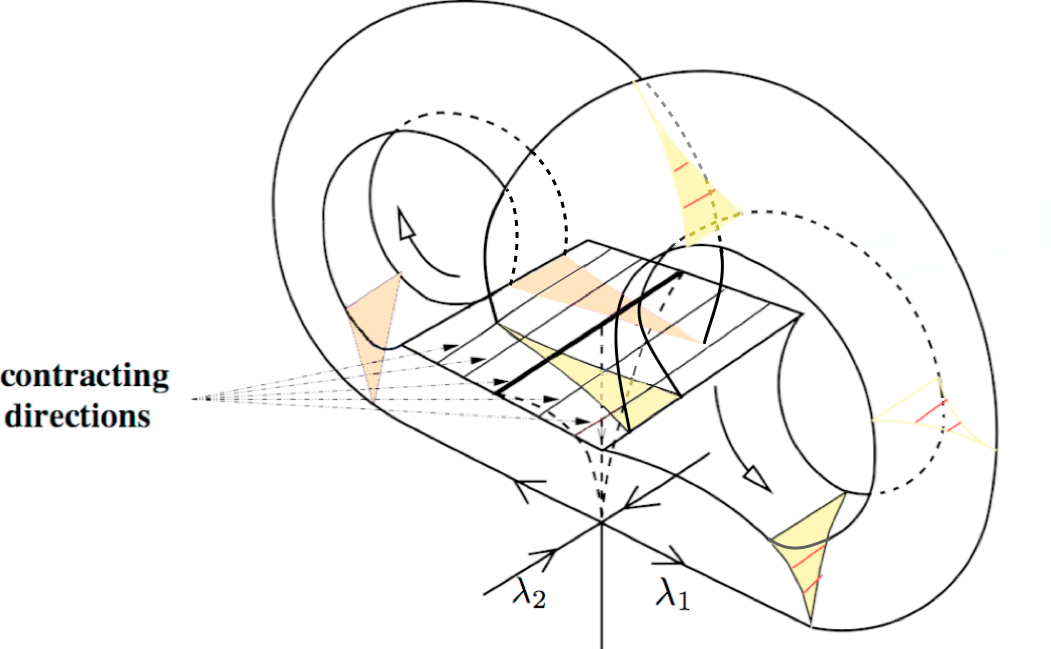}\\
  \caption{Dynamics of the Flow}\label{LorenzFluxo}
  \end{center}
\end{figure}

In \cite{ACT}, Arneodo, Coullet and Tresser present an attractor obtained in the same way as the one obtained by Guckenheimer and Williams, just modifying the relation between the eigenvalues of the singularity, taking  $ \lambda_3 + \lambda_1 < 0 $ . Then we have a contracting Lorenz attractor.

We can consider the Poincar\'e map of the square

$Q=\left\{ \left| x\right| \leq cte;\left| y\right| \leq cte;z=cte\right\} $ into itself, having the returns as in figure~\ref{duasfig} .

\begin{figure}
  \begin{center}\includegraphics[scale=.3]{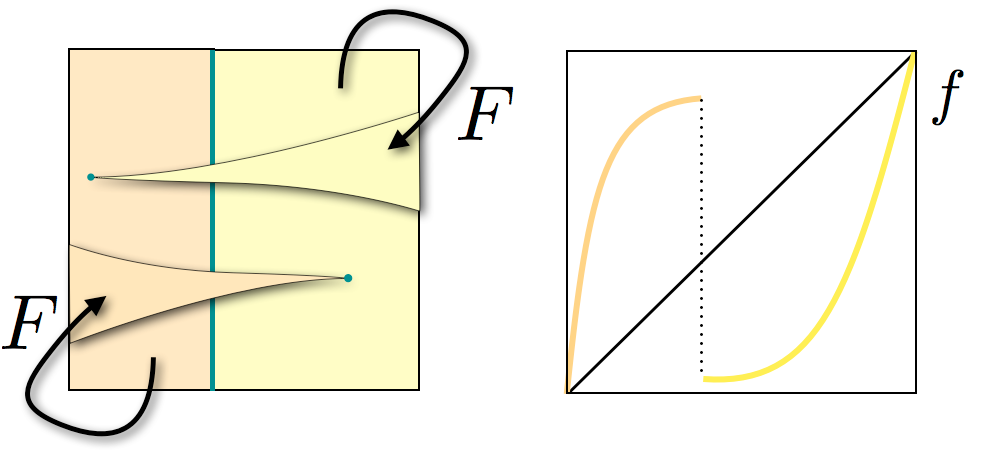}\\
  \caption{Poincaré and Induced One dimensional Maps}\label{duasfig}
  \end{center}
\end{figure}

On the other hand, we can exhibit in $Q$ a foliation by one dimensional leaves, invariant by this Poincar\'e map, and such that this contracts exponentially its leaves. Then we have the induced one dimensional map like $f$ in Figure~\ref{duasfig}, that motivated the approach using the ideas in the work of Benedicks and Carleson \cite{BC1} adopted by Rovella to show in the \mbox{2-parameter} space a positive measure set of transitive chaotic attractors alternating their existences with open and dense sets of hyperbolic ones (see \cite{Rov93} for details).

\subsection{One dimensional Lorenz Maps}
We say that a $C^2$ map $f:[0,1]\setminus \{c\} \rightarrow[0,1]$, $0<c<1$, is a {\em Lorenz map} if $f(0)=0$, $f(1)=1$, $f'(x)>0$ $\forall\,x\in[0,1]\setminus \{c\}$. A Lorenz map is called {\em contracting} if $\lim_{x\to c}f'(x)=0$.

In the study of an expansive Lorenz map (or even a Lorenz map with bounded $\log|f'|$, see \cite{BM}), one can bypass most of the difficulties using the expansivity (or the distortion control in the case that $\log|f'|$ is bounded). For example, in these cases there are no wandering intervals. For a contracting Lorenz map, as $\lim_{x\to c}f'(x)=0$, the discontinuity at the critical point together with derivative equal to zero make the study of its dynamics much more complicated.

\begin{figure}
\begin{center}
\includegraphics[scale=.3]{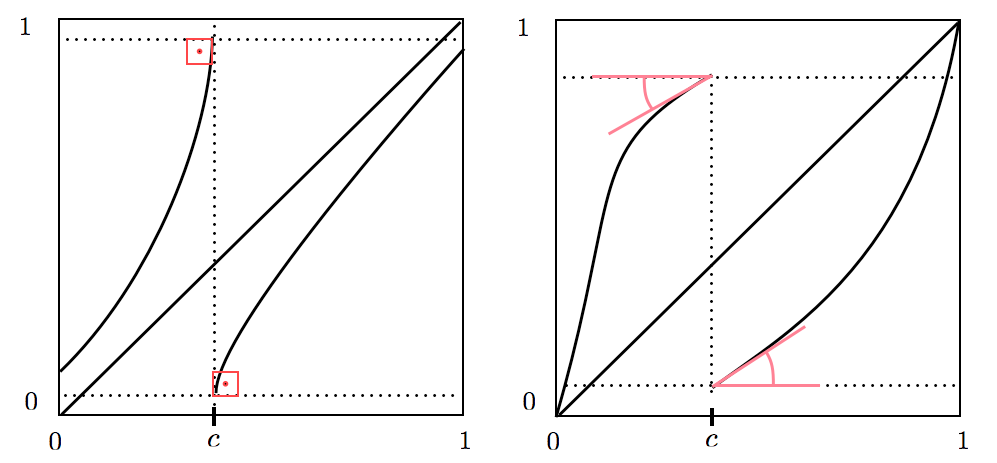}\\
\caption{}\label{LorenzOutrosTipos.png}
\end{center}
\end{figure}

We will focus here in the study of the non-wandering set $\Omega(f)$ of the contractive Lorenz maps (see picture \ref{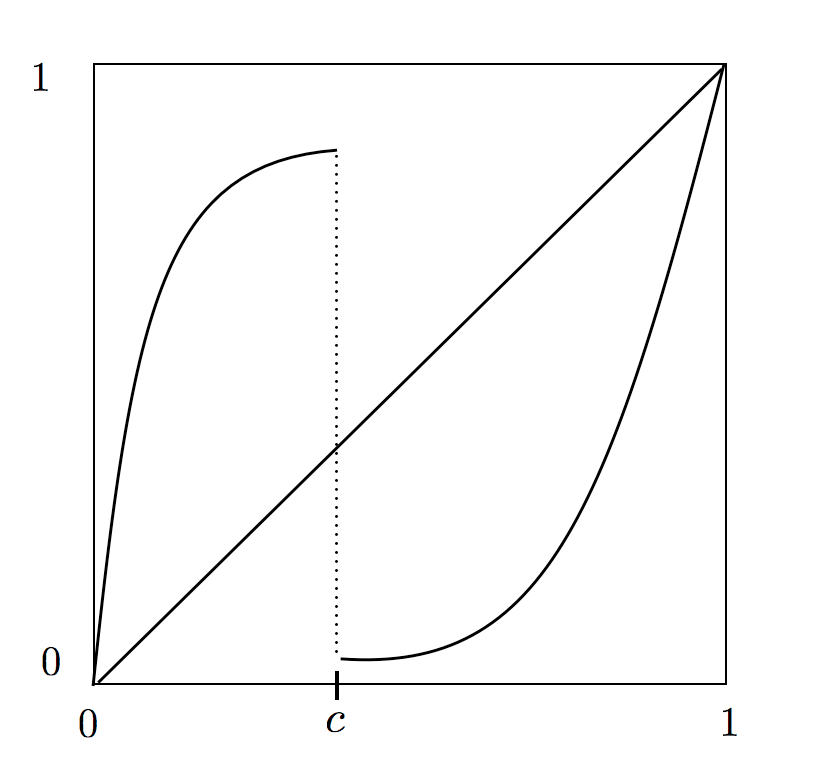}) and also of its topological attractors. 

\begin{figure}
\begin{center}\label{LorenzContracting.png}
\includegraphics[scale=.2]{LorenzContracting.png}\\
\caption{}\label{LorenzContracting.png}
\end{center}
\end{figure}

For maps of the interval, critical points and  critical values play fundamental roles in the study of dynamics. From this point of view, Lorenz maps are of hybrid type. Indeed, these maps have a single critical point, as the unimodal maps, but two critical values, as the bimodal ones.

To point out the ambivalence of these maps (with respect to unimodal and bimodal maps) let's consider two settings. 
The first one, is when we have some expansion in the orbits of the critical values (for example, a Collet-Eckmann condition). In this case we may expect the existence of absolutely continuous invariant measures $\mu$ with respect to Lebesgue measure. In this case, it is not difficult to show that the attractor will be the support of $\mu$ and it will be an orbit of interval.

This orbit of interval will contain the critical point in its interior. Furthermore, as in the unimodal case, there will be a single metrical attractor.

Indeed, the basin of this attractor is an open and dense set (in particular, residual) and has full measure.



The second setting for Lorenz maps we want to discuss as a motivation is when we do not observe any expansion along the orbit of the critical values. Now, we may think that the points that come close to the critical point follow the critical orbits for a long time. As in this case the critical orbits are recurrent (or yet more, for example suppose we don't have any weak repeller nor any attracting periodic point, so that we know, by Ma\~ne's Theorem, that $c\in\omega_f(x)$ for Lebesgue almost every $x$) before they go very far from the orbit of the critical points these points will likely go again near the critical point, where it will have a strong contraction and come close again the orbit of the critical value.
That is, one may expect that $\omega_f(x)\subset\overline{\co^+_f(v_0)\cup\co^+_f(v_1)}$. 
Whenever  $\omega_f(v_0)\ne\omega_f(v_1)$, it is reasonable to expect that part of the points follow $\omega_f(v_0)$ and other part follow $\omega_f(v_1)$, building two attractors.

This reasoning can be reinforced by the following toy model, introduced to us by Charles Tresser, the {\em Plateau maps}. 

\begin{figure}
\begin{center}\label{plateauUNIMODAL}
\includegraphics[scale=.3]{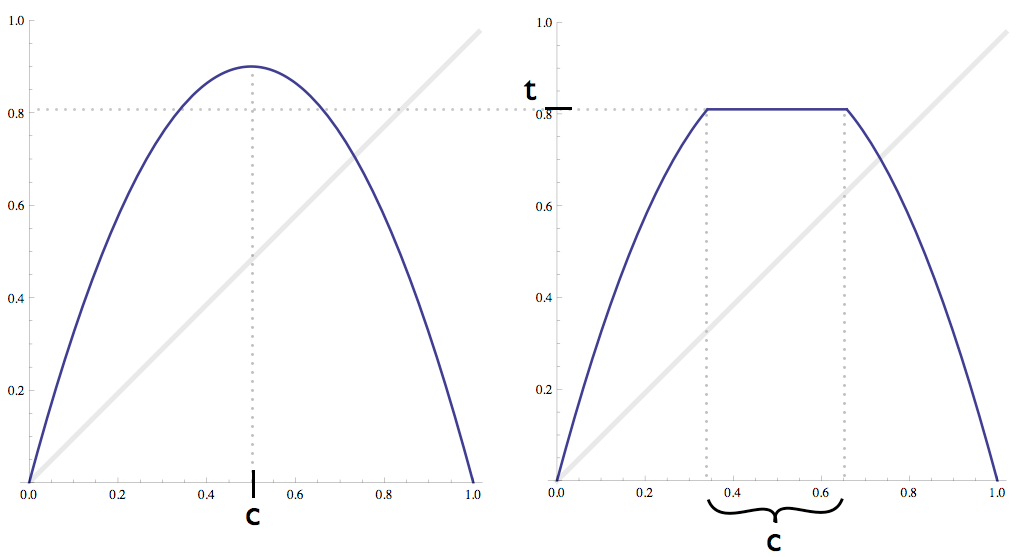}\\
\caption{}\label{plateauUNIMODAL}
\end{center}
\end{figure}

In the plateau model, the critical region is an interval.  For example, if $f$  is an unimodal map without any weak repeller and as in the picture (\ref{plateauUNIMODAL}), we can produce a plateau map by taking $f_t(x)=\min\{t,f(x)\}$.
The dynamics of an ``unimodal'' plateau map  split in two different dynamical blocks:
the set $\Lambda$ of the points whose orbits avoid the critical region $\cc(f_t)=[c_0,c_1]:=f^{-1}(t)$ and the set $\Gamma$ of the points that eventually fall in the critical region, i.e., $\Gamma:=\{x\,;\,\co_{f_t}^+(x)\cap\cc(f_t)\ne\emptyset\}$.
Using Ma\~ne's Theorem (as we have taken $f$ without any weak repeller), we obtain that Lebesgue almost every point belongs to $\Gamma$.
Thus, $\omega_{f_t}(x)=\omega_{f_t}(t)$ for Lebesgue almost all $x\in[0,1]$. Moreover, $\Gamma$ is residual on $[0,1]$ (indeed, the interior of $\Gamma$ is an open and dense set). 
This means that $\AA:=\omega_{f_t}(t)$ is at the same time the topological and the metrical attractor for $f_t$.

In 1979, Guckenheimer (\cite{G79}) proved the unicity of the topological attractors for the S-unimodal maps. The unicity of the metric attractors was proved by Blokh and Lyubich in the end of the 80s (in (\cite{BL89}) and (\cite{BL91})). 
This way, the number of attractors both topological and metric obtained for the unimodal case matches the number of attractors of the corresponding toy model.

The dynamics of ``Lorenz'' plateau maps will decompose in three blocks: (1) the set $\Lambda$ of points whose orbits avoid the critical region $\cc(f_t):=[c_0,c_1]$, (2) the set of points thats eventually go into the left side of the critical region and follow this critical value, i.e., $\Gamma_0:=\{x\,;\,\co_{f_t}^+(x)\cap[c_0,c)\}$ and (3) the points that visit the right side and follow its critical value, that is, $\Gamma_1:=\{x\,;\,\co_{f_t}^+(x)\cap(c,c_1]\}$. By Mañe, $\leb(\Lambda)=0$. On the other hand, $\leb(\Gamma_0)>0<\leb(\Gamma_1)$. Thus, if $\omega_{f_t}(c_0)\ne\omega_{f_t}(c_1)$ we will get two attractors $\Lambda_0:=\omega_{f_t}(c_0)$ and $\Lambda_1:=\omega_{f_t}(c_1)$, each one having a basin containing an open set.
Futhermore, one can choose the values $t_0$ and $t_1$ in such a way that both 
$\Gamma_0$ and $\Gamma_1$ are distinct Cantor sets, providing a situation in which more than simple periodic orbits, we have two non-periodic distinct attractors that attract distinct positive measure sets. 

\begin{figure}
\begin{center}
\includegraphics[scale=.35]{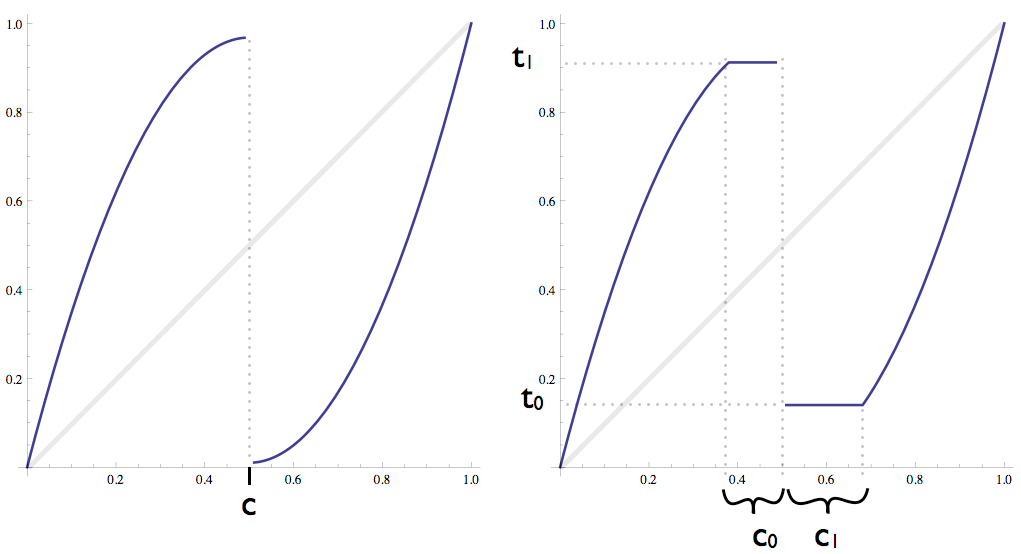}\\
\caption{}\label{plateauLORENZ}
\end{center}
\end{figure}

Despite of this indication, we prove in Theorem~\ref{atratortopologico} that also in this second setting, there is one single topological attractor (at least, if $f$ has negative Schwarzian derivative), that is, the behavior of the contracting Lorenz looks like the unimodal maps, instead of the bimodal maps, that admits up to two attractors. 

In the sequel we will be devoted to the development of the understanding of the dynamics of Lorenz maps in a very classical sense, namely, in terms of the decomposition of their non-wandering sets.

A point $x$ is said to be {\em non-wandering} if for any neighborhood $U \ni x$, $\exists n \ge 1$ such that $f^n(U)\cap U\ne \emptyset$. The set of all non-wandering points is the {\em non-wandering set} $\Omega(f)$. 

Much of the understanding of the qualitative behavior of dynamical systems follows from the description of Axiom A diffeomorphisms  satisfying the no-cycle condition. These diffeomorphisms have non-wandering sets that decompose into a finite number of transitive sets and admit a filtration, and using this,  Smale \cite{Sm} has shown their stability. The same kind of structure was constructed for unimodal maps by Jonker and Rand \cite{JR}, that have non-wandering sets decomposed in a similar way (although possibly in an infinite countable number of pieces).

One may observe that this kind of result in the context of unimodal or multimodal maps is usually obtained applying Milnor-Thurston Theorem, that is no longer valid in our case, as we don't have a continuous map. So, we had to construct an alternative way to prove transitivity instead of using the semi-conjugacy with a piecewise affine map that is usually used for unimodal or multimodal maps. 

\newpage

\section{Main Results}\label{MainResults}

\begin{Definition}[Lorenz Maps]
We say that a $C^2$ map $f:[0,1]\setminus \{c\} \rightarrow[0,1]$, $0<c<1$, is a {\em Lorenz map} if $f(0)=0$, $f(1)=1$, $f'(x)>0$ $\forall\,x\in[0,1]\setminus \{c\}$. A Lorenz map is called {\em contracting} if $\lim_{x\to c}f'(x)=0$.
\end{Definition}

Given $n\ge1$, define $f^{n}(c_{-})=\lim_{x\uparrow c}f^{n}(x)$ and $f^{n}(c_{+})=\lim_{x\downarrow c}f^{n}(x)$. The critical values of $f$ are $f(c_{-})$ and $f(c_{+})$.

Given $x\in[0,1]\setminus\{f(c_{-}),f(c_{+})\}$, set the pre-image of $x$ as usually, that is, $f^{-1}(x)=\{y\in[0,1]\,;\,f(y)=x\}$. If $x\in\{f(c_{-}),f(c_{+})\}$ set $f^{-1}(x)=\{c\}\cup\{y\in[0,1]\,;\,f(y)=x\}$. Given a set $X\subset[0,1]$, define $f^{-1}(X)=\bigcup_{x\in X}f^{-1}(x)$. Inductively, define $f^{-n}(x)=f^{-1}(f^{-(n-1)}(x))$, where $n\ge2$.
The {\em pre-orbit} of a point $x\in[0,1]$ is the set $\co_{f}^{-}(x):=\bigcup_{n\ge0}f^{-n}(x)$, where $f^{0}(x):=x$.

Denote the positive orbit of a point $x\in[0,1]\setminus\co_{f}^{-}(c)$ by $\co_f^+(x)$, i.e., $\co_{f}^{+}(x)=\{f^j(x);j\ge0\}$. If $x\in\co_{f}^{-}(c)$, let $\co_{f}^{+}(x)=\{x,f(x),\cdots,f^{m_{x}-1}(x),c\}$, with $f^{m_{x}}(x)=c$. Define $\co_{f}^+(c_{-})=\{f^{n}(c_{-})\,;\,j\ge0\}$ and also $\co_{f}^+(c_{+})=\{f^{n}(c_{+})\,;\,j\ge0\}$.

The set of accumulation points of the positive orbit of $x\in[0,c)\cup\{c_{-},c_{+}\}\cup(c,1]$ is denoted by $\omega_f(x)$, the $\omega$-limit set of $x$.  The $\alpha$-limit set of $x$, $\alpha_{f}(x)$, is the set of points $y$ such that $y=\lim_{j\to\infty}x_{j}$ for some sequence $x_{j}\in f^{-n_{j}}(x)$ with $n_{j}\to+\infty$.

We say $I$ is {\em a wandering interval} of $f$ if $f^i(I)\cap f^j(I)=\emptyset$ for $i\ne j >0$ and the $\omega$-limit set of $I$ is not equal to a single periodic orbit.



Following Milnor in \cite{Milnor:1985ut}, we say that a  compact set $A$ is a {\em topological attractor} if 
its basin $\beta(A) = \{x; \omega_{f}(x) \subset A\}$ is residual in an open set  and also that each closed forward invariant subset $A'$ which is strictly contained in $A$ has a topologically smaller basin of attraction, i.e., $\beta(A) \setminus \beta(A')$ is residual in an open set. 

\begin{Definition}[Periodic Attractor]\label{PeriodicAttractor}
A periodic attractor is a finite set $\Lambda$ such that $\interior(\{x\,;\,\omega_{f}(x)=\Lambda\})\ne\emptyset$.
\end{Definition}

There are two types of periodic attractors: periodic orbits and the super-attrctors. A super-attractor is a finite set $\Lambda=\{p_{1},\cdots,p_{n},c\}$ such that $f(p_{i})=p_{i+1}$ for $1\le i<n$, $f(p_{n})=c$ and $\lim_{0<\varepsilon\downarrow0}f(c+\varepsilon)=p_{1}$ or $\lim_{0<\varepsilon\downarrow0}f(c-\varepsilon)=p_{1}$.

Given a periodic point $p$, say $f^n(p)=p$, we say that this periodic orbit $\co_f^+(p)$ is an {\em attracting periodic orbit} if $\exists \epsilon >0$ such that $(p,p+\epsilon)$ or $(p-\epsilon,p)\subset \beta(\co_f^+(p))$.

A {\em weak repeller} is a periodic point $p$ of $f$ such that it is non-hyperbolic and it is not a periodic attractor.

\begin{Definition}[Chaotic attractors]
We say that an attractor (topological or metrical) $\Lambda$ is a {\em chaotic attractor} if $\Lambda$ is transitive,  periodic orbits are dense in it ($\overline {Per(f)\cap \Lambda}=\Lambda$), its topological entropy $h_{top}(f|_\Lambda)$ is positive and $\exists \lambda >0$ and a dense subset of points $x \in \Lambda$ such that their {\em Lyapounov exponents}, $\exp_f(x)$, are greater than $\lambda$, where
\begin{equation}\label{eqExpLy}\exp_f(x):=\liminf \frac{1}{n}\log |Df^n(x)|.\end{equation}
\end{Definition}

\begin{Definition}[Wild attractors]
We say that an attractor (topological or metrical) $\Lambda$ is {\em wild} if $\Lambda$ is transitive but is not the maximal transitive set that contains it.  
\end{Definition}

In \cite{Milnor:1985ut}, Milnor has given examples where the metrical attractor contains the topological one and vice-versa (see examples 5 and 6 of the Appendix of \cite{Milnor:1985ut}). He also asked if the topological and metrical attractors were always the same for unimodal maps with negative Schwarzian derivative. For $S$-unimodal maps, Milnor already knew from Guckenheimer's work \cite{G79} that if they are not the same, the metrical attractor has to be inside the topological one  and, as both are transitive, the metrical attractor must be a wild attractor by the definition above.
For quasi-quadratic maps (i.e., $f$ with $Sf<0$ and non-degenerate critical point: $|f''(c)|\ne0$, see details in \cite{Ly94}), Lyubich has shown in 1994 that the topological attractors obtained by Guckenheimer coincide with the metrical ones obtained in (\cite{BL89}). The question if the coincidence of topological and metrical attractors was a general phenomenon was open till 1996 when Bruin, Keller, Nowicki and van Strien \cite{BKNvS} exhibit examples of unimodal maps of the type $\lambda(1-|2x-1|^{\ell})$ (with big $\ell$) having metrical attractor distinct to the topological one.

We say that a Lorenz map $f$ is a {\em Cherry map} if there is a  neighborhood $J$ of the critical point such that the first return to $J$ is semi-conjugated to an irrational rotation. In this case, there is a minimal compact set $\Lambda$ such that $\omega_{f}(x)=\Lambda\ni c$ $\forall x\in J$. A {\em Cherry attractor} is a minimal compact set contained in the interior of its basin of attraction and such that $rot_{f}(x)=\rho\notin\QQ$ $\forall\,x\in\beta(\Lambda)$, where $rot_{f}(x)$ is the rotation number of $x$.

%

A renormalization interval for $f$ is an open interval $J=(a,b)\ni c$ such that the first return map to $(a,b)$ is conjugated to a Lorenz map. The points of the boundary of a renormalization interval $J=(a,b)$ are always periodic points and $$f^{\period(a)}([a,c))\subset[a,b]\supset f^{\period(b)}((c,b]).$$
%
%
An attractor $\Lambda$ of a contracting Lorenz map $f$ is a {\em Solenoidal attractor} (or {\em Solenoid})  if  $\Lambda\subset\bigcap_{n=0}^\infty K_n$, where $K_n=\bigg(\bigcup_{j=0}f^{\period(a_{n})}([a_{n},c))\bigg)\cap\bigg(\bigcup_{j=0}^{\period(b_{n})}f^{j}((c,b_{n}])\bigg)$ and $\{J_n=(a_{n},b_{n})\}_{n}$ is an infinite nested chain of renormalization intervals. 


A Contracting Lorenz map $f:[0,1]\setminus\{c\} \to [0,1]$ is called {\em non-flat} if there exist constants $\alpha>1$, $a,b\in[0,1]$ and $C^2$ orientation preserving  diffeomorphisms $\phi_{0}:[0,c]\to[0,a^{1/\alpha}]$ and $\phi_{1}:[c,1]\to[0,b^{1/\alpha}]$ such that $$f(x)=\begin{cases}a-(\phi_{0}(c-x))^\alpha&\text{ if }x<c\\
1-b+(\phi_{1}(x))^\alpha&\text{ if }x>c\end{cases}.$$

\begin{maintheorem}
\label{baciastopologicas}
Let $f$  be a non-flat contracting Lorenz map having neither weak repellers nor periodic attractors. Then there is a compact set $\Lambda$ that is transitive, positively invariant (indeed $f(\Lambda)=\Lambda$) such that its basin $\beta(\Lambda)$ is a residual set in the whole interval. We have that $\Lambda$ is one and only one of the following types:

\begin{enumerate}

\item {\em Cherry attractor} and in this case $\omega_{f}(x)=\Lambda$ in an open and dense set of points $x\in [0,1]$. 
\item {\em Solenoidal attractor} and in this case $\omega_{f}(x)=\Lambda$ in a residual set of points $x\in [0,1]$. 
\item {\em Chaotic attractor} that can be of two kinds:
\begin{enumerate}
\item {\em Finite union of intervals}, in this case $\omega_{f}(x)=\Lambda$ in a residual set of points $x\in [0,1]$. 
\item {\em Cantor set} and in this case there are wandering intervals. 

\end{enumerate}

\end{enumerate}

\end{maintheorem}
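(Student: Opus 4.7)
The strategy is to identify the candidate $\Lambda$ with (essentially) $\overline{\omega_f(c_-)\cup\omega_f(c_+)}$, and then classify it by the renormalization structure of $f$. Since $f$ has neither a weak repeller nor an attracting periodic orbit, Ma\~n\'e's theorem yields $c\in\omega_f(x)$ for Lebesgue almost every $x$; by a Baire category argument that exploits non-flatness and the Koebe principle for nested pullbacks (available thanks to $Sf<0$), this can be upgraded to a residual statement. Whenever the orbit of such an $x$ enters a sufficiently small neighborhood of $c$, its next iterate lies very close to $f(c_-)$ or $f(c_+)$, and bounded distortion between consecutive deep returns forces $\omega_f(c_-)\cup\omega_f(c_+)\subseteq\omega_f(x)$. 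This already gives the inclusion $\Lambda\subseteq\omega_f(x)$ on a residual set and, in particular, a residual basin once $\Lambda$ is shown to be transitive.

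I would then split into cases by the maximal nested chain $\{J_n=(a_n,b_n)\}_n$ of renormalization intervals. If this chain is infinite and, at some level, the first return to $J_n$ is semi-conjugate to an irrational rotation, then $\Lambda$ is a Cherry attractor and, because $c$ sits in the interior of the basin and the irrational rotation number dictates the itinerary of nearby orbits, one moreover gets $\omega_f(x)=\Lambda$ on an open and dense set, yielding case~(1). If the chain is infinite with rational rotation numbers at every level, the intersection of the sets $K_n$ defined in the excerpt is a solenoid, giving case~(2). If the chain is finite I would pass to the deepest renormalization and work with a non-renormalizable contracting Lorenz map, proving there that $\omega_f(c_-)=\omega_f(c_+)=\Lambda$, that periodic orbits are dense in $\Lambda$, and that $h_{top}(f|_\Lambda)>0$, with the sub-dichotomy between case~(3a) and case~(3b) decided by whether forward iterates of $[a,c)$ and $(c,b]$ eventually cover a neighborhood of $c$; in the nowhere-dense regime the complementary gaps furnish wandering intervals in the sense of the definition above.

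The main obstacle, as flagged in the introduction, is proving transitivity of $\Lambda$ in the chaotic case without Milnor--Thurston, since the discontinuity at $c$ obstructs the usual semi-conjugacy with a piecewise affine model. My plan here is a pullback argument: given nonempty relatively open sets $U,V\subset\Lambda$, I would construct a sequence of intervals obtained by pulling $V$ back through carefully chosen branches of $f$, using negative Schwarzian and the Koebe principle to control distortion and hence ensure definite growth of lengths; the absence of weak repellers prevents these pullbacks from being trapped near a neutral periodic orbit, so they eventually reach a size forcing them to meet $U$. Residualness of $\beta(\Lambda)$ is then the Baire-category statement that, for a shrinking neighborhood basis $\{W_m\}$ of $\Lambda$, each set $\{x:\co_f^+(x)\cap W_m=\emptyset\}$ is nowhere dense, since on any open set contained in its complement the dynamics would have to be uniformly hyperbolic in the absence of weak repellers and periodic attractors; and $f(\Lambda)=\Lambda$ is an immediate consequence of the $\omega$-limit characterization of $\Lambda$.
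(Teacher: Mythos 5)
Your outline reproduces the easy part of the case division (Cherry / solenoidal / chaotic), but the core of the theorem --- transitivity of $\Lambda$ in the chaotic case, and the identification of $\Lambda$ with a set on which a residual set of points has exactly $\omega_f(x)=\Lambda$ --- is exactly where your plan has a genuine gap. Your transitivity argument is a pullback-growth argument: pull a relatively open set $V\subset\Lambda$ back along branches and use ``negative Schwarzian and the Koebe principle'' to get ``definite growth of lengths''. First, $Sf<0$ is not a hypothesis of this theorem (it only enters in Theorem~C); the statement is for $C^2$ non-flat maps. Second, and more seriously, the growth mechanism you invoke is precisely what is unavailable here: the map is contracting at $c$ ($f'\to 0$), orbits in the attractor return to every neighborhood of $c$, so pullbacks through branches passing near the critical point can shrink, and Koebe-type control gives bounded distortion, not expansion; the absence of weak repellers only excludes neutral periodic obstructions and does not make pullback lengths grow. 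This is the difficulty the introduction flags (no Milnor--Thurston, no expansivity), and the paper resolves it by a completely different mechanism: it works with $\alpha$-limit sets, shows that $\EE=\{x:\,c\in\alpha_f(x)\}$ contains a neighborhood of $c$, builds from it a trapping region $\UU$, proves $\alpha_f(x)\cap\UU=\Omega(f)\cap\UU$ for every $x\in\UU$, and deduces \emph{strong} transitivity of $f|_{\Omega(f)\cap\UU}$; the attractor is then $\Lambda=\overline{\Omega(f)\cap\UU}$, not a priori $\overline{\omega_f(c_-)\cup\omega_f(c_+)}$. Nothing in your proposal substitutes for this step.

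There are secondary gaps as well. Your claim that $c\in\omega_f(x)$ forces $\omega_f(c_-)\cup\omega_f(c_+)\subset\omega_f(x)$ needs two-sided accumulation of the orbit of $x$ on $c$, which requires an argument (the paper controls such two-sidedness through the wandering-interval lemma and its consequences on $\alpha$- and $\omega$-limits). Your Cherry/solenoid split is misstated: a Cherry attractor corresponds to a \emph{finite} renormalization chain whose deepest return map has irrational rotation set, while the infinite chain is the solenoidal case; ``irrational rotation at some level of an infinite chain'' does not occur. Finally, residualness of $\beta(\Lambda)$ does not follow from the sets $\{x:\co_f^+(x)\cap W_m=\emptyset\}$ being nowhere dense for a neighborhood basis $W_m$ of $\Lambda$: meeting every $W_m$ does not force $\omega_f(x)\subset\Lambda$; one needs the trapping-region structure (orbits that enter $\UU$ stay in $\UU$, and boundary accumulation is excluded via the wandering-interval analysis), which is how the paper gets both the residual basin and, via the transitivity dichotomy, $\omega_f(x)=\Lambda$ on a residual set.
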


\begin{maintheorem}
\label{teoalfalim}
Let $f$ be a non-flat contracting Lorenz map without weak repellers nor periodic attractors. Then $f$ has a single topological attractor~$\Lambda$. Moreover,  
$f$ has no wandering interval if and only if $\alpha_{f}(x)=[0,1], \forall x \in \Lambda$

\end{maintheorem}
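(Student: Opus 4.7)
My plan is to derive the uniqueness of the topological attractor $\Lambda$ directly from Theorem~\ref{baciastopologicas} (it furnishes a compact transitive set with $f(\Lambda) = \Lambda$ and residual basin, and any other topological attractor would have to coincide with $\Lambda$ by the minimality clause in Milnor's definition), and then to prove the biconditional in two steps.

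For $(\Leftarrow)$, I assume $\alpha_f(x) = [0,1]$ for every $x \in \Lambda$ and suppose, for contradiction, that $I$ is a wandering interval. Pairwise disjointness of the iterates $f^n(I)$ forces $|f^n(I)| \to 0$, and after shrinking $I$ slightly I may further assume $c \notin \bigcup_{n \geq 0} f^n(\overline{I})$, so that each $f^n|_I$ is a homeomorphism onto an interval. The residuality of $\beta(\Lambda)$ provides some $z \in I$ with $\omega_f(z) \subset \Lambda$, whence $\omega_f(I) \cap \Lambda \ne \emptyset$; fix $x$ in this intersection. The hypothesis supplies $y \in \mathrm{int}(I)$ with $f^M(y) = x$, so $x \in \mathrm{int}(f^M(I))$ and I can choose a neighborhood $V$ of $x$ with $V \subset f^M(I)$. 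Since $x \in \omega_f(I)$, there are arbitrarily large $n$ with $f^n(I) \cap V \ne \emptyset$; any such $n > M$ gives $f^n(I) \cap f^M(I) \ne \emptyset$, contradicting that $I$ wanders.

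For $(\Rightarrow)$, I assume $f$ has no wandering intervals, fix $x \in \Lambda$ and an open interval $U$, and try to exhibit points of $U \cap f^{-n}(x)$ for arbitrarily large $n$. Theorem~\ref{baciastopologicas} gives four possible types for $\Lambda$: the Cherry case (1) contains wandering intervals as the non-degenerate fibres of its semi-conjugacy with an irrational rotation, and type (3b) exhibits wandering intervals by definition, so the hypothesis reduces the analysis to $\Lambda$ being a solenoid (type 2) or a finite union of intervals (type 3a). Residuality of $\beta(\Lambda)$ yields $w \in U$ with $\omega_f(w) \subset \Lambda$, and the transitivity of $f$ on $\Lambda$ together with $f(\Lambda) = \Lambda$ makes the pre-images of $x$ inside $\Lambda$ dense in $\Lambda$. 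I then propagate this density outward: for a suitable large $k$, the iterate $f^k(w)$ is approximated by some $y \in f^{-m}(x) \cap \Lambda$, and a local inverse branch of $f^k$ (legitimate because $f$ is a $C^2$ diffeomorphism with positive derivative away from the critical orbit, and the relevant orbit segment of $w$ can be chosen to avoid $c$) adjusts $w$ within $U$ to a genuine point of $f^{-(k+m)}(x)$. Repeating the construction along later portions of the orbit of $w$ yields pre-images of $x$ inside $U$ under arbitrarily large iterates.

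The main obstacle will be the solenoidal case of $(\Rightarrow)$, where $\Lambda$ is a Cantor set with empty interior: transferring density of pre-images from inside $\Lambda$ to arbitrary open subsets of $[0,1]$ requires threading through the nested chain of renormalization intervals, and the absence of wandering intervals must be invoked to ensure that every open interval is eventually swept by a branch of $f^{-n}$ landing in any prescribed neighborhood of $x$. The type (3a) case is routine, since $\Lambda$ has nonempty interior and a transitive interval map has pre-images of every point dense in its support.
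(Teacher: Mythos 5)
Your converse half (that a wandering interval is incompatible with $\alpha_f(x)=[0,1]$ for $x\in\Lambda$) is essentially correct — and is in fact left implicit in the paper — and the uniqueness claim is obtained the same way as in the paper. The forward direction, however, has genuine gaps. First, you dismiss the Cherry case by asserting that a Cherry attractor always carries wandering intervals as nondegenerate fibres of the semi-conjugacy. That is false: the semi-conjugacy may be a conjugacy, and the paper's proof of Theorem~\ref{teoalfalim} treats precisely the Cherry attractor \emph{without} wandering intervals, where the first return map to an interval $[a,b]$ (identifying $a$ with $b$) is conjugate to an irrational rotation and $\Lambda$ is a cycle of intervals containing no periodic points; this case survives the hypothesis ``no wandering intervals'' and is not subsumed by your types (2) and (3a). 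Second, the step ``transitivity of $f|_\Lambda$ together with $f(\Lambda)=\Lambda$ makes the pre-images of $x$ dense in $\Lambda$'' is not a valid deduction: transitivity yields a dense forward orbit, not dense backward orbits of every point. What is actually needed — and what the paper proves — is strong transitivity on $\Omega(f)\cap\UU$ (Proposition~\ref{transitivo}), which rests on the $\alpha$-limit machinery: accumulation of periodic points and of preimages on both sides of $c$ (Proposition~\ref{Proposition008345678}), Lemma~\ref{216} ($c\in\alpha_f(x)\Rightarrow\alpha_f(x)\supset\Omega(f)$) and Corollary~\ref{sobrealfa}.

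Third, the ``propagation outward'' is the crux of the statement and your sketch does not carry it out: knowing that $f^k(w)$ is close to some $y\in f^{-m}(x)$ does not place $y$ inside $f^k(U)$, so the proposed local inverse branch has nothing to pull back. One needs the iterates of an arbitrary interval $U$ to eventually \emph{cover} a definite region (a one-sided neighborhood of $c$, or a whole renormalization interval), which is exactly what the long-branch arguments provide (Lemma~\ref{Lemma549164}, Proposition~\ref{Proposition008345678}); without them the adjustment of $w$ to a genuine preimage of $x$ inside $U$ is unjustified. Finally, in the solenoidal case you only name the obstacle (``threading through the nested chain of renormalization intervals'') without resolving it, whereas the paper closes it by noting that $\alpha_f(x)\supset\Lambda_{J_n}$ for every $x$ in the trapping region $K_{J_n}$ and that, when there are no wandering intervals, $\bigcup_n\Lambda_{J_n}$ is dense in $[0,1]$. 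As it stands, the implication ``$\alpha_f(x)=[0,1]$ for all $x\in\Lambda$ $\Rightarrow$ no wandering interval'' is proved, but the implication ``no wandering interval $\Rightarrow\alpha_f(x)=[0,1]$ for all $x\in\Lambda$'' is not.
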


Next theorem goes deeper in the classification provided by Theorem~\ref{baciastopologicas}, as it distinguishes two possible situations for item (3)(b) of that theorem, as it didn't state the Cantor set $\Lambda$ was $\omega_{f}(x)$ for a residual set of $x \in [0,1]$, but only that its basin contains such a set. Indeed this is not generally true, as we may construct an example of a wild attractor $\Lambda'$ as defined above, contained but different to $\Lambda$ that (3)(b) says that exists. That is, situation (3)(b) splits in two possibilities. Either $\Lambda$ attracts a residual set whose $\omega$-limit coincides with $\Lambda$, or it properly contains another Cantor set that has this property. And we want to emphasize it by stating the two possibilities separately.

We say that a $C^3$ map $f$  has negative Schwarzian derivative if its Schwarzian derivative, $Sf$, is negative in every point $x$ such that $Df(x)\ne0$, where

\begin{equation}\label{defschwarz}
Sf(x)=\frac{D^3 f(x)}{D f(x)}-\frac{3}{2}\bigg( \frac{D^2 f(x)}{D f(x)} \bigg)^2
\end{equation}

\begin{maintheorem}
\label{atratortopologico}
Let $f$ be a $C^3$ non-flat contracting Lorenz map with negative Schwarzian derivative.
If $f$ has a periodic attractor $\Lambda$ then either $\beta(\Lambda)$ is an open and dense set with full Lebesgue measure or there is one more periodic attractor $\Lambda'$ such that $\beta(\Lambda\cup\Lambda')$ is open and dense with full Lebesgue measure.

If $f$ does not have periodic attractor then there is a single transitive topological attractor $\Lambda$ with $\omega_{f}(x)=\Lambda$ for a residual set of points $x \in [0,1]$ and it is one of the following types:

\begin{enumerate}
\item {\em Cherry attractor}. 
\item {\em Solenoidal attractor}. 
\item {\em Chaotic attractor of one of the two following kinds:}

\begin{enumerate}

\item {\em Finite union of intervals} 
\item {\em Cantor set} (this case occurs only if there is a wandering interval).

\end{enumerate}
\item {\em Wild Cantor attractor} (also only with the occurrence of wandering intervals). 

\end{enumerate}

\end{maintheorem}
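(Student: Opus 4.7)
The plan is to deduce Theorem~C from Main Theorems A and B by exploiting the negative Schwarzian derivative to eliminate weak repellers and to handle the periodic attractor case via a Singer-type argument. The crux of the proof is that under $Sf<0$, the only non-hyperbolic periodic points that can exist are periodic attractors, so the weak-repeller-free hypothesis of Theorems A and B is automatically satisfied whenever $f$ has no periodic attractor. This allows the decomposition of the theorem into two disjoint regimes: (i) $f$ has a periodic attractor, or (ii) it does not.

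For regime (i), I would first invoke a Lorenz-analogue of Singer's theorem: since $Sf<0$ is preserved under iteration and the fixed points $0,1$ are repelling (as $f'>0$ with $f(0)=0$, $f(1)=1$ and $f$ contracting at $c$ forces $Df(0),Df(1)>1$), the immediate basin of any periodic attractor $\Lambda$ must contain one of the one-sided critical trajectories $\co_f^+(c_-)$ or $\co_f^+(c_+)$. Since there are only two such sides, there are at most two periodic attractors. I would then show that $\beta(\Lambda)$, or $\beta(\Lambda\cup\Lambda')$ in the two-attractor case, is open, dense, and of full Lebesgue measure. Openness is immediate; density and full measure follow from Ma\~n\'e's theorem, which guarantees that for Lebesgue-a.e.\ $x$ outside $\beta(\Lambda\cup\Lambda')$ one has $c\in\omega_f(x)$, combined with Koebe distortion control (a consequence of $Sf<0$) applied to the induced first-return maps to a neighborhood of $c$. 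This forces the set of non-attracted points to be meager and of measure zero.

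For regime (ii), having ruled out weak repellers via Step~1, Theorems A and B apply and yield a unique topological attractor $\Lambda$ with $\omega_f(x)=\Lambda$ on a residual set, classified as Cherry, Solenoidal, chaotic union of intervals, or chaotic Cantor set. The only new work is to refine case~(3)(b) into two sub-possibilities. I would consider the set $R:=\{x\in[0,1]\,;\,\omega_f(x)=\Lambda\}$: if $R$ is residual in $[0,1]$, we are in the new case~(3)(b); otherwise, I would exhibit a maximal transitive compact $\Lambda'\subsetneq \Lambda$ such that $\omega_f(x)=\Lambda'$ for a residual set inside some open set, producing the wild attractor of case~(4). To extract $\Lambda'$, I would examine the nested chain of renormalization intervals (or the analogous combinatorial nest of the Cantor attractor), using the Koebe principle to propagate residuality from a level where $\omega_f(x)$ stabilizes. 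Wandering intervals appear in both (3)(b) and (4) from the Cantor structure of $\Lambda$.

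The main obstacle is the construction in regime~(ii) of the wild attractor $\Lambda'$ and the verification of its defining properties: being transitive, strictly contained in the maximal transitive set $\Lambda$, and having a basin that is residual in an open set. The topological attractor from Theorem~A need not be minimal among transitive invariant sets, and singling out the ``correct'' proper transitive piece requires a careful combinatorial/geometric analysis of the induced dynamics on the nested renormalization levels. A secondary technical point is the full-measure/density statement in Step~2 when only one of $c_-, c_+$ is absorbed by $\Lambda$: showing that the complementary side does not produce a positive-measure non-attracted set requires a Koebe-based argument analogous to the unimodal theory, and constitutes the other non-routine piece of the proof.
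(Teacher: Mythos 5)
Your overall skeleton (use $Sf<0$ to kill weak repellers via the Minimum Principle, split into the periodic-attractor regime and its complement, quote Theorems~\ref{baciastopologicas}/\ref{teoalfalim}, then refine case (3)(b) into (3)(b) versus (4)) is the same as the paper's, but the two steps you yourself flag as the hard ones are exactly where the proposal has genuine gaps, and in both places the paper's mechanism is different from what you sketch. In the periodic-attractor regime, the statement that Lebesgue-a.e.\ point (and an open dense set) is absorbed by the one or two periodic attractors is \emph{not} obtained in the paper by a Ma\~n\'e-plus-Koebe argument; the paper explicitly remarks that, unlike the $S$-unimodal case, this does not follow easily from Singer and Ma\~n\'e (the map is discontinuous at $c$), and it imports the result as Theorem~\ref{stpierreperiodic} from St.~Pierre's thesis. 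Your two-sentence sketch of this step does not engage with the actual difficulty (controlling the points whose orbits accumulate on $c$ from the side whose critical value is not captured by the attractor). Also, your claim that $0$ and $1$ are forced to be repelling is false: nothing in the definition prevents $Df(0)\le 1$, and the paper's own case analysis (e.g.\ the non-renormalizable case with $f(c_{+})=0$ and $0$ attracting) includes attracting fixed points at the boundary; fortunately this does not affect the statement, since such a fixed point is then simply one of the (at most two, by the Singer corollary) periodic attractors.

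In the regime without periodic attractors, the new content of Theorem~\ref{atratortopologico} beyond Theorem~\ref{baciastopologicas} is precisely that, in the wandering-interval case, there is a single set $\Lambda$ with $\omega_f(x)=\Lambda$ on a residual set, which is either the chaotic Cantor attractor $\overline{\UU\cap\Omega(f)}$ or a wild attractor strictly inside it. Your proposal to ``examine the nested renormalization levels and use Koebe to propagate residuality from a level where $\omega_f(x)$ stabilizes'' is not an argument: it neither produces the candidate $\Lambda'$ nor shows its basin is residual, and (since $f$ is only finitely renormalizable in this case) the renormalization nest is not the relevant structure. The paper's actual mechanism is different and short: by Lemma~\ref{DenWanInt} the union $V$ of all wandering intervals is open and dense; by St.~Pierre's metric-attractor theorem $\omega_f(x)=\Lambda_{sP}$ for Lebesgue-a.e.\ $x$, and since all points of a given wandering interval share the same $\omega$-limit, $\omega_f(x)=\Lambda_{sP}$ for \emph{every} $x\in V$; taking $\Lambda=\Lambda_{sP}$ gives the residual (indeed open dense) set with $\omega_f(x)=\Lambda$, and the dichotomy $\Lambda=\overline{\UU\cap\Omega(f)}$ (chaotic Cantor) versus $\Lambda\subsetneqq\overline{\UU\cap\Omega(f)}$ (wild, by transitivity of both sets) closes the classification. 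Without some substitute for this ``constant $\omega$-limit on a dense open union of wandering intervals'' step, your construction of the wild attractor and of its residual basin remains unproven.
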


At this point it worths mentioning that (metrical, at least) wild Cantor attractors do occur, as we know we can relate a symmetric unimodal map to a Lorenz map (see Section~\ref{SecEmbeding} of the Appendix), translating most of the features of one into the other, and so we can build the example given by \cite{BKNvS} for a Lorenz map.

\begin{maincorollary}
\label{atrattopologicoXmetrico}
Let $f$ be a $C^3$ non-flat contracting Lorenz map with negative Schwarzian derivative without periodic attractors. 

\begin{enumerate}
\item The topological attractor contains the metrical one.
\item If the topological attractor is not a cycle of intervals then the topological attractor and the metrical one coincide.
\end{enumerate}

\end{maincorollary}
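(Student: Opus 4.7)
The plan is to deduce the corollary from Theorem~\ref{atratortopologico} together with a Blokh-Lyubich style control of the metric attractor by the closures of the critical orbits.

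I would first verify that in each of the four types of $\Lambda$ listed in Theorem~\ref{atratortopologico} the critical point $c$ lies in $\Lambda$ and is a two-sided accumulation point of $\Lambda$: minimality handles the Cherry and Solenoidal cases, while the explicit geometric description (cycle of intervals, or critical Cantor set) handles the two remaining ones. Since $\Lambda$ is closed and forward invariant, taking sequences in $\Lambda$ approaching $c$ from the left and from the right yields $f(c_-),f(c_+)\in\Lambda$, and therefore $\overline{\co_f^+(c_-)}\cup\overline{\co_f^+(c_+)}\subset\Lambda$.

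For part~(1), I would invoke the Lorenz analog of the Blokh-Lyubich theorem: under the hypotheses of the corollary, any metric attractor $M$ is contained in $\overline{\co_f^+(c_-)}\cup\overline{\co_f^+(c_+)}$. The key input is Ma\~ne's theorem (recalled in the introduction), which gives $c\in\omega_f(x)$ for Lebesgue almost every $x$; the strong contraction of $f$ at $c$ then forces $\omega_f(x)$ to shadow one of the two one-sided critical orbits and hence to lie in $\overline{\co_f^+(c_-)}\cup\overline{\co_f^+(c_+)}$ for a.e. $x$. Combining with the preceding inclusion gives $M\subset\Lambda$.

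For part~(2), I would proceed case by case using the classification in Theorem~\ref{atratortopologico}. In the Cherry and Solenoidal cases, $\Lambda$ is minimal, so the nonempty closed forward invariant subset $M\subset\Lambda$ coincides with $\Lambda$. In the chaotic Cantor case~(3b) and the wild Cantor case~(4), I would argue that $\Lambda=\overline{\co_f^+(c_-)}\cup\overline{\co_f^+(c_+)}$: if the right-hand side were a proper closed forward invariant subset of $\Lambda$, the same contracting argument used in part~(1) would show it already attracts every point attracted to $\Lambda$, contradicting the minimality clause in Milnor's definition of topological attractor; hence $\Lambda\subset M$ and equality follows. The cycle of intervals case~(3a) is excluded from~(2) precisely because the critical orbit can accumulate on a proper Cantor (indeed wild) subset of the cycle, which is the Bruin--Keller--Nowicki--van Strien phenomenon transplanted to the Lorenz setting via the embedding of Section~\ref{SecEmbeding}.

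The principal obstacle is the proof, used in part~(1), of the Lorenz analog of the Blokh-Lyubich inclusion $M\subset\overline{\co_f^+(c_-)}\cup\overline{\co_f^+(c_+)}$: orbits of a Lorenz map may alternate freely between the two sides of the critical discontinuity, so one cannot simply adapt the unimodal argument. I expect this step to be established as a separate auxiliary proposition earlier in the paper, combining the negative Schwarzian derivative, non-flatness of $c$, and the Ma\~ne-type absence of weak repellers via an induced-expansion construction.
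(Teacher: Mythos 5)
Your plan hinges on a step that is never actually supplied: the inclusion of the metric attractor $M$ in $\overline{\co_f^+(c_-)}\cup\overline{\co_f^+(c_+)}$. You justify it only by the heuristic that Ma\~n\'e's theorem gives $c\in\omega_f(x)$ for a.e.\ $x$ and that the strong contraction near $c$ then forces $\omega_f(x)$ to ``shadow'' one of the one-sided critical orbits, and you defer the real proof to ``a separate auxiliary proposition earlier in the paper''. No such proposition exists in the paper, and the heuristic is exactly the reasoning the introduction warns is delicate in the Lorenz setting (the plateau toy model shows the same heuristic would predict two attractors, which Theorem~\ref{atratortopologico} rules out only after substantial work). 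The paper takes a different route: it quotes St.\ Pierre's theorem (first theorem of page 10 of \cite{StP}) giving a unique metric attractor $\Lambda_{sP}$ with $\omega_f(x)=\Lambda_{sP}$ for Lebesgue a.e.\ $x$, and then reads the corollary off the proof of Theorem~\ref{atratortopologico}: in the Cherry and Solenoidal cases $\omega_f(x)=\Lambda$ for a.e.\ $x$ (every point entering a neighborhood of $c$, resp.\ every point with $c\in\omega_f(x)$, has $\omega$-limit equal to $\Lambda$), so $\Lambda_{sP}=\Lambda$; in the cycle-of-intervals case the full-measure basin statement of Theorem~\ref{cicloint}(2) gives $\Lambda_{sP}\subset\Lambda$; and in the two Cantor cases (which come with wandering intervals) the topological attractor is by construction $\Lambda_{sP}$ itself, since $\omega_f(x)=\Lambda_{sP}$ for every point of the open dense union of wandering intervals. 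Your proposal replaces this with an unproved Blokh--Lyubich-type statement, which is a genuine gap, not a routine citation.

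There is a second problem in your part~(2) argument for the Cantor cases. To get $\Lambda\subset M$ you argue that if $\overline{\co_f^+(c_-)}\cup\overline{\co_f^+(c_+)}$ were a proper closed invariant subset of $\Lambda$, it would ``attract every point attracted to $\Lambda$'', contradicting the minimality clause in Milnor's definition. But the only attraction you could hope to extract from the (already missing) shadowing step is measure-theoretic: a full-measure set of points with $\omega_f(x)$ inside the smaller set. Milnor's topological minimality clause requires the smaller basin to fail to be residually smaller, and full Lebesgue measure does not imply residual; indeed the wild-attractor phenomenon is precisely a case where metric and topological bigness of basins diverge. So even granting your part~(1), the contradiction you invoke does not follow without a separate argument in the Baire category. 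The paper avoids this entirely because in the wandering-interval cases the residual (in fact open dense) set of points lying in wandering intervals is shown directly to have $\omega$-limit equal to $\Lambda_{sP}$.
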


Finally we obtain the stratification of the non-wandering set for the Lorenz maps, $\Omega(f)$.

\begin{maintheorem}[Spectral Decomposition of Lorenz Maps]\label{spdec}
Let $f$ be a $C^{3}$  non-flat contracting Lorenz map with negative Schwarzian derivative. Then, there is $n_f \in \NN \cup \{\infty\}$, the number of strata of the decomposition of $f$, and a collection of compact sets $\{\Omega_{n}\}_{0\le n\le n_{f}}$ such that:
\begin{enumerate}
\item $\Omega(f)=\bigcup_{0\le j \le n_f}\Omega_j$.
\item Each $\Omega_n$ is a forward invariant set {\em ($f(\Omega_{n})=\Omega_{n}$)} and 

\begin{itemize}

\item[(i)] 
$\Omega_0  = \begin{cases}
[0,1] & \text{ if $f(c_{+})=0$ and $f(c_{-})=1$} \\
\{0\} & \text{ if $f(c_{+})>0$ and $f(c_{-})=1$}\\
\{1\} & \text{ if $f(c_{+})=0$ and $f(c_{-})<1$}\\
\{0,1\} & \text{ if $f(c_{+})>0$ and $f(c_{-})<1$}
\end{cases}$

\item $\Omega_{n}\cap\Omega_{m}=\emptyset$  $\forall\,0\le n\ne m<n_{f}$;
\item if $\Omega_{n}\cap\Omega_{m}\ne\emptyset$ for $n<m$ then $(n,m)=(n_{f}-1,n_{f})$ and in this case $\Omega_{n_{f}-1}\cap\Omega_{n_{f}}$ consist of one or two periodic orbits and $c_{-}$ or $c_{+}$ are pre-periodic.


\item[(ii)] $f$ restricted to $\Omega_j$ is topologically transitive for all $0 <j < n_f$.

\item[(iii)] $\Omega_{n_f}$ is either a transitive set or the union of a pair of attracting periodic orbits.
\end{itemize}



\item
For each $0<j<n_{f}$ there is a decomposition of $\Omega_j$ into closed sets $$\Omega_j= X_{0,j}\cup X_{1,j}\cup\cdots\cup X_{\ell_j,j}$$ such that $\# ( X_{a,j}\cap X_{b,j} )\le1$ when $a\ne b$, the first return map to $X_{0,j}$ is topologically exact (in particular, topologically mixing) and,
for each  $i\in\{1,\cdots,\ell_{j}\}$, there is  $1\le s_{i,j}\le \ell_{j}$ such that $f^{s_{i,j}}(X_{i,j})\subset X_{0,j}$.

\end{enumerate}

\end{maintheorem}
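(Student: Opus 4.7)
The plan is to build the decomposition inductively along the renormalization tower of $f$, using Theorem~A as the engine supplying transitivity at each level. The base stratum $\Omega_0$ is forced by the boundary behaviour: since $f(0)=0$ and $f(1)=1$, the endpoints are always non-wandering, while the entire interval $[0,1]$ becomes non-wandering exactly when $f(c_+)=0$ and $f(c_-)=1$, because then both one-sided branches cover $[0,1]$ under a single iterate; the two mixed cases in the piecewise formula follow symmetrically. After separating this stratum one may assume both boundary fixed points are strictly repelling, and all subsequent strata lie in $(0,1)$.

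For the intermediate strata, let $\{J_n=(a_n,b_n)\}_{1\le n\le n_f}$ be the maximal nested chain of renormalization intervals containing $c$ (with $J_0:=(0,1)$), and let $K_n=\bigcup_{j=0}^{\period(a_n)-1}f^j([a_n,c])\cup\bigcup_{j=0}^{\period(b_n)-1}f^j([c,b_n])$ be the associated cycle of intervals. I would set $\Omega_n:=\Omega(f)\cap(\overline{K_{n-1}}\setminus\interior K_n)$ for $0<n<n_f$. Forward invariance and disjointness follow at once from $f$-invariance of each $K_n$, and the allowed overlap at indices $(n_f-1,n_f)$ occurs precisely when a renormalization endpoint sits on a periodic orbit shared with the next cycle, which by the boundary characterization of renormalization intervals is exactly the case where $c_-$ or $c_+$ is pre-periodic. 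Transitivity of $f|_{\Omega_n}$ comes from applying Theorem~A to the first return map to $J_{n-1}$, which is conjugate to a non-flat contracting Lorenz map $\hat f_{n-1}$ with negative Schwarzian derivative inheriting from $f$ the absence of weak repellers and periodic attractors; the topological attractor produced by Theorem~A for $\hat f_{n-1}$, propagated through $K_{n-1}$ by the $f$-iterates, is exactly $\Omega_n$. The terminal stratum $\Omega_{n_f}$ is either the bottom topological attractor of Theorem~A (if the chain has finite depth) or the pair of attracting periodic orbits allowed by Theorem~C.

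For the finer decomposition, each cycle $K_{n-1}$ consists of $\ell_n+1$ closed intervals cyclically permuted by $f$, so I take $X_{i,n}$ to be the intersection of $\Omega_n$ with the $i$-th such interval, with $X_{0,n}$ the one containing $c$. The intersections $X_{a,n}\cap X_{b,n}$ with $a\ne b$ reduce to at most one common boundary point of consecutive intervals of the cycle, and the required return time $s_{i,n}$ is read off from the cyclic permutation. The first return of $f$ to $X_{0,n}$ is conjugate to $\hat f_{n-1}$, so topological exactness reduces to the following single-scale statement: a non-renormalizable, non-flat contracting Lorenz map with negative Schwarzian, no weak repeller and no periodic attractor, whose topological attractor is a cycle of intervals, is topologically exact on that attractor. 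This exactness step is the main obstacle, since transitivity and mixing alone (which Theorem~A already supplies) are insufficient. I would prove it by combining a Ma\~ne-style argument showing that open intervals avoiding $c$ must grow under iteration and hit $c$ in finite time, with a pullback through $c$ that uses negative Schwarzian and the Koebe principle to guarantee that once an iterate of an open set touches the critical point, finitely many further iterates sweep across the full cycle. The Cantor-attractor cases (Theorem~A~(3)(b) and the wild attractor of Theorem~C~(4)) are handled by the analogous argument carried out inside the totally disconnected invariant set, relying on the absence of clopen invariant subsets that non-renormalizability forces.
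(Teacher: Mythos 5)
There is a genuine gap at the heart of your argument: you obtain transitivity of the intermediate strata by ``applying Theorem~A to the first return map to $J_{n-1}$'' and identifying $\Omega_n$ with the topological attractor of that renormalized map propagated through the cycle. But $\Omega_n$ is not that attractor. When $f$ is renormalizable one more time on $J_n\subset J_{n-1}$, the attractor of the renormalized map $\hat f_{n-1}$ lies inside the deeper trapping region $K_n$, i.e.\ in the strata of index $>n$; the stratum $\Omega_n=\Omega(f)\cap(K_{n-1}\setminus K_n)$ is the complementary, \emph{expanding} part of the non-wandering set of $\hat f_{n-1}$, namely the points whose orbits avoid the next renormalization interval. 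Theorem~A says nothing about transitivity of this repelling set, so your key step does not follow from it. An additional obstruction to invoking Theorem~A at intermediate levels is its hypothesis of no periodic attractors: when $f$ does have a periodic attractor (a case the statement explicitly allows, cf.\ item (iii) and the overlap clause), every renormalized map down the tower inherits that attractor, so Theorem~A is simply not applicable there. The paper instead proves transitivity of each $\Omega_n$ directly, via $\alpha$-limit arguments ($\alpha_f(x)\supset\Omega_n(f)$ for $x$ in a suitable trapping annulus, then strong transitivity as in Proposition~\ref{transitivo2}), and it must first develop the regular/non-regular/degenerate dichotomy for renormalization intervals (Lemmas~\ref{LemmaS6546SS}, \ref{LemmaSSSS}, \ref{LemmaSSDeg}, Corollary~\ref{Corollary9988}) precisely to replace the hypothesis ``no periodic attractor'' by ``$I_n$ regular and not contained in the basin of a periodic attractor'' and to identify the terminal stratum in the attracting case. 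Your proposal omits this machinery entirely, so the cases producing $\Omega_{n_f}$ equal to one or two periodic attractors, and the possible overlap $\Omega_{n_f-1}\cap\Omega_{n_f}$, are not actually covered.

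The same misidentification affects your finer decomposition: the first return map to the piece of the cycle containing $c$, \emph{restricted to $\Omega_j$}, is not conjugate to $\hat f_{j-1}$ acting on its interval attractor, so topological exactness does not reduce to the single-scale statement you formulate (which, for a cycle of intervals, is in any case about the wrong object: exactness is claimed for the return map on the stratum, which is generically an expanding Cantor set). The paper's construction takes $X_{0,j}$ to be the component $(p_j,p_j')$ of the complement of the minimal-period orbit in $I_j\setminus I_{j+1}$, and proves exactness of the return map there by showing that preimages of $p_j$ accumulate on $\partial I_{j+1}$ and produce, inside any neighborhood of any point of $\Omega_j\cap(p_j,p_j')$, an interval mapped homeomorphically onto $(p_j,p_j')$ in finite time (Claims~\ref{cheganobordo} and \ref{cobretudo}); this Markov-type full-branch argument is the ingredient your sketch is missing, and the Koebe/Ma\~n\'e growth argument you propose would at best give mixing-type covering of the attractor of the renormalized map, not exactness on the stratum.
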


\subsubsection*{Related works on contracting Lorenz maps}

The initial results on contracting Lorenz maps (and contracting Lorenz flows) are from beginning of the decade of 1980. Among the main names in this decade and the first half of the 1990's, we cite C. Tresser, A. Arneodo, L. Alsed\`a, A. Chenciner, P. Coullet, J-M. Gambaudo, M. Misiurewicz, A. Rovella, R.F. Williams (see \cite{ACT,CGT84,GT85,Gambaudo:1986p2422,Tresser:1993uf,Rov93}).
From the end of the 20th century until now the main contributions  are from  M. Martens and W. de Melo \cite{MM} and G. Keller and M. St. Pierre \cite{StP}. We also can cite results on the existence of SRB-measures (R. Metzger \cite{Metzger:2000vp}), combinatorial structure of contracting Lorenz maps (Labarca and Moreira \cite{Labarca:2010p1505,Labarca:2006p1486}), thermodynamical formalism (M.J. Pac\'ifico and M. Todd \cite{Pacifico:2010cc}), construction of contracting Lorenz maps (and flows) in higher dimensions (V. Araujo, A. Castro, M.J. Pac\'ifico, and V. Pinheiro \cite{Araujo:2011p2744}).

\newpage

\newpage

\section{Some Useful Results on Schwarzian Derivative}

We begin this section by stating some known results that will be useful in the sequel. Recall the definition of Schwarzian derivative given in equation (
\ref{defschwarz}).

An important property of maps with negative Schwarzian derivative is the Minimum Principle:
\begin{Proposition}[Minimum Principle, see \cite{MvS}] \label{MinP} Let T be a closed interval with end-points $a,b$ and $f:T \to \mathbb{R}$ a map with negative Schwarzian derivative ($Sf<0$). If $Df(x)\ne 0$ for all $x\in T$ then $$|Df(x)|>\min\{|Df(a)|,|Df(b)|\},\ \forall x\in (a,b).$$
\end{Proposition}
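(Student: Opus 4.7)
The plan is to argue by contradiction: a strict local minimum of $|Df|$ in the interior of $T$ would force the Schwarzian derivative to be non-negative at that point, contradicting the hypothesis $Sf<0$.

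First I would reduce to the case $Df>0$ on $T$. Since $Df$ is continuous and nowhere vanishing on the connected set $T$, it has constant sign; replacing $f$ by $-f$ changes neither $Sf$ nor $|Df|$, so I may assume $Df>0$ and hence $|Df|=Df$ throughout. Set $g=Df$. Suppose for contradiction that there exists $x_{0}\in(a,b)$ with $g(x_{0})\le\min\{g(a),g(b)\}$. By continuity and compactness, $g$ attains its minimum $m:=\min_{T}g$ on $T$ at some point $x_{\ast}\in T$. From $g(x_{0})\le\min\{g(a),g(b)\}$ one deduces $m\le g(x_{0})\le\min\{g(a),g(b)\}$, so $m$ is attained at $x_{0}$ (if $x_{\ast}\in\{a,b\}$, then $m=g(x_{\ast})\ge\min\{g(a),g(b)\}\ge m$, forcing $g(x_{0})=m$ and allowing me to replace $x_{\ast}$ by $x_{0}$). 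Thus there is an interior minimum $x_{\ast}\in(a,b)$ of $g$.

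Next, at the interior minimum $x_{\ast}$ of the $C^{2}$ function $g=Df$, the standard first- and second-order conditions give
$$D^{2}f(x_{\ast})=Dg(x_{\ast})=0\qand D^{3}f(x_{\ast})=D^{2}g(x_{\ast})\ge 0.$$
Plugging these into the definition \eqref{defschwarz} of the Schwarzian derivative,
$$Sf(x_{\ast})=\frac{D^{3}f(x_{\ast})}{Df(x_{\ast})}-\frac{3}{2}\bigg(\frac{D^{2}f(x_{\ast})}{Df(x_{\ast})}\bigg)^{2}=\frac{D^{3}f(x_{\ast})}{Df(x_{\ast})}\ge 0,$$
since $Df(x_{\ast})>0$. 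This contradicts the hypothesis $Sf<0$ on $T$.

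The argument is almost entirely routine; the only delicate point is securing the \emph{strict} inequality in the conclusion, and the main subtlety to handle carefully is the possibility that the minimum of $g$ on $T$ is attained at an endpoint with the same value as at $x_{0}$. As shown above, this case still forces a minimum at an interior point ($x_{0}$ itself), so the second-order condition applies and the contradiction goes through. Consequently $Df(x)>\min\{Df(a),Df(b)\}$ for every $x\in(a,b)$, which is the asserted minimum principle.
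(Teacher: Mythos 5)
Your proof is correct. Note that the paper does not prove Proposition~\ref{MinP} at all; it quotes it from \cite{MvS}, and the argument given there is essentially the one you wrote: after normalizing to $Df>0$, an interior minimum point $x_{\ast}$ of $Df$ forces $D^{2}f(x_{\ast})=0$ and $D^{3}f(x_{\ast})\ge 0$, hence $Sf(x_{\ast})\ge 0$, contradicting $Sf<0$. Your treatment of the possible tie with an endpoint value is what secures the strict inequality, and the only hypothesis you use implicitly (that $f$ is $C^{3}$, so $Df$ is $C^{2}$ and the second-order necessary condition applies at $x_{\ast}$) is already built into the paper's definition of negative Schwarzian derivative in (\ref{defschwarz}). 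An equivalent classical variant, worth knowing, is that $Sf<0$ is the same as strict convexity of $|Df|^{-1/2}$ (one computes $\big(|Df|^{-1/2}\big)''=-\tfrac12 |Df|^{-1/2}\,Sf$), and strict convexity of $|Df|^{-1/2}$ gives $|Df(x)|>\min\{|Df(a)|,|Df(b)|\}$ on $(a,b)$ immediately; both routes are equally short, and yours has the advantage of using nothing beyond the first- and second-order conditions at a minimum.
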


One of the consequences of the Minimum Principle is that maps with negative Schwarzian derivative do not admit weak repellers:

\begin{Corollary}[See \cite{MvS}] \label{weakrepellers}
If $U\subset\RR$ is an open set and $f:U\to\RR$ is a $C^{3}$ map with $Sf<0$ then $f$ does not admit weak repellers.
\end{Corollary}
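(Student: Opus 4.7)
I would argue by contradiction: suppose $p$ is a weak repeller, i.e.\ a non-hyperbolic periodic point of some period $n$ whose orbit is not a periodic attractor. Set $g=f^n$, so that $|Dg(p)|=1$. The Schwarzian cocycle identity
\[
  S(\varphi\circ\psi)(x)=S\varphi(\psi(x))\,\psi'(x)^{2}+S\psi(x)
\]
shows inductively that $Sg<0$ on a neighborhood of $p$, since $Df(f^{i}(p))\ne 0$ for $0\le i<n$ (otherwise $Dg(p)=0$) and hence $Dg\ne 0$ on a neighborhood of~$p$. The aim is to deduce that $p$ attracts a one-sided neighborhood under $g$; this makes $\co_{f}^{+}(p)$ a periodic attractor and contradicts the assumption.

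Assume first $Dg(p)=1$. I would apply the Minimum Principle (Proposition~\ref{MinP}) to $g$ on arbitrarily short intervals $[p-\varepsilon,p+\varepsilon]$ contained in $\{Dg\ne 0\}$: it yields $1=|Dg(p)|>\min\{|Dg(p-\varepsilon)|,|Dg(p+\varepsilon)|\}$, so $p$ is \emph{not} a local minimum of $|Dg|$. To promote this to a genuine one-sided neighborhood on which $|Dg|<1$, I would read off the local behavior of $|Dg|$ from the Taylor expansion
\[
  Dg(x)=1+D^{2}g(p)(x-p)+\tfrac{1}{2}D^{3}g(p)(x-p)^{2}+O((x-p)^{3}),
\]
splitting into two cases. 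If $D^{2}g(p)\ne 0$, then $|Dg|$ is strictly monotone near $p$, so $|Dg|<1$ on one side of $p$. If $D^{2}g(p)=0$, then $Sg(p)<0$ reduces to $D^{3}g(p)<0$, so $|Dg|<1$ on both sides. Either way, a one-sided interval $J$ with endpoint $p$ satisfies $|Dg|<1$ throughout, and $g$ is orientation-preserving on $J$. The Mean Value Theorem gives $|g(x)-p|<|x-p|$ for $x\in J$, and the leading non-vanishing term of $g(x)-x$ (determined by the same Taylor data) puts $g(x)$ strictly between $p$ and $x$. Iterates of any $x\in J$ therefore form a monotone sequence converging to $p$, so $\omega_{f}(x)=\co_{f}^{+}(p)$ for all $x\in J$, making $\co_{f}^{+}(p)$ a periodic attractor.

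The case $Dg(p)=-1$ reduces to the previous one by replacing $g$ with $g^{2}=f^{2n}$: composition preserves $Sg<0$ and $Dg^{2}(p)=1$, and a one-sided interval of attraction for $g^{2}$ is also one for $f$. The main obstacle is precisely the step after invoking the Minimum Principle: that principle by itself only guarantees that for each $\varepsilon$ one of $p\pm\varepsilon$ has $|Dg|<1$, and this could \emph{a priori} alternate sides as $\varepsilon\downarrow 0$. Pinning down an honest one-sided interval of attraction requires the Taylor refinement sketched above (equivalently, a connectedness argument on $\{|Dg|<1\}$ near $p$ exploiting $Sg<0$). Once that interval is in hand, the monotone convergence to $p$, and hence the periodic attractor contradicting the definition of weak repeller, is immediate.
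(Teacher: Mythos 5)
Your argument is correct, but note that the paper does not actually prove this corollary: it simply records it as a known consequence of the Minimum Principle, citing \cite{MvS}, where it appears as part of Singer's Theorem (``each neutral periodic point is attracting''). What you have written is essentially a self-contained proof of that imported fact. Your route is the standard local one: pass to $g=f^{n}$ (and to $g^{2}$ when $Dg(p)=-1$), use the Schwarzian cocycle identity to get $Sg(p)<0$, and then read off one-sided attraction from the jet of $g$ at $p$ — saddle-node behaviour when $D^{2}g(p)\ne0$, and $D^{3}g(p)=Sg(p)<0$ forcing two-sided attraction when $D^{2}g(p)=0$ (in the $Dg(p)=-1$ case one lands automatically in this second branch, since $D^{2}(g^{2})(p)=0$). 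This buys a complete elementary argument where the paper only gives a pointer, and you correctly identify that the Minimum Principle alone does not produce a one-sided neighbourhood of contraction; in fact your Taylor analysis makes the appeal to the Minimum Principle dispensable altogether. The comparison with the paper is therefore not of two proofs but of a citation versus a worked argument; the citation buys brevity and the full strength of Singer's Theorem (including the statements about immediate basins containing critical or boundary points), while your computation proves exactly the piece needed here.

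Two small points of hygiene, neither a gap: since $f$ is only $C^{3}$, the expansion of $Dg$ has Peano remainder $o((x-p)^{2})$ rather than $O((x-p)^{3})$, which is all your case analysis needs; and when you conclude, you should observe (as you implicitly do) that convergence of the $g$- or $g^{2}$-iterates of a one-sided interval to $p$ gives $\omega_{f}(x)=\co_{f}^{+}(p)$ for all $x$ in that interval, because $Df\ne0$ along the orbit of $p$ makes the intermediate iterates continuous near $p$; this is what makes $\co_{f}^{+}(p)$ a periodic attractor in the sense of Definition~\ref{PeriodicAttractor} and yields the contradiction.
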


\begin{Theorem}[Singer]\label{ThSinger} If $I\subset\RR$ is an interval and $f:U\to I$ is a $C^{3}$ map with negative Schwarzian derivative then
\begin{enumerate}
\item the immediate basin of any attracting periodic orbit contains either a critical point of f or a boundary point of  $I$;
\item each neutral periodic point is attracting;
\item there exists no interval of periodic points.
\end{enumerate}
In particular, the number of non-repelling periodic orbits is bounded if the number of critical points of f is finite. 
\end{Theorem}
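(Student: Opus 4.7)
The plan is to derive all three assertions, and then the final counting statement, from two classical ingredients: the Minimum Principle (Proposition~\ref{MinP}) together with the chain rule for the Schwarzian derivative, $S(f^n)(x)=\sum_{k=0}^{n-1}Sf(f^k(x))\,(Df^k(x))^2$, which shows that the sign condition $Sf<0$ is inherited by every iterate $f^n$ along orbits that avoid $\mathrm{Crit}(f)$.

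For (1) I would argue by contradiction. Let $p$ be an attracting periodic point of period $n$ and let $B=(a,b)\ni p$ be the connected component of the immediate basin containing $p$. Suppose $B$ contains no critical point of $f^n$ and $\{a,b\}\cap\partial I=\emptyset$. Then $f^n|_B$ is a $C^3$ diffeomorphism; after passing to $f^{2n}$ if necessary, $a$ and $b$ are both fixed by $f^n$. Since they do not belong to the basin, $|Df^n(a)|\ge 1$ and $|Df^n(b)|\ge 1$. The Minimum Principle applied to $f^n$ on $[a,b]$ then forces $|Df^n(p)|>\min\{|Df^n(a)|,|Df^n(b)|\}\ge 1$, contradicting attractivity. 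For (3), a Baire category argument reduces an interval of periodic points to a subinterval $J'$ on which some single iterate $f^n$ is the identity; then $Df^n\equiv 1$ on $J'$, so the orbit of $J'$ avoids $\mathrm{Crit}(f)$ and the chain rule gives $Sf^n<0$ on $J'$, contradicting $Sf^n\equiv 0$ on an interval where $f^n=\mathrm{id}$.

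Assertion (2) is the most delicate step and will be the main obstacle. After replacing $n$ by $2n$ one reduces to $Dg(p)=1$ where $g=f^n$ has $Sg<0$ near $p$. I would expand $g(x)-x=a_2(x-p)^2+a_3(x-p)^3+\cdots$ and split into two cases. If $a_2\ne 0$, then $g(x)-x$ has constant sign on a punctured neighborhood of $p$, and a direct monotone iteration argument produces a one-sided interval contained in the basin of $p$. If $a_2=0$, then $Sg(p)$ equals $6a_3$, so $Sg(p)<0$ forces $a_3<0$; this makes $g(x)-x$ positive on the left of $p$ and negative on its right, yielding two-sided attraction. Either way $p$ satisfies the paper's one-sided attractivity condition. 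The hardest point is exactly this local analysis, where one must control signs carefully and verify that the monotone iterations remain inside a neighborhood on which the low-order expansion of $g$ dominates.

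Finally, the counting statement combines (1) and (2). By (2) every non-repelling periodic orbit is attracting in the paper's sense, and by (1) its immediate basin contains a point of $\mathrm{Crit}(f)\cup\partial I$. Since distinct attracting orbits have disjoint immediate basins, the number of non-repelling periodic orbits is bounded by $\#\mathrm{Crit}(f)+\#\partial I$, hence is finite whenever $f$ has finitely many critical points.
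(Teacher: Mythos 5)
The paper does not prove this statement at all: it is quoted as Singer's Theorem with a citation to de Melo--van Strien \cite{MvS}, and only the subsequent Corollary (the version for maps defined on an open subset of the interval) is proved in the text, by applying Singer's Theorem to iterates. So there is no in-paper argument to compare against; your sketch is essentially the standard textbook proof (Minimum Principle for (1), the Schwarzian chain rule plus a Baire argument for (3), and the local expansion $Sg(p)=6a_3-6a_2^2$ at a neutral fixed point for (2)), and it is correct in outline.

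Two points are worth tightening if you were to write it out in full. In (1), the claim that the endpoints $a,b$ of the immediate-basin component are (after passing to $f^{2n}$) fixed needs the maximality argument: $f^n(B)\subset B$ because $f^n(B)$ is a connected subset of the basin containing $p$, and $f^n(a)\in(a,b)$ would put $a$ in the basin and contradict that $B$ is a component; monotonicity then forces the endpoints to be fixed or swapped. You should also dispose of the degenerate case $Df^{2n}(a)=0$ (a superattracting endpoint), where the Minimum Principle does not apply but the contradiction is immediate since points of $B$ near $a$ would converge to $a\notin\co_f^+(p)$. In (2), since $f$ is only $C^3$ the expansion stops at third order, so you must rule out $a_2=a_3=0$; this follows because $|Df^n(p)|=1$ guarantees the orbit of $p$ avoids the critical set, hence $S(f^{2n})(p)=6a_3-6a_2^2<0$ strictly, which is exactly what excludes that case. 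Finally, note that the one-sided attraction you obtain is precisely the notion of ``attracting'' used in this paper, so the reduction to $f^{2n}$ causes no loss.
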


The corollary below is a slight generalization of Singer's result for maps defined on an interval.

\begin{Corollary}\label{CoSinger} Let $U\subset I$ be an open subset of an interval $I=[a,b]$ containing $\partial I$.  Let $f:U\to I$ be a $C^{3}$ map with negative Schwarzian derivative such that $f(\partial I)\subset \partial I$ and $\lim_{U\ni x\to y}f'(x)=0$ $\forall\,y\in\partial U\setminus\partial I$. If we define the critical set of $f$, $\cc_{f}$, by $$\cc_{f}=\{y\in\overline{U}\,;\,\lim_{U\ni x\to y}f'(x)=0\}$$  then we obtain the same conclusion of Singer's Theorem, that is,
\begin{enumerate}
\item the immediate basin of any attracting periodic orbit contains either a critical point of f or a boundary point of  $I$;
\item each neutral periodic point is attracting;
\item there exists no interval of periodic points.
\end{enumerate}
In particular, the number of non-repelling periodic orbits is bounded if the number of critical points of f  is finite. 
\end{Corollary}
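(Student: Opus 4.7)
The plan is to verify that the classical proof of Singer's Theorem (Theorem~\ref{ThSinger}) adapts to this setting, because the exceptional points in the hypothesis --- boundary points of $I$ and elements of $\cc_f$ --- play exactly the same role as the boundary points and critical points in the classical statement. The assumption $\lim_{U \ni x \to y} f'(x) = 0$ for $y \in \partial U \setminus \partial I$ is what makes this possible: it guarantees that such $y$ behave under the Minimum Principle (Proposition~\ref{MinP}) just like degenerate critical points, so the usual argument never needs to be carried across the ``gaps'' of $U$.

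To prove item (1), let $\Lambda = \{p_0, \ldots, p_{k-1}\}$ be an attracting periodic orbit of period $k$ with $f(p_i) = p_{i+1 \pmod k}$, and let $B_i \subset U$ be the connected component of the immediate basin of $\Lambda$ that contains $p_i$. Suppose, aiming at a contradiction, that $\overline{\bigcup_i B_i}$ is disjoint from $\cc_f \cup \partial I$. Then $f^k$ extends $C^3$-smoothly to a neighborhood of $\overline{B_0}$, and $S(f^k) < 0$ there by the composition rule for Schwarzian derivatives. Writing $B_0 = (\alpha, \beta)$, the invariance $f^k(B_0) \subset B_0$ together with continuity forces $f^k(\{\alpha,\beta\}) \subset \{\alpha,\beta\}$, for otherwise a one-sided neighborhood of an endpoint would also be mapped into $B_0$, contradicting that this endpoint lies in the topological boundary of the immediate basin. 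Applying Proposition~\ref{MinP} to $f^k$ on $[\alpha,\beta]$ and using $|Df^k(p_0)| \le 1$ gives $\min\{|Df^k(\alpha)|, |Df^k(\beta)|\} < 1$; but then whichever endpoint realizes the minimum is itself an attracting periodic point of $f^k$ with a one-sided neighborhood in its basin, contradicting that it belongs to $\partial B_0$.

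For item (2), if $p$ is a neutral periodic point of period $k$ not already known to be attracting, the same Minimum Principle argument on a small interval $[\alpha,\beta]\ni p$ chosen to avoid $\cc_f$, $\partial I$ and their preimages under $f, \ldots, f^{k-1}$ shows that $|Df^k|$ cannot remain $\ge 1$ on both sides of $p$; combined with $|Df^k(p)| = 1$, this produces a one-sided neighborhood of $p$ in its own basin. Item (3) is immediate: on an interval of periodic points of $f$ one would have $Df^k \equiv 1$, which is incompatible with $Sf^k < 0$. The final ``in particular'' statement then follows exactly as in Singer's original formulation, since each non-repelling cycle consumes either a distinct component of $\partial I$ or a distinct element of $\cc_f$, both assumed finite.

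The main obstacle, and really the only subtle point, is the possibility that an endpoint of $B_0$ is mapped by some iterate $f^j$ (with $0 < j < k$) to a point of $\cc_f$, which would make $f^k$ non-smooth at that endpoint and break the Minimum Principle step. But this is not something one needs to exclude: in that very situation one has $f^j(\alpha) \in \cc_f \cap \overline{B_{j}}$, so the immediate basin of $\Lambda$ already contains a critical point, which is precisely what (1) asserts. Hence whenever the argument of the preceding paragraph fails for smoothness reasons, the conclusion is handed to us for free; this is the essential reason the corollary holds under a hypothesis seemingly much weaker than that of Theorem~\ref{ThSinger}.
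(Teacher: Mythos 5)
Your strategy works, but it is genuinely different from the paper's. The paper does not re-prove Singer's theorem: it applies Theorem~\ref{ThSinger} as a black box to $f^{n}$ restricted to the connected component $T$ of $Dom(f^{n})=\bigcap_{j=0}^{n-1}f^{-j}(U)$ containing the periodic point $p$, obtaining either a point of $\partial T$ or a critical point of $f^{n}$ in the immediate basin, and then uses the chain rule (if $\lim_{U\ni x\to c}(f^{n})'(x)=0$ then $\lim_{U\ni x\to c}f'(f^{j}(x))=0$ for some $0\le j<n$) to convert a critical point of $f^{n}$ into a point of $\cc_{f}$ in the immediate basin of $f^{j}(p)$; items (2) and (3) are likewise inherited by applying Singer's theorem to $f^{n}$ on the components of its domain. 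What that reduction buys is that all the delicate case analysis of the classical proof (orientation-reversing returns, saddle-node configurations, neutral points) is absorbed into the cited theorem. Your route instead re-runs the classical Minimum-Principle proof directly in the generalized setting; it is self-contained and makes transparent why the hypothesis $\lim_{U\ni x\to y}f'(x)=0$ on $\partial U\setminus\partial I$ lets the gaps of $U$ be treated as degenerate critical points, and your closing observation that the ``bad'' case (an endpoint of $B_{0}$ hitting $\cc_{f}$ under some $f^{j}$) hands you the conclusion for free is essentially the same chain-rule step the paper uses.

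Two spots in your write-up are more schematic than the rest and would need the standard supplementary arguments if you do not want to quote Theorem~\ref{ThSinger}. First, in item (1) you take $B_{0}=(\alpha,\beta)$ open with $p_{0}$ interior and $|Df^{k}(p_{0})|\le1$; under the paper's one-sided definition of attracting periodic orbit the saddle-node case occurs, in which $p_{0}$ is itself an endpoint of its immediate-basin component, so it is not an interior point to feed into Proposition~\ref{MinP} --- there one needs the separate classical argument (the mean value theorem produces interior points where $|Df^{k}|\ge1$, and the Minimum Principle on the interval they bound with the attracted interior points gives the contradiction). Second, in item (2) the step from ``$|Df^{k}|$ cannot stay $\ge1$ on both sides of $p$'' to ``a one-sided neighborhood of $p$ lies in its basin'' is not immediate: points of derivative less than $1$ accumulating on one side do not by themselves yield attraction. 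The standard argument is the contrapositive: if $p$ attracted no one-sided neighborhood, then $f^{k}(x)-x$ would have the repelling sign on both sides, the mean value theorem would give points with $|Df^{k}|>1$ arbitrarily close to $p$ on both sides, and the Minimum Principle applied at $p$, where $|Df^{k}(p)|=1$, gives the contradiction. Finally, note that what your contradiction scheme literally yields is a point of $\cc_{f}\cup\partial I$ in the closure of the immediate basin; the paper's proof has the same looseness, so this is harmless, but it is worth stating explicitly.
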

\dem If $p$ is an attracting periodic orbit of period $n$, let $T\ni p$ be the connected component of the domain of $f^{n}$ (i.e., $Dom(f^{n}):=\bigcap_{j=0}^{n-1}f^{-j}(U)$) that contains $p$. As $p$ is an attracting fixed point to $f^{n}|_{T}$, we can apply Singer's Theorem and obtain that the immediate basin of $p$ contains a point $q\in\partial T$ or $c\in\cc_{f^{n}}=\{y\in\overline{U}\,;\,\lim_{U\ni x\to y}(f^{n})'(x)=0\}$. Of course, if  $q\in\partial T$ belongs to the immediate basin of $p$ for $f^{n}|_{T}$ then $q$ also belongs to the immediate basin of $\co_{f}^{+}(p)$ with respect to $f$. In the second case, the chain rule says that $\lim_{U\ni x\to c}f'(f^{j}(x))=0$ for some $0\le j<n$. Thus, $c':=\lim_{U\ni x\to c}f^{j}(x)\in\cc_{f}$ and it is in the immediate basin of $p':=f^{j}(p)\in\co_{f}^{+}(p)$ with respect to $f$.

The items 2 and 3 follow also from applying Singer's Theorem to the restriction of $f^{n}$ to the connected components of $Dom(f^{n})$.

\cqd

\begin{Corollary} If $f$ is a $C^{3}$ contracting Lorenz map with negative Schwarzian derivative then $f$ has, at most, two periodic attractors.
\end{Corollary}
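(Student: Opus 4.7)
I would deduce this directly from Corollary~\ref{CoSinger} applied to $f$. For a contracting Lorenz map, the critical set in the sense of that corollary is $\cc_{f}=\{c\}$: $c$ is the only boundary point of $U=[0,1]\setminus\{c\}$ outside $\partial I$ where $f'$ has zero limit, and by assumption $f'>0$ on $U$. The boundary of $I=[0,1]$ is $\{0,1\}$, and both $0$ and $1$ are fixed by $f$.

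By Corollary~\ref{CoSinger}(1), every attracting periodic orbit has in its immediate basin an element of $\cc_{f}\cup\partial I=\{c,0,1\}$. The central observation is that $c$ is a discontinuity of $f$ rather than a point of the domain, so ``immediate basin contains $c$'' really means the basin contains one of the one-sided neighborhoods $(c-\varepsilon,c)$ or $(c,c+\varepsilon)$. Since immediate basins of distinct attractors are disjoint, these two sides give at most two distinct slots $c_{-}$ and $c_{+}$, each claimed by at most one attractor.

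Next I would dispose of the boundary points. If $0$ (resp.\ $1$) lies in the immediate basin of a periodic attractor $\Lambda$, then since $0$ (resp.\ $1$) is fixed we have $\Lambda=\{0\}$ (resp.\ $\{1\}$). It remains to show that such a boundary attractor, if it exists, must already claim one of the slots $c_{\pm}$. Concretely, the immediate basin of $\{0\}$ is an interval $[0,a)\subset[0,c)$; if $a<c$, then by continuity of $f$ on $[0,c)$ the point $a$ is periodic, by Corollary~\ref{CoSinger}(2) it is not neutral (else it would be attracting and so could not sit on the boundary of another basin), and by Corollary~\ref{weakrepellers} it is not a weak repeller; I would then apply the Minimum Principle (Proposition~\ref{MinP}) to a suitable iterate $f^{n}$ on the interval $(a,c)$ to force the adjacent basin to extend up to $c_{-}$, and iterate this ``basin chain'' argument so that the configuration collapses and $\{0\}$ ends up claiming $c_{-}$. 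The symmetric argument handles $\{1\}$ and $c_{+}$.

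Combining everything, every periodic attractor claims one of the two slots $c_{-},c_{+}$, so there can be at most two. The main obstacle is making the basin-extension step precise: ruling out an alternating chain of basins separated by strictly repelling periodic points on $[0,c)$. This is where $Sf<0$ is essential, through the Minimum Principle applied to iterates $f^{n}$ and the non-existence of neutral/weakly repelling periodic orbits provided by Corollary~\ref{CoSinger}(2) and Corollary~\ref{weakrepellers}.
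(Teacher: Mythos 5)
The paper offers no written proof of this corollary: it is stated as an immediate consequence of Corollary~\ref{CoSinger}, the tacit argument being exactly your first two paragraphs (each periodic attractor's immediate basin must contain a one-sided neighborhood $(c-\varepsilon,c)$ or $(c,c+\varepsilon)$ of the critical point or a boundary point, immediate basins of distinct attractors are disjoint, and the two sides of $c$ are two slots), combined with the further tacit claim — asserted later in the paper, also without proof, as ``$Sf<0$ excludes inessential periodic attractors'' — that the boundary points $0,1$ do not create additional slots. You have correctly identified that this last point is the real content, and your proposal stands or falls with your third paragraph.

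That step is where the genuine gap lies: the ``basin chain collapses so that $\{0\}$ ends up claiming $c_{-}$'' is not proved, and it cannot be proved from the stated hypotheses. The Minimum Principle (Proposition~\ref{MinP}) only rules out an attracting orbit whose immediate basin is compactly contained in an interval of monotonicity, away from $c$ and from $\partial[0,1]$; it says nothing against the left branch having the fixed-point pattern $0<a<p<c$ with $0$ attracting (basin $[0,a)$), $a$ hyperbolically repelling, $p$ attracting with basin $(a,c)$, and $f(x)<x$ on $(p,c)$, so that the slot $c_{-}$ is owned by $p$ while $\{0\}$ owns only the boundary point. Indeed $Sf<0$ on a branch is equivalent to convexity of $(f')^{-1/2}$, which tends to $+\infty$ at $c$ because the map is contracting; a convex profile that is $>1$ at $0$, dips below $1$ near $a$, and is $>1$ again on $(a,c)$ is perfectly admissible and realizes exactly this pattern, and mirroring it on the right branch produces a $C^{3}$ non-flat contracting Lorenz map, in the paper's sense (which requires neither $f(c_{-})>c>f(c_{+})$ nor repelling endpoints), with $Sf<0$ and four periodic attractors $\{0\},\{p\},\{q\},\{1\}$; no neutral or weakly repelling orbits occur, so Corollary~\ref{weakrepellers} and item (2) of Corollary~\ref{CoSinger} give no leverage. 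In other words, Singer-type counting bounds the number of periodic attractors by the number of available slots, which here is four ($c_{-}$, $c_{+}$, $0$, $1$), not two; reducing the boundary slots to the critical ones requires an additional hypothesis (for instance that the endpoint fixed points are not attracting, or that only attractors whose basins meet a one-sided neighborhood of $c$ — the essential ones — are being counted, which is how the paper actually uses the statement), and your sketch supplies no such argument.
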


Combining Singer's Theorem above with Mañe's Theorem (see Section \ref{transpgap}) (and also the fact that every expanding set in the interval has zero Lebesgue measure), it easily follows that if a $S$-unimodal map $f:[0,1]\to[0,1]$ has a periodic attractor then its basin is open and dense in $[0,1]$ and has full Lebesgue measure. In contrast, as a contracting Lorenz map $f:[0,1]\setminus\{c\}\to[0,1]$ can not be continuously extended to the interval, the proof that for a contracting Lorenz map with negative Schwarzian derivative the existence of a periodic attractor implies that Lebesgue almost every point is attracted to a periodic attractor does not follows easily from the theorems cited above. This result can be found in St. Pierre's Ph.D. thesis (\cite{StP}).

\begin{Theorem}[\cite{StP}]
\label{stpierreperiodic}
Let $f:[0,1]\setminus\{c\}\to[0,1]$ be a $C^{3}$ non-flat contracting Lorenz map with negative Schwarzian derivative. If $f$ has a periodic attractor, then there is an open and dense set $B\subset[0,1]$ with full Lebesgue measure ($\leb(B)=1$) such that every $x\in B$ belongs to the basin of some periodic attractor.
\end{Theorem}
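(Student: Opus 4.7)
The plan is to let $\Lambda_1,\ldots,\Lambda_k$ with $k\le 2$ (by the preceding Corollary) denote all periodic attractors of $f$, set $B:=\beta(\Lambda_1)\cup\cdots\cup\beta(\Lambda_k)$, and prove in turn that $B$ is open, has full Lebesgue measure, and is dense. Openness is immediate: each immediate basin is open, $B$ is the union of iterated $f$-preimages of these immediate basins, and $f$ is continuous on $[0,1]\setminus\{c\}$, so preimages of open sets stay open.

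For full Lebesgue measure I would combine two ingredients. First, Corollary~\ref{CoSinger} forces each immediate basin to contain at least one of $c_-, c_+, 0, 1$, so $B$ contains a one-sided neighborhood of at least one of these points. Second, the classical Mañé dichotomy — available here because $Sf<0$ rules out weak repellers (Corollary~\ref{weakrepellers}) — gives that for Lebesgue almost every $x\in[0,1]$, either $x\in B$, or $c\in\omega_f(x)$. The remaining alternative, that $\omega_f(x)$ is a uniformly expanding compact invariant set not containing $c$, has a Lebesgue basin of measure zero by standard Koebe-distortion arguments applied to its expanding inverse branches.

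It therefore suffices to show that $E:=\{x\notin B:c\in\omega_f(x)\}$ has Lebesgue measure zero. If $B$ contains one-sided neighborhoods of $c$ on both sides — equivalently, if $f(c_-)$ and $f(c_+)$ both lie in $B$ — this is immediate: the hypothesis $c\in\omega_f(x)$ places some iterate $f^n(x)$ in $B$, and $B$ is $f^{-1}$-invariant, so $x\in B$, forcing $E=\emptyset$. The hard sub-case is when only one side of $c$, say the left, lies in $B$. Then $f(c_+)\notin B$, so $\co_f^+(c_+)$ is itself non-attracted, and by the same Mañé dichotomy accumulates on $c$ — necessarily from the uncovered right side, because the left side is in $B$ and $B$ is $f^{-1}$-invariant. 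Every $x\in E$ is then forced to shadow $\co_f^+(c_+)$ on its right-side approaches.

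The main obstacle is proving $\leb(E)=0$ in this shadowing sub-case. The approach I would try is to build a decreasing sequence of right-sided neighborhoods $U_n=(c,c+\delta_n)\cap\omega_f(c_+)$ together with first-return maps $R_n\colon U_n\to U_n$ induced by $f$. Negative Schwarzian yields uniform Koebe distortion along $R_n$ and all of its inverse branches, while the non-flat form $f(x)-f(c_+)\asymp (x-c)^{\alpha}$ near $c$ converts each right-side return into a quantitatively strong contraction whose rate is controlled by how close the previous iterate came to $c$. A Borel--Cantelli style estimate on the nested $f$-preimages of the $U_n$ should then show that a Lebesgue-typical $x\in E$ must, after finitely many right returns, overshoot $c$ and land in $(c-\varepsilon,c)\subset B$, contradicting $x\notin B$. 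Density of $B$ then follows automatically from openness plus full measure. The delicate step — which is precisely what St. Pierre's thesis carries out in detail — is to make this overshoot argument rigorous against the singular geometry of $f$ at the critical point.
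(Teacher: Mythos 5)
There is no ``paper's own proof'' to compare against here: the paper states Theorem~\ref{stpierreperiodic} with the citation \cite{StP} and explicitly defers to St.\ Pierre's thesis, precisely because the result does \emph{not} follow easily from Singer plus Ma\~n\'e in the Lorenz setting. Measured against that, your proposal has a genuine gap rather than a complete argument. The reductions you carry out are fine in spirit: openness of $B$, the Ma\~n\'e-type dichotomy (with the caveat that to invoke Theorem~\ref{ThMane} you must remove neighborhoods of the attracting periodic orbits as well as of $c$, since Ma\~n\'e requires all periodic points outside $U$ to be hyperbolic repelling --- with $Sf<0$ this is harmless because Singer/Corollary~\ref{CoSinger} makes every non-repelling orbit an attractor), and the observation that the only problematic case is when exactly one of $f(c_-),f(c_+)$ lies in $B$, so that $E=\{x\notin B\,:\,c\in\omega_f(x)\}$ consists of points approaching $c$ only from the non-attracted side. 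All of that is the easy half.

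The actual content of the theorem is the assertion $\leb(E)=0$ in that last sub-case, and this is exactly the step you do not prove: the ``decreasing right-sided neighborhoods $U_n$, induced return maps $R_n$, Koebe distortion, Borel--Cantelli overshoot'' paragraph is a program, not an argument. As written it is not even clear what the measure estimate being summed is, why the returns to the non-attracted side have uniformly controlled distortion (the return branches are not a priori extendible past the orbit of $c_+$, which is recurrent to $c$ on that side, so the Koebe space you need is precisely what must be constructed), nor why a typical point must eventually ``overshoot'' into $(c-\varepsilon,c)$ rather than, say, accumulate on a Cantor set on which $\omega_f(x)=\omega_f(c_+)$ with positive measure --- ruling out such a positive-measure quasi-attractor on the non-attracted side is the heart of St.\ Pierre's proof and of the analogous unimodal arguments (Blokh--Lyubich type ergodicity/real-bounds estimates), and you acknowledge yourself that this is ``precisely what St.\ Pierre's thesis carries out in detail.'' Deferring the decisive estimate to the cited reference means the proposal establishes only the reduction, not the theorem; to make it a proof you would need to supply the real bounds giving definite Koebe space for the return branches on the non-attracted side and the resulting distortion/measure estimate showing that the set of points whose forward orbit approaches $c$ only from that side is Lebesgue null.
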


\newpage

\section{The Dynamics of $c$-phobic Sets}\label{phobic}

\subsection{The Transport of Gaps}\label{transpgap}

Given an interval $I$, let $\mathring{I}$ be its interior (when we say ``an interval'' we will be assuming that it is non-trivial, i.e., with non-empty interior). If $ \exists k \ge0 $
such that $c\in f^k ({\mathring{I}})$,
define $\theta(I)$, the {\em order} of $I$, by $$\theta(I)=\min\{k \ge 0\,|\,c\in f^k ({\mathring{I}})\}.$$
If there is no such $k$, let  $\theta(I)= \infty$.

\begin{Theorem}[Mañe \cite{Man85}]\label{ThMane}
Let $f:[a,b]\to\RR$ be a $C^{2}$ map. If $U$ is an open neighborhood of the critical points of $f$ and every periodic point of $[a,b]\setminus U$ is hyperbolic and expanding then $\exists\,C>0$ and $\lambda>1$ such that $$|Df^{n}(x)|>C \lambda^{n}$$ for every $x$ such that $\{x,\cdots,f^{n-1}(x)\}\cap U=\emptyset$.
\end{Theorem}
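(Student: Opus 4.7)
The plan is to prove the theorem in two stages. First, I establish a finite-time version: there exist $N\in\NN$ and $\lambda_{0}>1$ such that $|Df^{N}(x)|\ge\lambda_{0}$ whenever the orbit segment $x,f(x),\ldots,f^{N-1}(x)$ lies in $K:=[a,b]\setminus U$. Given this, the full exponential estimate follows by decomposing an admissible orbit of length $n$ into $\lfloor n/N\rfloor$ consecutive blocks of length $N$ (each contributing a factor $\lambda_{0}$) and absorbing the residual into a constant $C>0$; the positive lower bound $m:=\inf_{K}|Df|>0$, which holds because $U$ is an open neighborhood of the entire critical set, makes the residual manageable.

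To prove the finite-time estimate I argue by contradiction. Assume there exist sequences $N_{k}\to\infty$ and $x_{k}\in K$ with admissible length-$N_{k}$ orbit segments in $K$ and $|Df^{N_{k}}(x_{k})|\le 2$. The main tool is the classical bounded-distortion estimate available under $C^{2}$-regularity with $|Df|\ge m$ on $K$: any interval $J$ whose first $n$ iterates stay in $K$ and whose total iterate length $\sum_{j=0}^{n-1}|f^{j}(J)|$ is bounded by $\delta$ has distortion $\sup_{J}|Df^{n}|/\inf_{J}|Df^{n}|\le e^{M\delta}$ with $M=\sup_{K}|D^{2}f|/m^{2}$. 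Choosing $\delta$ so that $e^{M\delta}\le 2$ and letting $J_{k}\ni x_{k}$ be the maximal interval whose first $N_{k}$ iterates stay in $K$ with total iterate length below $\delta$, bounded distortion combined with the assumption $|Df^{N_{k}}(x_{k})|\le 2$ keeps each $|f^{j}(J_{k})|$ comparable to $|Df^{j}(x_{k})||J_{k}|$.

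Because $N_{k}\to\infty$ while the iterates of $J_{k}$ lie inside $K$ with total length bounded by $\delta$, the pigeonhole principle supplies indices $i_{k}<j_{k}\le N_{k}$ with $f^{i_{k}}(J_{k})\cap f^{j_{k}}(J_{k})\ne\emptyset$. Applying the intermediate value theorem to $f^{p_{k}}-\mathrm{id}$ with $p_{k}=j_{k}-i_{k}$ on the interval $f^{i_{k}}(J_{k})$ produces a periodic point $q_{k}\in K$ of period $p_{k}$ whose derivative $|Df^{p_{k}}(q_{k})|$ is controlled by $e^{M\delta}$ times the ratio of lengths of $f^{j_{k}}(J_{k})$ to $f^{i_{k}}(J_{k})$; in particular $\tfrac{1}{p_{k}}\log|Df^{p_{k}}(q_{k})|\to 0$ as $p_{k}\to\infty$. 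A compactness extraction from the sequence $q_{k}$ then yields either a non-hyperbolic periodic orbit in $K$ or a recurrent invariant set inside $K$ on which expansion fails, contradicting the hypothesis. The main obstacle is precisely this final extraction step: one must upgrade a sequence of weakly expanding periodic orbits of unbounded period to a genuine non-expanding periodic point lying in $K$. This is where $C^{2}$-regularity of $f$ and the fact that $K$ sits at positive distance from the critical set enter decisively, together with the monotonicity of $f$ on each connected component of $K$, which ensures that the limit of the $q_{k}$ is itself periodic (or can be produced as a fixed point of some $f^{p}$ by a refined IVT argument) with $|Df^{p}|\le 1$ at that point, yielding the required contradiction.
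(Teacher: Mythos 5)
The paper offers no proof of this statement: it is quoted directly as Mañé's theorem from \cite{Man85} (see also Theorem 5.1 in Chapter III of \cite{MvS}), so your sketch has to be measured against those proofs, and it does not close the essential difficulties. The first concrete problem is your pigeonhole step. You choose $J_k\ni x_k$ precisely so that $\sum_{j=0}^{N_k-1}|f^j(J_k)|\le\delta$, and then claim that this, together with $N_k\to\infty$, forces two images $f^{i_k}(J_k)$ and $f^{j_k}(J_k)$ to intersect. It does not: pairwise disjoint intervals can have arbitrarily small total length (that is exactly the picture of a wandering interval), and an overlap can only be forced when the total length of the images exceeds $b-a$, i.e.\ when the images are \emph{large} --- the opposite of what your distortion set-up arranges. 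Moreover the standing assumption $|Df^{N_k}(x_k)|\le 2$ tends to keep the images small rather than large, so no periodic point $q_k$ is actually produced by this argument.

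The second, more fundamental gap is the final ``extraction''. Even granting periodic points $q_k\in K$ with periods $p_k\to\infty$ and $\frac{1}{p_k}\log|Df^{p_k}(q_k)|\to 0$, this contradicts nothing: the hypothesis is purely qualitative (every periodic point in $[a,b]\setminus U$ is hyperbolic and expanding) and is perfectly compatible with a sequence of repelling periodic orbits whose multipliers degenerate to $1$ as the period grows. Ruling out exactly this degeneration is the whole content of Mañé's theorem. A limit of periodic points of unbounded period need not be periodic, no ``refined IVT argument'' is given that manufactures a single periodic point of $K$ with $|Df^p|\le 1$, and the alternative outcome you permit --- ``a recurrent invariant set inside $K$ on which expansion fails'' --- is not excluded by the hypotheses at all. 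The known proofs need substantially more machinery at precisely this point: a Denjoy--Schwartz-type distortion argument (using $C^2$ and $|Df|\ge m$ on $K$) to deal with intervals all of whose iterates remain in $K$ and are monotone, a case division according to whether finitely or infinitely many periodic orbits meet $K$, and a local analysis converting failure of uniform expansion into an actual non-hyperbolic or attracting periodic point inside $K$. Your first stage (the block decomposition reducing the theorem to a finite-time expansion estimate, with the remainder absorbed using $\inf_K|Df|>0$) is correct, but it is routine and carries none of the difficulty.
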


\begin{Theorem}[See Theorem 2.6 in Chapter III of \cite{MvS}]\label{lebzero} Every uniformly expanding set of a $C^{1+}$ map of the interval has zero Lebesgue measure.
\end{Theorem}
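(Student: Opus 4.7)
The plan is to argue by contradiction: assume $\leb(\Lambda) > 0$ and derive an impossible conclusion. First I would fix $C > 0$, $\lambda > 1$ realizing the uniform expansion, $|Df^n(x)| \geq C\lambda^n$ for all $x \in \Lambda$ and $n \geq 1$, and record two elementary consequences. Since $Df$ is continuous and cannot vanish on $\Lambda$, one has $\dist(\Lambda, \{Df = 0\}) > 0$; in particular $f$ is a local diffeomorphism on an open neighborhood $U$ of $\Lambda$, and $\log|Df|$ has the modulus of continuity coming from the $C^{1+}$ hypothesis throughout $U$. Furthermore, $\Lambda$ has empty interior: any open interval $J \subset \Lambda$ would satisfy $f^n|_J$ monotone with $|f^n(J)| = \int_J |Df^n|\,dy \geq C\lambda^n |J|$, contradicting that $f^n(J) \subset \Lambda$ lies in a bounded interval.

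The strategy is to combine a Lebesgue density point of $\Lambda$ with bounded distortion of pulled-back iterates. Fix a small $\delta_0 > 0$. For $x \in \Lambda$ and $n \geq 1$, let $J_n(x) \ni x$ be the maximal interval on which $f^n$ is a diffeomorphism onto an interval of radius $\delta_0$ around $f^n(x)$. The key technical estimate I would establish is that $f^n|_{J_n(x)}$ has distortion bounded by a constant $K$ independent of $n$ and $x$. This is the familiar Koebe-type argument: expand $\log|Df^n(y)| - \log|Df^n(x)|$ as $\sum_{j=0}^{n-1}\bigl(\log|Df(f^j(y))| - \log|Df(f^j(x))|\bigr)$; control each summand by the modulus of continuity of $\log|Df|$ on $U$; and use backward contraction $|f^j(y) - f^j(x)| \leq K'\delta_0/\lambda^{n-j}$, derived inductively from uniform expansion along the orbit of $x$, to bound the total by a convergent geometric series.

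With bounded distortion in hand, uniform expansion gives $|J_n(x_0)| \leq K\delta_0/(C\lambda^n) \to 0$, so $J_n(x_0)$ shrinks to $x_0$, and at a Lebesgue density point $x_0 \in \Lambda$ one has $\leb(\Lambda \cap J_n(x_0))/|J_n(x_0)| \to 1$. Since $f^n|_{J_n(x_0)}$ has distortion at most $K$ and maps $\Lambda \cap J_n(x_0)$ into $\Lambda$ by forward invariance, the density transports: $\leb(\Lambda \cap f^n(J_n(x_0)))/|f^n(J_n(x_0))| \to 1$. By compactness of $\Lambda$, pass to a subsequence with $f^{n_k}(x_0) \to y \in \Lambda$; since $B(f^{n_k}(x_0), \delta_0) \subset f^{n_k}(J_{n_k}(x_0))$, for large $k$ the ball $B(y, \delta_0/2)$ lies in $f^{n_k}(J_{n_k}(x_0))$, yielding $\leb(B(y, \delta_0/2) \setminus \Lambda) = 0$. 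Combined with $\Lambda$ being closed with empty interior, this is a contradiction.

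The main obstacle is the bounded distortion estimate for $f^n$ on $J_n(x)$. One must verify carefully (a) that the maximal diffeomorphic pullback $J_n(x)$ is not cut short by a critical point of $f$, which follows from points in $J_n(x)$ remaining close to the orbit of $x$ and hence inside $U$; and (b) that the backward contraction estimate used inside the distortion sum is genuinely uniform in $n$, which requires setting up the induction so that the telescoping sum of displacements $|f^j(y) - f^j(x)|$ at intermediate times stays small enough to feed the modulus of continuity of $\log|Df|$. Both steps are standard but delicate; once they are in place, the density transport and the empty-interior contradiction are immediate.
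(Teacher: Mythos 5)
Your argument is correct: this is the classical Lebesgue-density-point plus bounded-distortion proof, and since the paper offers no proof of its own (it simply cites Theorem 2.6 of Chapter III of \cite{MvS}), your route essentially reproduces the argument of the cited reference. The only points worth making explicit are the standing hypotheses you use implicitly -- that the expanding set is compact and forward invariant (true for the sets $\Lambda_J$ to which the paper applies the theorem) -- and the trivial adjustment of the $\delta_0$-balls near the endpoints of $[0,1]$; with those noted, the distortion estimate, the transport of density, and the final contradiction with empty interior all go through as you describe.
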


\begin{Lemma}\label{Remark98671oxe}Let $f:[0,1]\setminus\{c\}\to[0,1]$ be a $C^{2}$ contracting Lorenz map. If $f$ does not have any weak repeller nor  periodic attractor then $$c\in\omega_{f}(x)\text{ for Lebesgue almost every }x.$$
Furthermore, if $J\subset[0,1]$ is an open set with $c\in J$ then $\Lambda_{J}:=\{x\,;\,\co^{+}_{f}(x)\cap J=\emptyset\}$ is an uniformly expanding set and $\leb(\Lambda_{J})=0$.
Also, $\#(\co^{+}_{f}(x)\cap J)=\infty$ for Lebesgue almost all $x$.
\end{Lemma}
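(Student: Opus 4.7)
The plan is to use Ma\~ne's hyperbolicity theorem to produce uniform expansion outside any neighborhood of $c$, and then to upgrade that via the piecewise nonsingularity of $f$ to a full-measure statement about $\omega$-limit sets.

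First I would check that under the hypotheses every periodic point of $f$ is hyperbolic repelling. By the \emph{weak repeller} definition, a non-hyperbolic periodic point is either a periodic attractor or a weak repeller, and both are excluded; so all periodic points are hyperbolic, and as hyperbolic attracting ones are also forbidden, each periodic point $p$ of period $n$ satisfies $|Df^{n}(p)|>1$. Now fix an open set $J\ni c$. Theorem~\ref{ThMane} (Ma\~ne), applied with $U=J$, supplies constants $C>0$ and $\lambda>1$ such that $|Df^{n}(x)|\ge C\lambda^{n}$ whenever $\{x,f(x),\ldots,f^{n-1}(x)\}\cap J=\emptyset$. Thus $\Lambda_{J}$ is a uniformly expanding set for $f$, and Theorem~\ref{lebzero} yields $\leb(\Lambda_{J})=0$.

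Next I would boost this to the almost-everywhere recurrence statement. On each branch $(0,c)$ and $(c,1)$ the Lorenz map $f$ is a $C^{2}$ diffeomorphism onto its image (since $f'>0$ off $c$), hence absolutely continuous with absolutely continuous local inverses. Therefore the $f$-preimage of a Lebesgue null set is Lebesgue null, and
$$\widetilde\Lambda_{J}:=\bigcup_{k\ge0}f^{-k}(\Lambda_{J})=\{x\,;\,\#(\co_{f}^{+}(x)\cap J)<\infty\}$$
has zero Lebesgue measure, which already proves the last assertion of the lemma. Applying this to every member of a countable shrinking basis of open neighborhoods of $c$ (for instance $J_{n}=(c-1/n,c+1/n)$) gives that for Lebesgue almost every $x$ the forward orbit enters every $J_{n}$ infinitely often, so $c\in\omega_{f}(x)$ almost surely.

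The main obstacle I anticipate is the clean verification, from the definitions in Section~\ref{MainResults}, that our hypotheses really do force every periodic point to be hyperbolic repelling, so that Ma\~ne's theorem is genuinely applicable with $U=J$; once this is secured, the remaining arguments are direct invocations of Theorems~\ref{ThMane} and~\ref{lebzero} plus the elementary branchwise absolute-continuity argument. A minor technical subtlety is that Ma\~ne's theorem is stated for $C^{2}$ maps of a closed interval while our $f$ has a jump at $c$, but since $c\in U=J$ the orbit segments being controlled stay inside $[0,1]\setminus J$, where $f$ is genuinely $C^{2}$, so the proof applies unchanged.
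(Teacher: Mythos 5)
Your argument is correct and follows the paper's strategy in its core: Ma\~n\'e's Theorem yields uniform expansion of $\Lambda_J$, Theorem~\ref{lebzero} gives $\leb(\Lambda_{J})=0$, and a countable shrinking basis $J_{n}=(c-1/n,c+1/n)$ upgrades this to the almost-everywhere statements. Two points of comparison. First, where you assert that Ma\~n\'e's proof ``applies unchanged'' despite the discontinuity at $c\in J$, the paper makes this precise by choosing a $C^{2}$ map $g:[0,1]\to[0,1]$ with $g|_{[0,1]\setminus J}=f|_{[0,1]\setminus J}$ and applying Theorem~\ref{ThMane} to $g$; since $g$ and $f$ coincide on $\Lambda_{J}$ (whose orbits never enter $J$), the expansion transfers to $f$. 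Your observation that only the dynamics off $J$ matters is exactly why this works, but the clean formulation is via the extension $g$, not by invoking the theorem for a discontinuous map. Second, for the claim $\#(\co^{+}_{f}(x)\cap J)=\infty$ a.e., you route through non-singularity of the branches to show $\bigcup_{k\ge0}f^{-k}(\Lambda_{J})$ is Lebesgue null; this is valid (the inverse branches are $C^{1}$ on open intervals, hence locally Lipschitz, so preimages of null sets are null), but it is more machinery than needed: once $c\in\omega_{f}(x)$, the orbit of $x$ accumulates at $c$, so any open $J\ni c$ is automatically visited infinitely often, which is essentially how the paper concludes and which avoids the absolute-continuity step entirely.
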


\dem Let $J\subset[0,1]$ be an open set with $c\in J$.
Consider any $C^{2}$ map $g:[0,1]\to[0,1]$ such that $g|_{[0,1]\setminus J}=f|_{[0,1]\setminus J}$ (see Figure~\ref{Mane}). By Mañe's theorem $\Lambda_{J}$ is a uniformly expanding set for $g$. Thus, $\Lambda_{U}$ is a uniformly expanding set for $f$, because $g|_{\Lambda_{J}}=f|_{\Lambda_{J}}$. By Theorem~\ref{lebzero}, $\leb(\Lambda_{J})=0$.

Now we can state that in this setting $c \in \omega_{f}(x)$ for Lebesgue almost all $x \in [0,1]$. This follows from the fact that $\forall x$ such that $ c\not\in \omega_{f}(x)$, $\exists J \ni c$ such that $x \in \Lambda_J$. The set of all points whose $\omega$-limit do not contain $c$ is $\bigcup_{J\ni c}\Lambda_J$ for some $J$. By definition, if $K \supset J \ni c$, $\Lambda_J \supset \Lambda_K$, then if we take a sequence of nested intervals $J_n$ containing $c$ with length going to zero, say, $J_n=(c-1/n,c+1/n)$ then every J in the collection that defined $\bigcup_{J\ni c}\Lambda_J$ contains some interval of the type $J_n$ and as $J \supset J_n$ implies $\Lambda_J \subset \Lambda_{J_n}$, $\bigcup_{J\ni c}\Lambda_J \subset \bigcup_{J_n, n\in \NN}\Lambda_{J_n}$, a countable union of zero Lebesgue measure sets, hence $\bigcup_{J\ni c}\Lambda_J$ has Lebesgue measure zero.

Also, $\#(\co^{+}_{f}(x)\cap J)=\infty$ for Lebesgue almost all $x$. Consider $x_1$ the first point in the orbit of $x$ that visit $J$, consider $U_1$ to be any interval of the kind $J_n=(c-1/n,c+1/n)$ that doesn't contain $x_1$, then consider $x_2$ to be the first point of the orbit of $x_1$ that visit $U_1$, and so on, we build a sequence of different points in the orbit of $x$ converging to $c$.
\cqd

\begin{figure}[H]
  \begin{center}\includegraphics[scale=.23]{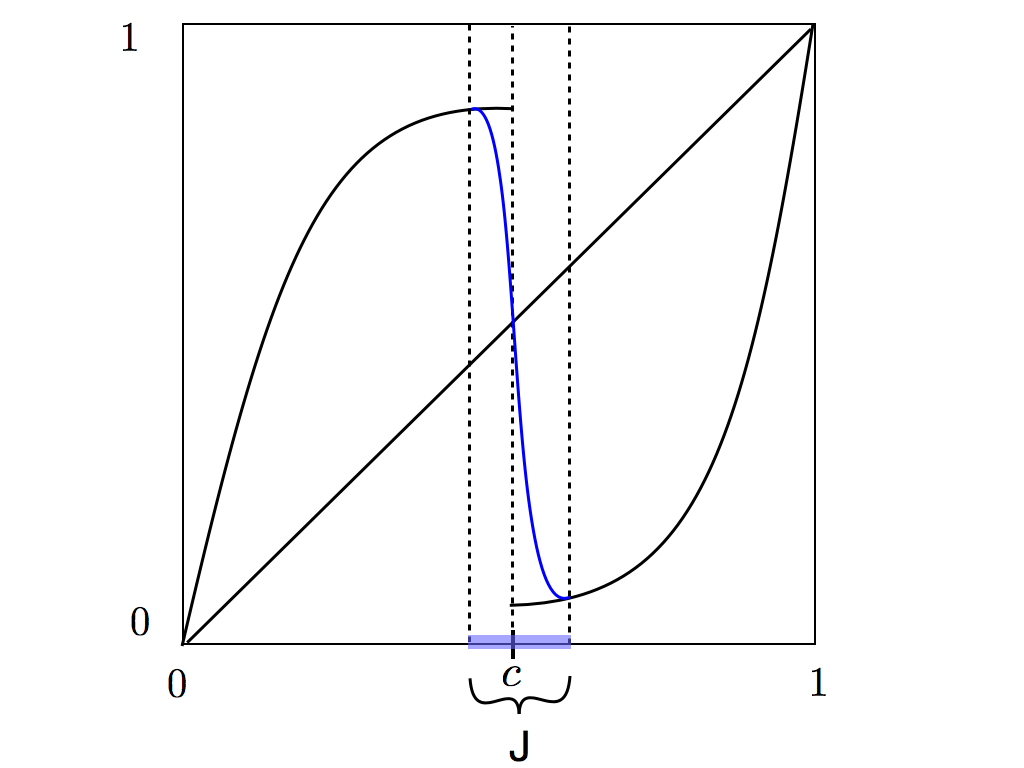}\\
 \caption{}\label{Mane}
  \end{center}
\end{figure}

\begin{Definition}[$c$-Phobic sets]\label{Def535353}
Given $J\subset[0,1]$ an open set with $c\in J$ we will call $\Lambda_{J}=\{x\,;\,\co^{+}_{f}(x)\cap J=\emptyset\}$ the {\em J-phobic set} of $[0,1]$, the expanding set generated by $J$ that avoids it, and the {\em c-phobic set} of $[0,1]$ the set of points that do not infinitely approach $c$: $\Lambda_{c}=\{x; c\not\in\omega_{f}(x)\}$ , and we just saw both have Lebesgue measure zero.
\end{Definition}

An open interval $I=(a,b)$ containing the critical point $c$
is called a  {\em nice interval} of $f$ if
$\co_{f}^{+}(\partial I)\cap I=\O$ (where $\co_{f}^{+}(X)$ denotes the positive orbit of $X$ by $f$, that is,
$\co_{f}^{+}(X)=\bigcup_{x \in X}\{f^j(x), j \in \NN \}$).

We will denote the {\em set of nice intervals of $f$} by
$\mathcal{N}=\mathcal{N}(f)$ and the set of nice intervals whose
border belongs to the set of periodic points of $f$ by
$\mathcal{N}_{per}=\mathcal{N}_{per}(f)$, that is,
$\mathcal{N}_{per}=\{I\in\mathcal{N}\ \|\ \partial I\subset Per(f)\}$.

\begin{Definition}[Gaps generated by a nice interval]\label{Def765091}Given a nice interval $J$, let $C_J$ be the set of connected components of $[0,1] \setminus \Lambda_{J}$. An element of $C_{J}$ is called a {\em gap of $\Lambda_J$} or {\em a gap generated by $J$}.
\end{Definition}

\begin{Proposition}\label{Prop765091}Let $J$ be a nice interval of a $C^{2}$ contracting Lorenz map $f:[0,1]\setminus\{c\}\to[0,1]$. If $f$ does not have neither periodic attractors nor weak repellers then:
\begin{enumerate}
\item $\Lambda_{J}$ is a uniformly expanding set, $\leb(\Lambda_{J})=0$ and $\leb(\bigcup_{I\in C_{J}}I)=1$;
\item $\theta(I)=\min\{\ell\ge0\,;\,f^{\ell}(I)\cap J\ne\emptyset\}$ $\forall\,I\in C_{J}$;
\item $f^{\theta(I)}|_{I}$ is a diffeomorphism and $f^{\theta(I)}(I)=J$ $\forall\,I\in C_{J}$.
\end{enumerate}
\end{Proposition}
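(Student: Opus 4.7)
My plan is to deduce item (1) directly from Lemma~\ref{Remark98671oxe}, which already provides uniform expansion of $\Lambda_{J}$ and $\leb(\Lambda_{J})=0$ for every open $J\ni c$. A short check shows $\Lambda_{J}$ is closed (if $x_{n}\to x$ with $x_{n}\in\Lambda_{J}$ and $x\notin\Lambda_{J}$, the first index $k$ with $f^{k}(x)\in J$ would force $f^{k}(x_{n})\in J$ for large $n$ by continuity of $f^{k}$ at $x$, using $c\in J$), so $[0,1]\setminus\Lambda_{J}=\bigcup_{I\in C_{J}}I$ is open of full Lebesgue measure. The key structural input from niceness that I will use for (2) and (3) is $\partial J\subset\Lambda_{J}$: for $J=(a,b)$, the defining condition $\co^{+}_{f}(\partial J)\cap J=\emptyset$ says precisely that $a,b\in\Lambda_{J}$, so $J$ is itself a gap (any gap containing $J$ would need endpoints in $\Lambda_{J}$, but $\{a,b\}\subset\Lambda_{J}$ already borders $J$), and additionally $\{0,1\}\subset\Lambda_{J}$ because $0,1$ are fixed.

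I will fix a gap $I=(p,q)\in C_{J}$ with $I\ne J$ and set $m=\min\{\ell\ge0:f^{\ell}(I)\cap J\ne\emptyset\}$, which is finite because $I\cap\Lambda_{J}=\emptyset$. The endpoints $p,q$ lie in $\Lambda_{J}$ (covering the boundary cases $p=0$ or $q=1$ as well), which is forward invariant, so $f^{\ell}(p),f^{\ell}(q)\in\Lambda_{J}$ for every $\ell\ge 0$; in particular these iterates always differ from $c$. For $0\le\ell<m$, the minimality of $m$ combined with $c\in J$ yields $c\notin f^{\ell}(I)$, and inductively each $f^{\ell}|_{I}$ is a $C^{2}$ monotone diffeomorphism onto the open interval $f^{\ell}(I)$ with endpoints $f^{\ell}(p),f^{\ell}(q)$.

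For (2), since $c\in J$ the inequality $m\le\theta(I)$ is immediate. Conversely, because $c\notin f^{m-1}(I)$ the map $f^{m}|_{I}$ is also a $C^{2}$ monotone diffeomorphism, so $f^{m}(I)=(\alpha,\beta)$ is an open interval with $\alpha,\beta\in\Lambda_{J}\subset[0,1]\setminus J$. Since $f^{m}(I)$ meets $J=(a,b)$ while neither endpoint lies in $J$, the interval must straddle $J$, i.e., $\alpha\le a<b\le\beta$, so $c\in(a,b)\subset f^{m}(\mathring I)$ and $\theta(I)\le m$. This proves (2), and simultaneously shows $f^{m}|_{I}$ is a diffeomorphism onto an open interval containing $J$, which is half of (3).

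The remaining and subtlest step, which I expect to be the main obstacle, is to rule out that $f^{m}$ overshoots $J$. I will argue by contradiction: if some $x\in I$ satisfied $f^{m}(x)\in\{a,b\}=\partial J$, then since $\partial J\subset\Lambda_{J}$ and $\Lambda_{J}$ is forward invariant, one would have $f^{n}(x)\notin J$ for all $n\ge m$, while $f^{n}(x)\in f^{n}(I)\subset[0,1]\setminus J$ for $0\le n<m$ by the choice of $m$. Hence the entire forward orbit of $x$ would avoid $J$, putting $x\in\Lambda_{J}$ and contradicting $x\in I\subset[0,1]\setminus\Lambda_{J}$. Therefore $f^{m}(I)\cap\partial J=\emptyset$; an open interval containing $(a,b)$ and avoiding $\{a,b\}$ must equal $(a,b)$, so $f^{m}(I)=J$, completing (3). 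The delicate point is precisely this \emph{no-overshoot} deduction, which uses the niceness of $J$ essentially through $\partial J\subset\Lambda_{J}$; without this hypothesis the image $f^{m}(I)$ could a priori be a proper superinterval of $J$.
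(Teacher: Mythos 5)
Your proof is correct and follows essentially the same route as the paper: item (1) via Lemma~\ref{Remark98671oxe}, the diffeomorphism and $m=\theta(I)$ via minimality of the first intersection time, and the crucial no-overshoot step by pulling back a boundary point of $J$ to a point of $I$ whose whole forward orbit avoids $J$ (the paper phrases this through $f^{j}(\partial J)\cap J=\emptyset$, you through $\partial J\subset\Lambda_{J}$ and forward invariance, which is the same fact). The extra care you take with the closedness of $\Lambda_{J}$ and the endpoint iterates never hitting $c$ only makes explicit what the paper leaves implicit.
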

\dem The first item follows from Lemma~\ref{Remark98671oxe}.
Given $I\in C_{J}$, let $n=\min\{j\ge0\,;\,f^{j}(I)\cap J\ne\emptyset\}$ and let $$i(\ell)=\begin{cases}0&\text{ if }f^{\ell}(I)\subset[0,c)\\
1&\text{ if }f^{\ell}(I)\subset(c,1]
\end{cases}$$ be the itinerary of $I$ for $0\le\ell<n$. Set $T_{0}=[0,c)$ and $T_{1}=(c,1]$. As $f^{n}|_{I}=f|_{f^{n-1}(I)}\circ f|_{f^{n-2}(I)}\circ\cdots\circ f|_{I}$ and $f|_{f^{\ell}(I)}=f|_{T_{i(\ell)}}|_{f^{\ell}(I)}$ is a diffeomorphism $\forall0\le\ell<n$ then $f^{n}|_{I}$ is a diffeomorphism.

Write $(a,b)=I$. As $a,b\in\Lambda$ and $f(\Lambda)\subset\Lambda$, we get $f^{n}(a),f^{n}(b)\notin J$. On the other hand, $c\in f^{n}(I)=(f^{n}(a),f^{n}(b))$. So $f^{n}(I)\supset J$. Suppose that $f^{n}(I)\supsetneqq J$. Thus, $\exists\,p\in f^{n}(I)\cap\partial J$. In this case let $q=(f^{n}|_{I})^{-1}(p)\in I$. Note that $f^{\ell}(q)\cap J=\emptyset$ $\forall0\le\ell<n$, as $f^{\ell}(q)\in f^{\ell}(I)$. Furthermore, as $J$ is a nice interval, $f^{j}(\partial J)\cap J=\emptyset$ $\forall\,j\ge0$. So $f^{\ell}(q)\notin J$ $\forall\,j\ge0$. That is, $q\in\Lambda_{J}$. But this is impossible, as $q\in I$ and $I$ is a connected component of $[0,1]\setminus\Lambda_{J}$. Thus, $f^{n}(I)=J$. Finally, as $c\in f^{n}(I)$ and $c\notin f^{\ell}(I)$ $\forall0\le\ell<n$ (because $f^{\ell}(I)\cap J=\emptyset$ $\forall0\le\ell<n$), we get that $n=\theta(I)$. 
\cqd

\subsection{Cylinders, branches, extensions and etc.}\label{SecCyBrEx}   

Given an interval $I$,  If $\theta(I)< \infty$, we define $\widehat{I}$, the {\em branch domain} for $I$,
as the closure of the maximal open interval $\mathcal{I}$ containing the interior of $I$ such that $f^{\theta(I)} |_{\mathcal{I}}$ is monotone.
If $\theta(I)= \infty$, define $\widehat{I}$ as the closure of the maximal interval $T$ containing the interior of $I$ such that $f^k |_{T} $ is monotone for every $k\ge0$.

When $W\subset[0,1]$ is such that $\theta(W)<\infty$ we define the {\em branch} of $f$ associated to $W$,
denoted by $F_W$, as the continuous extension of $f^{\theta(W)}|_{\mathcal{W}}$ to $\widehat{W}$, where $\mathcal{W}$ is the interior of $\widehat{W}$. Of course that $\widehat{\widehat{W}}=\widehat{W}$ and so,
$$F_{W}=F_{\widehat{W}},$$ for all (non trivial) interval $W$.

We will denote the image of a map $g$ by $Im(g)$. Of course that $$Im(F_{I})=Im(F_{\widehat{I}})=F_{I}(\widehat{I})=F_{\widehat{I}}(\widehat{I}).$$

If $J\in\cn$, let us denote  $$\widehat{C_J} = \{\widehat{I}\, | \, I \in C_J\}.$$

If $p \in Per(f)$ define $I(p)$ as the maximal interval containing $p$ such that $f^{period(p)} |_{I(p)} $ is monotone.

Given a periodic {\em nice} interval $J=(a,b)\in\mathcal{N}_{per}$
let $\cu_{J}$ be the set of all periodic {\em nice} interval
$(a',b')\in\mathcal{N}_{per}$ containing the closure of $J$, $[a,b]$,
such that $period(a')\le period(a)$ and $period(b')\le period(b)$.

\begin{Definition}[$\xi J$: Root of a Periodic Nice Interval J]\label{defxidej}
Let $J=(a,b)\in\mathcal{N}_{per}$ and suppose that $\# Per_{n}(f)<+\infty$ $\forall\,n\ge1$, where $Per_{n}(f)=\{p\,;\,f^{n}(p)=p\}$. Define the {\em root} of $J$ denoted by $\xi J$ as the
the smallest interval belonging to
$\cu_{J}$, i.e., $\xi J$ is the interval $I\in\cu_{J}$ such that $I\subset V$ for all $V\in\cu_{J}$. Of course that $$\xi J=\bigcap_{V\in\cu_{J}}V.$$
\end{Definition}

Set $\xi(0,1)=(0,1)$. Thus, whenever there is only
a finite number of periodic points for a given
period, the root of any periodic {\em nice} interval is
well defined.

\subsection{On the structure of $c$-phobic sets}

In this section we will study the set $\Lambda_{J}$, $J=(p,q)\in\cn_{per}$ when $p$ and $q$ belong to the same orbit. In this case, as we will show, the set $\Lambda_{J}$ is not a perfect set. Indeed $\forall I \in C_J$, $\partial I$ consists of isolated points of $\Lambda_J$. Furthermore, we will characterize $\xi J$ with the combinatorial information of $\partial J$ and show that if a point $x\in\xi J\setminus J$ does not belongs to the pre-orbit of $p$ then $x$ eventually falls into $J$, i.e., $\co^{+}_{f}(x)\cap J\ne\emptyset$.

\begin{Lemma}
\label{23/02/07} Let $f:[0,1]\setminus\{c\}\to[0,1]$ be a $C^{2}$ contracting Lorenz map.
If $J=(p,q) \in \mathcal{N}_{per}$ and $\Lambda_{J}$ does not contain periodic attractors or weak repellers then the following statements are equivalent:
\begin{enumerate}
\item $\Lambda_J$ is not a perfect set;
\item $\exists I\in C_J\setminus\{J\}$ such that $\partial I \cap \partial J \ne \emptyset$;
\item $\co_{f}^{+}(p)=\co_{f}^{+}(q)$;
\item $\exists (\tilde{p},\tilde{q})\in C_J$ such that $\co_{f}^{+}(\tilde{p})=\co_{f}^{+}(q)$ or $\co_{f}^{+}(p)=\co_{f}^{+}(\tilde{q})$;
\item $\forall I \in C_J$, $\partial I$ consists of isolated points of $\Lambda_J$.
\end{enumerate}
\end{Lemma}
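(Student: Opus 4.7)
The plan rests on the branch structure from Proposition~\ref{Prop765091}: for each gap $I=(a,b)\in C_{J}$ with $I\ne J$, the map $f^{\theta(I)}\colon\overline{I}\to\overline{J}$ is a homeomorphism, and because $f'>0$ off $c$ together with the fact that the intermediate iterates $f^{j}(\overline{I})$, $0\le j<\theta(I)$, stay outside $J\ni c$, this homeomorphism is orientation-preserving. Consequently $f^{\theta(I)}(a)=p$, $f^{\theta(I)}(b)=q$, and $f^{\theta(I)}$ extends to a local $C^{1}$ diffeomorphism on a neighborhood of each endpoint of $\overline{I}$. I will exploit this rigid correspondence throughout.

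The equivalence (2)$\Leftrightarrow$(3) then follows almost immediately. If some $I\in C_{J}\setminus\{J\}$ meets $\partial J$, say $I=(a,p)$, the identity $q=f^{\theta(I)}(p)$ puts $q$ on the orbit of $p$, and since $p$ is periodic one gets $\co_{f}^{+}(p)=\co_{f}^{+}(q)$. Conversely, writing $q=f^{k}(p)$ with $k$ minimal and using that $f^{k}$ is a local diffeomorphism at $p$ (whose forward orbit stays bounded away from $J\ni c$), a small left-neighborhood of $p$ is carried into $J$; it therefore misses $\Lambda_{J}$ and lies in a gap with $p$ as its right endpoint. The equivalence (3)$\Leftrightarrow$(4) is formal: (3) implies (4) by the choice $(\tilde p,\tilde q)=J$, and the converse uses the branch map $f^{\theta(\tilde I)}\colon\tilde p\mapsto p$, so $\tilde p\in\co_{f}^{+}(q)$ forces $p\in\co_{f}^{+}(q)$ and hence $\co_{f}^{+}(p)=\co_{f}^{+}(q)$ by periodicity.

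For (3)$\Rightarrow$(5) I would iterate the previous local-diffeomorphism argument at every gap. With $n=\period(p)=\period(q)$ and $q=f^{k}(p)$ as above, for an arbitrary gap $I=(a,b)\in C_{J}$ the point $b+\epsilon$ is mapped by $f^{\theta(I)}$ to $q+O(\epsilon)$ and then by $f^{\,n-k}$ (a local diffeomorphism at $q$, since the orbit of $q$ avoids $c$) to $p+O(\epsilon)\in J$. Hence $b+\epsilon\notin\Lambda_{J}$, there is a gap $(b,b'')\in C_{J}$, and $b$ is isolated in $\Lambda_{J}$; the symmetric argument handles $a$. The implication (5)$\Rightarrow$(1) is then trivial, since $\partial J\subset\Lambda_{J}$ already provides isolated points.

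It remains to prove (1)$\Rightarrow$(3), which I expect to be the main obstacle. The idea is to pick an isolated point $x\in\Lambda_{J}$ interior to $[0,1]$, so that $(x-\delta,x)\subset I_{1}$ and $(x,x+\delta)\subset I_{2}$ for two gaps $I_{1},I_{2}\in C_{J}$. Orientation preservation forces $f^{\theta(I_{1})}(x)=q$ and $f^{\theta(I_{2})}(x)=p$; since $x$ is not in the pre-orbit of $c$ (its forward orbit avoids $J\ni c$), both iterates are single-valued at $x$, so $\theta(I_{1})\ne\theta(I_{2})$, and applying the larger iterate to the image of the smaller one places $p$ on the forward orbit of $q$, giving (3). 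The delicate point is the degenerate possibility that $\Lambda_{J}$ has an isolated point only at $\{0,1\}$, where a second adjacent gap does not exist; one must use the hypothesis that $\Lambda_{J}$ contains no periodic attractor and no weak repeller to exclude this, since otherwise the forward-invariance of $[0,p]$ (or $[q,1]$) under $f$ would create an attracting periodic orbit in $\Lambda_{J}$. Making this dichotomy precise is the crux of the argument.
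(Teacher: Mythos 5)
Your main line of argument is correct and rests on the same ingredients as the paper's proof: the branch structure of gaps from Proposition~\ref{Prop765091}, orientation preservation, and the fact that endpoints of gaps lie in $\Lambda_J$, hence have orbits avoiding $c$, so the branch extensions agree with the genuine iterates at those endpoints. Your (2)$\Leftrightarrow$(3), (3)$\Leftrightarrow$(4) coincide with the paper's; your (1)$\Rightarrow$(3) (evaluating the two adjacent branch maps at the isolated point and composing) and your direct (3)$\Rightarrow$(5) (producing a gap on the outer side of each endpoint) replace the paper's versions -- the paper gets (1)$\Rightarrow$(2) by pushing the right-hand gap forward, and proves (3)$\Rightarrow$(5) by contradiction with a sequence of gaps accumulating on an endpoint -- but these are cosmetic variations on the same mechanism, and your versions are sound. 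Note also that under (3) no gap can have $0$ or $1$ as an endpoint (a gap $(0,b)$ would force $p=f^{\theta((0,b))}(0)=0$, hence $q\in\co_f^+(p)=\{0\}$, absurd; symmetrically for $1$), so your two-sided argument in (3)$\Rightarrow$(5) needs no boundary caveat.

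The one point you leave open -- an isolated point of $\Lambda_J$ sitting at $0$ or $1$ in the implication (1)$\Rightarrow$(3) -- is not really ``the crux'': the paper's own proof of (1)$\Rightarrow$(2) silently assumes the isolated point has gaps on both sides, i.e.\ skips exactly this case. It can be closed along the lines you suggest, but not quite with the interval you name. If $0$ were isolated, the gap adjacent to $0$ is some $G=(0,b)$, and since $f(0)=0$ and the orbit of $0$ avoids $c$, the branch map gives $p=f^{\theta(G)}(0)=0$; as two distinct components cannot share the endpoint $0$, necessarily $G=J=(0,q)$. So in this case $[0,p]$ is the trivial set $\{0\}$, and the interval that does the work is $[q,1]$: niceness of $J$ gives $f(q)\ge q$, and since $f$ is increasing on $(c,1]\supset[q,1]$ we get $f([q,1])\subset[q,1]$, whence $[q,1]\subset\Lambda_J$. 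The restriction $f|_{[q,1]}$ is a continuous increasing map of $[q,1]$ into itself, so its periodic points are fixed points; if every fixed point in $[q,1]$ were hyperbolic repelling, the sign of $f(x)-x$ would have to change between any two consecutive ones, producing a further fixed point in between -- a contradiction. Hence some fixed point in $[q,1]$ is non-hyperbolic or attracts from at least one side, i.e.\ $\Lambda_J$ contains a periodic attractor or a weak repeller, contradicting the hypothesis. The case of $1$ isolated is symmetric, with $q=1$ forced and $[0,p]$ forward invariant. With this observation your proof is complete.
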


\dem

(1) $\implies$ (2). If $\Lambda_J$ is not perfect, there exists $x \in \Lambda_J$, $x$ an isolated point. So, $\exists W_0=(a,x)$ and $W_1=(x,b)$, $W_0, W_1 \in C_J$. Let's consider $s=\theta(W_0)<\theta(W_1)=u$. In this case, $f^s(W_0)=J=(p,q)$ and $f^s(W_1)=(q,f^s(b))$ is as required. The case $s>u$ is analogous.


(2) $\implies$ (3). Suppose that $I=(a,p) \in C_J$ (the case $I=(q,a)$ is analogous). As $f$ is orientation preserving and $f^{\theta(I)}(I)=J$ we have that  $f^{\theta(I)}(p)=q$ and then $\co_{f}^{+}(p)=\co_{f}^{+}(q)$.

(3) $\Longleftrightarrow$ (4). We have only to check that (4) implies (3), because $J\in C_J$.
As $p$ and $q$ are periodic points, if $\co_{f}^{+}(\tilde{p})=\co_{f}^{+}(q)$ (the other case is analogous) then $\tilde p$ is a periodic point.
As $f$ is orientation preserving where monotonous, $(f^n(\tilde p),f^n(\tilde q))=f^n((\tilde p,\tilde q))=(p,q)$, where $n=\theta((\tilde p,\tilde q))$.
Thus $\co_{f}^{+}(q)=\co_{f}^{+}(\tilde p)=\co_{f}^{+}(p)$.

(3) $\implies$ (5). Suppose that $I=(a,b) \in C_J$ and that $a$ is not an isolated point (the case of $b$ is analogous). Then we have that there exists a sequence $I_n=(p_n,q_n) \in C_J$ such that $p_n<q_n<a$ and $lim \,  p_n = a$. We also have that there is a neighborhood $V$ of $[a,b]$ such that $f^{\theta(I)}|_V$ is a diffeomorphism. For every $n$ big enough we have that $I_n \subset V $ and then $I'_n=(p'_n,q'_n)=(f^{\theta(I)}(p_n),f^{\theta(I)}(q_n))=f^{\theta(I)}(I_n)\in C_J$. Observe that $p'_n<q'_n<p$ and $lim \, p'_n=p$. Let $l$ be such that $f^l(p)=q$ and $W$ is a neighborhood of $p$ such that $f^l|_W$ is a diffeomorphism. To every $n$ big enough we will have that $I'_n \subset W$. Observe also that to $n$ sufficiently big, $p<f^l(p'_n)<f^l(q'_n)<f^l(p)=q$. That is, $f^l(I'_n) \subsetneqq J$, what is an absurd, as $I'_n \in C_J$.

(5) $\implies$ (1). Straightforward.

\cqd

\begin{Corollary}Let $f:[0,1]\setminus\{c\}\to[0,1]$ be a $C^{2}$ contracting Lorenz map.
If $J=(p,q)\in\cn_{per}$ and $\Lambda_{J}$ does not contain periodic attractors or weak repellers then $\Lambda_{J}$ is a Cantor set if and only if $\co^{+}_{f}(p)\ne\co^{+}_{f}(q)$.
\end{Corollary}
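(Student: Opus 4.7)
The plan is to assemble the Cantor set property from the pieces already in hand: $\Lambda_J$ is nonempty (it contains $\{p,q\}=\partial J$), compact (it is a closed subset of $[0,1]$ by its very definition as the complement of the open set $\bigcup_{I\in C_J}\mathring I$), and so the only nontrivial content is to show that totally disconnected plus perfect is equivalent to $\co_f^+(p)\ne\co_f^+(q)$.

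First I would dispatch total disconnectedness using Proposition~\ref{Prop765091}: since $\Lambda_J$ is uniformly expanding one has $\leb(\Lambda_J)=0$, hence $\Lambda_J$ has empty interior in $[0,1]$. A closed subset of $\mathbb{R}$ with empty interior cannot contain a nondegenerate interval, and as every connected subset of $\mathbb{R}$ is an interval, this forces every connected component of $\Lambda_J$ to be a singleton. Thus $\Lambda_J$ is totally disconnected under our standing hypotheses, with no further work needed.

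The heart of the corollary is then the perfectness statement, and this is exactly the content of Lemma~\ref{23/02/07}. That lemma shows the equivalence (1)$\Leftrightarrow$(3): $\Lambda_J$ fails to be perfect if and only if $\co_f^+(p)=\co_f^+(q)$. Contrapositively, $\Lambda_J$ is perfect if and only if $\co_f^+(p)\ne\co_f^+(q)$. Combining this with nonemptiness, compactness, and total disconnectedness, we conclude that $\Lambda_J$ is a Cantor set exactly when $\co_f^+(p)\ne\co_f^+(q)$.

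There is no real obstacle here; the statement is essentially a repackaging of Lemma~\ref{23/02/07} once one observes that the only remaining ingredient, total disconnectedness, is automatic from the fact that an expanding invariant set of a $C^{1+}$ interval map has zero Lebesgue measure (Theorem~\ref{lebzero}). The only thing to be slightly careful about is the nonemptiness of $\Lambda_J$: this is guaranteed because $p,q\in\partial J$ are periodic points of $f$ whose forward orbits avoid $J$ (by the niceness of $J$), so $\{p,q\}\subset\Lambda_J$.
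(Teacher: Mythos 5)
Your argument is correct and is essentially the paper's own: the corollary is stated there without proof as an immediate consequence of Lemma~\ref{23/02/07}, whose equivalence (1)$\Leftrightarrow$(3) gives perfectness of $\Lambda_{J}$ exactly when $\co^{+}_{f}(p)\ne\co^{+}_{f}(q)$, while compactness and total disconnectedness (via $\leb(\Lambda_{J})=0$ from Proposition~\ref{Prop765091}) supply the remaining Cantor-set properties, just as you assemble them. The only minor caveat, already present in the paper itself, is that Proposition~\ref{Prop765091} is stated under the global hypothesis that $f$ has no periodic attractors or weak repellers, whereas the corollary only assumes none are contained in $\Lambda_{J}$.
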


It follows from Lemma \ref{23/02/07} that if $J=(a,b) \in \mathcal{N}_{per}$ with $\co_{f}^{+}(a)=\co_{f}^{+}(b)$ that $a$ and $b$ are isolated points of $\Lambda_{J}$. So, there is some $(a_{-1},a)\in C_{J}$, i.e., $(a_{-1},a)$ is the connected component of $[0,1]\setminus\Lambda_{J}$ with $a$ in its boundary and different to $J$. Analogously,  there is some $(a_{-2},a_{-1})\in C_{J}$ and so on.  
Thus, there is a sequence $a_n$ such that $...<a_{-(m+1)}<a_{-m}<...<a_{-1}<a_0=a<b=a_1<a_{2}<...<a_n<a_{n+1}<...$ and $A_n=(a_n,a_{n+1})$ belong to $C_J, \forall n \in \ZZ $. About these we can state the following

\begin{Theorem}[The structure of $C_{J}$ and $\xi J$ for $J$ imperfect]\label{imperfeito} Let $f:[0,1]\setminus\{c\}\to[0,1]$ be a $C^{2}$ contracting Lorenz map. 
Let $J=(a,b) \in \mathcal{N}_{per}$  be such that $\Lambda_{J}$ does not contain periodic attractors or weak repellers. Let $(\alpha, \beta )=\xi J$. If $\co_{f}^{+}(a)=\co_{f}^{+}(b)$ then
\begin{enumerate}
\item  $period(\alpha)=\min \{u | f^u(A_{-1})=A_j, j \geq 0 \}$;
\item $I(\alpha)\supset[\alpha,a]$;
\item $F_{I(\alpha)}([\alpha,a])\supset [\alpha,b]$;
\item $F_{I(\alpha)}(A_{n})=A_{n+\gamma+1}$ $\forall n\le-1$, where $\gamma\ge0$ is given by $A_{\gamma}=F_{I(\alpha)}(A_{-1})$;
\item if $I\in C_J$ and $I\subset(\alpha,a)$ then $\exists\,i\in\{0,...,\gamma\}$ and $n\ge1$ such that
$F_I=F_{A_i}\circ (F_{I(\alpha)})^n$;
\item $\{I\in C_J$ $|$ $I\subset(\alpha,a)\}$ $=$ $\{A_n\}_{n\le-1}$.
\end{enumerate}
Analogously we have
\begin{enumerate}
\item[($1'$)] $period(\beta)=\min \{v | f^v(A_{1})=A_k, k \leq 0 \}$;
\item[($2'$)] $I(\beta)\supset[b,\beta]$;
\item[($3'$)] $F_{I(\beta)}([b,\beta])\supset [a,\beta]$;
\item[($4'$)]  $F_{I(\beta)}(A_{n})=A_{n+\rho-1}$ $\forall n\ge1$, where $\rho\le0$ is given by $A_{\rho}=F_{I(\beta)}(A_{1})$;
\item[($5'$)] if $I\in C_j$ and $I\subset(b,\beta)$ then $\exists\,i\in\{\rho,...,0\}$ and $n\ge1$ such that
$F_I=F_{A_i}\circ (F_{I(\beta)})^n$;
\item[($6'$)]  $\{I\in C_J$ $|$ $I\subset(b,\beta)\}$ $=$ $\{A_n\}_{n\ge1}$.
\end{enumerate}
\end{Theorem}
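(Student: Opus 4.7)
The sequence $\{a_n\}_{n \in \ZZ}$ and the limits $\alpha,\beta$ are produced directly from Lemma~\ref{23/02/07}: since $\co_f^+(a)=\co_f^+(b)$, every gap boundary is isolated in $\Lambda_J$, so the monotone bi-infinite chain $\cdots < a_{-2} < a_{-1} < a_0 = a < b = a_1 < a_2 < \cdots$ exists with $A_n=(a_n,a_{n+1})\in C_J$, and monotone boundedness gives $\alpha=\lim_{n\to-\infty}a_n$, $\beta=\lim_{n\to+\infty}a_n$ in the closed set $\Lambda_J$. A short check identifies $(\alpha,\beta)=\xi J$: any periodic nice interval in $\cu_J$ has endpoints in $\Lambda_J$ that avoid the isolated points $\{a_n\}$, so its left endpoint must sit at or below $\alpha$ and symmetrically on the right. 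The left/right symmetry of the theorem lets me focus entirely on (1)--(6); the primed claims follow by the identical argument applied to $\beta$.

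I would prove the claims in the order (2), (3), (4), (1), (5), (6). For (2), the goal is to show that $f^{\period(\alpha)}$ is monotone on $[\alpha,a]$, i.e.\ $c\notin f^k((\alpha,a))$ for $0\le k<\period(\alpha)$. If the smallest such $k$ produced an interior preimage $x^*\in(\alpha,a)$ with $f^k(x^*)=c$, then combined with the nice property of $\xi J$ (which gives $\co_f^+(\alpha)\cap(\alpha,\beta)=\emptyset$, so no intermediate return is possible) one constructs a periodic nice interval strictly smaller than $\xi J$ containing $[a,b]$, contradicting the root definition of $\xi J$. For (3), monotonicity of $F_{I(\alpha)}=f^{\period(\alpha)}$ on $[\alpha,a]$ and orientation-preservation give $F_{I(\alpha)}([\alpha,a])=[\alpha,F_{I(\alpha)}(a)]$; since $a$ is periodic, $F_{I(\alpha)}(a)\in\co_f^+(a)=\co_f^+(b)$, and the nice property of $\xi J$ forces this image point to satisfy $F_{I(\alpha)}(a)\ge b$. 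Defining $\gamma\ge0$ by $F_{I(\alpha)}(A_{-1})=A_\gamma$, (4) follows because the orientation-preserving diffeomorphism $F_{I(\alpha)}$ carries the ordered family of gaps $\{A_n\}_{n\le-1}$ in $[\alpha,a]$ onto the ordered gaps of $[\alpha,a_{\gamma+1}]$, namely $\{A_n\}_{n\le\gamma}$, giving the shift $A_n\mapsto A_{n+\gamma+1}$. Statement (1) is then immediate: $\period(\alpha)$ lies in the set by (4) with $j=\gamma$, and any smaller $u$ with $f^u(A_{-1})=A_j$, $j\ge0$, would (by continuity at $\alpha$ and the nice property placing $f^u(\alpha)\notin(\alpha,\beta)$) force $f^u(\alpha)=\alpha$, contradicting the minimality of $\period(\alpha)$.

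For (5), given $I\in C_J$ with $I\subset(\alpha,a)$, I iteratively pull $I$ back under $F_{I(\alpha)}$: since $F_{I(\alpha)}$ acts as the index-shift of (4) on the family of gaps, after the smallest $n\ge1$ pull-backs the image lands in some $A_i$ with $i\in\{0,\ldots,\gamma\}$, and by the chain rule $F_I=F_{A_i}\circ(F_{I(\alpha)})^n$. Claim (6) then follows: every gap of $\Lambda_J$ in $(\alpha,a)$ arises from the $(F_{I(\alpha)})^{-n}$-orbit of $A_0,\ldots,A_\gamma$, which by (4) is precisely $\{A_n\}_{n\le-1}$. The main obstacle, as indicated, is step (2): ruling out intermediate preimages of $c$ inside $(\alpha,a)$ for iterates below $\period(\alpha)$. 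This requires a careful use of the root status of $\xi J$ together with the no-weak-repeller hypothesis, so that the hyperbolic periodic point produced by such a preimage yields a genuinely smaller periodic nice interval, contradicting minimality in $\cu_J$. Once (2) is in hand, everything else is a largely combinatorial unwinding of the monotone Markov structure of $F_{I(\alpha)}$.
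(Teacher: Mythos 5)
Your outline reproduces the right combinatorial skeleton (a monotone Markov/shift structure for the return map attached to $\alpha$), but the heart of the theorem is not actually proved. Two facts carry all the weight and are missing. First, you never prove that $a_{-\infty}:=\lim_{n\to-\infty}a_n$ is a \emph{periodic} point, nor that $(a_{-\infty},a_{+\infty})$ is a periodic nice interval belonging to $\cu_{J}$; your ``short check'' only yields one inclusion (every $V\in\cu_{J}$ contains $(a_{-\infty},a_{+\infty})$), whereas the equality $\xi J=(a_{-\infty},a_{+\infty})$ also needs membership of $(a_{-\infty},a_{+\infty})$ in $\cu_{J}$, i.e.\ periodicity of $a_{\pm\infty}$, the period bounds $\period(a_{-\infty})<\period(a)$, $\period(a_{+\infty})<\period(b)$, and niceness ($\co_{f}^{+}(a_{\pm\infty})\cap\bigcup_{n}\overline{A_n}=\emptyset$). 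In the paper these are consequences of the inductive ``contiguity'' claim: with $l=\min\{k\ge0\,;\,f^{k}(A_{-1})=A_j,\ j\ge0\}$, the map $f^{l}$ is a diffeomorphism on a neighborhood of every $[a_{-k},a_0]$ and shifts the gaps, $f^{l}(A_{-j})=A_{\gamma+1-j}$; one then extends $f^{l}$ continuously to the closure of its maximal monotonicity interval, obtains a fixed point at $a_{-\infty}$, and excludes $a_{-\infty}$ from the boundary because that would create a super-attractor. You instead assume from the outset that $\alpha$ is periodic and that $F_{I(\alpha)}=f^{\period(\alpha)}$ is available on $[\alpha,a]$, and you defer the decisive monotonicity statement (your step (2)) to an unspecified construction of ``a periodic nice interval strictly smaller than $\xi J$'' arising from a ``hyperbolic periodic point produced by such a preimage''. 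No such construction is given, and it is not the mechanism that works: the paper rules out an interior preimage $f^{s}(q)=c$ with $q\in A_{-(n+1)}$, $s<l$, by showing that then either $b\in f^{s}(A_{-(n+1)})$, which drives the orbit of $b$ into $J$ and contradicts the niceness of $J$, or $b=f^{s}(a_{-n})$, which after transposing along the contiguous gaps contradicts the minimality of $l$. Without this claim, items (2)--(6) are unsupported, and so is your proof of (1), which tacitly needs to know how $f^{u}$ acts on the whole chain $\{A_{-k}\}_{k\ge1}$ (again the contiguity claim) before you can conclude $f^{u}(\alpha)=\alpha$.

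A secondary flaw: in (3) you invoke ``the nice property of $\xi J$'' to force $F_{I(\alpha)}(a)\ge b$, but niceness of $\xi J$ constrains the orbit of $\partial\xi J=\{\alpha,\beta\}$, not of $a$. What is needed is niceness of $J$ (so $f^{\period(\alpha)}(a)\notin(a,b)$) together with an argument excluding $f^{\period(\alpha)}(a)\le a$; in the paper (3) simply falls out of the gap-shifting claim, since $F_{I(\alpha)}([\alpha,a])=\overline{\bigcup_{n\le\gamma}A_n}\supset[\alpha,b]$ with $\gamma\ge0$. In short, the proposal postpones exactly the steps that constitute the proof.
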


\dem

Let $l=min\{k \geq 0 | f^k(A_{-1})=A_j,$ for some $j \ge0\}$ and $\gamma\ge0$ such that $f^l(A_{-1})=A_{\gamma}$.
As $A_{-1} \in C_J$, $f^{\theta(A_{-1})}$ is a diffeomorphism in a neighborhood of $\overline{A_{-1}}$, and as
$l \leq \theta(A_{-1})$, we have that $f^l$ is a diffeomorphism in a neighborhood of $\overline{A_{-1}}$.

\begin{claim}
$f^l$ is a diffeomorphism in a neighborhood of $[a_{-k},a_0]$ $\forall\,k\ge1$ and also, $f^l(A_{-j})=A_{\gamma+1-j}$, $\forall\,j \ge 1$.
\end{claim}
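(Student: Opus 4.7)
The plan is to prove both assertions simultaneously by induction on $k$, with base case $k=1$ supplied by the text immediately preceding the claim (which gives $f^l$ as a diffeomorphism on a neighborhood of $\overline{A_{-1}}=[a_{-1},a_0]$ and $f^l(A_{-1})=A_\gamma$ by the definition of $\gamma$). Fix $k\ge 1$ and assume $f^l$ is a diffeomorphism on a neighborhood $V_k$ of $[a_{-k},a_0]$ with $f^l(A_{-j})=A_{\gamma+1-j}$ for $1\le j\le k$. I would first reduce the inductive step to the single inequality $l\le\theta(A_{-(k+1)})$: once this holds, no $f^j$ with $0\le j<l$ hits $c$ on $A_{-(k+1)}$ (by definition of $\theta(A_{-(k+1)})$), hence $f^l|_{A_{-(k+1)}}$ is monotonic and glues at $a_{-k}$ with $f^l|_{V_k}$ into a diffeomorphism on a neighborhood of $[a_{-(k+1)},a_0]$.

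The inequality I would prove by contradiction: assume $\theta:=\theta(A_{-(k+1)})<l$ and aim to show $f^\theta(A_{-1})=A_k$, which, since $k\ge 1\ge 0$, contradicts the minimality of $l=\min\{t\ge 0\,:\,f^t(A_{-1})=A_j,\ j\ge 0\}$. Because $\theta<l$, the outer inductive hypothesis makes $f^\theta$ a diffeomorphism on $V_k$, and combined with the diffeomorphism of $f^\theta$ on $A_{-(k+1)}$ onto $J$ one obtains a diffeomorphism of $f^\theta$ on a neighborhood of $[a_{-(k+1)},a_0]$ sending $a_{-(k+1)}\mapsto a$ and $a_{-k}\mapsto b$. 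A descending inner induction on $j$ (from $j=k$ down to $j=1$) then forces $f^\theta(A_{-j})=A_{k+1-j}$: given the boundary image $f^\theta(a_{-j})=a_{k+1-j}$ produced by the previous inner step, I need $f^\theta(A_{-j})$ to be an open interval disjoint from $\Lambda_J$, and then its other endpoint $f^\theta(a_{-j+1})\in\Lambda_J$ is forced to coincide with the next element of $\Lambda_J$ to the right of $a_{k+1-j}$, which is $a_{k+2-j}$.

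The crux of the inner step is the comparison $\theta(A_{-j})\ge\theta$ for $1\le j\le k$: if instead $\theta(A_{-j})<\theta$, then $f^{\theta(A_{-j})}|_{A_{-j}}$ would already land on $J$, so $f^{\theta(A_{-j})+1}$ would have a jump at the preimage of $c$ inside $A_{-j}\subset V_k$, contradicting the smoothness of $f^{\theta(A_{-j})+1}$ on $V_k$ delivered by the outer inductive hypothesis (note $\theta(A_{-j})+1\le\theta<l$). With this comparison in hand, forward invariance $f(\Lambda_J)\subset\Lambda_J$ gives $f^\theta(A_{-j})\cap\Lambda_J=\emptyset$, since any $y\in A_{-j}$ with $f^\theta(y)\in\Lambda_J$ would satisfy $f^{\theta(A_{-j})}(y)=f^{\theta(A_{-j})-\theta}(f^\theta(y))\in\Lambda_J\cap J=\emptyset$.

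Taking $j=1$ produces the promised contradiction, so $\theta(A_{-(k+1)})\ge l$ and the extension of $f^l$ is secured. To identify the image I would run essentially the same inner-induction argument one last time with $l$ in place of $\theta$: $f^l$ is by now a diffeomorphism on a neighborhood of $[a_{-(k+1)},a_0]$ with $f^l(a_{-k})=a_{\gamma+1-k}$ from the outer hypothesis, and the invariance-plus-adjacent-gap trap forces $f^l(a_{-(k+1)})=a_{\gamma-k}$, i.e.\ $f^l(A_{-(k+1)})=A_{\gamma-k}$, closing the outer induction. I expect the main obstacle to be exactly the intermediate bound $\theta(A_{-j})\ge\theta(A_{-(k+1)})$ inside the inner induction, since it is the delicate coupling of the minimality of each $\theta(A_{-j})$ with the full diffeomorphism strength of the outer hypothesis that lets the forward-invariance argument trap $f^\theta(A_{-j})$ in the correct gap.
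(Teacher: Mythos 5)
Your proof is correct and follows essentially the same strategy as the paper's: induction on $k$, with the inductive step closed by a contradiction with the minimality of $l$ obtained by transporting the chain of contiguous gaps $A_{-k},\dots,A_{-1}$ forward onto $A_1,\dots,A_k$. The only real difference is organizational: by working with the first entry time $s=\theta(A_{-(k+1)})$ and invoking Proposition~\ref{Prop765091}(3) (so that $f^{s}(A_{-(k+1)})=J$ exactly, hence $f^{s}(a_{-k})=b$ automatically), you collapse the paper's two-case analysis --- its first case, $b\in f^{s}(A_{-(n+1)})$, never arises for you --- and you make the paper's ``contiguousness argument'' explicit through the forward invariance of $\Lambda_J$ and the bound $\theta(A_{-j})\ge s$.
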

\dem We have seen this holds for $k=1$ and we will show by induction that if it holds for $n$ it also holds for $n+1$.

Suppose this wasn't true. Then, $\exists\,q \in A_{-(n+1)}$ and $s<l$ such that $f^s(q)=c$.

In this case, as $f^i(A_{-n})\cap J=\emptyset$ $\forall\,0\le i < \theta(A_{-n})$ (because $A_{-n}\in C_J$), either $b \in f^s(A_{-(n+1)})$ or $b=f^s(a_{-n})$.
In the first situation, it follows that $f^{l-s}(b) \in f^l(A_{-(n+1)})$ and so, $f^{l-s+\theta(f^l(A_{-(n+1)}))}(b)\in J$, what contradicts the fact that $J$ is nice and $b$ never visits $J$. The second situation implies $f^s(A_{-n})= A_{1}$. As $s<l$, and as by the induction hypothesis $f^l$ is a diffeomorphism in a neighborhood of $A_{-n}$, so is $f^s$, and as $A_{-n},...,A_{-1}$ is a sequence of contiguous intervals such that $f^s(A_{-n})= A_{1}$ and $A_{-(n-1)}$ is in the right side of $A_{-n}$, then $f^s(A_{-(n-1)}) \cap A_{2} \neq \emptyset$ ($A_{2}$ is in the right side of $A_{1}$). As all these $A_j$ and their images by $f$ before reaching $J$ are components of $C_J$, if they intersect any other, they are the same component, so $f^s(A_{-(n-1)})=A_{2}$. Successively we have that $f^s(A_{-1})= A_{n}, n>0 $, what denies the minimality of $l$. Then both cases lead to an absurd.

The contiguousness argument provides $f^l(A_{-j})=A_{\gamma+1-j}$ as stated.
\cqd

Consider $W$ the maximal interval containing $A_{-1}$ such that $f^l |_W$ is continuous. Let $F$ a continuous extension of  $f^l |_W$ to $\overline W$. We have shown above that $A_j \subset W$, $\forall j<0$. Sequence $\{ a_n \}_{n<0}$ converges to a point $a_{-\infty} \in \overline W$.
As $F$ is continuous in this compact set $\overline{W}$, $F(a_{-\infty})$ $=$ $F(\lim_{n\to-\infty} \, a_n)$ $=$ $\lim_{n\to-\infty} F(\, a_n)$
$=$ $\lim_{n\to-\infty} \, f^l(a_n)$ $=$ $\lim_{n\to-\infty} \, a_{n+1}= a_{-\infty}$. If $a_{-\infty} \in \partial W$, $a_{-\infty}$ would be a periodic super-attractor. As we are supposing $f$ has no kind of attractors, $a_{-\infty}$ belongs to the interior of $W$. Thus, $f^l(a_{-\infty})=F(a_{-\infty})=a_{-\infty}$.

\begin{claim}
$period(a_{-\infty})=l$.
\end{claim}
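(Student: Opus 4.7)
The plan is to argue by contradiction: suppose the period $m$ of $a_{-\infty}$ satisfies $m<l$. Since $f^l(a_{-\infty})=a_{-\infty}$, we necessarily have $m\mid l$, so $l/m$ is a positive integer $\ge 2$.

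First, since $f^l$ is a local diffeomorphism at $a_{-\infty}$, the forward orbit of $a_{-\infty}$ up to time $l$ avoids $c$, and hence so does the sub-orbit up to time $m$; thus $f^m$ is also a local diffeomorphism at $a_{-\infty}$, and in fact on a neighborhood of $\overline{A_n}$ for every $n\le -1$ (the first claim already gives this for $f^l$ on a neighborhood of $[a_{-k},a_0]$, and $m\le l$). A second preliminary observation, obtained by iterating $f^l(A_{-j})=A_{\gamma+1-j}$, is that $\theta(A_{-j})\to\infty$ as $j\to\infty$.

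The key step is to show that for every sufficiently negative $n$, the image $f^m(A_n)$ coincides with some component $A_{k(n)}\in C_J$. Choose $n$ so that $\theta(A_n)>m$. Then $f^m$ is a diffeomorphism on a neighborhood of $\overline{A_n}$, so $f^m(A_n)$ is an open interval and its endpoints $f^m(a_n),f^m(a_{n+1})$ lie in $\Lambda_J$ by forward invariance. If the interior of $f^m(A_n)$ met $\Lambda_J$ at some $y$, then $x:=(f^m|_{A_n})^{-1}(y)\in A_n$ would satisfy $f^k(x)\notin J$ for all $k\ge m$ (because $y\in\Lambda_J$) and also $f^k(x)\notin J$ for all $0\le k<m$ (because, by minimality of $\theta(A_n)$, every point of $A_n$ first visits $J$ exactly at time $\theta(A_n)>m$). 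This would force $x\in\Lambda_J$, contradicting $x\in A_n\subset [0,1]\setminus\Lambda_J$. Hence $f^m(A_n)\subset [0,1]\setminus\Lambda_J$, and since its closure has endpoints in $\Lambda_J$, it must equal some $A_{k(n)}$.

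To conclude, adjacency of consecutive intervals (they share an endpoint) implies that $k(n+1)=k(n)+1$, so the shift $\sigma:=k(n)-n$ is a constant integer over all sufficiently negative $n$. Writing $f^l=(f^m)^{l/m}$ and applying both sides to $a_n$ (choosing $n$ so negative that each intermediate point $a_{n+j\sigma}$, $0\le j\le l/m$, remains in the good range), we obtain
\[
a_{n+1}\;=\;f^l(a_n)\;=\;(f^m)^{l/m}(a_n)\;=\;a_{n+(l/m)\sigma},
\]
so $(l/m)\sigma=1$, i.e. $\sigma=m/l\in(0,1)$. Since $\sigma$ is an integer, this is impossible, giving the desired contradiction and hence $\operatorname{period}(a_{-\infty})=l$. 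The main obstacle is the key step isolating $f^m(A_n)$ as a single component $A_{k(n)}$: this is where the minimality of $\theta(A_n)$ is combined with the forward invariance of $\Lambda_J$. The remaining steps are symbolic manipulations within the structure already set up by the first claim.
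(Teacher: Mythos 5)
Your setup is sound and the key mechanism you isolate is the right one (and the same as the paper's): for a gap $I\in C_J$ with $\theta(I)>m$, the image $f^{m}(I)$ is again a gap, because every point of $I$ first meets $J$ exactly at time $\theta(I)$ and the endpoints stay in the forward invariant set $\Lambda_J$. The adjacency argument giving $k(n+1)=k(n)+1$, hence a constant shift $\sigma=k(n)-n$ for all sufficiently negative $n$, is also fine (modulo the small unstated point that $f^{m}(A_n)$ lies in $(a_{-\infty},a)$ — which follows from $f^{m}(a_{-\infty})=a_{-\infty}$ and monotonicity of $f^{m}$ near $a_{-\infty}$ — so that the gap really is one of the $A_k$).

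The genuine gap is in the punchline. You use the identity $f^{l}(a_n)=a_{n+1}$, but this is false in general: by the first claim (item (iv) of the proof), $f^{l}(A_n)=A_{n+\gamma+1}$ for $n\le-1$, so $f^{l}(a_n)=a_{n+\gamma+1}$, where $\gamma\ge0$ need not be $0$ (the orbit of $A_{-1}$ may first land on a gap $A_\gamma$ to the right of $J$ rather than on $J$ itself; the paper's limit computation $\lim f^l(a_n)=\lim a_{n+1}=a_{-\infty}$ is only an assertion about limits, not an index identity). With the correct relation your computation yields $(l/m)\sigma=\gamma+1$, which is not a contradiction — e.g. $\sigma=m(\gamma+1)/l$ can perfectly well be a positive integer — so the argument does not close. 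The repair is to push your own adjacency step further: since $\theta(A_n)\ge l>m$ for every $n\le-1$ (each $A_n$ must still pass through the chain before reaching $J$), the relation $f^{m}(A_n)=A_{n+\sigma}$ propagates from very negative $n$ all the way up to $n=-1$, exactly as in the paper's ``contiguous intervals'' argument, giving $f^{m}(A_{-1})=A_{\sigma-1}$ with $\sigma-1\ge0$; since $m<l$, this contradicts the minimality of $l$ directly. That is the paper's proof; your divisibility shortcut, as written, does not substitute for it.
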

\dem Suppose that $f^s(a_{-\infty})=a_{-\infty}$ for some $0<s<l$. Taking n sufficiently big and for a neighborhood $V_{a_{-\infty}}$, $\exists A_{-n} \in V_{a_{-\infty}}$ such that $f^s(A_{-n})=A_{-m} \subset V_{a_{-\infty}}$ (as we have that the components $C_J$ have to map exactly onto components of $C_J$) and $n>m$ (for otherwise $a_{-\infty}$ would be an attractor).

This way we may again use an argument of transposing the contiguous intervals by finite steps that we used above, what give us
$f^s(A_{-n})=A_{-m} \Rightarrow f^s(A_{-1})=A_{(n-m)-1}$.
As we have taken $l$ minimum such that this condition occurs, then $s=l$.
\cqd

At this point of the proof, we already  have
\begin{enumerate}
\item[(i)] $period(a_{-\infty})=l$;
\item[(ii)] $F_{I(a_{-\infty})}$ $=$ $f^l|_{I(a_{-\infty})}$ with
$I(a_{-\infty})\supset\overline{\bigcup_{n\le-1}A_n}$ $=$ $[a_{-\infty},a]$;
\item[(iii)] $F_{I(a_{-\infty})}([a_{-\infty},a])$ $=$
$\overline{\bigcup_{n\le\gamma}A_n}$ $\supset$ $\overline{\bigcup_{n\le0}A_n}$ $=$ $[a_{-\infty},b]$.
\end{enumerate}

We also have
\begin{equation}\tag{iv}
F_{I(a_{-\infty})}(A_n)=A_{n+\gamma+1}\,\forall\,n\le-1.
\end{equation}

\begin{claim}
$\co_{f}^{+}(a_{-\infty})\cap\bigcup_{n\in\ZZ}\overline{A_n}=\emptyset$.
\end{claim}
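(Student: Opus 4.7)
The plan is to reduce the claim to two ingredients: $a_{-\infty}\in\Lambda_{J}$ (so its whole forward orbit stays in $\Lambda_{J}$), and each endpoint $a_n$ is an isolated point of $\Lambda_{J}$. Together with local injectivity of the iterates of $f$ at $a_{-\infty}$, these will forbid the orbit to land on any $a_n$.

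First I would observe that the earlier part of the proof already exhibits an open neighborhood $W\ni a_{-\infty}$ on which $f^{l}$ is a diffeomorphism. In particular each $f^{j}$ with $0\le j\le l$ is continuous and strictly monotone on $W$, the orbit $a_{-\infty},f(a_{-\infty}),\ldots,f^{l-1}(a_{-\infty})$ avoids the critical point $c$, and $f^{j}$ is locally injective at $a_{-\infty}$. Since each $a_k\in\Lambda_{J}$, and $\Lambda_{J}$ is closed (being the complement in $[0,1]\setminus\{c\}$ of the open set $\bigcup_{n\ge 0}f^{-n}(J)$) and forward invariant, the continuity of $f^{j}$ at $a_{-\infty}$ yields $f^{j}(a_{-\infty})=\lim_{k\to-\infty}f^{j}(a_k)\in\Lambda_{J}$. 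Hence $\co_{f}^{+}(a_{-\infty})\subset\Lambda_{J}$, and since every gap $A_n$ is a component of $[0,1]\setminus\Lambda_{J}$, the orbit cannot meet the interiors of the $A_n$; the question reduces to the endpoints.

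Each $a_n$ is isolated in $\Lambda_{J}$, since both $A_{n-1}$ and $A_{n}$ abut it and lie in $[0,1]\setminus\Lambda_{J}$. Suppose for contradiction that $f^{j}(a_{-\infty})=a_n$ for some $0<j<l$ and $n\in\ZZ$; the cases $j\in\{0,l\}$ produce only $a_{-\infty}$, which is not in any $\overline{A_n}$. Choose a neighborhood $V\ni a_n$ with $V\cap\Lambda_{J}=\{a_n\}$. By continuity of $f^{j}$ at $a_{-\infty}$ and by $a_k\to a_{-\infty}$, for $k$ sufficiently negative one has $f^{j}(a_k)\in V$; but $f^{j}(a_k)\in\Lambda_{J}$, forcing $f^{j}(a_k)=a_n=f^{j}(a_{-\infty})$, which contradicts local injectivity of $f^{j}$ at $a_{-\infty}$ since $a_k\ne a_{-\infty}$. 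The symmetric statement for $\beta=a_{+\infty}$ follows by the same argument applied to the right-hand sequence. The main delicate point is the local homeomorphism property of $f^{j}$ at $a_{-\infty}$ for every $0\le j\le l$, which is exactly why one needs the orbit of $a_{-\infty}$ to miss $c$; this is supplied by the already established fact that $a_{-\infty}$ lies in the interior of the continuity domain $W$ of $f^{l}$.
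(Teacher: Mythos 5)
Your proof is correct, but it is organized differently from the paper's. You first place $a_{-\infty}$ in $\Lambda_J$ (closedness of $\Lambda_J$), use forward invariance to keep the whole orbit of $a_{-\infty}$ inside $\Lambda_J$ and thereby exclude the open gaps, and then rule out the endpoint case by combining the isolation of each $a_n$ in $\Lambda_J$ with local injectivity of $f^j$ at $a_{-\infty}$: the points $f^j(a_k)\in\Lambda_J$ would have to collapse onto $a_n=f^j(a_{-\infty})$. The paper argues in one stroke instead: if $f^i(a_{-\infty})\in[a_j,a_{j+1})$, then a small right neighborhood $V$ of $a_{-\infty}$ satisfies $f^i(V)\subset A_j$; choosing $m\ll 0$ with $\overline{A_m}\subset V$, the whole gap $\overline{A_m}$ is carried by $f^{i+\theta(A_j)}$ into $J$, contradicting that the endpoints of $A_m$ lie in $\Lambda_J$ (i.e.\ that $A_m\in C_J$). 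Both arguments rest on the same two facts --- gap endpoints belong to $\Lambda_J$, and iterates of $f$ are homeomorphisms near $a_{-\infty}$ --- but yours buys a cleaner interior-versus-endpoint case split at the price of invoking closedness and forward invariance of $\Lambda_J$, which the paper never needs explicitly. Two small points you should make explicit: (a) you only treat $0<j<l$; this suffices because $f^{l}(a_{-\infty})=a_{-\infty}$ makes the forward orbit equal to $\{f^{j}(a_{-\infty})\,;\,0\le j<l\}$ (alternatively, once $a_{-\infty}\in\Lambda_J$ its orbit avoids $c\in J$, so continuity and local injectivity of $f^{j}$ at $a_{-\infty}$ hold for every $j\ge0$ and no restriction is needed); (b) the closedness of $\Lambda_J$ is better argued directly (if a finite orbit segment of $x$ reaches the open set $J\ni c$, the earlier iterates avoid $c$, so the same holds for all nearby points) rather than through openness of $\bigcup_{n\ge0}f^{-n}(J)$, which is slightly delicate under the paper's convention for preimages through $c$.
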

\dem Otherwise $\exists\, i>0$ and $j\in\ZZ$ such that $f^{ i}(a_{-\infty})\in[a_ j,a_{ j+1})$.
In this case, let $V=(a_{-\infty},\delta)$ with $\delta>0$ small such that
$f^ i|_V$ is a homeomorphism and $f^ i\big((a_{-\infty},\delta)\big)\subset(a_ j,a_{ j+1})=A_ j$.
Note that one can find $m<<0$ such that $\overline{A_m}\subset V$, because $A_n=(a_n,a_{n+1})$ and $a_n\downarrow a_{-\infty}$
when $n\to-\infty$. So, $f^{i+\theta(A_j)}|_{\overline{A_m}}$ is a
homeomorphism and $\overline{f^{i+\theta(A_j)}(A_m)}=f^{i+\theta(A_j)}(\overline{A_m})\subset
f^{\theta(A_j)}(A_j)= J$, but this is impossible as $A_m\in C_J$.
\cqd

A completely analogous reasoning proves that sequence $\{ a_n \}_{n>0}$ converges to a periodic point
$a_{+\infty}>b$
with period $r=\min \{v | f^v(A_{1})=A_k, k \leq 0 \}$ and such that
$\co_{f}^{+}(a_{+\infty})\cap\bigcup_{n\in\ZZ}\overline{A_n}=\emptyset$.

Moreover, taking $\rho\le0$ so that
$f^r(A_1)=A_{\rho}$, we have
\begin{enumerate}
\item[(i$'$)] $period(a_{+\infty})=r$;
\item[(ii$'$)] $F_{I(a_{+\infty})}$ $=$ $f^r|_{I(a_{+\infty})}$ with
$I(a_{+\infty})\supset\overline{\bigcup_{n\ge1}A_n}$ $=$ $[b,a_{+\infty}]$;
\item[(iii$'$)] $F_{I(a_{+\infty})}([b,a_{+\infty}])$ $=$
$\overline{\bigcup_{n\le\rho}A_n}$ $\supset$ $\overline{\bigcup_{n\ge0}A_n}$ $=$ $[a,a_{+\infty}]$.
\end{enumerate}

We also have
\begin{equation}\tag{iv$'$}
F_{I(a_{-\infty})}(A_n)=A_{n+\rho-1}\,\forall\,n\ge1.
\end{equation}

\begin{claim}
$(a_{-\infty},a_{+\infty})=\xi J$.
\end{claim}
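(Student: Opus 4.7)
The plan is to verify that $I_{0}:=(a_{-\infty},a_{+\infty})$ satisfies the two defining requirements of $\xi J$: that $I_{0}\in\cu_{J}$, and that $I_{0}$ is contained in every other element of $\cu_{J}$. For the first, $I_{0}\supset[a,b]$ is immediate, its endpoints are periodic by items (i) and (i$'$), and $I_{0}$ is nice because the already established claims $\co_{f}^{+}(a_{\pm\infty})\cap\bigcup_{n\in\ZZ}\overline{A_{n}}=\emptyset$ together with $a_{n}\to a_{\pm\infty}$ give $\bigcup_{n\in\ZZ}\overline{A_{n}}=I_{0}$, so no forward image of either endpoint lies in $I_{0}$. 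For the period bound $\period(a_{-\infty})\le\period(a)$, set $t=\theta(A_{-1})$; the hypothesis $\co_{f}^{+}(a)=\co_{f}^{+}(b)$ combined with Proposition~\ref{Prop765091} forces $f^{t}(a)=b$, so $t<\period(a)$ since $b\ne a$. Proposition~\ref{Prop765091} also gives $f^{t}(A_{-1})=A_{0}=J$, so $t$ is admissible in the definition of $l$ with $j=0$, yielding $l\le t<\period(a)$. The right endpoint is handled symmetrically.

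For minimality I would take $(p,q)\in\cu_{J}$ and show $p\le a_{-\infty}$ (the bound on $q$ is symmetric). Suppose instead $p>a_{-\infty}$, so $p\in(a_{-\infty},a)$ (the strict inequality $p<a$ holds because $(p,q)$ is open and contains $[a,b]$). Since $a_{n}\downarrow a_{-\infty}$ and the intervals $\{A_{n}\}_{n\le-1}$ tile $(a_{-\infty},a)$ modulo the isolated points $\{a_{n}\}_{n\le-1}$, either $p\in A_{j}$ for some $j\le-1$, or $p=a_{j}$ for some $j\le-1$. In the first case $p\notin\Lambda_{J}$, so $\co_{f}^{+}(p)\cap J\ne\emptyset$; since $J\subset(p,q)$, this contradicts niceness of $(p,q)$. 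In the second case $p$ is periodic by the definition of $\cu_{J}$, and $f^{\theta(A_{j})}(a_{j})=a$ gives $a\in\co_{f}^{+}(p)$; as $a\in(p,q)$, niceness is once more violated. Combining the two steps, $I_{0}\in\cu_{J}$ and $I_{0}\subseteq V$ for every $V\in\cu_{J}$, whence $I_{0}=\bigcap_{V\in\cu_{J}}V=\xi J$.

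The subtle point is the period bound in the first step: one has to leverage the shared-orbit hypothesis $\co_{f}^{+}(a)=\co_{f}^{+}(b)$ to conclude $\theta(A_{-1})<\period(a)$, which is precisely what makes $a_{-\infty}$ qualify as a valid endpoint of a member of $\cu_{J}$. Everything else follows from Proposition~\ref{Prop765091} and the structural properties of $C_{J}$ already established in this section.
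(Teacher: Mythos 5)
Your overall architecture is the same as the paper's: first show that $I_{0}=(a_{-\infty},a_{+\infty})=\bigcup_{n\in\ZZ}\overline{A_{n}}$ is a periodic nice interval whose endpoints have periods bounded by those of $a$ and $b$ (so $I_{0}\in\cu_{J}$), and then rule out anything smaller by taking an endpoint of a competing interval, locating it in some $\overline{A_{w}}$, and pushing it forward by $f^{\theta(A_{w})}$ into $[a,b]$, contradicting niceness. Your two-case split ($p$ in the interior of a gap versus $p=a_{j}$) is exactly the paper's argument written out, and the niceness of $I_{0}$ via $\co_{f}^{+}(a_{\pm\infty})\cap\bigcup_{n}\overline{A_{n}}=\emptyset$ is also the paper's step.

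There is, however, one logically invalid inference in your period bound, and it is precisely the point that makes $I_{0}$ qualify for $\cu_{J}$. From $f^{t}(a)=b$ and $b\ne a$ (with $t=\theta(A_{-1})$) you conclude $t<\period(a)$; but that hypothesis only tells you $t\not\equiv 0 \pmod{\period(a)}$ — nothing a priori prevents $t$ from exceeding $\period(a)$. The statement is true, but it needs an argument along the following lines: let $s$ be the least positive integer with $f^{s}(a)=b$; since $b\in\co_{f}^{+}(a)\setminus\{a\}$, necessarily $s<\period(a)$. Because the orbit of $a$ avoids $c$, $f^{s}$ is continuous and increasing on a small left-neighborhood $(a-\varepsilon,a)\subset A_{-1}$, whose image is then a left-neighborhood of $b$, hence contained in $J$. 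Thus $f^{s}(A_{-1})\cap J\ne\emptyset$, and the minimality in Proposition~\ref{Prop765091}(2) gives $\theta(A_{-1})\le s<\period(a)$, which is what your chain $l\le\theta(A_{-1})<\period(a)$ actually requires. (To be fair, the paper itself asserts $\theta(A_{-1})<\period(a)$ with only the words ``by construction,'' but in a self-contained proof this step has to be supplied; with it, your argument is complete and coincides with the paper's.)
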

\dem
Note that $(a_{-\infty},a_{+\infty})=\bigcup_{n\in\ZZ}\overline{A_n}$ is a periodic nice interval
(because $\big(\co_{f}^{+}(a_{-\infty})\cup\co_{f}^{+}(a_{+\infty})\big)$ $\cap$ $\bigcup_{n\in\ZZ}\overline{A_n}$
$=$ $\emptyset$) with $period(a_{-\infty})<period(a)$ and $period(a_{+\infty})<period(b)$ (observe that by construction $period(a_{-\infty})=l=min\{k \geq 0 | f^k(A_{-1})=A_j,$ for some $j \ge0\} \le \theta(A_{-1})< period(a)$) .

Thus, to prove the claim we only have to show that there is no periodic nice interval $(p,q)$
such that $(a_{-\infty},a_{+\infty})\supsetneqq(p,q)\supset[a,b]$.

Indeed, if exists $(p,q)\in\cn_{per}$ such that $(a_{-\infty},a_{+\infty})\supsetneqq(p,q)\supset[a,b]$ then
$\exists\,x\in\{p,q\}$ such that $x\in(a_{-\infty},a)\cup(b,a_{+\infty})$. In
this case, $x\in \overline{A_w}$
for some $w \in \ZZ$ and then $f^{\theta(A_w)}(x)\in f^{\theta(A_w)}(\overline{A_w})=[a,b]\subset(p,q)$,
but this is impossible as $(p,q)$ is a nice interval.
\cqd

As $\alpha=a_{-\infty}$ and $\beta=a_{+\infty}$, it follows from (i),(i$'$),(ii),(ii$'$),(iii),(iii$'$),(iv) and (iv$'$)
the items (1),(1$'$),(2),(2$'$),(3),(3$'$),(4), (4$'$), (6) and (6$'$) of the Proposition.

\begin{claim}
If $(\alpha,a)\supset I\in C_J$ then  $F_I=F_{A_i}\circ (F_{I(\alpha)})^k$ for some $i\in\{0,...,\gamma\}$ and $k\ge1$.
\end{claim}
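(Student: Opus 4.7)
By item (6) already established, any gap $I\in C_J$ contained in $(\alpha,a)$ has the form $I=A_{-j}$ for some $j\ge 1$. The strategy is to use item~(4) to ``undo'' $A_{-j}$ through a chain $A_{-j}\mapsto A_{-j+(\gamma+1)}\mapsto\cdots$ until the index becomes non-negative, at which point a single branch $F_{A_i}$ takes us to $J$. Concretely, set $k=\lceil j/(\gamma+1)\rceil\ge 1$ and $i=k(\gamma+1)-j$; by minimality of $k$ we have $i\in\{0,1,\ldots,\gamma\}$.

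Repeatedly applying item~(4), and observing that the indices $-j+r(\gamma+1)$ remain $\le -1$ for $0\le r\le k-1$, we obtain $(F_{I(\alpha)})^r(A_{-j})=A_{-j+r(\gamma+1)}$ for $0\le r\le k-1$ and finally $(F_{I(\alpha)})^k(A_{-j})=A_i$. Setting $l=\period(\alpha)$, this iterate coincides with $f^{kl}|_{A_{-j}}$, a diffeomorphism, since $F_{I(\alpha)}=f^l|_{I(\alpha)}$ is one. Post-composing with $F_{A_i}=f^{\theta(A_i)}|_{\widehat{A_i}}$ produces the map $F_{A_i}\circ(F_{I(\alpha)})^k=f^{kl+\theta(A_i)}|_{A_{-j}}$, which is a diffeomorphism of $A_{-j}$ onto $J$.

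To finish, we need this composition to equal $F_{A_{-j}}=f^{\theta(A_{-j})}|_{\widehat{A_{-j}}}$, i.e.\ $\theta(A_{-j})=kl+\theta(A_i)$. The main step is the auxiliary estimate $\theta(A_n)\ge l$ for every $n\le-1$: since $A_n\subset I(\alpha)$ and $f^l|_{I(\alpha)}$ is a diffeomorphism, $c\notin f^s(A_n)$ for $0\le s\le l-1$; but if $\theta(A_n)<l$ then $f^{\theta(A_n)}(A_n)=J\ni c$ by Proposition~\ref{Prop765091}, a contradiction. Granted this, for $0\le s<kl+\theta(A_i)$ write $s=rl+s'$: if $r<k$ and $0\le s'<l$, then $f^s(A_{-j})=f^{s'}(A_{-j+r(\gamma+1)})$ misses $J$ because $\theta(A_{-j+r(\gamma+1)})\ge l>s'$; if $r=k$ and $0\le s'<\theta(A_i)$, the same conclusion follows directly from the definition of $\theta(A_i)$. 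Hence $\theta(A_{-j})\ge kl+\theta(A_i)$, and the reverse inequality is immediate from the composition itself.

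The main obstacle is the inequality $\theta(A_n)\ge l$, which couples the monotonicity domain $I(\alpha)$ of $f^l$ with the fact that $c\in J$; once this is in hand, the rest of the argument is just combinatorial bookkeeping on the $(\gamma+1)$-shift produced by item~(4).
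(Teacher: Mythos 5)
Your proof is correct and follows essentially the same route as the paper: use item (6) to write $I=A_{-j}$, iterate $F_{I(\alpha)}$ (which by item (4) shifts the gap index by $\gamma+1$) until the index becomes non-negative, landing in some $A_i$ with $i\in\{0,\dots,\gamma\}$, and then compose with $F_{A_i}$; the paper defines $k$ as a first-exit time while you compute $k$ and $i$ explicitly, which is only a cosmetic difference. Your additional verification that $\theta(A_{-j})=k\,\period(\alpha)+\theta(A_i)$, via the estimate $\theta(A_n)\ge\period(\alpha)$ for $n\le-1$, correctly fills in the identification of the composition with the branch $F_{A_{-j}}$, a step the paper asserts without comment.
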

\dem Note that $\{I\in C_J$ $|$ $I\subset(\alpha,a)\}$ $=$ $\{A_n\}_{n\le-1}$.
Given $(\alpha,a)\supset I\in C_J$,
write $I=A_m$, with $m\le-1$, and
let $k=\min\{s\ge0$ $|$ $(F_{I(\alpha)})^s(A_m)\notin\{A_n\}_{n\le-1}\}$. Of course that $k\ge1$.
By (iv), $$F_{I(\alpha)}(\{A_n\}_{n\le-1})=\{F_{I(\alpha)}(A_n)\}_{n\le-1}=\{A_n\}_{n\le\gamma}=$$
$$=\{A_n\}_{n\le-1} \cup \{A_0,...,A_{\gamma}\}.$$
So, $(F_{I(\alpha)})^k(A_m)$ $\in$ $\{A_0, ...,A_{\gamma}\}$. Writing $A_i=(F_{I(\alpha)})^k(A_m)$, we get
$J=F_{A_i}(A_i)=F_{A_i}\circ(F_{I(\alpha)})^k(A_m)$ and so, $F_I=F_{A_m}=F_{A_i}\circ(F_{I(\alpha)})^k$.
\cqd

The claim above proves item ($5$) of the Proposition and a completely analogous reasoning proves item ($5'$), completing the proof of the Proposition.

\cqd

\begin{Corollary}\label{Corollaryimperfeito} Let $f:[0,1]\setminus\{c\}\to[0,1]$ be a $C^{2}$ contracting Lorenz map. 
Let $J=(a,b) \in \mathcal{N}_{per}$ with $\co_{f}^{+}(a)=\co_{f}^{+}(b)$  and such that $\Lambda_{J}$ does not contain periodic attractors or weak repellers. If $p\in Per(f)\cap\xi J$ then either $p\in\co_f^+(a)$ or $\co_f^+(p)\cap J\ne\emptyset$. Furthermore, if $x\in\xi J$ and $\co_{f}^{+}(x)\cap J=\emptyset$ then $x\in\co_{f}^{-}(a)$.
\end{Corollary}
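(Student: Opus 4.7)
The plan is to reduce the corollary to the structural description of $\xi J$ provided by Theorem~\ref{imperfeito}, and then handle a short case analysis on where a point of $\xi J$ sits in the decomposition $\xi J = \bigcup_{n\in\ZZ}\overline{A_n}$. I would prove the second (stronger) statement first and recover the first from it together with the periodicity hypothesis.

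Recall from Theorem~\ref{imperfeito} that, writing $\xi J=(\alpha,\beta)$, the interval $\xi J$ decomposes as $\bigcup_{n\in\ZZ}\overline{A_n}$, where $A_n=(a_n,a_{n+1})$, $A_0=J$, and $\cdots<a_{-1}<a_0=a<b=a_1<a_2<\cdots$; moreover, each $A_n\in C_J$, so by Proposition~\ref{Prop765091} the branch $F_{A_n}=f^{\theta(A_n)}|_{\overline{A_n}}$ is an orientation-preserving homeomorphism from $\overline{A_n}$ onto $[a,b]$. In particular the only points of $\xi J\cap\Lambda_J$ are the endpoints $\{a_n\}_{n\in\ZZ}$ (isolated points of $\Lambda_J$ by Lemma~\ref{23/02/07}).

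For the second statement, take $x\in\xi J$ with $\co_f^+(x)\cap J=\emptyset$. If $x$ lies in the interior of some $A_n$, then $f^{\theta(A_n)}(x)\in f^{\theta(A_n)}(A_n)=J$, contradicting the hypothesis. So $x=a_n$ for some $n\in\ZZ$. Orientation-preservation of $F_{A_n}$ forces $F_{A_n}(a_n)=a$ and $F_{A_n}(a_{n+1})=b$; applying this to the index $n$ when $n\le 0$ yields $f^{\theta(A_n)}(a_n)=a$, and applying it to the index $n-1$ when $n\ge 1$ yields $f^{\theta(A_{n-1})}(a_n)=b$. Since $b\in\co_f^+(a)$ by hypothesis and $a$ is periodic, $b\in\co_f^-(a)$ as well, so in every case $a_n\in\co_f^-(a)$, proving the second assertion.

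For the first statement, let $p\in Per(f)\cap\xi J$. Again either $p$ lies in some $A_n$, in which case $f^{\theta(A_n)}(p)\in J$ and hence $\co_f^+(p)\cap J\ne\emptyset$, or $p=a_n$ and the previous paragraph gives $p\in\co_f^-(a)$. In the latter case $f^k(p)=a$ for some $k\ge 0$; since $p$ is periodic, $a$ belongs to the finite orbit $\co_f^+(p)$, so $\co_f^+(p)=\co_f^+(a)$ and $p\in\co_f^+(a)$, as desired. There is no real obstacle here; the content of the argument is entirely contained in Theorem~\ref{imperfeito} and Proposition~\ref{Prop765091}, and the only points requiring care are the orientation-preservation bookkeeping that identifies which endpoint of $A_n$ maps to $a$ and which to $b$, and the elementary observation that a periodic point in $\co_f^-(a)$ for periodic $a$ must lie in the orbit of $a$.
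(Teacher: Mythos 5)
Your proof is correct and follows essentially the same route as the paper's: both rest on the decomposition $\xi J=\bigcup_{n\in\ZZ}\overline{A_n}$ from items (6) and (6$'$) of Theorem~\ref{imperfeito}, send interior points of a gap $A_n$ into $J$ at time $\theta(A_n)$, send endpoints to $\{a,b\}$, and then use periodicity of $p$ to upgrade $p\in\co_f^{-}(a)$ to $p\in\co_f^{+}(a)$. The only difference is that you spell out the small step the paper leaves implicit, namely that $\co_f^{+}(a)=\co_f^{+}(b)$ with $a,b$ periodic gives $b\in\co_f^{-}(a)$, so landing on $b$ still puts the point in the pre-orbit of $a$.
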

\dem
Write $(\alpha, \beta )=\xi J$. It follows from items (6) and (6$'$) of the Theorem~\ref{imperfeito} that $(\alpha,\beta)=\bigcup_{n\in\ZZ}A_n\uplus\bigcup_{n\in\ZZ}\partial A_n$.
If $p\in A_j$ for some $j$ then $f^{\theta(A_j)}(p)\in f^{\theta(A_j)}(A_j)=J$.
On the other hand, if $p\notin A_j$ $\forall\,j\in\ZZ$ then $p\in\partial A_j$ for some $j\in\ZZ$.
Thus, $f^{\theta(A_j)}(p)\in f^{\theta(A_j)}(\partial A_j)=\{a,b\}$.
 This implies that $x\in\xi J$ and $\co_{f}^{+}(x)\cap J=\emptyset$ then $x\in\co_{f}^{-}(a)$. As a consequence, if $p\in\xi J$ is periodic and $\co_{f}^{+}(p)\cap J=\emptyset$ 
then $p\in\co_f^+(a)$ $(=\co_f^+(b))$.
\cqd

\newpage

\section{First Return Maps to Nice Intervals}\label{Section98767}

Given an interval $J=(a,b)$ denote the {\em first return map} to $J$ by $\cf_{J}:J^{*}\to J$. That is, $\cf_{J}(x)=f^{R(x)}(x)$, where $J^{*}=\{x\in J$ $;$ $\co^{+}_{f}(f(x))\cap J\ne\emptyset\}$ and $R(x)=\min\{j\ge1$ $;$ $f^{j}(x)\in J\}$. By Lemma~\ref{Remark98671oxe}, if $c\in J$ then $\leb(J\setminus J^{*})=0$. Let $\cp_{J}$ be the collection of connected components of $J^{*}$. 

\begin{center}\label{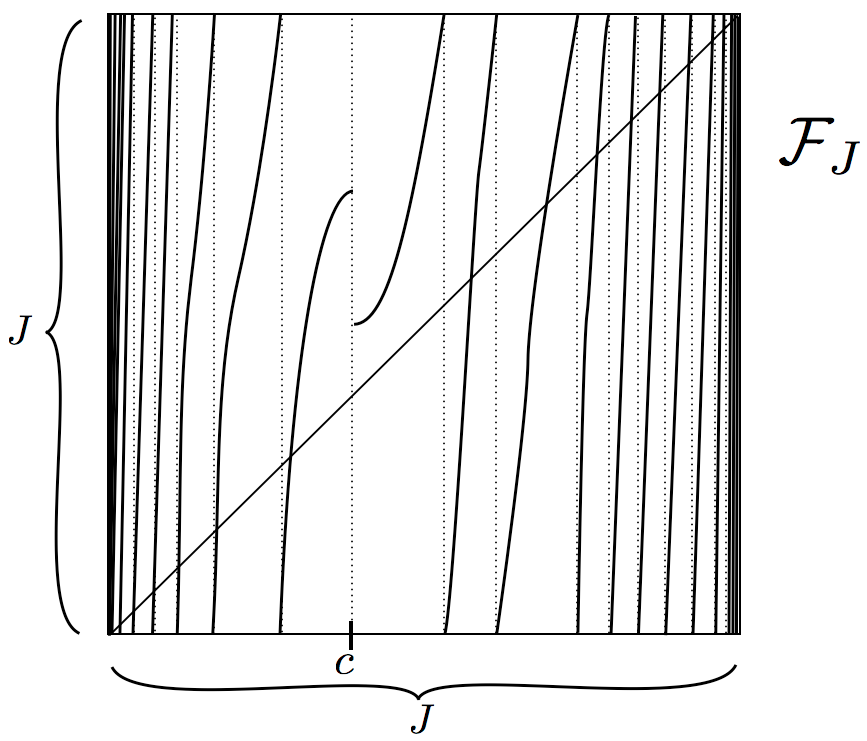}
\includegraphics[scale=.26]{PrimeiroRetorno.png}\\
\end{center}

\begin{Lemma}\label{Lemma8388881}
If $J=(a,b)$ is a non-empty open interval then every $I\in\cp_{J}$ is an open set and $R|_{I}$ is constant.
\end{Lemma}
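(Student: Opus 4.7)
The plan is to prove the pointwise statement: for each $x_0\in J^*$ there is an open interval $U\ni x_0$ contained in $J^*$ with $R|_U\equiv R(x_0)$. From this, $J^*$ is open, each $I\in\cp_J$ is an open interval, and the integer-valued map $R$, being locally constant, is constant on each component.

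Fix $x_0\in J^*$ and set $n:=R(x_0)$. The starting observation is that $f^j(x_0)\ne c$ for every $0\le j\le n-1$: otherwise, by the orbit convention fixed in Section \ref{MainResults}, $f^{j+1}(x_0)$ would fail to exist, contradicting that $f^n(x_0)$ is a well-defined point of $J$. Since $c$ is the only discontinuity of $f$, each iterate $f^j$ with $0\le j\le n$ is continuous on a common open interval around $x_0$. Continuity of $f^n$ together with openness of $J$ then gives $\delta_1>0$ with $f^n(y)\in J$ for all $y\in(x_0-\delta_1,x_0+\delta_1)$, so this interval already lies in $J^*$ and $R(y)\le n$ there.

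It remains to ensure $R(y)\ge n$, i.e., $f^j(y)\notin J$ for every $1\le j\le n-1$, on a possibly smaller neighborhood. For each such $j$, $f^j(x_0)\in[0,1]\setminus J=[0,a]\cup[b,1]$; when $f^j(x_0)\notin\partial J$, it lies in the open set $[0,a)\cup(b,1]$, and continuity of $f^j$ keeps $f^j(y)\notin J$ on a small neighborhood of $x_0$. Intersecting these finitely many open conditions yields $U$. The delicate case, and the main obstacle, is $f^j(x_0)\in\partial J=\{a,b\}$ for some intermediate $j$; in the context of this section, $J$ is a nice interval, so $\co_f^+(\partial J)\cap J=\emptyset$, whereas that scenario would force $f^n(x_0)=f^{n-j}(f^j(x_0))\in\co_f^+(\partial J)\cap J$, a contradiction. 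Hence the bad case does not arise and the construction of $U$ is complete.
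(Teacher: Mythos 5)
Your proof follows the same core line as the paper's: the finite orbit segment $x_0,f(x_0),\dots,f^{n-1}(x_0)$ avoids $c$, so $f^{n}$ is continuous near $x_0$, and continuity plus openness of $J$ gives that a whole neighborhood of $x_0$ lies in $J^{*}$ with return time at most $n$; this yields openness of $J^{*}$ (hence of each $I\in\cp_{J}$) exactly as in the paper, and this part needs no extra hypothesis. Where you go beyond the paper is in isolating the case $f^{j}(x_0)\in\partial J$ for some intermediate $j$, which is precisely the step the paper's one-line assertion ``every nearby point returns at the same time'' passes over; your contradiction $f^{n}(x_0)=f^{n-j}(f^{j}(x_0))\in\co_{f}^{+}(\partial J)\cap J$ is correct \emph{when $J$ is a nice interval}.

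The caveat is that niceness is not among the hypotheses of the lemma: it is stated for an arbitrary non-empty open interval $J$, and it is later applied to intervals that are not assumed nice (e.g.\ in Lemma~\ref{Lemma8388881a}, where only a one-sided condition such as $\co_f^+(a)\cap(a,b)=\emptyset$ is imposed, so the orbit of the other endpoint may re-enter $J$). So, as written, your argument establishes the full conclusion only for nice $J$. This is not a cosmetic point: if some $f^{j}(x_0)$ lands exactly on an endpoint of $J$ whose forward orbit later re-enters $J$, then points on one side of $x_0$ return strictly earlier than $n$ while points on the other side return at time $n$, and all of them lie in $J^{*}$; hence $R$ need not be constant on the component of $J^{*}$ containing $x_0$, and the constancy claim genuinely requires a hypothesis of this type (or must be read in the situations where the lemma is actually invoked). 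In short: same method as the paper, with a more careful treatment of the boundary case, but state explicitly that your constancy argument uses the no-return property of $\partial J$, since the lemma as printed does not grant it.
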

\dem  Let $I\in\cp_{J}$ and $x\in I$. Let $n=R(x)$. $f^n$ is a continuous function (defined on $[0,1]\setminus\bigcup_{k=0}^{n-1}f^{-j}(c)$). Thus, every point $y\notin\bigcup_{k=0}^{n-1}f^{-j}(c)$ sufficiently close to $x$ will return to $J$ at the same time of $x$. Thus, $I$ is open and, as $I$ is connected, $R(y)=R(x)$ for every $x\in I$.
\cqd

\begin{Lemma}\label{Lemma8388881a}
Let $J=(a,b)$ be an interval with $c\in\overline{J}$.
The following three statements are true.
\begin{enumerate}
\item If $a<c$ and $\co_f^+(a)\cap(a,b)=\emptyset$ then
\begin{enumerate}
\item $\big((p,q)\in\cp_{J}\text{ and }p\ne c\big) \Rightarrow \cf_{J}((p,q))=(a,f^{R|_{(p,q)}}(q));$
\item $\big((p,q)\in\cp_{J}\text{ and }q<c\big) \Rightarrow \cf_{J}((p,q))=J.$
\end{enumerate}

\item If $b>c$ and $\co_f^+(b)\cap(a,b)=\emptyset$ then
\begin{enumerate}
\item $\big((p,q)\in\cp_{J}\text{ and }q\ne c\big) \Rightarrow \cf_{J}((p,q))=(f^{R|_{(p,q)}}(p),b);$
\item $\big((p,q)\in\cp_{J}\text{ and }p>c\big) \Rightarrow \cf_{J}((p,q))=J.$
\end{enumerate}
\item If $J$ is a nice interval then $$\big(I\in\cp_{J}\text{ and }c\notin\partial I\big) \Rightarrow \cf_{J}(I)=J.$$
\end{enumerate}
\end{Lemma}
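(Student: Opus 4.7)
My plan is to classify $\cf_J((p,q))$ for each component $(p,q)\in\cp_J$ by a careful one-sided-limit analysis at the boundary. Set $n=R|_{(p,q)}$; by Lemma~\ref{Lemma8388881} the return time is constant on $(p,q)$, so no $x\in(p,q)$ visits $c$ before time $n$, and $f^n|_{(p,q)}$ is continuous and (since $f$ is orientation preserving) strictly increasing. Therefore $\cf_J((p,q))=(u,v)$, where
\[
u=\lim_{x\downarrow p}f^n(x)\qquad\text{and}\qquad v=\lim_{x\uparrow q}f^n(x),
\]
and both limits lie in $\overline J=[a,b]$. The right-endpoint formula in (1)(a) is then tautological, so the real content of the three assertions is to identify the boundary values: $u=a$ for (1)(a), $v=b$ as well when $q<c$ for (1)(b), the two mirror statements in (2), and (3) by combining the two halves.

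The first step is a boundary-regularity observation. The point $p$ cannot lie in $J^*$, since the components of $J^*$ are pairwise disjoint open intervals and $p\in J^*$ would place a neighbourhood of $p$ in a single component conflicting with $(p,q)$. Assuming the non-degenerate case $c\in(a,b)$, this also rules out $f^j(p)=c$ for every $0\le j<n$, because such a hit would give $R(p)\le j$, i.e.\ $p\in J^*$. Hence $f^n$ extends continuously through $p$, $u=f^n(p)$, and moreover $R(p)=\infty$, so $f^j(p)\notin J$ for every $j\ge1$; the same conclusions hold at $q$.

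For (1)(a) I show $u=f^n(p)=a$ by inspecting the orbit of $p$. The possibility $f^j(p)=a$ with $1\le j<n$ is excluded: for $x$ just to the right of $p$, $f^j(x)$ is slightly above $a$ and therefore in $J$, contradicting $R|_{(p,q)}=n$. If $f^j(p)=b$ for some $1\le j<n$, the tail $\co_f^+(b)=\{f^{j+s}(p):s\ge0\}$ must avoid $J$ because $R(p)=\infty$, so $f^n(p)\in\co_f^+(b)\subset[0,1]\setminus J$. In the remaining subcase $f^j(p)\in[0,a)\cup(b,1]$ strictly for every $1\le j<n$, continuity propagates this to nearby $x$ on both sides of $p$, so $f^n(p)\in(a,b)$ would force $R(x)=n$ in a two-sided neighbourhood of $p$, making $p$ an interior point of the component --- a contradiction. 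In every branch $f^n(p)\in\overline J\setminus J=\{a,b\}$, and $f^n(p)=b$ is impossible since it would push $f^n(x)>b$ on the right of $p$, forcing the image out of $J$. Therefore $u=f^n(p)=a$.

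For (1)(b) the hypothesis $q<c$ gives $p<q<c$, so $p\ne c$ and (1)(a) already yields $u=a$. To obtain $v=b$ I run the same analysis at $q$ with left and right interchanged: an intermediate hit $f^j(q)=b$ is excluded (it would place $f^j(x)\in J$ just below $q$), an intermediate hit $f^j(q)=a$ is handled by the hypothesis $\co_f^+(a)\cap J=\emptyset$ (the orbit of $a$ stays outside $J$ thereafter), the ``no intermediate hit'' subcase is the same continuity argument, and $f^n(q)=a$ is ruled out because it would force $f^n(x)<a$ on the left of $q$. Hence $v=b$ and $\cf_J((p,q))=J$. Statement (2) is the verbatim mirror with $a,b,p,q$ interchanged, using $\co_f^+(b)\cap J=\emptyset$ in place of $\co_f^+(a)\cap J=\emptyset$; (3) then follows at once, for a nice $J$ supplies both one-sided hypotheses, and $c\notin\partial I$ gives $p\ne c$ and $q\ne c$, so (1)(a) and (2)(a) jointly yield $u=a$ and $v=b$. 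The only subtle point throughout is the intermediate hit at the ``opposite'' endpoint (the case $f^j(p)=b$ in (1)(a) and its mirror in the proof of $v=b$): it is exactly the observation $p,q\notin J^*$ that makes the subsequent orbit avoid $J$ and so silently supplies the opposite-side niceness whenever it is needed.
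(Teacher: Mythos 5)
Your argument is correct and follows essentially the same route as the paper's proof: identify the image of a component as the interval between the one-sided limits of the monotone branch $f^{n}$, use constancy of the return time together with maximality of the component to force the boundary value out of the interior of $J$, and use monotonicity to exclude the wrong endpoint; your preliminary observation that $p,q\notin J^{*}$ and the case analysis on intermediate hits at $a$ or $b$ are only a mild repackaging (indeed partly redundant, since $R(p)=\infty$ already yields $f^{n}(p)\notin J$). Your explicit restriction to the non-degenerate case $c\in(a,b)$ matches the paper's own proof, which makes the same assumption implicitly when it deduces $f^{j}(p)\neq c$ from $f^{j}(I)\cap(a,b)=\emptyset$.
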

\dem\,
Of course (3) is a consequence of (1) and (2). Assume that $I=(p,q)\in\cp_{J}$ and $p\ne c$. Let $n=R|_{I}$.
 As $\co_{f}^{+}(a)\cap J=\emptyset$, if $p=a$ then $f^{n}(p)\le a$. If $f^{n}(p)<a$ then $f^{n}(p+\varepsilon)<a$ for $\varepsilon>0$ sufficiently small. This is an absurd, as $n$ is a return time to $p+\varepsilon\in I$. Thus, $f^{n}(p)=a$ whenever $p=a$. So, consider now $a<p$. As $n$ is the first return time of $I$ to $(a,b)$, $f^{j}(I)\cap (a,b)=\emptyset$ for every $0<j<n$.
Thus,  $f^{j}(p)\ne c$ $\forall\,0\le j<n$.
Thus $f^{n}$ is continuous in $(p-\delta,p+\delta)$ for a sufficiently small $\delta>0$.
As a consequence, if $a<f^{n}(p)<b$ then, taking $\delta>0$ small, $n$ will be the first return time for $(p-\delta,q)$ to $(a,b)$, contradicting $I\in\cp_{J}$.
So, $f^{n}(p)=a$ proving (a).
Yet in the hypothesis of (1), suppose $q<c$. Then, we also are in the hypothesis of (a), and similarly to what we've done to p, as $q\ne c$, as $n$ is the first return time of $I$ to $(a,b)$, $f^{j}(I)\cap (a,b)=\emptyset$ for every $0<j<n$ implies also that  $f^{j}(q)\ne c$ $\forall\,0\le j<n$ and (similar reasoning) that $f^{n}(q)=b$, proving (b)

Analogously (2) follows. 
\cqd

\begin{figure}
\begin{center}
\includegraphics[scale=.35]{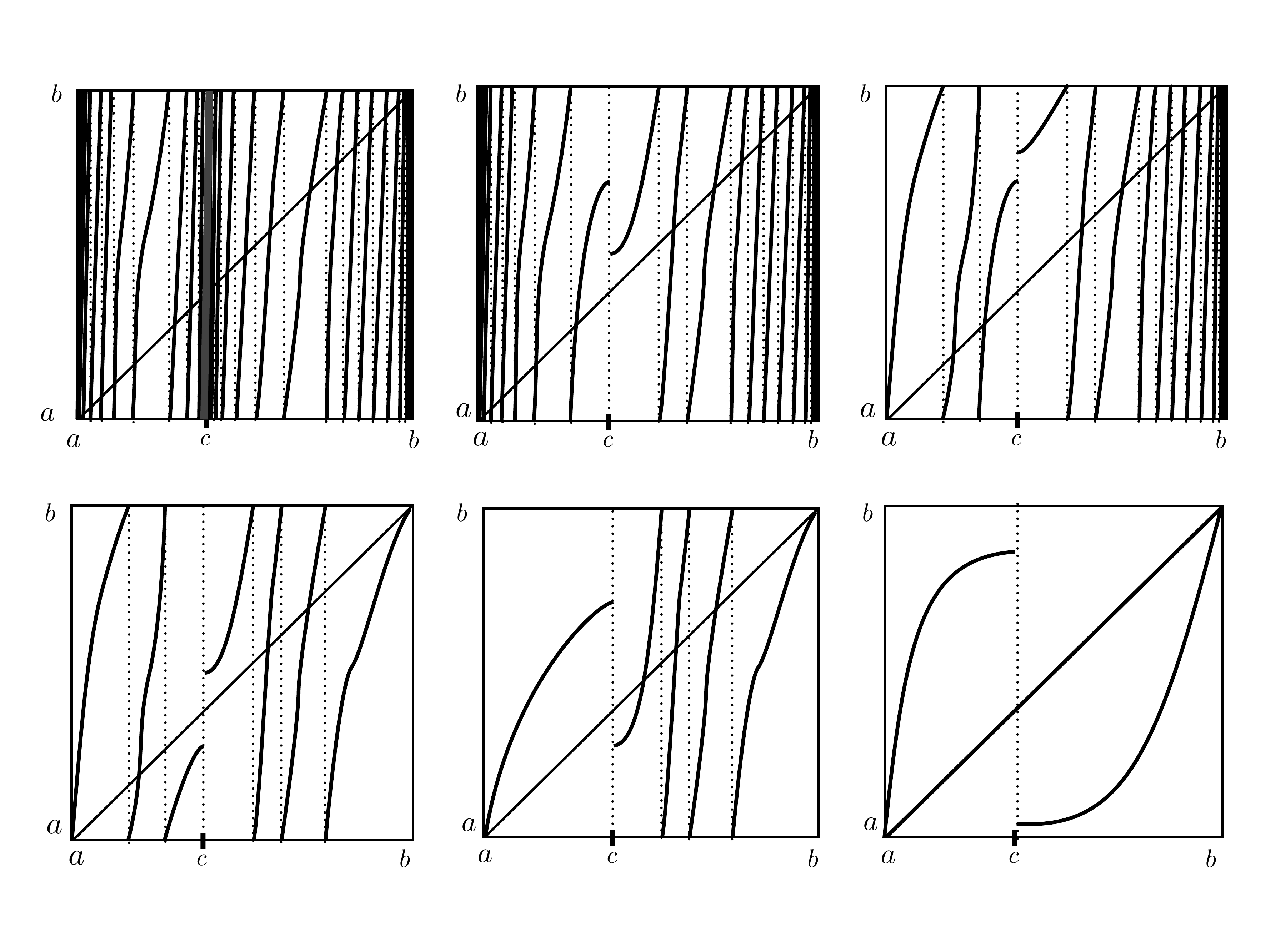}\\
\caption{Some examples of first return maps to a nice interval.}\label{ExemplosPR.png}
\end{center}
\end{figure}

\begin{Corollary}\label{Corollary8388881b}
If $J=(a,b)$ is an interval with $c\in\overline{J}$ then the following statements are true.
\begin{enumerate}
\item If $a<c$ and $\co_f^+(a)\cap(a,b)=\emptyset$ then$$a\in\partial I\text{ for some }I\in\cp_{J}\Leftrightarrow\,\,a\in Per(f).$$
\item If $b>c$ and $\co_f^+(b)\cap(a,b)=\emptyset$ then $$b\in\partial I\text{ for some }I\in\cp_{J}\Leftrightarrow\,\,b\in Per(f).$$
\end{enumerate}
\end{Corollary}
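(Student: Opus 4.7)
Plan: The two items are symmetric, so I will focus on (1); the proof of (2) is identical with left/right interchanged, using Lemma~\ref{Lemma8388881a}(2)(a) in place of (1)(a). Both implications pivot on Lemma~\ref{Lemma8388881a}(1)(a), which, for any $(p,q)\in\cp_J$ with $p\ne c$, identifies $\cf_J((p,q))=(a,f^{R|_{(p,q)}}(q))$: the image always has $a$ as its left endpoint. Since $a<c$, the hypothesis $p=a\ne c$ is automatically met for every component of $\cp_J$ touching $a$, so the lemma is applicable in both directions.

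For the implication $a\in\per(f)\Longrightarrow a\in\partial I$ for some $I\in\cp_J$, let $n$ be the (minimal) period of $a$. The orbit $\co_f^+(a)$ is a finite cycle contained in $[0,a]\cup[b,1]$, so in particular $c\notin\co_f^+(a)$. Consequently $f^n$ is a composition of orientation-preserving $C^2$ branches of $f$ on neighborhoods not containing $c$, hence $f^n$ is $C^2$, orientation-preserving on a neighborhood of $a$, and $f^n(a)=a$. Choose $\delta>0$ small enough that $f^n((a,a+\delta))\subset(a,b)$ and, for every $0<j<n$, $f^j((a,a+\delta))\cap(a,b)=\emptyset$; this is possible because $f^j(a)\in[0,a]\cup[b,1]$ and, by minimality of the period, $f^j(a)\ne a$, so the image of $(a,a+\delta)$ under $f^j$ stays in a small neighborhood of $f^j(a)$ disjoint from $(a,b)$. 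Hence $(a,a+\delta)\subset J^*$ and the component of $\cp_J$ containing $(a,a+\delta)$ has $a$ as its left endpoint.

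For the converse, suppose $a\in\partial I$ for some $I\in\cp_J$. Because $a$ is the left endpoint of $J$ and $I\subset J$, necessarily $I=(a,q)$ with $q\le c$. Setting $n=R|_I$, Lemma~\ref{Lemma8388881a}(1)(a) gives $\cf_J(I)=(a,f^n(q))$, which forces $\lim_{x\downarrow a}f^n(x)=a$. As soon as $c\notin\{a,f(a),\dots,f^{n-1}(a)\}$, the map $f^n$ extends continuously to $a$ from the right, so $f^n(a)=a$ and $a\in\per(f)$. The main technical point is exactly this continuity of $f^n$ at $a$: when $c\in(a,b)$ it is automatic, since $\co_f^+(a)\cap(a,b)=\emptyset$ then directly gives $c\notin\co_f^+(a)$; the borderline case $c=b$ is where some care is required, since $f^k(a)=c$ would not be excluded by the hypothesis, and must be treated by a one-sided limit argument using the orbit of $c_+$ to show that the closure of the orbit at $a$ still yields $a\in\per(f)$ (or that such a configuration is incompatible with $a\in\partial I$).
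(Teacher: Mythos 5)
Your proof follows the paper's own route in both directions: the forward implication reads off $\lim_{x\downarrow a}f^{n}(x)=a$ from Lemma~\ref{Lemma8388881a}(1)(a) applied to the component $I=(a,q)$, and the converse manufactures a one-sided return domain $(a,a+\delta)$ from the periodicity of $a$ together with $\co_f^+(a)\cap(a,b)=\emptyset$, which is exactly the paper's argument. Your converse is sound: since a genuine periodic orbit cannot meet $c$, the branch of $f^{n}$ at $a^{+}$ is defined, and orientation preservation keeps $f^{j}((a,a+\delta))$ just to the right of $f^{j}(a)$, so even the sub-case $f^{j}(a)=b$ is harmless. In the situation in which the paper actually uses the corollary (namely $a<c<b$, e.g.\ $J$ a nice interval), your forward direction is also complete, and there the two proofs coincide.

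The one step you defer, the borderline case $c=b$ with $f^{k}(a)=c$ for some $k<n$, is a genuine gap as written, and neither of the two resolutions you sketch can be carried out. In that configuration the germ at $a^{+}$ follows the orbit of $c_{+}$ after time $k$, so the left-endpoint identity of Lemma~\ref{Lemma8388881a} forces $f^{n-k}(c_{+})=a$; then $a$ lies on a finite cycle through $c$ (a super-attractor in the paper's terminology), $f^{n}(a)$ is simply undefined, and $a\notin Per(f)$. Such a configuration is perfectly compatible with $a\in\partial I$ for some $I\in\cp_{J}$ (points just right of $a$ do return to $J$ at time $n$), so one cannot show it is ``incompatible'', nor recover $a\in Per(f)$ by a one-sided limit; the honest statement in that degenerate case is ``$a$ is periodic or belongs to a super-attractor''. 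This is precisely the hedge the paper itself makes in the converse half of its proof (``suppose that $a\in Per(f)$ or $a$ is a super attractor''), while its forward half silently assumes $f^{n}(a)$ is defined — so you are no worse off than the paper, but as a standalone proof of the stated equivalence under the hypothesis $c\in\overline{J}$ the deferred case remains open, and your proposed way of closing it would not succeed.
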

\dem
If $I=(a,q)\in\cp_{J}$ (the case $I=(q,b)$ is analogous) and $n=R|_{I}$, it follows from Lemma~\ref{Lemma8388881a} that 
$f^n(a)=\cf_{J}(a)=a$. That is, $a$ is a periodic point.

Now suppose that $a\in Per(f)$ or $a$ is a super attractor (the proof for $b$ is analogous).
Thus, there is $n>0$ such that $\lim_{\delta \downarrow 0} f^{n}(a+\delta)=a$ and $f^{j}(a)\notin[a,b)\ni0$ $\forall0<j<n$.
As $f^n$ is well defined, continuous and monotone on $(a,a+\varepsilon)$ for some $\varepsilon>0$ and as $f$ preserves orientation, we get $f^{n}(x)\in(a,b)$ for every $x>a$ sufficiently close to $a$ and that $f^{j}(x)\notin(a,b)$ $\forall,0<j<n$. Thus, there is some $I=(a,q)\in\cp_{J}$.
\cqd

\begin{Lemma}\label{LemmaHGFGH54}
If $J=(a,b)$ is a nice interval then there are sequences $a_{n},b_{n}\in \overline{J}\cap Per(f)$ such that
\begin{enumerate}
\item $\lim_{n}a_{n}=a$ and $\lim_{n}b_{n}=b$;
\item $\co^{+}_{f}(a_{n})\cap(a_{n},b)=\emptyset$ and $\co^{+}_{f}(b_{n})\cap(a,b_{n})=\emptyset$.
\end{enumerate}
\end{Lemma}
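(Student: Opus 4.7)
I will produce the sequence $\{a_n\}$; the construction of $\{b_n\}$ is symmetric, swapping ``left'' and ``right'' throughout. The strategy is to realize each $a_n$ as a fixed point of the first return map $\cf_J:J^{*}\to J$ living inside a component of $\cp_J$ very close to $a$. If $a\in\per(f)$ then the constant sequence $a_n=a$ already works, because niceness of $J$ gives $\co^+_f(a)\cap(a,b)=\co^+_f(a)\cap J=\emptyset$; so from now on I assume $a\notin\per(f)$, and note that by Corollary~\ref{Corollary8388881b} no $I\in\cp_J$ then has $a$ on its boundary.

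The core construction proceeds as follows. Fix $\delta>0$ small (say $\delta<c-a$) and suppose there is some component $I=(p,q)\in\cp_J$ with $I\subset(a,a+\delta)$. Then $q<c$ and $p\ne c$, and the niceness of $J$ supplies $\co^+_f(a)\cap J=\emptyset$, so Lemma~\ref{Lemma8388881a}(1)(b) yields $\cf_J(I)=J=(a,b)$. Since $c\notin\overline{I}$, the return map $\cf_J|_I=f^{R|_I}|_I$ is continuous, monotone and orientation-preserving, and extends continuously to $[p,q]$ with $\cf_J(p)=a$ and $\cf_J(q)=b$. The function $x\mapsto\cf_J(x)-x$ is therefore negative at $p$ and positive at $q$, so by the intermediate value theorem it vanishes at some $y\in(p,q)$. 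This $y$ is a periodic point of $f$ of period $R|_I$, and because $R|_I$ is the \emph{first} return time of $I$ to $J$ we have $\co^+_f(y)\cap J=\{y\}$, whence $\co^+_f(y)\cap(y,b)=\emptyset$. Taking $\delta=1/n$ and setting $a_n:=y$ produces a sequence in $\overline J\cap\per(f)$ with $a<a_n<a+1/n$ and the required orbit-avoidance.

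The remaining and main obstacle is the accumulation claim: for every $\delta>0$ some $I\in\cp_J$ must satisfy $I\subset(a,a+\delta)$. I would establish this by showing that the preimages of $c$ inside $J$ cluster at $a$ from the right. Each $x\in J\cap f^{-k}(c)$ with $k\ge1$ lies in $J^{*}$ (since $f^k(x)=c\in J$), and therefore sits inside some component of $\cp_J$ whose endpoints are forced into $\co^-_f(c)\cup\partial J$ by the maximality in the definition of the components (cf.\ Lemma~\ref{Lemma8388881}); a sequence of such $x$'s approaching $a$ then shrinks the associated branch intervals to $a$. Under the no-weak-repeller/no-periodic-attractor hypotheses used elsewhere in the paper, this clustering is an immediate consequence of Lemma~\ref{Remark98671oxe}, which gives $\leb(\Lambda_J)=0$ and thereby forbids any $J$-phobic right-neighborhood of $a$. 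In full generality one must directly rule out the pathological configuration $(a,a+\delta)\subset\Lambda_J$: here I would exploit continuity of $f$ at $a^{+}$ together with the contracting Lorenz structure to see that $f$ then carries such a neighborhood into a forward-invariant subset of $[0,1]\setminus J$, on which one can locate a periodic orbit that is pulled back into $(a,a+\delta)$ via a local inverse branch of a suitable iterate of $f$. This last step is the technical heart of the argument.
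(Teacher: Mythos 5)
Your construction is essentially the paper's own proof. The paper also disposes first of the case $a\in Per(f)$, then picks a component of $\cp_{J}$ near $a$ with $c\notin\partial I$, uses Lemma~\ref{Lemma8388881a} to see that the corresponding return branch maps onto all of $J$, and obtains $a_{n}$ as a fixed point of $f^{R|_{I}}$ on $\overline{I}$, the first-return property giving $\co^{+}_{f}(a_{n})\cap(a_{n},b)=\emptyset$; iterating into the interval between $a$ and the left endpoint of the previous component produces $a_{n}\searrow a$. The only place you diverge is the accumulation step, and there you are in fact more careful than the paper, which simply posits the existence of $I_{0}\subset(a,c)$ and, inductively, of $I_{n+1}\subset(a,p_{n})$ with no justification. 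Your appeal to Lemma~\ref{Remark98671oxe} is the right way to fill this in, with two small touch-ups: the relevant null set is $J\setminus J^{*}\subset f^{-1}(\Lambda_{J})$ rather than $\Lambda_{J}$ itself (harmless, since each branch of $f$ is a diffeomorphism, so preimages of null sets are null), and a component of $J^{*}$ meeting $(a,a+\delta)$ need not be contained in it, so one should run the argument once more inside $(a,p)$, $p$ being that component's left endpoint (which is $>a$ because $a\notin Per(f)$, by Corollary~\ref{Corollary8388881b}); disjointness of the components then forces containment.

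Concerning the step you call the technical heart: the ``full generality'' completion you sketch cannot be carried out, because without the no-periodic-attractor/no-weak-repeller hypotheses the statement itself is false. Take a contracting Lorenz map with $f(x)<x$ on $(0,c)$ and $f(x)>x$ on $(c,1)$: then $Per(f)=\{0,1\}$, every interval $J=(a,b)$ with $0<a<c<b<1$ is nice (both boundary orbits move monotonically towards the endpoints and never re-enter $J$), $J^{*}=\emptyset$, and $\overline{J}\cap Per(f)=\emptyset$. So the lemma must be read with the standing assumptions under which it is invoked in the paper (no periodic attractors nor weak repellers), and in that setting your Ma\~ne/Lemma~\ref{Remark98671oxe} argument already closes the proof; the proposed detour through a forward-invariant set outside $J$ and a pulled-back periodic orbit should simply be dropped rather than completed.
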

\dem
We will show the existence of a sequence $a_{n}\in \overline{J}\cap Per(f)$ with $\lim_{n}a_{n}=a$ such that $\co^{+}_{f}(a_{n})\cap(a_{n},b)=\emptyset$.  Assume that $a\notin Per(f)$, otherwise take $a_{n}=a$. Let $I_{0}=(p_{0},q_{0})\in\cp_{J}$ such that $I_{0}\subset(a,c)$. By Lemma~\ref{Lemma8388881a}, as $a$ is not periodic we get $p_{0}\ne a$. Thus there is some $I_{1}\in\cp_{J}$ with $I_{1}\subset (a,p_{0})$. In particular, $c\ne\partial I_{1}$. Again by Lemma~\ref{Lemma8388881a} we get $\cf_{J}(I_{1})=f^{n_{1}}(I_{1})=J$. Thus, there is a fixed point $a_{1}\in\overline{I_{1}}$ of $f^{n_{1}}|_{\overline{I_{1}}}$. As $n_{1}=R_{J}(I_{1})$ it follows that $f^{j}(a_{1})\notin(a,b)$ for every $0<j<n$ and so, $\{a_{1}\}=\co^{+}(a_{1})\cap(a,b)$. From this we get $\co^{+}(a_{1})\cap(a_{1},b)=\emptyset$. Again, writing $I_{1}=(p_{1},q_{1})$, it follows as before that $a\ne p_{1}$ and so there is some $I_{2}\in\cp_{J}$ such that $I_{2}\subset(a,p_{1})$. Proceeding as before we get a periodic point $a_{2}\in\overline{I_{2}}$ satisfying the statement. Inductively we get a sequence $a_{n}\searrow a$ of periodic points with $\co^{+}(a_{n})\cap(a_{n},b)=\emptyset$.
Analogously one can get the sequence $b_{n}\nearrow b$.

\cqd

\begin{Lemma}[Homterval lemma]
\label{homtervals}
Let $f:[0,1]\setminus\{c\}\to[0,1]$ be a contracting Lorenz map and $I$ an open interval, $I\subset[0,1]$. Then either $\exists\,n$ such that $f^n|_I$ is a homeomorphism and $c\in f^n(I)$ or $I$ is a wandering interval or there is a periodic attractor $\Lambda$ such that $I\cap\beta(\Lambda)\ne\emptyset$ or there is a weak repeller.
\end{Lemma}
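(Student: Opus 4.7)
The plan is to assume the first alternative fails, i.e.\ $c\notin f^n(I)$ for every $n\ge 0$, and deduce one of (ii)-(iv). Under this assumption every $f^n|_I$ is monotone and continuous, hence a homeomorphism onto the open interval $f^n(I)$. The argument then splits along the dichotomy: either the iterates $\{f^n(I)\}_{n\ge 0}$ are pairwise disjoint, or there exist $n_1<n_2$ with $f^{n_1}(I)\cap f^{n_2}(I)\neq\emptyset$.

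In the pairwise disjoint case, the sum $\sum_n|f^n(I)|\le 1$ forces $|f^n(I)|\to 0$. Since each $f^n|_I$ is a homeomorphism with image shrinking to diameter zero, $\omega_f(x)$ is the same set $\Omega_I$ for every $x\in I$. If $\Omega_I$ is not a single periodic orbit, then by the definition recalled in Section~2 the interval $I$ is wandering, giving alternative (ii). Otherwise $\Omega_I=\co_f^{+}(p)$ is a periodic orbit; then $I\subset\{x:\omega_f(x)=\co_f^{+}(p)\}$, an open set with non-empty interior, so $\co_f^{+}(p)$ is a periodic attractor by Definition~\ref{PeriodicAttractor}, and $I\cap\beta(\co_f^{+}(p))\neq\emptyset$, which is alternative (iii).

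In the overlap case, set $J=f^{n_1}(I)$, $k=n_2-n_1$, and $g=f^k$. The forward $f$-orbit of $J$ avoids $c$, so $g$ is continuous and monotone increasing on $J$, and consequently on each iterate $g^m(J)$. The homeomorphism property of $g^m$ gives $g^m(J)\cap g^{m+1}(J)\supset g^m(J\cap g(J))\neq\emptyset$, so the set $U:=\bigcup_{m\ge 0}g^m(J)$ is an open interval, $g(U)\subset U$, and $g$ glues to a continuous monotone increasing map on $U$. Since $g$ is increasing, the orbit $\{g^n(x)\}$ of any $x\in J$ is a monotone sequence, hence converges to some $p\in\overline{U}$; using the one-sided continuity of $g$ at $\partial U$ (inherited from that of $f$ at $c$ and its preimages), the limit satisfies $g(p)=p$, so $p$ is a periodic point of $f$ of period dividing $k$. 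The orbit approaches $p$ from one side through a one-sided neighborhood contained in $U$, and therefore a one-sided neighborhood of $p$ lies in $\beta(\co_f^{+}(p))$. By the paper's definition this makes $\co_f^{+}(p)$ an attracting periodic orbit with $I\cap\beta(\co_f^{+}(p))\neq\emptyset$, alternative (iii); in the degenerate situation where $p$ is non-hyperbolic but fails to be a periodic attractor, $p$ is by definition a weak repeller, alternative (iv).

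The main technical point I expect to need care with is the overlap case: proving that $U$ is genuinely an interval and that $g$ extends as a continuous monotone map on all of $U$ (so that a limit of $g$-orbits is actually a fixed point), rather than losing monotonicity or continuity across the pieces $g^m(J)$. The disjoint case reduces cleanly once one observes that $|f^n(I)|\to 0$ forces $\omega_f(x)$ to be constant on $I$; the periodic-attractor conclusion then follows directly from the definition via the openness of $I$.
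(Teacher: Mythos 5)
Your proposal is correct and follows essentially the same route as the paper's proof: assuming $c\notin f^n(I)$ for all $n$, the overlapping-images case is handled exactly as in the paper by gluing the iterates into a positively invariant homterval on which the monotone return map has a fixed point, yielding a periodic attractor (or, if non-hyperbolic without attraction, a weak repeller), while the disjoint-images case gives a wandering interval or a periodic attractor. In fact you spell out two points the paper glosses over (that $|f^n(I)|\to 0$ makes $\omega_f$ constant on $I$ in the disjoint case, and why the fixed point of the extension actually attracts a one-sided neighborhood); the only residual loose end, shared with the paper, is the degenerate possibility that the limit point lies on $\partial U$ or is hit exactly by the chosen orbit, which is easily absorbed into the super-attractor/weak-repeller alternatives.
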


\dem
This is a well known result, and it is usual to define a {\em homterval} (as named by Misiurewicz) as being an interval on which $f^j$ is monotone $\forall j \ge 0$, see, for example, \cite{G79}. 

Suppose $I$ is not a wandering interval. Then, there will be $k<\ell$ for which $f^k(I)\cap f^\ell(I)\ne\emptyset$, define $f^k(I)=(a,b)$. If there is no such $n$ such that  $c\in f^n(I)$ we may consider the union $(a,b)\cup (f^{\ell-k}(a),f^{\ell-k}(b))$, and this is an interval, as their intersection is non-empty. 

Then $T:=\bigcup_{j=0}^{\infty}f^{{(\ell-k)}j}(I)$ is a positively invariant homterval. As $f$ does not have weak repellers (in particular, $f^{j}$ is not the identity in intervals $\forall\,j>0$), if $F$ is the continuous extension of  $f^{\ell-k}|_{T}$ to $\overline{T}$ then  $F$ has a fixed point on $p\in\overline{T}$. This implies that there is a periodic attractor $\Lambda$ with $p\in\Lambda$. Furthermore, $\exists\,\varepsilon>0$ such that  $B_{\varepsilon}(p)\cap\overline{T}\subset\beta(\Lambda)$.
\cqd

\begin{Lemma}
\label{noitenoite}
Let $f:[0,1]\setminus\{c\}\to[0,1]$ be a $C^{2}$ non-flat contracting Lorenz map.
If $Per(f)\cap (0,1)=\emptyset$ then either $f$ has an attracting periodic orbit (indeed, at least one of the fixed points is an attractor) or $\omega_{f}(x)\ni c \,\forall\, x \in (0,1)$.
\end{Lemma}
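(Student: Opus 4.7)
I will prove the contrapositive: assuming neither $0$ nor $1$ is attracting, I show $c\in\omega_f(x)$ for every $x\in(0,1)$. The starting observation is a monotonicity dichotomy. Since $f|_{(0,c)}$ and $f|_{(c,1)}$ are continuous and $\mathrm{Per}(f)\cap(0,1)=\emptyset$ rules out fixed points of $f$ inside either interval, the sign of $f(t)-t$ is constant on each of $(0,c)$ and $(c,1)$ by the intermediate value theorem. The hypothesis that $0$ is not attracting from the right (and $f(0)=0$) pins down $f(t)>t$ on a right-neighborhood of $0$, and hence on all of $(0,c)$; symmetrically $f(t)<t$ on $(c,1)$.

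Fix $x\in(0,1)\setminus\{c\}$ and assume for contradiction that $c\notin\omega_f(x)$: there exist $\epsilon_J>0$ and $N$ with $f^n(x)\notin J:=(c-\epsilon_J,c+\epsilon_J)$ for $n\ge N$. The key intermediate step is to prove $\{0,1\}\cap\omega_f(x)=\emptyset$. Supposing $0\in\omega_f(x)$, pick $\epsilon_0>0$ with $\epsilon_0\le f(c+\epsilon_J)$, a quantity strictly positive by continuity of $f|_{(c,1]}$ together with $f(c_+)\ge 0$ and strict monotonicity. Any preimage of $(0,\epsilon_0)$ lying in $(c,1]$ is then contained in $(c,c+\epsilon_J)\subset J$, which the orbit avoids; hence every backward step of the orbit of $x$ from any entry time to $(0,\epsilon_0)$ must use the $(0,c)$-branch of $f^{-1}$. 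Iterating this branch, which has $0$ as its unique attracting fixed point because $f(t)>t$ on $(0,c)$, the quantities $\delta_k:=f^{-k}(\epsilon_0)|_{(0,c)}$ strictly decrease to $0$, and the chain pins $x=f^0(x)\in(0,\delta_n)$ for each entry time $n$. As there are infinitely many such $n$, this forces $x\le 0$, a contradiction. A symmetric argument, using $f(t)<t$ on $(c,1)$ and $1$ as the attracting fixed point of $f^{-1}|_{(c,1]}$, handles $1\in\omega_f(x)$.

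Consequently $\omega_f(x)$ is a compact forward-invariant set lying inside $[\delta,c-\epsilon']\cup[c+\epsilon',1-\delta]$ for some $\delta,\epsilon'>0$. Extend $f$ to a $C^2$ map $g$ on $[0,1]$ as in the proof of Lemma~\ref{Remark98671oxe}, and apply Ma\~n\'e's theorem (Theorem~\ref{ThMane}) with $U:=(c-\epsilon',c+\epsilon')\cup(0,\delta)\cup(1-\delta,1)$. Since $\mathrm{Per}(f)\cap(0,1)=\emptyset$ and $\{0,1\}\subset U$, the hypothesis that all periodic points of $g$ in $[0,1]\setminus U$ be hyperbolic expanding is satisfied vacuously; the conclusion yields uniform expansion $|Df^n(\cdot)|\ge C\lambda^n$ along every orbit contained in $\omega_f(x)$.

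To extract a periodic orbit inside $\omega_f(x)\subset(0,1)$ and close the argument, choose a recurrent point $z\in\omega_f(x)$ from any minimal subset and a return time $n$ with $f^n(z)$ arbitrarily close to $z$. Because $c\notin\omega_f(x)$, the iterate $f^n$ is a local diffeomorphism at $z$ with $|Df^n(z)|\ge C\lambda^n\gg 1$, so its inverse branch at $f^n(z)$ is a strong contraction on a small interval centered at $z$; for $n$ sufficiently large this interval is mapped strictly into itself, and the Banach fixed-point theorem yields $p\in(0,1)$ close to $z$ with $f^n(p)=p$. This contradicts $\mathrm{Per}(f)\cap(0,1)=\emptyset$. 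The main obstacle is the backward-chain bookkeeping of the second paragraph: the point of the small choice $\epsilon_0\le f(c+\epsilon_J)$ is precisely to force the $J$-avoidance to exclude the $(c,1)$-branch of $f^{-1}$ at every backward step, so that the whole chain lives on the single $(0,c)$-branch where $0$ is the unique attracting fixed point.
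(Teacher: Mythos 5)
Your argument is essentially sound, but it follows a genuinely different route from the paper. The paper's proof is two lines: assuming no attracting fixed point and some $x\in(0,1)$ with $c\notin\omega_f(x)$, it takes the connected component $(a,b)\ni c$ of $[0,1]\setminus\overline{\co_f^+(x)}$, observes that it is a nice interval strictly contained in $(0,1)$, and invokes Lemma~\ref{LemmaHGFGH54} (nice intervals carry periodic points in their closure, accumulating on the boundary) to contradict $Per(f)\cap(0,1)=\emptyset$. You instead prove the contrapositive by hand: the monotonicity dichotomy ($f(t)>t$ on $(0,c)$, $f(t)<t$ on $(c,1)$ when neither endpoint attracts), the trapping of $\omega_f(x)$ away from $\{0,1,c\}$, Ma\~n\'e's theorem to get uniform expansion, and the recurrence-plus-expansion mechanism to manufacture a periodic point in $(0,1)$. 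Your route makes the role of the ``no attracting fixed point'' hypothesis completely transparent (in the paper it is hidden in the return-map construction behind Lemma~\ref{LemmaHGFGH54}), at the price of importing Ma\~n\'e and a hyperbolicity argument where the paper only uses soft first-return combinatorics.

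Two places need repair. First, in the backward-chain step you cannot pin $x=f^0(x)\in(0,\delta_n)$: the orbit is only known to avoid $J$ from time $N$ on, so the chain can only be pushed back to index $N$; the correct conclusion is $f^N(x)\in(0,\delta_{n-N})$ for unboundedly many $n$ (equivalently, replace $x$ by $f^N(x)$ at the outset), which still gives the contradiction; you should also shrink $\epsilon_0$ below $f(c_-)$ so the $\delta_k$ are defined, or avoid inverse branches altogether by noting that a predecessor on the $(0,c)$-branch is smaller than its image, so the whole tail orbit lies in $(0,\epsilon_0)$, is increasing there, and would converge to an interior fixed point. Second, the final extraction of a periodic point is only sketched at the critical spot: pointwise expansion $|Df^n(z)|\ge C\lambda^n$ at $z\in\omega_f(x)$ does not by itself give an inverse branch of $f^n$ defined and contracting on an interval of size independent of $n$ around $f^n(z)$. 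You need to apply Ma\~n\'e's conclusion to \emph{all} points of the maximal interval $T\ni z$ on which $f^n$ is monotone and whose iterates avoid a slightly smaller neighborhood $U'$, and then run the usual escape-to-large-scale/maximality argument (or a distortion argument) to get a definite scale before closing with the fixed-point theorem. Minor but related: $U$ should be taken relatively open in $[0,1]$ containing the endpoints (as written, $(0,\delta)$ and $(1-\delta,1)$ omit $0$ and $1$, yet you use $\{0,1\}\subset U$), and your ``vacuously satisfied'' check of Ma\~n\'e's hypothesis tacitly uses that only periodic orbits entirely avoiding $U$ (hence genuine $f$-orbits) are relevant, since the extension $g$ may have other periodic points through the modified region — this is the same convention the paper itself uses in Lemma~\ref{Remark98671oxe}, so it is acceptable, but it deserves a sentence.
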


\dem
Suppose this is not the case. That means we can pick a point $x \in (0,1)$ such that $\omega_{f}(x)\not\ni c $. Let $(a,b)$ be the connected component of $[0,1]\setminus\overline{\co^+_f(x)}$ containing $c$. Then $(a,b) \subsetneq (0,1)$ is a nice interval  and then $Per(f)\cap (a,b)\ne\emptyset$ (Lemma~\ref{LemmaHGFGH54}), absurd.
\cqd

\begin{Lemma}[Lemma 3.36 of \cite{StP}]\label{LemmaStP}If $f:[0,1]\setminus\{c\}\to[0,1]$ is a $C^{2}$ non-flat contracting Lorenz map then every wandering interval accumulates on both sides of the critical point. In particular, a wandering interval cannot contain any interval of the form $(-r,c)$ or $(c,r)$.
\end{Lemma}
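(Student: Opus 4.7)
The plan is to argue by contradiction. Let $I$ be a wandering interval and suppose, without loss of generality, that its forward orbit does not accumulate on $c$ from the left: there exist $r>0$ and $N\ge 0$ with $f^n(I)\cap(c-r,c)=\emptyset$ for every $n\ge N$. Replacing $I$ by $f^N(I)$, take $N=0$. First I would record the basic consequences of wandering: the iterates $\{f^n(I)\}_{n\ge 0}$ are pairwise disjoint, so $\sum_n|f^n(I)|\le 1$ and $|f^n(I)|\to 0$; and the standing assumption forbids any $f^n(I)$ from touching $c$ on its left (if some $f^n(I)$ contained a left half-neighborhood $(c-\varepsilon,c)$ of $c$, it would meet $(c-r,c)$), so each $f^n|_I$ is a homeomorphism onto an interval contained entirely in $[0,c)$ or in $(c,1]$. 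By the Homterval Lemma~\ref{homtervals} we are precisely in its ``wandering'' alternative.

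Next I would split on whether $c\in\omega_f(I)$. In the easy case $c\notin\omega_f(I)$, fix $s>0$ with $\omega_f(I)\cap(c-r,c+s)=\emptyset$; then the orbit of $I$ eventually avoids $V:=(c-r,c+s)$. Extending $f$ smoothly across $V$ to a $C^2$ map $g:[0,1]\to[0,1]$, as in the proof of Lemma~\ref{Remark98671oxe}, and applying Ma\~n\'e's Theorem~\ref{ThMane} to $g$ (after additionally removing small neighborhoods of each non-hyperbolic periodic point of $g$ lying outside $V$), one obtains a uniform expansion rate $\lambda>1$ along $g$-orbits staying in the complement. The orbit of $I$ lies in that complement: periodic attractors of $f$ cannot belong to $\omega_f(I)$ (else $I$'s orbit converges to a single periodic orbit, contradicting wanderingness), and weak repellers in $\omega_f(I)$ can be absorbed into the discarded neighborhoods, since a wandering orbit cannot be trapped near a non-attracting periodic point without producing compensating expansion on subsequent excursions. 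Since $g=f$ outside $V$, this gives $|f^n(I)|\ge C\lambda^{n-N'}|f^{N'}(I)|\to\infty$, contradicting $|f^n(I)|\to 0$.

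In the remaining case $c\in\omega_f(I)$ (necessarily only from the right), I would select return times $n_k\uparrow\infty$ with $f^{n_k}(I)\subset(c,c+\varepsilon_k)$, $\varepsilon_k\downarrow 0$. The non-flat hypothesis gives $|f'|<1/2$ on $(c,c+s)$ for $s$ small, and Koebe/cross-ratio bounds yield controlled distortion for branches of $f^m$ avoiding $c$. Together these show that $f^{n_k+j}(I)$ shadows $f^j(c_+)$ for $0\le j\le J_k$, where $J_k$ is the first time $|f^{n_k+j}(I)|$ becomes comparable to $\dist(f^j(c_+),c)$. If $\omega_f(c_+)\cap(c-r,c)\ne\emptyset$, the shadowing would force some $f^{n_k+j_k}(I)$ into $(c-r,c)$, contradicting the standing hypothesis; hence $\omega_f(c_+)\cap(c-r,c)=\emptyset$. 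I would then repeat the Ma\~n\'e analysis of the previous paragraph, now with short right-neighborhoods of $c$ in place of $I$, whose asymptotics are governed by $\omega_f(c_+)$. Either $c\notin\omega_f(c_+)$, in which case Ma\~n\'e yields uniform expansion along $c_+$'s orbit and the shadowing propagates it to $I$, contradicting $|f^n(I)|\to 0$; or $c\in\omega_f(c_+)$ (from the right, Cherry-like self-similar case), and one exploits the minimality/self-similarity of $\omega_f(c_+)$ together with the shadowing to force $|f^{n_k+j}(I)|$ eventually to exceed $\dist(f^j(c_+),c)$, so that $f^{n_k+j}(I)$ either straddles $c$ (contradicting $c\notin f^n(I)$ from Step~1) or falls on the left of $c$ (contradicting $f^n(I)\cap(c-r,c)=\emptyset$).

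The hard part will be the shadowing and distortion bookkeeping of the third paragraph, especially in the Cherry-like sub-case, where one must carefully propagate the fine asymptotic behaviour of $c_+$'s orbit onto the wandering interval $I$; once this is set up, the contradiction simply combines Ma\~n\'e-type expansion away from $c$ with the strong contraction of $f$ across the critical region, both consequences of the $C^2$ non-flat hypothesis.
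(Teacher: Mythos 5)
Your two-case scheme contains genuine gaps in both cases, and the overall strategy amounts to re-proving the hard classical non-existence theorem for wandering intervals rather than using it. In the case $c\notin\omega_f(I)$, Ma\~n\'e's Theorem (Theorem~\ref{ThMane}) applies only when every periodic point outside the removed neighborhood is hyperbolic and expanding, and the present lemma makes no assumption excluding periodic attractors or weak repellers. Your fix --- discarding small neighborhoods of the non-hyperbolic periodic points and invoking ``compensating expansion on subsequent excursions'' --- does not work: $\omega_f(I)$ may properly contain, say, a saddle-node periodic orbit approached by the orbit of $I$ from its repelling side (this does not force $\omega_f(I)$ to be a single periodic orbit, so your argument that periodic attractors cannot lie in $\omega_f(I)$ also fails for one-sided attractors). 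The orbit of $I$ then enters every neighborhood of that parabolic orbit infinitely often and spends arbitrarily long stretches there with only subexponential derivative growth, so no bound of the form $|f^n(I)|\ge C\lambda^{n}|I|$ follows and no contradiction with $|f^n(I)|\to 0$ is obtained. In the recurrent case $c\in\omega_f(I)$, the shadowing-plus-distortion argument you outline is exactly the core difficulty of the wandering-interval theorems; moreover the Koebe/cross-ratio control you invoke is not available here, since the lemma only assumes $f$ is $C^2$ (no negative Schwarzian), and you explicitly defer the ``bookkeeping''. As written, the essential steps are missing.

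The paper closes the argument with a short reduction that your proposal misses: if the orbit of the wandering interval never enters a one-sided neighborhood of $c$, say $(c,c+\varepsilon)$ (or $(c-r,c)$ in your normalization), one redefines $f$ on that interval alone so as to glue the two branches into a single $C^{2}$ map of $[0,1]$ with non-flat critical points. The orbit of $I$ never meets the modified region, so $I$ remains a wandering interval for the modified map, contradicting Theorem A of Chapter IV of \cite{MvS}. That classical theorem already absorbs all the delicate phenomena (recurrence of the critical orbit, parabolic periodic points, $C^2$ distortion estimates) that your sketch would otherwise have to rebuild from scratch.
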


\dem
Suppose we have a wandering interval $J$ that doesn't accumulates on the right side of the critical point, say, it never enters a neighborhood $(c,c+\varepsilon)$. So, we can modify $f$ to coincide with the original function out of this interval, but being $C^2$ in this interval (see Figure~\ref{errantepng}). This way, the modified function is a  $C^{2}$ function that will have a wandering interval, but it can't happen, according to Theorem A of Chapter IV of \cite{MvS}.

\begin{figure}
\begin{center}
\includegraphics[scale=.3]{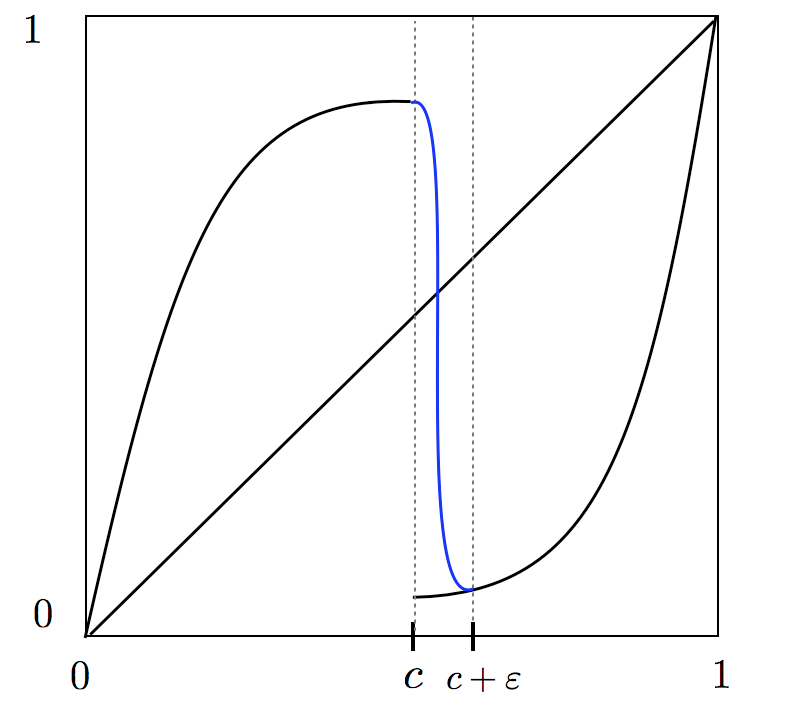}\\
\caption{}\label{errantepng}
\end{center}
\end{figure}

\cqd

\begin{Lemma}\label{AcumulacaoDePer}
If $f:[0,1]\setminus\{c\}\to[0,1]$ is a non-flat $C^{2}$ contracting Lorenz map without periodic attractors or weak repellers then either $\exists\,\delta>0$ such that $c\in\omega_{f}(x)$ $\forall\,x\in(c-\delta,c+\delta)$ or $$\overline{Per(f)\cap(0,c)}\ni c\in\overline{(c,1)\cap Per(f)}.$$

\end{Lemma}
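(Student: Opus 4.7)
The plan is to prove the contrapositive: assume (B) fails and derive (A). By symmetry of the two-sided formulation we may suppose $c \notin \overline{Per(f)\cap(c,1)}$, so there is $\delta>0$ with $Per(f)\cap(c,c+\delta)=\emptyset$. The goal is to exhibit $\delta_R,\delta_L>0$ such that $c\in\omega_f(y)$ for every $y\in(c-\delta_L,c+\delta_R)$; then (A) holds with $\min(\delta_L,\delta_R)$.

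For the right side, suppose for contradiction there are $y_n\to c^+$ with $c\notin\omega_f(y_n)$, and let $(a_n,b_n)$ be the connected component of $[0,1]\setminus\overline{\co_f^+(y_n)}$ containing $c$. Niceness of $(a_n,b_n)$ follows exactly as in Lemma \ref{noitenoite}: $f$ is continuous off $c$ and $\overline{\co_f^+(y_n)}$ is forward invariant, so $\co_f^+(a_n)\cup\co_f^+(b_n)\subset\overline{\co_f^+(y_n)}$ is disjoint from $(a_n,b_n)$. Because $y_n\in\overline{\co_f^+(y_n)}$ lies to the right of $c$, one has $b_n\le y_n\to c^+$. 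Applying Lemma \ref{LemmaHGFGH54} to $(a_n,b_n)$ yields periodic points $q_n^{(k)}\in(a_n,b_n)$ with $q_n^{(k)}\to b_n$; for $k$ large, $q_n^{(k)}\in(c,b_n)$, and choosing $n,k$ appropriately puts $q_n^{(k)}\in(c,c+\delta)\cap Per(f)$, contradicting the hypothesis. This furnishes the required $\delta_R$.

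For the left side, suppose for contradiction $x_n\to c^-$ with $c\notin\omega_f(x_n)$, and form the analogous nice interval $J_n=(a'_n,b'_n)\ni c$; then $a'_n\to c^-$. The right-side conclusion already obtained forces $\co_f^+(x_n)\cap(c,c+\delta_R)=\emptyset$ (otherwise some iterate $f^j(x_n)$ would lie in $(c,c+\delta_R)$, giving $c\in\omega_f(f^j(x_n))=\omega_f(x_n)$), so $b'_n\ge c+\delta_R$. Now consider the first-return map $\cf_{J_n}\colon J_n^*\to J_n$, whose domain has full Lebesgue measure in $J_n$ by Lemma \ref{Remark98671oxe}. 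If some component $I\in\cp_{J_n}$ satisfies $I\subset(c,c+\delta)$ with $c\notin\overline{I}$, then Lemma \ref{Lemma8388881a} gives $\cf_{J_n}(I)=J_n$, and the fixed-point argument used in the proof of Lemma \ref{LemmaHGFGH54} produces a periodic point in $\overline{I}\subset(c,c+\delta)$, the desired contradiction.

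The hard part will be the remaining degenerate case: a single component $(c,p^{(n)})\in\cp_{J_n}$ with $p^{(n)}\ge c+\delta$ swallows the right neighborhood $(c,c+\delta)$, so no first-return component of the required shape is available. On $(c,p^{(n)})$ the first return is a diffeomorphism onto $(f^R(c^+),b'_n)$ by Lemma \ref{Lemma8388881a}, but its derivative tends to zero as one approaches $c^+$ by the contracting Lorenz assumption, so a naive outer fixed point would sit away from $c$ and miss $(c,c+\delta)$. To force a periodic point into $(c,c+\delta)$ I would iterate the return on $(c,p^{(n)})$: the conclusion of the right-side step guarantees that every point of $(c,p^{(n)})\subset(c,c+\delta_R)$ has $c$ in its $\omega$-limit, so the induced higher-return structure on $(c,p^{(n)})$ eventually produces, via Lemma \ref{Remark98671oxe} applied to the induced system, a sub-component of some iterate that is contained in $(c,c+\delta)$ and disjoint from $c$; non-flatness and the absence of weak repellers and periodic attractors guarantee that the fixed-point argument applied to that sub-component yields a genuine (hyperbolic expanding) periodic point inside $(c,c+\delta)$, contradicting the contrapositive hypothesis. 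Combining both steps, $\delta_0:=\min(\delta_R,\delta_L)$ witnesses (A).
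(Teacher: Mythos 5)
Your right-hand-side step is fine: assuming $Per(f)\cap(c,c+\delta)=\emptyset$, taking for a point $y\in(c,c+\delta)$ with $c\notin\omega_f(y)$ the component $(a,b)\ni c$ of $[0,1]\setminus\overline{\co_f^+(y)}$ and feeding it to Lemma~\ref{LemmaHGFGH54} does produce periodic points in $(c,b]\subset(c,c+\delta)$, a contradiction; so $c\in\omega_f(y)$ for all $y\in(c,c+\delta)$. The problem is the left-hand side, exactly at what you yourself call ``the hard part'', and your sketch there does not close it. First, the containment you invoke is backwards: you only know $c\in\omega_f$ on $(c,c+\delta_R)$, while the degenerate return component is $(c,p^{(n)})$ with $p^{(n)}\ge c+\delta$, so ``every point of $(c,p^{(n)})\subset(c,c+\delta_R)$'' is unjustified. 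More seriously, the mechanism is missing: to extract a periodic point from a sub-interval $I\subset(c,c+\delta)$ you need some iterate to map $I$ homeomorphically \emph{over} $I$, and nothing in ``the induced higher-return structure'' or in Lemma~\ref{Remark98671oxe} (which only gives measure-zero avoidance sets) produces that covering. Indeed the target itself is wrong for this case: in the degenerate situation the dynamics near $c$ is of renormalization type, and the only contradiction available is with the \emph{extremality} of the nearest periodic points, not with membership of a periodic point in $(c,c+\delta)$; a soft iteration argument cannot conjure one there, since a priori all periodic points of the return system may lie to the left of $c$ or beyond $c+\delta$.

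What is actually needed — and what the paper does — is to set $a=\sup Per(f)\cap(0,c)$, $b=\inf Per(f)\cap(c,1)$ (so $b\ge c+\delta$ under your hypothesis), show via Lemma~\ref{LemmaStP}, the homterval lemma (Lemma~\ref{homtervals}) and Lemma~\ref{LemmaHGFGH54} that $a$ and $b$ are periodic with $f^{\period(a)}((a,c))\subset(a,b)\supset f^{\period(b)}((c,b))$, i.e.\ that $(a,b)$ is a renormalization interval, and then apply Lemma~\ref{noitenoite} to the renormalized contracting Lorenz map $g$: since failure of alternative (A) gives a point with $c_g\notin\omega_g$, $g$ must have a periodic point in its open interval, which pulls back to a periodic point of $f$ in $(a,b)$ and contradicts the definitions of $a$ and $b$. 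Your proposal never builds this renormalization structure, and without it (or an equivalent substitute) the degenerate case remains unproved.
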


\dem
Suppose that $f$ does not have periodic attractors or weak repellers and suppose also that $\nexists\delta>0$ such that $c\in\omega_{f}(x)$ $\forall\,x\in(c-\delta,c+\delta)$. In this case, by Lemma~\ref{noitenoite}, $Per(f)\cap(0,1)\ne\emptyset$. As $f$ does not have periodic attractors or weak repellers, $\co_{f}^{+}(x)\cap(0,c)\ne\emptyset\ne(c,1)\cap\co_{f}^{+}(x)$ $\forall\,x\in(0,1)\setminus\{c\}$. Thus $Per(f)\cap(0,c)\ne\emptyset\ne(c,1)\cap Per(f)$.

Let $a=\sup Per(f)\cap(0,c)$ and $b=\inf Per(f)\cap(c,1)$. We know that $0<a\le c\le b<1$. If $a=b$ the proof is done. So suppose that $a\ne b$. We may assume that $0<a<c\le b<1$ (the other case is analogous).

We claim that $\co_{f}^{+}(a_{-})\cap(a,b)=\emptyset=(a,b)\cap\co_{f}^{+}(b_{+})$. Indeed, if there is a minimum $\ell\ge1$ such that $f^{\ell}(a_{-})\in(a,b)$ then $\emptyset\ne f^{\ell}((a-\varepsilon,a)\cap Per(f))\subset(a,b)$, contradicting the definition of $a$ and $b$. With the same reasoning we can show that $\co_{f}^{+}(b_{+})\cap(a,b)=\emptyset$.

Note that $\exists n>0$ such that $f^{n}((a,c))\cap(a,c)\ne\emptyset$. Indeed, $(a,c)$ can not be  a wandering interval (Lemma~\ref{LemmaStP}) and as $f$ does not have periodic attractors, it follows from the homterval lemma (Lemma~\ref{homtervals}) that $f^{n}((a,c))\ni c$ for some $n\ge1$. Let $\ell$ be the smallest integer bigger than $0$ such that $f^{\ell}((a,c))\cap(a,c)\ne\emptyset$. As $\co_{f}^{+}(a)\cap(a,c)=\emptyset$, we get $f^{\ell}((a,c))\supset(a,c)$. Thus there is a periodic point $p\in[a,c)$ with period $\ell$. By the definition of $a$, it follows that $p=a$.

We claim that $f^{\ell}((a,c))\subset(a,b)$. If not  let $q_{0}\in f^{\ell}((a,c))\cap Per(f)\cap[b,1)$. Let $q=\min \co_{f}^{+}(q_{0})\cap(c,1)$ and $q'=(f^{\ell}|_{(a,c)})^{-1}(q)$. Of course that $a<q'<c<q$ and that $(q',q)$ is a nice interval. Thus, by Lemma~\ref{LemmaHGFGH54}, $Per(f)\cap(q',c)\ne\emptyset$ and this contradicts the definition of $a$.

Note that $f^{\ell}((a,c))\ni c$, otherwise $f$ would have periodic attractors or weak repellers. As a consequence of this and of the claim above, $b>c$. 

As $b>c$, $(a,b)$ is a nice interval. We already know that $f^{\ell}(a)=a$. Moreover, by the definition of $b$ and Lemma~\ref{LemmaHGFGH54}, $b$ also must be a periodic point. So, let $r=\period(p)$. From the same reasoning of the claim above, we get $f^{r}((c,b))\subset((a,b))$.

Thus the first return map to $[a,b]$ is conjugated to a contracting Lorenz map $g:[0,1]\setminus\{c_{g}\}\to[0,1]$. As $\nexists\delta>0$ such that $c\in\omega_{f}(x)$ $\forall\,x\in(c-\delta,c+\delta)$, it follows that $\exists\,x\in[0,1]$ such that $c_{g}\notin\omega_{g}(x)$. So, it follows from Lemma~\ref{noitenoite} that $Per(g)\cap(0,1)\ne\emptyset$. As a consequence, $Per(f)\cap(a,b)\ne\emptyset$. This contradicts the definition of $a$ and $b$, proving the lemma.

\cqd

\begin{Proposition}[``Variational Principle'']\label{LemmaVarPric}
Let $f:[0,1]\setminus\{c\}\to[0,1]$ be a non-flat $C^{2}$ contracting Lorenz map without periodic attractors or weak repellers. Suppose that $\not\exists\delta>0$ such that $c\in\omega_{f}(x)$ $\forall\,x\in(c-\delta,c+\delta)$.

Given $\varepsilon>0$ there exists a unique periodic orbit minimizing the period of all periodic orbit intersecting $(c-\varepsilon,c)$. Analogously, there exists a unique periodic orbit minimizing the period of all periodic orbit intersecting $(c,c+\varepsilon)$.
\end{Proposition}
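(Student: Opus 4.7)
The plan is to first establish existence of the minimum period $n_0$ by combining Lemma~\ref{AcumulacaoDePer} with hyperbolicity, then assume two distinct orbits $O_1\neq O_2$ of period $n_0$ both meet $(c-\varepsilon,c)$ and derive a contradiction by producing a periodic point of period strictly less than $n_0$ inside $(c-\varepsilon,c)$. This contradiction will split according to whether the two associated periodic nice intervals are nested or not. For existence, Lemma~\ref{AcumulacaoDePer} gives $Per(f)\cap(c-\varepsilon,c)\neq\emptyset$, so the set of periods is a nonempty subset of $\mathbb{N}$ with a minimum $n_0$; since $f$ has no periodic attractors nor weak repellers, every periodic point is hyperbolic repelling and hence isolated, so $n_0$ is realized by some orbit. (The case $n_0=1$ is handled directly: between two hyperbolic repelling fixed points on the continuous monotone branch $f|_{[0,c)}$ the Intermediate Value Theorem would force a third fixed point and an impossible infinite descent; so assume $n_0\geq 2$.) Let $O_1\neq O_2$ be two orbits of period $n_0$ meeting $(c-\varepsilon,c)$; set $a_i:=\max O_i\cap[0,c)\in(c-\varepsilon,c)$ and $b_i:=\min O_i\cap(c,1]$, the latter well defined for $n_0\geq2$ since orientation-preservingness of $f|_{[0,c)}$ forbids a non-fixed periodic orbit from being confined to $[0,c)$. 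Then $J_i:=(a_i,b_i)\in\cn_{per}$ has both endpoints on $O_i$. WLOG $a_1<a_2$, so $b_1\neq b_2$.

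In the \emph{nested case} $b_1>b_2$, one has $J_2\subset J_1$ and $J_1\in\cu_{J_2}$, so $\xi J_2\subset J_1$. Applying Theorem~\ref{imperfeito} to the imperfect interval $J_2$ yields $\xi J_2=(\alpha_2,\beta_2)$ with $\period(\alpha_2)<n_0$; the inclusion $\xi J_2\subset J_1$ forces $\alpha_2\geq a_1$, with strict inequality because $\period(a_1)=n_0>\period(\alpha_2)$. Therefore $\alpha_2\in(a_1,a_2)\subset(c-\varepsilon,c)$ is a periodic point of period strictly less than $n_0$, contradicting the minimality of $n_0$.

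In the \emph{non-nested case} $b_1<b_2$, consider the overlap interval $I=(a_1,a_2)\subset[0,c)$. The crucial step is to show that there exists $j^*\in\{1,\ldots,n_0-1\}$ with $f^{j^*}|_I$ a homeomorphism and $c\in f^{j^*}(I)$. If $c\notin f^j(I)$ for every $j<n_0$, then $f^{n_0}|_{[a_1,a_2]}$ is continuous and monotone with $f^{n_0}(a_i)=a_i$, so $f^{n_0}(I)=I$; this makes $I$ not wandering, and the homterval lemma (Lemma~\ref{homtervals}), combined with the absence of periodic attractors and weak repellers, would then provide a smallest $n\geq n_0$ with $c\in f^n(I)$; but $f^{n_0}(I)=I$ yields $c\in f^{n\bmod n_0}(I)$ with $n\bmod n_0<n_0$, contradicting the minimality of $n$. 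With $j^*<n_0$ so obtained, $f^{j^*}$ is continuous and monotone increasing on $[a_1,a_2]$; since $j^*$ is not a multiple of $n_0$, $f^{j^*}(a_1)\in O_1\cap[0,c)\setminus\{a_1\}$ forces $f^{j^*}(a_1)<a_1$, and $f^{j^*}(a_2)\in O_2\cap(c,1]$ gives $f^{j^*}(a_2)\geq b_2>b_1$. Pulling $[a_1,b_1]$ back along $f^{j^*}|_{[a_1,a_2]}$ yields $[u,v]\subset(a_1,a_2)$ with $f^{j^*}(u)=a_1<u$ and $f^{j^*}(v)=b_1>v$; the Intermediate Value Theorem then produces a fixed point $z\in(u,v)$ of $f^{j^*}$, periodic of period dividing $j^*<n_0$ and lying in $(u,v)\subset(a_1,c)\subset(c-\varepsilon,c)$, again contradicting minimality.

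The main obstacle is the non-nested case: when $J_1$ and $J_2$ are non-nested the root construction $\xi J$ does not by itself deliver a short-period periodic point inside $(c-\varepsilon,c)$, and one must pivot to the overlap interval $(a_1,a_2)$, use Lemma~\ref{homtervals} to extract a return time $j^*<n_0$, and run an elementary Intermediate Value Theorem fixed-point argument for $f^{j^*}$. The analogous uniqueness for orbits intersecting $(c,c+\varepsilon)$ follows by the symmetric argument, exchanging the roles of the left and right branches of $f$ at $c$.
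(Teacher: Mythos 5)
Your proof is correct, and its working core is essentially the paper's own argument: pass to the two points $a_1<a_2$ of the competing orbits that are closest to $c$ from the left, take the first time $j^*<n_0$ at which $f^{j^*}$ is monotone on $(a_1,a_2)$ with $c$ in its image, note that $f^{j^*}(a_1)<a_1$ (maximality of $a_1$ in $O_1\cap[0,c)$ and $j^*$ not a multiple of $n_0$) while $f^{j^*}(a_2)>c>a_2$, and extract a fixed point of $f^{j^*}$ in $(a_1,a_2)\subset(c-\varepsilon,c)$ by the Intermediate Value Theorem, contradicting minimality. The differences are organizational and mostly add weight you do not need. The nested/non-nested split according to $b_1,b_2$ is superfluous: your IVT step only ever uses $f^{j^*}(a_1)<a_1$ and $f^{j^*}(a_2)>c>a_2$, neither of which involves $b_1$ or $b_2$ (pulling back $[a_1,b_1]$ is an avoidable detour), so the ``non-nested'' argument already covers the nested case and the excursion through $\xi J_2$ and Theorem~\ref{imperfeito} can be deleted -- it is valid as written, but note that the inequality $\period(\alpha_2)<\period(a_2)$ you invoke appears only inside the proof of that theorem, not among its enumerated items. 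Moreover, once the $b_i$ are dropped the separate $n_0=1$ discussion also becomes unnecessary (for $n_0=1$ one lands automatically in your ``monotone'' branch with $f(I)=I$, and the same homterval contradiction applies); as it stands, your sketch for $n_0=1$ should be finished by observing that an accumulation point of fixed points is a non-hyperbolic fixed point, hence a periodic attractor or weak repeller. Finally, where you use the homterval lemma plus reduction mod $n_0$ to rule out the case $c\notin f^j(I)$ for all $j<n_0$, the paper argues more directly that $f^{n_0}$ monotone on $[a_1,a_2]$, fixing both endpoints and not the identity (no weak repellers), forces a one-sided attracting fixed point; both routes work, with the small caveat that your final contradiction is with the case hypothesis itself rather than literally with the ``minimality of $n$''.
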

\dem
As $Per(f)\cap(c-\varepsilon,c)\ne\emptyset$ (Lemma~\ref{AcumulacaoDePer}), let
\begin{equation}\label{Eq10011}
n=\min\{\period(x)\,;x\in Per(f)\cap(c-\varepsilon,c)\}
\end{equation}
and suppose that there are $p_0,q_0\in Per_n(f)\cap(c-\varepsilon,c)$ such that $\co_f^+(p_0)\ne\co_f^+(q_0)$. Let $p=\max\{\co_f^+(p_0)\cap(c-\varepsilon,c)$ and $q=\max\{\co_f^+(q_0)\cap(c-\varepsilon,c)$. Thus, $\co_f^+(p)\cap(p,c)=\emptyset=\co_f^+(q)\cap(q,c)$. We may assume that $q<p$.

\begin{figure}[H]
\begin{center}\label{Fig47488}
\includegraphics[scale=.25]{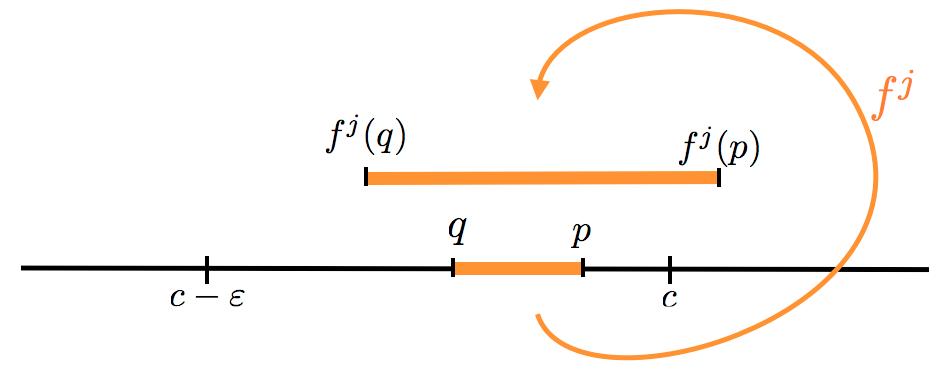}\\
\caption{}\label{Fig47488}
\end{center}
\end{figure}

Note that $f^n$ can not be monotone on $(q,p)$. Indeed, otherwise, if $f^n$ is monotone on $(q,p)$ then $f^n([q,p])=[q,p]$. As $f$ doesn't have any weak repeller, $f^n$ can not be the identity on $[q,p]$ and so, $f^n([q,p])=[q,p]$ implies in the existence of an attracting fixed point for $f^n$ on $[q,p]$. But this is impossible, as we are assuming that $f$ does not have a finite attractor.

As $f^n$ is not monotone on $(q,p)$, there is $0<j<n$ such that $f^j$ is monotone on $(q,p)$ and $c\in f^j((q,p))$. Thus, $f^j(q)<c<f^j(p)$. Moreover, $f^j(q)<q$ (because $\co_f^+(q)\cap(q,c)=\emptyset$ and $j<n$). Thus, $f^j((q,p))\supset(q,p)$ (see Figure~\ref{Fig47488}) and this implies in the existence of a periodic point $a\in[q,p]\subset(c-\varepsilon,c)$ with period $j<n$, contradicting (\ref{Eq10011}).

The proof for the case $(c,c+\varepsilon)$ is analogous.

\cqd

\newpage

\section{Renormalization and  Cherry-like maps}

\begin{Definition}[Left and right renormalizations]
Let $J=(a,b)\in\cn$ and let $g:J^{*}\to J$ be the map of first return to $J$. We say that $f$ is renormalizable by the left side with respect to $J$ (or, for short, $J$-left-renormalizable) if $(a,c)\subset J^{*}$ (this means that $g|_{(a,b)}=f^{n}|_{(a,b)}$ for some $n\ge1$).
Analogously we say that $f$ is renormalizable by the right side with respect to $J$ (or, for short, $J$-right-renormalizable) if $(c,b)\subset J^{*}$.
\end{Definition}

\begin{Definition}If the first return map to an interval $\overline{J}\ne[0,1]$ is conjugated to a Lorenz map, we say that {\em $f$ is renormalizable} with respect to $J$.
\end{Definition}
Note that $f$ is renormalizable with respect to $J$ if and only if $J\in\cn_{per}$ and $f$ is renormalizable by both sides (left and right) with respect to $J$. Moreover, it is easy to check the following result.

\begin{Lemma}\label{Lemma09090863}
Let $J=(a,b)\in\cn$. The following statements are equivalent.
\begin{enumerate}
\item $f$ is renormalizable with respect to $J$.
\item $(\,\overline{J}\,)\,^{*}=[a,c)\cup(c,b]$.
\item $c\in\partial I$ $\forall\,I\in (\overline J)^{*}$.
\item $a$ and $b$ are periodic points, $$f^{\period(a)}([a,c))\subset[a,b]\supset f^{\period(b)}((c,b]).$$
\end{enumerate}
\end{Lemma}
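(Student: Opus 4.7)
The plan is to prove the four-way equivalence by the cycle $(1)\Rightarrow(2)\Rightarrow(4)\Rightarrow(1)$ plus $(2)\Leftrightarrow(3)$, using the niceness of $J$ together with Lemmas~\ref{Lemma8388881a} and~\ref{LemmaHGFGH54}.

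For $(1)\Rightarrow(2)$, I would simply unpack the definition: a Lorenz map has domain $[0,1]\setminus\{c_{g}\}$, so under a conjugacy $h:[a,b]\to[0,1]$ with $h(c)=c_{g}$, the first return map $\cf_{\overline{J}}$ must have domain exactly $[a,b]\setminus\{c\}=[a,c)\cup(c,b]$. For $(2)\Rightarrow(4)$ the key step is to show $a,b$ are periodic. Since $a\in[a,c)\subset(\overline{J})^{*}$, some iterate $f^{n}(a)$ lies in $\overline{J}=[a,b]$; because $J$ is nice, $f^{n}(a)\in\{a,b\}$. If $f^{n}(a)=b$ then by monotonicity $f^{n}(a+\varepsilon)>b$ for small $\varepsilon>0$, contradicting $(a,a+\varepsilon)\subset(\overline{J})^{*}$; hence $f^{n}(a)=a$, and the same niceness argument forces $n=\period(a)$. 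The inclusion $f^{\period(a)}([a,c))\subset[a,b]$ is then automatic because this iterate is exactly $\cf_{\overline{J}}|_{[a,c)}$. The right side is symmetric. For $(4)\Rightarrow(1)$, I would define $g:[a,b]\setminus\{c\}\to[a,b]$ piecewise by $g|_{[a,c)}=f^{\period(a)}|_{[a,c)}$ and $g|_{(c,b]}=f^{\period(b)}|_{(c,b]}$; monotonicity of $f$ and the containments in (4) make $g$ a Lorenz map on $[a,b]$, and I only need to check that $g$ really is the first return map, which follows because niceness of $J$ forbids any earlier return from $[a,c)$ or $(c,b]$ (such a return would put an iterate of $\partial J$ into $J$).

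For $(2)\Rightarrow(3)$ there is nothing to do: the two components $[a,c)$ and $(c,b]$ both have $c$ on their boundary. The interesting direction is $(3)\Rightarrow(2)$. Since components of $(\overline{J})^{*}$ are disjoint subintervals of $[a,b]\setminus\{c\}$, the hypothesis forces at most one component on each side of $c$, so there exist $\alpha\in[a,c)$ and $\beta\in(c,b]$ with the left component $I_{L}$ (if non-empty) of the form $[\alpha,c)$ or $(\alpha,c)$, and symmetrically $I_{R}$. I would then argue $\alpha=a$ by contradiction: if $\alpha>a$, then $(a,\alpha)$ contains no point of $(\overline{J})^{*}$, but Lemma~\ref{LemmaHGFGH54} produces periodic points $a_{n}\in\overline{J}$ with $a_{n}\downarrow a$ and $\co^{+}_{f}(a_{n})\cap(a_{n},b)=\emptyset$, so each $a_{n}$ returns to $\overline{J}$ and lies in $(\overline{J})^{*}$, contradicting $a_{n}\in(a,\alpha)$ for $n$ large. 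Finally, the common first return time $n_{L}$ on $I_{L}$ satisfies $f^{n_{L}}(a_{n})=a_{n}$ for $a_{n}\in I_{L}$, and passing to the limit using continuity of $f^{n_{L}}$ at $a$ gives $f^{n_{L}}(a)=a$; hence $a\in(\overline{J})^{*}$ and $a$ must lie in $I_{L}$, forcing $I_{L}=[a,c)$. The right endpoint is handled symmetrically.

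The one delicate point will be the application of Lemma~\ref{LemmaHGFGH54} in the $(3)\Rightarrow(2)$ step, because I must be sure that the periodic approximants $a_{n}$ given by that lemma really land inside a supposed gap $(a,\alpha)$ and that the continuity argument for $f^{n_{L}}$ at the endpoint $a$ is valid (which it is, because the first return time is constant on $I_{L}$ and $f^{n_{L}}$ extends continuously across $a$ from the right, as $a$ is not in the forward orbit of $c$ before time $n_{L}$ by niceness). All the other implications are routine unpackings of definitions together with the monotonicity/niceness argument used in $(2)\Rightarrow(4)$.
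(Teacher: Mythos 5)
Your cycle of implications mirrors the paper's, but three of the four arrows rest on steps that do not hold as stated. The central gap is in $(2)\Rightarrow(4)$: when you rule out $f^{n}(a)=b$ by saying that $f^{n}(a+\varepsilon)>b$ "contradicts $(a,a+\varepsilon)\subset(\overline J)^{*}$", there is no contradiction, because membership in $(\overline J)^{*}$ only says those points return to $[a,b]$ at \emph{some} time, not at time $n$; points pushed just past $b$ at time $n$ may re-enter $[a,b]$ later. So the scenario in which $a$ is merely pre-periodic (its first return lands on $b$, with $b$ periodic) is not excluded by your argument; this is exactly the point where the paper instead invokes Corollary~\ref{Corollary8388881b} (together with Lemma~\ref{Lemma8388881a}, which forces the left endpoint of a return branch $(a,q)\in\cp_{J}$ to be mapped to $a$). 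The same unjustified rigidity reappears when you call the inclusion $f^{\period(a)}([a,c))\subset[a,b]$ "automatic because this iterate is exactly $\cf_{\overline J}|_{[a,c)}$": constancy of the first return time on $[a,c)$ is not free. Lemma~\ref{Lemma8388881} gives constancy only on the \emph{open} components of $J^{*}$, and its proof breaks precisely at points whose return value is an endpoint of $J$; a priori, hypothesis (2) is compatible with the return time jumping along $[a,c)$, with infinitely many branches accumulating at $a$ — which is exactly the non-renormalizable picture the lemma must exclude, so it cannot be dismissed as automatic.

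The implications $(4)\Rightarrow(1)$ and $(3)\Rightarrow(2)$ suffer from the same kind of gap. In $(4)\Rightarrow(1)$ the parenthetical "such a return would put an iterate of $\partial J$ into $J$" is false: if $f^{m}(a)<a$ for some $m<\period(a)$, an interior point $x\in(a,c)$ can have $f^{m}(x)\in(a,b)$ with no iterate of $a$ or $b$ ever entering $J$. Excluding early returns genuinely uses the inclusions in (4): an early return yields $y\in(a,c)$ with $f^{m}(y)=a$, then (4) applied at $y$ forces $f^{\period(a)-m}(a)=b$, and then $f^{\period(a)}$ of points just above $y$ exceeds $b$, contradicting (4) — niceness alone does not do it. In $(3)\Rightarrow(2)$, the assertion $f^{n_{L}}(a_{n})=a_{n}$ is unjustified: the first return of the periodic point $a_{n}$ to $\overline J$ need only land in $[a,a_{n}]$ (its forward orbit avoids $(a_{n},b)$, not $[a,a_{n})$), and the "common first return time $n_{L}$ on $I_{L}$" again presupposes the constancy discussed above; moreover, when $a$ is periodic, Lemma~\ref{LemmaHGFGH54} returns only the constant sequence $a_{n}=a$, so your approximants need not lie in the putative gap $(a,\alpha)$ (that case must be treated separately, for instance by noting that $\{a\}$ would then be a component of $(\overline J)^{*}$ whose boundary does not contain $c$, violating (3)). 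So, while your architecture is essentially the paper's, the key steps replacing its use of Lemma~\ref{Lemma8388881a} and Corollary~\ref{Corollary8388881b} do not go through as written.
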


\dem
$(1)\implies(2)$ If $f$ is renormalizable with respect to $J=(a,b)\in\cn$, this means that $\exists m$, $n$ such that $f^m((a,c))\subset(a,b)$ and $f^n((c,b))\subset(a,b)$, $f^m(a)=a$ and $f^n(b)=(b)$. So, we have $(\,\overline{J}\,)\,^{*}=[a,c)\cup(c,b]$.
$(2)\iff(3)$ Straightforward. $(2)\implies(4)$ If $(\,\overline{J}\,)\,^{*}=[a,c)\cup(c,b]$, by Corollary~\ref{Corollary8388881b} that $a$, $b \in Per(f)$. As the first return map to $\overline J$, $R_{\overline{J}}$, is constant on $[a,c)$ and also in $[c,b]$ and as $R_{\overline{J}}(a)=\period(a)$ and $R_{\overline{J}}(b)=\period(b)$, then $f^{\period(a)}([a,c)\subset[a,b]\supset f^{\period(b)}((c,b])$. $(4)\implies(1)$  Straightforward.

\cqd

The interval involved in a (left/right) renormalization is called an interval of  (left/right) renormalization. A map $f$ is {\em non renormalizable} if it does not admit any interval of renormalization.

\begin{Definition}[Renormalization Cycle]\label{rencyc} Given a renormalization interval $J=(a,b)$, define the renormalization cycle associated to $J$ (or generated by $J$) as $$U_J=\bigg(\bigcup_{i=0}^{\period(a)}f^{i}((a,c))\bigg)\cup\bigg(\bigcup_{i=0}^{\period(b)}f^{i}((c,b))\bigg).$$
\end{Definition}

\begin{Definition}[Nice Trapping Region]\label{nicetrap} We call a set $K_J$ a {\em nice trapping region} if it is the union of gaps of $\Lambda_J$ (that is, connected component of $[0,1]\setminus \Lambda_J$, see Lemma \ref{Remark98671oxe}) such that each gap contains one interval of the renormalization cycle.
\end{Definition}

\begin{Lemma}\label{Lemma545g55} Let $J=(a,b)$ be a renormalization interval of a contracting Lorenz map $f:[0,1]\setminus\{c\}\to[0,1]$, with $\ell=\period(a)$ and $r=\period(b)$. If 
$f^{\ell}(x)>x$ $\forall x \in{(a,c)}$,  $f^{r}(x)<x$ $\forall x \in{(c,b)}$ and $\lim_{x \uparrow c}f^{\ell}(x)>c > \lim_{x\downarrow c}f^{r}(x)$ then  $$\co^{+}_{f}(x)\cap(a,c)\ne\emptyset\ne\co^{+}_{f}(x)\cap(c,b)\hspace{0.5cm}\forall\,x\in J\setminus\co^{-}_f(c).$$
Therefore, the positive orbit $\co^{+}_{f}(x)$ of any $x\in J\setminus\co^{-}_f(c)$ intersects each connected component of the renormalization cycle $U_J$ (and also each connected component of the nice trapping region $K_{J}$).
\end{Lemma}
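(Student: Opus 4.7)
The plan is to reduce the statement to an orbit analysis for the first-return map $g:=\cf_J$ to $J$, exploiting the two monotonicity hypotheses on the branches of $g$ and the jump condition at $c$.

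First, since $J$ is a renormalization interval, Lemma~\ref{Lemma09090863} gives $(\overline{J})^*=[a,c)\cup(c,b]$, so $g$ is defined on $J\setminus\{c\}$ with $g|_{(a,c)}=f^\ell|_{(a,c)}$ and $g|_{(c,b)}=f^r|_{(c,b)}$. The hypotheses say precisely that $g((a,c))\subset(a,b)$ and $g((c,b))\subset(a,b)$. Hence for any $x\in J\setminus\co_f^-(c)$ the sequence $x_n:=g^n(x)$ is well defined, lies in $J\setminus\{c\}$, and each $x_n$ belongs either to $(a,c)$ or to $(c,b)$.

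Next I will show that $\{x_n\}$ cannot eventually stay on one side of $c$. Suppose, for contradiction, that there is $N$ with $x_n\in(a,c)$ for every $n\ge N$ (the case $(c,b)$ being symmetric). On $(a,c)$ the map $g=f^\ell$ is continuous and satisfies $g(y)>y$, so $\{x_n\}_{n\ge N}$ is strictly increasing and bounded above by $c$; let $y^*:=\lim_n x_n\le c$. If $y^*<c$, continuity of $g$ on $(a,c)$ yields $g(y^*)=y^*$, contradicting $g(y)>y$ on $(a,c)$. If $y^*=c$, then by the jump hypothesis
\[
x_{n+1}=g(x_n)=f^\ell(x_n)\longrightarrow\lim_{y\uparrow c}f^\ell(y)>c,
\]
contradicting $x_{n+1}\to c$. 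Therefore the $g$-orbit of $x$ visits both $(a,c)$ and $(c,b)$ infinitely often, which proves the first assertion since each $g$-iterate is an $f$-iterate.

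Finally I will deduce that $\co_f^+(x)$ hits every connected component of $U_J$ (and hence of $K_J$). Choose $n$ with $f^n(x)\in(a,c)$; since $\ell=\period(a)$ is the first return time of $(a,c)$ to $(a,b)$, the iterates $f^{n+i}(x)$ belong to $f^i((a,c))$ for $0\le i<\ell$, hitting every ``left'' component of $U_J$. Symmetrically, picking $m$ with $f^m(x)\in(c,b)$ takes care of the ``right'' components $f^i((c,b))$, $0\le i<r$. By Definition~\ref{nicetrap} every connected component of $K_J$ is a gap of $\Lambda_J$ containing some component of $U_J$, so the orbit visits each of those too.

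The main obstacle is the dichotomy in the contradiction step: one must simultaneously rule out an interior accumulation point (handled by the strict inequality $f^\ell(y)>y$, which forbids a fixed point of $f^\ell$ in $(a,c)$) and accumulation at $c$ itself (handled by the strict jump $\lim_{y\uparrow c}f^\ell(y)>c$). Both pieces of the hypothesis are used exactly once, and no further distortion or Schwarzian input is required.
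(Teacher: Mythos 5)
Your proof is correct and follows essentially the same route as the paper: both pass to the first-return map on $J$, assume the orbit stays on one side of $c$, and derive a contradiction from the strictly increasing (resp.\ decreasing) bounded sequence, the paper phrasing the limit as an attracting fixed point while you split the cases $y^*<c$ and $y^*=c$ using the jump condition. Your explicit final step on the components of $U_J$ and $K_J$ merely makes precise what the paper leaves implicit after the word ``Therefore''.
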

\dem
Let $\cf:J^*\to J$ be the first return map to the renormalization interval $J$. In this case $J^*=(a,c)\cup(c,b)$ and $$\cf(x)=\left\{\begin{array}{cc}
f^{\ell}(x) & \text{if }x\in(a,c)\\
f^{r}(x) & \text{if }x\in(c,b)
\end{array}\right..$$

For example, suppose that there is $y\in(a,c)\setminus\co^{-}_f(c)$ such that $\cf^n(y)\in(a,c)$ $\forall\,n\ge0$. That is, $a<f^{\ell\,n}(y)<c$ for all $n\ge0$.
As $f^{\ell}|_{(a,c)}$ is an increasing map, we get $f^{\ell}(a)=a<y<f^{\ell}(y)<f^{2\ell}(y)<\cdots<f^{\ell\,n}(y)\cdots<c$. This implies that $\lim_{n\to\infty}f^{\ell\,n}(y)$ is a attracting fixed point to $f^{\ell}$, contradicting our hypothesis. Thus, there is some $n>0$ such that $f^{\ell\,n}(y)=\cf^n(y)\in(c,b)$. Analogously, if  $y\in(c,b)\setminus\co^{-}_f(c)$ then $\cf^m(y)\in(a,c)$ for some $m>0$.
\cqd

\begin{Corollary}\label{Corollary545g55}  Let $f:[0,1]\setminus\{c\}\to[0,1]$ be a $C^{2}$ contracting Lorenz map without periodic attractors or weak repellers.
For any given $J=(a,b)$ renormalization interval $f$, we have that  $$\co^{+}_{f}(x)\cap(a,c)\ne\emptyset\ne\co^{+}_{f}(x)\cap(c,b)\hspace{0.5cm}\forall\,x\in J\setminus\co^{-}_f(c).$$
\end{Corollary}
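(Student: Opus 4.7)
The plan is to derive the hypotheses of Lemma~\ref{Lemma545g55} from the absence of periodic attractors and weak repellers, and then apply it. Set $\ell=\period(a)$ and $r=\period(b)$. By Lemma~\ref{Lemma09090863}, $f^{\ell}|_{[a,c)}$ and $f^{r}|_{(c,b]}$ are continuous increasing maps fixing $a$ and $b$ respectively and sending their domains into $[a,b]$. I need to verify: (i) $f^{\ell}(x)>x$ on $(a,c)$; (ii) $f^{r}(x)<x$ on $(c,b)$; and (iii) $f^{\ell}(c_{-})>c>f^{r}(c_{+})$.

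First I would prove (i); then (ii) follows by a symmetric argument. Since $a$ is periodic, is not an attracting periodic orbit, and is not a weak repeller, the derivative must satisfy $(f^{\ell})'(a)>1$; this already gives $f^{\ell}(x)>x$ just to the right of $a$. If the inequality failed somewhere on $(a,c)$, continuity of $f^{\ell}(x)-x$ would yield a smallest fixed point $p^{*}\in(a,c)$ of $f^{\ell}$, with $f^{\ell}(x)>x$ on $(a,p^{*})$. Monotonicity of $f^{\ell}$ then forces the $f^{\ell}$-orbit of any $x\in(a,p^{*})$ to stay in $(a,p^{*})$ and increase monotonically to $p^{*}$. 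Since the $f$-orbit of $p^{*}$ cannot contain $c$ (otherwise $\co_{f}^{+}(p^{*})\cup\{c\}$ would be a super-attractor, which is a periodic attractor), each iterate $f^{j}$ is continuous at $p^{*}$, so $f^{k\ell+j}(x)\to f^{j}(p^{*})$ for every $j\ge 0$. This gives $\omega_{f}(x)=\co_{f}^{+}(p^{*})$ throughout $(a,p^{*})$, making $\co_{f}^{+}(p^{*})$ an attracting periodic orbit --- contradicting the hypothesis.

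Next I would prove (iii). Taking the left limit in (i) gives $f^{\ell}(c_{-})\ge c$. If equality held, then the orbit of $c_{-}$ reaches $c$ from below in exactly $\ell$ steps, and since $\lim_{\varepsilon\downarrow0}f(c-\varepsilon)=f(c_{-})$, the finite set $\{f(c_{-}),\ldots,f^{\ell-1}(c_{-}),c\}$ fits the paper's definition of a super-attractor --- a periodic attractor again contradicting the hypothesis. Hence $f^{\ell}(c_{-})>c$, and symmetrically $f^{r}(c_{+})<c$. With all three hypotheses of Lemma~\ref{Lemma545g55} verified, its conclusion yields the corollary immediately.

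The main obstacle is Step 1: converting the structural failure of $f^{\ell}(x)>x$ into a genuine attracting periodic orbit. One must carefully verify that monotone $f^{\ell}$-convergence $f^{k\ell}(x)\to p^{*}$ upgrades to full $\omega$-limit convergence $\omega_{f}(x)=\co_{f}^{+}(p^{*})$, which requires continuity of every iterate $f^{j}$ at $p^{*}$ and hence the exclusion of $c\in\co_{f}^{+}(p^{*})$. That exclusion, in turn, is precisely what ties the alternative back to a super-attractor and closes the loop against the no-periodic-attractor assumption.
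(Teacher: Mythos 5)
Your proof is correct and takes essentially the same route as the paper: both reduce the corollary to Lemma~\ref{Lemma545g55} by using the absence of periodic attractors (including super-attractors) and weak repellers to verify that $f^{\period(a)}(x)>x$ on $(a,c)$, $f^{\period(b)}(x)<x$ on $(c,b)$, and that the one-sided limits at $c$ lie on the correct sides of $c$. You simply spell out, via the smallest fixed point $p^{*}$ and monotone convergence of the $f^{\period(a)}$-orbits, the step the paper compresses into the assertion that a failure of these inequalities would yield an attracting fixed point or a weak repeller.
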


\dem
Let $\ell=\period(a)$ and $r=\period(b)$. As we don't have periodic attractors, we don't have any super attractor, then $\lim_{x \uparrow c}f^{\ell}(x)>c > \lim_{x\downarrow c}f^{r}(x)$. 
If there is $x\in(a,c)$ such that $f^{\ell}(x)\le x$, then $f^{\ell}|_{(a,c)}$ will have a fixed point that is either an attracting fixed point or weak repeller for $f^{\ell}|_{(a,c)}$, but this contradicts the hypothesis that $f$ doesn't have any attracting periodic point or weak repeller. The same reasoning applies to $f^{\ell}|_{(c,b)}$, and therefore the hypothesis of the corollary implies in the hypothesis of Lemma \ref{Lemma545g55}. 
\cqd

\begin{definition}[Linked Intervals] We say that two open intervals $I_{1}$ and $I_{2}$ are linked if $\partial I_{0}\cap I_{1}\ne\emptyset\ne I_{0}\cap\partial I_{1}$.
\end{definition}

\begin{Lemma}\label{renormalinks} Let $f:[0,1]\setminus\{c\}\to[0,1]$ be a $C^{2}$ non-flat contracting Lorenz map without periodic attractors or weak repellers. Two renormalization intervals of $f$ can never be linked. Moreover, if $J_{0}$ and $J_{1}$ are two renormalization intervals and $J_{0}\ne J_{1}$ then  either $\overline{J_{0}}\subset J_{1}$ or $\overline{J_{1}}\subset J_{0}$. In particular, $\partial J_{0}\cap\partial J_{1}=\emptyset$.

\end{Lemma}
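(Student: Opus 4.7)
The plan is to rule out linked configurations via a clean contradiction using Corollary~\ref{Corollary545g55}, then deduce nesting, and finally upgrade to strict nesting of the closures.

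Both $J_{0}=(a_{0},b_{0})$ and $J_{1}=(a_{1},b_{1})$ are nice intervals, so they both contain $c$; in particular $J_{0}\cap J_{1}\ne\emptyset$. By Lemma~\ref{Lemma09090863} the four endpoints $a_{0},b_{0},a_{1},b_{1}$ are periodic points of $f$, and in particular none of them lies in $\co_{f}^{-}(c)$.

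For the first claim I would argue by contradiction: assume $J_{0}$ and $J_{1}$ are linked. After relabelling we may suppose $a_{1}<a_{0}<c<b_{1}<b_{0}$. Then $a_{0}\in J_{1}\setminus\co_{f}^{-}(c)$, so Corollary~\ref{Corollary545g55} applied to the renormalization interval $J_{1}$ yields $\co_{f}^{+}(a_{0})\cap(c,b_{1})\ne\emptyset$. But $(c,b_{1})\subset(a_{0},b_{0})=J_{0}$, while niceness of $J_{0}$ forces $\co_{f}^{+}(a_{0})\cap J_{0}=\emptyset$, a contradiction. (Symmetrically, one could use $b_{1}\in J_{0}$ and apply Corollary~\ref{Corollary545g55} to $J_{0}$ to hit $(a_{0},c)\subset J_{1}$.)

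Once linking is excluded and $J_{0}\ne J_{1}$, the two intervals both contain $c$, so the endpoint configuration forces either $a_{1}\le a_{0}$ with $b_{0}\le b_{1}$ or the symmetric inclusion, giving $J_{0}\subset J_{1}$ or $J_{1}\subset J_{0}$. To upgrade to $\overline{J_{0}}\subset J_{1}$ (or vice versa), I would show that the endpoints cannot coincide. For instance, suppose $a_{0}=a_{1}$ and $b_{0}<b_{1}$. Then $b_{0}\in J_{1}\setminus\co_{f}^{-}(c)$, so Corollary~\ref{Corollary545g55} for $J_{1}$ gives $\co_{f}^{+}(b_{0})\cap(a_{1},c)\ne\emptyset$; since $(a_{1},c)=(a_{0},c)\subset J_{0}$, this contradicts niceness of $J_{0}$. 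The three remaining coincidence cases are handled by the same recipe after swapping left/right and/or the roles of $J_{0}$ and $J_{1}$. Hence the inclusions are strict at the boundary, and in particular $\partial J_{0}\cap\partial J_{1}=\emptyset$.

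The main obstacle is essentially bookkeeping rather than technical depth: each contradiction is one line once Corollary~\ref{Corollary545g55} is in hand, but one must be careful to verify that the auxiliary periodic endpoint (e.g.\ $a_{0}$ or $b_{0}$) indeed lies in the \emph{other} renormalization interval and outside $\co_{f}^{-}(c)$, both of which are automatic from the linked/nested configuration and from the periodicity provided by Lemma~\ref{Lemma09090863}.
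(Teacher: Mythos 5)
Your proof is correct and follows essentially the same route as the paper: exclude linking by applying Corollary~\ref{Corollary545g55} to the renormalization interval containing the other interval's endpoint and contradicting niceness, then rule out shared boundary points by the identical argument (you merely apply the corollary to the mirror-image interval compared with the paper, which changes nothing of substance). Your extra care that the endpoint is periodic, hence outside $\co_{f}^{-}(c)$, is a fine detail the paper leaves implicit.
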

\dem

Write $J_0=(a_0,b_0)$ and $J_1=(a_1,b_1)$. First note that $J_{0}$ and $J_{1}$ can not be linked. Indeed, if they were linked, we would either have $a_{0}<a_{1}<c<b_{0}<b_{1}$ or $a_{1}<a_{0}<c<b_{1}<b_{0}$. We may suppose that $a_{0}<a_{1}<c<b_{0}<b_{1}$. In this case, $a_{1}\in J_{0}$ and by Corollary~\ref{Corollary545g55}.
$\emptyset\ne\co_{f}^{+}(a_{1})\cap(c,b_{0})\subset\co_{f}^{+}(a_{1})\cap(a_{1},b_{1})=\co_{f}^{+}(a_{1})\cap J_{1}$ contradicting the fact that $J_{1}$ is a nice interval.

As $J_{0}\cap J_{1}\ne\emptyset$ (because both contains the critical point) and as $J_{0}$ and $J_{1}$ are not linked, in follows that either $J_{0}\supset J_{1}$ or $J_{0}\subset J_{1}$. We may suppose that $J_{0}\supset J_{1}$. In this case, as $J_{0}\ne J_{1}$ we have three possibilities: either $a_{0}< a_{1}<c<b_{1}= b_{0}$ or $a_{0}= a_{1}<c<b_{1}< b_{0}$ or $J_{0}\supset\overline{J_{1}}$. If $a_{0}< a_{1}<c<b_{1}= b_{0}$, we can use again Corollary~\ref{Corollary545g55} to get  $\co_{f}^{+}(a_{1})\cap J_{1}\ne\emptyset$. On the other hand, if $a_{0}= a_{1}<c<b_{1}< b_{0}$, the same  Corollary~\ref{Corollary545g55} implies that $\co_{f}^{+}(b_{1})\cap J_{1}\ne\emptyset$. In both cases we get a contradiction to the fact that $J_{1}$ is a nice interval. Thus, the remaining possibility is the true one.

\cqd

\begin{Lemma}\label{semnome} Let $f:[0,1]\setminus\{c\}\to[0,1]$ be a $C^{2}$ non-flat contracting Lorenz map without periodic attractors or weak repellers. Furthermore, suppose that $\nexists\delta>0$ such that $c\in\omega_{f}(x)$ $\forall\,x\in(c-\delta,c+\delta)$.
If $J=(a,b)$ is a renormalization interval then there is a unique periodic orbit that minimizes the period inside $J$, i.e., if $p,q\in Per(f)\cap J$ and $\period(p)=\period(q)=\min\{\period(x)\,;\,x\in Per(f)\cap J\}$ then $\co^{+}_{f}(p)=\co^{+}_{f}(q)$. Moreover, if $L_p\in\cn_{per}$ is the connected component of $[0,1]\setminus\co^+_f(p)$ containing $c$, where $\period(p)=\min\{\period(x)\,;\,x\in Per(f)\cap J\}$, then
\begin{enumerate}
\item $\co^+_f(x)\cap L_p\ne\emptyset$ for all $x\in J\setminus\co^-_f(p)$;
\item $J'\subset L_p$ for all renormalization interval $J'\subset J$.
\end{enumerate}
\end{Lemma}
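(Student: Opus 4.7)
Uniqueness will follow from the Variational Principle (Proposition~\ref{LemmaVarPric}). Let $n=\min\{\period(x):x\in Per(f)\cap J\}$ and let $p,q$ realise it. Writing $p_L=\max\co_f^+(p)\cap(0,c)$ and $q_L$ analogously, Corollary~\ref{Corollary545g55} places $p_L, q_L\in(a,c)$. For any $\varepsilon\in(\max(c-p_L,c-q_L),c-a)$ the window $(c-\varepsilon,c)$ lies inside $J$ and meets both orbits; since every periodic point in $(c-\varepsilon,c)\subset J$ has period at least $n$, the Variational Principle yields a unique minimal orbit and forces $\co_f^+(p)=\co_f^+(q)$.

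Setting $L_p=(p_L,p_R)$, since $\partial L_p\subset\co_f^+(p)\subset Per(f)$ and $\co_f^+(\partial L_p)\cap L_p\subset\co_f^+(p)\cap L_p=\emptyset$, we get $L_p\in\cn_{per}$. Because $\co_f^+(p_L)=\co_f^+(p_R)$, Lemma~\ref{23/02/07} makes $\Lambda_{L_p}$ imperfect, so Theorem~\ref{imperfeito} gives $\xi L_p=(\alpha,\beta)$ with $\period(\alpha),\period(\beta)<n$. No periodic point of period less than $n$ may lie in $J$; combined with $\alpha<p_L<c<p_R<\beta$ this forces $\alpha\leq a$ and $\beta\geq b$, so $\xi L_p\supset[a,b]\supset J$. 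Item (1) is then immediate: any $x\in J\setminus\co_f^-(p)$ lies in $\xi L_p\setminus\co_f^-(p_L)$, and Corollary~\ref{Corollaryimperfeito} gives $\co_f^+(x)\cap L_p\neq\emptyset$.

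For Item (2), let $J'=(a',b')\subset J$ be a renormalization interval with $J'\neq J$, so $\overline{J'}\subset J$ by Lemma~\ref{renormalinks}. Suppose $J'\not\subset L_p$; by the left-right symmetry of the setup we may assume $a'<p_L$ (the case $b'>p_R$ being analogous). Then $p_L\in J'\setminus\co_f^-(c)$, and Corollary~\ref{Corollary545g55} applied to $J'$ forces $\co_f^+(p_L)\cap(c,b')\neq\emptyset$; as $p_R=\min\co_f^+(p)\cap(c,1)$, this intersection sits in $[p_R,b')$, whence $b'>p_R$ and $L_p\subsetneq J'$. Now applied to the periodic point $a'\in\xi L_p$, Corollary~\ref{Corollaryimperfeito} yields either $a'\in\co_f^+(p_L)$, in which case $p_L\in\co_f^+(a')\cap J'$, or $\co_f^+(a')\cap L_p\neq\emptyset$; since $L_p\subset J'$, the latter also places a point of $\co_f^+(a')$ inside $J'$. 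Both alternatives contradict the niceness of $J'$, so $J'\subset L_p$.

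The delicate point in (2) is that niceness alone does not contradict $a'<p_L$ directly when $J'$ and $L_p$ only interlock; the crucial trick is to apply Corollary~\ref{Corollary545g55} inside $J'$ to the point $p_L$, which rules out interlocking and forces the genuine containment $L_p\subsetneq J'$. After that, Corollary~\ref{Corollaryimperfeito} delivers the contradiction cleanly.
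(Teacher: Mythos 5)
Your proof is correct, and for the uniqueness statement and item (1) it uses the same ingredients as the paper: Corollary~\ref{Corollary545g55} plus the Variational Principle (Proposition~\ref{LemmaVarPric}) give the unique minimizing orbit, and Corollary~\ref{Corollaryimperfeito} applied to $\xi L_p$ gives (1) — the only cosmetic difference being that the paper asserts $\xi L_p=J$ directly from the definition of the root, while you deduce the (sufficient) inclusion $J\subset\xi L_p$ from the period bound $\period(\alpha),\period(\beta)<n$ coming out of Theorem~\ref{imperfeito}. Where you genuinely diverge is item (2). The paper argues: an endpoint $x'\in\partial J'\cap(J\setminus L_p)$ has $\co_f^{+}(x')\cap L_p\subset\co_f^{+}(\partial J')\cap J'=\emptyset$, then invokes Corollary~\ref{Corollaryimperfeito}; but the displayed inclusion tacitly presumes $L_p\subset J'$, i.e., it silently excludes the interlocked configuration $a'<p_L<c<b'<p_R$, in which part of $L_p$ lies outside $J'$ and the orbit of $a'$ could enter $L_p$ without violating niceness of $J'$. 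Your argument handles exactly this point: applying Corollary~\ref{Corollary545g55} inside $J'$ to the periodic point $p_L$ forces $\co_f^{+}(p)\cap(c,b')\ne\emptyset$, hence $b'>p_R$ and $\overline{L_p}\subset J'$, after which Corollary~\ref{Corollaryimperfeito} applied to $a'$ contradicts niceness of $J'$ in both of its alternatives. So your route is slightly longer but more self-contained, and it supplies the justification for the step the paper leaves implicit; the paper's version is shorter at the cost of that unstated non-interlocking assumption. (Two trivial remarks: the statement of (2) must of course be read with $J'\ne J$, as you do, and your "$\xi L_p\supset[a,b]$" should be "$\xi L_p\supset(a,b)$" since a priori $\alpha=a$ or $\beta=b$ is not excluded — this is all you use anyway.)
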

\dem
Let $n=\min\{\period(x)\,;\,x\in Per(f)\cap J\}$. Let $p\in J\cap Per(f)$ be such that $\period(p)=n$. It follows from Corollary~\ref{Corollary545g55} that $\co_{f}^{+}(p)\cap(a,c)\ne\emptyset\ne(c,b)\cap\co_{f}^{+}(p)$. Now, from the ``Variational Principle'' (Lemma~\ref{LemmaVarPric}) we get $\co_{f}^{+}(p)$ is the unique orbit of period $n$ intersecting $(a,c)$ and also the unique intersecting $(c,b)$. Thus, it is the only periodic orbit with period $n$ intersecting $J$.

Let $L_p\in\cn_{per}$ be the connected component of $[0,1]\setminus\co^+_f(p)$ containing $c$. By the minimality of the period of $p$ in $J$ and the definition of $\xi J$ (Definition~\ref{defxidej}), it follows that $\xi L_{p}=J$. From the corollary of Theorem~\ref{imperfeito} (Corollary~\ref{Corollaryimperfeito}), we get $\co^+_f(x)\cap L_p\ne\emptyset$ for all $x\in J\setminus\co^-_f(p)$. Of course this also implies that if $J'\subset J$ is a renormalization interval then $J'\subset L_{p}$. Indeed, if $J'\not\subset L_{p}$ then $\partial J'\cap J\setminus L_{p}\ne\emptyset$. As $\co_{f}^{+}(\partial J')\cap L_{p}\subset \co_{f}^{+}(\partial J')\cap J'=\emptyset$, it follows from Corollary~\ref{Corollaryimperfeito} that $\partial J'\cap\co^{-}_{f}(p)\ne\emptyset$. But as $J'$ is a nice interval, we get $\partial J'\cap \partial L_{p}\ne\emptyset$. But this is impossible by Lemma~\ref{renormalinks}.

\cqd

\begin{Definition}[$\infty$-renormalizable]We say that $f$ is $\infty$-renormalizable if $f$ has infinitely many different renormalization intervals.
\end{Definition}

\begin{Definition}[Essential periodic attractors] A periodic attractor $\Lambda$ is called essential if its local basin contain $c^{-}$ or $c^{+}$. Precisely, if $\exists\,p\in\Lambda$ such that $(p,c)$ or $(c,p)$ is contained in $\beta(\Lambda)=\{x\,;\,\omega_{f}(x)\subset\Lambda\}$ (the basin of $\Lambda$).
\end{Definition}

If a periodic attractor in not essential, it is called {\em inessential}. Note the, if $f$ is $C^{3}$ and has negative Schwarzian derivative then, by Singer's Theorem, $f$ does not admit inessential periodic attractors.

\begin{Proposition}
\label{Lemma1110863}Suppose that $f:[0,1]\setminus\{c\}\to[0,1]$ is a $C^{2}$ non-flat contracting Lorenz map that does not admit inessential periodic attractors or weak repellers. 
If $J_{n}$ is a sequence of renormalization intervals with $J_{n}\supsetneqq J_{n+1}$ then $\bigcap_{n}J_{n}=\{c\}$.
\end{Proposition}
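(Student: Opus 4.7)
The plan is to argue by contradiction. Suppose $\bigcap_n J_n \supsetneq \{c\}$ and write $J_n=(a_n,b_n)$. Lemma~\ref{renormalinks} forces $\overline{J_{n+1}}\subset J_n$, so $a_n\nearrow a_\infty$, $b_n\searrow b_\infty$, and $\bigcap_n J_n=[a_\infty,b_\infty]$; by symmetry I may assume $a_\infty<c$. The endpoints $a_n$ are distinct periodic points, and the absence of weak repellers rules out any iterate $f^k$ being the identity on a subinterval, so $\fix(f^k)$ is finite for every $k$ and therefore $\period(a_n)\to\infty$.

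Next I show that $U:=(a_\infty,c)$ is a homterval. For each $n$, Lemma~\ref{Lemma8388881a} applied to the first return map of $J_n$ gives $f^j((a_n,c))\cap J_n=\emptyset$ for $0<j<\period(a_n)$, so $c\notin f^j(U)$ for these $j$; since $\period(a_n)\to\infty$, it follows that $c\notin f^j(U)$ for every $j\geq 1$, i.e.\ $U$ is a homterval. Because $U$ contains a one-sided neighborhood of $c$, Lemma~\ref{LemmaStP} rules out $U$'s being wandering, and the Homterval Lemma~\ref{homtervals}, combined with the absence of weak repellers, produces a periodic attractor $\Lambda$ with $U\cap\beta(\Lambda)\neq\emptyset$.

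Now I locate $\Lambda$ in $[a_\infty,b_\infty]$. Any $x\in U\cap\beta(\Lambda)$ must visit $J_n$ infinitely often (the renormalization property gives a first-return map defined on all of $J_n\setminus\{c\}$), but $\omega_f(x)\subset\Lambda$ is finite, so $\Lambda\cap J_n\neq\emptyset$ for all large $n$; hence there is $q\in\Lambda\cap[a_\infty,b_\infty]$. Three cases arise. If $q\in(a_\infty,c)\cup(c,b_\infty)$, then $q$ is a periodic point in the interior of $J_n$ on one side of $c$, and by the renormalization property its first-return time to $J_n$ is $\period(a_n)$ or $\period(b_n)$; so $\period(q)\geq\period(a_n)\to\infty$, contradicting that $q$ is periodic. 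If $q=c$, then $\Lambda$ is a super-attractor of some period $N$; for $n$ with $\period(a_n)>N$, points near $c_\pm$ return to $J_n$ in at most $N$ steps, contradicting that the renormalization return time is $\period(a_n)$.

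The remaining case $q\in\{a_\infty,b_\infty\}$ is the main obstacle. Suppose $q=a_\infty$, so $a_\infty$ is an attracting periodic point of $f$. If it is hyperbolic, its basin contains an open neighborhood of $a_\infty$, so $a_n\in\beta(\Lambda)$ for $n$ large, contradicting $\omega_f(a_n)=\co_f^+(a_n)\neq\Lambda$. If it is non-hyperbolic with $(f^{\period(a_\infty)})''(a_\infty)\neq 0$, the $C^2$ Taylor expansion of $g:=f^{\period(a_\infty)}$ at $a_\infty$ shows that $g(x)-x$ has constant nonzero sign on each side of $a_\infty$, and then every iterate $g^k$ has $a_\infty$ as its only fixed point in a fixed punctured neighborhood; since each $a_n$ is fixed by some iterate of $g$, this forces $a_n=a_\infty$ for $n$ large, contradiction. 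The truly degenerate subcase $(f^{\period(a_\infty)})''(a_\infty)=0$ I would handle separately via the $C^2$ non-flat hypothesis, observing that a sequence of periodic points accumulating at a parabolic attractor produces either another periodic attractor (which must be inessential once $x_k$ is close enough to $a_\infty$, since its basin cannot reach $c$) or a weak repeller, both excluded. The case $q=b_\infty$ is symmetric. Since all cases yield contradictions, $a_\infty=c$; symmetrically $b_\infty=c$, and $\bigcap_n J_n=\{c\}$.
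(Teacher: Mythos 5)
Your opening matches the paper's own strategy: the return time of any point of the limit interval to itself is at least $\min\{\period(a_n),\period(b_n)\}\to\infty$, so $(a_\infty,c)$ is a homterval, Lemma~\ref{LemmaStP} rules out its being wandering, and the Homterval Lemma~\ref{homtervals} produces a periodic attractor $\Lambda$ whose basin meets it. But from that point on you set aside the hypothesis that actually does the work --- the absence of \emph{inessential} periodic attractors --- and replace it by a case analysis on the location of $\Lambda$, and that analysis has a genuine hole: the subcase $q=a_\infty$ non-hyperbolic with $(f^{\period(a_\infty)})''(a_\infty)=0$ is only ``handled separately'' by a sketch, and the sketch is not an argument. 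A sequence of hyperbolic repelling periodic points $a_n$ with $\period(a_n)\to\infty$ can accumulate on a degenerate neutral periodic point without producing any further attractor or weak repeller (your iterate argument only controls fixed points of powers of $g=f^{\period(a_\infty)}$ whose $g$-orbits stay in the one-sided neighborhood; it says nothing once $g(x)-x$ is allowed to vanish to higher order), so nothing you wrote excludes this configuration. Since under the stated hypotheses $\Lambda$ may be exactly such a non-hyperbolic essential attractor sitting at $a_\infty$, this is not a fringe subcase: it is where the proof has to close, and it is left open.

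The paper closes it in one line precisely by using essentiality: since inessential periodic attractors are excluded, the attractor $\Lambda$ furnished by Lemma~\ref{homtervals} is essential, so there is a periodic $q\in\Lambda$ with $(q,c)$ or $(c,q)\subset\beta(\Lambda)$; a periodic point cannot lie in $[a_\infty,b_\infty]$ (interior points never return, and a periodic endpoint would lie in every $J_n$ on a fixed side of $c$ and hence have period at least $\period(a_n)\to\infty$), so this one-sided basin contains $a_n$ or $b_n$ for all large $n$, and infinitely many distinct periodic orbits cannot be absorbed by the finite set $\Lambda$. That single observation replaces your entire three-way case analysis, degenerate subcase included. Two further cautions: Lemma~\ref{renormalinks}, which you invoke for $\overline{J_{n+1}}\subset J_n$ and for the distinctness of the $a_n$, is stated under ``no periodic attractors'' at all, not merely no inessential ones, so it is not directly available here; and finiteness of $\fix(f^k)$ does not follow just from excluding an interval of fixed points --- accumulating fixed points force a neutral periodic point, which under these hypotheses could be an essential attractor rather than a weak repeller. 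The paper itself asserts $\min\{\period(a_n),\period(b_n)\}\to\infty$ without detail, so this last leap is shared, but your stated justification for it is not correct as written.
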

\dem

Let $J=\bigcap_{n}J_{n}$. Write $(a,b)=\interior J$. Suppose for example that $a\ne c$ (the case $b\ne c$ is analogous). Given $x\in(a,b)$ let $R(x)=\min\{j>0\,;\,f^{j}(x)\in(a,b)$. As $J_n=(a_n,b_n)$ are renormalization intervals, then $(a_n,c)$ only returns to  $J_n$ at $\period(a_n)$ (and $(c,b_n)$ at the period of $b_n$), that is, the first return is at the time $\period(a_n)$. So, as $R(x)\ge\min\{\period(a_n),\period(b_{n})\}\to\infty$. Thus $R(x)=\infty$ $\forall\,x\in(a,b)$. 
As $f^{j}((a,c))\cap(a,b)=\emptyset$ $\forall\,j>0$ (because $R\equiv\infty$) then $f^{j}|_{(a,c)}$ is a homeorphism $\forall\,j$. By Lemma~\ref{LemmaStP}, $(a,c)$ is not a wandering interval. As $f$ does not have weak repellers, it follows from Lemma~\ref{homtervals} that there is a periodic attractor $\Lambda$ with $I\cap\Lambda\ne\emptyset$. As $f$ does not have inessential  periodic attractors, there is some $q\in\Lambda$ such that $(q,c)$ or $(c,q)\subset\beta(\Lambda)$
As $q$ is periodic, $q\notin[a,b]$. Thus, $q<a_{n}<c$ for some $n$ or $c<b_{n}<q$. In any case, we get an absurd.

\cqd

\begin{Corollary}\label{Corolary989982}
Suppose that $f:[0,1]\setminus\{c\}\to[0,1]$ is a $C^{2}$ non-flat contracting Lorenz map that does not admit inessential periodic attractors or weak repellers. If there exists $p\in(0,1)$ such that $\alpha_{f}(p)\ni c\notin\overline{\co_{f}^{+}(p)}$ then $f$ is not an infinitely renormalizable map.
\end{Corollary}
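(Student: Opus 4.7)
The plan is to argue by contradiction. I will assume $f$ is $\infty$-renormalizable and take a strictly nested sequence $J_n=(a_n,b_n)$ of renormalization intervals. Since by hypothesis $f$ has neither inessential periodic attractors nor weak repellers, Proposition~\ref{Lemma1110863} applies and gives $\bigcap_n J_n=\{c\}$; in particular $\overline{J_n}\to\{c\}$ in Hausdorff distance. My target is to exhibit, for each $n$, an iterate $f^{m_n}(p)\in\overline{J_n}$, since then $\{f^{m_n}(p)\}$ accumulates on $c$, forcing $c\in\overline{\co_f^+(p)}$ and contradicting the hypothesis.

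The first step would use $c\in\alpha_f(p)$ to place $p$ inside each renormalization cycle closure. By definition of $\alpha_f$ there is a sequence $x_j\in f^{-n_j}(p)$ with $x_j\to c$ and $n_j\to\infty$. For any fixed $n$, $x_j\in J_n$ once $j$ is large. Because $J_n$ is a renormalization interval, the first return map to $J_n$ sends $J_n\setminus\{c\}$ back into $J_n$, so every well-defined forward iterate of a point of $J_n\setminus\{c\}$ stays in the renormalization cycle closure $\overline{U_{J_n}}$. Applied to the finite orbit $x_j,f(x_j),\dots,f^{n_j}(x_j)=p$, which is well-defined and hence avoids $c$, this yields $p\in\overline{U_{J_n}}$.

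Next, I would observe that the full forward orbit of $p$ also avoids $c$, for otherwise $c\in\co_f^+(p)\subset\overline{\co_f^+(p)}$, contrary to hypothesis. The set $\overline{U_{J_n}}$ is the union of $\overline{J_n}$ with the satellite intervals $\overline{f^i((a_n,c))}$ for $1\le i<\period(a_n)$ and $\overline{f^i((c,b_n))}$ for $1\le i<\period(b_n)$. If $p\in\overline{f^i((a_n,c))}$, writing $p=f^i(y)$ for some $y\in[a_n,c]$ and using that $f^{\period(a_n)-i}$ is defined at $p$, I get
$$f^{\period(a_n)-i}(p)=f^{\period(a_n)}(y)\in f^{\period(a_n)}([a_n,c])\subset\overline{J_n},$$
since $f^{\period(a_n)}(a_n)=a_n$ and $f^{\period(a_n)}((a_n,c))\subset J_n$. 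The symmetric case $p\in\overline{f^i((c,b_n))}$ is handled identically using $\period(b_n)$, and if $p\in\overline{J_n}$ I just take $m_n=0$. In every case some iterate $f^{m_n}(p)$ lies in $\overline{J_n}$, and the contradiction is complete.

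The hard part will be verifying that $\overline{U_{J_n}}$ is genuinely forward invariant for $f$ off $c$ and that any point of $\overline{U_{J_n}}$ whose forward orbit avoids $c$ is absorbed into $\overline{J_n}$ within $\max\{\period(a_n),\period(b_n)\}$ iterates; both are essentially built into the definition of a renormalization interval via Lemma~\ref{Lemma09090863}. The only real subtlety is that $p$ may coincide with a critical-orbit endpoint $f^i(c_\pm)$ of a satellite interval, but the displayed formula above handles that case uniformly, as it only requires $f^{\period(a_n)-i}$ to be defined at $p$, which follows from $c\notin\co_f^+(p)$.
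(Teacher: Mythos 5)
Your proof is correct and takes essentially the same route as the paper's: both use Proposition~\ref{Lemma1110863} to shrink the nested renormalization intervals to $\{c\}$, use $c\in\alpha_{f}(p)$ to place a preimage of $p$ in each $J_n$ and hence $p$ in (the closure of) the renormalization cycle, and then use forward invariance of the cycle to force the forward orbit of $p$ into $\overline{J_n}$, contradicting $c\notin\overline{\co_{f}^{+}(p)}$. The paper organizes this as a dichotomy ($p\in U_n$ for all $n$ versus $p\notin U_n$ for some $n$) while you run both halves in a single chain, but the mechanism is identical.
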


\dem

Suppose that $T_{n}$ is a sequence of two by two distinct renormalizable intervals. By Proposition~\ref{Lemma1110863}, $\bigcap_{n}T_{n}=\{c\}$.  
For each $n\in\NN$, let $0<r_n,\ell_n\in\NN$ be such that $f^{\ell_n}(T_n\cap(0,c))\subset T_n$ and $f^{r_n}(T_n\cap(c,1))\subset T_n$ and let $$U_{n}=T_n\cup\bigg(\bigcup_{j=1}^{\ell_n-1}f^j((T_n\cap(0,c))\bigg)\cup\bigg(\bigcup_{j=1}^{r_n-1}f^j((T_n\cap(c,1))\bigg).$$

If $p\in U_{n}$ $\forall\,n$ then $c\in\omega_{f}(p)$, contradicting our hypothesis. Thus, one can find some $n\ge0$ such that $p\notin U_{n}$. But this is an absurd, because $c\in\alpha_{f}(p)$ and so, $\co_{f}^{-}(p)\cap T_{n}\ne\emptyset$.

\cqd

\begin{Theorem}[The Solenoid attractor]\label{Lemma88609}
Let $f:[0,1]\setminus\{c\}\to[0,1]$ be a $C^{2}$ non-flat contracting Lorenz map without inessential periodic attractors or weak repellers. If $f$ is $\infty$-renormalizable then there is a compact minimal set $\Lambda$, with $c\in\Lambda\subset\bigcap_{J\in\cR }K_J$ such that $\omega_f(x)=\Lambda$ $\forall\,x\in[0,1]$ with $c\in\omega_f(x)$, where $\cR $ is the set of renormalization intervals of $f$ and $K_J$ is the nice trapping region of $J\in\cR $.
\end{Theorem}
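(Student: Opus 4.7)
The strategy is to choose a nested exhausting sequence of renormalization intervals and set $\Lambda$ to be the intersection of the associated nice trapping regions. By Lemma~\ref{renormalinks}, two distinct renormalization intervals are always strictly nested, so $\cR$ is totally ordered by inclusion. Since $f$ is $\infty$-renormalizable I can enumerate $\cR$ as $J_1 \supsetneqq J_2 \supsetneqq \cdots$, and Proposition~\ref{Lemma1110863} gives $\bigcap_n J_n = \{c\}$, in particular $|J_n| \to 0$. I then verify that $K_n := K_{J_n}$ is forward invariant (the cyclic permutation of the intervals of $U_{J_n}$ under $f$, together with $f(\Lambda_{J_n}) \subset \Lambda_{J_n}$, sends each gap of $\Lambda_{J_n}$ that meets $U_{J_n}$ into another such gap) and that $K_{n+1} \subset K_n$ (using $U_{J_{n+1}} \subset U_{J_n}$, which follows from $(a_{n+1},c) \subset (a_n,c) \subset U_{J_n}$ and the forward invariance of $U_{J_n}$, together with $\Lambda_{J_n} \subset \Lambda_{J_{n+1}}$). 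Setting $\Lambda := \bigcap_n K_n$ then produces a non-empty compact forward-invariant set containing $c$ and contained in every $K_J$, $J \in \cR$.

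Fix $x \in [0,1]$ with $c \in \omega_f(x)$. Since $|J_n| \to 0$, $x$ enters every $J_n$ and remains in $K_n$ thereafter by forward invariance, so $\omega_f(x) \subset \Lambda$. For the reverse inclusion take $z \in \Lambda$ and let $G_n(z)$ denote the connected component of $K_n$ containing $z$; Corollary~\ref{Corollary545g55}, applied to the orbit of $x$ once it has entered $J_n$, guarantees that $\co_f^+(x)$ meets each connected component of $U_{J_n}$ (and hence each component of $K_n$, in particular $G_n(z)$) infinitely often, so provided one can show $|G_n(z)| \to 0$ one concludes $z \in \omega_f(x)$. Minimality of $\Lambda$ then follows by the same mechanism: any $y \in \Lambda$ has its forward orbit trapped in each $K_n$ and, since after $\theta(G_n(y))$ steps it enters $J_n$ and then by Corollary~\ref{Corollary545g55} returns infinitely often, $c \in \omega_f(y)$, whence $\omega_f(y) = \Lambda$ by what was just shown.

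The main obstacle is the diameter estimate $|G_n(z)| \to 0$. For $z = c$ this is immediate from $G_n(c) = J_n$ and $|J_n| \to 0$. For pre-critical $z$ (with $f^k(z) = c$ for some minimal $k$), $G_n(z)$ is for large $n$ the pull-back of $J_n$ under the local diffeomorphic branch of $f^k$ at $z$, whose derivative is a fixed non-zero constant, so $|G_n(z)| \asymp |J_n| \to 0$. The delicate case is $z \ne c$ and not pre-critical: one first shows that the return time $\theta(G_n(z))$ must tend to infinity (otherwise some iterate of $z$ would lie in $\bigcap_n J_n = \{c\}$), and then combines Ma\~n\'e's Theorem~\ref{ThMane}, applied to orbit segments of $G_n(z)$ of length $\theta(G_n(z))$ that stay outside $J_n$, with a bounded-distortion (Koebe-type) estimate on the diffeomorphism $f^{\theta(G_n(z))}\colon G_n(z) \to J_n$ supplied by Proposition~\ref{Prop765091}, to deduce $|G_n(z)| \lesssim |J_n|\,\lambda^{-\theta(G_n(z))} \to 0$. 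The real difficulty lies in controlling how the expansion constants supplied by Ma\~n\'e's Theorem depend on the neighborhood $J_n$, since they may degrade as $n \to \infty$; a Koebe/cross-ratio argument tailored to the renormalization tower structure built in the earlier sections is what drives the estimate through.
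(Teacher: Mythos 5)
There is a genuine gap, and it sits exactly where you flag it, but it is worse than a ``delicate estimate'': your choice $\Lambda:=\bigcap_n K_n$ proves (or would prove) a strictly stronger statement than the theorem, and that stronger statement can fail. Note that the theorem only asserts $\Lambda\subset\bigcap_{J\in\cR}K_J$, not equality. The diameter estimate $|G_n(z)|\to0$ for \emph{every} $z$ in the intersection is equivalent to saying that every connected component of $\Delta:=\bigcap_n K_n$ is a point; but a non-degenerate component of $\Delta$ has, as its interior, a wandering interval (this is precisely the Claim inside the paper's proof), and nothing in the hypotheses ($C^2$, non-flat, no weak repellers, no inessential periodic attractors) excludes wandering intervals — the paper only rules them out under extra assumptions such as bounded $\log|f'|$ (Barry--Mestel). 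If a wandering interval sits inside $\Delta$, then its interior is disjoint from $\Omega(f)$, hence from $\omega_f(x)$, so $\omega_f(x)\subsetneqq\Delta$ and $\Delta$ is not minimal: your $\Lambda$ is the wrong set. Correspondingly, the proposed mechanism for the estimate cannot work: Ma\~n\'e's Theorem gives constants $C,\lambda$ only for orbit segments avoiding a \emph{fixed} neighborhood of $c$, whereas the segments of length $\theta(G_n(z))$ you consider only avoid $J_n$, and since $f'(x)\to0$ as $x\to c$ the constants associated with $U=J_n$ degenerate as $J_n\downarrow\{c\}$; no uniform $\lambda>1$ is available, and a Koebe/cross-ratio argument strong enough to repair this would amount to proving absence of wandering intervals, which is not at your disposal here.

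The paper's proof is structured precisely to avoid this trap. It keeps $\Delta=\bigcap_n K_n$ as an auxiliary set, and \emph{defines} $\Lambda$ as the set of $y\in\Delta$ approximated by points $y_n\in\Delta$ whose components in $K_{k_n}$ have diameters tending to $0$. Density of $\co_f^+(x)$ in each family of components (via Proposition~\ref{Lemma1110863}) then gives $\Lambda\subset\omega_f(x)\subset\Delta$ for every $x$ with $c\in\omega_f(x)$ — this part of your argument is fine. The substantive work, which your proposal is missing, is to show that points of $\Delta\setminus\Lambda$ do \emph{not} lie in $\omega_f(x)$: one shows a non-degenerate component $\Delta(y)$ is a wandering interval, so $y$ could only be an isolated point of $\omega_f(x)$ sitting on the boundary of such a component, and this possibility is excluded by a homterval-lemma argument (using Corollary~\ref{CORaeroporto2} and the fact that the relevant endpoint is pre-periodic). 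Minimality then follows as in your sketch, but for the correctly defined $\Lambda$. If you want to salvage your route, you must either add a hypothesis guaranteeing no wandering intervals (and justify the uniform expansion across the renormalization tower), or switch to the paper's definition of $\Lambda$ and supply the $\Delta\setminus\Lambda$ exclusion argument.
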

\dem
As there are no inessential periodic attractors or weak repellers and as $f$ is $\infty$-renormalizable, we conclude that $f$ does admit periodic attractors.

Write $\cR =\{J_n\}_{n\in\NN}$, with $J_1\supsetneqq J_2\supsetneqq J_3\supsetneqq\cdots$.
Note that $J_n\supset\overline{J_{n+1}}$ $\forall\,n$ and also
\begin{equation}
K_{J_n}=\interior(K_{J_n})\supset\overline{K_{J_{n+1}}}\;\;\forall\,n\in\NN.
\end{equation}
Thus, $$\Delta:=\bigcap_{n\in\NN}\overline{K_{J_n}}=\bigcap_{n\in\NN}K_{J_n}.$$

As each $K_n$ is a trapping region ($f(K_n)\subset K_n$), it is easy to see that $\omega_f(x)\subset\Delta$, whenever $c\in\omega_f(x)$. Indeed, if  $c\in\omega_f(x)$ then $\co^+_f(x)\cap J_n\ne\emptyset$ for every $n\in\NN$, because $\{c\}=\bigcap_{n}J_n$ (see Proposition~\ref{Lemma1110863}). Thus $\omega_f(x)\subset\overline{K_n}$ $\forall\,n$.

Let $\ck_n$ be the collection of connected components of $K_{J_n}$ and $\ck_n(y)$ be the element of $\ck_n$ containing $y$ (Figure~\ref{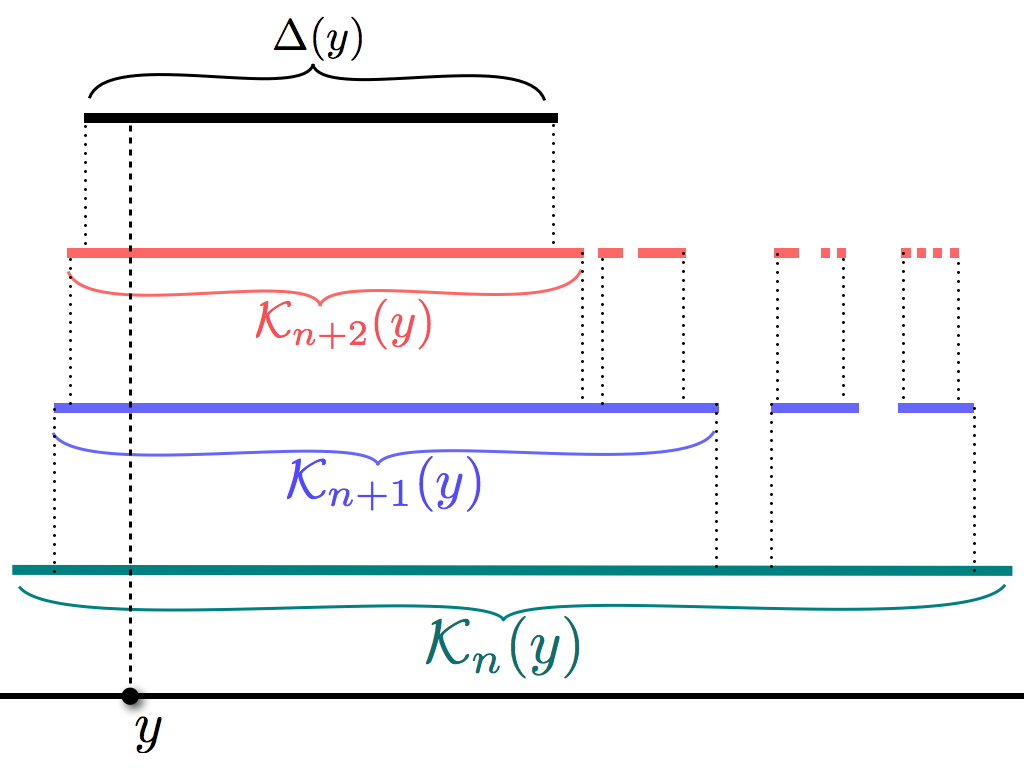}), for any given $y\in\Delta$.
Let $\Lambda$ be the (closed) set of $y\in\Delta$ such that there is a sequence $\Delta\ni y_n\to y$ and $\NN\ni k_n\to\infty$ with $\lim_n\diameter(\ck_{k_n}(y_n))=0$. 
Given any $x\in[0,1]$ with $c\in\omega_f(x)$, we have $\co^+_f(x)\cap J_n\ne\emptyset$  $\forall\,n\in\NN$ and,  by Proposition~\ref{Lemma1110863}, $\co^+_f(x)$ intersects every element of $\ck_n$ $\forall\,n$. As a consequence, $\co^+_f(x)$ is dense in $\Lambda$.  That is,
\begin{equation}\label{eq6783453}
\Delta\supset\omega_f(x)\supset\Lambda\text{\, for every $x$ such that }c\in\omega_f(x).
\end{equation}

\begin{figure}
\begin{center}\label{KnBasica.png}
\includegraphics[scale=.2]{KnBasica.png}\\
\caption{}\label{KnBasica.png}
\end{center}
\end{figure}

\begin{claim} Define $\Delta(y)$ as the connected component of $\Delta$ containing $y$.
If $\interior(\Delta(y))\ne\emptyset$, $y\in\Delta$, then $\interior(\Delta(y))$ is a wandering interval.
\end{claim}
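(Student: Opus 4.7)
The plan is to apply the Homterval Lemma (Lemma~\ref{homtervals}) to the open interval $I := \interior(\Delta(y))$ and rule out three of its four alternatives so that only the ``$I$ is a wandering interval'' option survives. Since $\Delta(y)$ is a connected component of the closed set $\Delta = \bigcap_n \overline{K_{J_n}}$, it is a closed subinterval of $[0,1]$, and the hypothesis $\interior(\Delta(y)) \ne \emptyset$ makes $I$ a genuine non-degenerate open interval.

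The first and main step is to show $c \notin f^n(I)$ for every $n \ge 0$. Each $K_{J_n}$ is a trapping region, so $f(\Delta) \subset \Delta$. Suppose for contradiction that $n_0 := \min\{n \ge 0 \,:\, c \in f^n(I)\}$ is finite; by minimality of $n_0$, the map $f^{n_0}|_I$ is a $C^2$ diffeomorphism and $f^{n_0}(I)$ is an open interval contained in $\Delta$ and containing $c$. Being connected and contained in every $K_{J_m}$, $f^{n_0}(I)$ must lie in a single connected component of each $K_{J_m}$; since that component contains $c$, it is the gap $J_m$ itself. Therefore $f^{n_0}(I) \subset \bigcap_m J_m = \{c\}$ by Proposition~\ref{Lemma1110863}, contradicting the non-degeneracy of $f^{n_0}(I)$. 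The base case $n_0 = 0$ (that is, $c \in I$) is ruled out by exactly the same reasoning.

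Second, I would rule out all periodic attractors in this setting. Inessential ones are forbidden by hypothesis. An essential periodic attractor $\Lambda$ would provide some $q \in \Lambda$ with $(q,c) \subset \beta(\Lambda)$ (or the symmetric interval on the right). Writing $J_n = (a_n, b_n)$, Proposition~\ref{Lemma1110863} forces $a_n \uparrow c$, so eventually $a_n \in (q,c) \subset \beta(\Lambda)$, whence $\co^+_f(a_n) = \omega_f(a_n) \subset \Lambda$. But the $\{a_n\}$ form an infinite family of distinct periodic points while $\Lambda$ is finite, a contradiction.

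Applying Lemma~\ref{homtervals} to $I$, the four alternatives become: $c \in f^n(I)$ for some $n$ with $f^n|_I$ a homeomorphism, ruled out by Step~1; $I$ wandering, the desired conclusion; a periodic attractor $\Lambda$ with $I \cap \beta(\Lambda) \ne \emptyset$, ruled out by Step~2; or a weak repeller, excluded by hypothesis. Hence $I$ is wandering. The main obstacle is Step~1, where one must carefully combine the forward invariance of $\Delta$ with the bottleneck $\bigcap_m J_m = \{c\}$ from Proposition~\ref{Lemma1110863} to squeeze any hypothetical critical iterate $f^{n_0}(I)$ into a single point; the remaining steps are short applications of results already in place.
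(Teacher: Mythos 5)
Your proof is correct and takes essentially the same route as the paper's: you show any forward image of the component that met $c$ would be squeezed into every $J_m$ and hence into $\bigcap_m J_m=\{c\}$ (Proposition~\ref{Lemma1110863}), then invoke the homterval lemma (Lemma~\ref{homtervals}) to conclude wandering. The only extra work is your Step 2 re-deriving that there are no periodic attractors at all, a fact the paper records at the very start of the proof of Theorem~\ref{Lemma88609}; your argument for it is fine.
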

\dem[Proof of the claim]

If $f^s(\Delta(y))\cap J_n\ne\emptyset$ then $f^s(\ck_n(y))\cap\ck_n(y)\ne\emptyset$ and so, $f^s(\Delta(y))\subset f^s(\ck_n(y))\subset J_n$.
Thus, if $c\in f^s(\Delta(y))$ then we will have $f^s(\Delta(y))\subset \bigcap_nJ_n=\{c\}$ (Proposition~\ref{Lemma1110863}), an absurd. This implies, that $c\notin f^s(\Delta(y))$ $\forall s\in\NN$. From Lemma~\ref{homtervals}, we get that $\interior(\Delta(y))$ is a wandering interval.
\cqd

Now consider $y\in\Delta\setminus\Lambda$. We will show that if $c \in \omega_f(x)$, then $y \not\in \omega_f(x)$. In this case there is some $\varepsilon>0$ such that $B_{\varepsilon}(y)\cap\Delta=B_{\varepsilon}(y)\cap\Delta(y)$.
Note that $\Delta(y)\ne\{y\}$, otherwise $\lim_n\diameter(\ck_n(y))=0$.
This implies that $\interior(\Delta(y))\ne\emptyset$ and so, by the claim above, $\interior(\Delta(y))$ is a wandering interval. This implies that $\omega_f(x) \cap \interior(\Delta(y))=\emptyset$, $\forall x$.

Thus, $B_{\varepsilon}(y)\cap\Delta\cap\Omega(f)=B_{\varepsilon}(y)\cap\Delta(y)\cap\Omega(f)\subset\{y\}$. Suppose that $y\in\omega_f(x)$ for some $x$ such that $c\in\omega_f(x)$.
In this case, as $\Delta\supset\omega_f(x)$, we conclude that $y$ is a isolated point of $\omega_f(x)$ (indeed, as $\omega_f(x)\subset\Delta\cap\Omega(f)$, we have $y \in \omega_f(x)\cap B_{\varepsilon}(y)=\omega_f(x)\cap B_{\varepsilon}(y)\cap\Delta\cap\Omega(f)=\omega_f(x)\cap B_{\varepsilon}(y)\cap\Delta(y)\cap\Omega(f)\subset\{y\}$, then this set is $\{y\}$).

\begin{figure}
\begin{center}\label{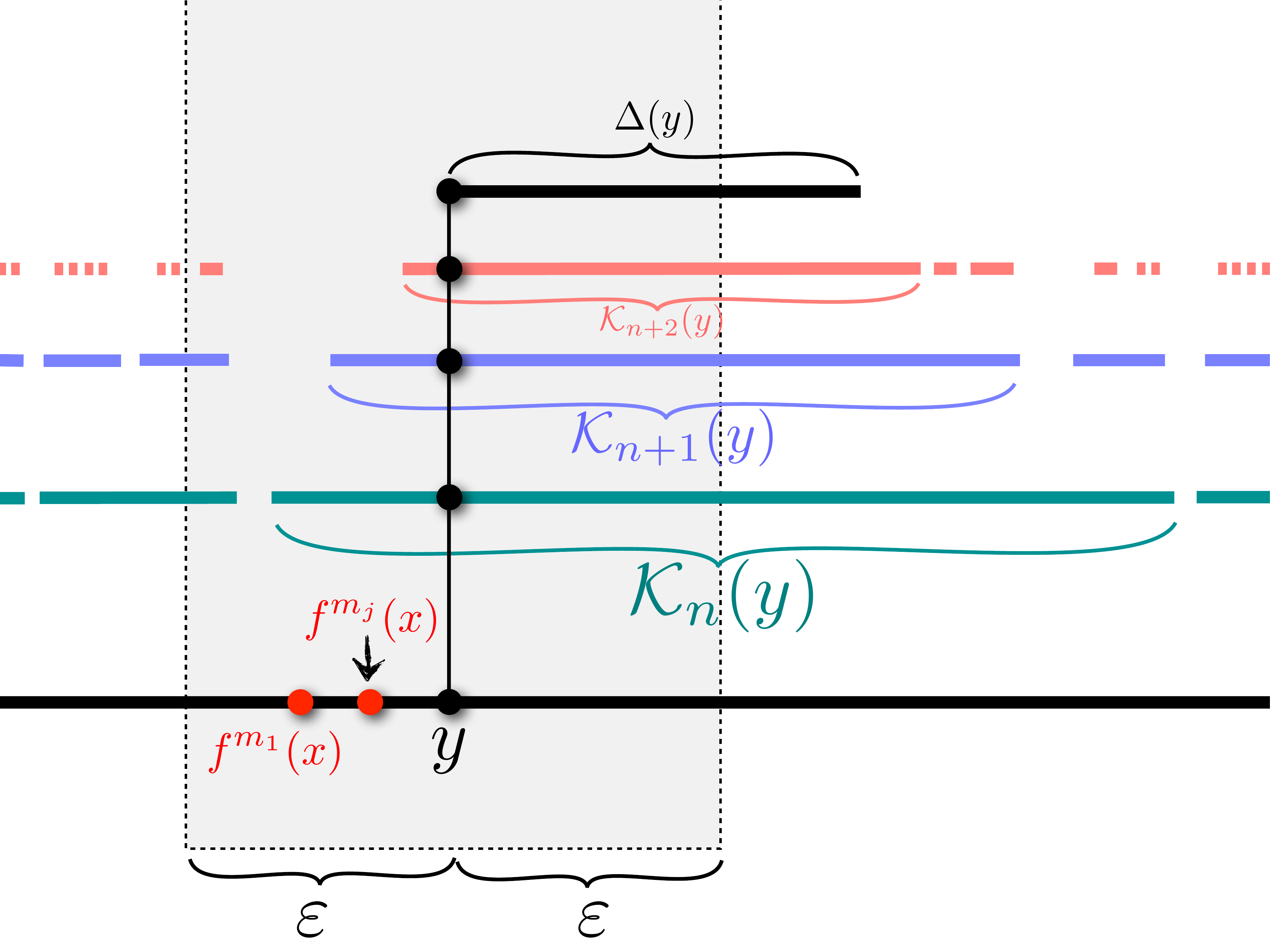}
\includegraphics[scale=.3]{Kn.pdf}\\
\caption{}\label{Kn.pdf}
\end{center}
\end{figure}

As $y\notin\interior(\Delta(y))$, we may suppose that $\Delta(y)=[y,b]$ (the case $\Delta(y)=[a,y]$ is analogous). Taking $\varepsilon>0$ small enough, we can assume that $y+\varepsilon<b$. Let $n\ge1$ be such that $y-\varepsilon<k_{n,0}(y)<y$, where $(k_{n,0},k_{n,1}):=K_n(y)$. Let $m_j\in\NN$ be such that $k_{n,0}<f^{m_1}(x)<f^{m_2}(x)<\cdots<f^{m_j}(x)\nearrow\,y$ and $\co_f^+(x)\cap(k_{n,0},y)=\{f^{m_1}(x),f^{m_2}(x),f^{m_3}(x),\cdots\}$ (Figure~\ref{Kn.pdf}). 

Choose $j_0$ big enough so that $m_j>m_1$ $\forall\,j\ge j_0$. Given $j\ge j_0$, let $I_j=(t_j,f^{m_1}(x))$ be the maximal interval contained in $(k_{n,0},f^{m_1}(x))$ such that $f^{m_j-m_1}|_{I_j}$ is a homeomorphism. If $k_{n,0}<t_j$, there is some $1\ge s<m_j-m_1$ s.t. $f^s((t_j,f^{m_1}(x)))=(c,f^{m_1+s}(x))$. By Corollary~\ref{CORaeroporto2} of the Appendix, $\#\co_f^+(x)\cap (c,f^{m_1+s}(x))=\infty$, but this will imply that $\#\co_f^+(x)\cap(k_{n,0},f^{m_1}(x))\ge\#\co_f^+(x)\cap(t_j,f^{m_j}(x))=\infty$. An absurd. Thus, $t_j=k_{n,0}$  and so, $I_j=(k_{n,0},f^{m_1}(x))\,\,\forall\,j\ge j_0.$

As a consequence, $f^j|_{(k_{_{n,0}},f^{m_1}(x))}$ is a homeomorphism $\forall\,j\in\NN$  (because $f^{m_j-m_1}|_{(k_{_{n,0}},f^{m_1})}=f^{m_j-m_1}|_{I_j}$ is a homeomorphism $\forall\,j\ge j_0$). But this contradicts the homterval lemma (Lemma~\ref{homtervals}), as $(k_{_{n,0}},f^{m_1}(x))$ cannot be a wandering interval ($k_{n,0}$ is pre-periodic) and as $f$ does not have periodic attractors.

 For short, if $c\in\omega_f(x)$ then $y\notin\omega_f(x)$ for all $y\in\Delta\setminus\Lambda$. So, by (\ref{eq6783453}), $\omega_f(x)=\Lambda$ when $c\in\omega_f(x)$. Finally, as $\Lambda\subset\bigcap_{J\in\cR }K_J$ and $c\in\omega_f(x)$ for every $x\in\bigcap_{J\in\cR }K_J$ then $\omega_f(x)=\Lambda$ $\forall\,x\in\Lambda=da$. That is, $\Lambda$ is minimal and we conclude the proof.

\cqd

\begin{Remark}\label{Remark12327890}
Let $x,y\in[0,1]\setminus\{c\}$, $\delta>0$ and $j\in\NN$. If $f^{j}|_{(y-\delta,y+\delta)}$ is an homeomorphim then $y\in\alpha_{f}(x)$ $\iff$ $f^{j}(y)\in\alpha_{f}(x)$.
\end{Remark}

\begin{Lemma}\label{Lemma01928373} Let $f:[0,1]\setminus\{c\}\to[0,1]$ be a $C^{2}$ non-flat contracting Lorenz map without periodic attractor. If $c\in\alpha_{f}(p)$ for some $p\ne c$ then $\co^{-}_{f}(p)\cap(-\delta,c)\ne\emptyset$ and $\co^{-}_{f}(p)\cap(c,\delta)\ne\emptyset$ $\forall\delta>0$.
\end{Lemma}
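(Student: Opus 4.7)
\bp
The plan is to argue by contradiction and reduce to a single gap of $\alpha_f(p)$ adjacent to $c$. By symmetry it suffices to treat one side, so suppose $\co^-_f(p)\cap(c-\delta,c)=\emptyset$ for some $\delta>0$. Since any point of $\alpha_f(p)$ in the open interval $(c-\delta,c)$ would be a limit of preimages of $p$ lying in $(c-\delta,c)$, the assumption upgrades to $\alpha_f(p)\cap(c-\delta,c)=\emptyset$. Because $\alpha_f(p)$ is closed and contains $c$, I would then let $(a,c)$ be the connected component of $[0,1]\setminus\alpha_f(p)$ having $c$ as right endpoint; by construction $(a,c)\cap\alpha_f(p)=\emptyset$ and $a\le c-\delta$.

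Next I would apply the Homterval Lemma (Lemma~\ref{homtervals}) to the interval $I=(a,c)$. The hypotheses of the theorem rule out periodic attractors and weak repellers, and Lemma~\ref{LemmaStP} rules out that $I$ is a wandering interval (since $I\supset(c-r,c)$ for small $r>0$). Therefore there exists $n\ge1$ such that $f^n|_{I}$ is a homeomorphism and $c\in f^n(I)$. Let $y=(f^n|_I)^{-1}(c)\in(a,c)$.

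Finally I would invoke Remark~\ref{Remark12327890}: since $f^n|_I$ is a homeomorphism, it is a homeomorphism on a small neighborhood $(y-\varepsilon,y+\varepsilon)\subset(a,c)$ of $y$, so $y\in\alpha_f(p)\iff f^n(y)\in\alpha_f(p)$. As $f^n(y)=c\in\alpha_f(p)$ by hypothesis, we conclude $y\in\alpha_f(p)$, contradicting $(a,c)\cap\alpha_f(p)=\emptyset$. Running the same argument with $(c,b)$ in place of $(a,c)$ yields the other side.

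The main technical point, and the step I expect to require the most care, is the application of the Homterval Lemma to $(a,c)$: I must verify that the remaining alternative (a wandering interval) is genuinely excluded, and this uses in an essential way that $(a,c)$ reaches $c$ on one side, so Lemma~\ref{LemmaStP} applies. The rest of the argument is a direct transfer of the pre-orbit accumulation at $c$ back through the monotone branch of $f^n$ on $(a,c)$ to produce a point of $\alpha_f(p)$ inside the forbidden gap.
\ep
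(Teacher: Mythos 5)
Your strategy is essentially the paper's: pass to the gap of $\alpha_f(p)$ adjacent to $c$, produce an iterate carrying the gap monotonically over $c$, pull $c\in\alpha_f(p)$ back into the gap via Remark~\ref{Remark12327890}, and use Lemma~\ref{LemmaStP} to exclude the wandering-interval alternative. The reduction from ``$\co^{-}_{f}(p)\cap(c-\delta,c)=\emptyset$'' to ``$\alpha_f(p)\cap(c-\delta,c)=\emptyset$'', the gap construction, and the pull-back step are all correct. The genuine problem is the sentence ``the hypotheses of the theorem rule out periodic attractors and weak repellers.'' The lemma assumes only that $f$ has no periodic attractor; weak repellers are \emph{not} excluded, and a $C^{2}$ non-flat contracting Lorenz map without periodic attractors may well possess one (e.g.\ a neutral, one-sidedly repelling fixed point far from $c$). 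Since the Homterval Lemma (Lemma~\ref{homtervals}) carries the global alternative ``or there is a weak repeller,'' in that situation it yields no information whatsoever about $I=(a,c)$, and your argument stops. As written, your proof establishes the lemma only under the additional hypothesis that $f$ has no weak repellers --- which does hold in the paper's applications (e.g.\ Proposition~\ref{Proposition008345678}), but is not what the statement asserts.

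The missing ingredient is how to discharge that alternative using only the absence of periodic attractors, and this is exactly what the paper's proof does by running the homterval argument by hand on the gap instead of quoting Lemma~\ref{homtervals}. If some iterate $f^{\ell}$ is monotone on the gap and $f^{\ell}(\text{gap})$ meets the gap, then either $f^{\ell}(\text{gap})$ is contained in the gap --- and monotonicity forces an attracting periodic orbit or a super-attractor, contradicting the hypothesis --- or $f^{\ell}(\text{gap})$ covers an endpoint of the gap; the endpoints lie in $\alpha_f(p)$, so Remark~\ref{Remark12327890} pulls a point of $\alpha_f(p)$ back inside the gap, again a contradiction. Hence the gap never meets its iterates, it is a wandering interval, and Lemma~\ref{LemmaStP} gives the final contradiction. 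If you replace your appeal to the Homterval Lemma by this direct case analysis (or strengthen the hypotheses to include ``no weak repellers''), the rest of your argument goes through as written.
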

\dem
Suppose that $c\in\alpha_{f}(p)$, $p\in[0,c)\cup(c,1]$.  We may assume that $\co^{-}_{f}(p)\cap(-\delta,c)\ne\emptyset$ $\forall\delta>0$ (the other case is analogous). In this case we have to show that $\co^{-}_{f}(p)\cap(c,\delta)\ne\emptyset$ $\forall\delta>0$. Suppose by contradiction that $\co^{-}_{f}(p)\cap(c,\delta_{0})=\emptyset$ for some $\delta_{0}>0$. In this case, as $\alpha_{f}(p)$ is compact, there is some $q>0$ such that $(c,q)$ is a connected component of $[0,1]\setminus\alpha_{f}(p)$.
\begin{Claim}\label{Claim434344a}$f^{j}\big((c,q)\big)\cap(c,q)=\emptyset$ $\forall\,j>0$.
\end{Claim}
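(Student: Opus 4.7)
}
The strategy is a pull-back argument producing preimages of $p$ inside $(c,q)$, which contradicts the fact that $(c,q) \cap \alpha_f(p) = \emptyset$. Suppose for contradiction that $f^{j_0}((c,q)) \cap (c,q) \ne \emptyset$ for some $j_0 \ge 1$. Then $(c,q)$ fails to be wandering, since $f^{0}((c,q)) \cap f^{j_0}((c,q)) \ne \emptyset$ with $0 \ne j_0$.

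Since $f$ has no periodic attractor by the hypothesis of Lemma~\ref{Lemma01928373} (and we may assume no weak repeller, as the surrounding arguments in this section tacitly do), the Homterval Lemma (Lemma~\ref{homtervals}) applied to $(c,q)$ leaves only the first alternative: there exists $n \ge 1$ such that $f^n|_{(c,q)}$ is a homeomorphism and $c \in f^n((c,q))$. Writing $f^n((c,q)) = (L,R)$, we have $L < c < R$, so $(L,c)$ is a genuine left-neighbourhood of $c$ contained in the image.

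Now I invoke the one-sided accumulation from the outer contradiction hypothesis of Lemma~\ref{Lemma01928373}: since $\co^-_f(p) \cap (c,\delta_0) = \emptyset$ but $c \in \alpha_f(p)$, every preimage sequence witnessing $c \in \alpha_f(p)$ must approach from the left. Choose $x_k \in f^{-n_k}(p)$ with $x_k \to c^-$ and $n_k \to \infty$. For all large $k$, $x_k \in (L,c) \subset f^n((c,q))$, so the homeomorphism produces $y_k := (f^n|_{(c,q)})^{-1}(x_k) \in (c,q)$, and $y_k \to y^\ast := (f^n|_{(c,q)})^{-1}(c) \in (c,q)$ by continuity of the inverse. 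But $y_k \in f^{-(n+n_k)}(p)$ with $n + n_k \to \infty$, so $y^\ast \in \alpha_f(p)$. This contradicts $(c,q) \cap \alpha_f(p) = \emptyset$ and proves the claim.

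The main obstacle is securing the Homterval-Lemma hypothesis cleanly: case (iv) (weak repeller) is not excluded by the bare statement of Lemma~\ref{Lemma01928373}, and one must either argue it is implicit in the working assumptions of the section or eliminate it directly (a weak repeller would itself produce nice intervals near $c$ whose boundary periodic points are accumulated by the pre-orbit of $p$ from both sides, again contradicting the one-sided accumulation assumed in the outer contradiction). Once that is in place, the rest is immediate from the homeomorphic pull-back.
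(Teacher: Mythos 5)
Your pull-back mechanism (obtain a monotone branch of some $f^{n}$ on $(c,q)$ whose image contains a one-sided neighbourhood of $c$, then drag preimages of $p$ back into $(c,q)$ to contradict $(c,q)\cap\alpha_{f}(p)=\emptyset$) is sound, and it is essentially the same device the paper uses via Remark~\ref{Remark12327890}. The genuine gap is exactly where you flagged it, and your proposed repair does not close it. Lemma~\ref{Lemma01928373} assumes only that $f$ has no periodic attractor, so when you invoke the Homterval Lemma (Lemma~\ref{homtervals}) you cannot discard its fourth alternative: if $f$ happens to have a weak repeller somewhere in $[0,1]$, that lemma gives no information at all about $(c,q)$, and your argument stops there. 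Your sketched elimination --- that a weak repeller ``would produce nice intervals near $c$ whose boundary periodic points are accumulated by $\co^{-}_{f}(p)$ from both sides'' --- is not an argument: a weak repeller is merely a non-hyperbolic, non-attracting periodic point, possibly far from $c$ and with no a priori relation to $p$ or its pre-orbit, and nothing in the outer contradiction hypothesis constrains it. (There is also a small mismatch with the paper's definition of wandering interval, which only requires $f^{i}(I)\cap f^{j}(I)=\emptyset$ for $i\ne j>0$; from $f^{j_{0}}((c,q))\cap(c,q)\ne\emptyset$ you should push forward once to get intersecting images with positive indices. This is trivial to fix.)

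The paper avoids the Homterval Lemma altogether, and that is the idea you are missing. Take $\ell$ minimal with $f^{\ell}((c,q))\cap(c,q)\ne\emptyset$. Minimality already forces $f^{\ell}|_{(c,q)}$ to be a homeomorphism: if $c\in f^{s}((c,q))$ for some $s<\ell$, then the image, being an open interval containing $c$, would already meet $(c,q)$, contradicting the choice of $\ell$. Then either $f^{\ell}((c,q))\subset(c,q)$, which for an increasing map of an interval into itself yields an attracting periodic orbit or a super-attractor and contradicts the no-periodic-attractor hypothesis without any appeal to weak repellers; or $f^{\ell}((c,q))$ contains one of the endpoints $c$ or $q$, both of which lie in $\alpha_{f}(p)$, and pulling back through the branch (Remark~\ref{Remark12327890}) places a point of $\alpha_{f}(p)$ inside $(c,q)$, a contradiction. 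If you replace your appeal to Lemma~\ref{homtervals} by this minimal-return-time argument, the rest of your proof goes through unchanged; your limit argument $y_{k}\to y^{\ast}$ is correct and is just a hands-on version of Remark~\ref{Remark12327890}.
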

{\em Proof of the claim:} Suppose there is $j>0$ such that
\begin{equation}\label{Equation5393573894}
f^{j}\big((c,q)\big)\cap(c,q)\ne\emptyset.
\end{equation}
Let $\ell$ be the smallest $j$ satisfying (\ref{Equation5393573894}). In this case $f^{\ell}|_{(c,q)}$ is a homeomorphism.
If $f^{\ell}\big((c,q)\big)\subset(c,q)$ then $f$ admits a periodic attractor or a super-attractor, contradicting our hypothesis. Thus there is some $x\in\{c,q\}\cap f^{\ell}\big((c,q)\big)$. As both $c$ and $q$ are accumulated by pre-images of $p$, it follows that $x$ is also accumulated by pre-images of $p$. So, $\alpha_{f}(p)\cap(c,q)\ne\emptyset$  (Remark~\ref{Remark12327890}), contradicting that $(c,q)$ is contained in the complement of $\alpha_{f}(p)$. 
$\square$(end of the proof of the Claim~\ref{Claim434344a})

It follows from the claim above that $f^{j}|_{(c,q)}$ is a homeomorphism for every $j>0$. Moreover, $(c,q)$ is a wandering interval. Indeed, if $f^{j}\big((c,q)\big)\cap f^{k}\big((c,q)\big)\ne\emptyset$, with $j<k$, then $f^{j}\big((c,q)\big)\not\supset f^{k}\big((c,q)\big)$, because $f^{j}\big((c,q)\big)\supset f^{k}\big((c,q)\big)$ implies the existence of a periodic attractor or a super-attractor, contradicting again our hypothesis. Thus, there is $x\in\{f^{j}(c),f^{j}(q)\}$ belonging to $f^{k}\big((c,q)\big)$. As $f^{j}(c)$ and $f^{j}(q)\in\alpha_{f}(p)$ we get $\big(f^{j}|_{(c,q)}\big)^{-1}(x)\in\alpha_{f}(p)\cap(c,q)$ (Remark~\ref{Remark12327890}), contradicting again that $(c,q)$ is contained in the complement of $\alpha_{f}(p)$.

As $(c,q)$ being a wandering interval is a contradiction to Lemma~\ref{LemmaStP}, we have to conclude that $\co^{-}_{f}(p)\cap(c,\delta)\ne\emptyset$ $\forall\delta>0$.
\cqd

\begin{Definition}[Cherry-like]
We say that $f$ is a {\em Cherry-like map} if it does not have a periodic or super attractor and there is $\delta>0$  such that $c\in\omega_{f}(x)$ for every $x\in(c-\delta,c+\delta)$.
\end{Definition}

\begin{Lemma}\label{Lemma549164}
Let $f:[0,1]\setminus\{c\}\to[0,1]$ be a contracting Lorenz map that doesn't have weak repellers or periodic attractors. Let $p\in(0,1)$ be such that $c\notin\overline{\co_{f}^{+}(p)}$ and let $(p_{1},p_{2})$ be the connected component of $(0,1)\setminus\overline{\co_{f}^{+}(p)}$ containing the critical point $c$. If $f$ does not have a super-attractor then, given $y\in\co_{f}^{-}(p)$, $$\bigcup_{j\ge0}f^{j}(y,y+\varepsilon)\supset (p_{1},c)\,\,\text{ and }\,\,\bigcup_{j\ge0}f^{j}(y-\varepsilon,y)\supset (c,p_{2}),$$ $\forall\varepsilon>0$.\end{Lemma}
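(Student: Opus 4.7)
The overall strategy has three stages: reduce to the case $y=p$, apply the nice-interval theory of Proposition~\ref{Prop765091} to $(p_1,p_2)$ to see that covering $(p_1,c)$ reduces to placing a single non-trivial gap inside the forward-iterate union, and finally perform a bootstrap inside $(p_1,p_2)$ whose only obstruction is a super-attractor.

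For the reduction, fix $N\ge 0$ with $f^N(y)=p$, and shrink $\varepsilon$ so that $(y,y+\varepsilon)$ meets no pre-image of $c$ of order less than $N$. Then $f^N|_{(y,y+\varepsilon)}$ is a homeomorphism, and since $f$ preserves orientation we obtain $f^N(y,y+\varepsilon)=(p,p+\delta)$ for some $\delta>0$. Hence $\bigcup_{j\ge 0}f^j(y,y+\varepsilon)\supset V:=\bigcup_{j\ge 0}f^j(p,p+\delta)$, and it suffices to prove $V\supset(p_1,c)$; the assertion for the left neighborhood is completely symmetric.

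Next observe that $(p_1,p_2)$ is a nice interval: since $c\notin\overline{\co_f^+(p)}$ we have $p_1<c<p_2$, so $f$ is continuous at both $p_i$, hence $\co_f^+(p_i)\subset\overline{\co_f^+(p)}$, which by definition avoids $(p_1,p_2)$. Proposition~\ref{Prop765091} then gives $\leb(\Lambda_{(p_1,p_2)})=0$ and $f^{\theta(I)}(I)=(p_1,p_2)$ diffeomorphically for every gap $I\in C_{(p_1,p_2)}$. In particular, it is enough to show that $V$ contains some gap $I$ with $\theta(I)\ge 1$, since then $(p_1,c)\subset(p_1,p_2)=f^{\theta(I)}(I)\subset V$. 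If $(p,p+\delta)$ itself contains such a gap, we are done. Otherwise $(p,p+\delta)$ lies inside a single gap $I_0$; since $p\in\Lambda_{(p_1,p_2)}$ cannot lie in the open gap, $p=\inf I_0$, and $f^{\theta(I_0)}(p,p+\delta)=(p_1,p_1+\delta_1)\subset V$ for some $\delta_1>0$. This reduces the problem to covering $(p_1,c)$ starting from a right-neighborhood of $p_1$ already known to lie in $V$.

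For this reduced problem I would work with the first-return map $\cf_{(p_1,p_2)}$ of Section~\ref{Section98767} and define $r=\sup\{x\in[p_1,c]:(p_1,x)\subset V\}$, aiming to show $r=c$ by contradiction. Suppose $r<c$. Since $V$ is open, $r\notin V$. Using Lemma~\ref{Lemma8388881a}(3) and the geometry of $\cp_{(p_1,p_2)}$, the first-return component adjacent to $r$ on its right either maps by a power of $f$ back into $(p_1,r)\subset V$, in which case $V$ strictly extends past $r$ and contradicts maximality, or else it stabilizes a one-sided neighborhood of $r$ under a power $f^R$. In the latter case, $r$ becomes a one-sided attracting fixed point of $f^R$, so either $r$ is a periodic attractor or the critical value is forced onto the orbit of $r$, yielding a super-attractor; both are forbidden by hypothesis. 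Hence $r=c$, so $V\supset(p_1,c)$.

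The main obstacle will be Stage (iii): carefully cataloguing the first-return geometry at $r$ and ruling out every way the iteration could stall short of $c$. This is precisely where the hypothesis ``no super-attractor'' is essential, since a stalled iteration producing $c$ on the orbit of the limit point is exactly the super-attractor configuration; together with the previously assumed absence of periodic attractors and weak repellers, it closes off the last degeneracy.
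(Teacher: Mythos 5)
Your Stages (i) and (ii) are essentially sound: the reduction from $y$ to $p$ is the same one the paper makes, and your use of Proposition~\ref{Prop765091} to produce an interval $(p_1,p_1+\delta_1)\subset V$ anchored at $p_1$ is legitimate (note that $p\in\Lambda_{(p_1,p_2)}$ and that a gap contained in $(p,p+\delta)$ maps onto all of $(p_1,p_2)$, so both horns of your case distinction there work). The problem is Stage (iii), which is exactly the step you yourself flag as the main obstacle, and as written it does not go through. First, the dichotomy is not exhaustive: by Lemma~\ref{Lemma8388881a}(3) a return component of $\cp_{(p_1,p_2)}$ whose boundary avoids $c$ maps onto the \emph{whole} interval $(p_1,p_2)$, so generically it neither ``maps back into $(p_1,r)$'' nor ``stabilizes a one-sided neighborhood of $r$''; moreover $r$ need not be adjacent to any single return component (it can be accumulated by infinitely many), and nothing forces $f^R(r)=r$, so the claimed one-sided attracting fixed point in the second horn is unsupported. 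Second, and more seriously, the first horn is logically backwards: $V=\bigcup_j f^j(p,p+\delta)$ is only \emph{forward} invariant, so knowing that a component lying to the right of $r$ maps into $(p_1,r)\subset V$ gives no information about that component (or about $r$) belonging to $V$; to move the supremum past $r$ you must exhibit $r$ itself as a point of a forward image of something already in $V$, not track where points outside $V$ go.

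The missing ingredient is the one the paper uses at precisely this spot: by Ma\~n\'e's Theorem (Theorem~\ref{ThMane}, via Lemma~\ref{Remark98671oxe}), Lebesgue-almost every point enters $(c-\delta',c+\delta')$ for every $\delta'>0$, so the positive-measure interval $(p,p+\delta)$ (equivalently your $(p_1,p_1+\delta_1)$) has a first iterate $j$ whose image meets $(c-\delta',c+\delta')$; minimality of $j$ makes $f^{j}$ a homeomorphism on the interval, and since the orbit of the anchor endpoint ($p$, resp.\ $p_1$) never enters $(p_1,p_2)$, that image is an interval with left endpoint $\le p_1$ reaching past $c-\delta'$, hence it contains $(p_1,c-\delta')$ in one sweep; letting $\delta'\to0$ gives $V\supset(p_1,c)$ with no bootstrap at an intermediate point $r$ needed. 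If you insist on your supremum formulation, this stretching argument is what shows $r$ cannot be $<c$ (the image covering $(p_1,c-\delta')$ contains $r$, contradicting $r\notin V$); the absence of periodic and super-attractors is not what drives this step, it is the a.e.\ recurrence to neighborhoods of $c$ combined with the anchoring of $\co_f^+(p)$ outside $(p_1,p_2)$.
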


\dem

Given $\delta>0$, one can extend $f|_{[0,1]\setminus (c-\delta,c+\delta)}$ as a smooth $g$ on $[0,1]$ and apply Mañe's Theorem (note that $f$ does not have weak repellers) to $\co_{g}^{+}(x)=\co_{f}^{+}(x)$ $\forall\,x\in[0,1]$ with $x\in\Lambda_{\delta}=\{x\,;\,\co_{f}^{+}(x)\cap(c-\delta,c+\delta)=\emptyset\}$. Thus, this is an uniformly expanding set and so $Leb(\Lambda_{\delta}) = 0$. As a consequence, $\co^{+}_{f}(x)\cap (c-\delta,c+\delta)\ne\emptyset$ for Lebesgue almost all $x$.

As $\leb((p,p+\varepsilon))$ and $\leb((p-\varepsilon,p))>0$ 
whenever $\forall\varepsilon>0$, there are $j_{1},j_{2}\ge0$ such that $f^{j_{1}}|_{(p-\varepsilon,p)}$ and $f^{j_{2}}|_{(p,p+\varepsilon)}$ are homeomorphisms and $f^{j_{1}}((p-\varepsilon,p))\cap(c-\delta,c+\delta)\ne\emptyset\ne f^{j_{2}}((p,p+\varepsilon))\cap (c-\delta,c+\delta)$. Moreover, $f^{j_{2}}|_{(p,p+\varepsilon)}\supset(p_{1},c-\delta)$ and $f^{j_{1}}((p-\varepsilon,p))\supset(c+\delta,p_{2})$, because $\co_{f}^{+}(p)\cap(p_{1},p_{2})=\emptyset$ and $f^{j_{1}}|_{(p-\varepsilon,p)}$ and $f^{j_{2}}|_{(p,p+\varepsilon)}$ preserve orientation.


As a consequence, $$\bigcup_{j\ge0}f^{j}\big((p,p+\varepsilon)\big)\supset \bigcup_{\delta>0}(p_{1},c-\delta)=(p_{1},c)$$ and $$\bigcup_{j\ge0}f^{j}\big((p-\varepsilon,p)\big)\supset\bigcup_{\delta>0}(c+\delta,p_{2})=(c,p_{2}).$$

Suppose that $y\in f^{-s}(p)$ for some $s\ge1$. There is $r>0$ such that $f^{s}|_{(y,y+r)}$ and $f^{s}|_{(y-r,y)}$
are homeomorphisms. For instance, suppose that $f^{s}|_{(y,y+r)}$ is a homeomorphism. In this case, $f^{s}((y,y+r))=(p,p+\varepsilon)$ with $\varepsilon=f^{s}(y+r)-p$. Thus, $$\bigcup_{j\ge0}f^{j}\big((y,y+r)\big)\supset\bigcup_{j\ge0}f^{j}\big(f^{s}((y,y+r))\big)=\bigcup_{j\ge0}f^{j}\big((p,p+\varepsilon)\big)\supset(p_{1},c).$$
\cqd

Observe that the hypothesis in the lemma below does not restrict to the case of avoiding periodic attractors or weak repeller, and because of this it will be used at another point ahead in a context without any such restriction.


\begin{Lemma}\label{jotaxrenormaliza}
Let $f:[0,1]\setminus\{c\}\to[0,1]$ be a contracting Lorenz map. Write $v_{1}=\sup f([0,c))$ and $v_{0}=\inf f((c,1])$.
Given any $x$, $v_{0} < x < v_{1}$, let $J_{x}=(x_{1},x_{2})$ be the connected component of $[0,1]\setminus\alpha_{f}(x)$ that contains the critical point $c$. If $J_{x}\ne\emptyset$ then $J_{x}$ is a renormalization interval and $\partial J_{x}\subset\alpha_{f}(x)$.
\end{Lemma}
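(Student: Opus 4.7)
The plan is to prove the two assertions, $\partial J_x \subset \alpha_f(x)$ and that $J_x$ is a renormalization interval, in sequence. The first assertion is mostly topological: since $J_x=(x_1,x_2)$ is a connected component of the open set $[0,1]\setminus\alpha_f(x)$, any endpoint $x_i\in(0,1)$ is a limit of points of the closed set $\alpha_f(x)$ and hence lies in $\alpha_f(x)$. If $x_1=0$ (or $x_2=1$), the hypothesis $v_0<x<v_1$ guarantees that $f|_{[0,c)}^{-1}(x)\in(0,c)$ (resp.\ $f|_{(c,1]}^{-1}(x)\in(c,1)$) is defined, and iterating the corresponding inverse branch produces a monotone sequence of preimages of $x$ converging to the fixed point $0$ (resp.\ $1$), which places that point in $\alpha_f(x)$.

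Next I would verify that $J_x$ is a nice interval. The set $\alpha_f(x)$ is closed and forward invariant at non-critical points: if $y\neq c$ is the limit of $y_n\in f^{-k_n}(x)$, then continuity of $f$ at $y$ gives $f(y_n)\to f(y)$ with $f(y_n)\in f^{-(k_n-1)}(x)$, so $f(y)\in\alpha_f(x)$. Combined with $\partial J_x\subset\alpha_f(x)$ and $c\in J_x$, this means $\co_f^+(x_i)$ never reaches $c$ (otherwise $c\in\alpha_f(x)$ by forward invariance) and stays inside $\alpha_f(x)$, which is disjoint from $J_x$.

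The core of the argument is the renormalization property. By Lemma~\ref{Lemma09090863}(4) it suffices to show $x_1,x_2\in\mathrm{Per}(f)$ together with $f^{\period(x_1)}([x_1,c))\subset[x_1,x_2]\supset f^{\period(x_2)}((c,x_2])$. Focusing on the left endpoint, I apply the Homterval Lemma~\ref{homtervals} to the interval $(x_1,c)\subset J_x$. The wandering-interval alternative is excluded by Lemma~\ref{LemmaStP}, as such an interval must accumulate at $c$ from both sides, forcing the orbit of $(x_1,c)$ to re-enter $J_x\supset(c,x_2)$ and yielding the desired return. The periodic-attractor and weak-repeller alternatives must be ruled out using that $\alpha_f(x)^c$ is forward invariant and that $J_x$ is the maximal component of $\alpha_f(x)^c$ containing $c$: any such $\Lambda$ would capture an arc of $(x_1,c)$ and the resulting dynamics would be incompatible with the two-sided accumulation of preimages of $x$ at $x_1$ forced by $x_1\in\alpha_f(x)$ together with $v_0<x<v_1$. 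This leaves the first alternative: there is a smallest $\ell\ge 1$ with $c\in f^{\ell}((x_1,c))$ and $f^{\ell}|_{(x_1,c)}$ a homeomorphism. Since $J_x$ is nice, $(x_1,c)$ is then a component of the first return domain $J_x^*$; Lemma~\ref{Lemma8388881a}(1)(a) (with $a=x_1$) gives $\cf_{J_x}((x_1,c))=(x_1,f^{\ell}(c^-))\subset[x_1,x_2]$, and continuity of $f^{\ell}$ at $x_1$ (using $f^j(x_1)\neq c$ for every $j\ge 0$) yields $f^{\ell}(x_1)=x_1$. Hence $x_1\in\mathrm{Per}(f)$ with $\period(x_1)\mid\ell$ and $f^{\period(x_1)}([x_1,c))\subset[x_1,x_2]$. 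A symmetric analysis of $(c,x_2)$ handles $x_2$.

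The main obstacle is the careful exclusion of the periodic-attractor and weak-repeller alternatives of the Homterval Lemma without any extra dynamical hypothesis on $f$. The contradiction must be extracted from $x_1\in\alpha_f(x)$, the two-sided branching structure of the preorbit of $x$ guaranteed by $v_0<x<v_1$, and the maximality of $J_x$ as the component of $\alpha_f(x)^c$ containing $c$; any such $\Lambda$ (or weak repeller cycle) would disrupt how these preimages cluster at $x_1$ and how $(x_1,c)$ is exactly the left half of $J_x$.
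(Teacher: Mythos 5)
Your first two steps are sound: $\partial J_x\subset\alpha_f(x)$ follows as you say, and your niceness argument via forward invariance of $\alpha_f(x)$ at non-critical points is a legitimate (slightly different) substitute for the paper's use of Remark~\ref{Remark12327890}. The genuine gap is in the core step. The lemma carries \emph{no} hypothesis excluding periodic attractors or weak repellers — the paper remarks on exactly this point right before the statement, precisely because the lemma is later applied in situations where attractors are present. Your plan hinges on ruling out the periodic-attractor and weak-repeller alternatives of the Homterval Lemma~\ref{homtervals} for $(x_1,c)$, and you only assert that a contradiction ``must be extracted'' from $x_1\in\alpha_f(x)$ and the maximality of $J_x$. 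No such contradiction exists in general: if $J_x$ happens to be a \emph{non-regular} renormalization interval (first return $f^{\ell}((x_1,c))\subset(x_1,c)$ with an attracting periodic point inside), then $(x_1,c)$ lies in the basin of a periodic attractor, no iterate of $(x_1,c)$ ever covers $c$, preimages of a point $x$ outside that basin can still accumulate on $x_1$ from outside $J_x$, and the conclusion of the lemma nevertheless holds. So the alternative you want to exclude genuinely occurs, and your route to the periodicity of $x_1$ — taking the smallest $\ell$ with $c\in f^{\ell}((x_1,c))$ and invoking Lemma~\ref{Lemma8388881a} — collapses exactly in the cases the lemma is designed to cover. (A secondary, fixable issue: even in the favorable case, $(x_1,c)$ need not be a single component of the first-return domain, since part of it may re-enter $J_x$ before time $\ell$ without covering $c$.)

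The paper's proof sidesteps all of this and never touches the Homterval Lemma. Having shown $J_x$ is nice, it argues by contradiction: if $J_x$ were not a renormalization interval, Lemma~\ref{Lemma09090863} would give a component $I=(t_1,t_2)$ of the first-return domain with $c\notin\partial I$; by Lemma~\ref{Lemma8388881a}(3) the return is full, $f^{k}(I)=J_x$ with $k$ the (constant, by Lemma~\ref{Lemma8388881}) return time, and $f^{k}$ is a local homeomorphism around the endpoint $t_i$ lying in the interior of $J_x$; since $f^{k}(t_i)\in\partial J_x\subset\alpha_f(x)$, Remark~\ref{Remark12327890} pulls this back to $t_i\in\alpha_f(x)\cap J_x$, contradicting the definition of $J_x$. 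This argument uses only the structure of first returns to a nice interval and the invariance of $\alpha_f(x)$ under local homeomorphisms, which is why it needs no assumption on attractors; to repair your proof you should replace the homterval trichotomy by this kind of argument rather than try to exclude its attractor branch.
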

\dem
First note $\alpha_{f}(x)\supset\{0,1\}$ because, as $x \in (v_{0},v_{1})$, $0=\lim_{n\to\infty}(f|_{[0,c)})^{-n}(x)$ and $1=\lim_{n\to\infty}(f|_{(c,1]})^{-n}(x)$. Thus, $J_{x}$ is an open interval. Moreover, $\partial J_{x}\subset\alpha_{f}(x)$.

We claim that $J_{x}$ is a nice interval. Otherwise, consider $n$ the smallest integer $n>0$ such that $f^{n}(\partial J_{x})\cap J_{x}\ne\emptyset$. Let $i\in\{1,2\}$ be so that $f^{n}(x_{i})\in J_{x}$. As $f^{j}(x_{i})\notin J_{x}$ $\forall0\le j<n$, there is $\varepsilon>0$ such that $f^{n}|_{(x_{i}-\varepsilon,x_{i}+\varepsilon)}$ is a homeomorphism. From Remark~\ref{Remark12327890} it follows that $f^{n}(x_{i})\in\alpha_{f}(x)$, contradicting $\alpha_{f}(x)\cap J_{x}=\emptyset$. Thus, $J_{x}\in\cn$.

Now let us check that $J_{x}$ is a renormalization interval. If not, it follows from Lemma~\ref{Lemma09090863} that one can find a connected component $I=(t_{1},t_{2})$ of the domain of the first return map to $J_{x}$ such that $c\notin\partial I$. By Lemma~\ref{Lemma8388881a}, $f^{k}(I)=\cf_{J_{x}}(I)=J_{x}$, where $k=R_{J_{x}}(I)$. Note that $t_{1}$ or $t_{2}\in(x_{1},x_{2})$. Suppose that $t_{1}\in(x_{1},x_{2})$ (the case $t_{2}\in(x_{1},x_{2})$ is similar). As $c\notin\partial I$ (and $f^{j}(t_{1})\notin J_{x}$ $\forall0<j<k$), there is some small $\delta>0$ such that $f^{k}|_{(t_{1}-\delta,t_{1}+\delta)}$ is a homeomorphism. As $f^{k}(t_{1})=x_{1}\in\alpha_{f}(x)$, it follows from Remark~\ref{Remark12327890} that $t_{1}\in\alpha_{f}(x)$. But this is impossible as $\alpha_{f}(x)\cap J_{x}=\emptyset$.
\cqd

\begin{Corollary}\label{Cor111} Let $f:[0,1]\setminus\{c\}\to[0,1]$ be a $C^{2}$ non-flat contracting Lorenz map. Suppose that $f$ doesn't have a periodic attractor nor a weak repeller. If $p\in Per(f)$ then either
$\overline{\co^{-}_{f}(p)\cap(0,c)}\ni c \in\overline{(c,1)\cap\co^{-}_{f}(p)}$ or the connected  component of $[0,1]\setminus\alpha_{f}(p)$, $J_{p}$, is non empty and it is a renormalization interval.\end{Corollary}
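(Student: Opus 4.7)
The plan is to dichotomize on whether $c \in \alpha_{f}(p)$ or not; the two cases will correspond to the two alternatives of the conclusion.

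\emph{Case $c \in \alpha_{f}(p)$.} Since $c$ lies outside the domain of $f$ and $p$ is periodic, $p \ne c$. Lemma~\ref{Lemma01928373} then gives $\co^{-}_{f}(p) \cap (c-\delta,c) \ne \emptyset$ and $\co^{-}_{f}(p) \cap (c,c+\delta) \ne \emptyset$ for every $\delta > 0$, which is exactly the first alternative $\overline{\co^{-}_{f}(p)\cap(0,c)}\ni c\in\overline{(c,1)\cap\co^{-}_{f}(p)}$.

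\emph{Case $c \notin \alpha_{f}(p)$.} Here $J_{p}$ is a nonempty open interval, and I want to conclude it is a renormalization interval via Lemma~\ref{jotaxrenormaliza}. Rather than verify the hypothesis $p \in (v_{0},v_{1})$ for $p$ itself, I will replace $p$ by a convenient orbit point. I claim the orbit $\co^{+}_{f}(p)$ must cross $c$: if it were confined to $[0,c)$ (the case $(c,1]$ being symmetric), then because $f|_{[0,c)}$ is strictly increasing, any periodic orbit of $f$ sitting in $[0,c)$ would reduce to a fixed point of $f|_{[0,c)}$; but $0$ is hyperbolic repelling (as $f'(0) > 1$, forced by the absence of periodic attractors and weak repellers), so any second fixed point in $(0,c)$ together with $0$ would, by the intermediate-value theorem applied to $f(x) - x$, force a neutral or attracting fixed point in between, contradicting the hypotheses. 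Hence the orbit of $p$ meets both $(0,c)$ and $(c,1)$, and the iterate produced at the ``crossing'' of $c$ furnishes a point $p^{*} \in \co^{+}_{f}(p) \cap \big([v_{0},c) \cup (c,v_{1}]\big) \subset (v_{0},v_{1})$. Since $p^{*}$ lies in the orbit of the periodic point $p$, the backward orbits of $p^{*}$ and $p$ agree up to a finite shift, so $\alpha_{f}(p^{*}) = \alpha_{f}(p)$ and hence $J_{p^{*}} = J_{p}$. Lemma~\ref{jotaxrenormaliza} applied to $x = p^{*}$ now yields that $J_{p}$ is a renormalization interval.

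The main delicacy is the crossing argument in the second case, ruling out periodic orbits of $f$ confined to a single monotone branch. It rests on the intermediate-value theorem applied to $f - \mathrm{id}$ together with the hyperbolic-repelling character of the fixed points $0$ and $1$ (in turn forced by the absence of periodic attractors and weak repellers), which together prevent a pair of hyperbolic repelling fixed points of one monotone branch from coexisting without an intermediate non-repelling periodic point.
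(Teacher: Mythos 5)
Your proof is correct and is essentially the argument the paper intends (the corollary is stated without proof as a direct combination of Lemma~\ref{Lemma01928373} for the case $c\in\alpha_{f}(p)$ and Lemma~\ref{jotaxrenormaliza} for the case $c\notin\alpha_{f}(p)$); in fact you supply the one detail the paper glosses over, namely passing from $p$ to an orbit point lying in $(v_{0},v_{1})$ (via the crossing argument and the invariance of $\alpha_{f}$ along a periodic orbit) so that Lemma~\ref{jotaxrenormaliza} is actually applicable. The only loose end is that your claim that the orbit of $p$ must cross $c$ fails for the trivial fixed points $p\in\{0,1\}$, but this degenerate case is implicitly excluded by the paper as well (those points are likewise outside the scope of Lemma~\ref{jotaxrenormaliza}, and the corollary is only ever invoked for periodic points in the interior), so this does not count against your argument.
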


\begin{Notation}[$\cl_{Per}$, $\cl_{Sol}$ and $\cl_{Che}$]
Let $\cl_{Per}$ denote the collection of contracting Lorenz maps having periodic attractors. The set of all $\infty$-renormalizable contracting Lorenz maps will be denoted by $\cl_{Sol}$ and let $\cl_{Che}$ be the set of all Cherry-like contracting Lorenz maps.

\end{Notation}

\begin{Lemma}
\label{vizinhanca}
Let $f:[0,1]\setminus\{c\}\to[0,1]$ be a $C^{2}$ non-flat contracting Lorenz map with no weak repeller and $f \notin \cl_{Per}\cup\cl_{Sol} \cup \cl_{Che}$ then $c\in\alpha_{f}(p)$ for some $p\in Per(f)$.
\end{Lemma}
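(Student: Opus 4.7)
The plan is to combine the three non-degeneracy hypotheses to locate a periodic point very close to $c$ whose pre-orbit is forced to accumulate on $c$.

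First, since $f$ has no periodic attractor and $f\notin\cl_{Che}$, the defining clause of a Cherry-like map fails: there is no $\delta>0$ with $c\in\omega_f(x)$ for every $x\in(c-\delta,c+\delta)$. Together with the absence of weak repellers and periodic attractors, this is precisely the hypothesis of Lemma~\ref{AcumulacaoDePer}, which therefore yields $\overline{Per(f)\cap(0,c)}\ni c\in\overline{Per(f)\cap(c,1)}$. I would then fix a sequence $p_n\in Per(f)\cap(0,c)$ with $p_n\to c$ and aim to show that $c\in\alpha_f(p_n)$ for all sufficiently large $n$.

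The next step is to analyze each $\alpha_f(p_n)$ via Corollary~\ref{Cor111}. For each $n$ one obtains the dichotomy: either $c$ is a bilateral accumulation point of $\co_f^-(p_n)$, which implies $c\in\alpha_f(p_n)$ (because each fibre $f^{-k}(p_n)$ is finite, so bilateral accumulation at $c$ forces the pre-image orders $n_j$ to be unbounded, which is exactly the definition of the $\alpha$-limit), or else the connected component $J_{p_n}$ of $[0,1]\setminus\alpha_f(p_n)$ containing $c$ is a non-empty renormalization interval.

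The heart of the argument is to rule out the renormalization alternative for large $n$. Because $f\notin\cl_{Sol}$, the collection of renormalization intervals of $f$ is finite, and by Lemma~\ref{renormalinks} any two renormalization intervals are strictly nested. Hence the collection is either empty or admits a unique smallest element; let $J^{*}$ denote that element, with the convention $J^{*}:=(0,1)$ if no renormalization interval exists. Since $J^{*}$ is an open neighborhood of $c$ and $p_n\to c$, for every sufficiently large $n$ we have $p_n\in J^{*}$. But $p_n$ is periodic, hence $p_n\in\alpha_f(p_n)$, which forces $p_n\notin J_{p_n}$. If $J_{p_n}$ were a renormalization interval, minimality of $J^{*}$ would give $J^{*}\subset J_{p_n}$, producing $p_n\in J^{*}\subset J_{p_n}$, a contradiction. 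Therefore the first alternative of Corollary~\ref{Cor111} must hold for all large $n$, yielding $c\in\alpha_f(p_n)$ and proving the lemma. The only delicate point is the edge case with no renormalization intervals, absorbed into the convention $J^{*}=(0,1)$; apart from this bookkeeping the argument is a direct combination of Lemma~\ref{AcumulacaoDePer}, Lemma~\ref{renormalinks}, and Corollary~\ref{Cor111}.
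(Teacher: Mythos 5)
Your proposal is correct and follows essentially the same strategy as the paper's proof: both hinge on Corollary~\ref{Cor111} applied to a periodic point $p$ (which satisfies $p\in\alpha_f(p)$, hence $p\notin J_p$) and on the nestedness of renormalization intervals (Lemma~\ref{renormalinks}) to rule out the alternative that $J_p$ is a renormalization interval, since it would then be strictly smaller than the minimal one. The only difference is bookkeeping: you obtain periodic points accumulating on $c$ via Lemma~\ref{AcumulacaoDePer} (using that $f$ is not Cherry-like) and take one inside the smallest renormalization interval $J^{*}$, whereas the paper produces a periodic point inside $J^{*}$ directly via Lemma~\ref{noitenoite}; both routes are valid.
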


\dem
If $f$ is not renormalizable let $I=(0,1)$, otherwise let $I=(a,b)$ be the smallest renormalization interval of $f$ (we are assuming that $f\notin \cl_{Per}\cup\cl_{Sol} \cup \cl_{Che}$).
By lemma \ref{noitenoite} we can pick a point $p \in (a,b)$ that is periodic. So, we have that $ p \in \alpha_{f}(p)$. As a consequence, it follows from Corollary~\ref{Cor111} that $\overline{\co_{f}^{-}(p)\cap(0,c)}\ni c \in\overline{(c,1)\cap \co_{f}^{-}(p)}$. Indeed, if the pre-orbit of $p$ is not accumulating on $c$ by both sides, then $J_{p}\ne\emptyset$ is a renormalization interval. In this case, as $p\in \alpha_{f}(p)$, we get $J_{p}\subsetneqq(a,b)$. This is an absurd,  as $(a,b)$ is the smallest renormalization interval.

\cqd

\begin{Proposition}[Long branches lemma]\label{Proposition008345678}Let $f:[0,1]\setminus\{c\}\to[0,1]$ be a $C^{2}$ non-flat contracting Lorenz map.
Suppose that $f$ does not admit a periodic attractor nor a weak repeller. If $\alpha_{f}(p)\ni c \notin\omega_{f}(p)$ for some $p\ne c$ then there exists $\varepsilon>0$ such that $\overline{ \co^{-}_{f}(x)\cap(0,c)} \ni c \in \overline{ \co^{-}_{f}(x)\cap(c,1)}$ for every $0<|x-c|<\varepsilon$. 

Moreover, $f$ is not $\infty$-renormalizable, $f$ is not Cherry-like and $Per(f)$ $\cap$ $(c-\delta,c)$ $\ne$ $\emptyset$ $\ne$ $Per(f)$ $\cap$ $(c,c+\delta)$ $\forall\delta>0$.
\end{Proposition}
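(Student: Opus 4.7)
The plan is to deduce the one-sided accumulation of $\co_{f}^{-}(x)$ on $c$ for every $x$ in a fixed punctured neighborhood of $c$ by transferring the analogous property of $p$, provided by Lemma~\ref{Lemma01928373}, through the covering property of Lemma~\ref{Lemma549164}. The three remaining assertions then follow by short applications of the Cherry-like definition, Corollary~\ref{Corolary989982}, and Lemma~\ref{AcumulacaoDePer}.

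First I would treat the generic case $c\notin\overline{\co_{f}^{+}(p)}$. Let $(p_{1},p_{2})$ be the connected component of $(0,1)\setminus\overline{\co_{f}^{+}(p)}$ containing $c$ and set $\varepsilon:=\min(c-p_{1},\,p_{2}-c)>0$. By Lemma~\ref{Lemma01928373} one can pick sequences $y_{n}^{-}\uparrow c$ and $y_{n}^{+}\downarrow c$ in $\co_{f}^{-}(p)$. Fix $x$ with $0<|x-c|<\varepsilon$ and assume $x\in(p_{1},c)$ (the case $x\in(c,p_{2})$ is symmetric, using the left half-neighborhoods). Applying Lemma~\ref{Lemma549164} with $y=y_{n}^{-}$ and $\delta_{n}:=(c-y_{n}^{-})/2$ yields a point $z_{n}^{-}\in(y_{n}^{-},y_{n}^{-}+\delta_{n})\cap\co_{f}^{-}(x)$, necessarily in $(y_{n}^{-},c)$, and thus $z_{n}^{-}\uparrow c$. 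Applying the same lemma with $y=y_{n}^{+}$ and $\delta_{n}\downarrow 0$ gives $z_{n}^{+}\in(y_{n}^{+},y_{n}^{+}+\delta_{n})\cap\co_{f}^{-}(x)$ with $z_{n}^{+}\downarrow c$. Hence $\co_{f}^{-}(x)$ accumulates on $c$ from both sides, as desired.

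For the three remaining assertions: $f$ cannot be Cherry-like, since every $y\in\co_{f}^{-}(p)$ satisfies $\omega_{f}(y)=\omega_{f}(p)\not\ni c$ (the equality still holds vacuously when $\omega_{f}(p)=\emptyset$), and Lemma~\ref{Lemma01928373} supplies such $y$ on both sides of $c$ arbitrarily close to it, contradicting the Cherry-like definition. Non-$\infty$-renormalizability is exactly Corollary~\ref{Corolary989982}. Combining the non-Cherry-like conclusion with the dichotomy of Lemma~\ref{AcumulacaoDePer} forces $\overline{Per(f)\cap(0,c)}\ni c\in\overline{(c,1)\cap Per(f)}$, which is the periodic point density statement.

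The main obstacle is the degenerate sub-case $p\in\co_{f}^{-}(c)$, where $c\in\co_{f}^{+}(p)$, the interval $(p_{1},p_{2})$ collapses, and neither Lemma~\ref{Lemma549164} nor Corollary~\ref{Corolary989982} can be quoted directly. In this sub-case $\omega_{f}(p)=\emptyset$ trivially and the pre-orbit of $c$ itself accumulates on $c$; I would handle it by the observation that forward iterates of a small interval around any non-critical pre-image of $c$ are unfolded, once they cross $c$, to cover a fixed punctured neighborhood of $c$ inside $(v_{0},v_{1})$, recovering the density statement with a possibly smaller $\varepsilon$. Ruling out $\infty$-renormalizability in this pre-periodic sub-case requires a separate combinatorial observation that $c$ cannot be pre-periodic within an infinite tower of renormalization cycles, after which the Cherry-like and periodic density conclusions proceed exactly as in the generic case.
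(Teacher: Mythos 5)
Your treatment of the generic case $c\notin\overline{\co_{f}^{+}(p)}$ is correct and uses exactly the paper's ingredients: Lemma~\ref{Lemma01928373} to produce pre-images of $p$ on both sides of $c$, and the covering property of Lemma~\ref{Lemma549164}; you merely organize it directly (producing pre-images $z_{n}^{\pm}\to c$ of each $x\in(p_{1},c)\cup(c,p_{2})$) instead of the paper's contradiction with the set $W=\{x\,;\,c\notin\alpha_{f}(x)\}$, and you obtain the same $\varepsilon=\min(c-p_{1},p_{2}-c)$. The Cherry-like and $\infty$-renormalizability assertions are handled exactly as in the paper. For the density of periodic points near $c$ you take a small shortcut: non-Cherry-like plus the dichotomy of Lemma~\ref{AcumulacaoDePer}, whereas the paper builds the nice intervals $J_{n}$ bounded by $\bigcup_{j<n}f^{-j}(p)$ and applies Lemma~\ref{LemmaHGFGH54}; both work, and yours avoids the auxiliary construction.

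The genuine problem is your degenerate sub-case $p\in\co_{f}^{-}(c)$. The ``separate combinatorial observation'' you promise there cannot exist, because under the paper's conventions the conclusions actually fail in that regime. Indeed, let $f$ be $\infty$-renormalizable without periodic attractors or weak repellers, with nested renormalization intervals $(a_{n},b_{n})\downarrow\{c\}$ (Proposition~\ref{Lemma1110863}). Since there is no attractor, $f^{\period(a_{n})}(c_{-})>c$, so each $(a_{n},c)$ contains a point $w_{n}$ with $f^{\period(a_{n})}(w_{n})=c$; as $f^{-1}(c)\setminus\{c\}$ has at most two elements, infinitely many $w_{n}$ pass through the same first pre-image $p_{0}$ of $c$, so $c\in\alpha_{f}(p_{0})$, while $\omega_{f}(p_{0})=\emptyset\not\ni c$ by the convention that forward orbits of pre-critical points stop at $c$. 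Thus the hypothesis holds with $p=p_{0}$, yet $f$ is $\infty$-renormalizable; moreover the pre-orbit of the boundary periodic point $a_{n}$ never enters the nice interval $(a_{n},b_{n})$ (a pre-image $w$ of $a_{n}$ inside it would have to return there at arbitrarily large times, while $\co_{f}^{+}(a_{n})$ avoids it), so $c\notin\alpha_{f}(a_{n})$ with $a_{n}\to c$, and the main $\varepsilon$-statement fails as well. The correct resolution is not to prove this sub-case but to exclude it: the proposition, like Lemma~\ref{Lemma549164} and Corollary~\ref{Corolary989982} that it relies on, and like the paper's own proof (which immediately takes the component $(p_{1},p_{2})$ of $[0,1]\setminus\overline{\co_{f}^{+}(p)}$), is implicitly stated for $c\notin\overline{\co_{f}^{+}(p)}$, which is how it is applied later (with $p$ periodic, as in Lemma~\ref{vizinhanca}). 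So your instinct to flag the gap was sound, but the proposed repair should be replaced by this restriction of the hypothesis.
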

\dem
Suppose by contradiction that the main statement is not true. That is, $c\in\overline{W}$, where $$W=\{x\,;\,c\notin  \overline{ \co^{-}_{f}(x)\cap(0,c)} \text{ or }c\notin \overline{ \co^{-}_{f}(x)\cap(c,1)}\}.$$

By Lemma~\ref{Lemma01928373}, if $\co^{-}_{f}(x)$ accumulates on one side of $c$ then  $\co^{-}_{f}(x)$ will accumulate on $c$ by both sides. Then, $W=\{x; c \notin \alpha_f(x)\}$.

Let $(p_{1},p_{2})$ be the connected component of $[0,1]\setminus\overline{\co_{f}^{+}(p)}$ that contains $c$.  
Choose a sequence $\co_{f}^{-}(p)\ni y_{n}\to c$. As $f$ does not have a periodic attractor, taking a subsequence if necessary, we get by Lemma~\ref{Lemma549164} that 
\begin{equation}\label{EQ2345654a}
\bigcup_{j\ge0}f^{j}\big((y_{n},y_{n}+\varepsilon)\big)\supset (p_{1},c)\,\,\forall\varepsilon>0,\forall\,n>0
\end{equation} and that
\begin{equation}\label{EQ2345654b}
\bigcup_{j\ge0}f^{j}\big((y_{n}-\varepsilon,y_{n})\big)\supset (c,p_{2})\,\,\forall\varepsilon>0,\forall\,n>0.
\end{equation}

As $c$ is accumulated by $W$, say by the left side (the other case is analogous), choose some $q\in(p_{1},c)\cap W$. It follows from (\ref{EQ2345654a}) that $\bigcup_{j\ge0}f^{j}\big((y_{n},c)\big)\supset (p_{1},c)\ni q$ $\forall\,n>0$ (we are taking $\varepsilon=|y_{n}-c|$ in (\ref{EQ2345654a})). Thus, there is a sequence $y_{n}<q_{n}<c$ and $i_{n}\to\infty$ such that $f^{i_{n}}(q_{n})=q$ $\forall\,n$. This implies that $c\in\alpha_{f}(q)$. But this is an absurd because $q\in W$.

Therefore, we can not have $c\in\overline{W}$ and this proves the main part of the Proposition. By Corollary~\ref{Corolary989982}, $f$ cannot be $\infty$-renormalizable and as $\omega_{f}(y)=\omega_{f}(p)\not\ni c$ for all $y\in\co^{-}_{f}(p)$, it follows that $f$ cannot be Cherry-like. Finally, let us show that $Per(f)\cap(c-\delta,c)\ne\emptyset\ne Per(f)\cap(c,c+\delta)$ $\forall\delta>0$. For this, let $J_{n}$ be the connected component of $(0,1)\setminus\bigcup_{j=0}^{n-1}f^{-j}(p)$ containing the critical point $0$. It easy to see that $J_{n}$ is a nice interval $\forall\,n$. As it follows from Lemma~\ref{LemmaHGFGH54} that  $Per(f)\cap\overline{J_{n}}\cap(-\infty,c)\ne\emptyset\ne(c,+\infty)\cap\overline{J_{n}}\cap Per(f)$ $\forall\,n$, we conclude the proof.

\cqd

Observe that is also true that $f$ being Cherry-like also implies $Per(f) \cap (u,v)=\emptyset$,  $(u,v)$ being the last interval or renormalization.  

It worths observing that the former definition given to Cherry-like maps is closely related to the usual one ({\em A Cherry map is a contracting Lorenz map $f:[0,1]\setminus\{c\}\to[0,1]$ such that the restriction of f to the invariant interval $[f(c_{+}), f(c_{-})]$ is semi-conjugated to a irrational rotation of a circle, the restriction to $(f(0_{+}), f(0_{-}))$ is injective but not necessarily surjective.}), according to the next Lemma. In it we will use the concept of {\em rotation interval} that generalizes the concept of rotation number for endomorphisms that was introduced by Newhouse, Palis and Takens in \cite{NPT83}. Based on their construction, Tresser and Gambaudo \cite{CGT84, GT85}  adapted it to the discontinuous and non-injective maps setting.

\begin{figure}[H]
\begin{center}\label{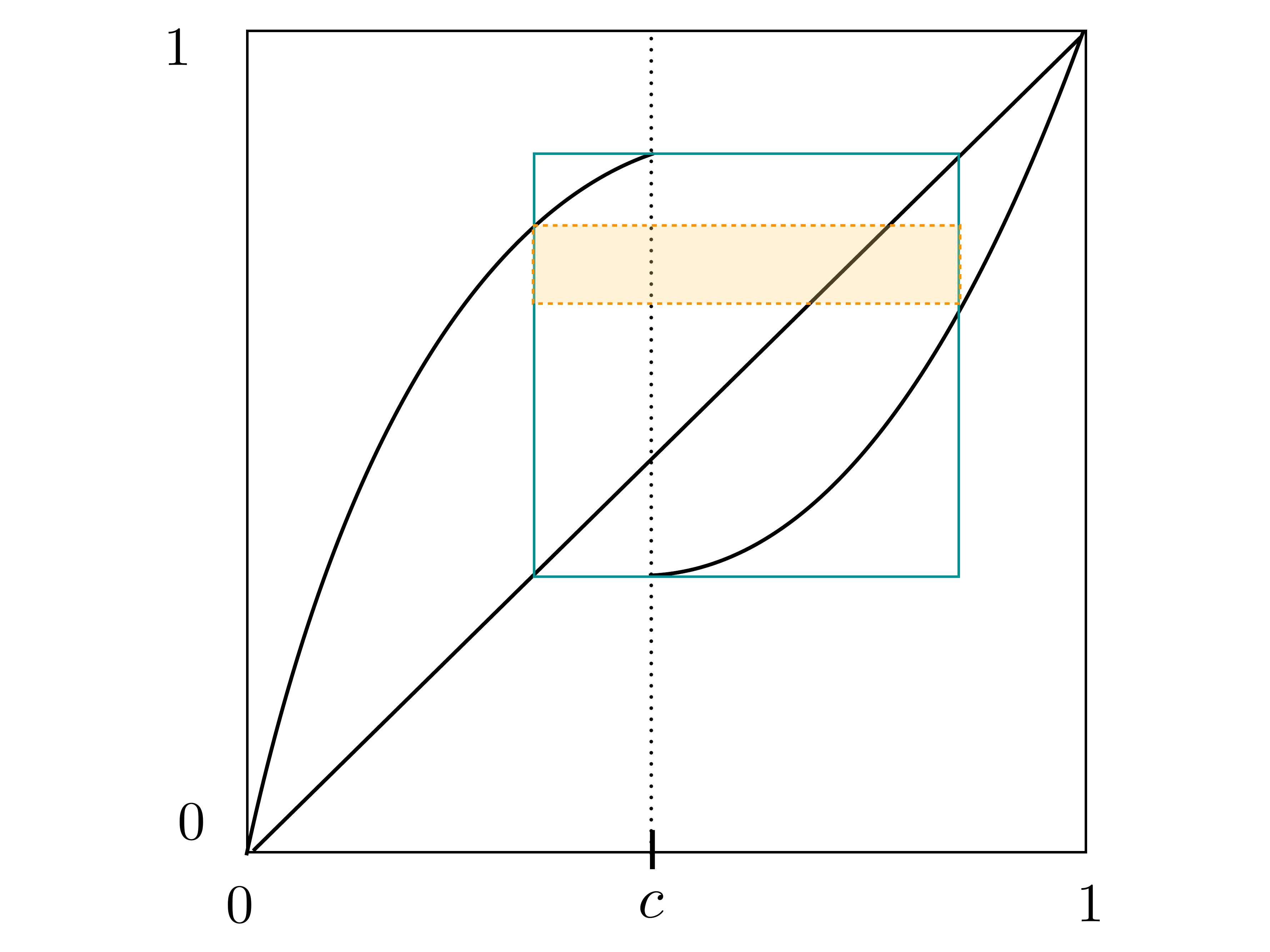}
\includegraphics[scale=.25]{CherryMap.pdf}\\
\end{center}
\end{figure}

\begin{Lemma}
\label{rotirra}Let $f:[0,1]\setminus\{c\}\to[0,1]$ be a contracting Lorenz map.
$f$ is Cherry-like if and only if there is a renormalization interval $I$ such that $rot(F)\cap \QQ =\emptyset$, where $F(x)=f^{R(x)}(x)$ is the first return map to $\overline{I}$ and $rot$ denotes the rotation interval of $f$.
\end{Lemma}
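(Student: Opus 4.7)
The backbone of both directions is the classical Newhouse--Palis--Takens / Gambaudo--Tresser characterization: a degree-one circle endomorphism has a rational number in its rotation interval if and only if it has a periodic orbit. Since the renormalization interval $I=(a,b)$ satisfies $f^{\period(a)}([a,c))\subset[a,b]\supset f^{\period(b)}((c,b])$, we may identify $a\sim b$ and view the first return $F=f^R$ to $\overline I$ as such a circle endomorphism with a single discontinuity at $c$. Thus $rot(F)\cap\QQ=\emptyset$ is equivalent to $F$ having no periodic orbit in $\overline I$, and since $rot(F)$ is an interval, this further forces $rot(F)=\{\rho\}$ for a unique irrational $\rho$.

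For the ``if'' direction, assume $I=(a,b)$ is a renormalization interval with $rot(F)\cap\QQ=\emptyset$. Every $f$-periodic point lying in the renormalization cycle $U_I$ yields an $F$-periodic point in $\overline I$ (and hence a rational in $rot(F)$), so $f$ has no periodic point inside $U_I$; in particular no periodic or super attractor meets $U_I$. Since Corollary~\ref{Corollary545g55} (applied to the absence of attractors forced inside $I$) and the renormalization structure show that every orbit eventually enters $U_I$, there is no periodic nor super attractor anywhere. Next, because $rot(F)=\{\rho\}$ with $\rho\notin\QQ$, the classical Poincar\'e--Denjoy-type theory for degree-one circle maps yields a semi-conjugacy of $F$ with the irrational rotation $R_\rho$; in particular there is a unique minimal set $\Lambda\subset\overline I$ such that $\omega_F(x)=\Lambda$ for every $x\in\overline I$. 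Since $c$ is the discontinuity and thus must belong to $\Lambda$, we get $c\in\omega_f(x)$ for every $x\in I$. Picking $\delta>0$ with $(c-\delta,c+\delta)\subset I$ establishes the Cherry-like property.

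For the ``only if'' direction, assume $f$ is Cherry-like. In particular $f$ has no periodic or super attractor, and (by the intended mutual exclusivity with $\cl_{Sol}$, which I will justify using Theorem~\ref{Lemma88609} and Proposition~\ref{Lemma1110863}: the nested renormalizations would force a minimal Cantor attractor distinct in flavor from the Cherry setting) $f$ is not $\infty$-renormalizable. By Lemma~\ref{renormalinks} the set of renormalization intervals is totally ordered by inclusion, hence since it is finite there exists a smallest renormalization interval $I=(a,b)$ (if no proper renormalization exists at all, take $I=(0,1)$, which qualifies via the fixed points $0,1$). Let $F$ be the first return to $\overline I$; suppose for contradiction that $F$ has a periodic orbit. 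Any such orbit gives $f$-periodic points in $I$, and using the Variational Principle (Proposition~\ref{LemmaVarPric}) together with the construction in Lemma~\ref{semnome} we produce a renormalization interval $J\subsetneq I$ from a shortest-period $F$-periodic orbit, contradicting the minimality of $I$. Hence $F$ has no periodic orbit in $\overline I$, and by the equivalence recalled at the start $rot(F)\cap\QQ=\emptyset$.

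The main obstacle I expect is the bookkeeping in the ``only if'' direction: one must show that the $F$-periodic orbit whose existence we wish to exclude actually yields a \emph{strictly smaller} renormalization interval. This requires carefully selecting the periodic orbit of minimal $F$-period, transporting it back to $f$ via the return-time function, and verifying both the ``nice'' condition and the two inward-mapping conditions of Lemma~\ref{Lemma09090863}(4) on the new interval. The Cherry-like hypothesis (recurrence of $c$) is what prevents the construction from producing only a super-attractor or a degenerate interval, and the absence of weak repellers together with the Minimum Principle keep the candidate endpoints under control.
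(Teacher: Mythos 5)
Your backbone is the same as the paper's: both directions reduce to the Newhouse--Palis--Takens/Gambaudo--Tresser equivalence ``rational in the rotation interval $\iff$ existence of a periodic orbit'', plus the identification ``Cherry-like $\iff$ no periodic points of $f$ meeting the relevant renormalization domain''. However, as written your plan has two concrete gaps. First, in the ``if'' direction, the step ``every orbit eventually enters $U_I$, [hence] there is no periodic nor super attractor anywhere'' is false and the conclusion does not follow: the $I$-phobic set $\Lambda_I=\{x\,;\,\co_f^+(x)\cap I=\emptyset\}$ is in general an uncountable set containing periodic orbits (it always contains $0$ and $1$), and Lemma~\ref{Remark98671oxe}/Ma\~n\'e only give $\leb(\Lambda_I)=0$ \emph{under} the very no-attractor, no-weak-repeller hypotheses you are trying to derive. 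More to the point, irrationality of $rot(F)$ constrains only the dynamics through the cycle $U_I$; nothing prevents an attracting periodic orbit (e.g.\ an attracting fixed point at $0$) living in a gap of $\Lambda_I$ disjoint from $U_I$, so the global ``no periodic or super attractor'' clause of the Cherry-like definition cannot be obtained from the rotation hypothesis at all. The paper never attempts this: it argues this direction by contraposition and, crucially, only ever applies the lemma under the standing hypotheses ``no attracting periodic orbit, no weak repeller'' of Theorem~\ref{baciastopologicas}; your proof should either import those hypotheses or argue as the paper does, rather than try to manufacture them.

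Second, in the ``only if'' direction your exclusion of $F$-periodic orbits rests on Proposition~\ref{LemmaVarPric} and Lemma~\ref{semnome}, but both of these explicitly hypothesize that there is \emph{no} $\delta>0$ with $c\in\omega_f(x)$ for all $x\in(c-\delta,c+\delta)$ --- i.e.\ exactly the negation of the Cherry-like recurrence condition you are assuming --- so neither is applicable and the contradiction with minimality of $I$ is not established by the cited tools. (The paper's own proof of this direction leans on the unproved remark preceding the lemma that Cherry-likeness kills $Per(f)$ inside the last renormalization interval; a correct argument must show that a periodic orbit of $F$, or more generally a rational in $rot(F)$, forces either a periodic/super attractor or periodic points within $\delta$ of $c$, contradicting recurrence.) Two smaller points: after gluing $a\sim b$ the return map fixes the glued point, so ``no periodic orbit in $\overline I$'' and $rot(F)$ must be understood for orbits in the open interval (the paper shares this looseness, working with $f|_{(v_0,v_1)}$); and in the Denjoy step, $c$ need not lie in the minimal set $\Lambda$ when there is a wandering interval --- what you need, and can get from Lemma~\ref{LemmaStP} (wandering intervals accumulate on $c$ from both sides), is $c\in\omega_f(x)$ for $x$ near $c$, not $c\in\Lambda$.
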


\dem
As $f$ is a Cherry-like, then $Per(f)\cap(0,1)=\emptyset$.
This implies that $rot(f|_{(v_0,v_1)}) \cap \QQ =\emptyset$, for if $\exists \gamma \in rot(f)$ then $\exists x$ such that $rot(x)=\gamma$ and then if $\gamma \in \QQ \cap rot(f)$ there shall exist $x$ periodic with this rotation number.
The converse is also true, for if $f$ is not Cherry-like, as we observed, $Per(f)\cap (0,1)\ne \emptyset$ (up to a renormalization, perhaps), and this gives $Per(f)|_{(v_0,v_1)}\ne\emptyset$. But then, $rot(f|_{(v_0,v_1)})\cap \QQ\ne \emptyset$
\cqd

In Section~\ref{SecAtCherry-like} in the Appendix, we study the attractors of the Cherry-like maps.

\newpage

\section{Topological Attractors}

This section is divided in two subsection. In the the first one, {\em The structure of the Topological Attractors} (Section~\ref{TheSofTA}),
we will be studying the topological attractors for the contracting Lorenz maps. The core result of this subsection will be Theorem \ref{cicloint}, from which in the second subsection (Section~\ref{ProofABCD}), we will obtain the main theorems: Theorem \ref{baciastopologicas}, \ref{teoalfalim} and \ref{atratortopologico}.

\subsection{The structure of the Topological Attractors}
\label{TheSofTA}

In all Section~\ref{TheSofTA}, $f$ will be a $C^{2}$ non-flat contracting Lorenz map $f:[0,1]\setminus\{c\}\to[0,1]$.
\vspace{0.05cm}

\begin{Lemma}
\label{216}
If $f$ does not have weak repellers or periodic attractors then 
$$
\alpha_f(x) \ni c \Rightarrow \alpha_f(x) \supset \Omega(f)
$$
\end{Lemma}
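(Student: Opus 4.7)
The plan is to pick an arbitrary $z \in \Omega(f)$ and to produce preimages of $x$ of arbitrarily high order inside every small neighborhood of $z$, which will force $z \in \alpha_f(x)$. If $z = c$ the conclusion is the hypothesis itself, so I will focus on $z \neq c$.

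First I would fix a decreasing sequence of open intervals $V_k \ni z$ with $\bigcap_k V_k = \{z\}$ and $c \notin \overline{V_k}$, and apply the Homterval Lemma (Lemma~\ref{homtervals}) to each $V_k$. The hypothesis on $f$ rules out the periodic-attractor and weak-repeller alternatives, while the wandering-interval alternative is incompatible with $z \in V_k \cap \Omega(f)$: one checks directly from the definition that a wandering interval $W$ must satisfy $W \cap f^n(W) = \emptyset$ for every $n \geq 1$ (if $p \in W \cap f^n(W)$ with $p = f^n(q)$, $q \in W$, then $f^n(p) \in f^n(W) \cap f^{2n}(W)$, contradicting the wandering property), so the existence of some iterate returning to $V_k$ forces $V_k$ not to be wandering. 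Hence for each $k$ there is $n_k \geq 1$ with $f^{n_k}|_{V_k}$ a homeomorphism and $c \in f^{n_k}(V_k)$; in particular $f^{n_k}(V_k)$ is an open interval containing a two-sided neighborhood of $c$.

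Next I would exploit $c \in \alpha_f(x)$ to extract a sequence $y_j \in f^{-m_j}(x)$ with $y_j \to c$ and $m_j \to \infty$. For each $k$, pick $j(k)$ large enough that $y_{j(k)} \in f^{n_k}(V_k)$ and $m_{j(k)} \geq k$, and set $w_k := (f^{n_k}|_{V_k})^{-1}(y_{j(k)})$. Then $w_k \in V_k$, so $w_k \to z$, and $f^{n_k + m_{j(k)}}(w_k) = x$ with $n_k + m_{j(k)} \to \infty$, which realizes $z$ as an element of $\alpha_f(x)$. Since $z \in \Omega(f)$ was arbitrary, $\Omega(f) \subset \alpha_f(x)$ follows.

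The main obstacle is securing the first alternative of the Homterval Lemma on small neighborhoods of an arbitrary non-wandering point $z$; this reduces to the elementary compatibility between wandering intervals and non-wandering points noted above. Once a monotone branch $f^{n_k}|_{V_k}$ onto a neighborhood of $c$ is in hand, the rest is a mechanical transport of the preimages of $x$ that accumulate on $c$ back into $V_k$ via the inverse branch.
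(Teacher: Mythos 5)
Your argument is correct and rests on the same two ingredients as the paper's proof: inside an arbitrary neighborhood of a non-wandering point, produce an interval some monotone iterate of which covers $c$, and then transport the preimages of $x$ accumulating on $c$ (supplied by $c\in\alpha_f(x)$) back through that branch, so that preimages of $x$ of arbitrarily high order appear in the neighborhood. The difference is in how the covering of $c$ is obtained: you invoke the Homterval Lemma (Lemma~\ref{homtervals}) on shrinking neighborhoods $V_k\ni z$ and must then exclude the wandering-interval alternative, which you do from $z\in\Omega(f)$; the paper instead takes a return pair $z'\in T$, $f^j(z')\in T$ and argues inline that if no iterate of $(z',f^j(z'))$ met $c$, then $\bigcup_{s\ge0}f^{js}\big((z',f^j(z')]\big)$ would be a forward invariant interval yielding a periodic or super attractor, so it never needs to discuss wandering intervals at all. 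The two mechanisms are essentially the same trapping argument, yours outsourced to the quoted lemma at the price of the extra wandering-interval exclusion. Two small points to tighten, neither a genuine gap: in your verification that $W\cap f^n(W)=\emptyset$ for a wandering interval, the point $f^n(p)$ need not be defined if the orbit of $p$ meets $c$ before time $n$; since you arranged $c\notin\overline{V_k}$, it is cleaner to apply $f$ only once and note $f(p)\in f(V_k)\cap f^{n+1}(V_k)$, which already contradicts disjointness of the positive iterates. Also, for the endpoints $0,1\in\Omega(f)$ your $V_k$ are only relatively open, and the return of the open interval $(0,\varepsilon)$ (resp. $(1-\varepsilon,1)$) into itself should be justified by the endpoint being a repelling fixed point, so that both the interval and its image accumulate on the endpoint, rather than by bare non-wandering-ness; this is a one-line repair.
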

\dem
Let $x$ such that $\alpha_f(x) \ni c$ and given $y \in \Omega(f)$ consider any neighborhood $T$ of $y$. As $y$ is non-wandering, there is $z \in T$, (we may assume $z \not \in \co^-_f (c) \cup \co^-_f(Per(f))$) and $j \in \NN$ such that $f^j(z) \in T$. 

We claim that there is $s \in \NN$ such that $f^s((z,f^j(z))) \ni c$. Indeed, if $f^s((z,f^j(z))) \not \ni c$ for any $s$, then $f^t|_{[z,f^j(z)]}$ is a well defined homeomorphism, for any $t$. Observe that  $\bigcup_{s \ge 0}f^{js}(z,f^j(z)]$ is an interval, say, $(z,q)$. Moreover $f^j((z,q)) \subset (z,q)$. But this would imply the existence of a periodic or super attractor, what proves the claim.
 
Let $s$ be as above. As $x$ is such that $\alpha_f(x) \ni c$, we have that $\co^-_f (x) \cap f^s(z,f^j(z)) \ne \emptyset$ and then $\co^-_f (x) \cap T \supset \co^-_f (x) \cap (z,f^j(z)) \ne \emptyset$
As $T$ is any neighborhood, we can conclude $y \in \alpha_f(x)$.
 
\cqd

For a Lorenz map $f \notin \cl_{Per}\cup\cl_{Sol} \cup \cl_{Che}$ without any weak repeller, let us define $$\EE = \{x \in (0,1); \alpha_{f}(x) \ni c\}.$$
By Lemma~\ref{vizinhanca} and Proposition~\ref{Proposition008345678}, $\EE$ contains a neighborhood of $c$. In the next lemma, consider $(a,b) \subset \EE$ to be the maximal interval containing $c$.

\begin{Lemma}
$\exists \ell$ and $r>0$ such that $f^\ell((a,c))\subset(a,b)\supset f^r((c,b))$
\end{Lemma}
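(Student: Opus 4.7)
The plan is to combine forward invariance of $\EE$ with the homterval lemma (Lemma~\ref{homtervals}), applied separately to $(a,c)$ and to $(c,b)$. The key preliminary observation is that $\EE$ is forward invariant: if $c\in\alpha_{f}(x)$ is witnessed by preimages $y_{k}\to c$ with $f^{n_{k}}(y_{k})=x$ and $n_{k}\to\infty$, then $f^{n_{k}+1}(y_{k})=f(x)$ supplies the same kind of witness for $f(x)$, so $f(x)\in\EE$ whenever $f(x)\in(0,1)$. Iterating, every forward image of $(a,c)$ that is well-defined lies in $\EE$.

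I also need $(a,b)$ to be genuinely non-degenerate. Under the hypotheses $f\notin\cl_{Per}\cup\cl_{Sol}\cup\cl_{Che}$ with no weak repellers, Lemma~\ref{vizinhanca} produces $p\in Per(f)$ with $c\in\alpha_{f}(p)$; since $p$ is periodic, $c\notin\omega_{f}(p)$. Proposition~\ref{Proposition008345678} then delivers $\varepsilon>0$ such that $c\in\alpha_{f}(x)$ for every $0<|x-c|<\varepsilon$. Thus $(c-\varepsilon,c)\cup(c,c+\varepsilon)\subset\EE$, forcing $a<c<b$.

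For $(a,c)$ I then apply the homterval lemma. The hypotheses preclude both periodic attractors and weak repellers, so the lemma leaves only two alternatives: either $(a,c)$ is wandering, or some $f^{\ell}|_{(a,c)}$ is a homeomorphism with $c\in f^{\ell}((a,c))$. Lemma~\ref{LemmaStP} forbids any wandering interval from containing an interval of the form $(c-r,c)$; since $(a,c)\supset(c-r,c)$ for each $r\in(0,c-a)$, the wandering case is ruled out. Therefore there is $\ell\ge1$ with $f^{\ell}|_{(a,c)}$ a homeomorphism and $c\in f^{\ell}((a,c))$. The image $f^{\ell}((a,c))$ is then a connected open interval containing $c$; by forward invariance $f^{\ell}((a,c))\setminus\{c\}\subset\EE$, and by the maximality of $(a,b)$ as a connected interval around $c$ with this property, $f^{\ell}((a,c))\subset(a,b)$. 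A completely symmetric argument yields $r\ge1$ with $f^{r}((c,b))\subset(a,b)$.

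The only delicate step will be discarding the wandering-interval alternative in the homterval lemma, which is exactly the content of Lemma~\ref{LemmaStP}; everything else is a direct combination of machinery already developed in the paper.
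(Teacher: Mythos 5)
Your argument is correct, but it reaches the key step by a different route than the paper. The paper first forces a visit of $(a,c)$ to $(a,b)$ by a measure-theoretic argument: $f((a,c))$ has positive Lebesgue measure, while by Ma\~n\'e's Theorem (Lemma~\ref{Remark98671oxe}) the set of points whose orbit avoids the neighborhood $(a,b)$ of $c$ has measure zero; it then takes the minimal entry time $k$ and shows $f^{k}((a,c))\subset(a,b)$ by the boundary contradiction: if the image overlapped the boundary it would contain a point $y\notin\EE$ having a preimage $w\in(a,c)\subset\EE$, impossible since $\alpha_{f}(w)\subset\alpha_{f}(y)$. You instead apply the homterval lemma (Lemma~\ref{homtervals}) directly to $(a,c)$, discard the wandering alternative via Lemma~\ref{LemmaStP} (a wandering interval cannot contain an interval $(c-r,c)$ or $(c,c+r)$), and so obtain the stronger intermediate fact that some $f^{\ell}|_{(a,c)}$ is a homeomorphism with $c\in f^{\ell}((a,c))$; containment in $(a,b)$ then follows from forward invariance of $\EE$ (i.e.\ $\alpha_{f}(x)\subset\alpha_{f}(f(x))$) together with maximality of $(a,b)$, rather than from a boundary contradiction --- though these two closing steps are really the same idea in different clothing. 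Your route avoids the measure-theoretic input entirely and is exactly the technique the paper itself uses in Lemma~\ref{AcumulacaoDePer}; it does lean on Lemma~\ref{LemmaStP}, whose proof needs the standing $C^{2}$ non-flat hypothesis of the section (just as Ma\~n\'e's Theorem needs $C^{2}$), so neither approach is more economical in hypotheses, but yours yields the extra information that the return branch of $(a,c)$ actually covers $c$. Your careful handling of the point $c$ itself (writing $f^{\ell}((a,c))\setminus\{c\}\subset\EE$ and reading the maximality of $(a,b)$ accordingly) is appropriate, since the paper is silent on whether $c\in\EE$.
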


\dem
Suppose $\forall j>0,j \in \NN$, is such that $f^j((a,c))\cap (a,b)= \emptyset$. We have that $f((a,c))$ is a positive measure set that is contained in the set of points that never visit the neighborhood $(a,b)$ of the critical point. By Ma\~ne's Theorem, this set has Lebesgue measure zero. Then there is a minimum k such that $f^k((a,c))\cap (a,b) \ne \emptyset$.

Suppose now that $f^k((a,c))\not\subset (a,b)$. For example, $b \in  f^k((a,c))$. Then $\exists y \in f^k((a,c))$ such that $y \not \in \EE$. But in this case there is a pre-image $w$ of $y$ in $(a,b)$, that is a subset of $\EE$, so, as $\alpha_f(w) \ni c$, $\alpha_f(y) \ni c$, absurd. 
\cqd

For a Lorenz map $f \notin \cl_{Per}\cup\cl_{Sol} \cup \cl_{Che}$ without any weak repeller and $\ell$ and $r$ as given by the former lemma, we define 
\begin{equation}\label{regiaoarmadilha}
\UU = (a,b) \cup \big(\bigcup_{j=1}^{\ell-1}f^j((a,c))\big) \cup \big(\bigcup_{j=1}^{r-1}f^j((c,b))\big)\ni c
\end{equation}
and we have that $\UU$ is a trapping region, that is, $f( \UU \setminus \{c\}) \subset \UU$.

It worths observing that given a non-renormalizable Lorenz map $f$ defined in an interval $I$ such that $f$ has a trapping region, any point in this interval eventually reaches this trapping region when iterated by $f$. Also, the non-wandering set within this interval is necessarily inside $\UU$ (if this was not true, we would have a non-wandering point $y$ out of the trapping region that would reach it, by the former observation, and couldn't leave this region to be back - and it should, as it is non-wandering).

\begin{Lemma} Let $f$ be a non-renormalizable Lorenz map  defined in $I$ and  $\UU \subset I$ so that $f(\UU)\subset \UU$, then $\forall x \in I$ $\exists k >0$ such that $f^k(x) \in \UU$.  
\end{Lemma}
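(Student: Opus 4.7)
My plan is to argue by contradiction. Suppose there is some $x \in I$ with $f^{k}(x) \notin \UU$ for every $k \geq 1$. Since $\UU$ is open and contains the neighborhood $(a,b) \ni c$ of the critical point, the forward orbit of $x$ avoids a fixed neighborhood of $c$, so $c \notin \omega_f(x) \subset I \setminus \UU$. I will derive $\omega_f(x) = \emptyset$, contradicting the compactness of $I$.

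The central observation, extracted from the forward-invariance $f(\UU) \subset \UU$, is that no $y \in \omega_f(x)$ can satisfy $c \in \alpha_f(y)$: if it did, preimages of $y$ would accumulate at $c$, so some preimage $w \in (a,b) \setminus (\{c\} \cup \co_f^{-}(c)) \subset \UU$ would have $f^{M}(w) = y$ for some $M \geq 1$, and forward-invariance of $\UU$ along the orbit of $w$ would force $y \in \UU$, contradicting $y \in I \setminus \UU$ (the countable set $\co_f^{-}(c)$ is avoided so that the orbit of $w$ does not touch $c$).

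Set $v_0 = \inf f((c,1])$ and $v_1 = \sup f([0,c))$, and note that $v_0 < c < v_1$ under the standing assumptions (otherwise one of the extremal branches would be forward invariant, and monotonicity there together with the absence of periodic attractors would already yield a contradiction). If $\omega_f(x) \cap (v_0, v_1) \neq \emptyset$, pick $y$ in this intersection; Lemma~\ref{jotaxrenormaliza} states that the component $J_y$ of $[0,1] \setminus \alpha_f(y)$ containing $c$ is either empty or a renormalization interval of $f$. Non-renormalizability of $f$ rules out the second option, so $J_y = \emptyset$ and thus $c \in \alpha_f(y)$, contradicting the central observation.

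In the remaining case $\omega_f(x) \subset [0, v_0] \cup [v_1, 1]$, intervals on each of which $f$ is continuous and monotone-increasing. For $y \in \omega_f(x) \cap [0, v_0]$, the case hypothesis and forward-invariance of $\omega_f(x)$ confine the orbit of $y$ to $[0, v_0]$ (because $f(y) \in [0, v_1)$ cannot lie in $(v_0, v_1)$); by monotonicity this orbit is itself monotone and so converges to a fixed point $q \in [0, v_0]$, which would have to be attracting from $y$'s side --- impossible unless $y = q$ is already a fixed point. Since the only fixed points allowed in $[0, v_0]$ by the hypotheses are repelling hyperbolic, and they enter $\omega_f(x)$ only through the degenerate choice $x \in \co_f^{-}(\{0\})$, we conclude $\omega_f(x) \cap [0, v_0] = \emptyset$; a symmetric argument rules out $\omega_f(x) \cap [v_1, 1]$. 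Hence $\omega_f(x) = \emptyset$, the desired contradiction. The main subtlety is this second case, where $v_0 < c < v_1$ and the absence of attracting or neutral fixed points are both indispensable for precluding trapped monotone dynamics in the extremal branches; the first case, by contrast, plugs non-renormalizability cleanly into Lemma~\ref{jotaxrenormaliza}.
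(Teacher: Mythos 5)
Your overall strategy (argue by contradiction, use forward invariance of $\UU$ to see that $\alpha$-limits of points of $\omega_f(x)$ cannot meet the trapping region, and then invoke Lemma~\ref{jotaxrenormaliza} together with non-renormalizability) is the same as the paper's, but there is a genuine gap at the decisive step. In your first case you pick an arbitrary $y\in\omega_f(x)\cap(v_0,v_1)$ and claim that ``$J_y$ is a renormalization interval'' contradicts non-renormalizability. It need not: Lemma~\ref{jotaxrenormaliza} allows $J_y=(0,1)$ (i.e.\ $\alpha_f(y)=\{0,1\}$), and $(0,1)$ is a renormalization interval in the trivial sense which non-renormalizability (defined via $\overline J\ne[0,1]$) does not exclude. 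You never show that $\alpha_f(y)$ meets $(0,1)$. The paper is careful about exactly this point: its proof extracts a \emph{periodic} point $p$ from $\omega_f(x)$ (which is a compact invariant uniformly expanding set disjoint from $\UU$, by Ma\~n\'e's Theorem~\ref{ThMane}), and then $p\in\alpha_f(p)\cap(0,1)$ forces $J_p$ to be a \emph{proper} subinterval (``$J_p$ can't be $I$''), so the renormalization conclusion really is contradictory. Your argument is missing that ingredient.

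The second case has a related problem. From monotonicity you correctly conclude that every point of $\omega_f(x)\cap\big([0,v_0]\cup[v_1,1]\big)$ is a fixed point, but the assertion that hyperbolic repelling fixed points ``enter $\omega_f(x)$ only through the degenerate choice $x\in\co_f^-(\{0\})$'' is unjustified (repelling fixed points belong to many $\omega$-limit sets without the orbit landing on them). What your case analysis actually yields is that $\omega_f(x)$ is a single repelling fixed point $q$ which the orbit of $x$ eventually hits exactly; and this, by itself, is \emph{not} a contradiction -- nothing you have proved forbids a repelling fixed point $q\notin\UU$. To close the case you must run the same $\alpha$-limit/renormalization argument on $q$ itself (as in Corollary~\ref{Cor111}): $\alpha_f(q)$ cannot meet the trapping region, so $c\notin\alpha_f(q)$, hence $J_q\ne\emptyset$, and $q\in\alpha_f(q)\cap(0,1)$ makes $J_q$ a proper renormalization interval, the desired contradiction. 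In short, both of your cases are repaired by the single idea the paper uses from the start: apply the argument to a periodic point of $\omega_f(x)$, whose membership in its own $\alpha$-limit set is what rules out the degenerate possibilities.
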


\dem

Suppose there is a point $x \in I$ such that $f^j(x)\not\in \UU \forall j>1$.
We consider the set $\omega_{f}(x)$, $\omega_{f}(x) \cap \UU = \emptyset$, and this is an invariant expanding set. So we have periodic points inside it, say $p$, and $p$ and its orbit never visit $\UU$. Consider now $\alpha_{f}(p)$, this set contains $p$ and so $J_p$ can't be $I$ and it can't also be an empty set, as there couldn't be the case of $\alpha_{f}(p)\cap \UU \ne \emptyset$, as $\alpha_{f}(p)$ couldn't have points in a trapping region as $p$ is not within it. So, $J_x$ is a smaller renormalization interval within $I$, contradiction to the fact that $f$ was non-renormalizable.

\cqd

\begin{Corollary}
\label{sobrealfa}
For $f \notin \cl_{Per}\cup\cl_{Sol} \cup \cl_{Che}$ without any weak repeller we have that 
$\alpha_{f}(x) \supset \Omega(f)$ $ \forall x \in \UU$.
\end{Corollary}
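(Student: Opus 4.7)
The natural strategy is to reduce everything to Lemma~\ref{216}. That lemma, under exactly our standing hypotheses (no weak repellers, and $f\notin\cl_{Per}$ so no periodic attractors), already guarantees $\alpha_f(x)\supset\Omega(f)$ whenever $c\in\alpha_f(x)$. So the whole task collapses to proving $\UU\subset\EE$, i.e.\ that every $x\in\UU$ has the critical point in its $\alpha$-limit.

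For points sitting in the ``core'' interval $(a,b)$ this is immediate, since by the preceding two lemmas $(a,b)$ is by construction the maximal interval containing $c$ that lies inside $\EE$. The remaining points of $\UU$ are of the form $x=f^j(y)$ with either $y\in(a,c)$ and $1\le j\le \ell-1$, or $y\in(c,b)$ and $1\le j\le r-1$. At this point the argument rests on the elementary monotonicity of $\alpha$-limit sets under forward iteration: if $z\in\alpha_f(y)$, then $z\in\alpha_f(f^j(y))$ for every $j\ge 0$. Indeed, any sequence $w_m\to z$ with $f^{n_m}(w_m)=y$ and $n_m\to\infty$ yields $f^{n_m+j}(w_m)=f^j(y)$, still a sequence of preimages of $f^j(y)$, with times $n_m+j\to\infty$, converging to $z$. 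Applied with $z=c$ and $y\in(a,c)\subset(a,b)\subset\EE$ (and symmetrically with $y\in(c,b)$), this gives $c\in\alpha_f(x)$, hence $\UU\subset\EE$.

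Putting the two steps together, every $x\in\UU$ satisfies $c\in\alpha_f(x)$, and Lemma~\ref{216} yields $\alpha_f(x)\supset\Omega(f)$ as claimed. I do not expect any serious obstacle here; the only point worth checking is that each intermediate iterate $f^j((a,c))$ with $1\le j<\ell$ actually avoids the critical point, so that $f^j$ is a homeomorphism on $(a,c)$ and the ``pull-back of times'' argument above is literally valid. This is built into the definition of $\ell$ as the first-return time to $(a,b)$: for $0<j<\ell$ we have $f^j((a,c))\cap(a,b)=\emptyset$, and since $f^j((a,c))$ is connected it must lie entirely in $[0,a]$ or $[b,1]$, in particular away from $c\in(a,b)$. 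The symmetric statement for $(c,b)$ holds by the same reasoning, which completes the plan.
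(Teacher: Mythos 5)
Your proposal is correct and follows essentially the same route as the paper: the paper's proof is exactly the reduction to Lemma~\ref{216} via the inclusion $\UU\subset\EE$, which you simply spell out more explicitly through the monotonicity $\alpha_f(y)\subset\alpha_f(f^j(y))$. Your final caution about $f^j$ being a homeomorphism on $(a,c)$ is not actually needed for that pull-back of preimages (only that the orbit segment of the given point is defined), but it is harmless and consistent with how $\ell$ and $r$ are chosen.
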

\dem
As Lemma \ref{216} says $\alpha_f(x) \supset \Omega(f)$ to any $x$ such that $c \in \alpha_f(x)$, this holds for any $x$ in $\UU$, as this is contained in $\EE$.
\cqd

\begin{Lemma}For $f \notin \cl_{Per}\cup\cl_{Sol} \cup \cl_{Che}$ without any weak repeller, if $\alpha_f(x) \ni c$ then $\alpha_f(x)\cap\UU\subset\Omega(f)\cap\UU$.
\end{Lemma}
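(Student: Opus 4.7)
The plan is to argue by contradiction. Suppose $y \in \alpha_f(x) \cap \UU$ is wandering, and fix an open interval $V \subset \UU$ containing $y$ with $V \cap f^k(V) = \emptyset$ for every $k \geq 1$. Since $y \in \alpha_f(x)$, one can pick preimages $x_j \in V$ with $f^{N_j}(x_j) = x$ and $N_j \to \infty$; equivalently, the single point $x$ belongs to $f^{N_j}(V)$ for infinitely many distinct $j$.

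Apply the Homterval Lemma (Lemma~\ref{homtervals}) to $V$. Since $f$ has neither periodic attractors nor weak repellers, only two possibilities remain: either $V$ is itself a wandering interval, or there is a smallest $m \geq 0$ with $c \in \interior f^m(V)$ and $f^m|_V$ a homeomorphism. In the first case the iterates $\{f^i(V)\}_{i \geq 0}$ are pairwise disjoint, so $x$ can lie in at most one of them, contradicting the fact that $x \in f^{N_j}(V)$ for infinitely many $j$. In the second case, let $v_0 \in V$ be the preimage of $c$ under $f^m|_V$, split $V \setminus \{v_0\}$ into two open subintervals $V^-, V^+$ (both still wandering neighborhoods as subsets of $V$), and observe that since $x_j \to y$, all but finitely many $x_j$ lie in whichever side contains $y$; call this side $V_1$. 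Repeat the construction on $V_1$.

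This recursion yields either a wandering interval $V_k$ at some finite stage (giving the first-case contradiction applied to $V_k$), or an infinite nested sequence $V \supset V_1 \supset V_2 \supset \cdots$ of wandering neighborhoods of $y$, with strictly increasing critical-entry times $m < m_1 < m_2 < \cdots$. In the infinite case, by construction $c$ never enters $\interior f^n(V_k)$ for $n < m_k$, so every open interval $W \subset \bigcap_k V_k$ is a homterval; a third application of the Homterval Lemma makes $W$ into a wandering interval, and since the $x_j$ eventually lie in $W$, the single point $x$ would belong to infinitely many pairwise disjoint iterates of $W$, a contradiction. The main obstacle is the bookkeeping needed to ensure that $y$ remains interior to each $V_k$, so that $\interior\bigcap_k V_k$ is a non-empty open interval containing $y$; this is arranged by always choosing $V_{k+1}$ on the side of $v_k$ that contains $y$ in its interior, while the exceptional case in which $v_k \to y$ (so that $y$ becomes a limit of preimages of $c$) is handled separately using Lemma~\ref{Lemma01928373} together with the backward invariance of $\alpha_f(x)$ to exhibit preimages of $x$ accumulating at $y$ from the opposite side, which reduces the analysis to the previous case and yields the same contradiction.
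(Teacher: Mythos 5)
Your recursion handles only the configurations in which the splitting points $v_k\in\co_f^{-}(c)$ stay away from $y$: in those cases you contradict the fact that the single point $x$ cannot lie in infinitely many pairwise disjoint images of a wandering interval, and that part is sound (note, though, that it never uses the wandering-ness of $V$ nor the hypothesis $y\in\UU$, which already signals that the real work lies elsewhere). The essential content of the lemma sits precisely in what you call the exceptional case --- $y$ itself a preimage of $c$, or $v_k\to y$ --- and there your treatment is a gesture, not an argument. Lemma~\ref{Lemma01928373} gives accumulation of preorbits at $c$, not at $y$, and backward invariance of $\alpha_f(x)$ does not produce preimages of $x$ accumulating at $y$ from the opposite side; even granting such preimages, if preimages of $c$ accumulate at $y$ from both sides (or $y=v_0$, so that no side contains $y$ at the very first split) there is no homterval adjacent to $y$ in which your previous cases could be replayed, so there is no ``reduction to the previous case''. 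Moreover, in that situation the contradiction you need is of a different kind: you must contradict the wandering-ness of $V$ itself by exhibiting a return $f^t(V)\cap V\ne\emptyset$, and the nested-homterval machinery cannot manufacture a return. You also skip the small but necessary preliminary that preimages of $x$ accumulate on $y$ \emph{inside $V\setminus\{y\}$} (otherwise $y$ would be a preimage of $x$ for infinitely many times, a degenerate situation that must be excluded separately, as the paper does in its first claim).

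This is exactly where the hypothesis $y\in\UU$, never invoked in your proposal, is indispensable. The paper's proof takes two preimages $x_1,x_2\in V\cap\co_f^{-}(x)$ with different return times, shows by the no-attractor argument that some iterate $f^{s}([x_1,x_2))$ contains $c$, and then uses that $x_1\in\UU\subset\EE$, so that $c\in\alpha_f(x_1)$ (Lemma~\ref{vizinhanca} and Proposition~\ref{Proposition008345678}) and hence, by Lemma~\ref{Lemma01928373}, the preorbit of $x_1$ accumulates on $c$ from both sides; pulling a point of $\co_f^{-}(x_1)\cap f^{s}([x_1,x_2))$ back through the branch $f^{s}|_{[x_1,x_2)}$ produces $x_1'\in V$ with $f^{t}(x_1')=x_1\in V$, i.e.\ an actual return of $V$, which is what makes $y$ non-wandering. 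Pulling back preimages of $x$ alone (all your construction provides) only reproduces density of $\co_f^{-}(x)$ near $y$ and never yields a return. Without this ingredient, or a genuine substitute for it, the deferred case --- which cannot be ruled out a priori --- remains open, so the proof as proposed has a real gap.
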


\dem
Consider $x$ such that $\alpha_f(x) \ni c$. Given $y \in \alpha_f(x)$ consider any neighborhood $V$ of $y$. We may assume $V \subset \UU$. 

\begin{Claim}
$y \in \overline{(V\setminus\{y\})\cap\co^-_f(x)}$.
\end{Claim}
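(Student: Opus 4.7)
The plan is to unpack the definition of $\alpha_f(x)$, handle the generic case directly, and in the degenerate case combine the Long Branches Lemma (Proposition~\ref{Proposition008345678}) with Lemma~\ref{Lemma549164} to manufacture pre-images of $x$ near $y$ distinct from $y$ itself.

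By definition of $\alpha_f(x)$ there exist $y_j\in f^{-n_j}(x)$ with $y_j\to y$ and $n_j\nearrow+\infty$. If infinitely many $y_j$ differ from $y$, they eventually lie in $V\setminus\{y\}$ and the claim follows at once. Otherwise $y_j=y$ for all $j$ large, so $f^{n_j}(y)=x$ at arbitrarily large times; taking two such times gives $f^{n_{j+1}-n_j}(x)=x$, so $x$ is periodic (of period $m$, say) and $y\in\co^-_f(x)$.

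In this degenerate case, $\omega_f(x)=\co^+_f(x)$ is a finite periodic orbit not containing $c$ (since there are no periodic attractors), while $c\in\alpha_f(x)$ is given, so Proposition~\ref{Proposition008345678} applies with $p=x$: there is $\varepsilon_0>0$ such that $\co^-_f(z)$ accumulates at $c$ from both sides for every $z$ with $0<|z-c|<\varepsilon_0$. Picking any $w\in\co^-_f(x)$ with $0<|w-c|<\varepsilon_0$ (available because $c\in\alpha_f(x)$), its pre-orbit lies in $\co^-_f(x)$, so $\co^-_f(x)$ itself accumulates on $c$ from both sides; writing $(p_1,p_2)$ for the component of $(0,1)\setminus\overline{\co^+_f(x)}$ containing $c$, we may thus choose $\tilde x_+\in(p_1,c)\cap\co^-_f(x)$ and $\tilde x_-\in(c,p_2)\cap\co^-_f(x)$. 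Now apply Lemma~\ref{Lemma549164} to $p=x$ and $y\in\co^-_f(x)$: for every $\varepsilon>0$,
\[
\bigcup_{j\ge 0}f^j\big((y,y+\varepsilon)\big)\supset(p_1,c)\qand\bigcup_{j\ge 0}f^j\big((y-\varepsilon,y)\big)\supset(c,p_2).
\]
Fixing $\varepsilon>0$ with $(y-\varepsilon,y+\varepsilon)\subset V$, the first inclusion gives $j\ge 0$ and $z_+\in(y,y+\varepsilon)$ with $f^j(z_+)=\tilde x_+\in\co^-_f(x)$, so $z_+\in(V\setminus\{y\})\cap\co^-_f(x)$, and a symmetric argument yields $z_-\in(y-\varepsilon,y)\cap(V\setminus\{y\})\cap\co^-_f(x)$. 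Letting $\varepsilon\downarrow 0$ produces a sequence in $(V\setminus\{y\})\cap\co^-_f(x)$ converging to $y$.

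The hard step is precisely the degenerate case, since the $\alpha$-limit sequence may reduce to the constant $y$ and thus give no nearby pre-images directly. The key observation is that this collapse forces $x$ to be periodic and hence $c\notin\omega_f(x)$, which is exactly the scope of the Long Branches Lemma; the covering statement of Lemma~\ref{Lemma549164} then transports pre-images accumulating at $c$ into every one-sided neighborhood of $y$.
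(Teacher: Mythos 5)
Your argument is correct, but it takes a different route from the paper's. The paper proves the claim by contradiction: assuming $B_\epsilon(y)\cap\co^-_f(x)=\{y\}$, it deduces (exactly as in your degenerate case) that $x$ is periodic, and then splits on whether some iterate of $B_\epsilon(y)$ captures $c$: if never, it builds a forward-invariant interval $(x,\gamma)$ for $f^{n_2-n_1}$, which would force a periodic attractor; if some $f^s(B_\epsilon(y))\ni c$, it pulls the pre-images of $x$ accumulating at $c$ (coming from $c\in\alpha_f(x)$) back through the branch of $f^s$ on $B_\epsilon(y)$, contradicting the assumed isolation of $y$. You instead argue directly: the only nontrivial situation is the eventually constant witnessing sequence, where $x$ is periodic and $y\in\co^-_f(x)$, and there you invoke Proposition~\ref{Proposition008345678} to get two-sided accumulation of $\co^-_f(x)$ on $c$, and Lemma~\ref{Lemma549164} to transport those pre-images into $(y,y+\varepsilon)$ and $(y-\varepsilon,y)$. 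The covering and no-trapped-homterval work that the paper redoes inline by contradiction is precisely what is packaged in Lemma~\ref{Lemma549164}, so your proof is shorter and constructive at the cost of invoking that stronger covering lemma (legitimately available here, since $f\notin\cl_{Per}$ and there are no weak repellers). One small remark: your step ``available because $c\in\alpha_f(x)$'' for the existence of $w\in\co^-_f(x)$ with $0<|w-c|<\varepsilon_0$ tacitly reads $c\in\alpha_f(x)$ as genuine accumulation of pre-images of $x$ at $c$; this is the same reading the paper itself uses in its final step, and it is exactly what Lemma~\ref{Lemma01928373} supplies, so you could cite that lemma directly and dispense with the auxiliary point $w$.
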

\dem
On the contrary, $\exists \epsilon >0$ such that $B_\epsilon(y)\cap\co^-_f(x)=\{y\}$. In this case, we have that $\exists n_1<n_2<...<n_j \to \infty$ such that $f^{n_j}(y)=x$. 
Then $$x=f^{n_2}(y)=f^{n_2-n_1}(f^{n_1}(y))=f^{n_2-n_1}(x).$$


Observe that if $f^s(B_\epsilon(y))\not\ni c\, \forall s$ then writing $(\alpha, \beta)=f^{n_1}(B_\epsilon(y))$ we have
$$x \in (\alpha,\beta) \text{ and } f^{k(n_2-n_1)}((\alpha,\beta))\not\ni c \forall k.$$
Taking $(x,\gamma)=\bigcup_{k\ge1}f^{k(n_2-n_1)}((x,\beta))=\bigcup_{k\ge1}(x,f^{k(n_2-n_1)}(\beta))$ we have $f^{n_2-n_1}|_{(x,\gamma)}$ homeomorphism and $f^{n_2-n_1}((x,\gamma))\subset(x,\gamma).$

But this would imply the existence of attracting periodic orbits, that are considered not to exist. Then, we necessarily have that $\exists s$ such that $f^s(B_\epsilon(y))\ni c$. 

As $c\in \alpha_{f}(x)$ we would have that $\#\co_f^-(x)\cap B_\epsilon(y)=\infty$. Again an absurd, proving the claim.
\cqd
Because of the claim we may assume that $y\in \overline{(y,1)\cap V\cap \co^-_f(x)}$ (the proof for the case $y\in$ $\overline{(0,y)\cap V\cap \co^-_f(x)}$ is analogous).

We may take $x_2<x_1\in(y,1)\cap V \cap\co_f^-(x)$ such that $f^{n_2}(x_2)=x=f^{n_1}(x_1)$ with $n_1<n_2$.

\begin{Claim}
$\exists s \in \NN$ such that $f^s([x_1,x_2))\ni c$
\end{Claim}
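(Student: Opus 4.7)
The plan is to argue by contradiction, reproducing the homterval strategy used to prove the preceding claim. Suppose that $c\notin f^s([x_1,x_2))$ for every $s\ge 0$, and write $I$ for the open interval with endpoints $x_2<x_1$. Then each $f^s|_{\overline I}$ is a continuous monotone map, hence an orientation-preserving homeomorphism onto its image. Setting $N:=n_2-n_1\ge 1$ and $g:=f^{n_1}(x_2)$, the interval $J_0:=f^{n_1}(I)$ has endpoints $x$ and $g$, and $f^N(g)=f^{n_2}(x_2)=x$. Moreover $f^s(J_0)=f^{s+n_1}(I)$ avoids $c$ for every $s\ge 0$, so every $f^{kN}$ restricts to an orientation-preserving homeomorphism on an open neighborhood of $\overline{J_0}$.

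Because $f^N$ preserves orientation and sends the endpoint $g$ of $J_0$ onto the endpoint $x$, the image $J_1:=f^N(J_0)$ lies on the opposite side of $x$ from $J_0$ and shares with $J_0$ only the point $x$. Iterating, the intervals $J_k:=f^{kN}(J_0)$ concatenate monotonically end-to-end, and the sequence $\{f^{kN}(x)\}_{k\ge 0}$ is monotone and bounded in $[0,1]$; denote its limit by $L$. If $f^j(L)\ne c$ for every $0\le j<N$, then $f^N$ is continuous at $L$, $f^N(L)=L$, and a one-sided neighborhood of $L$ is carried by $f^N$ strictly into itself toward $L$, producing an attracting periodic orbit for $f$. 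If instead some $f^{j_0}(L)=c$ with $0\le j_0<N$, monotone convergence forces $\lim_{z\to L^\pm}f^N(z)=L$ on the appropriate side, and tracing this along the forward orbit of $c_\pm$ identifies $L$ as a point of a super-attractor $\{p_1,\dots,p_{N-j_0}=L,\dots,p_N=c\}$ in the sense of the definition following Definition~\ref{PeriodicAttractor}. Either alternative contradicts $f\notin\cl_{Per}$.

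The main technical obstacle is the orientation and endpoint bookkeeping of the second paragraph, namely verifying that the intervals $J_k$ really form a monotone concatenated chain rather than piling up or oscillating around $x$. This is immediate from the orientation-preservation of $f$ on its branches of monotonicity once the position of $g$ relative to $x$ is named, but it is the only place where one has to pay close attention; the rest of the argument is a direct transcription of the homterval-style finish used just before in the previous claim of this proof (and in Lemma~\ref{homtervals}).
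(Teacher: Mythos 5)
Your proposal is correct and follows essentially the same route as the paper: assume $c$ is never in the images, push the interval forward to an interval abutting $x$, use orientation preservation to get a monotone concatenated chain of iterates whose union is an interval mapped into itself by $f^{n_2-n_1}$, and conclude the existence of a periodic attractor, contradicting $f\notin\cl_{Per}$. The only difference is cosmetic (you apply $f^{n_1}$ and build the chain on the far side of $x$, while the paper applies $f^{n_2}$ and lets the chain accumulate toward a point $\gamma$), plus you spell out the super-attractor subcase that the paper subsumes in ``would imply the existence of attracting periodic orbits.''
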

\dem
If $c\notin f^s([x_1x_2))$ $\forall\,s\ge0$ then $$f^{k(n_2-n_1)}([f^{n_2-n_1}(x),x))=f^{k(n_2-n_1)+n_2}([x_1,x_2))\not\ni c,\; \forall k \in \NN.$$

As $f$ preserves orientation, $f^{k(n_2-n_1)}|_{[f^{n_2-n_1}(x),x)}$ is a homeomorphism $\forall x$, so we have $f^{(k+1)(n_2-n_1)}(x)<f^{k(n_2-n_1)}(x) \, \forall k>0.$

Then, $\bigcup_{k\ge0}f^{k(n_2-n_1)}([f^{n_2-n_1}(x),x))$ is an interval $(\gamma,x)$. Besides that, $f^{n_2-n_1}|_{(\gamma,x)}$ is a homeomorphism and $f^{n_2-n_1}((\gamma,x))\subset(\gamma,x)$. 

But this is an absurd, because it would imply the existence of attracting periodic orbits.
\cqd

Let $s\in \NN$ such that $f^s([x_1,x_2))\ni c$. As $x_1 \in \UU$, we have that $\co^-_f(x_1)$ accumulates in $c$ by both sides. Then, $\co_f^-(x_1)\cap f^s([x_1,x_2))\ne \emptyset$.

This implies that $\exists x_1' \in \co^-_f(x_1)\cap[x_1,x_2)\subset V$, say $x_1' \in f^{-t}(x_1)\cap V$. Then,
$$
f^t(V)\cap V \ne \emptyset
$$

As $V$ is a neighborhood of $y \in \UU$ that was arbitrarily taken, we may conclude that $y \in \Omega(f)$.

\cqd

\begin{Corollary}
\label{poiupoiu} For $f \notin \cl_{Per}\cup\cl_{Sol} \cup \cl_{Che}$ without any weak repeller,
$\alpha_{f}(x)\cap \UU = \Omega(f)\cap \UU, \forall x \in \UU$.
\end{Corollary}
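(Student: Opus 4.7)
The plan is to observe that this corollary is essentially the conjunction of the two results that immediately precede it, namely Corollary~\ref{sobrealfa} and the Lemma that shows $\alpha_f(x)\cap\UU\subset\Omega(f)\cap\UU$ whenever $\alpha_f(x)\ni c$. So the proof should be a short two-line argument with no new ideas.

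First, I would fix $x\in\UU$. Since $\UU\subset\EE$ by the definition of $\EE$ (recall $\EE=\{x\in(0,1)\,;\,\alpha_f(x)\ni c\}$ and $\UU$ was constructed as a trapping region contained in $\EE$), we automatically have $\alpha_f(x)\ni c$. Thus the hypothesis of the preceding lemma is satisfied, which gives the inclusion $\alpha_f(x)\cap\UU\subset\Omega(f)\cap\UU$.

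For the reverse inclusion, I would invoke Corollary~\ref{sobrealfa} directly: for $x\in\UU$ we have $\alpha_f(x)\supset\Omega(f)$, so intersecting with $\UU$ yields $\alpha_f(x)\cap\UU\supset\Omega(f)\cap\UU$.

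Combining these two inclusions we get the equality stated in Corollary~\ref{poiupoiu}. There is no genuine obstacle here; the real work was done in the two preceding results. The only thing worth being careful about is the justification that every $x\in\UU$ does lie in $\EE$ (so that $\alpha_f(x)\ni c$ and the preceding lemma applies); this is immediate from the way $\UU$ was built in equation~(\ref{regiaoarmadilha}) as a union of forward iterates of a subinterval of $\EE$ containing $c$, together with the fact that $\alpha_f(y)\ni c$ is preserved under taking forward images of $y$.
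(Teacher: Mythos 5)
Your proposal is correct and follows exactly the paper's (implicit) argument: the corollary is stated without proof precisely because it is the conjunction of Corollary~\ref{sobrealfa} (giving $\Omega(f)\cap\UU\subset\alpha_f(x)\cap\UU$) and the preceding lemma (giving $\alpha_f(x)\cap\UU\subset\Omega(f)\cap\UU$ since $\UU\subset\EE$ forces $c\in\alpha_f(x)$). Your remark that membership in $\EE$ is preserved under forward iteration is the same justification the paper uses when it asserts $\UU\subset\EE$.
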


\begin{Corollary}
\label{compon} For $f \notin \cl_{Per}\cup\cl_{Sol} \cup \cl_{Che}$ without any weak repeller, then any connected component of $\UU \setminus \Omega(f)$ is a wandering interval.
\end{Corollary}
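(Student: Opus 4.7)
The plan is to argue by contradiction via the Homterval Lemma (Lemma~\ref{homtervals}). Let $J$ be a connected component of $\UU\setminus\Omega(f)$; since $\Omega(f)$ is closed and $\UU$ is open, $J$ is a non-empty open interval, and $c\notin J$ because $c\in\Omega(f)$ (indeed, since $f\notin\cl_{Che}$ and has no periodic attractor, Lemma~\ref{AcumulacaoDePer} yields periodic points accumulating $c$ from both sides, and $\Omega(f)$ is closed). Suppose, for contradiction, that $J$ is not a wandering interval. Since $f$ has no periodic attractor and no weak repeller, Lemma~\ref{homtervals} forces the existence of a smallest $n\ge 1$ with $c\in f^n(J)$; for this $n$, $f^n|_J$ is a homeomorphism onto an open interval $f^n(J)$ containing $c$ as an interior point.

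Set $x_0:=(f^n|_J)^{-1}(c)\in J$; the goal is to show $x_0\in\Omega(f)$, which contradicts $J\cap\Omega(f)=\emptyset$. Fix an arbitrary $y\in J$. By Corollary~\ref{poiupoiu}, $\alpha_f(y)\cap\UU=\Omega(f)\cap\UU$, and since $c\in\Omega(f)\cap\UU$ this gives $c\in\alpha_f(y)$. Thus there is a sequence $y_j\in f^{-n_j}(y)$ with $n_j\to\infty$ and $y_j\to c$. For $j$ large, $y_j$ lies in the open neighborhood $f^n(J)$ of $c$, so $z_j:=(f^n|_J)^{-1}(y_j)\in J$ is defined, and continuity of the inverse branch of $f^n|_J$ gives $z_j\to x_0$. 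Moreover $f^{n+n_j}(z_j)=f^{n_j}(y_j)=y$ with $n+n_j\to\infty$, so $x_0\in\alpha_f(y)$.

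Applying Corollary~\ref{poiupoiu} once more at the point $x_0\in\UU$, we conclude $x_0\in\alpha_f(y)\cap\UU=\Omega(f)\cap\UU$, contradicting $x_0\in J\subset\UU\setminus\Omega(f)$. Therefore $J$ must be a wandering interval.

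The main obstacle is identifying the correct point of $J$ on which to test non-wandering: rather than trying to produce forward recurrence of a point of $J$ directly (which is awkward, since iterates of $J$ may leave and re-enter $\UU$ in complicated ways), one exploits the backward-recurrence statement of Corollary~\ref{poiupoiu} and transports it through the homeomorphism $f^n|_J$ to the specific preimage $x_0$ of $c$. Once that choice is made, the contradiction follows almost immediately.
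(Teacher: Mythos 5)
Your proof is correct and follows essentially the same route as the paper: rule out the non-wandering alternative of the homterval lemma by producing, via the homeomorphic branch $f^n|_J$ with $c\in f^n(J)$, a point of $J$ that Corollary~\ref{poiupoiu} forces to lie in $\Omega(f)\cap\UU$, contradicting $J\cap\Omega(f)=\emptyset$. Pinning the argument on the explicit point $x_0=(f^n|_J)^{-1}(c)$ and on $c\in\alpha_f(y)$ for $y\in J$ is just a cleaner rendering of the paper's pull-back of $\alpha$-limit points through $f^n|_J$.
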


\dem
Let $J=(a,b)$ connected component of $\UU \setminus \Omega(f)$. Suppose it is not a wandering interval. Then, Lemma~\ref{homtervals} says there will be an $n$ for which $f^n(J)\ni c$. Lemma~\ref{vizinhanca} and Proposition~\ref{Proposition008345678} say there are plenty of points with $c$ in their $\alpha$-limits  inside this set $f^n(J)$. We know $f^{-1}(\alpha_{f}(x))\subset\alpha_{f}(x)$ and then Corollary~\ref{poiupoiu} insures us these points are in $\Omega(f)$, but they are inside $J$, that should not contain any point of $\Omega(f)$.
\cqd

\begin{Definition}[Strong Topological Transitivity]\label{StTopTrans} Let $\XX$ be a compact metrical space. Given a continuous map $g:A\subset\XX\to\XX$, we say it is strongly (topologically) transitive if for any open set $V\subset\XX$ with $V\cap A\ne\emptyset$, we have $\bigcup_{j \ge 0}g^j(V)=A$.

Let us make precise the notation used in this definition: given $V\subset\XX$, let $g^{-1}(V)=\{x\in A\,;\,g(x)\in V\}$. We define inductively $g^{-n}(V)$, for $n\ge2$, by $g^{-n}(V)=g^{-(n-1)}(g^{-1}(V))$. We define for $n\ge1$, $g^{n}(V)=\{g^{n}(v)\,;\,v\in V\cap g^{-n}(A)\}$. 
\end{Definition}

\begin{Proposition} \label{transitivo} If $f \notin \cl_{Per}\cup\cl_{Sol} \cup \cl_{Che}$ and $f$ doesn't have any weak repeller, then    
$f|_{\Omega\cap\UU}$ is strongly transitive.
In particular, $$f|_{\Omega\cap\UU}\text{ is transitive.}$$
\end{Proposition}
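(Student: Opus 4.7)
The plan is to verify the strong transitivity criterion directly: given an open set $V\subset[0,1]$ with $V\cap A\ne\emptyset$, writing $A:=\Omega(f)\cap\UU$, and any target $y\in A$, I want to produce a preimage of $y$ lying in $V\cap A$. This is enough because $f(A)\subset A$ (the set $\Omega(f)$ is forward invariant and, by (\ref{regiaoarmadilha}), $\UU$ is a trapping region), so such a preimage witnesses $y\in g^{n}(V)$, and varying $y$ yields $\bigcup_{n\ge 0}g^{n}(V)=A$, which is strong transitivity; ordinary topological transitivity will then follow at once.

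To manufacture the preimage I would fix any $v\in V\cap A$ and invoke Corollary~\ref{poiupoiu} with the point $y$: since $y\in\UU$, it gives $\alpha_{f}(y)\cap\UU=\Omega(f)\cap\UU=A$, so in particular $v\in\alpha_{f}(y)$. Hence there is a sequence $x_{n}\in f^{-k_{n}}(y)$ with $x_{n}\to v$ and $k_{n}\to\infty$, and (by avoiding the countable pre-orbit of $c$ up to height $k_{n}$) one may arrange that $f^{k_{n}}$ is a local homeomorphism at each $x_{n}$. Because $V$ and $\UU$ are open and both contain $v$, we have $x_{n}\in V\cap\UU$ for all sufficiently large $n$.

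The main obstacle — and the only genuinely dynamical step — is upgrading $x_{n}\in V\cap\UU$ to $x_{n}\in V\cap A$, i.e.\ showing $x_{n}\in\Omega(f)$. I plan to do this by contradiction, using Corollary~\ref{compon}: if $x_{n}\in\UU\setminus\Omega(f)$, then its connected component in $\UU\setminus\Omega(f)$ is a wandering interval $I$. Choose a small neighborhood $W\subset I$ of $x_{n}$ on which $f^{k_{n}}$ is a homeomorphism; then $f^{k_{n}}(W)$ is an open interval containing $y$. The inclusion $f^{j+k_{n}}(W)\subset f^{j+k_{n}}(I)$ combined with the pairwise disjointness of $\{f^{i}(I)\}_{i\ge 0}$ forces the iterates of $f^{k_{n}}(W)$ to be pairwise disjoint, making $f^{k_{n}}(W)$ itself a wandering interval and hence disjoint from $\Omega(f)$; this contradicts $y\in f^{k_{n}}(W)\cap\Omega(f)$. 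With this contradiction, $x_{n}\in V\cap A$ with $f^{k_{n}}(x_{n})=y$, completing the proof. The subtlety to watch for is precisely that Corollary~\ref{poiupoiu} only accumulates preimages inside $\UU$, not a priori inside $\Omega(f)$; Corollary~\ref{compon} together with the pushforward-of-a-neighborhood argument is what transfers wandering behavior from $I$ to a neighborhood of $y$ and so rules out the bad case.
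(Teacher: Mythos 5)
Your proposal is correct, and it shares the paper's starting point: both arguments feed Corollary~\ref{poiupoiu} (which gives $\alpha_f(y)\cap\UU=\Omega(f)\cap\UU$ for $y\in\UU$) into the observation that preimages of the target $y$ must accumulate on a point $v\in V\cap\Omega(f)\cap\UU$, hence land in $V\cap\UU$. Where you genuinely diverge is in certifying that such a preimage is non-wandering. The paper never touches wandering intervals at this stage: it first records the inclusion $f^{-1}(\Omega(f)\cap\UU)\cap\UU\subset\Omega(f)\cap\UU$ (its equation (\ref{inclusaoestrela}), a one-line consequence of Corollary~\ref{poiupoiu} together with $f^{-1}(\alpha_f(x))\subset\alpha_f(x)$) and then runs a finite induction along the backward chain $x_t\mapsto x_{t-1}\mapsto\cdots\mapsto x_0$, using only that $\UU$ is trapping to keep every link in $\UU$. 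You instead use Corollary~\ref{compon}: if $x_n\in\UU\setminus\Omega(f)$, its component $I$ is a wandering interval, and pushing a small neighborhood $W\subset I$ forward by $f^{k_n}$ produces an open interval around $y$ whose forward iterates are pairwise disjoint (being contained in distinct iterates of $I$), which already forces that interval to miss $\Omega(f)$ — contradiction; you do not even need the $\omega$-limit clause in the definition of wandering interval, disjointness of iterates suffices. This is sound, but it costs you the requirement that $f^{k_n}$ be a homeomorphism near $x_n$, which is where your parenthetical about the pre-orbit of $c$ enters; in fact no ``arranging'' is needed or available: whenever $f^{k_n}(x_n)=y$ holds honestly, the orbit of $x_n$ misses the finite set $\bigcup_{j<k_n}f^{-j}(c)$ and the neighborhood exists automatically, while generalized preimage chains passing through $c$ are useless for the final conclusion $y\in f^{k_n}(V\cap\Omega(f))$ in either proof (the paper silently discards them as well). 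In sum, the paper's route via (\ref{inclusaoestrela}) is purely set-theoretic and slightly more robust, needing no monotonicity neighborhood; your route via Corollary~\ref{compon} gives a more geometric reason why a preimage, inside $\UU$, of a non-wandering point cannot sit in a gap of $\Omega(f)$.
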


\dem
We know that $f^{-1}(\alpha_{f}(x))\subset\alpha_{f}(x)$. 

We will show that $\bigcup_{j\ge0}f^j(V\cap\Omega(t)) = \Omega(f)\cap \UU, \forall V  \subset \UU$, $V$ open and $V \cap \Omega(f) \ne \emptyset$.

It follows from the Corollary \ref{poiupoiu} that 
\begin{equation}
\label{inclusaoestrela}
f^{-1}(\Omega(f)\cap\UU)\cap\UU\subset\Omega(f)\cap\UU.
\end{equation}

Let $V\subset\UU$, $V$ any open set with $V \cap \Omega(f) \ne \emptyset$. Given $x \in \Omega(f)\cap\UU$ we have that $\alpha_{f}(x)\cap V\ne \emptyset$ and then $\co^-_f(x)\cap V \ne \emptyset$. Pick $x_t \in f^{-t}(x) \cap V$. Define $x_k=f^{t-k}(x_t)$ for $0 \le k \le t$. 
$$
x_t \stackrel{f}{\to} x_{t-1} \stackrel{f}{\to} \dots \stackrel{f}{\to} x_0=f^t(x_t) 
$$
As $\UU$ is a trapping region, we have that $x_k$ in $\UU, \forall 0 \le k \le t$. 

We claim that indeed $x_t \in \Omega(f)\cap\UU$. 

We have that $x_0 \in \Omega(f)\cap\UU$. Suppose it also worths for $k-1$, that is, $x_{k-1} \in \Omega(f)\cap\UU$. We have that $x_k \in \UU$. Then $x_k \in f^{-1}(x_{k-1})\cap \UU$ and by (\ref{inclusaoestrela}) we have that $x_k \in \Omega(f)\cap \UU$.
It follows by induction that $x_t \in \Omega(f)\cap\UU$.

\cqd

\begin{Theorem} 
\label{cicloint}
Let $f:[0,1]\setminus\{c\}\to[0,1]$ be a $C^{2}$ non-flat contracting Lorenz map without any weak repeller. 
If $f$ doesn't have a periodic attracting orbit, isn't Cherry-like nor $\infty$-renormalizable, then there is an open trapping region $U\ni c$ given by a finite union of open intervals such that $\Lambda:=\overline{U\cap\Omega(f)}$ satisfies the following statements.

\begin{enumerate}

\item$\omega_{f}(x)=\Lambda$ for a residual set of points of $\Lambda$ (in particular, $\Lambda$ is transitive).  
\item The basin of attraction of $\Lambda$, $\beta(\Lambda):=\{x;\omega_{f}(x)\subset \Lambda\}$, is an open and dense set of full measure.
\item $\exists \lambda >0$ such that $\lim_{n\to\infty} \frac{1}{n}\log |Df^n(x)|=\lambda$ for a dense set of points $x$ in $\Lambda$.
\item \label{opcoes} either $\Lambda$ is a finite union of intervals or it is a Cantor set.
\item if $\Lambda$ is a finite union of intervals then $\omega_{f}(x)=\Lambda$ for a residual set of $x$ in $[0,1]$.
\item \label{cantor} $\Lambda$ is a Cantor set if and only if there is a wandering interval.

\end{enumerate}
\end{Theorem}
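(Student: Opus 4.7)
The plan is to take $U = \UU$ from equation (\ref{regiaoarmadilha}) and set $\Lambda = \overline{U \cap \Omega(f)}$. By construction $U$ is a finite union of open intervals containing $c$ with $f(U\setminus\{c\}) \subset U$, so $\Lambda$ is compact and forward-invariant. Proposition \ref{transitivo} already gives strong transitivity of $f|_{\Omega(f)\cap U}$, from which item (1) follows by a standard Baire argument: fix a countable basis $\{V_n\}_{n\in\NN}$ of the topology of $\Lambda$; each set $E_n = \{x \in \Lambda : \#(\co_f^+(x) \cap V_n) = \infty\}$ is a $G_\delta$, strong transitivity forces each $E_n$ to be dense in $\Lambda$, so $\bigcap_n E_n$ is residual in $\Lambda$ and consists exactly of points with $\omega_f(x) = \Lambda$.

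For (2), Lemma \ref{Remark98671oxe} gives $c \in \omega_f(x)$ for Lebesgue almost every $x$; since $U$ is an open neighborhood of $c$ such an orbit eventually enters $U$ and stays there by forward invariance, so $\omega_f(x) \subset \overline{U} \cap \Omega(f)$. One checks that the boundary of $U$ consists of iterates of the periodic endpoints $a,b$ of the maximal interval $(a,b) \subset \EE$ and that any such boundary point in $\Omega(f)$ is approached from within $U \cap \Omega(f)$, giving $\overline{U}\cap\Omega(f) \subset \Lambda$ and therefore $\omega_f(x)\subset\Lambda$. Hence $\beta(\Lambda) \supset \bigcup_{n\geq 0} f^{-n}(U)$ is open, has full Lebesgue measure, and is dense.

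For (3), Theorem \ref{ThMane} combined with the absence of weak repellers yields, for each open $V \ni c$, constants $C_V,\lambda_V > 0$ such that $|Df^n(x)| > C_V\lambda_V^n$ whenever $\{x,\ldots,f^{n-1}(x)\} \cap V = \emptyset$; in particular, every periodic orbit avoiding $V$ has Lyapunov exponent at least $\log\lambda_V$. Density of periodic points in $\Lambda$ is obtained by combining strong transitivity (which yields $f^j(W)\cap W\neq\emptyset$ for each open $W\subset\Lambda$) with the uniform expansion away from $c$ on the appropriate inverse branches; this produces the desired dense set of points with Lyapunov exponent bounded below by a fixed $\lambda > 0$. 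For (4): if $\Lambda$ has empty interior then, being compact with no isolated points (by transitivity), it is a Cantor set; otherwise the non-empty interior of $\Lambda$, combined with forward invariance and transitivity, forces $\Lambda$ to be a finite cyclic union of closed intervals. For (5), the same Baire argument as in (1) is run in $[0,1]$, using that $\beta(\Lambda)$ is open and dense to spread the residuality. For (6): if $\Lambda$ is Cantor, each component of $U\setminus\Lambda$ lies in a component of $U\setminus\Omega(f)$, which is a wandering interval by Corollary \ref{compon}; conversely, Lemma \ref{LemmaStP} forces any wandering interval to accumulate on $c$ from both sides, so some forward iterate lies in $U\setminus\Omega(f)\subset U\setminus\Lambda$, preventing $\Lambda$ from being a finite union of intervals and hence, via (4), forcing the Cantor alternative.

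The main obstacle I expect is item (4), namely excluding a hybrid structure in which $\Lambda$ has a proper interval part alongside a genuine Cantor part. The clean argument requires showing that the interior of $\Lambda$, whenever non-empty, is itself forward invariant and that its forward iterates under $f$ must already exhaust $\Lambda$, leaving no Cantor remainder; this uses strong transitivity of $f|_{\Omega\cap U}$ together with the fact that $\Lambda$ contains $c$ and is saturated by the dynamics generated through the returns to $(a,b)$. A subtler secondary point is verifying in (2) that boundary points of $U$ do not produce elements of $\Lambda$ lying outside the closure of $U\cap\Omega(f)$, which is handled via the structure of $\partial U$ as a finite union of orbits of periodic endpoints and the approximation of these by the periodic points constructed in Lemma \ref{LemmaHGFGH54}.
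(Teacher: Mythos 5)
Your overall scaffolding is the paper's: same trapping region $\UU$ from (\ref{regiaoarmadilha}), same $\Lambda=\overline{\UU\cap\Omega(f)}$, Proposition~\ref{transitivo} for item (1), Ma\~n\'e for item (2), and Corollary~\ref{compon} plus Lemma~\ref{LemmaStP} for the wandering-interval discussion. But two steps are genuinely missing. The first is item (4), which you yourself flag as ``the main obstacle'' and leave unresolved; your proposed route (show $\interior(\Lambda)$ is forward invariant and that its iterates ``exhaust'' $\Lambda$) is not the mechanism that works and is never carried out. The argument that closes the dichotomy is shorter and different: if $\interior(\Lambda)\ne\emptyset$, an open interval $I\subset\Lambda\subset\Omega(f)$ cannot be wandering, so the homterval lemma (Lemma~\ref{homtervals}) gives $j$ with $f^{j}(I)\ni c$ and hence $c\in\interior(\Lambda)$; then Lemma~\ref{LemmaStP} (any wandering interval accumulates on $c$ from both sides) rules out wandering intervals altogether, so by Corollary~\ref{compon} $\UU\setminus\Omega(f)=\emptyset$ and $\Lambda=\overline{\UU}$, which is a finite union of intervals because $\UU$ is. This chain is exactly what also yields items (5) and (6), so leaving (4) open leaves the second half of the theorem unproved. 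A related smaller defect is in (2): the endpoints $a,b$ of the maximal interval of $\EE$ need not be periodic, so your description of $\partial\UU$ as iterates of periodic endpoints is unjustified; the correct handling of a limit point $q\in\partial\UU$ is that the orbit of $x$ accumulates on $q$ from inside $\UU$, so $q$ cannot bound a component of $\UU\setminus\Omega(f)$ on that side (such components are wandering intervals by Corollary~\ref{compon}, and an orbit cannot enter a wandering interval repeatedly), whence $q\in\overline{\UU\cap\Omega(f)}=\Lambda$.

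The second gap is item (3). As stated, the claim is that there is a single $\lambda>0$ with $\lim_{n}\frac1n\log|Df^{n}(x)|=\lambda$ on a dense subset of $\Lambda$; your argument would only give a dense family of periodic points whose exponents are bounded below by some $\log\lambda_V$, and distinct periodic orbits have distinct exponents, so this does not produce a common limit value. Moreover, the density of $Per(f)$ in $\Lambda$ that you invoke does not follow from ``strong transitivity plus expansion away from $c$'' as asserted: in the paper this is a separate, nontrivial statement (Lemma~\ref{perdenso}) proved \emph{after} this theorem by a first-return-map argument, so using it here is both unproved and, in the paper's logical order, circular. The intended argument avoids all of this: pick one repelling periodic point $p\in\Lambda$ (they accumulate on $c$ by Proposition~\ref{Proposition008345678}); Corollary~\ref{poiupoiu} gives $\alpha_{f}(x)\cap\UU=\Omega(f)\cap\UU$ for $x\in\UU$, so $\co_{f}^{-}(p)$ is dense in $\Lambda$, and every point of $\co_{f}^{-}(p)$ is eventually periodic with Lyapunov limit exactly $\exp_{f}(p)=:\lambda>0$. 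With that substitution, and with the chain described above installed as the proof of (4)--(6), your outline matches the paper; as it stands, items (3) and (4) are not established.
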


\dem Set $\Lambda:=\overline{\Omega(f) \cap\UU}$ with $\UU$ as defined in (\ref{regiaoarmadilha}).
\begin{enumerate}

\item Lemma~\ref{dicotomia} of Appendix insures us it is true, as we have transitivity provided by Proposition~\ref{transitivo}.
\item By Ma\~ne (see Theorem~\ref{ThMane} and \ref{lebzero}), the set of points that never visit $\UU$ has measure zero, then empty interior, so its complement $\cu$ is a full measure set, open and dense. We claim that any point $y$ in this set $\cu$ is also in $\beta(\Lambda)$. For some $k$, $f^k(y)=x \in \UU$, and we have two possible situations for a point $q\in\omega_{f}(x)=\omega_f(y)$. As $\UU$ is a trapping region, $q$ can be an interior point of $\UU$, and then it automatically belongs to $\Lambda=\overline{\Omega \cup \UU}$. If not an interior point, $q\in\partial\UU$. In this case, as $q\in\omega_{f}(x)$, there are infinitely many $f^{n_j}(x)$ accumulating in $q$. Then, there can be no wandering interval with border $q$ (as images of $x$ keep coming close to $q$). By Corollary~\ref{compon}, as $q$ can't be in the border of a wandering interval, it is not in the border of a connected component of $\UU\setminus\Omega(f)$, then it is accumulated by points of this set, that is, $q\in \Lambda=\overline{\Omega(f) \cap\UU}$.
\item Proposition (\ref{Proposition008345678}) says that repeller points $p \in Per(f)$ accumulate in $c$. As they are in $\Omega(f)$, the ones that are in $\UU$ are also in $\Lambda$, and it follows from Corollary(\ref{poiupoiu}) that $\co^-_f(p)$ is dense in $\Lambda$. 
Given any point $x\in\co^-_f(p)$, as it is eventually periodic, say $f^j(x)=p$, we have that
as $$\lim_{n\to\infty} \frac{1}{n}\log |D(f^{n-j}\circ f^j)(x)|$$
$$=\lim_{n\to\infty} \frac{1}{n}\log\big(|Df^{n-j}(f^{j}(x))||Df^j(x)|\big)$$
$$=\lim_{n\to\infty} \frac{1}{n}\log\big(|Df^{n-j}(p)|\big)+\lim_{n\to\infty} \frac{1}{n}\log|Df^j(x)|$$
$$=\lim_{n\to\infty} \frac{1}{n}\log\big(|Df^{n-j}(p)|\big)$$
$$=\lim_{n\to\infty} \frac{n-j}{n(n-j)}\log\big(|Df^{n-j}(p)|\big)$$
$$=\lim_{n\to\infty} \frac{1}{(n-j)}\log\big(|Df^{n-j}(p)|\big)=\exp_f(p)=:\lambda.$$

\item \label{item4} As $\Lambda$ is transitive, $\exists x \in \Lambda=\omega_{f}(x)$, then, by Lemma~\ref{aeroporto} of Appendix, it is a perfect set.
We have two possibilities. $\interior(\Lambda)= \emptyset$ or not.
As $\Lambda$ is a subset of $\RR$, if it has empty interior, it is totally disconnected. Consequently, it will be a Cantor set (as we already proved it is compact and perfect).
Suppose then $\interior(\Lambda) \ne \emptyset$. Let $I$ be an open interval, $I \subset \Lambda$ and it can't be a wandering interval, as it is a subset of $\Lambda \subset \Omega(f)$. Then, by Lemma~\ref{homtervals}, $\exists j$ such that $f^{j}(I)\ni c$, and so, $c \in \interior\Lambda$. This forbids the existence of wandering intervals. Indeed, if there is a wandering interval $J$, it has to accumulate in the critical point (by Lemma~\ref{LemmaStP}), but this would imply that $f^{n}(J)\cap\Omega(f)\ne\emptyset$ for $n$ sufficiently big. An absurd. So, as we cannot have wandering intervals, Corollary~\ref{compon}, $\UU\setminus\Omega(f)$ has to be an empty set. As $\UU$ is an orbit of intervals, it proves the claim of the Theorem.
\item Let $\Lambda'=\{x\in\UU;\omega_{f}(x)=\Lambda\}$.
Observe that $x \in \bigcup_{j\ge0}f^{-j}(\Lambda')$ implies that $\omega_{f}(x)=\Lambda$.
As $\Lambda'$ is residual, there exist $A_n$, $n\in\NN$ open and dense sets in $\UU$ such that $\Lambda'=\bigcap_{n\in \NN}A_n$.
On the other hand, for every $n\in\NN$ we have that $\bigcup_{j\ge0}f^{-j}(A_n)$ is an open dense set in $[0,1]$.
Then, $\bigcap_{n\in\NN}\big(\bigcup_{j\ge0}f^{-j}(A_n)\big)$ is residual in $[0,1]$.
So we have that $\bigcup_{j\ge0}f^{-j}\big(\Lambda'\big)=\bigcup_{j\ge0}f^{-j}\big(\bigcap_{n\in\NN}A_n\big)=\bigcap_{n\in\NN}\big(\bigcup_{j\ge0}f^{-j}(A_n)\big)$ is residual.

\item It follows straightforwardly from the former construction: $\Lambda$ being a Cantor set implies that $\UU\setminus\Omega(f)$ has connected components, that Lemma~\ref{homtervals} says it is a wandering interval. The converse, for as $\Lambda$ is compact and perfect, if we suppose $\interior(\Lambda)\ne\emptyset$, following the same reasoning of (\ref{item4}), there would be an interval $I$ such that $f^j(I)\ni c$ for some $j$, contradicting the existence of wandering interval.

\end{enumerate}

\cqd

\begin{Corollary} Let $f:[0,1]\setminus\{c\}\to[0,1]$ be a $C^{2}$ non-flat contracting Lorenz map without any weak repeller with $\log |f'|$ bounded then if $f$ is not $\infty$-renormalizable and has no attracting periodic orbit nor is a Cherry-like map, then $\exists$ intervals $I_1, ..., I_s$ with $f(I_1 \cup ... \cup I_s) \subset I_1 \cup ... \cup I_s$ such that $\omega_{f}(x)=I_1\cup...\cup I_s$ for a residual set of $x\in[0,1]$.

\end{Corollary}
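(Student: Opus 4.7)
The plan is to derive this corollary as a direct consequence of Theorem~\ref{cicloint}, so the only real work is to exclude the Cantor-set alternative in item~(\ref{opcoes}) of that theorem under the extra hypothesis that $\log|f'|$ is bounded. First I would verify that $f$ satisfies all the hypotheses of Theorem~\ref{cicloint}: non-flat, $C^2$, contracting Lorenz, no weak repeller, no attracting periodic orbit, not Cherry-like and not $\infty$-renormalizable. Applying that theorem yields a compact forward-invariant set $\Lambda$ such that $\omega_f(x)=\Lambda$ for a residual set of $x\in\Lambda$, with $\Lambda$ either a finite union of intervals or a Cantor set, and the basin $\beta(\Lambda)$ residual and of full measure.

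By item~(\ref{cantor}) of Theorem~\ref{cicloint}, $\Lambda$ is a Cantor set if and only if $f$ admits a wandering interval. Hence the key step is to establish that, under the hypothesis $\sup|\log|f'||<\infty$, the map $f$ has no wandering interval. This is a classical distortion argument (see \cite{BM} cited earlier in the paper): if $J$ were a wandering interval, Lemma~\ref{LemmaStP} says that its forward orbit must accumulate on the critical point from both sides; the boundedness of $\log|f'|$ then gives uniformly bounded distortion of $f^n$ on suitable pull-backs of neighborhoods of critical values, which together with the disjointness $f^i(J)\cap f^j(J)=\emptyset$ $(i\ne j)$ yields $\sum_n|f^n(J)|=\infty$ via comparison with the Lebesgue measure of $[0,1]$, a contradiction. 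I expect this to be the main obstacle, and I would appeal directly to the version of this argument already available in \cite{BM} rather than redoing it from scratch.

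Once wandering intervals are excluded, Theorem~\ref{cicloint}(\ref{cantor}) forces $\Lambda$ to be a finite union of intervals, say $\Lambda=I_1\cup\cdots\cup I_s$. Forward invariance $f(\Lambda)\subset\Lambda$ is part of the theorem's conclusion (Proposition~\ref{transitivo} yields $f(\Lambda)=\Lambda$ in fact). Finally, item~(5) of Theorem~\ref{cicloint} applies precisely in the case when $\Lambda$ is a finite union of intervals and gives the required conclusion: $\omega_f(x)=\Lambda=I_1\cup\cdots\cup I_s$ for a residual set of $x\in[0,1]$. This finishes the proof.
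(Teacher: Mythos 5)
Your proposal is correct and follows essentially the same route as the paper: cite Barry--Mestel \cite{BM} to rule out wandering intervals under the bounded $\log|f'|$ hypothesis, then invoke Theorem~\ref{cicloint} (items on the interval/Cantor dichotomy and the residual basin) to conclude that $\Lambda$ is a finite union of intervals attracting a residual set. The extra distortion sketch you give is just an outline of what \cite{BM} proves, so nothing is missing.
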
 

\dem
It was proved by Barry and Mestel \cite{BM} that under these conditions $f$ has no wandering interval. It follows from the theorem above that it is not a Cantor set, but a finite orbit of intervals.
\cqd

\begin{Lemma} 
\label{perdenso} Let $f:[0,1]\setminus\{c\}\to[0,1]$ be a $C^{2}$ non-flat contracting Lorenz map without weak repellers. 
Suppose that $f \notin \cl_{Per}\cup\cl_{Sol} \cup \cl_{Che}$. Then $\overline {Per(f)\cap \Lambda}=\Lambda$, with $\Lambda$ as obtained in Theorem \ref{cicloint}.

\end{Lemma}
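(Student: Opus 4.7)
The plan is to reduce density of $Per(f)\cap\Lambda$ in $\Lambda$ to its density within a single nice interval $J\ni c$ with periodic endpoints in $\Lambda$, using the strong transitivity of Proposition~\ref{transitivo}, and to prove the local density via the first return map $\cf_J$ analyzed in Section~\ref{Section98767}.

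First, by Lemma~\ref{AcumulacaoDePer} combined with $f\notin\cl_{Che}$, there are periodic points of $f$ arbitrarily close to $c$ on both sides, and since the trapping region $\UU$ from~(\ref{regiaoarmadilha}) contains a neighborhood of $c$, every such periodic point sufficiently close to $c$ lies automatically in $\UU\cap\Omega(f)\subset\Lambda$. Fixing a pair $p<c<q$ in $Per(f)\cap\Lambda$, the connected component $J=(\alpha,\beta)$ of $[0,1]\setminus(\co^+_f(p)\cup\co^+_f(q))$ containing $c$ is a nice interval whose endpoints $\alpha,\beta$ are periodic (they lie in the orbits of $p$ and $q$) and belong to $\Lambda$ by forward invariance of $\Lambda$.

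I would then establish density of $Per(f)\cap\Lambda$ in $J\cap\Lambda$ via the first return map $\cf_J:J^*\to J$. By Lemma~\ref{Lemma8388881a}(3), each branch $I\in\cp_J$ with $c\notin\partial I$ is mapped homeomorphically by $\cf_J$ onto all of $J$, and the strict inclusion $I\subsetneq J$ combined with the intermediate value theorem produces a fixed point of $\cf_J|_I$, i.e., a periodic point of $f$ in $I\subset J\cap\UU\cap\Omega(f)\subset J\cap\Lambda$. For any $z\in J\cap\Lambda$ with $\cf_J$-itinerary $(i_0,i_1,\ldots)$, the nested cylinders $I_{i_0}\supset I_{i_0,i_1}\supset\cdots$ are full branches of the iterates $\cf_J^{m+1}$ onto $J$, and the same IVT argument yields a periodic point of $f$ in $\Lambda\cap I_{i_0,\ldots,i_m}$ for each $m$. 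Write $K=\bigcap_m I_{i_0,\ldots,i_m}$. If the itinerary is eventually periodic of period $p$ and $K$ is non-degenerate, $\cf_J^p|_K$ is a homeomorphism of $K$ into itself with only repelling periodic points in its interior, an absurdity; if the itinerary is not eventually periodic and $K$ is non-degenerate, then the disjoint cylinders containing the iterates $\cf_J^m(K)$ force $K$ to be a wandering interval of $f$, hence disjoint from $\Omega(f)$ by Corollary~\ref{compon}, contradicting that $z\in K\cap\Lambda\subset\overline{\Omega(f)}$ lies interior to every $I_{i_0,\ldots,i_m}$ and so interior to $K$. Therefore $K=\{z\}$, the chosen periodic points converge to $z$, and $Per(f)\cap\Lambda$ is dense in $J\cap\Lambda$.

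Finally, I spread the density. Let $\hat x\in\Lambda$ and $V$ be an open neighborhood of $\hat x$. Since $\Omega(f)\cap\UU$ is dense in $\Lambda$ and $\co^+_f(c_-)\cup\co^+_f(c_+)$ is countable, I may replace $\hat x$ by a point in $V\cap(\Omega(f)\cap\UU)\setminus(\co^+_f(c_-)\cup\co^+_f(c_+))$. If $\hat x\in J$, the previous paragraph already produces a periodic point of $f$ in $V\cap J\cap\Lambda$. Otherwise Proposition~\ref{transitivo} furnishes $y\in J\cap\Omega(f)\cap\UU$ and $k\ge 1$ with $f^k(y)=\hat x$; since $\hat x$ avoids the critical orbits, the segment $y,f(y),\ldots,f^{k-1}(y)$ avoids $c$, so $f^k$ is a local homeomorphism at $y$. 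A periodic point $y'\in J\cap Per(f)\cap\Lambda$ sufficiently close to $y$ then has image $f^k(y')\in V$ which is periodic (a forward iterate of a periodic orbit) and lies in $\Lambda$ by forward invariance, producing the required element of $V\cap Per(f)\cap\Lambda$. The most delicate step is the cylinder-shrinking argument in the second paragraph, which relies on Corollary~\ref{compon} identifying components of $\UU\setminus\Omega(f)$ as wandering intervals, together with the absence of attractors and weak repellers ruling out the eventually-periodic alternative.
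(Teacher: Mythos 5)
The decisive step of your argument does not hold as stated. Lemma~\ref{Lemma8388881a}(3) gives $\cf_{J}(I)=J$ only for branches $I\in\cp_{J}$ with $c\notin\partial I$; for a branch having $c$ in its boundary, items (1)(a) and (2)(a) only give an image of the form $(\alpha,f^{R}(q))$ or $(f^{R}(p),\beta)$, in general a proper subinterval of $J$ which need not even contain the branch itself. Such branches do occur: whenever the forward orbit of a critical value enters $J$, the points of $J$ adjacent to $c$ on the corresponding side form a single component of $J^{*}$ with $c$ in its boundary. Hence your claim that for an arbitrary $z\in J\cap\Lambda$ the nested cylinders $I_{i_{0},\dots,i_{m}}$ are full branches of $\cf_{J}^{m+1}$ onto $J$ is unjustified, and the intermediate-value argument producing a periodic point in every cylinder collapses precisely for those $z$ whose return itinerary passes through a $c$-adjacent branch; since the final spreading step via Proposition~\ref{transitivo} takes the local density inside $J$ as input, the whole proof is blocked here. (Two smaller loose ends in the same step: you implicitly assume that $z$ returns to $J$ infinitely often, and in the wandering-interval alternative you assume $z$ is interior to every cylinder; neither holds for all $z\in J\cap\Lambda$, e.g.\ for preimages of $c$ or of $\partial J$.)

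For comparison, the paper's proof is organized so that non-full branches cannot arise. It argues by contradiction: if $\Lambda\setminus\overline{Per(f)}\ne\emptyset$, take a component $I$ of $\UU\setminus\overline{Per(f)}$ with $I\cap\Lambda\ne\emptyset$ and consider the first return map $F$ to $I$. If some branch of $F$ were not full, an intermediate image would be cut at $c$; since periodic points accumulate on both sides of $c$ (Proposition~\ref{Proposition008345678}), that image contains infinitely many periodic points, and pushing them forward by the remaining iterates places infinitely many periodic points inside $I$, impossible because $I\cap\overline{Per(f)}=\emptyset$. So every branch is full, and there are at least two of them (a single monotone full branch would make the uncountable set of points of $I\cap\Lambda$ returning infinitely often to $I$ consist of fixed points of one iterate), which again yields infinitely many periodic points in $I$, a contradiction. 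If you wish to keep your direct cylinder argument, you would need an analogous mechanism (based on the accumulation of $Per(f)$ on $c$) to deal with the $c$-adjacent branches; as written, the fullness of all cylinders is the missing idea.
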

\dem
Suppose that $\Lambda\setminus\overline {Per(f)}\ne\emptyset$. Let $I$ be connected component of $\UU\setminus\overline {Per(f)}$ such that $I\cap\Lambda\ne\emptyset$. As $\Lambda$ is perfect and compact we have that $I\cap\Lambda$ is uncountable. Moreover, as $\{x\in\Lambda;\omega_f(x)=\Lambda\}$ is residual in $\Lambda$, we have that $\{x\in\Lambda;\omega_f(x)=\Lambda\}\cap I$ is uncountable.
Then, the set of points that return infinitely many times to $I$ (that is, $\bigcap_{j\ge0}f^{-j}(I))$ is uncountable.
Let $I^*=\{x\in I;\co^+_f(f(x))\cap I\ne\emptyset\}$ be the set of points that return to $I$ and $F:I^*\to I$ the first return map. 
Observe that the set of points that return infinitely many times to $I$ is given by
$$
\{x; \#(\co^+_f(x)\cap I)=\infty\}=\bigcap_{j\ge0}F^{-j}(I))
$$
This way
\begin{equation}
\label{estdavi}
\bigcap_{j\ge0}F^{-j}(I) \text{ is uncountable.} 
\end{equation}

\begin{Claim}\label{afir1}
If $J$ is connected component of $I^*$ then $F(J)=I$.
\end{Claim}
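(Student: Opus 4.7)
The plan is to show that on each connected component $J=(p_1,p_2)$ of $I^*$, the first return map $F|_J=f^n|_J$ (with $n=R|_J$, constant by Lemma~\ref{Lemma8388881}) is a continuous, strictly increasing surjection onto $I$. Continuity of $f^n|_J$ is immediate: any discontinuity would require some $f^j(J)$, $0\le j<n$, to contain $c$, but then $R$ would jump across points of $f^{-j}(c)\cap J$, contradicting the constancy of $R|_J$. Denote the one-sided limits $y_1=\lim_{x\downarrow p_1}f^n(x)$ and $y_2=\lim_{x\uparrow p_2}f^n(x)$; these lie in $\overline{I}=[\alpha,\beta]$, and the goal reduces to proving $y_1=\alpha$ and $y_2=\beta$.

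The key structural remark is that $I$ is ``nice-like'' at its boundary: the endpoints $\alpha,\beta$ lie in $\overline{Per(f)}$ (and the points of $\partial\UU$ are also in $\overline{Per(f)}$, because $c\in\overline{Per(f)}$ by Lemma~\ref{AcumulacaoDePer}, which applies since $f\notin\cl_{Che}$ and has no periodic attractor). Moreover, $f\notin\cl_{Per}$ rules out super-attractors, so the one-sided critical orbits $\co_f^+(c_{\pm})$ never reach $c$. Suppose, for contradiction, that $y_1>\alpha$; by monotonicity $y_1\in I$, and the strategy is to force $y_1\in\overline{Per(f)}$, contradicting $I\cap\overline{Per(f)}=\emptyset$.

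If $f^n$ is continuous at $p_1$ and $p_1$ lies in the interior of $I$, then a two-sided neighborhood of $p_1$ would be contained in $I^*$ with return time $n$ (since $f^j(p_1)\notin I$ for $0<j<n$), contradicting the maximality of $J$ unless $y_1\in\partial I$, which would force $y_1=\alpha$, against $y_1>\alpha$. If instead $p_1=\alpha$, picking $q_k\to\alpha$ with $q_k\in Per(f)$ produces $f^n(q_k)\to y_1$ with each $f^n(q_k)\in Per(f)$, so $y_1\in\overline{Per(f)}$, a contradiction.

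If $f^n$ is discontinuous at $p_1$, let $k_0$ be the smallest integer with $1\le k_0\le n$ and $f^{k_0-1}(p_1)=c$. From inside $J$ one computes $y_1=f^{n-k_0+1}(c_+)$. Taking periodic $q_m\to c^+$ (available by Lemma~\ref{AcumulacaoDePer}) and using that $f^j(c_+)\ne c$ for every $j\ge 1$ (no super-attractor), the map $f^{n-k_0+1}$ is continuous along the orbit of $c_+$, so $f^{n-k_0+1}(q_m)\to y_1$ with each $f^{n-k_0+1}(q_m)\in Per(f)$, giving $y_1\in\overline{Per(f)}$, again a contradiction. The symmetric argument at $p_2$ yields $y_2=\beta$, so $F(J)=I$. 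The main obstacle is precisely this discontinuous case: when $p_1$ is a preimage of $c$, the orbit of $p_1$ terminates before time $n$ and the value $y_1$ must be read off the one-sided critical orbit; producing periodic approximants of $y_1$ then requires transporting periodic points accumulating at $c$ through the continuous map $f^{n-k_0+1}$, which only works thanks to Lemma~\ref{AcumulacaoDePer} together with the absence of super-attractors.
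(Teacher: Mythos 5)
Your main case (when some iterate of the left endpoint of $J$ hits $c$) is exactly the paper's argument: periodic points accumulate on both sides of $c$ (the paper quotes Proposition~\ref{Proposition008345678}, you quote Lemma~\ref{AcumulacaoDePer}; both give the fact needed here), the absence of super-attractors lets you transport them by the continuous map $f^{n-k_0+1}$ onto a sequence of periodic points converging to $y_1\in I$, contradicting $I\cap\overline{Per(f)}=\emptyset$. Your explicit treatment of the continuity cases, which the paper compresses into the one-line assertion that $t_0\ne i_0$ forces an intermediate iterate of the endpoint of $J$ to hit $c$, is also essentially sound when $p_1$ is interior to $I$: there, continuity of $f^n$ at $p_1$ together with $f^n(p_1)\in I$ already puts $p_1$ in $I^*$, hence in the open component $J$, a contradiction (you do not need the nearby return times to equal $n$ for this, so the parenthetical ``with return time $n$'' is an inessential overstatement).

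The step that does not hold up as written is the claim that $\partial\UU\subset\overline{Per(f)}$ ``because $c\in\overline{Per(f)}$'', which you invoke precisely in the sub-case $p_1=\alpha$ to get periodic points accumulating at $\alpha$ that can be pushed forward by the (two-sidedly) continuous $f^n$. The boundary of $\UU$ as defined in (\ref{regiaoarmadilha}) consists of one-sided limits of the endpoints of the intervals $(a,b)$, $f^j((a,c))$ and $f^j((c,b))$: these are points of the form $f^j(c_\pm)$, but also points of the form $f^j(a)$ and $f^j(b)$, where $(a,b)$ is the maximal interval of $\EE$ containing $c$. For the first kind your reasoning works (push the periodic points near $c$ forward) --- and in fact such a point could not be the left endpoint of $I$ anyway, since the transported periodic points would then lie just inside $I$. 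For the second kind, however, nothing follows from $c\in\overline{Per(f)}$: you would need $a,b\in\overline{Per(f)}$ (or that the relevant iterates of $a,b$ lie on the critical orbits), and this is established neither in your argument nor anywhere in the paper. Consequently the corner case ``$\alpha\in\partial\UU\setminus\overline{Per(f)}$, $f^j(\alpha)\ne c$ for $0\le j<n$, $f^n(\alpha)\in I$'' is not actually excluded by your proof; you must either prove that the endpoints $a,b$ of the base interval of $\UU$ are accumulated by periodic points, or find a different argument for this sub-case. (To be fair, the paper's own proof is silent on the same corner, hiding it in the unproved assertion that $t_0\ne i_0$ forces a visit of the endpoint's orbit to $c$; but as your proof makes the case split explicit, it also inherits the burden of closing it.)
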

{\em Proof of the claim.}
Let $I=(i_0,i_1)$. If $F(J)\ne I$ then let $(t_0,t_1)=F(J)$ and in this case $t_0\ne i_0$ or $t_1\ne i_1$. Suppose $t_0\ne i_0$ (the other case is analogous). Let $n=R(J)$.
As $t_0\ne i_0$, there is $0\le s< n$ such that $f^s(t_0)=c$. Then we have that
$$
\#\big(Per(f)\cap f^s(J)\big)=\#\big(Per(f)\cap (c,f^s(t_1))\big)=\infty,
$$
as the periodic points accumulate in both sides of the critical point (Proposition~\ref{Proposition008345678}).
Then $\#\big(Per(f)\cap I\big)\ge\#\big(Per(f)\cap f^n(J)\big)=\infty$, contradicting the fact that $I$ is connected component of $\UU\setminus\overline {Per(f)}$.
$\square $(end of the proof of the claim)  

\begin{Claim}\label{afir2}
$I^*$ has more than one connected component.
\end{Claim}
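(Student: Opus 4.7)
The plan is to argue by contradiction, assuming that $I^{*}$ has a single connected component $J$, and deriving a contradiction with (\ref{estdavi}).

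First I would gather structural information. Combining Lemma~\ref{vizinhanca} with Proposition~\ref{Proposition008345678}, the periodic points of $f$ accumulate on $c$ from both sides; in particular $c\notin\overline{I}$, and a fortiori $c\notin\overline{J}$. By Lemma~\ref{Lemma8388881}, $R$ is constantly equal to some $n\ge 1$ on $J$ and, since none of $J,f(J),\ldots,f^{n-1}(J)$ meets $c$, the map $F=f^{n}|_{J}$ is an orientation-preserving $C^{2}$ diffeomorphism of $J$ onto $F(J)=I$ (Claim~\ref{afir1}). A fixed point of $F$ in $J$ would be a periodic point of $f$ lying in $I\subset U\setminus\overline{Per(f)}$, which is impossible; therefore $F$ has no fixed point in $J$.

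Next I would split by the position of $\partial J$ inside $I$. Write $I=(a,b)$ and $J=(p,q)$ with $a\le p<q\le b$, and note that $F$ extends continuously to $[p,q]\to[a,b]$ with $F(p)=a$ and $F(q)=b$. If $p>a$ and $q<b$, then $F(p)-p<0<F(q)-q$ and the intermediate value theorem immediately produces a fixed point of $F$ in $J$, a contradiction. In every other configuration, the absence of fixed points in $J$ together with the monotonicity of $F$ forces $F-\mathrm{id}$ to have constant sign on $J$; assume $F(x)>x$ on $J$ (the opposite sign is symmetric). For any $x\in J$ the iterates $F^{k}(x)$ are then strictly increasing, so if they all stayed in $J$ they would converge to some $x^{*}\in\overline{J}$ with $F(x^{*})=x^{*}$, and the only option is $x^{*}=q$. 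If $q<b$, then $q$ is a periodic point of $f$ in the interior of $I$, again contradicting $I\cap\overline{Per(f)}=\emptyset$; if $q=b$, then $F^{k}(x)\nearrow b$ exhibits the periodic orbit of $b$ as an attracting periodic orbit of $f$, contradicting $f\notin\cl_{Per}$ (the sub-case $F=\mathrm{id}_{I}$, which could occur only when $J=I$, is excluded since it would make every point of $I$ a weak repeller).

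It follows that, in every case, the $F$-orbit of each $x\in J$ must leave $J$ in finite time, whence $\bigcap_{k\ge 0}F^{-k}(I)=\emptyset$, contradicting (\ref{estdavi}). The delicate point of the argument is the configuration in which $J$ shares an endpoint with $I$: the fixed point of $F$ produced by the monotone iteration then sits on $\partial I$ rather than in the interior, and one has to translate the one-sided convergence $F^{k}(x)\nearrow b$ into the existence of a genuine attracting periodic orbit of $f$ in order to reach the contradiction.
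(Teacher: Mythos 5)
Your proof is correct and takes essentially the same route as the paper: the paper's terse observation that a single full monotone branch forces $\bigcap_{j\ge0}F^{-j}(I)=\operatorname{Fix}(f^{n}|_{I^{*}})$, which then clashes with the uncountability in (\ref{estdavi}), is precisely the IVT-plus-monotone-convergence analysis you spell out, using $I\cap Per(f)=\emptyset$ and the exclusion of periodic attractors. The only cosmetic remarks are that your sub-case $x^{*}=q<b$ is vacuous once you know $F(q)=b$, and that when $q=b$ the endpoint could a priori be pre-critical, so the resulting finite attractor might be a super-attractor rather than an attracting periodic orbit; since super-attractors are also periodic attractors in this paper's sense, the contradiction with $f\notin\cl_{Per}$ stands.
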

{\em Proof of the claim.}
Suppose it isn't so, then $I^*$ is an interval and we will write it as $(u,v)$ and $F=f^n|{(u,v)}$ for some $n\in\NN$. This implies then that $\bigcap_{j\ge0}F^{-j}(I)=Fix(f^n|{(u,v)})$. 
But this is an absurd, as by equation (\ref{estdavi}) this set would be uncountable and so the set of periodic points of $f$ would also be uncountable.

$\square $(end of the proof of the claim)

\begin{figure}
\begin{center}\label{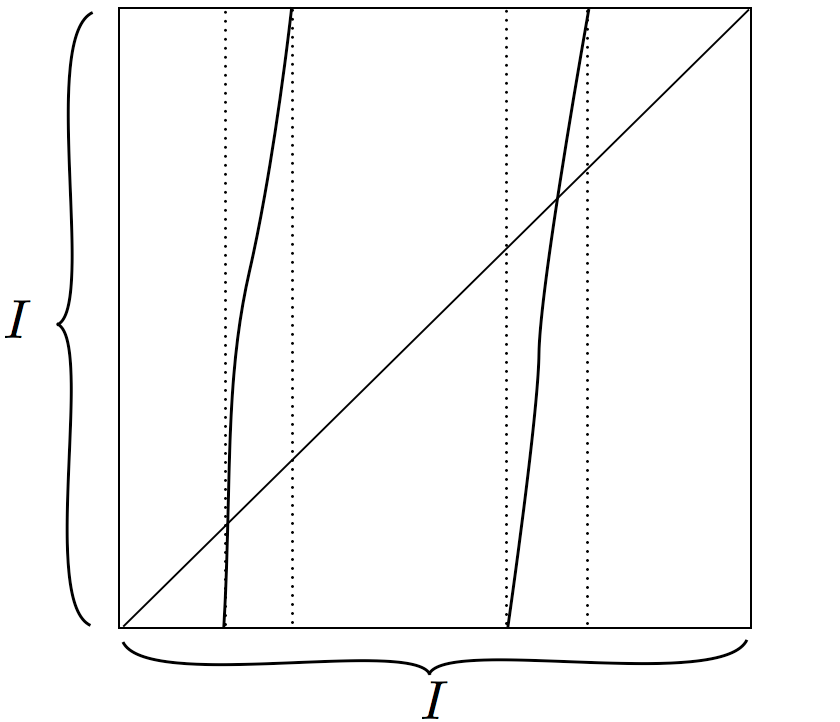}
\includegraphics[scale=.27]{PerDenso.png}\\
Figure~\ref{PerDenso.png}
\end{center}
\end{figure}

As $F$ has at least two branches covering the full image $I$, we have it has infinitely many periodic points and then $f$ also has infinitely many periodic points in $I$, absurd.

\cqd

\subsection{Proof of Theorems \ref{baciastopologicas}, \ref{teoalfalim} and \ref{atratortopologico}}\label{ProofABCD}

Now, we will prove the main theorems: Theorem \ref{baciastopologicas}, \ref{teoalfalim} and \ref{atratortopologico}. 

\dem[Proof of Theorem~\ref{baciastopologicas}]
We are supposing $f$ has no attracting periodic orbit. The first situation to consider is 
\begin{enumerate}

\item $\exists \varepsilon>0$ such that $B_\varepsilon(c)\cap Per(f)=\emptyset$. 
As there are no attracting periodic orbits, Lemma~\ref{noitenoite} can be rewritten to say that $\omega_{f}(x)\ni c \,\forall x \in B_\varepsilon(c)$. That is, according to Lemma~\ref{rotirra} $f$ there exists $\rho\notin\QQ$ such that $rot_{f}(x)=\rho$ $\forall\,x\,B_\varepsilon(c)$. From Lemma~\ref{SecAtCherry-like} in the Appendix there is a compact minimal set $\Lambda$ s.t. $\omega_{f}(x)=\Lambda$ $\forall\,x\in B_\varepsilon(c)$. As $\{x\,;\,\co_{f}^{+}(x)\cap B_\varepsilon(c)=\ne\emptyset\}$ is an open and dense set (with Lebesgue measure one), it is not difficult to conclude that $\Lambda$ is a Cherry attractor (as defined in Section~\ref{MainResults}).

Observe that $\Lambda = \omega(c)=\overline{\co_f^+(c)}$, and its basin contains $B_\varepsilon(c)$.

As $\bigcup_{j\in\NN}^{-j}B_\varepsilon(c)$ is open and also follows by Ma\~ne that it is dense, $\Lambda$ attracts a residual subset of the interval. 

It may occur that the semi-conjugacy is not a bijection, meaning the Cherry map has a gap, that is, there is a wandering interval. 

For the remaining cases we have then that $\forall \varepsilon>0 \,\exists  p; p\in B_\varepsilon(c)\cap Per(f)$.

\item Among these, the first situation to consider is the one of $\Lambda$ being a solenoidal attractor.

As we have defined, it is a set $\Lambda \subset \bigcap_{n=0}^\infty K_n$, where $K_n=\bigcup_{j=0}^{\period(p_{n})}f^{j}((p_n,c))\cup \bigcup_{j=0}^{\period(q_n)}f^{j}((c,q_n))$, $J_n=(p_{n},q_{n})$, $n\in\NN$, and $J_1 \supset J_2 \supset \cdots $ is the chain of renormalization intervals.

	It follows from the construction that $\Lambda \ni c$, indeed $\co^+_f(c)=\omega(c)=\Lambda$. Moreover, it follows from Proposition~\ref{Prop765091} that all points of the open dense set  $V_n=[0,1]\setminus\Lambda_{J_{n}}$ visit $J_n$, then there is a residual set $\bigcap_{n=0^\infty}V_{n}$ of points that eventually fall in any renormalization interval, that is, belong to the basin of $\Lambda$, as stated. By Lemma \ref{Lemma88609}.

\item Now we come to the situation that $f$ has no periodic attractor, neither Cherry attractor nor Solenoidal attractor. It follows from Theorem~\ref{cicloint} that $\exists \Lambda$ compact, $f(\Lambda)=\Lambda$, transitive set   such that $\omega_{f}(x)=\Lambda$ for a residual set of points of $\Lambda$, whose basin of attraction $\beta(\Lambda):=\{x;\omega_{f}(x)\subset \Lambda\}$, is an open and dense set of full measure. Also, $\exists \lambda >0$ such that $\lim_{n\to\infty} \frac{1}{n}\log |Df^n(x)|=\lambda$ for a dense set of points $x$ in $\Lambda$.

Theorem~\ref{cicloint} also gives two possibilities for this setting:
\begin{enumerate}

\item either $\Lambda$ is a finite union of intervals and $\omega_{f}(x)=\Lambda$ for a residual set of $x$ in $[0,1]$
\item or it is a Cantor set and there is a wandering interval.
\end{enumerate}

In both cases all we have to prove to complete the theorem is that these two are chaotic attractors, and for this, it only remains to prove that periodic orbits are dense in it ($\overline {Per(f)\cap \Lambda}=\Lambda$) and that its topological entropy $h_{top}(f|_\Lambda)$ is positive. The condition on the periodic points follows from Lemma~\ref{perdenso}. 
The fact that the topological entropy is positive can be obtained by taking arbitrarily small nice intervals whose borders are non-periodic (e.g., pre-periodic points), and observing that the returns to this interval provide at least two full branches, that will give in the dynamics shifts that have positive entropy.

\end{enumerate}

\cqd

\dem[Proof of Theorem~\ref{teoalfalim}]
The existence of a single topological attractor is given by Theorem~\ref{cicloint}. If $\Lambda$ is a Cherry attractor and does not have a wandering interval, then there is an interval $[a,b]$, such that (identifying $a$ and $b$) the first return map to $F:[a,b]\to[a,b]$ is conjugated to an irrational rotation. In particular $\alpha_{f}(x)\supset\alpha_{F}(x)=[a,b]=\omega_{F}(x)\subset\omega_{f}(x)$ $\forall\,x\in[a,b]$. Furthermore, $\Lambda=``\co_{f}^{+}([a,b])''=[a,b]\cup \bigcup_{j=0}^{\ell-1}f^{j}([f(a),f(c_{-})])\cup \bigcup_{j=0}^{r-1}f^{j}([f(c_{+}),f(b)])$, where $\ell$ and $r$ is given by $f^{\ell}((a,c))\subset(a,b)\supset f^{r}((c,b))$. So, $$\alpha_{f}(x)\supset\Lambda\subset\omega_{f}(x)\,\,\forall\,x\in\Lambda.$$ In particular, $$\alpha_{f}(x)\supset(a,b)\subset\omega_{f}(x)\,\,\forall\,x\in\Lambda.$$

Thus, as $[0,1]\setminus\Lambda_{c}$ is open and dense, where $\Lambda_{c}=\{x\in[0,1]\,;\,\omega_{f}(x)\ni c\}$ is the  $c$-phobic  set, we get $\alpha_{f}(x)=[0,1]$ $\forall\,x\in\Lambda$.

If $\Lambda$ is a Solenoid, then $\Lambda\subset\bigcap_{n=0}^\infty K_n$, where $$K_n=\bigg(\bigcup_{j=0}f^{\period(a_{n})}([a_{n},c))\bigg)\cap\bigg(\bigcup_{j=0}^{\period(b_{n})}f^{j}((c,b_{n}])\bigg)$$ and $\{J_n=(a_{n},b_{n})\}_{n}$ is an infinite nested chain of renormalization intervals. In this case, $\alpha_{f}(x)\supset \Lambda_{J_{n}}$ $\forall\,x\in K_{J_{n}}$ $\forall\,n$. If $f$ does not have wandering interval, it is easy to show that $\bigcup_{n\ge0}\Lambda_{J_{n}}$ is dense in $[0,1]$. Thus $\alpha_{f}(x)=[0,1]$ $\forall\,x\in\Lambda$, because $\bigcap_{n=0}^\infty K_n$.

Finally, if $\Lambda$ is not a Cherry or a Solenoid attractor, the proof follows from Corollary~\ref{sobrealfa} and items (\ref{opcoes}) and  (\ref{cantor}) of Theorem~\ref{baciastopologicas}. Indeed, as we are assuming that $f$ does not have wandering intervals, it follows from items (\ref{opcoes}) and (\ref{cantor}) of Theorem~\ref{baciastopologicas} that $\Lambda$ is a cycle of intervals. By Corollary~\ref{sobrealfa} and the fact that $\Lambda=\overline{\UU\cap\Omega(f)}$, we get $\alpha_{f}(x)\supset\Lambda$. As $\Lambda$ contains a open neighborhood of $c$, it follows that the set of $x\in[0,1]$ such that $\co_{f}^{+}(x)\cap\Lambda\ne\emptyset$ has Lebesgue measure one (and so, it is dense). Thus $\bigcup_{j\ge0}f^{-j}(\Lambda)$ is dense and so $\alpha_{f}(x)$ is dense $\forall\,x\in\UU$. As the $\alpha$-limit is closed, $\alpha_{f}(x)=[0,1]$ for all $x\in\UU$. If $\Lambda=\overline{\UU\cap\Omega(f)}=\UU\cap\Omega(f)\subset\UU$, the proof is done. On the other hand, if $\Lambda\not\subset\UU$ then $\Lambda\setminus\UU\subset(\co_{f}^{+}(c_{-})\cap\co_{f}^{+}(c_{+}))$. But, as it was defined in the beginning of Section~\ref{MainResults}, $c\in f^{-1}(f(c_{-}))$ and also $c\in f^{-1}(f(c_{+}))$. Thus $\alpha_{f}(\Lambda\setminus\UU)\supset\alpha_{f}(c)=[0,1]$ (because $c\in\UU$).
\cqd

\begin{Lemma}[Denseness of wandering interval, when they exist]\label{DenWanInt}
Let $f:[0,1]\setminus\{c\}\to[0,1]$ be a $C^{2}$ be a non-flat contracting Lorenz map without periodic attractors or weak repellers.  If $f$ has a wandering interval $I$ then then $\cw$ is an open and dense set, where $\cw$ is union of all open wandering intervals of $f$.
\end{Lemma}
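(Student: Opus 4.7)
The openness of $\cw$ is immediate from its definition as a union of open sets, so the content of the lemma is density. My plan is to show that every non-empty open interval $V \subset [0,1]$ contains a wandering subinterval. Two ingredients carry the argument: the Homterval Lemma (Lemma~\ref{homtervals}) applied to $V$, and Lemma~\ref{LemmaStP}, which forces the forward iterates of the wandering interval $I$ to accumulate on both sides of $c$.

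Since $f$ has neither weak repellers nor periodic attractors, the Homterval Lemma leaves only two possibilities for $V$: either $V$ is itself a wandering interval (whence $V \subset \cw$ and we are done), or there exists $n \ge 0$ such that $f^{n}|_{V}$ is a homeomorphism and $c \in f^{n}(V)$. I will focus on the latter case. Then $f^{n}(V)$ is an open interval with $c$ in its interior, hence contains some neighborhood $(c-\delta, c+\delta)$. A short inductive argument using Lemma~\ref{LemmaStP} shows that every iterate $f^{k}(I)$ is again a wandering interval whose closure misses $c$, and collectively they accumulate on $c$ from both sides; consequently $\cw \cap (c-\delta, c+\delta)$ is non-empty and open. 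I will choose any non-empty open subinterval $W$ of this intersection; since $W$ lies inside an honest wandering interval and there are no periodic attractors, $W$ is itself wandering.

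Setting $U := (f^{n}|_{V})^{-1}(W) \subset V$ gives an open subinterval with $f^{n}(U) = W$. It remains to verify that $U$ is a wandering interval, which will finish the density argument. The $\omega$-limit condition is immediate: $\omega(U) = \omega(W)$ is not a single periodic orbit, else it would be a periodic attractor, ruled out by hypothesis. The substantive step is pairwise disjointness of $\{f^{i}(U)\}_{i \ge 0}$. For $i, j \ge n$ this is automatic because $f^{i}(U) = f^{i-n}(W)$ and $W$ is wandering. For a mixed case $i < n \le j$, if $z \in f^{i}(U) \cap f^{j}(U)$, write $z = f^{i}(u_{1}) = f^{j}(u_{2})$ with $u_{1}, u_{2} \in U$ and apply $f^{n-i}$, well-defined since $f^{n}|_{V}$ is a homeomorphism, obtaining $f^{n}(u_{1}) = f^{n-i}(z) = f^{j+n-i}(u_{2}) \in W \cap f^{j-i}(W)$ with $j-i > 0$, a contradiction. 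The remaining case $0 \le i < j < n$ reduces to the previous one by applying $f^{n-j}$ to get an intersection $f^{n-j+i}(U) \cap f^{n}(U) \ne \emptyset$ with $n-j+i < n$.

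The hard part will be the pairwise-disjointness bookkeeping in the last paragraph: it requires keeping careful track of where the intermediate iterates of $f$ are well-defined so that the cross-cases really reduce to the wandering property of $W$. Once this is handled, $U \subset V$ is a wandering interval and $V \cap \cw \ne \emptyset$, establishing density.
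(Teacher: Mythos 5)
Your argument is correct and follows essentially the same route as the paper's proof: the homterval lemma (Lemma~\ref{homtervals}) sends an arbitrary open set homeomorphically onto a neighborhood of $c$, Lemma~\ref{LemmaStP} supplies wandering intervals accumulating on $c$ there, and pulling one back along the monotone branch yields a wandering subinterval of the given open set. The paper compresses your pullback/disjointness bookkeeping into the one-line assertion $f^{-1}(\cw)=\cw$ (so that $[0,1]\setminus\cw$ is forward invariant) and runs the argument by contradiction, but the content is the same.
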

\begin{proof}
If $\cw$ is not dense then $[0,1]\setminus\cw$ contains some open interval $I$. Of course $I$ is not a wandering interval. As $f$ does not have periodic attractors or weak repellers, we can apply Lemma~\ref{homtervals} and conclude that there is $n\in\NN$ that $f^{n}|_{I}$ is a homeomorphism and that $f^{n}(I)\ni c$. As $\cw$ is invariant ($f^{-1}(\cw)=\cw$), $[0,1]\setminus\cw$ is also invariant. Thus $c\in\interior([0,1]\setminus\cw)$, that is, there is no wandering interval in a neighborhood of $c$. An absurd, by Lemma~\ref{LemmaStP}.

\end{proof}

\dem[Proof of Theorem~\ref{atratortopologico}]
Items (1),(2) and (3)(a) repeat what is said in Theorem~\ref{baciastopologicas}. 
In the other cases, we have the existence of wandering intervals, so let's consider $V$ the union of all wandering intervals. This set is open and dense in $[0,1]$. It follows from St. Pierre's Theorem (first Theorem of page 10 in~\cite{StP}) that $\exists !$ attractor $\Lambda_{sP}$ such that $\omega_{f}(x)=\Lambda_{sP}$ for Lebesgue almost $x \in [0,1]$. Then, if $I$ is a wandering interval, we have that $\omega_{f}(x)=\Lambda_{sP}$ for Lebesgue almost $x \in I$. On the other hand, as $I$ is a wandering interval, $\omega_{f}(x)$ is the same for any $x\in I$. This way we conclude that $\omega_{f}(x)=\Lambda_{sP}, \forall x \in V$. Take $\Lambda=\Lambda_{sP}$.

In this case that we have wandering intervals two situations can occur: $\Lambda=\overline{\UU\cap\Omega(f)}$ where $\UU$ is defined in (\ref{regiaoarmadilha}) or $\Lambda \subsetneqq \overline{\UU\cap\Omega(f)}$.
We have seen in the proof of Theorem A that $\overline{\UU\cap\Omega(f)}$ is chaotic and then the attractor is chaotic in the case $\Lambda=\overline{\UU\cap\Omega(f)}$. On the other hand, if $\Lambda\subsetneqq\overline{\UU\cap\Omega(f)}$ then, as $\overline{\UU\cap\Omega(f)}$ is transitive and $\Lambda$ also is by St. Pierre's first Theorem of page 10 in~\cite{StP} again, as it says that for almost every $x$,  $\omega_{f}(x)=\Lambda$, it follows from the definition of wild attractor that $\Lambda$ is wild.

\cqd

We may emphasize the possibilities of occurrence of wild attractors and how this relates to the existence or not of wandering intervals. We saw that whenever we have the existence of wandering intervals, it necessarily happens that the topological and metrical attractors are the same $\Lambda$ and $\Lambda\subsetneqq\overline{\UU\cap\Omega(f)}$, so that $\Lambda$ is wild. 

On the other hand, if we don't have wandering intervals, then $\Lambda = \overline{\UU\cap\Omega(f)}$ one may verify that there can't be a bigger transitive set containing both of them.

\newpage

\section{Particular features of each kind of Attractor}

We give in Table~\ref{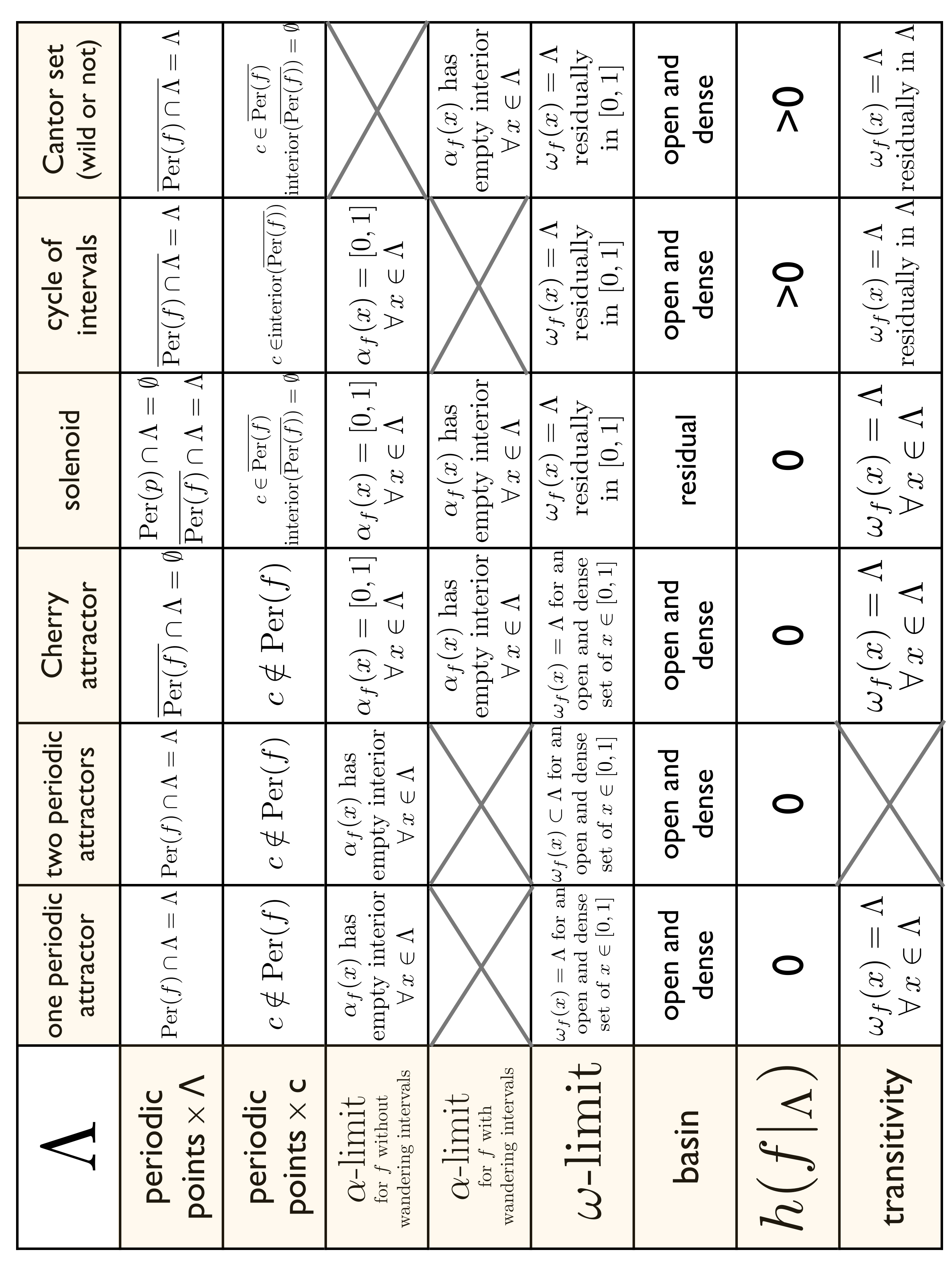} a panoramic view of the topological attractors for contracting Lorenz maps. There, the main features of each kind of possible attractor are described, according to their structure of periodic points, topological entropy (see Lemma~\ref{LemmaZeroEntropy} in the Appendix), Lyapunov exponents, transitivity, $\alpha$ and $\omega$-limit sets of the points of the attractor, etc.

To include the periodic attractor and simplify technicalities like inessential periodic attractors, we assumed that the map is $C^{3}$ with negative Schwarzian derivative.
For more informations about essential and inessential periodic attractors , see Section~4 in Chapter II of \cite{MvS}

In Table~\ref{TABELA.pdf}, $\Lambda$ are the possible ``topological global attractors'' for a $C^{3}$ contracting Lorenz map $f:[0,1]\setminus\{c\}\to[0,1]$ with negative Schwarzian derivative. This means that $\Lambda$ attracts a residual set of points of $[0,1]$. If $f$ does not have periodic attractors, the existence and classification of $\Lambda$ is given by Theorem~\ref{atratortopologico}. Otherwise, we know by Theorem~\ref{stpierreperiodic} that either $\Lambda$ is a single periodic attractor or $\Lambda=\Lambda_{1}\cup\Lambda_{2}$ is a union of two periodic attractors $\Lambda_{1}$ and $\Lambda_{2}$. Recall that a periodic attractor can be of two types: an attracting periodic orbit of or a super-attractor (see Definition~\ref{PeriodicAttractor} and the comments just below it).

\begin{figure}
\begin{center}\label{TABELA.pdf}
\includegraphics[scale=.59]{TABELA.pdf}\\
Figure~\ref{TABELA.pdf}
\end{center}
\end{figure}

\newpage

\section{Spectral Decomposition of the Non-Wandering Set}

Recall that if $J=(a,b)$ is a renormalization interval, the  renormalization cycle associated to it is the of a nice interval $J=(j_0,j_1)$ was defined (Definition~\ref{rencyc}) as being the union of iterates of $J$, that is, $$U_J=\bigg(\bigcup_{i=0}^{\period(a)}f^{i}((a,c))\bigg)\cup\bigg(\bigcup_{i=0}^{\period(b)}f^{i}((c,b))\bigg).$$

The nice trapping region of $J$ (Definition~\ref{nicetrap}) is the union of gaps of $\Lambda_J$ such that each gap contains one interval of the renormalization cycle, where $\Lambda_{J}=\{x\,\;\,\co_{J}^{+}(x)\cap J=\emptyset\}$ is the $J$-phobic set: the set whose points avoid $J$ (see Definition~\ref{Def765091} and Proposition~\ref{Prop765091}). 

Observe that a nice trapping region $K_J$ is a positively invariant set as iterating any point of it, it travels through gaps of $\Lambda_J$ that eventually fall into the nice interval $J$ that generated $K_J$, as we constructed in Section \ref{phobic}. Indeed, $f^{j}((a,c))\cap\Lambda_{J}=\emptyset=f^{i}((c,b))\cap\Lambda_{J}$ for $0\le j<\period(a)$ and $0\le i<\period(b)$, as $J$ is a renormalization interval. Thus, if we denote, for a given $x\notin\Lambda_{J}$, the gap of $\Lambda_{J}$ containing $x$ by $G[x]$ then $$f^{\period(a)-1}\big(G[f(a,c)]\big)=G[(a,c)]=J=G[(c,b)]=f^{\period(b)-1}\big(G[f((c,b))]\big)$$ and $$K_J=\bigg(\bigcup_{i=0}^{\period(a)}G[f^{i}((a,c))]\bigg)\cup\bigg(\bigcup_{i=0}^{\period(b)}G[f^{i}((c,b))]\bigg)=$$
$$=\bigg(\bigcup_{i=0}^{\period(a)-1}f^{j}\big((G[f((a,c))]\big)\bigg)\cup\bigg(\bigcup_{i=0}^{\period(b)-1}f^{i}\big(G[f((c,b))]\big)\bigg)$$

Now we will state the main result of this section: the Spectral Decomposition for Lorenz Maps. Below we present a more detailed version of Theorem~\ref{spdec}.

\begin{theorem}[Spectral Decomposition for Lorenz Maps]\label{spdecnotacao}
Let $f:[0,1]\setminus\{c\}\to[0,1]$ be a non-flat $C^{3}$ contracting Lorenz map with negative Schwarzian derivative. Then, there is $n_f \in \NN \cup \{\infty\}$, the number of strata of the decomposition of $f$, and a sequence $\{K_{n}\}_{n}$ of trapping regions such that:
\begin{enumerate}
\item $K_{0}=(0,1)\supsetneqq K_{1}\supsetneqq K_{2}\supsetneqq\cdots$;
\item $K_{n}$ is a nice trapping region for some nice interval $I_{n}$ $\forall\,0<n<n_{f}$;
\item $\Omega(f)$ can be decomposed into closed forward invariant subsets $\Omega_j$ such that
$$
\Omega(f)=\bigcup_{0\le j \le n_f}\Omega_j
$$
where, for $j<n_f$,
$$
\Omega_j:= \Omega(f)\cap (K_j\setminus K_{j+1})
$$

and $$\Omega_{n_f}:= \begin{cases} 

\overline{ \Omega(f)\cap (K_{n_f})} & \text{ if } n_f < \infty\\
\bigcap_{j\ge 0}K_j & \text{ if } n_f = \infty

\end{cases}$$
\item
\begin{itemize}
\item[(i)]
$\Omega_0  = \begin{cases}

[0,1] & \text{ if $f$ is transitive (in this case, $n_f=0$, $f(c_{+})=0$ and}\\
 & \text{\hspace{0.5cm}$f(c_{-})=1$)}\\
\{0\} & \text{ if $f(c_{+})>0$ and $f(c_{-})=1$ (and in this case, $n_f=1$)}\\
\{1\} & \text{ if $f(c_{+})=0$ and $f(c_{-})<1$ (and in this case, $n_f=1$)}\\
\{0,1\} & \text{ otherwise, i.e., if $f(c_{+})>0$ and $f(c_{-})<1$}
\end{cases}$
\item[(ii)] $\Omega_j$ is a transitive set for all $0 <j < n_f$.

\item[(iii)] $\Omega_{n_f}$ is either a transitive set or the union of a pair of attracting periodic orbits.

\end{itemize}
\item
For each $0<j<n_{f}$ there is a decomposition of $\Omega_j$ into closed sets $$\Omega_j= X_{0,j}\cup X_{1,j}\cup\cdots\cup X_{\ell_j,j}$$ such that $\# ( X_{a,j}\cap X_{b,j} )\le1$ when $a\ne b$, the first return map to $X_{0,j}$ is topologically exact (in particular, topologically mixing) and,
for each  $i\in\{1,\cdots,\ell_{j}\}$, there is  $1\le s_{i,j}\le \ell_{j}$ such that $f^{s_{i,j}}(X_{i,j})\subset X_{0,j}$.

\end{enumerate}

\end{theorem}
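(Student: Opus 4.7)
The approach is to construct a decreasing tower of nested nice trapping regions $K_0\supsetneq K_1\supsetneq\cdots$, where each $K_n$ (for $0<n<n_f$) is the nice trapping region associated with a renormalization interval $I_n\ni c$ of $f$ at depth $n$, and to extract from each shell $K_n\setminus K_{n+1}$ the non-wandering points as a basic piece $\Omega_n$. The base case $\Omega_0$ is handled directly from the boundary behavior: if $f(c_+)=0$ and $f(c_-)=1$ then $f$ itself is topologically transitive on $[0,1]$ and $n_f=0$, $\Omega_0=[0,1]$; otherwise $\Omega_0$ is the prescribed subset of the fixed endpoints $\{0,1\}$ and $n_f\geq1$.

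The inductive step runs as follows. Suppose $K_n$ and the renormalization interval $I_n$ have been built. The first return map to $I_n$ is conjugate to a $C^3$ non-flat contracting Lorenz map $\tilde g_n$ with negative Schwarzian derivative (the Schwarzian property is preserved under composition, hence under first return). Apply Main Theorem~\ref{atratortopologico} to $\tilde g_n$: if it has a periodic attractor, stop with $n_f=n$ and $\Omega_{n_f}$ equal to the pair of periodic orbits (pulled back through the conjugacy); if it is Cherry-like, stop with $\Omega_{n_f}$ the Cherry attractor; if it has a chaotic Cantor or wild Cantor attractor, stop with $\Omega_{n_f}=\overline{K_n\cap\Omega(f)}$, which is transitive by Theorem~\ref{cicloint}; if it has a cycle-of-intervals attractor, this corresponds to a further renormalization, so take $I_{n+1}$ to be the new renormalization interval (lifted back) and $K_{n+1}$ the associated nice trapping region, and iterate. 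If the process never terminates, $f$ is $\infty$-renormalizable and Theorem~\ref{Lemma88609} identifies $\Omega_\infty=\bigcap_n K_n$ as the minimal Solenoid attractor, with $n_f=\infty$.

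The analysis of each intermediate $\Omega_j$ (for $0<j<n_f$) proceeds thus. A non-wandering point in $K_j\setminus K_{j+1}$ necessarily enters $I_j$ but avoids the smaller interval $I_{j+1}\ni c$ under iteration by the return map $g_j$. By Lemma~\ref{Remark98671oxe}, $\Omega_j$ therefore lies inside a uniformly expanding compact $g_j$-invariant set. Transitivity of $\Omega_j$ follows from Proposition~\ref{transitivo} applied to $\tilde g_j$, and Lemma~\ref{perdenso} ensures that periodic points are dense in $\Omega_j$. For the cyclic decomposition $\Omega_j=X_{0,j}\cup\cdots\cup X_{\ell_j,j}$ with $f^{s_{i,j}}(X_{i,j})\subset X_{0,j}$ and first return to $X_{0,j}$ topologically exact, one applies the standard Smale-type spectral decomposition for topologically transitive invariant sets of piecewise expanding maps: take $X_{0,j}$ to be a maximal mixing component of the first return to a small nice interval inside $I_j\setminus I_{j+1}$, and let $X_{i,j}$ be its $f$-images under the finite cyclic permutation that $f$ induces on the components.

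The principal obstacle I expect is the inductive verification that Main Theorem~\ref{atratortopologico}, Proposition~\ref{transitivo}, and Lemma~\ref{perdenso} transfer verbatim to each conjugated return map $\tilde g_j$. One must check: (i) the Schwarzian of $\tilde g_j$ remains negative, (ii) $\tilde g_j$ is $C^3$ and non-flat at the induced critical point, (iii) $\tilde g_j$ admits no weak repellers (inherited from the Minimum Principle applied to $f$), and (iv) $\tilde g_j$ has no periodic attractor at level $j<n_f$ (else the induction would have stopped earlier). A secondary subtlety is the case where $\Omega_{n_f-1}\cap\Omega_{n_f}$ consists of one or two periodic orbits, which occurs exactly when one of $c_\pm$ is pre-periodic: the shared periodic orbit lies simultaneously on the topological boundary of the deepest nice interval (contributing to the expanding outer shell $\Omega_{n_f-1}$) and in the attracting core $\Omega_{n_f}$, and this overlap must be reconciled with the invariance and transitivity claims; I expect this to fall out cleanly from Corollary~\ref{Corollaryimperfeito} and the structure theorem Theorem~\ref{imperfeito}, since in the pre-periodic case both endpoints of $I_{n_f}$ share the same orbit.
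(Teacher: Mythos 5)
There is a genuine gap at the heart of your induction: you derive transitivity of the intermediate strata $\Omega_j$ ($0<j<n_f$) by ``applying Proposition~\ref{transitivo} to $\tilde g_j$'' and density of periodic points from Lemma~\ref{perdenso}, but those results (like Theorem~\ref{cicloint} and Theorem~\ref{atratortopologico}) describe the non-wandering set \emph{inside} the innermost trapping region $\UU$ around $c$, i.e.\ the attractor of the return map. The stratum $\Omega_j=\Omega(f)\cap(K_j\setminus K_{j+1})$ consists precisely of the non-wandering points that the return map's trapping region \emph{excludes}: an expanding set in the shell between two consecutive renormalization intervals. Nothing in the attractor theory transfers to it by conjugacy, and this is exactly why the paper develops a separate shell analysis: Lemma~\ref{LemmaPrimeiroEspectro} (for $x\in I_n\setminus K_{n+1}$ the critical gap $J_x$ is $I_n$ or $I_{n+1}$ and $\alpha_f(x)\supset\partial I_n$), Lemma~\ref{2162} and Lemma~\ref{anelarmadilha} ($\alpha_f(x)\supset\Omega_n(f)$ on a new shell trapping region $\UU_n$), Corollary~\ref{poiupoiu2} ($\alpha_f(x)\cap\UU_n=\Omega_n(f)$), and Proposition~\ref{transitivo2} (strong transitivity), all resting on Corollary~\ref{Corollary9988} and the adapted variational principle to exclude homtervals/periodic attractors in the shell. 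Your proposal contains no substitute for this machinery, so the central claim (4)(ii) is unsupported; likewise item (5) is not the ``standard Smale-type decomposition'': the paper must build $X_{0,j}$ from the minimal-period orbit gap $L_n=(p_n,p_n')$ via Theorem~\ref{imperfeito} and Corollary~\ref{Corollaryimperfeito} and prove topological exactness of the first return map by showing branches over pre-images of $p_n$ eventually cover all of $(p_n,p_n')$ (Claims~\ref{cheganobordo} and \ref{cobretudo}).

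A secondary but real defect is the stopping rule of your tower. You iterate ``cycle-of-intervals attractor of $\tilde g_n$ $\Rightarrow$ further renormalization,'' but these are independent: a non-renormalizable return map can have a chaotic interval attractor (the tower must stop there with $\Omega_{n_f}$ transitive), and a renormalizable one can also exhibit an interval attractor deeper inside, so the attractor type of $\tilde g_n$ does not determine whether $I_{n+1}$ exists. The paper instead enumerates the renormalization intervals directly (they are nested by Lemma~\ref{renormalinksSCH}) and drives the termination by the regular/non-regular/degenerate trichotomy (Lemmas~\ref{LemmaS6546SS}, \ref{LemmaUmOuOutro}, \ref{LemmaSSSS}), which is also what yields the ``pair of attracting periodic orbits'' alternative for $\Omega_{n_f}$ and identifies which strata, if any, lie in the basin of a periodic attractor. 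Without this (or an equivalent) classification, your induction neither defines $n_f$ correctly nor justifies the description of the last stratum.
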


We define a {\em stratum on level $0 \le j \le n_f$} of the {\em filtration} $\big(K_i\big)_{i\le {n_f}}$ as being $W_0:=\{0,1\}$ and, for $j>0$, the set $W_j=K_{j-1}\setminus  {K_{j}}$.

We have previously studied the structure of the attractors that can occur for contracting Lorenz maps, what gives us a fundamental information on the structure of, at least, the last stratum as defined above, $\Omega_{n_f}$. But it gives no information on the non-wandering sets of the other levels of the stratification (although the former study gives us the structure of the non-wandering set in the last renormalization level in the cases that we don't have a periodic attractor, but it doesn't say anything about the non-wandering set if there is a periodic attractor, and we will have to analyze this situation separately, later).

This construction is analogous to the one developed by Smale to hyperbolic maps, with two important differences. The first one is the number of strata, that in that case was in a finite number and in this context can be infinity, although countable. The second one is the fact that the non-wandering sets of the last stratum may not be uniformly hyperbolic (although in many cases it can have a non-uniformly expanding behavior). Nevertheless, it is important to emphasize that in all the other strata the non-wandering set is transitive and expanding, as in the Axiom A setting.

\subsection{Adapted results to the negative Schwarzian derivative context}

\begin{Definition}Let $f:[0,1]\setminus\{c\}\to[0,1]$ be a contracting Lorenz map. We say that a renormalization interval $I=(a,b)$ is regular if $f^{\ell}(x)>x$ $\forall x \in{(a,c)}$,  $f^{r}(x)<x$ $\forall x \in{(c,b)}$ and $\lim_{x \uparrow c}f^{\ell}(x)>c > \lim_{x\downarrow c}f^{r}(x)$, where $\ell=\period(a)$ and $r=\period(b)$ (see Figure~\ref{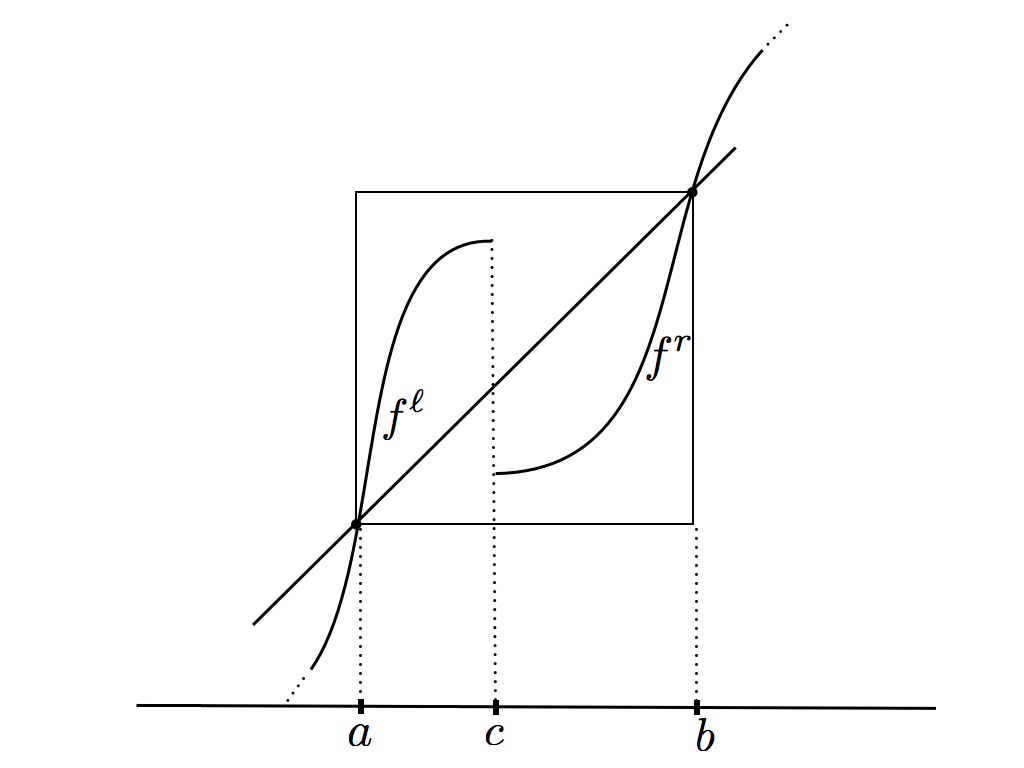}).
\end{Definition}

\begin{figure}[H]
\begin{center}\label{RenorRegXIrreg1.png}
\includegraphics[scale=.25]{RenorRegXIrreg1.png}\\
Figure~\ref{RenorRegXIrreg1.png}: Regular renormalization interval
\end{center}
\end{figure}

\begin{Lemma}\label{Lemma657h6}
Let $f:[0,1]\setminus\{c\}\to[0,1]$ be a contracting Lorenz map and $I=(a,b)$ a regular renormalization interval.  Given any $\varepsilon>0$ there are $n,m\ge1$ such that $f^{n\period(a)}|_{(a,a+\varepsilon)}$ and $f^{m\period(b)}|_{(b-\varepsilon,b)}$ are homeomorphisms and $$f^{n\period(a)}((a,a+\varepsilon))\supset(a,c)\text{ and }f^{m\period(b)}((b-\varepsilon,b))\supset(c,b).$$
\end{Lemma}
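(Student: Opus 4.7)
The plan is to iterate the first--return branch $g_a:=f^{\period(a)}|_{(a,c)}$ on $(a,a+\varepsilon)$ until its forward image stretches across $(a,c)$, and then repeat the argument symmetrically with $g_b:=f^{\period(b)}|_{(c,b)}$ on $(b-\varepsilon,b)$.

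First I would verify that $g_a$ is an orientation--preserving homeomorphism onto its image. Since $(a,b)$ is a renormalization interval, Lemma~\ref{Lemma09090863} gives $(\,\overline{I}\,)^{*}=[a,c)\cup(c,b]$, so the first return map to $\overline{I}$ is constant equal to $\ell=\period(a)$ on $[a,c)$; in particular $f^{j}((a,c))\cap(a,b)=\emptyset$ for every $0<j<\ell$, and hence $c\notin f^{j}((a,c))$ for all $0\le j<\ell$. Consequently $f^{\ell}|_{(a,c)}$ is a composition of orientation--preserving restrictions of $f$ avoiding the critical point, i.e.\ an increasing homeomorphism of $(a,c)$ onto $(a,g_a(c^{-}))$; by the regularity hypothesis, $g_a(c^{-})>c$. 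An identical argument yields that $g_b$ is an increasing homeomorphism of $(c,b)$ onto $(g_b(c^{+}),b)$ with $g_b(c^{+})<c$.

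Next, fix $\varepsilon>0$, put $x_0=a+\varepsilon$, and define inductively $x_{k+1}=g_a(x_k)$ as long as $x_k\in(a,c)$. Regularity gives $x_{k+1}>x_k$, so $(x_k)$ is strictly increasing. If it remained in $(a,c)$ for every $k$, it would converge to some $L\in(a,c]$; passing to the limit in $x_{k+1}=g_a(x_k)$ using continuity of $g_a$ on $(a,c)$ (and its left limit at $c$) would force $g_a(L^{-})=L$, contradicting either $g_a(x)>x$ on $(a,c)$ (if $L<c$) or $g_a(c^{-})>c$ (if $L=c$). Hence there is a smallest $n\ge1$ with $x_n\ge c$.

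For this $n$, the image $(a,x_k)=f^{k\ell}((a,a+\varepsilon))$ is contained in $(a,c)$ for every $0\le k<n$, so at each stage the next application of $f^{\ell}$ is the homeomorphism $g_a$ restricted to $(a,x_k)$. Composing, $f^{n\ell}|_{(a,a+\varepsilon)}$ is itself a homeomorphism, and its image is $(a,x_n)\supset(a,c)$, as required. The same argument applied to $g_b$ yields the $m\ge1$ for the right-hand side.

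The only delicate point is the dynamical one in the third paragraph: ensuring the orbit of $a+\varepsilon$ under $g_a$ actually escapes $(a,c)$ in finite time. This is precisely what the two regularity inequalities $g_a(x)>x$ on $(a,c)$ and $g_a(c^{-})>c$ are designed to provide — without them one could have a neutral or attracting fixed point of $g_a$ in $[a,c)$ trapping the orbit. Once the escape is established, the homeomorphism property is automatic from the renormalization property, which keeps all intermediate iterates outside $(a,b)\ni c$.
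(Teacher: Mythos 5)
Your proposal is correct and follows essentially the same route as the paper: both iterate the first–return branch $f^{\period(a)}|_{[a,c)}$ (a homeomorphism because intermediate iterates avoid $\overline{I}\ni c$) and use the regularity inequalities to guarantee that the iterates sweep across $(a,c)$. The only cosmetic difference is direction — the paper picks $n$ via backward iterates of $c$ entering $(a,a+\varepsilon)$, while you push $a+\varepsilon$ forward until it escapes $(a,c)$ — which is the same monotone-dynamics argument.
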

\dem
As $(a,b)$ is a renormalization interval, $f^{\period(a)}|_{[a,c)}$ is a homeomorphism. As $I$ is regular , $f^{n\period(a)}([a,c))\supset[a,c)$. Thus, choosing $n=\min\{j\ge1$ $;$ $(f^{\period(a)}|_{[a,c)})^{-j}(c)\in(a,a+\varepsilon)\}$ we get that $f^{n\period(a)}|_{(a,a+\varepsilon)}$ is a homeomorphism and $f^{n\period(a)}((a,a+\varepsilon))\supset(a,c)$. The proof for the other side ($b$ side) is the same.
\cqd

\begin{Remark}\label{Remark54556}
Let $f:[0,1]\setminus\{c\}\to[0,1]$ be a $C^{3}$ contracting Lorenz map with negative Schwarzian derivative.  A renormalization interval $J=(a,b)$ is regular if and only if $$f^{\period(a)}((a,c))\ni c\in f^{\period(b)}((c,b)).$$
\end{Remark}

The remark below follows straightforwardly from the definition of a regular renormalization interval and the Minimum Principle (Proposition~\ref{MinP}).

\begin{Remark}\label{Remark5466}
Let $f:[0,1]\setminus\{c\}\to[0,1]$ be a $C^{3}$ contracting Lorenz map with negative Schwarzian derivative. If  
a renormalization interval $J$ is not regular then $f$ has a periodic attractor $\Lambda$ such that $(p,c)$ or $(c,p)$ is contained in the basin of $\Lambda$ for some $p\in\partial J$ (see Figure~\ref{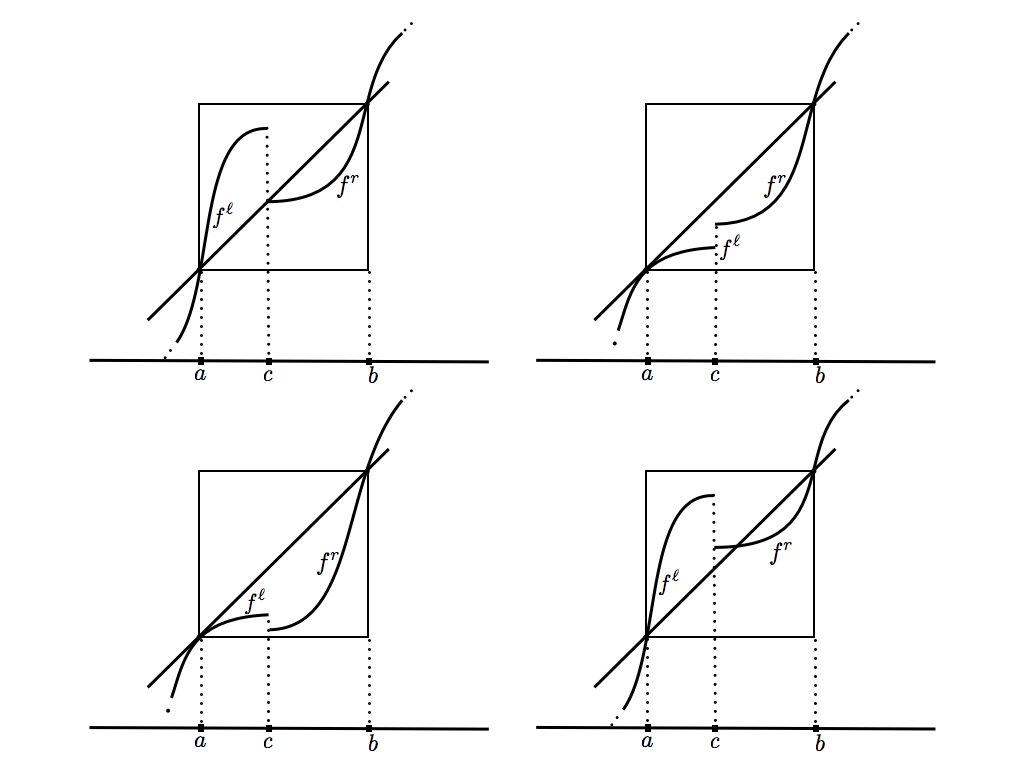}).
\end{Remark}

\begin{figure}
\begin{center}\label{RenorRegXIrreg2.png}
\includegraphics[scale=.43]{RenorRegXIrreg2.png}\\
Figure~\ref{RenorRegXIrreg2.png}(a): Examples of non-regular renormalization intervals
\end{center}
\end{figure}

\begin{figure}
\begin{center}\label{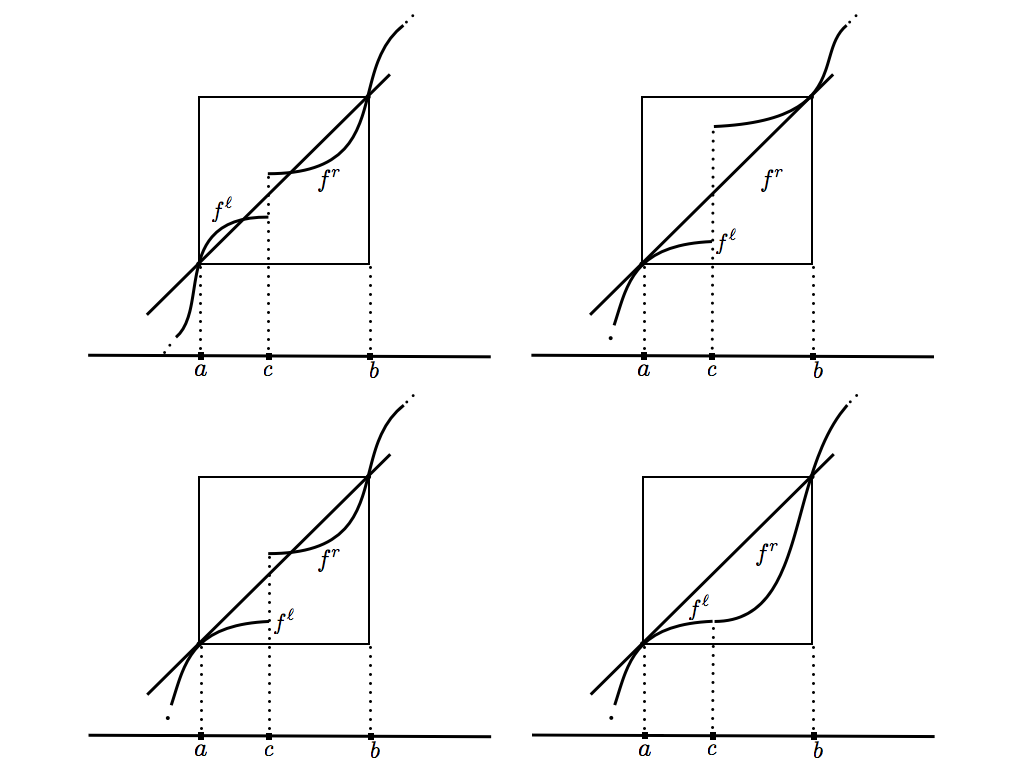}
\includegraphics[scale=.43]{RenorRegXIrreg3.png}\\
Figure~\ref{RenorRegXIrreg3.png}(b): Examples of non-regular renormalization intervals
\end{center}
\end{figure}


\begin{Lemma}\label{LemmaJ1J2}
Let $f:[0,1]\setminus\{c\}\to[0,1]$ be a $C^{3}$ contracting Lorenz map with negative Schwarzian derivative. Let $J_{0}$ and $J_{1}$ be renormalization intervals of $f$. If $J_{0}$ is regular but $J_{1}$ is not then $J_{0}\not\subset J_{1}$.
\end{Lemma}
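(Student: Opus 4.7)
The plan is to argue by contradiction. Suppose $J_{0}\subset J_{1}$ and write $J_{i}=(a_{i},b_{i})$, so $a_{1}\le a_{0}<c<b_{0}\le b_{1}$ with strict inclusion on at least one side (the two intervals cannot be equal because of the different regularity status). Since $J_{1}$ is not regular, Remark~\ref{Remark5466} gives a periodic attractor $\Lambda$ and a point $p\in\partial J_{1}$ with $(p,c)\subset\beta(\Lambda)$ or $(c,p)\subset\beta(\Lambda)$. I take $p=a_{1}$, so $a_{1}\in\Lambda$ and $(a_{1},c)\subset\beta(\Lambda)$; the case $p=b_{1}$ is handled by the mirror-image argument.

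If $a_{1}<a_{0}$ then $a_{0}\in(a_{1},c)\subset\beta(\Lambda)$, so $\omega_{f}(a_{0})\subset\Lambda$. But $a_{0}\in\partial J_{0}$ is periodic (Lemma~\ref{Lemma09090863}), so $\omega_{f}(a_{0})=\co_{f}^{+}(a_{0})$ is a finite periodic sub-orbit of $\Lambda=\co_{f}^{+}(a_{1})$, forcing $\co_{f}^{+}(a_{0})=\co_{f}^{+}(a_{1})$. Since $a_{0}\in J_{1}$, this contradicts the niceness of $J_{1}$. So $a_{1}=a_{0}$, and then $J_{0}\ne J_{1}$ forces $b_{0}<b_{1}$, placing $b_{0}$ inside $J_{1}$. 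If moreover $b_{0}\in\Lambda=\co_{f}^{+}(a_{1})$, then $b_{0}\in\co_{f}^{+}(a_{1})\cap J_{1}$, again contradicting niceness of $J_{1}$.

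The only remaining possibility is $a_{1}=a_{0}$ together with $b_{0}\notin\Lambda$. Then the niceness of $J_{0}$ applied to its endpoint $a_{0}$ gives $\Lambda=\co_{f}^{+}(a_{0})\subset[0,1]\setminus J_{0}$, so
$$\Lambda\cap\overline{J_{0}}\subset\{a_{0},b_{0}\}\cap\Lambda=\{a_{0}\}.$$
Pick any $y\in(a_{0},c)\cap\beta(\Lambda)\setminus\co_{f}^{-}(c)$ and set $y_{n}:=\cf_{J_{0}}^{n}(y)$. Since $\{y_{n}\}\subset J_{0}$ and every accumulation point of $\{y_{n}\}$ lies in $\omega_{f}(y)\cap\overline{J_{0}}\subset\Lambda\cap\overline{J_{0}}=\{a_{0}\}$, we obtain $y_{n}\to a_{0}$. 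Fix $\varepsilon\in(0,c-a_{0})$ and $N$ with $y_{n}\in(a_{0},a_{0}+\varepsilon)\subset(a_{0},c)$ for all $n\ge N$. On $(a_{0},c)$ the first return coincides with $h:=f^{\period(a_{0})}$, and regularity of $J_{0}$ yields $h(x)>x$ there. Hence $\{y_{n}\}_{n\ge N}$ is strictly increasing and bounded above by $a_{0}+\varepsilon$, so it converges to some $L\in(y_{N},a_{0}+\varepsilon]\subset(a_{0},c)$. Continuity of $h$ at $L$ then gives $h(L)=L$, contradicting $h(x)>x$ on $(a_{0},c)$.

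The main obstacle is orchestrating the interplay of two different niceness conditions with the regularity hypothesis: niceness of $J_{1}$ is what kills the subcases $a_{1}<a_{0}$ and $b_{0}\in\Lambda$, niceness of $J_{0}$ is what confines $\Lambda$ away from the interior of $J_{0}$ so that the first-return orbit is forced to converge to the single boundary point $a_{0}$, and regularity of $J_{0}$ is what then traps the tail of that orbit as a strictly increasing bounded sequence whose would-be limit is a forbidden fixed point of the first-return map.
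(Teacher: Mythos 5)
Your closing argument is correct and is in fact the valuable part of the write-up: a first-return orbit of $(a_{0},c)$ converging to $a_{0}$ would be, by regularity, a strictly increasing sequence trapped in $(a_{0},a_{0}+\varepsilon)$ whose limit would be a fixed point of $f^{\period(a_{0})}$ in $(a_{0},c)$ --- this is exactly the dynamical content that the paper's own two-line proof (``we get an absurd as $a,b$ are periodic points'') leaves implicit. The gap is in how you reach that situation. From Remark~\ref{Remark5466} you infer ``$a_{1}\in\Lambda$'', i.e.\ $\Lambda=\co_{f}^{+}(a_{1})$. The Remark only asserts that $(p,c)$ or $(c,p)$ lies in $\beta(\Lambda)$ for some $p\in\partial J_{1}$; it does not put $p$ on $\Lambda$, and that stronger claim is false in general: the attractor produced by a failure of regularity can be a super-attractor (when $f^{\period(a_{1})}(c_{-})=c$) or an attracting periodic orbit lying strictly inside $J_{1}$ (an attracting fixed point of the return branch), as the paper itself allows, e.g.\ in the proof of Lemma~\ref{LemmaSSSS}. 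All three branches of your case analysis use the identification $\Lambda=\co_{f}^{+}(a_{1})$: the case $a_{1}<a_{0}$ (to turn $\co_{f}^{+}(a_{0})\subset\Lambda$ into a violation of niceness of $J_{1}$), the case $b_{0}\in\Lambda$ (to place $b_{0}$ on $\co_{f}^{+}(a_{1})\cap J_{1}$), and the final case (to get $\Lambda\cap\overline{J_{0}}\subset\{a_{0}\}$, which is what forces $y_{n}\to a_{0}$).

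The first case can be repaired without that premise: $a_{0}$ is a periodic point in $\beta(\Lambda)$, hence $\Lambda=\co_{f}^{+}(a_{0})$, which is disjoint from $J_{0}$ by niceness of $J_{0}$, and your convergence argument then runs (with a small extension if $b_{0}$ happens to lie on the same orbit). The real damage is in the shared-endpoint case $a_{1}=a_{0}$: there $\Lambda$ may genuinely intersect $J_{0}$ --- an attracting cycle of the first return map to $J_{0}$ attracting all of $(a_{0},c)$ is perfectly compatible with $J_{0}$ being regular --- and then $\Lambda\cap\overline{J_{0}}\not\subset\{a_{0}\}$, the returns $y_{n}$ need not tend to $a_{0}$, and no contradiction can be extracted from the left-hand data at all. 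What saves the lemma in that sub-case is a different observation: since $a_{1}=a_{0}$, the left return branch of $J_{1}$ is the same map $f^{\period(a_{0})}$ on the same interval $(a_{0},c)$ as for $J_{0}$, so regularity of $J_{0}$ already gives the left-hand regularity conditions for $J_{1}$; the failure of regularity of $J_{1}$ is therefore attached to $b_{1}$, and the argument behind Remark~\ref{Remark5466} then yields an attractor with $(c,b_{1})\subset\beta(\Lambda)$; the periodic point $b_{0}\in(c,b_{1})$ forces $\Lambda=\co_{f}^{+}(b_{0})$, disjoint from $J_{0}$, and the mirror image of your convergence argument on $(c,b_{0})$ concludes. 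Two minor points: when choosing $y$ you should also avoid the countable set $\co_{f}^{-}(\partial J_{0})$, not only $\co_{f}^{-}(c)$, since a return may land exactly on $a_{0}$ or $b_{0}$, after which $\cf_{J_{0}}^{n}(y)$ is undefined; and note that the paper's proof is organized differently (it first argues $p\notin\partial J_{0}$ and then declares the presence of a periodic endpoint of $J_{0}$ strictly inside the one-sided basin absurd), so your final paragraph is a useful fleshing-out of that step, but it only becomes available after the case analysis is corrected as above.
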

\dem
Suppose that $J_{1}$ is not regular with $J_{0}\subset J_{1}$ and let $\Lambda$ be the periodic attractor such that $(p,c)$ or $(c,p)$ is contained in the basin of $\Lambda$ for some $p\in\partial J_1$. Write $J_{0}=(a,b)$. As $J_{0}$ is regular, $p\notin\partial J_{0}$. Thus $p<a<c$ or $c<b<p$. In any case we get an absurd as $a,b$ are periodic points.
\cqd

\begin{Lemma}[Regular renormalization intervals are not linked]\label{renormalinksSCH} Let $f:[0,1]\setminus\{c\}\to[0,1]$ be a $C^{3}$ contracting Lorenz map with negative Schwarzian derivative. Let $J_{0}$ and $J_{1}$ be renormalization intervals of $f$. If $J_{0}$ is regular then it can never be linked to other renormalization interval (regular or not). Moreover, if $J_{0}$ and $J_{1}$ renormalization intervals, with $J_{0}$ regular and $J_{0}\ne J_{1}$ then  either $\overline{J_{0}}\subset J_{1}$ or $\overline{J_{1}}\subset J_{0}$. In particular, $\partial J_{0}\cap\partial J_{1}=\emptyset$.
\end{Lemma}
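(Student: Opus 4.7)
The plan is to adapt the proof of Lemma~\ref{renormalinks}. There the essential tool was Corollary~\ref{Corollary545g55}, which combined Lemma~\ref{Lemma545g55} with the absence of periodic attractors and weak repellers to guarantee that $\co^{+}_{f}(x)\cap(a,c)\ne\emptyset\ne\co^{+}_{f}(x)\cap(c,b)$ for every $x$ in a renormalization interval $J=(a,b)$. In the present setting I cannot exclude periodic attractors, but the assumption that $J_{0}=(a_{0},b_{0})$ is \emph{regular} is precisely the hypothesis needed to invoke Lemma~\ref{Lemma545g55} directly on $J_{0}$; this already yields the same orbit property for every $x\in J_{0}\setminus\co^{-}_{f}(c)$. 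Since boundary points of any renormalization interval are periodic (Lemma~\ref{Lemma09090863}) and the discontinuity at $c$ prevents a periodic point from belonging to $\co^{-}_{f}(c)$, the property will apply in particular to any endpoint of $J_{1}$ that happens to sit inside $J_{0}$.

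With this tool I would first rule out linking exactly as in Lemma~\ref{renormalinks}. Writing $J_{1}=(a_{1},b_{1})$ and assuming, without loss of generality, $a_{0}<a_{1}<c<b_{0}<b_{1}$, the point $a_{1}$ lies in $J_{0}$ and is periodic, so Lemma~\ref{Lemma545g55} gives $\co^{+}_{f}(a_{1})\cap(c,b_{0})\ne\emptyset$; but $(c,b_{0})\subset(a_{1},b_{1})=J_{1}$, contradicting the niceness of $J_{1}$ since $a_{1}\in\partial J_{1}$.

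For the second conclusion, since $J_{0}\cap J_{1}\ni c$ and the intervals are not linked, either $J_{0}\supset J_{1}$ or $J_{1}\supset J_{0}$. In the case $J_{0}\supsetneqq J_{1}$ I would split according to which, if any, endpoints coincide: if $a_{0}=a_{1}$ and $b_{1}<b_{0}$, then $b_{1}\in J_{0}$ is periodic and Lemma~\ref{Lemma545g55} places an iterate of $b_{1}$ in $(a_{0},c)=(a_{1},c)\subset J_{1}$, contradicting $J_{1}\in\cn$; the symmetric subcase $a_{0}<a_{1}$ and $b_{1}=b_{0}$ is ruled out by applying the lemma to $a_{1}\in J_{0}$ and finding an iterate in $(c,b_{0})=(c,b_{1})\subset J_{1}$; hence $\overline{J_{1}}\subset J_{0}$. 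The remaining case $J_{1}\supsetneqq J_{0}$ is handled by Lemma~\ref{LemmaJ1J2}: were $J_{1}$ not regular, that lemma would give $J_{0}\not\subset J_{1}$, a contradiction; thus $J_{1}$ is regular and the argument just carried out, with the roles of $J_{0}$ and $J_{1}$ exchanged, yields $\overline{J_{0}}\subset J_{1}$. The statement $\partial J_{0}\cap\partial J_{1}=\emptyset$ is then immediate.

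The only genuinely new ingredient relative to Lemma~\ref{renormalinks} is the asymmetry introduced by requiring regularity only of $J_{0}$. The expected obstacle is precisely the configuration $J_{1}\supsetneqq J_{0}$ with a common endpoint: Lemma~\ref{Lemma545g55} cannot be applied to $J_{1}$ until one knows that $J_{1}$ itself is regular, and it is Lemma~\ref{LemmaJ1J2} that supplies this by excluding the situation in which a regular interval is contained in a non-regular renormalization interval.
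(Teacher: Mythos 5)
Your proposal is correct and follows essentially the same route as the paper's proof: regularity of $J_{0}$ is used exactly as the hypothesis of Lemma~\ref{Lemma545g55} (replacing Corollary~\ref{Corollary545g55} from the attractor-free setting) to rule out linking and the shared-endpoint configurations, and Lemma~\ref{LemmaJ1J2} is invoked in the case $J_{1}\supsetneqq J_{0}$ to conclude that $J_{1}$ is regular so the argument can be repeated with the roles exchanged. The paper merely writes out both linking configurations explicitly instead of your reflection ``without loss of generality,'' which is an inessential difference.
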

\dem

The proof follows closely the proof of Lemma~\ref{renormalinks}.

Write $J_0=(a_0,b_0)$ and $J_1=(a_1,b_1)$. First note that $J_{0}$ and $J_{1}$ can not be linked. Indeed, if they were linked, we would either have $a_{0}<a_{1}<c<b_{0}<b_{1}$ or $a_{1}<a_{0}<c<b_{1}<b_{0}$. Suppose that $a_{0}<a_{1}<c<b_{0}<b_{1}$. In this case, $a_{1}\in J_{0}$ and by Lemma~\ref{Lemma545g55}.
$\emptyset\ne\co_{f}^{+}(a_{1})\cap(c,b_{0})\subset\co_{f}^{+}(a_{1})\cap(a_{1},b_{1})=\co_{f}^{+}(a_{1})\cap J_{1}$ contradicting the fact that $J_{1}$ is a nice interval. Now, if $a_{1}<a_{0}<c<b_{1}<b_{0}$, we have $b_{1}\in J_{0}$. Again, by Lemma~\ref{Lemma545g55}, we get $\emptyset\ne\co_{f}^{+}(b_{1})\cap(a_{0},c)\subset\co_{f}^{+}(b_{1})\cap J_{1}$ and a contradiction with $J_{1}$ being a nice interval.

As $J_{0}\cap J_{1}\ne\emptyset$ (because both contains the critical point) and as $J_{0}$ and $J_{1}$ are not linked, in follows that either $J_{0}\supset J_{1}$ or $J_{0}\subset J_{1}$.

Suppose that $J_{0}\supset J_{1}$. In this case, as $J_{0}\ne J_{1}$ we have three possibilities: either $a_{0}< a_{1}<c<b_{1}= b_{0}$ or $a_{0}= a_{1}<c<b_{1}< b_{0}$ or $J_{0}\supset\overline{J_{1}}$. If $a_{0}< a_{1}<c<b_{1}= b_{0}$, we can use again Lemma~\ref{Lemma545g55} to get  $\co_{f}^{+}(a_{1})\cap J_{1}\ne\emptyset$. On the other hand, if $a_{0}= a_{1}<c<b_{1}< b_{0}$, the same  Lemma~\ref{Lemma545g55} implies that $\co_{f}^{+}(b_{1})\cap J_{1}\ne\emptyset$. In both cases we get a contradiction to the fact that $J_{1}$ is a nice interval. Thus, the remaining possibility is the true one.

On the other hand, if $J_{1}\supset J_{0}$, it follows from Lemma~\ref{LemmaJ1J2} that $J_{1}$ is regular. Thus, the proof is exactly the same of the case $J_{0}\supset J_{1}$.

\cqd

\begin{Lemma}[The maximal non-regular renormalization interval]\label{LemmaS6546SS} Let $f:[0,1]\setminus\{c\}\to[0,1]$ be a $C^{3}$ contracting Lorenz map with negative Schwarzian derivative. If $f$ has a non-regular renormalization interval then there is a non-regular interval $J_{max}$ such that $J_{max}\supset I$ for every non-regular renormalization interval $I$.
\end{Lemma}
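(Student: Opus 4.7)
The plan is to show that the set $\mathcal{NR}$ of non-regular renormalization intervals of $f$ is totally ordered by inclusion and has a maximum with respect to inclusion, then read off $J_{\max}$.

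First I would invoke Corollary~\ref{CoSinger}, which under the hypothesis $Sf<0$ gives that $f$ has at most two periodic attractors, say $\Lambda_L$ and $\Lambda_R$. By Remark~\ref{Remark5466}, every $J=(a,b)\in\mathcal{NR}$ has an endpoint $p\in\partial J$ with $(p,c)\subset\beta(\Lambda_i)$ or $(c,p)\subset\beta(\Lambda_i)$ for some $i\in\{L,R\}$. From this I would deduce that the ``constrained'' endpoints of members of $\mathcal{NR}$ take values in a finite set: a periodic $a$ with $(a,c)\subset\beta(\Lambda)$ must be either the rightmost point of $\Lambda\cap[0,c)$ or the periodic repeller at the left boundary of the connected component of $\beta(\Lambda)$ adjacent to $c$ from the left, giving at most two candidates per attractor, and symmetrically on the right side.

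Second, I would prove that two elements of $\mathcal{NR}$ cannot be linked. Suppose for contradiction that $J_1=(a_1,b_1)$ and $J_2=(a_2,b_2)$ are in $\mathcal{NR}$ and linked with $a_1<a_2<c<b_1<b_2$. By Remark~\ref{Remark5466} applied to $J_1$, one of $(a_1,c)\subset\beta(\Lambda)$ or $(c,b_1)\subset\beta(\Lambda)$ holds for some attractor $\Lambda$. In the first subcase, $a_2\in(a_1,c)\subset\beta(\Lambda)$ is periodic, forcing $a_2\in\Lambda$; then niceness of $J_2$ forces $\Lambda=\co^+_f(a_2)$ to be disjoint from $(a_2,b_2)$. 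Applying Remark~\ref{Remark5466} to $J_2$ to identify the attractor $\Lambda'$ responsible for the non-regularity of $J_2$, together with the fact that $b_1\in\mathrm{int}(J_2)$ is periodic and that $f^{\period(b_2)}|_{(c,b_2]}$ is a monotone homeomorphism into $[a_2,b_2]$, one derives a contradiction with the placement of $\co^+_f(b_1)$. The symmetric subcase $(c,b_1)\subset\beta(\Lambda)$ is handled analogously.

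Third, by the previous step $\mathcal{NR}$ is totally ordered by inclusion, and by the first step the constrained endpoints form a finite set. Combined with the observation that along a nested chain $J_n=(a_n,b_n)\in\mathcal{NR}$ the periods $\period(a_n),\period(b_n)$ are monotone (hence bounded along the ``upward'' direction of the chain since first-return times shrink as the interval grows) so that the unconstrained endpoints also take finitely many values by finiteness of periodic points of bounded period (Singer), the set $\mathcal{NR}$ is a \emph{finite} chain. Its maximum with respect to inclusion is the desired $J_{\max}$.

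The \textbf{main obstacle} I anticipate is the non-linking step. Lemma~\ref{renormalinksSCH} handles the case in which one of the two intervals is regular via Lemma~\ref{Lemma545g55}, but here both are non-regular, so the orbit-monotonicity argument underlying Lemma~\ref{Lemma545g55} is unavailable. Niceness constrains only the $f$-orbits of the boundary points of each interval, not of interior periodic points like $a_2\in\mathrm{int}(J_1)$ or $b_1\in\mathrm{int}(J_2)$; the argument must therefore simultaneously exploit the specific basin structure granted by $Sf<0$ (via Singer) and the monotone-branch structure of both first-return maps to rule out the linked configuration.
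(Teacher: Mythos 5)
The crux of your plan is the claim that two non-regular renormalization intervals can never be linked, and you yourself flag that you cannot prove it; this is precisely where the proposal fails. The mechanism behind Lemma~\ref{renormalinksSCH} is that regularity of one of the intervals gives, via Lemma~\ref{Lemma545g55}, that the forward orbit of any point of it (in particular of the other interval's periodic endpoint) must visit both $(a,c)$ and $(c,b)$, which contradicts niceness. When both intervals are non-regular this tool disappears: a non-regular return branch can have an attracting fixed point, so the orbit of the interior periodic point $a_2$ (or $b_1$) may simply converge inside one side of $c$ and never produce the forbidden visit, and neither Singer's theorem nor the basin structure you invoke yields the contradiction you sketch. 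Worse, the statement you are trying to prove is not needed and is not asserted anywhere in the paper: the paper's own argument explicitly allows linked non-regular intervals and instead proves closure under union. Namely, if $I=(\alpha,\beta)$ and $J=(a,b)$ are non-regular and linked with $\alpha<a<c<\beta<b$, then $Sf<0$ forces $f^{\period(\alpha)}((\alpha,c))\subset(\alpha,c)$ (otherwise $f^{\period(\alpha)}((\alpha,c))\ni c$ would drive $\co^{+}_{f}(a)$ into $(c,\beta)\subset(a,b)$, contradicting niceness of $J$) and, symmetrically, $f^{\period(b)}((c,b))\subset(c,b)$; hence $I\cup J=(\alpha,b)$ is again a renormalization interval, non-regular by Remark~\ref{Remark54556}. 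Together with the trivial unlinked case (where one interval contains the other), the family of non-regular renormalization intervals is closed under pairwise union, and $J_{max}$ is obtained as the union of all of them.

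Even granting your non-linking claim, the finiteness bookkeeping in your first and third steps is also not established: the assertion that the ``constrained'' endpoint can only be one of at most two points per attractor, and that the periods of the boundary points are monotone and bounded along an increasing chain, are both left unargued (boundary points of different intervals of a chain are different periodic orbits, and nothing you cite controls their periods from above). None of this bookkeeping is needed once one has closure under union, so the efficient repair is to drop the non-linking step entirely and argue as above: any two non-regular renormalization intervals, linked or not, have a non-regular renormalization interval containing both, and the union of the whole family is the required $J_{max}$.
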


\dem
Suppose that $I=(\alpha,\beta)$ and $J=(a,b)$ are non-regular renormalization intervals. If they are not linked then $I\cup J$ is a non-regular renormalization interval containing $I$ and $J$. Now, suppose that $I$ and $J$ are linked. We may assume that $\alpha<a<c<\beta<b$.

Because $Sf<0$, if $f^{\period(\alpha)}((\alpha,c))\ni c$ then $\co_{f}^{+}(a)\cap(c,\beta)\ne\emptyset$. Thus $\co_{f}^{+}(a)\cap(c,b)\ne\emptyset$. But this is impossible, as $(a,b)$ is a nice interval.
So, we necessarily have $f^{\period(\alpha)}((\alpha,c))\subset(\alpha,c)\subset(\alpha,b)$.
By the same reasoning, we get $f^{\period(b)}((c,b))\subset(c,b)\subset(\alpha,b)$. Thus $I\cup J=(\alpha,b)$ is a renormalization interval. Furthermore, $I\cup J$ is non-regular (Remark~\ref{Remark54556}).

Thus $J_{max}=\bigcup_{J\in\cu}J$ is a non-regular renormalization interval, where $\cu$ is the collection of all non-regular renormalization intervals. Of course that $J_{max}\supset J$ $\forall\,J$ being a non-regular renormalization interval.

\cqd

\begin{Lemma}[$\Omega(f)$ inside a non-regular renormalization interval]\label{LemmaSSSS} Let $f:[0,1]\setminus\{c\}\to[0,1]$ be a $C^{3}$ contracting Lorenz map with negative Schwarzian derivative. If $J$ is a non-regular renormalization interval then there a finite set $\Lambda$ such that $J\subset\beta(\Lambda)$, where $\Lambda$ is a periodic attractor or a union of two periodic attractors. In particular, $\Omega(f)\cap J=\Omega(f)\cap\Lambda$.
\end{Lemma}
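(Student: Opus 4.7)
The plan is to split into cases according to which side of $J=(a,b)$ fails the regularity condition and show, using Singer's theorem and the hypothesis $Sf<0$, that each half of $J$ is absorbed into the basin of a periodic attractor; the bound ``at most two'' periodic orbits in the statement then comes for free from the fact that a $C^{3}$ contracting Lorenz map with $Sf<0$ admits at most two periodic attractors (this is the corollary stated immediately after Corollary~\ref{CoSinger}). Writing $\ell=\period(a)$ and $r=\period(b)$, the maps $f^{\ell}|_{[a,c)}$ and $f^{r}|_{(c,b]}$ are orientation-preserving monotone diffeomorphisms whose derivatives tend to $0$ at $c_{-}$ and $c_{+}$ respectively (by the chain rule and the definition of a contracting Lorenz map), so each one-sided limit at $c$ plays the role of a critical point; moreover $Sf<0$ excludes weak repellers by Corollary~\ref{weakrepellers}.

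If the left side fails regularity, i.e.\ $f^{\ell}(x_{0})\le x_{0}$ for some $x_{0}\in(a,c)$ or $\lim_{x\uparrow c}f^{\ell}(x)\le c$, then $f^{\ell}$ maps $[a,c)$ into $[a,c]$, so the monotone sequence $(f^{\ell})^{n}(y)$ (for any $y\in (a,c)$) is bounded and must converge to a fixed point of the continuous extension of $f^{\ell}$ to $[a,c]$. Since there are no weak repellers, every such fixed point is attracting, and by Corollary~\ref{CoSinger} applied to $f^{\ell}|_{[a,c]}$ it lies on the orbit of some periodic attractor of $f$; I thus obtain $(a,c)\subset\beta(\Lambda^{L})$ where $\Lambda^{L}$ is the union of the (finitely many) periodic attractors arising this way. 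The symmetric argument on the right gives $(c,b)\subset\beta(\Lambda^{R})$ whenever the right side also fails regularity.

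Next I would treat the hybrid case where only the left side fails, so the right side satisfies $f^{r}(x)<x$ on $(c,b)$ and $\lim_{x\downarrow c}f^{r}(x)<c$ (the reverse hybrid is symmetric). For any $x\in(c,b)$ the iterates $(f^{r})^{n}(x)$ are strictly decreasing while they remain in $(c,b)$; since $f^{r}$ has no fixed point in $(c,b)$, the sequence must approach $c$, and then by $\lim_{x\downarrow c}f^{r}(x)<c$ one more application of $f^{r}$ lands the orbit in $(a,c)\subset\beta(\Lambda^{L})$, giving $(c,b)\subset\beta(\Lambda^{L})$ as well. In every case $J\subset\beta(\Lambda)$ with $\Lambda$ a disjoint union of periodic attractors; the bound of at most two such attractors for $f$ recalled in the first paragraph then yields the exact conclusion of the lemma.

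Finally, $\Omega(f)\cap J=\Omega(f)\cap\Lambda$ reduces to showing that every $x\in J\setminus\Lambda$ is wandering (the reverse inclusion is immediate since $\Lambda\subset\Omega(f)$). Pick an open $U\ni x$ with $\overline{U}\subset J\subset\beta(\Lambda)$ and $\overline{U}\cap\Lambda=\emptyset$; because $\Lambda$ is a (super-)attracting periodic set of a negative-Schwarzian map, its local basin contracts uniformly under the period-iterate, so $f^{n}(U)$ is contained in arbitrarily small neighborhoods of $\Lambda$ for all large $n$ and thus eventually disjoint from $U$; shrinking $U$ further eliminates the remaining finitely many returns, yielding $x\notin\Omega(f)$. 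The main obstacle is the hybrid-case argument above: one must exclude the existence of an $f^{r}$-invariant compact set inside $(c,b)$ trapping orbits away from $c$, and this is precisely where the strict inequality $f^{r}<\mathrm{id}$ together with the absence of weak repellers become decisive.
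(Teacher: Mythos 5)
Your overall route is the same as the paper's: show that non-regularity forces at least one of the return branches to map its side of $J$ into itself, use $Sf<0$ (Singer, absence of weak repellers) to make every orbit on that side converge to a periodic attractor, and funnel the other side across when its image contains $c$; the bound of two attractors and the identity $\Omega(f)\cap J=\Omega(f)\cap\Lambda$ then follow. The genuine gap is at the very first step. You assert: ``if the left side fails regularity, i.e.\ $f^{\ell}(x_{0})\le x_{0}$ for some $x_{0}\in(a,c)$ or $\lim_{x\uparrow c}f^{\ell}(x)\le c$, then $f^{\ell}$ maps $[a,c)$ into $[a,c]$''. The second disjunct does give this, but the first does not: a priori regularity can fail through an interior ``dip'' ($f^{\ell}(x_{0})\le x_{0}$, creating fixed points of $f^{\ell}$ inside $(a,c)$) while still $\lim_{x\uparrow c}f^{\ell}(x)>c$, and then $f^{\ell}$ does \emph{not} map $[a,c)$ into itself: points above the dip escape through $c$ into the right half of $J$, your monotone-convergence argument for that side collapses, and your ``hybrid'' case (organized by which half of the definition fails rather than by which branch image misses $c$) does not cover this configuration either. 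Excluding the dip-with-overshoot picture is exactly the content of the paper's Remark~\ref{Remark54556} (for $Sf<0$, $J$ is regular if and only if $f^{\period(a)}((a,c))\ni c\in f^{\period(b)}((c,b))$), which is the step the paper's own proof invokes before anything else; that equivalence is not bookkeeping, it is where the negative Schwarzian (Minimum Principle, Corollary~\ref{CoSinger}) enters the reduction, since a dip would produce an attracting or neutral fixed point of $f^{\ell}$ strictly inside $(a,c)$ whose immediate basin one must show cannot exist or must reach a one-sided neighborhood of $c$. Without this reduction your case split is not exhaustive, so the proof as written does not go through.

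Once that reduction is granted (for instance by quoting Remark~\ref{Remark54556}), your captured-side and hybrid arguments are essentially the paper's, with the same level of small imprecision (the limit of the monotone orbit can be $c$ itself, giving a super-attractor, and orbits hitting $c$ or preimages of repelling fixed points are silently absorbed). Your closing argument for $\Omega(f)\cap J=\Omega(f)\cap\Lambda$ via uniform contraction of the local basin is shakier than you indicate when $\Lambda$ is a one-sided (saddle-node) or super-attractor, since iterates of a neighborhood containing preimages of the neutral point need not stay uniformly close to $\Lambda$; the paper states this inclusion without proof, so the burden there is shared, but you should not present the uniformity as automatic.
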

\dem
Write $J=(a,b)$, $\ell=\period(a)$ and $r=\period(b)$.
By Remark~\ref{Remark54556}, $f^{\ell}((a,c))\subset(a,c)$ or $f^{r}((c,b))\subset(c,b)$. Suppose the first case (the other one is analogous). As $Sf<0$ there is a some $p\in[a,c]$ such that $\lim_{n\to\infty}(f^{\ell})^{n}(x)=p$ $\forall\,x\in(a,c)$. Let $\Lambda_{p}$ be the periodic attractor of $f$ containing $p$. Note that $(a,c)\subset\beta(\Lambda_{p})$. If $f^{r}((c,b))\ni c$ the $\co_{f}^{+}(x)\cap(a,c)\ne\emptyset$ $\forall\,x\in(c,b)$. Thus, $(a,c)\subset\beta(\Lambda_{p})$. On the other hand, if $f^{r}((c,b))\subset(c,b)$, there is some $q\in[c,b]$ such that $\lim_{n\to\infty}(f^{r})^{n}(x)=q$ $\forall\,x\in(c,b)$. Setting $\Lambda_{q}$ as the periodic attractor of $f$ containing $q$, we get $(c,b)\subset\Lambda_{q}$. So, $(a,b)\subset\Lambda:=\Lambda_{p}\cup\Lambda_{q}$.

\cqd

\begin{Lemma}\label{LemmaofTr}
Let $f:[0,1]\setminus\{c\}\to[0,1]$ be a $C^{3}$ contracting Lorenz map with negative Schwarzian derivative. Let $U\ni c$ be an open trapping region and let $U_{0}=(u_{0},u_{1})$ be the connected component of $U$ containing $c$. Suppose that $U_{0}$ is not a renormalization interval and that there exists $a\in[u_{0},c)$ and $b\in(c,u_{1}]$ such that $\co_{f}^{+}(a)\cap[u_{0},u_{1}]\ne\emptyset\ne[u_{0},u_{1}]\cap\co^{+}_{f}(b)$. If $J_{0}$ is the smallest renormalization interval not contained in $U$ then $\co_{f}^{+}(x)\cap U\ne\emptyset$ $\forall\,x\in J_{0}$.
\end{Lemma}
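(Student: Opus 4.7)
The plan is to argue by contradiction. Suppose there exists $x\in J_{0}$ with $\co^{+}_{f}(x)\cap U=\emptyset$; write $J_{0}=(\alpha,\beta)$ with $\ell=\period(\alpha)$ and $r=\period(\beta)$. By Lemma~\ref{Lemma09090863}, $f^{\ell}([\alpha,c))\subset[\alpha,\beta]$ and $f^{r}((c,\beta])\subset[\alpha,\beta]$, so the $f$-orbit of $x$ is trapped in the renormalization cycle $U_{J_{0}}$; since $c\in U$ is open and disjoint from $\co^{+}_{f}(x)$, this orbit avoids $c$, so $x\notin\co^{-}_{f}(c)$ and the first return map $\cf$ to $J_{0}$ is well defined at every iterate of $x$, with each $\cf^{n}(x)\in J_{0}\setminus U$.

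A preliminary step is to show $U_{0}\subsetneq J_{0}$: if $U_{0}$ contained a boundary point $p\in\{\alpha,\beta\}$ of $J_{0}$, then $p$ would be a periodic point in the $f$-trapping region $U$, and Corollary~\ref{Corollary545g55} would propagate the containment through the renormalization cycle of $J_{0}$, forcing $J_{0}\subset U$ and contradicting the choice of $J_{0}$. Consequently the $\cf$-orbit of $x$ avoids the open neighborhood $U_{0}=(u_{0},u_{1})$ of $c$ inside $J_{0}$.

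The core of the argument is to build a proper renormalization interval $\tilde J=(p,q)\subsetneq U_{0}$ of $f$ from the hypothesis that $U_{0}$ is not a renormalization interval while the orbits of $a\in[u_{0},c)$ and $b\in(c,u_{1}]$ return to $[u_{0},u_{1}]$. Applying Corollary~\ref{Corollary8388881b} and Lemma~\ref{LemmaHGFGH54} to the nice-interval structure near $c$ provided by the returning orbits of $a$ and $b$ produces sequences of periodic points accumulating on $u_{0}$ from the right and on $u_{1}$ from the left; the Variational Principle (Proposition~\ref{LemmaVarPric}) then lets me pick a pair $(p,q)$ of minimal periods so that $\tilde J=(p,q)$ is a renormalization interval of $f$. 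Since $U_{0}$ itself is not a renormalization interval, $\tilde J\subsetneq U_{0}$; by $f$-invariance of $U$ the whole renormalization cycle $U_{\tilde J}$ is contained in $U$. Passing to the Lorenz map $g$ conjugate to $\cf$ on $J_{0}$ via a homeomorphism $\varphi$, the classification of topological attractors (Theorems~\ref{baciastopologicas} and~\ref{atratortopologico}) combined with $Sf<0$ (which via Singer's Theorem~\ref{ThSinger}, Lemma~\ref{LemmaStP} and Lemma~\ref{homtervals} excludes inessential periodic attractors and wandering intervals disjoint from the critical point of $g$) shows that the $g$-orbit of $\varphi(x)$ must enter the renormalization interval $\varphi(\tilde J)\subset\varphi(U)$; pulling back, some $\cf^{n}(x)\in\tilde J\subset U$, contradicting $\co^{+}_{f}(x)\cap U=\emptyset$.

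The main obstacle is the construction of $\tilde J\subset U_{0}$: converting the purely topological recurrence provided by $a$ and $b$ into periodic-point accumulations and then into a genuine renormalization structure requires a careful use of the $C^{3}$ and $Sf<0$ hypotheses together with the $f$-invariance of the trapping region $U$, with the failure of $U_{0}$ itself to be a renormalization interval providing exactly the room needed for the strict inclusion $\tilde J\subsetneq U_{0}$.
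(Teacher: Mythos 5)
Your proposal has two serious problems, and the second one is fatal to the whole strategy. First, most of the machinery you invoke is not available under the hypotheses of this lemma: the statement does not assume absence of periodic attractors, and it is applied in the paper precisely in situations where periodic attractors exist (e.g. Lemma~\ref{Lemma9988} and the non-regular/degenerate cases of the spectral decomposition). Hence Corollary~\ref{Corollary545g55}, the Variational Principle (Proposition~\ref{LemmaVarPric}, which moreover needs the non-Cherry-like condition), and Theorem~\ref{baciastopologicas} cannot be used here. This already undercuts your preliminary step ($U_0\subsetneq J_0$), where in addition the inference that a boundary point of $J_{0}$ lying in $U$ would ``force $J_{0}\subset U$'' is not something Corollary~\ref{Corollary545g55} gives, and it undercuts the construction of $\tilde J$: $U_{0}$ need not be a nice interval, so Lemma~\ref{LemmaHGFGH54} does not apply to it, and even granted periodic points accumulating on $u_{0}^{+}$ and $u_{1}^{-}$, choosing a pair of minimal period does not by itself yield a renormalization interval.

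Second, and decisively, the closing step is wrong in principle: you need the orbit of an arbitrary $x\in J_{0}$ (assumed to avoid $U$) to enter $\tilde J\subset U_{0}$, and you justify this by the classification of topological attractors. Those theorems control only a residual (or full Lebesgue measure) set of points, while the set of points of $J_{0}$ whose orbits never meet a given nice interval $\tilde J$ --- the $\tilde J$-phobic set --- is in general a nonempty uncountable expanding set; having measure zero is useless because the conclusion must hold for \emph{every} $x\in J_{0}$, so nothing prevents your hypothetical $x$ from lying in that phobic set. The paper's proof sidesteps attractors altogether: it considers the open, fully invariant set $\cw=\{x\,;\,\co_{f}^{+}(x)\cap U\ne\emptyset\}$ and its connected component $W=(w_{0},w_{1})\ni c$ (which contains $U_{0}$); the returns of $a$ and $b$ to $[u_{0},u_{1}]$ together with $f^{-1}(\cw)=\cw$ (so both $\cw$ and its complement are forward invariant) force $f^{\ell}((w_{0},c))\subset W\supset f^{r}((c,w_{1}))$, i.e. $W$ is itself a renormalization interval; since $U_{0}$ is not a renormalization interval, $W\supsetneqq U_{0}$, hence $W\not\subset U$, and the choice of $J_{0}$ as the smallest renormalization interval not contained in $U$ yields $J_{0}\subset W\subset\cw$, which is exactly the conclusion. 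Some argument of this kind, working directly with the set of points whose orbits meet $U$ rather than with generic orbits and attractors, is what your proposal is missing.
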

\dem

Write $\ell=\min\{j>\,;\,f^{j}(a)\in[u_{0},u_{1}]\}$ and $r=\min\{j>\,;\,f^{j}(b)\in[u_{0},u_{1}]\}$.

Let $\cw=\{x\in[0,1]\,;\,\co^{+}_{f}\cap\cu\ne\emptyset\}$. Note that $\cw$ is open and invariant, that is, $f^{-1}(\cw)=\cw$. Let $W=(w_{0},w_{1})$ be the connected component of $\cw$ containing $c$. Thus $W\supset U_{0}$.

Observe that if $p\in\partial J_{0}\subset\cw$ then $\co_{f}^{+}(p)\cap\subset J_{0}$, but this contradicts $J_{1}$ being a nice interval. Thus $\partial J_{0}\cap W=\emptyset$ and so, $J_{0}\supset W\supset U_{0}$.

 As $f^{\ell}(a)\in f^{\ell}([w_{0},c))\cap [w_{0},w_{1}]$  and $f(\cw)\subset\cw$, we get $f^{\ell}((w_{0},c))\subset (w_{0},w_{1})$. Furthermore, as $f^{-1}(\cw)=\cw$ we get $f([0,1]\setminus\cw)\subset [0,1]\setminus\cw$. Thus, $f^{\ell}((w_{0},c))=(w_{0},f^{\ell}(c_{-}))\subset W$. By the same reasoning, $f^{r}((c,w_{0})=(f^{r}(c_{+}),w_{1})\subset W$. That is, $W$ is a renormalization interval. As $U_{0}$ is not a renormalization interval, we get $J_{0}\supset W\supsetneqq U_{0}$.

Observe that if $p\in\partial J_{0}\subset\cw$ then $\co_{f}^{+}(p)\cap\subset J_{0}$, but this contradicts $J_{1}$ being a nice interval. Thus $\partial J_{0}\cap W=\emptyset$ and so, $J_{0}\supset W\supsetneqq J_{1}$. This implies, as $J_{0}$ and $J_{1}$ are consecutive, that $J_{0}=W\subset\cw$. From the minimality of $J_{0}$, we get $J_{0}=W$. Thus, $J_{0}\subset\cw$.

\cqd

\begin{Lemma}\label{Lemma9988}Let $f:[0,1]\setminus\{c\}\to[0,1]$ be a $C^{3}$ contracting Lorenz map with negative Schwarzian derivative. 
Let $J_{0}\supset J_{1}=(a,b)$ be two consecutive renormalization intervals. If $\exists\,\varepsilon>0$ such that $\omega_{f}(x)=\co_{f}^{+}(a)$ $\forall\,x\in(a-\varepsilon,a]$ or $\omega_{f}(x)=\co_{f}^{+}(b)$ $\forall\,x\in[b,b+\varepsilon)$ then  $\co_{f}^{+}(a)=\co_{f}^{+}(b)$, $\co_{f}^{+}(a)$ is an attracting periodic orbit and $J_{0}\subset\beta(\co_{f}^{+}(a))$.
\end{Lemma}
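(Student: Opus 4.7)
Suppose without loss of generality that $\omega_{f}(x) = \co_{f}^{+}(a) =: \Lambda_{a}$ for all $x \in (a-\varepsilon, a]$ (the case $[b, b+\varepsilon)$ is symmetric). This directly shows that $\Lambda_{a}$ is a periodic attractor (Definition~\ref{PeriodicAttractor}), since its basin has non-empty interior. My first step is to invoke Corollary~\ref{CoSinger} (Singer's theorem for negative Schwarzian derivative): the immediate basin of $\Lambda_{a}$ must contain either the critical point $c$ or a boundary point of $[0,1]$. The boundary alternative is excluded because $\omega_{f}(0)=\{0\}$ and $\omega_{f}(1)=\{1\}$ cannot be contained in $\Lambda_{a}$ unless $a$ or $b$ coincides with $0$ or $1$, corner cases to be handled separately. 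Therefore some $(c-\delta, c)$ (or $(c, c+\delta)$) lies in $\beta(\Lambda_{a})$; I will assume the former.

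The core of the proof is to show $J_{0} \subset \beta(\Lambda_{a})$. The plan is to consider the first return map $g := \cf_{J_{0}}: J_{0} \to J_{0}$; after normalization, this is a $C^{3}$ non-flat contracting Lorenz map with negative Schwarzian derivative, and $\Lambda_{a} \cap J_{0}$ is a $g$-periodic attractor whose basin already contains an interval on the left of the critical point. Applying St. Pierre's theorem (Theorem~\ref{stpierreperiodic}) to $g$ produces an open dense subset of $J_{0}$ of full Lebesgue measure attracted to some periodic attractor of $g$; since $g$ admits at most two periodic attractors (the corollary following Theorem~\ref{ThSinger}), either $\Lambda_{a}$ is the unique one, or there exists a second attractor $\Lambda'$ whose immediate basin (again by Singer's theorem applied to $g$) must contain $c$ from the side opposite $\Lambda_{a}$, say $(c, c+\delta') \subset \beta(\Lambda')$.

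The main obstacle I anticipate is ruling out this hypothetical second attractor $\Lambda'$, and here the hypothesis that $J_{0}$ and $J_{1}$ are \emph{consecutive} renormalization intervals becomes essential. The idea is to take the maximal basin components $W_{L} = (q_{L}, c) \subset \beta(\Lambda_{a})$ and $W_{R} = (c, q_{R}) \subset \beta(\Lambda')$ around $c$; under negative Schwarzian derivative, the endpoints $q_{L}$ and $q_{R}$ must be periodic points lying on the boundaries of their respective immediate basins. One then checks that $(q_{L}, q_{R})$ is a nice interval whose first return map sends $(q_{L}, c)$ into itself (since $(q_{L}, c) \subset \beta(\Lambda_{a})$ and $\Lambda_{a}$ meets $(q_{L}, q_{R})$ at $a$) and similarly $(c, q_{R})$ into itself, making it a renormalization interval by Lemma~\ref{Lemma09090863}. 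Since $J_{1} \subsetneqq (q_{L}, q_{R}) \subsetneqq J_{0}$, this contradicts the consecutiveness of $J_{0}$ and $J_{1}$. Hence $\Lambda_{a}$ is the unique $g$-periodic attractor. A short density argument then upgrades the ``open dense of full measure'' from St. Pierre to $J_{0} \subset \beta(\Lambda_{a})$, since any exceptional point in $J_{0} \setminus \beta(\Lambda_{a})$ would, by the homterval lemma and uniqueness of the attractor, produce either another periodic orbit (which must coincide with $\Lambda_{a}$) or a wandering interval whose orbit eventually enters the basin anyway. In particular $b \in J_{0} \subset \beta(\Lambda_{a})$, so $\co_{f}^{+}(b) = \omega_{f}(b) \subset \Lambda_{a}$, and since both are periodic orbits, $\co_{f}^{+}(a) = \co_{f}^{+}(b)$, completing the proof.
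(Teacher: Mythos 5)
Your proposal has two genuine gaps, both at the places where the real work of the lemma is done. The first is the step that rules out the second attractor $\Lambda'$. The claims that the endpoints $q_{L},q_{R}$ of the basin components adjacent to $c$ ``must be periodic points'', that $(q_{L},q_{R})$ is nice, and that the first return map sends $(q_{L},c)$ and $(c,q_{R})$ into $(q_{L},q_{R})$ are all unjustified: boundary points of basin components are typically only pre-periodic (or preimages of $c$), points of $\co_{f}^{+}(a)$ may lie inside $(q_{L},a)$ so the return to $(q_{L},q_{R})$ need not have the two-branch structure of Lemma~\ref{Lemma09090863}, and your justification that $\Lambda_{a}$ ``meets $(q_{L},q_{R})$ at $a$'' presupposes $q_{L}<a$, which is exactly what is in question. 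Worse, the dangerous configuration the lemma must exclude is the one in which the two one-sided basins fill $J_{1}$ itself (a non-regular picture: $(a,c)\subset\beta(\Lambda_{a})$, $(c,b)\subset\beta(\Lambda')$); there your candidate interval is not strictly between $J_{1}$ and $J_{0}$ (or at least is not shown to be a renormalization interval), so no contradiction with consecutiveness arises, yet the conclusion $\co_{f}^{+}(a)=\co_{f}^{+}(b)$ fails in that picture. Note that after your first sentence you never use the hypothesis $\omega_{f}(x)=\co_{f}^{+}(a)$ on $(a-\varepsilon,a]$; but without it such two-attractor configurations do exist, so any correct proof must exploit it. The paper does precisely this: from $x<f^{\ell}(x)$ on $(a-\delta,a)$ and $Sf<0$ it shows $c_{-}$ lies in the basin of $\co_{f}^{+}(a)$, and then, letting $(a,a')$ be the component of $[0,1]\setminus\co_{f}^{+}(a)$ containing $c$, it forces $b=a'$, which is what gives $\co_{f}^{+}(a)=\co_{f}^{+}(b)$.

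The second gap is the final upgrade from St.\ Pierre's conclusion to $J_{0}\subset\beta(\Lambda_{a})$. An open, dense, full-measure basin is perfectly compatible with repelling periodic orbits and expanding Cantor sets inside $J_{0}$; the lemma asserts the much stronger fact that \emph{every} point of $J_{0}$ is attracted, and your ``short density argument'' cannot deliver it, since an exceptional point need not lie in any homterval (the homterval lemma applies to intervals, not points). The paper obtains this instead by constructing the trapping region $U=\bigcup_{j=0}^{\ell-1}f^{j}((a-\delta,a])\cup U_{J_{1}}$, whose component containing $c$ is not a nice interval, and invoking Lemma~\ref{LemmaofTr} — this is where the consecutiveness of $J_{0}$ and $J_{1}$ actually enters — to conclude that every orbit in $J_{0}$ enters $U\subset\beta(\co_{f}^{+}(a))$. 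A further, smaller mismatch: Theorem~\ref{stpierreperiodic} requires a non-flat map, while the lemma does not assume non-flatness, so your route adds a hypothesis that the paper's proof (which uses neither St.\ Pierre nor Singer in this step) does not need.
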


\dem
Suppose that $\omega_{f}(x)=\co_{f}^{+}(a)$ $\forall\,x\in(a-\varepsilon,a]$ (the case $\omega_{f}(x)=\co_{f}^{+}(b)$ $\forall\,x\in[b,b+\varepsilon)$ is analogous). Set $\ell=\period(a)$ and $r=\period(b)$. In this case, there is some $0<\delta<\varepsilon$ such that $\bigcup_{j=0}^{\ell-1}f^{j}((a-\delta,a])$ is a positive invariant set, i.e., $$f\bigg(\bigcup_{j=0}^{\ell-1}f^{j}((a-\delta,a])\bigg)\subset \bigcup_{j=0}^{\ell-1}f^{j}((a-\delta,a]).$$

Thus $$U=\bigcup_{j=0}^{\ell-1}f^{j}((a-\delta,a])\,\cup \,U_{J_{1}}=$$
$$=\bigcup_{j=0}^{\ell-1}f^{j}((a-\delta,a])\cup \bigcup_{j=0}^{\ell-1}f^{j}((a,c)) \cup \bigcup_{j=0}^{r-1}f^{j}((c,b))=$$
$$\bigcup_{j=0}^{\ell-1}f^{j}((a-\delta,c))\cup \bigcup_{j=0}^{r-1}f^{j}((c,b))$$ is an open trapping region.

The connected component of $\cu$ containing $c$ is $\cu_{0}=(a-\varepsilon,b)$. As $\cu_{0}$ is not a nice interval, it is not a renormalization interval. Thus, as $f^{\ell}(a)=a\in [a-\varepsilon,c)$ and $f^{\ell}(a)=a\in (c,b]$, it follows from Lemma~\ref{LemmaofTr} that $\co_{f}^{+}(x)\cap\cu\ne\emptyset$ $\forall\,x\in J_{0}$.

So, if $x\in J_{0}$ then $\co_{f}^{+}(x)\cap(a,b)\ne\emptyset$ or $\co_{f}^{+}(x)\cap(a-\delta,a]\ne\emptyset$. In the last case we have $\omega_{f}(x)=\co_{f}^{+}(a)$. Thus, to finish the proof we need only to show that $\co_{f}^{+}(a)=\co_{f}^{+}(b)$ and that $J_{1}\subset \beta(\co_{f}^{+}(a))$.

As $Sf<0$ and $x<f^{\ell}(x)$ for $a-\delta<x<a$, it is easy to get that $c_{-}$ is in the local basin of $\co_{f}^{+}(a)$. Thus, if $(a,a')$ is the connected component of $[0,1]\setminus\co_{f}^{+}(a)$ containing $c$ then $f^{\ell}|_{(c,a']}((c,a'])\subset\beta(\co_{f}^{+}(a))$. 
So, $b\notin(c,a')$ (because $b$ is periodic). Moreover, as $(a,b)$ is a nice interval, $a'\notin(a,b)$. Thus, $b=a'$. So, $\co_{f}^{+}(a)=\co_{f}^{+}(b)$. 

\begin{figure}
\begin{center}\label{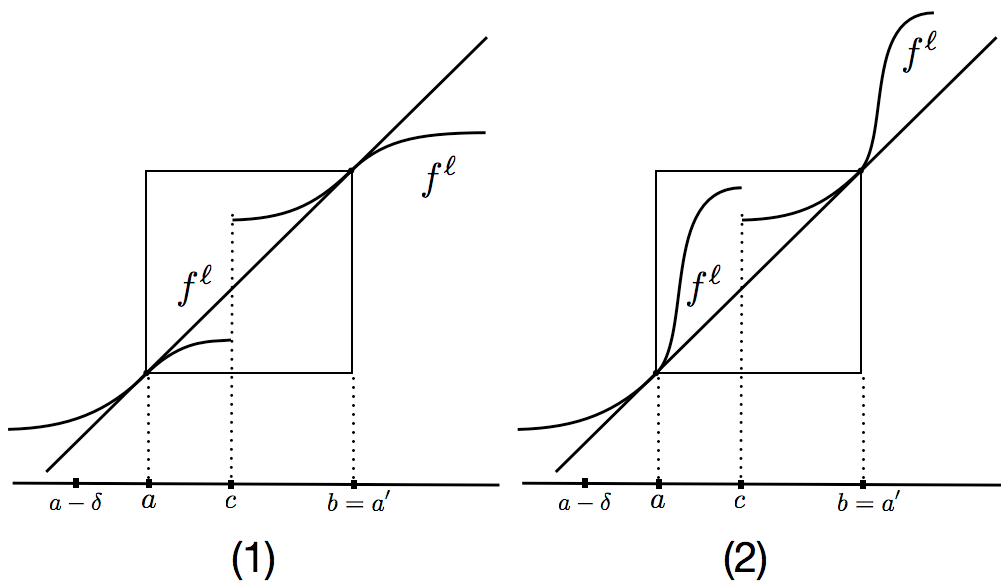}
\includegraphics[scale=.42]{RRper.png}\\
Figure~\ref{RRper.png}
\end{center}
\end{figure}

Now we have two possibilities: $a$ is a saddle-noddle (if $x<f^{\ell}(x)$ for $a<x<c$, see Figure~\ref{RRper.png}(2)) or  $a$ attracts by both side (for instance, if $Df^{\ell}(a)<1$), see Figure~\ref{RRper.png}(1). In any case we get $[a,b]\subset\beta(\co_{f}^{+}(a)$ (if $a$ is sable-noddle, for every $a<x<c$ there is some $s_{x}\ge1$ such that $f^{s_{x}\ell}(x)\in[c,a]$ and so, $x\in\beta(\co_{f}^{+}(a))$).

\cqd

\begin{Corollary}\label{Corollary9988}Let $f:[0,1]\setminus\{c\}\to[0,1]$ be a $C^{3}$ contracting Lorenz map with negative Schwarzian derivative. 
Let $J_{0}\supset J_{1}=(a,b)$ be two consecutive renormalization intervals with $J_{0}$ regular. If there is an interval $I\subset J_{0}\setminus J_{1}$ and $\ell\ge1$ such that $f^{\ell}|_{I}$ is a homeomorphism, $f^{j}(I)\cap J_{1}=\emptyset$ $\forall\,0\le j\le\ell$ and $f^{\ell}(I)\subset I$ then $\Lambda=\co_{f}^{+}(a)=\co_{f}^{+}(b)$ is a attracting periodic orbit containing $J_{0}$ in its basin of attraction ($J_{0}\subset\beta(\Lambda)=\{x\,;\,\omega_{f}(x)=\Lambda\}$).
\end{Corollary}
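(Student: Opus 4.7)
The plan is to reduce this corollary to Lemma~\ref{Lemma9988} by extracting an attracting periodic orbit from $I$ and then showing its basin touches one side of $\partial J_1$. First, since $f^\ell(I)\subset I$ and $f^\ell|_I$ is a monotone homeomorphism on an interval while $Sf^\ell<0$, Corollary~\ref{CoSinger} applied to $f^\ell$ on a suitable neighborhood of $\overline I$ yields an attracting fixed point $p\in\overline I$ of $f^\ell$. Set $\Lambda:=\co_f^+(p)$; by the disjointness hypothesis $f^j(I)\cap J_1=\emptyset$ for $0\le j\le\ell$, the iterates $f^j(I)$ and hence $\Lambda$ are disjoint from $\mathring{J_1}$.

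Next, I enlarge $I$ to the maximal interval $\tilde I\supset I$ on which $f^\ell$ is a monotone homeomorphism, $f^j(\tilde I)\cap J_1=\emptyset$ for $0\le j<\ell$, and $f^\ell(\tilde I)\subset\tilde I$. An endpoint of $\tilde I$ not in $\partial[0,1]$ must be either a point mapping to $c$ under some $f^j$ with $j<\ell$, a repelling periodic point of $f^\ell$, or a point of $\partial J_1$. Using that $J_0$ is regular and that $J_0,J_1$ are consecutive renormalization intervals, I argue that the endpoint of $\tilde I$ facing $J_1$ must lie in $\partial J_1$: a repelling periodic endpoint strictly between $\partial J_0$ and $\partial J_1$ would, via Lemma~\ref{jotaxrenormaliza} applied to the connected component of its complement in its $\alpha$-limit, produce a renormalization interval strictly between $J_1$ and $J_0$, contradicting consecutivity; a pre-image of $c$ of order less than $\ell$ would carve out an analogous intermediate nice structure. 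Up to reflection, we may therefore assume $I\subset(a_0,a)$ (where $J_0=(a_0,b_0)$) with $a\in\overline{\tilde I}$.

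Finally I form the open trapping region $\cu:=\bigcup_{j=0}^{\ell-1}f^j(\tilde I)\cup U_{J_1}$, whose connected component containing $c$ is of the form $(u_0,b)$ with $u_0<a$. Since this component strictly contains $\overline{J_1}$ on one side but is not itself a renormalization interval (otherwise it would be an intermediate renormalization interval between $J_0$ and $J_1$), the structure of $f^\ell|_{\tilde I}$ with $Sf<0$, combined with the fact that $a\in\overline{\tilde I}$ is periodic and that $f^\ell|_{\tilde I}$ admits a unique attractor, forces $p=a$. Hence there is $\varepsilon>0$ with $\omega_f(x)=\co_f^+(a)$ for all $x\in(a-\varepsilon,a]$, exactly the hypothesis of Lemma~\ref{Lemma9988}, which then delivers $\Lambda=\co_f^+(a)=\co_f^+(b)$ and $J_0\subset\beta(\Lambda)$.

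The main obstacle is the second step: carefully justifying that the maximal extension $\tilde I$ abuts $\partial J_1$ rather than terminating strictly inside $(a_0,a)$. This uses regularity of $J_0$ in an essential way, since non-regularity would permit a periodic attractor basin sitting inside $J_0$ (Lemma~\ref{LemmaSSSS}) whose boundary could trap $\tilde I$ before it reaches $a$, and it also uses the absence of any intermediate renormalization interval to exclude a repelling periodic endpoint of $\tilde I$ inside $(a_0,a)$.
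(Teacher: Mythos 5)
Your reduction to Lemma~\ref{Lemma9988} is the right target, and the extraction of an attracting periodic point $p\in\overline{I}$ of $f^{\ell}$ is fine, but the core of your argument --- that the maximal extension $\tilde I$ must abut $\partial J_{1}$, and that this forces $p=a$ --- has a genuine gap. First, a periodic endpoint $q$ of $\tilde I$ lying strictly inside $J_{0}\setminus\overline{J_{1}}$ is not excluded by Lemma~\ref{jotaxrenormaliza} plus consecutivity: since $q$ is periodic, $q\in\alpha_{f}(q)$, but the component $J_{q}$ of $[0,1]\setminus\alpha_{f}(q)$ containing $c$ may perfectly well be $J_{1}$ itself, or be empty (when $c\in\alpha_{f}(q)$); neither case yields a renormalization interval strictly between $J_{1}$ and $J_{0}$, so there is no contradiction --- and indeed $\Omega(f)\cap\big(J_{0}\setminus K_{J_{1}}\big)$ is typically a Cantor set full of repelling periodic points, so such endpoints are to be expected. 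Second, and more fundamentally, your argument never uses the essentiality of periodic attractors, which is exactly what $Sf<0$ contributes here: without it one can have an inessential attracting periodic orbit entirely contained in $J_{0}\setminus\overline{J_{1}}$ and avoiding $J_{1}$; a small interval $I$ around a point of that orbit satisfies all your hypotheses, yet $\tilde I$ stops at the boundary of the immediate basin, well before $\partial J_{1}$, and the conclusion fails. Since your endpoint analysis is purely combinatorial, it cannot rule this scenario out. Finally, even when the conclusion holds, the correct statement is $a\in\co_{f}^{+}(p)$ (the orbit of $p$ passes through $a$), not $p=a$: $a$ is only an endpoint of $\tilde I$, its period need not divide $\ell$, and it need not be fixed by $f^{\ell}$.

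The paper's proof settles this with a short dichotomy that you should adopt in place of your second step. Either $\partial J_{1}\cap\co_{f}^{+}(p)\ne\emptyset$, say $a=f^{s}(p)$; then $f^{s}(I)=(a-\varepsilon,a)$ for some $\varepsilon>0$, so $\omega_{f}(x)=\co_{f}^{+}(a)$ on $(a-\varepsilon,a]$ and Lemma~\ref{Lemma9988} concludes. Or $\co_{f}^{+}(p)\cap\overline{J_{1}}=\emptyset$; then the gap $(p_{0},p_{1})$ of $\Lambda=\co_{f}^{+}(p)$ containing $c$ satisfies $p_{0}<a<c<b<p_{1}$, and Singer's Theorem (this is where $Sf<0$ enters: the attractor is essential) gives $(p_{0},c)\subset\beta(\Lambda)$ or $(c,p_{1})\subset\beta(\Lambda)$, hence $a\in\beta(\Lambda)$ or $b\in\beta(\Lambda)$ --- impossible, since $a$ and $b$ are periodic points not on $\Lambda$. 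This second alternative is precisely the step your proposal is missing.
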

\dem
As  $f^{\ell}(I)\subset I$ and $f^{\ell}|_{I}$ is a homeomorphism, there is a attracting periodic point $p\in\overline{I}$. Suppose for instance that $a\in\co_{f}^{+}(p)$. Say, $a=f^{s}(p)$. In this case, as $(a-\varepsilon,a)=f^{s}(I)$ for some $\varepsilon>0$, we can apply Lemma~\ref{Lemma9988} and finish the proof. The same is true is $b\in\co_{f}^{+}(p)$.

Suppose now that $\partial J_{1}\cap \co_{f}^{+}(p)=\emptyset$. Let $\Lambda=\co_{f}^{+}(p)$ and $(p_{0},p_{1})$ be the connected component of $[0,1]\setminus\Lambda$ containing $c$. By Singer's Theorem $(p_{0},c)$ or $(c,p_{1})\subset\beta(\Lambda)$. As $\co_{f}^{+}(p)\cap\overline{J_{1}}=\emptyset$, we get $p_{0}<a<c<b<p_{1}$. So, $a\in\beta(\Lambda)$ or $b\in\beta(\Lambda)$ and then $\co_{f}^{+}(a)=\omega_{f}(a)=\Lambda$ or $\co_{f}^{+}(a)=\omega_{f}(a)=\Lambda$. An absurd, as we are assuming that $\partial J_{1}\cap\Lambda=\emptyset$.
\cqd

\begin{Corollary}\label{VarPrinII}[The ``adapted Variational Principle''] Let $f:[0,1]\setminus\{c\}\to[0,1]$ be a $C^{3}$ contracting Lorenz map with negative Schwarzian derivative. 
Let $J_{0}\supset J_{1}=(a,b)$ be two consecutive renormalization intervals with $J_{0}$ regular and not contained in the basin of a periodic attractor. Then there exists a unique periodic orbit minimizing the period of all periodic orbit intersecting $J_{0}\setminus J_{1}$.
\end{Corollary}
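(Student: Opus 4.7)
The plan is to adapt the proof of the unrestricted Variational Principle (Proposition~\ref{LemmaVarPric}) to this local setting, using Corollary~\ref{Corollary9988} as a substitute for the global hypothesis ``no periodic attractor'' and Corollary~\ref{weakrepellers} to dispose of weak repellers.

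Write $J_0=(a_0,b_0)$, let $\ell_1=\period(a)$, $r_1=\period(b)$, and set $n:=\min\{\period(x)\,;\,x\in Per(f)\cap(J_0\setminus J_1)\}$; since $a,b\in\partial J_1\subset J_0\setminus J_1$ are periodic, $n\le\min(\ell_1,r_1)<\infty$. Suppose for contradiction there exist distinct periodic orbits $\co_1\ne\co_2$ of period $n$ each meeting $J_0\setminus J_1=(a_0,a]\cup[b,b_0)$. A first observation, using only niceness of $J_1$ and the definition of $\ell_1,r_1$, is that any periodic point of period $n$ whose orbit visits $(a,c)$ must satisfy $n\ge\ell_1$ (its orbit under the return map to $[a,b]$ forces at least one full $\ell_1$-excursion), and analogously $n\ge r_1$ if the orbit visits $(c,b)$.

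Combined with Corollary~\ref{Corollary545g55} applied to $J_0$, which forces both $\co_1$ and $\co_2$ to meet $(a_0,c)$ and $(c,b_0)$, one may, after a symmetry reduction, assume both orbits have representatives in $(a_0,a)$ and pick $p\in\co_1$, $q\in\co_2$ as the rightmost points of their respective orbits in $(a_0,a]$, so that $\co^+_f(p)\cap(p,c)=\co^+_f(q)\cap(q,c)=\emptyset$. Assume $q<p$ and examine $f^n|_{(q,p)}$. In the monotone case, $f^j|_{(q,p)}$ is an orientation-preserving homeomorphism for every $0\le j\le n$ and $f^n([q,p])=[q,p]$. The choice of $p,q$ together with the first observation yields $(\co^+_f(p)\cup\co^+_f(q))\cap J_1=\emptyset$; hence $f^j(q),f^j(p)\notin J_1$, and if $f^j((q,p))$ met $J_1$ then by connectedness $J_1\subset(f^j(q),f^j(p))=f^j((q,p))$, forcing $c\in f^j((q,p))$ and contradicting monotonicity. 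Thus $f^j((q,p))\cap J_1=\emptyset$ for every $0\le j\le n$, and Corollary~\ref{Corollary9988} applies to $I=(q,p)$ with $\ell=n$, yielding a periodic attractor $\Lambda$ with $J_0\subset\beta(\Lambda)$, contradicting the hypothesis on $J_0$.

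If $f^n|_{(q,p)}$ is not monotone, let $0<j<n$ be minimal with $c\in f^j((q,p))$. Then $f^j|_{(q,p)}$ is an orientation-preserving homeomorphism with $f^j(q)<c<f^j(p)$; the same orbit-vs-$J_1$ analysis rules out $f^j(q)\in[a,c)$, so $f^j(q)<q$, and symmetrically $f^j(p)>p$. Therefore $f^j((q,p))\supset(q,p)$ and the intermediate value theorem supplies a fixed point of $f^j$ in $(q,p)\subset J_0\setminus J_1$, a periodic point of period $\le j<n$ in $J_0\setminus J_1$, contradicting the minimality of $n$.

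The main obstacle will be the bookkeeping in the boundary case $n=\min(\ell_1,r_1)$, in which one of $\co_i$ may coincide with $\co^+_f(a)$ or $\co^+_f(b)$ and the other orbit may have its only representative in $[b,b_0)$ or visit $(a,c)\subset J_1$. This demands a more careful selection of the representatives --- possibly on the right side $[b,b_0)$ or at a secondary orbit point --- and a separate verification that $\co^+_f(p)\cup\co^+_f(q)$ remains disjoint from $J_1$ in every admissible configuration; but the monotone/non-monotone dichotomy, with Corollary~\ref{Corollary9988} in the first case and a contradiction to minimality in the second, still closes the argument in every sub-case.
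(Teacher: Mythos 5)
Your overall strategy is the one the paper intends (its own proof is only the instruction to adapt Proposition~\ref{LemmaVarPric} via Corollary~\ref{Corollary9988}), and your main case is executed correctly and in more detail than the paper: the connectedness argument showing $f^j((q,p))\cap J_1=\emptyset$ is exactly the point needed to invoke Corollary~\ref{Corollary9988} in the monotone case, and the non-monotone case correctly produces a periodic point of period $<n$ inside $J_0\setminus J_1$. Two remarks on that part. First, the citation should be Lemma~\ref{Lemma545g55} rather than Corollary~\ref{Corollary545g55}: the corollary assumes globally that $f$ has no periodic attractor, which is not available here, while the lemma's hypotheses are precisely the regularity of $J_0$ (this is the same substitution the paper makes in Lemma~\ref{renormalinksSCH} and Lemma~\ref{semnomeSCH}). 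Second, the worry about representatives lying only in $[b,b_0)$ is unnecessary: once both orbits are disjoint from the open interval $J_1$, Lemma~\ref{Lemma545g55} applied to the regular $J_0$ forces each of them to meet $(a_0,c)$, hence $(a_0,a]$, so the left-side choice always works (and at most one representative can equal $a$).

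The genuine gap is the other boundary sub-case, which your last paragraph defers and then asserts is closed by the same dichotomy: a minimal-period orbit that \emph{does} intersect $J_1$. By your own first observation this can only happen when $n=\min(\period(a),\period(b))$, say $n=\period(a)$, and then the orbit meets $J_1$ in exactly one point $x_2\in(a,c)$ with $f^{\period(a)}(x_2)=x_2$, i.e.\ $x_2$ is an interior fixed point of the left return branch of $J_1$; such an orbit does meet $J_0\setminus J_1$ (its first return to $J_0$ occurs before time $\period(a)$), so it is a second candidate minimizer besides $\co_f^+(a)$. Here your dichotomy does not close the argument: the relevant comparison interval $(a,x_2)$ lies \emph{inside} $J_1$, so the hypothesis $f^j(I)\cap J_1=\emptyset$ of Corollary~\ref{Corollary9988} fails and the monotone alternative only yields \emph{some} periodic attractor (by $Sf<0$ and the Minimum Principle one of $a,x_2$ attracts), which is not a contradiction — the hypothesis forbids only that $J_0$ itself be contained in the basin of a single periodic attractor, and since $J_0$ contains the distinct periodic orbits $\co_f^+(a)$ and $\co_f^+(x_2)$ it can never be so contained. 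Excluding this configuration therefore needs an ingredient beyond the dichotomy, for instance an argument that an interior fixed point of a return branch of $J_1$ forces a renormalization interval strictly between $J_1$ and $J_0$ (contradicting that they are consecutive), or some other use of consecutiveness together with $Sf<0$; as written, the claim that "the dichotomy still closes the argument in every sub-case" is unsupported exactly where the hypotheses of Corollary~\ref{Corollary9988} are unavailable.
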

\dem
Using Corollary~\ref{Corollary9988}, one can easily adapted the proof of the ``Variational Principle'' (Proposition \ref{LemmaVarPric}) to this case.
\cqd

\begin{Lemma}\label{semnomeSCH}  Let $f:[0,1]\setminus\{c\}\to[0,1]$ be a $C^{3}$ contracting Lorenz map with negative Schwarzian derivative.  
Let $J_{0}\supset J_{1}=(a,b)$ be two consecutive renormalization intervals with $J_{0}$ regular and not contained in the basin of a periodic attractor. If $L_J\in\cn_{per}$, $L_J=(p_J,p_J')$ is the connected component of $[0,1]\setminus\co^+_f(p_J)$ containing $c$, where $\period(p_J)=\min\{\period(x)\,;\,x\in Per(f)\cap J_{0}\setminus J_{1}\}$, then
\begin{enumerate}
\item $\co^+_f(x)\cap L_J\ne\emptyset$ for all $x\in J\setminus\co^-_f(p_J)$;
\item $J'\subset L_J$ for all renormalization interval $J'\subset J$.
\end{enumerate}
\end{Lemma}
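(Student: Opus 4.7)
The plan is to replay the proof of Lemma~\ref{semnome} using the $Sf<0$ counterparts of the tools appearing there. Writing $J_{0}=(a_{0},b_{0})$, the adapted Variational Principle (Corollary~\ref{VarPrinII}) gives a unique periodic orbit $\co_{f}^{+}(p_{J})$ realizing $n:=\min\{\period(x)\,;\,x\in Per(f)\cap(J_{0}\setminus J_{1})\}$, because $J_{0}$ is regular and not contained in any periodic attractor basin. Since $J_{0}$ is regular, Lemma~\ref{Lemma545g55} forces $\co_{f}^{+}(p_{J})$ to meet both $(a_{0},c)$ and $(c,b_{0})$, so the connected component $L_{J}=(p_{J},p_{J}')$ of $[0,1]\setminus\co_{f}^{+}(p_{J})$ containing $c$ satisfies $a_{0}<p_{J}<c<p_{J}'<b_{0}$ and is a periodic nice interval with $\co_{f}^{+}(p_{J})=\co_{f}^{+}(p_{J}')$.

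The minimality of $n$ over $J_{0}\setminus J_{1}$ combined with the uniqueness given by Corollary~\ref{VarPrinII} then yields $\xi L_{J}=J_{0}$: since the periods of $a_{0},b_{0}$ are at most $n$, we have $J_{0}\in\cu_{L_{J}}$ and hence $\xi L_{J}\subset J_{0}$, and any strictly smaller candidate $V$ would have endpoints periodic of period $\le n$ outside $\co_{f}^{+}(p_{J})$, forcing them into $\overline{J_{1}}$ and contradicting $V\supset\overline{L_{J}}$. With $\xi L_{J}=J_{0}$ in hand, I apply Corollary~\ref{Corollaryimperfeito} to $L_{J}$. Weak repellers in $\Lambda_{L_{J}}$ are excluded by Corollary~\ref{weakrepellers}; a hypothetical periodic attractor $\Lambda\subset\Lambda_{L_{J}}$ would sit in $J_{0}\setminus L_{J}$ and, by the adapted Singer's Theorem (Corollary~\ref{CoSinger}), pull a one-sided neighborhood of $c$ inside $L_{J}$ into its basin; an iterate of this neighborhood would be an interval $I\subset J_{0}\setminus J_{1}$ with $f^{\ell}(I)\subset I$ and $f^{j}(I)\cap J_{1}=\emptyset$ for $0\le j\le\ell$, so Corollary~\ref{Corollary9988} would yield $J_{0}\subset\beta(\co_{f}^{+}(a))$, contradicting the standing hypothesis on $J_{0}$. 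Corollary~\ref{Corollaryimperfeito} thus delivers item~(1): every $x\in J_{0}$ with $\co_{f}^{+}(x)\cap L_{J}=\emptyset$ lies in $\co_{f}^{-}(p_{J})$.

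Item~(2) follows by mimicking the final step of Lemma~\ref{semnome} with Lemma~\ref{renormalinksSCH} replacing Lemma~\ref{renormalinks}: a renormalization interval $J'\subset J_{0}$ with $J'\not\subset L_{J}$ would satisfy $\partial J'\cap(J_{0}\setminus L_{J})\ne\emptyset$ while niceness of $J'$ gives $\co_{f}^{+}(\partial J')\cap L_{J}\subset\co_{f}^{+}(\partial J')\cap J'=\emptyset$, so Corollary~\ref{Corollaryimperfeito} forces $\partial J'\cap\co_{f}^{-}(p_{J})\ne\emptyset$, and niceness of $J'$ together with the periodicity of $\partial J'$ upgrades this to $\partial J'\cap\partial L_{J}\ne\emptyset$, contradicting Lemma~\ref{renormalinksSCH} since $J_{0}$ is regular. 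The main obstacle is the exclusion of periodic attractors from $\Lambda_{L_{J}}$: the bare assumption $Sf<0$ does not prevent them, so one must combine the regularity of $J_{0}$ with Corollary~\ref{CoSinger} and Corollary~\ref{Corollary9988} to convert a hypothetical local basin near $c$ into a basin engulfing all of $J_{0}$, and then invoke the hypothesis that $J_{0}\not\subset\beta(\Lambda)$.
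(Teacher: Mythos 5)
Your proposal is correct and follows essentially the same route as the paper, which simply declares the proof to be that of Lemma~\ref{semnome} with Corollary~\ref{Corollary545g55} replaced by Lemma~\ref{Lemma545g55} and Lemma~\ref{renormalinks} by Lemma~\ref{renormalinksSCH}: uniqueness via the adapted Variational Principle (Corollary~\ref{VarPrinII}), the identification $\xi L_{J}=J_{0}$, Corollary~\ref{Corollaryimperfeito} for item (1), and the non-linking argument for item (2). Your extra verification that Corollary~\ref{Corollaryimperfeito} can still be invoked under the weaker hypotheses (ruling out periodic attractors interfering with $\Lambda_{L_{J}}$ via Singer's Theorem and Corollary~\ref{Corollary9988}) is a detail the paper's one-line proof leaves implicit, and it is a welcome addition.
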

\dem Changing Corollary~\ref{Corollary545g55} by Lemma~\ref{Lemma545g55} and Lemma~\ref{renormalinks} by Lemma~\ref{renormalinksSCH}, the proof is exactly the same as the proof of Lemma~\ref{semnome}.
\cqd

\subsection{The degenerate renormalization interval}
\label{SecDegRenInt}

Suppose that  $f:[0,1]\setminus\{c\}\to[0,1]$ is a $C^3$ contracting Lorenz map with negative Schwarzian derivative with a periodic attractor and without non-regular renormalization intervals (see, for instance, Example~\ref{ExDegenerado}). In this case, there is a unique periodic attractor $\Lambda$ and only one of the critical values ($\{f(c_{-}),f(c_{+})\}$ is attracted by $\Lambda$. That is, either $c_{-}\in\beta(\Lambda)\not\ni c_{-}$ or $c_{-}\not\in\beta(\Lambda)\ni c_{-}$. Let $I$ be the smallest regular renormalization interval or, if $f$ is not renormalizable, set $I=(0,1)$. As $Sf<0$, there is a periodic point $a\in I$ such that $(a,c)$ or $(c,a)\in\subset\beta(\Lambda)$. We may assume that $(a,c)\in\subset\beta(\Lambda)$. Let $\ell=\period(a)$ and let $T$ be the maximal interval containing $a$ such that $f^{\ell}|_{T}$ is a homeomorphism.  As $Sf<0$, we get in this case that if $a$ is the unique fixed point of $f^{\ell}|_{T}$ then $a$ is a saddle-node and $a\in\Lambda$. If $\Lambda$ is a super-attractor ($c\in\Lambda$) then $a$ is the unique fixed point of $f^{\ell}|_{T}$ and $Df^{\ell}(a)>1$. If $\Lambda$ is not a super-attractor then $f^{\ell}|_{T}$ has exactly two fixed points $a_{0}<a_{1}<c$ (of course $a\in\{a_{0},a_{1}\}$). Moreover, $Df^{\ell}(a_{0})>1>Df^{a_{1}}$, $a_{1}\in\Lambda$ and $f^{\ell}((a_{0},c))\subset(a_{0},c)\subset\beta(\Lambda)$.

So, there is $\alpha<c$ ($\alpha\in I$) such that $\co_{f}^{+}(\alpha)\cap(\alpha,c)=\emptyset=(\alpha,c)\cap\co_{f}^{+}(c_{+})$ and such that $f^{\period(\alpha)}(\alpha,c))\subset(\alpha,c)$.

\begin{Definition}[Degenerate Renormalization Intervals]
We say that an interval $I=(a,c)$ is a degenerate renormalization interval if there is $n$ such that $f^{n}(I)\subset I$ and $\co_{f}^{+}(a)\cap I=\emptyset= I\cap\co_{f}^{+}(c_{+})$. Analogously, an interval $I=(c,a)$ is called a degenerate renormalization interval if there is $n$ such that $f^{n}(I)\subset I$ and $\co_{f}^{+}(a)\cap I=\emptyset=I\cap\co_{f}^{+}(c_{-})$.
\end{Definition}

\begin{Example}[Figure~\ref{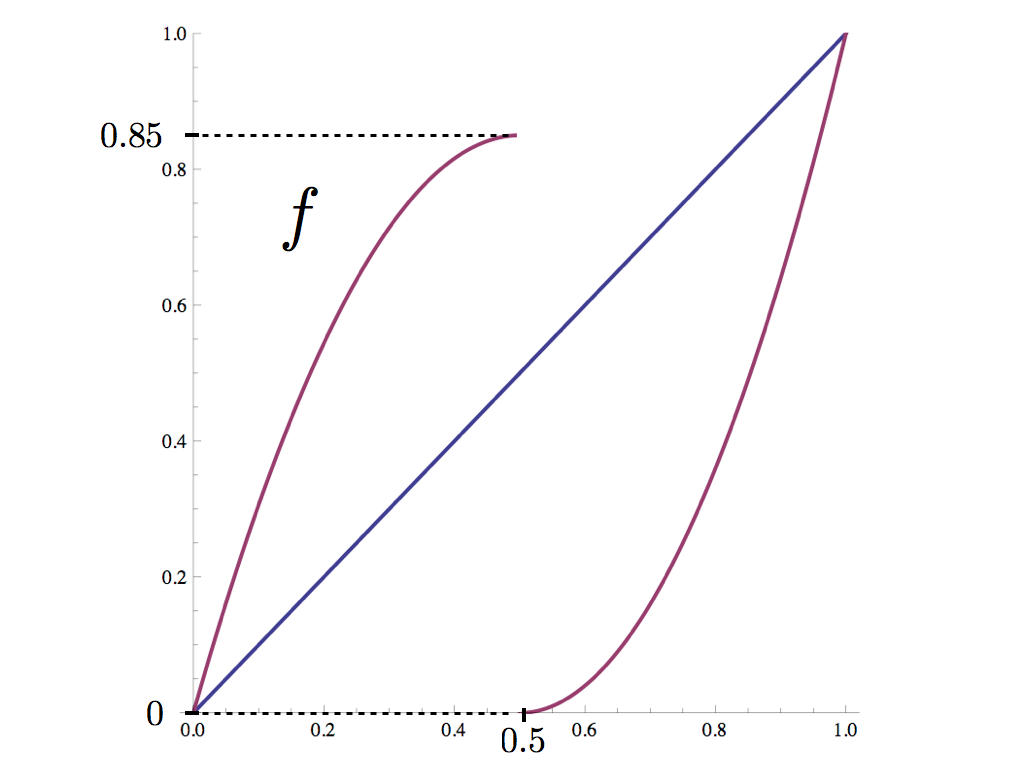}]\label{ExDegenerado}
Let $f:[0,1]\setminus\{0.5\}\to[0,1]$ given by
$$
f(x)=\begin{cases}
3.4 x (1 - x)&\text{ if }x<0.5\\
1 - 4 x (1 - x)&\text{ if }x>0.5
\end{cases}
.$$

\begin{figure}
\begin{center}\label{ExemploDegeneradoA.png}
\includegraphics[scale=.23]{ExemploDegeneradoA.png}\\
Figure~\ref{ExemploDegeneradoA.png}
\end{center}
\end{figure}

In this example $c=0.5$, $f(c_{-})=0.85$ and $f(c_{+})=0$. As $f(c_{+})=0$, $f$ does not admit a renormalization. Indeed, if $I=(a,b)\subsetneqq(0,1)$ is a renormalization interval then we would have $0<a<c<b<1$ and $f^{\period(a)}((a,c))\subset(a,b)\supset f^{\period(b)}((c,b))$. But as $f(c_{+})=0$, $f^{\period(b)}((c,b))=(0,b)\not\subset(a,b)$. On the other hand $\Lambda=\{p,q\}$, where $p\approx0.48880830755049054\cdots$ and $q\approx0.849574136468393\cdots$  is an attracting periodic orbit (of period $2$) for $f$ (Figure~\ref{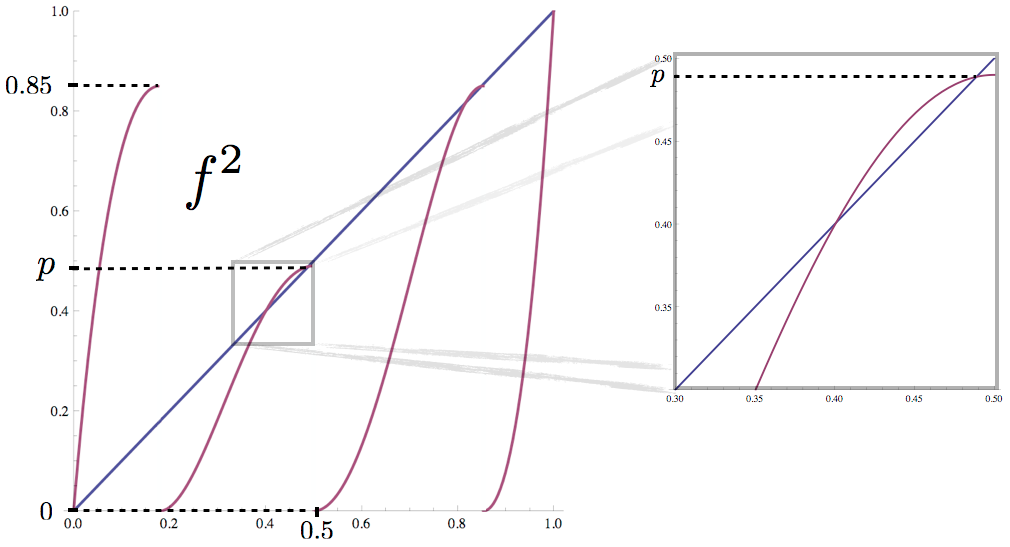}).

\begin{figure}
\begin{center}\label{ExemploDegeneradoB.png}
\includegraphics[scale=.37]{ExemploDegeneradoB.png}\\
Figure~\ref{ExemploDegeneradoB.png}
\end{center}
\end{figure}

\end{Example}

The proof of the following lemma is immediate.
\begin{Lemma}\label{LemmaUmOuOutro}
If $f:[0,1]\setminus\{c\}\to[0,1]$ is a $C^{3}$ contracting Lorenz map with negative Schwarzian derivative then either $f$ has a non-regular renormalization interval or $f$ has a degenerate renormalization interval.
\end{Lemma}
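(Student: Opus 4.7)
The plan is to interpret the statement in the context of the preceding paragraph: the claim is vacuous (or rather, the ``non-regular'' horn is never triggered) when $f$ admits no periodic attractor, since by Remark~\ref{Remark5466} the absence of a periodic attractor already forces every renormalization interval to be regular, and the invariance condition $f^{n}(I)\subset I$ in the definition of a degenerate renormalization interval forces, together with $Sf<0$ and the absence of weak repellers, the existence of an attracting fixed point of $f^{n}$ in $\overline{I}$. Hence the substance of the lemma lies in the case when $f$ has a periodic attractor, and here the content is exactly the construction carried out in the paragraph preceding the statement. My task therefore reduces to reorganizing that construction into a genuine proof.

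First I would assume $f$ has a periodic attractor and that $f$ has no non-regular renormalization interval, aiming to produce a degenerate one. Let $I_{0}$ be the smallest regular renormalization interval of $f$, or $I_{0}=(0,1)$ if $f$ is non-renormalizable. By Singer's Corollary~\ref{CoSinger} applied to the first-return Lorenz map on $\overline{I_{0}}$ (which is well defined because $I_{0}$ is regular and $f$ has no smaller renormalization interval), the immediate basin of some periodic attractor $\Lambda$ of $f$ contained in $I_{0}$ must contain a one-sided neighborhood of $c$. Without loss of generality, there is a periodic point $a\in I_{0}\cap(0,c)$ with $(a,c)\subset\beta(\Lambda)$ and $\co_{f}^{+}(a)\cap(a,c)=\emptyset$.

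Next I would set $\ell=\period(a)$ and let $T\ni a$ be the maximal interval on which $f^{\ell}$ is a homeomorphism; since $Sf<0$, $f^{\ell}|_{T}$ has at most two fixed points, and a short case analysis (super-attractor versus ordinary attractor, carried out already in the paragraph preceding the lemma) produces $\alpha\in[a,c)$ which is periodic (saddle-node or expanding), satisfies $f^{\period(\alpha)}((\alpha,c))\subset(\alpha,c)$, and has $\co_{f}^{+}(\alpha)\cap(\alpha,c)=\emptyset$. It remains only to verify the second emptiness condition $(\alpha,c)\cap\co_{f}^{+}(c_{+})=\emptyset$ from the definition of a degenerate renormalization interval.

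The only non-immediate step, and what I expect to be the main obstacle, is this last verification. The way I would handle it: if $\co_{f}^{+}(c_{+})$ met $(\alpha,c)\subset\beta(\Lambda)$, then both critical values would lie in $\beta(\Lambda)$, and applying the same Singer-type argument on the right side of $c$ would give a symmetric periodic point $\beta\in(c,b]$ with $f^{\period(\beta)}((c,\beta))\subset(c,\beta)$; combining the two yields either a renormalization interval $(\alpha,\beta)$ for which the regularity condition $f^{\period(\alpha)}((\alpha,c))\ni c \in f^{\period(\beta)}((c,\beta))$ fails (contradicting the assumed absence of non-regular renormalization intervals via Remark~\ref{Remark54556}), or (in case the opposite critical value is captured by a second periodic attractor) provides a symmetric degenerate renormalization interval $(c,\beta)$ on the right, which is itself a degenerate renormalization interval and so still concludes the proof. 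In either branch of this dichotomy the lemma follows, and the proof is indeed immediate from the material already assembled.
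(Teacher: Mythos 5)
Your framing matches the paper's: the lemma carries the standing hypotheses of Section~\ref{SecDegRenInt} (a periodic attractor is present), the paper gives no argument beyond calling the proof immediate, and the construction you reassemble (smallest regular renormalization interval, Corollary~\ref{CoSinger} together with $Sf<0$ producing a periodic $\alpha$ with $f^{\period(\alpha)}((\alpha,c))\subset(\alpha,c)$ and $\co_{f}^{+}(\alpha)\cap(\alpha,c)=\emptyset$) is exactly the one the paper intends.

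The problem is the step you yourself single out, and your patch for it does not work. From $\co_{f}^{+}(c_{+})\cap(\alpha,c)\ne\emptyset$ you deduce that both critical values lie in $\beta(\Lambda)$ and then claim that ``the same Singer-type argument on the right side'' produces a periodic point $\beta>c$ with $f^{\period(\beta)}((c,\beta))\subset(c,\beta)$. Singer's theorem (Corollary~\ref{CoSinger}) only locates a critical point or a boundary point in the \emph{immediate} basin of an attracting orbit; it hands you one one-sided neighborhood of $c$, and the fact that $c_{+}$ lies in the global basin $\beta(\Lambda)$ neither makes the immediate basin two-sided nor creates any right-sided return structure. Concretely, nothing in your argument excludes the configuration in which the left branch has fixed points $0<\alpha<a_{1}<c$ with $a_{1}$ attracting and $f(c_{-})\in(a_{1},c)$, while the right branch satisfies $f(c_{+})\in(\alpha,a_{1})$ and $f(x)<x$ on $(c,1)$, so that the only periodic point to the right of $c$ is the repelling fixed point $1$. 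There your case hypothesis holds (the orbit of $c_{+}$ enters $(\alpha,c)$ and is attracted to $a_{1}$), yet no interval $(c,\beta)$ with $f^{\period(\beta)}((c,\beta))\subset(c,\beta)$ exists, so your dichotomy never gets triggered by the mechanism you describe; the lemma survives for different reasons, e.g.\ $(\alpha,1)$ is a renormalization interval which is non-regular because of its \emph{left} half ($c\notin f^{\period(\alpha)}((\alpha,c))$, cf.\ Remark~\ref{Remark54556}), and $(a_{1},c)$ is a degenerate interval with a re-chosen left endpoint. What actually has to be proved in this case --- either that the left endpoint can always be re-chosen so that $\co_{f}^{+}(c_{+})$ is avoided, or that some periodic $q>c$ (possibly $q=1$) satisfies $f^{\period(q)}((c,q])\subset[p,q]$ for a suitable periodic $p$, closing up a necessarily non-regular renormalization interval --- is exactly the assertion the paper itself leaves unproved (that only one of the critical values is attracted by $\Lambda$), and your sketch does not supply it; note also that your fallback ``degenerate interval $(c,\beta)$ on the right'' would additionally require $(c,\beta)\cap\co_{f}^{+}(c_{-})=\emptyset$, which you never verify.
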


\begin{Lemma}[The maximal degenerate renormalization interval]\label{LemmaS65Deg} Let $f:[0,1]\setminus\{c\}\to[0,1]$ be a $C^{3}$ contracting Lorenz map with negative Schwarzian derivative. If $f$ has a degenerate renormalization interval then there is a degenerate renormalization interval $J_{max}$ such that $J_{max}\supset I$ for every degenerate renormalization interval $I$.
\end{Lemma}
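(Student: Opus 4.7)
The plan is to adapt the strategy of Lemma~\ref{LemmaS6546SS} to the degenerate setting, with the additional twist that every degenerate renormalization interval has $c$ as one endpoint, so each is of one of two ``types'': left-type $(a,c)$ or right-type $(c,b)$. First I would observe that on each side the family of degenerate renormalization intervals is totally ordered by inclusion: if $(a_{1},c)$ and $(a_{2},c)$ are both left-type with $a_{1}\le a_{2}$ then $(a_{2},c)\subset(a_{1},c)$, so the larger is a trivial upper bound for the pair. The task on each side therefore reduces to producing a supremum.

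Next I would show that, on each side separately, the supremum of the family is itself a degenerate renormalization interval. Consider an increasing chain $\{(a_{n},c)\}_{n\ge1}$ of left-type degenerate intervals with $a_{n}\downarrow a_{\infty}$. The condition $(a_{\infty},c)\cap\co_{f}^{+}(c_{+})=\emptyset$ is immediate from $(a_{\infty},c)=\bigcup_{n}(a_{n},c)$. For the remaining conditions I would invoke the Singer-type result (Corollary~\ref{CoSinger}) applied to $f^{N_{n}}|_{[a_{n},c]}$, where $N_{n}$ witnesses the return property $f^{N_{n}}((a_{n},c))\subset(a_{n},c)$; this places each $(a_{n},c)$ in the basin of a periodic attractor of $f$. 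Since $Sf<0$, the boundary of the immediate basin is either a critical point or a repelling periodic orbit (by Singer together with the Minimum Principle, Proposition~\ref{MinP}). As $c$ is the unique critical point and lies on the opposite side, this forces $a_{\infty}$ to be periodic of some period $m$, with $f^{m}([a_{\infty},c])\subset[a_{\infty},c]$ and $\co_{f}^{+}(a_{\infty})\cap(a_{\infty},c)=\emptyset$. Hence $(a_{\infty},c)$ is the maximal left-type degenerate renormalization interval; the right-type case is symmetric.

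To conclude with a \emph{single} $J_{max}$ containing all degenerate intervals, I would rule out the simultaneous existence of nontrivial left-type and right-type degenerate intervals, since otherwise no single $J_{max}$ with $c$ as endpoint could contain both. As noted at the start of Section~\ref{SecDegRenInt}, in the scenario where degenerate intervals arise (periodic attractor present, no non-regular renormalization interval, $Sf<0$) there is a unique periodic attractor $\Lambda$ and only one of the critical values $f(c_{-}), f(c_{+})$ is attracted to $\Lambda$; meanwhile, a left-type (respectively right-type) degenerate interval would force $c_{-}$ (respectively $c_{+}$) to be attracted by a periodic orbit contained in $[a,c]$ (respectively $[c,b]$). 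Hence degenerate intervals can only occur on the side whose critical value lies in $\beta(\Lambda)$, and $J_{max}$ is the maximum on that side.

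The hardest step of this plan is the limiting argument in the second paragraph: ensuring that an infinite ascending chain of degenerate intervals yields, in the limit, an interval that is itself degenerate, with a \emph{single} return time and with $a_{\infty}$ periodic. Without $Sf<0$ one could imagine a chain whose periodic endpoints accumulate on a non-periodic point; the hypothesis of negative Schwarzian derivative, via Singer's Theorem and the Minimum Principle, is what forces the immediate basin boundary to consist of repelling periodic orbits and delivers both the periodicity of $a_{\infty}$ and the uniform return time.
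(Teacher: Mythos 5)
Your reduction to one side (two left-type intervals share the endpoint $c$, hence are nested) is fine, and you are right that something must rule out the simultaneous presence of left-type and right-type intervals; note, however, that your argument for that point imports the standing assumptions of Section~\ref{SecDegRenInt} (a periodic attractor, no non-regular renormalization interval, a unique attractor capturing exactly one critical value), which are not hypotheses of the lemma as stated, so at best you prove the lemma in the context in which it is applied. The genuine gap is the limiting step, the one you yourself flag as hardest. From $f^{N_n}((a_n,c))\subset(a_n,c)$ and Corollary~\ref{CoSinger} you may conclude that each $(a_n,c)$ lies in basins of periodic attractors (after first checking that $f^{N_n}|_{(a_n,c)}$ is monotone, which uses $\co_{f}^{+}(c_{+})\cap(a_n,c)=\emptyset$ and is not mentioned). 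But the inference ``hence $a_{\infty}$ is periodic, with a single $m$ such that $f^{m}([a_{\infty},c])\subset[a_{\infty},c]$ and with $\co_{f}^{+}(a_{\infty})\cap(a_{\infty},c)=\emptyset$'' is unsupported: (i) nothing shows that $a_{\infty}$ is a boundary point of the immediate basin of the relevant attractor --- the basin component abutting $c$ from the left may strictly contain $(a_{\infty},c)$, in which case the Singer/Minimum Principle analysis of basin boundaries says nothing about $a_{\infty}$; (ii) even for genuine boundary points of immediate basins of the non-invertible map $f$, the dichotomy ``critical point or repelling periodic orbit'' is not available: such points can be preperiodic or lie in $\co_{f}^{-}(c)$, and the clean statement that the boundary is fixed by the return map presupposes a single return map on a neighbourhood of $[a_{\infty},c]$, i.e.\ exactly the uniform return time you are trying to produce, so the argument is circular; (iii) even granting $a_{\infty}$ periodic of period $m$, the inclusion $f^{m}((a_{\infty},c))\subset(a_{\infty},c)$ and the condition $\co_{f}^{+}(a_{\infty})\cap(a_{\infty},c)=\emptyset$ need separate arguments (one must rule out that some iterate $f^{j}((a_{\infty},c))$, $j<m$, covers $c$, and that intermediate iterates of $a_{\infty}$ fall into the interval). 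None of this is addressed, so the key step does not close.

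For comparison, the paper disposes of the lemma by the same mechanism as Lemma~\ref{LemmaS6546SS}: the union of two degenerate renormalization intervals is again one (on a fixed side this is just your nesting remark) and $J_{max}$ is taken to be the union of the whole family; the content your sketch is missing is precisely the verification that such an increasing union is itself degenerate. A natural way to repair your argument, in the spirit of the discussion at the beginning of Section~\ref{SecDegRenInt}, is to identify the candidate maximal interval directly --- using $Sf<0$, the local basin attached to $c$ on the relevant side is bounded by a periodic point $a_{0}$ (repelling or saddle-node) with $\co_{f}^{+}(a_{0})\cap(a_{0},c)=\emptyset$ and $f^{\period(a_{0})}((a_{0},c))\subset(a_{0},c)$ --- and then to show that every degenerate interval on that side is contained in $(a_{0},c)$. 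As written, the Singer-based limit argument supplies neither the periodicity of $a_{\infty}$ nor the uniform return time.
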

\dem The same of the Proof of Lemma~\ref{LemmaS6546SS}. \cqd

\begin{Lemma}[$\Omega(f)$ inside a degenerate renormalization interval]\label{LemmaSSDeg} Let $f:[0,1]\setminus\{c\}\to[0,1]$ be a $C^{3}$ contracting Lorenz map with negative Schwarzian derivative. If $J$ is a degenerate renormalization interval then $J\subset\beta(\Lambda)$, where $\Lambda$ is a periodic attractor. In particular, $\Omega(f)\cap J=\Omega(f)\cap\Lambda$. Furthermore, $\Lambda$ is the unique periodic attractor of $f$.
\end{Lemma}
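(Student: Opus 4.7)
Assume without loss of generality that $J=(a,c)$; the case $J=(c,a)$ is completely symmetric. Let $n\ge 1$ be the minimal integer with $f^{n}(J)\subset J$. My first step is to check that $g:=f^{n}$ restricts to a well-defined $C^{3}$ orientation-preserving homeomorphism of $\overline J=[a,c]$ into itself, with $g(a)=a$ and $Sg<0$. The minimality of $n$ together with $\co_{f}^{+}(c_{+})\cap J=\emptyset$ forces $c\notin f^{j}(J)$ for $0\le j<n$ (otherwise the image would split into two pieces, and the piece containing $c_{+}$ would have to eventually re-enter $J$ at step $n$, contradicting $\co_{f}^{+}(c_{+})\cap J=\emptyset$), so $g$ is continuous and $C^{3}$ on $J$. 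Orientation preservation of $f$ on each side of $c$ gives $g$ monotone increasing, so the continuous extension satisfies $g(a)=\inf g(J)\in[a,c)$; since $\co_{f}^{+}(a)\cap J=\emptyset$ rules out $g(a)\in(a,c)$, we conclude $g(a)=a$. Negative Schwarzian is preserved under composition, so $Sg<0$ throughout $J$.

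Second, I apply the generalized Singer analysis of Corollary~\ref{CoSinger} to the monotone self-map $g:[a,c]\to[a,c]$. Here $a$ is a boundary fixed point and $c$ plays the role of a boundary critical point for $g$ (indeed $g'(x)\to 0$ as $x\uparrow c$ by the chain rule and the contracting condition $\lim_{x\to c}f'(x)=0$). The standard negative-Schwarzian theory for monotone maps then produces a unique attracting periodic orbit of $g$ inside $\overline J$: either the boundary fixed point $a$ (possibly as a one-sided saddle-node), an interior attracting fixed point $p\in(a,c)$, or a super-attractor at $c$. Writing $\Lambda$ for the corresponding $f$-orbit, every orbit of $g$ in $J$ tends to this attractor, hence $J\subset\beta(\Lambda)$, establishing the first assertion.

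Third, for the identity $\Omega(f)\cap J=\Omega(f)\cap\Lambda$, the inclusion $\supset$ is immediate since $\Lambda\cap J$ consists of periodic (hence non-wandering) points. For $\subset$, fix $x\in\Omega(f)\cap J$ and take an arbitrary open $U\subset J$ containing $x$; non-wandering gives $y\in U$ and $k\ge 1$ with $f^{k}(y)\in U$. Since $y\in J\subset\beta(\Lambda)$, the forward orbit of $y$ accumulates only on $\Lambda$, and so a return of some iterate into $U$ forces $U\cap\Lambda\ne\emptyset$. Shrinking $U$ and using that $\Lambda$ is finite and closed yields $x\in\Lambda$.

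The main obstacle is the uniqueness of $\Lambda$ as a periodic attractor of $f$. A left neighborhood of $c$ sits inside the immediate basin of $\Lambda$, so $c_{-}$ is absorbed. If a second periodic attractor $\Lambda'$ existed, Corollary~\ref{CoSinger} would place a critical or boundary point of $[0,1]$ in its immediate basin; since $c_{-}$ is already taken by $\Lambda$ and since the standing contracting Lorenz setup rules out $\{0\}$ and $\{1\}$ as attractors (they are repelling fixed points), we would be forced into $c_{+}\in$ immediate basin of $\Lambda'$. A symmetric analysis to the one carried out in the first two paragraphs, applied on the right side of $c$, would then produce a degenerate renormalization interval $(c,a')$ with $(c,a')\subset\beta(\Lambda')$ and $a'$ periodic. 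But then $(a,a')$ would be a renormalization interval failing both regularity inequalities (since $f^{\period(a)}((a,c))\subset(a,c)$ gives $\lim_{x\uparrow c}f^{\period(a)}(x)\le c$, and the symmetric bound holds on the right), so $(a,a')$ would be a non-regular renormalization interval. This contradicts the standing hypothesis of Section~\ref{SecDegRenInt} that $f$ admits no non-regular renormalization interval, forcing $\Lambda$ to be the unique periodic attractor of $f$.
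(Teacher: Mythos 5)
The decisive gap is in your second paragraph. For the monotone return branch $g=f^{n}|_{[a,c]}$ with $g(a)=a$, $Sg<0$ and $Dg(x)\to0$ as $x\uparrow c$, negative Schwarzian does \emph{not} yield a unique attracting fixed point whose basin is all of $(a,c)$. The Singer-type argument (Proposition~\ref{MinP} combined with the mean value theorem, as in Theorem~\ref{ThSinger} and Corollary~\ref{CoSinger}) only excludes an attracting fixed point whose immediate basin is bounded by two fixed points where $g$ is differentiable with nonvanishing derivative; it does not exclude the sigmoid configuration in which $a$ attracts from the right, a repelling fixed point $r\in(a,c)$ lies above it, and a second attracting fixed point lies in $(r,c)$ with immediate basin reaching the critical endpoint $c$ (the profile $Dg(a)\le1$, $Dg(r)\ge1$, then $Dg\le1$ and $Dg\to0$ is perfectly compatible with convexity of $(Dg)^{-1/2}$; an affine copy of $\arctan$ already shows $Sg<0$ allows attracting--repelling--attracting). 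In that configuration $J$ is split between the basins of two distinct periodic attractors of $f$, so neither $J\subset\beta(\Lambda)$ for a single periodic attractor nor the uniqueness claim follows from your argument, and nothing in the bare definition of a degenerate renormalization interval (which constrains only the orbits of $a$ and $c_{+}$, not the return dynamics inside $J$) visibly rules it out --- this is exactly the phenomenon Lemma~\ref{LemmaSSSS} must allow for non-regular renormalization intervals (``a union of two periodic attractors''). The paper's own proof is literally ``straightforward'' only because the lemma is read inside the standing setup of Section~\ref{SecDegRenInt}, where the degenerate interval is \emph{constructed} inside the basin of the already-unique periodic attractor; your self-contained route needs an extra argument excluding the two-attractor picture, and none is given.

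Two further points. Your uniqueness paragraph invokes ``the standing hypothesis of Section~\ref{SecDegRenInt} that $f$ admits no non-regular renormalization interval'', which is not among the hypotheses of the lemma as stated, so the last assertion is not proved as a standalone claim; and even granting it, promoting $(a,a')$ to a renormalization interval requires $(a,a')$ to be nice, i.e.\ that $\co_{f}^{+}(a)$ avoids $(c,a')$ and $\co_{f}^{+}(a')$ avoids $(a,c)$, which is not part of the degenerate-interval data. Finally, in the first paragraph the claim $c\notin f^{j}(J)$ for $j<n$ is not fully justified: re-entry of the piece adjacent to the orbit of $c_{+}$ only gives, after passing to limits, an iterate of $c_{+}$ in the closed interval $[a,c]$, and $\co_{f}^{+}(c_{+})\cap J=\emptyset$ does not exclude landing exactly on $a$ or on $c$; the case of landing on $c$ can be dismissed (orientation preservation then pushes nearby images to the right of $c$, contradicting $f^{n}(J)\subset J$), but an iterate of $c_{+}$ hitting $a$ exactly needs a separate argument, as does the possibility that the orbit of $a$ meets $c$ before time $n$, in which case $g(a)=a$ no longer means $a\in Per(f)$.
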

\dem Straightforward.\cqd

\subsection{Proof of the Spectral Decomposition}\label{SecSepDECOM}

Let $f:[0,1]\setminus\{c\}\to[0,1]$ be a $C^3$ contracting Lorenz map with negative Schwarzian derivative.

Let $\cR_{f}$ be the following sequence of nested renormalization intervals of $f$. If all renormalization intervals of $f$ are regular, then let $\cR_{f}$ be the nested sequence $(0,1)\supsetneqq I_{1}\supsetneqq I_{2}\supsetneqq I_{3}\cdots$ for all renormalization intervals of $f$ (by Lemma~\ref{renormalinksSCH} the sequence of regular renormalization interval is a nested sequence). If there is an non-regular (or a degenerate) renormalization interval, let $J_{max}$ be the maximal non-regular (or a degenerate) renormalization interval (see Lemma~\ref{LemmaS6546SS} (or Lemma~\ref{LemmaS65Deg})). Note that we can not have, for the same map, a non-regular and a degenerate renormalization interval (Lemma~\ref{LemmaUmOuOutro}).  The presence of a non-regular or a degenerate renormalization interval implies in the existence of a periodic attractor and the finitude of the number of renormalization intervals. In this case, let $J_{1}\supsetneqq I_{2}\supsetneqq\cdots \supsetneqq J_{t}$ be the nested sequence of all regular renormalization intervals. Note that $J_{t}\supset\overline{J_{max}}$ (Lemma~\ref{LemmaJ1J2} and \ref{renormalinksSCH}). Thus, set $I_{j}=J_{j}$ for $1\le j\le t$, $I_{t+1}=J_{max}$ and define $\cR_{f}=\{I_{n}\}_{1\le n\le t+2}$.

\begin{Definition}[The nested sequence of renormalization intervals]\label{NestRI}
Given a $C^3$ contracting Lorenz map $f$ with negative Schwarzian derivative, define {\em the nested sequence of renormalization intervals} of $f$ as the sequence $\cR_{f}$ constructed above.
\end{Definition}

\begin{Notation}
Given $I_{n}\supset I_{n+1}$ two consecutive intervals of the nested sequence of renormalization intervals $\cR_{f}$, define $K_{n}=K_{I_{n}}$  and $$\Omega_{n}(f)=\Omega(f)\cap\big(K_{n}\setminus K_{n+1}\big).$$
\end{Notation}

\begin{definition}[$J_x$, the $x$-phobic critical gap]
Let $f:[0,1]\setminus\{c\}\to[0,1]$ be a contracting Lorenz map. Write $v_{1}=\sup f([0,c))$ and $v_{0}=\inf f((c,1])$ For any $v_{0} < x <v_{1}$ define $J_{x}$ as the connected component of $[0,1]\setminus\alpha_{f}(x)$ containing $c$.
\end{definition}

Recall that Lemma \ref{jotaxrenormaliza} states that if $J_{x}\ne\emptyset$ then $J_{x}$ is a renormalization interval and $\partial J_{x}\subset\alpha_{f}(x)$.

\begin{Lemma}\label{LemmaPrimeiroEspectro}Let $f:[0,1]\setminus\{c\}\to[0,1]$ be a $C^3$ contracting Lorenz map with negative Schwarzian derivative. Let $I_n\supset I_{n+1}$, be two consecutive intervals of the nested sequence of renormalization intervals $\cR_{f}$, with $I_{n}$ being regular.  Given $x \in I_{n} \setminus K_{I_{n+1}}$ then $J_x$ is either $I_{n}$ or $I_{n+1}$. Also, it is always true that $\alpha_{f}(x) \supset \partial I_{n}$, and if $\alpha_f(x) \cap \partial I_{n+1} \ne \emptyset$ indeed $\alpha_f(x) \supset \partial I_{n+1}$.

%

\end{Lemma}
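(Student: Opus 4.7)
The proof decomposes into the three assertions, and I would start by noting that assertion 3 is an immediate consequence of assertion 1 together with the fact that $\partial I_{n+1}\subset I_n$. Indeed, if $J_x=I_n$ then $\alpha_f(x)\cap I_n=\emptyset$, so $\alpha_f(x)\cap\partial I_{n+1}=\emptyset$ and the hypothesis of assertion 3 is vacuous; while if $J_x=I_{n+1}$ then $\partial J_x=\partial I_{n+1}\subset\alpha_f(x)$ by the very definition of $J_x$. So it remains to establish assertions 1 and 2.

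For $\partial I_n\subset\alpha_f(x)$ I would exploit the regularity of $I_n$: the endpoints $a_n$ and $b_n$ are hyperbolic repelling fixed points of $f^{\period(a_n)}$ and $f^{\period(b_n)}$ respectively. Lemma~\ref{Lemma657h6} then supplies, for every $\varepsilon>0$, an integer $k\ge 1$ such that $f^{k\,\period(a_n)}|_{(a_n,a_n+\varepsilon)}$ is a homeomorphism whose image contains $(a_n,c)$. If $x\in(a_n,c)$ this directly yields a pre-image of $x$ inside $(a_n,a_n+\varepsilon)$; if instead $x\in(c,b_n)$ one first manufactures a pre-image of $x$ lying in $(a_n,c)$ using the two inverse branches of the first return Lorenz map $F_n$ on $I_n$, and then pulls this pre-image back into $(a_n,a_n+\varepsilon)$ via Lemma~\ref{Lemma657h6}. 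As $\varepsilon$ is arbitrary and the relevant $k\to\infty$ when $\varepsilon\to 0$, we obtain pre-images of $x$ of unbounded $f$-order accumulating at $a_n$, so $a_n\in\alpha_f(x)$; the argument for $b_n$ is symmetric.

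For assertion 1, I would first show that $c\notin\alpha_f(x)$: otherwise pre-images of $x$ would accumulate at $c$ and therefore eventually fall inside $I_{n+1}$, one of the gaps making up the nice trapping region $K_{I_{n+1}}$; forward invariance of $K_{I_{n+1}}$ would then force $x\in K_{I_{n+1}}$, contradicting the hypothesis. Hence $J_x$ is a non-empty open interval containing $c$, and Lemma~\ref{jotaxrenormaliza} promotes it to a renormalization interval of $f$. By Lemma~\ref{renormalinksSCH} all renormalization intervals of $f$ are totally nested, so $J_x$ is a member of the chain $\cR_f$; combined with assertion 2 this already gives $J_x\subseteq I_n$. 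To obtain the reverse containment $J_x\supseteq I_{n+1}$, suppose for contradiction $J_x\subsetneq I_{n+1}$. Then some $y\in\partial J_x$ lies in $\interior(I_{n+1})$, so $y\in\alpha_f(x)\cap\interior(I_{n+1})$; pre-images of $x$ converging to $y$ eventually lie in $I_{n+1}\subset K_{I_{n+1}}$, and forward invariance of $K_{I_{n+1}}$ again forces $x\in K_{I_{n+1}}$, a contradiction. Hence $I_{n+1}\subseteq J_x\subseteq I_n$, and because the construction of $\cR_f$ (invoking Lemma~\ref{LemmaJ1J2} in the boundary case $I_{n+1}=J_{max}$) leaves no renormalization interval strictly between $I_n$ and $I_{n+1}$, we conclude $J_x\in\{I_n,I_{n+1}\}$.

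The main technical obstacle is the side-crossing step in assertion 2, since Lemma~\ref{Lemma657h6} produces pre-image accumulation only on one side and one must travel across $c$ using both inverse branches of $F_n$; an additional delicacy when $I_{n+1}=J_{max}$ is non-regular is that $F_n$ may carry periodic attractors inside $I_{n+1}$, but $K_{I_{n+1}}$ is still forward invariant and $I_{n+1}$ is still one of its constituent gaps, so the forward-invariance arguments driving assertion 1 go through unchanged.
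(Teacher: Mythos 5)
Your scaffolding for assertion 1 and your reduction of assertion 3 to assertion 1 are essentially the paper's ($c\notin\alpha_f(x)$ via forward invariance of $K_{I_{n+1}}$, hence $J_x\supset I_{n+1}$; $J_x$ a renormalization interval by Lemma~\ref{jotaxrenormaliza}; nestedness via Lemma~\ref{renormalinksSCH}). The genuine gap is precisely the side-crossing step in your proof of assertion 2, and it cannot be repaired as stated. Write $\ell=\period(a_n)$, $r=\period(b_n)$, $v_-=f^{\ell}(c_-)$. The left branch of the return map $F_n$ has image $(a_n,v_-)$, so a point $x\in(c,b_n)$ with $x\ge v_-$ admits no left-branch pre-image; its only $F_n$-pre-image is the right-branch one, and since $I_n$ is regular ($f^{r}(t)<t$ on $(c,b_n)$) that pre-image is strictly larger than $x$. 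Hence every backward $F_n$-orbit of such an $x$ stays in $[v_-,b_n)$ and increases towards $b_n$, and by Remark~\ref{moduloretorno} every $f$-pre-image of $x$ lying in $I_n$ is such a backward return iterate. So for these $x$ there is \emph{no} pre-image of $x$ in $(a_n,c)$ at all: the pre-image you propose to ``manufacture'' with the two inverse branches of $F_n$ does not exist. Such $x$ really do occur in $I_n\setminus K_{I_{n+1}}$ (whenever $v_-<b_n$): the right-branch backward iterates of $b_{n+1}$ accumulate at $b_n$, so they eventually exceed $v_-$, and they belong to $\Lambda_{I_{n+1}}$ (their forward orbit stays in $[b_{n+1},b_n)$ at return times and then follows the orbit of $b_{n+1}$, which avoids the nice interval $I_{n+1}$), hence they lie outside the union of gaps $K_{I_{n+1}}$. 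Since you make assertion 1 depend on the full assertion 2 (``combined with assertion 2 this already gives $J_x\subseteq I_n$''), the gap propagates through your whole argument.

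The repair is the paper's ordering, which avoids any unconditional crossing of sides. One proves only the same-side endpoint directly: for $x\in(a_n,c)$, Lemma~\ref{Lemma657h6} gives pre-images of $x$ in $(a_n,a_n+\varepsilon)$ for every $\varepsilon$, so $a_n\in\alpha_f(x)$ (symmetrically $b_n\in\alpha_f(x)$ when $x\in(c,b_n)$). This single endpoint already forbids $J_x\supsetneq I_n$ (a renormalization interval strictly containing the regular $I_n$ contains that endpoint in its interior, by Lemma~\ref{renormalinksSCH}), and together with $J_x\supset I_{n+1}$ and consecutiveness yields $J_x\in\{I_n,I_{n+1}\}$. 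The opposite endpoint of $I_n$ is then recovered \emph{a posteriori}: if $J_x=I_n$ it lies in $\alpha_f(x)$ simply because $\partial J_x\subset\alpha_f(x)$ -- here the accumulation of pre-images occurs from outside $I_n$, not through any pre-image of $x$ in the other half of $I_n$, which is exactly why your direct construction is doomed; if $J_x=I_{n+1}$, one uses that the opposite boundary point of $I_{n+1}$ is in $\alpha_f(x)$, while no pre-image can enter $I_{n+1}$, to produce a pre-image $x'$ of $x$ in the other component of $I_n\setminus\overline{I_{n+1}}$, and then applies the one-sided Lemma~\ref{Lemma657h6} argument to $x'$. (A smaller point: Lemma~\ref{renormalinksSCH} gives nestedness only when one of the two intervals is regular, so ``$J_x$ is a member of the chain $\cR_f$'' needs the extra remark that any non-regular renormalization interval is contained in $I_{n+1}$; and regularity of $I_n$ does not make $a_n,b_n$ hyperbolic repellers, though Lemma~\ref{Lemma657h6} needs only regularity.)
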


\dem Let $x \in I_{n} \setminus K_{I_{n+1}}$, so, as $x \not\in K_{I_{n+1}}$ no pre-image of it can be inside this set, for otherwise it would never leave it  (as it is a positively invariant set), so $c\notin\alpha_{f}(x)$ (as $c\in K_{I_{n+1}}$). Thus, $J_x\ne\emptyset$, and then it is a renormalization interval by the former observation (using lemma \ref{jotaxrenormaliza}). As $\alpha_{f}(x)\cap I_{n+1}=\emptyset$ (because $I_{n+1}\subset K_{I_{n+1}}$), we get $J_{x}\supset I_{n+1}$. 

This way, $J_x$ has to be $I_{n+1}$ or perhaps other renormalization interval containing it.

Write $I_n=(a_n,b_n)$.
We will suppose that $x \in (a_n,c)\setminus K_{I_{n+1}}$ (the case $x\in(c,b_{n})\setminus K_{I_{n+1}}$ is analogous). Given $\forall \varepsilon >0$ let $T_{\varepsilon}=(a_{n},a_{n}+\varepsilon)$, it follows from Lemma \ref{Lemma657h6} that $f^{\theta(T_{\varepsilon})}(T_{\varepsilon})\supset (a,c)\ni x$. And this way we have \begin{equation}\label{Eq897295t5}
a_n \in \alpha_{f}(x),
\end{equation}
for pre-images of $x$ can be found as close to $a_n$ as we want. Then $J_x$ cannot be $I_j$ for any $I_j \supsetneqq I_{n}$ as, by definition, $J_x \cap \alpha_{f}(x) \ne \emptyset$ and any other renormalization interval containing $J_x$ would have to contain it properly (they couldn't even share a point in the border, by Lemma \ref{renormalinksSCH}), and in this case, $J_x$ can be $I_{n}$, but no other bigger renormalization interval. So, these are the only two possibilities,
\begin{equation}\label{Eq09876567}
J_{x}=\text{ either }\,I_{n}\text{ or }I_{n+1}.
\end{equation}

If $J_x=I_{n}$, by definition of $J_x$, we get $\alpha_{f}(x) \supset \partial I_{n}$. By (\ref{Eq09876567}), we have $\alpha_{f}(x)\cap\partial I_{n+1}\ne\emptyset$ $\iff$ $J_{x}=I_{n+1}$. Thus, assume that $J_x=I_{n+1}$. In this case, again by the definition of $J_{x}$, we get $\alpha_{f}(x) \supset \partial I_{n+1}$ (in particular, $\alpha_{f}(x)\cap\partial I_{n+1}\ne\emptyset$ $\implies$ $\alpha_{f}(x)\supset\partial I_{n+1}$). Now, we will show that we also have $\alpha_{f}(x)\supset\partial I_{n}$. By (\ref{Eq897295t5}), we already know that $a_{n}\in\alpha_{f}(x)$. On the other hand, as $b_{n+1}\in\partial I_{n+1}\subset\alpha_{f}(x)$ and $\alpha_{f}(x)\cap I_{n+1}=\emptyset$, there is some $x'\in\co_{f}^{-}(x)\cap(b_{n+1},b_{n})$. So, an analogous reasoning as we did for $x\in(a_n,a_{n+1})$ above can be done to $x'\in(b_{n+1},b_{n})$ and we also get $\alpha_{f}(x)\supset\alpha_{f}(x') \ni b_{n}$. 

\cqd

\begin{Lemma}
\label{2162}
Suppose that $f:[0,1]\setminus\{c\}\to[0,1]$ is a $C^3$ contracting Lorenz map with negative Schwarzian derivative. Let $I_n\supset I_{n+1}$ be two consecutive intervals of the nested sequence of renormalization intervals $\cR_{f}$, with $I_{n}$ being regular and not contained in the basin of a periodic attractor, then 
$$
\alpha_f(x) \cap \partial I_{n+1} \ne \emptyset \Rightarrow \alpha_f(x) \supset \Omega_{n}(f)
$$
$\forall x \in I_n \setminus K_{n+1}$.
\end{Lemma}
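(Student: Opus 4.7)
The plan is to mirror the proof of Lemma~\ref{216}, with the hypothesis $\alpha_f(x)\cap\partial I_{n+1}\neq\emptyset$ playing the role that $c\in\alpha_f(x)$ played there. First, Lemma~\ref{LemmaPrimeiroEspectro} upgrades the assumption to $\alpha_f(x)\supset\partial I_n\cup\partial I_{n+1}$, and since $\alpha_f(x)$ is forward invariant, it in fact contains the entire orbits of the four periodic points $a_n,b_n,a_{n+1},b_{n+1}$.

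Fix $y\in\Omega_n(f)$ and an open neighborhood $T$ of $y$ with $T\subset K_n$. If $y$ belongs to the (countable) forward orbit of $\partial I_n\cup\partial I_{n+1}$, then $y\in\alpha_f(x)$ already, so we may assume otherwise and shrink $T$ to avoid this orbit and the critical point $c$. Because $y\in\Omega(f)$, pick $z\in T$ outside $\co^-_f(c)\cup\co^-_f(\per(f))$ and an integer $j\geq 1$ with $f^j(z)\in T$; let $I$ denote the open interval joining $z$ and $f^j(z)$.

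The crux is to establish that some $f^s(I)$ meets $\alpha_f(x)$. Suppose this fails. Then no $f^s(I)$ contains $c$, so $f^j|_I$ is a homeomorphism with $f^j(z)\in\overline{I}$, and $I$ is not wandering (as $f^j(\overline{I})\cap\overline{I}\ni f^j(z)$). The Homterval Lemma~\ref{homtervals}, together with $Sf<0$ ruling out weak repellers (Corollary~\ref{weakrepellers}), produces a periodic attractor $\Lambda$ with $I\cap\beta(\Lambda)\neq\emptyset$. Forward invariance of $K_n$ forces $\Lambda\subset\overline{K_n}$, and since $\Lambda$ is a periodic orbit and $K_{n+1}$ is forward invariant, either $\Lambda\subset K_{n+1}$ or $\Lambda\cap K_{n+1}=\emptyset$. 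In the first case, orbits in $\beta(\Lambda)\cap I$ start in $K_n\setminus K_{n+1}$ and converge to $\Lambda\subset K_{n+1}$, so some iterate $f^s(I)$ must cross $\partial K_{n+1}$; this crossing happens either via the critical point $c$ (contradicting our standing assumption) or via the orbit of $\partial I_{n+1}\subset\alpha_f(x)$ (contradicting the supposition). In the second case, $\Lambda\subset K_n\setminus K_{n+1}$, and Singer's Corollary~\ref{CoSinger} applied to the component of the domain of $f^{\period(\Lambda)}$ containing $\Lambda$ provides an interval of the form $(p,c)$ or $(c,p)\subset\beta(\Lambda)$ for some $p\in\partial I_{n+1}$ (after suitable iteration); Corollary~\ref{Corollary9988} applied to the regular pair $I_n\supset I_{n+1}$ then forces $I_n\subset\beta(\Lambda)$, contradicting the hypothesis on $I_n$.

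With the crux secured, take $s$ minimal such that $f^s(I)\cap\alpha_f(x)\neq\emptyset$ and pick $w$ in this intersection. By minimality and the very argument above, $f^s|_I$ remains a homeomorphism. Since pre-images of $x$ accumulate at $w$ and the generic choice of $z$ ensures that the endpoints of $f^s(\overline{I})$ do not belong to $\co^-_f(x)$, we find $w'\in f^s(I)\cap\co^-_f(x)$; pulling back through the homeomorphism $f^s|_I$ yields a pre-image of $x$ inside $I\subset T$. Hence $y\in\alpha_f(x)$. The main obstacle is the periodic-attractor step: carefully distinguishing $\Lambda\subset K_{n+1}$ from $\Lambda\cap K_{n+1}=\emptyset$, tracking how iterates of $I$ interact with the nested trapping structure of $K_n\supset K_{n+1}$, and setting up the hypotheses of Corollary~\ref{Corollary9988} in the right component of $K_n$ all require delicate case analysis.
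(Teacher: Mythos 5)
There is a genuine gap, and it sits exactly at the step where your argument tries to copy Lemma~\ref{216}. You suppose that no image $f^s(I)$ of $I=(z,f^j(z))$ meets $\alpha_f(x)$ and deduce that ``no $f^s(I)$ contains $c$'', so that the Homterval Lemma~\ref{homtervals} applies. In Lemma~\ref{216} this inference was legitimate because the hypothesis there was literally $c\in\alpha_f(x)$; here it fails, and in fact the opposite is true: since $x\in I_n\setminus K_{n+1}$ and $K_{n+1}$ is a trapping region, no pre-image of $x$ ever lies in $K_{n+1}$, hence $\alpha_f(x)\cap K_{n+1}=\emptyset$ and in particular $c\in I_{n+1}$ and the whole interval $I_{n+1}$ are \emph{disjoint} from $\alpha_f(x)$ (this is consistent with Lemma~\ref{LemmaPrimeiroEspectro}, which says $J_x$ equals $I_n$ or $I_{n+1}$). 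So an image of $I$ can perfectly well contain $c$ --- for instance by being swallowed whole into a component of $K_{n+1}$ and then carried into $I_{n+1}$ --- without ever touching $\alpha_f(x)$; and once an image lies inside $K_{n+1}$, no later image can meet $\alpha_f(x)$ either, so the contradiction you are aiming for never materializes. The same confusion reappears later: you say a crossing of $\partial K_{n+1}$ must occur ``via $c$ (contradicting the standing assumption)'' --- it does not contradict it --- ``or via the orbit of $\partial I_{n+1}\subset\alpha_f(x)$''; but $\partial K_{n+1}$ consists of \emph{pre-images} of $\partial I_{n+1}$ along the renormalization cycle, not forward iterates, and their membership in $\alpha_f(x)$ would need a backward-invariance property of $\alpha_f(x)$ that you have not established. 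The second branch of your case analysis (Singer producing $(p,c)$ or $(c,p)\subset\beta(\Lambda)$ ``for some $p\in\partial I_{n+1}$'', then Corollary~\ref{Corollary9988}) also presupposes that the homterval/invariant-interval structure avoids $I_{n+1}$, which again does not follow from avoiding $\alpha_f(x)$.

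The paper's proof is restructured precisely to get around this. It works with the chained intervals $f^{js}\big((z,f^j(z)]\big)$, whose endpoints all lie on the single orbit $\{f^{js}(z)\}$: if these images never meet $K_{n+1}$, their union is an interval $(z,q)$ with $f^j((z,q))\subset(z,q)$ disjoint from $I_{n+1}$, and then Corollary~\ref{Corollary9988} (the adapted statement for the regular pair $I_n\supset I_{n+1}$, not the plain homterval lemma plus Singer) forces $I_n$ into the basin of a periodic attractor, contradicting the hypothesis. Hence some image does reach $K_{n+1}$, and one extracts a time at which the image interval has one endpoint inside $I_{n+1}$ and the other outside; such an interval contains a point of $\partial I_{n+1}\subset\alpha_f(x)$, so it contains pre-images of $x$, which pull back into the neighborhood $V$ of $y$. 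Your final pull-back step and your treatment of the case where $y$ lies on the orbit of $\partial I_{n+1}$ are fine, but the crux as you have written it does not close.
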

\dem
Let $x\in I_n \setminus K_{n+1}$ such that $\alpha_f(x) \cap \partial I_{n+1} \ne \emptyset $ and given $y \in \Omega_{n}(f)=\Omega(f)\cap (K_n\setminus K_{n+1})$ consider any neighborhood $V$ of $y$. As $y$ is non-wandering, there is $z \in V$, and $j \in \NN$ such that $f^j(z) \in V$ (we may assume $z \not \in \co^-_f (c) \cup \co^-_f(Per(f))$). 

We claim that there is $k \in \NN$ such that $f^{jk}((z,f^j(z))) \cap \partial K_{n+1} \ne \emptyset $. 
Indeed, if $f^{js}((z,f^j(z)))$ $\cap$  $K_{n+1}=\emptyset$ 
for any $s$, then $f^t|_{[z,f^j(z)]}$ is a well defined homeomorphism, for any $t$. Observe that  $\bigcup_{s \ge 0}f^{js}(z,f^j(z)]$ is an interval, say, $(z,q)$. Moreover $f^j((z,q)) \subset (z,q)$ and $(z,q)\cap I_{n+1} = \emptyset$.
But this can not be the case, because it 
would imply, by Corollary~\ref{Corollary9988}, that $J_{0}$ is in the basin of a periodic attractor.

We can observe that $f^{jk}((z,f^j(z))) \cap  K_{n+1} \ne \emptyset$ $\implies$ $f^{s}((z,f^j(z))) \cap  I_{n+1} \ne \emptyset$ for some $s\in\NN$.
Consider the sequence $\{ f^{js}(z)\}_{j\in \NN}$. By what we concluded above, there is a minimum $s$ such that $f^{js}(z) \not\in I_{n+1}$ and $f^{j(s+1)}(z) \in I_{n+1}$, say $s=k$, then  $f^{jk}((z,f^j(z)))$ is such that one border is within $I_{n+1}$ and the other is outside it. Then we prove what was claimed.
 
Let $k$ be as above. 

As $x$ is taken in $I_n \setminus K_{n+1}$, the former lemma says that if $\alpha_f(x) \cap \partial I_{n+1} \ne \emptyset $, then $\alpha_f(x) \supset \partial I_{n+1}$, and we have that $\co^-_f (x) \cap f^{jk}(z,f^j(z)) \ne \emptyset$ and then $\co^-_f (x) \cap V \supset \co^-_f (x) \cap (z,f^j(z)) \ne \emptyset$.

As $V$ is any neighborhood, we can conclude $y \in \alpha_f(x)$.
 
\cqd

\begin{Notation}{}
If $I_{n}$ is regular, in the same way that we defined in Lemma \ref{semnomeSCH}, let $\co_{n}$ be the periodic orbit with minimum period that intersects $I_{n}$, let $L_n=(p_n,p_n')$ be the connected component of $[0,1]\setminus\co_{n}$ containing $c$. 
\end{Notation}

Note that $L_{n}\subset I_{n}$ is a nice interval and, if $I_{n+1}$ exists, then $I_{n}\supset L_{n}\supset I_{n+1}$.

\begin{Lemma}\label{bordos}Suppose that $f:[0,1]\setminus\{c\}\to[0,1]$ is a $C^3$ contracting Lorenz map with negative Schwarzian derivative. If $I_{n}$ is a regular renormalization interval then $\alpha(p_{n})=\alpha(p_{n}') \supset \partial I_{n+1}$. 

\end{Lemma}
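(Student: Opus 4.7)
My plan is to first show $\alpha_f(p_n)=\alpha_f(p_n')$ from the fact that $p_n,p_n'$ lie on the same periodic orbit $\co_n$, and then to prove $\partial I_{n+1}\subset\alpha_f(p_n)$ by applying Lemma~\ref{LemmaPrimeiroEspectro} to $p_n$ and reducing to the task of exhibiting one point of $\alpha_f(p_n)$ strictly interior to $I_n$. The natural candidate will be $p_n'$.

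The equality is immediate: if $k\ge 1$ satisfies $f^k(p_n')=p_n$, any sequence $x_j\in f^{-n_j}(p_n)$ with $n_j\to\infty$ is equally a sequence $x_j\in f^{-(n_j+k)}(p_n')$, so the two $\alpha$-limits coincide. For the main inclusion, I will check that $p_n\in I_n\setminus K_{n+1}$: $p_n\in\partial L_n\subset I_n$, and $\co_f^+(p_n)\cap L_n=\emptyset$ (because $L_n$ is nice) forces $\co_f^+(p_n)\cap I_{n+1}=\emptyset$, whence $p_n\in\Lambda_{I_{n+1}}$. Lemma~\ref{LemmaPrimeiroEspectro} then yields $J_{p_n}\in\{I_n,I_{n+1}\}$ and $\alpha_f(p_n)\supset\partial I_n$; and when $J_{p_n}=I_{n+1}$ the frontier $\partial I_{n+1}=\partial J_{p_n}$ automatically lies in the closed set $\alpha_f(p_n)$. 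Thus the task reduces to ruling out $J_{p_n}=I_n$, which I will do by exhibiting $p_n'\in\alpha_f(p_n)$, noting that $p_n'$ is strictly interior to $I_n$.

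For this final step I exploit that $p_n'$ is a hyperbolic repelling periodic point: Corollary~\ref{weakrepellers} precludes weak repellers, and the existence of the deeper renormalization interval $I_{n+1}\subsetneq I_n$ combined with Singer's Theorem (which under $Sf<0$ forbids inessential attractors) prevents $\co_n$ from being a periodic attractor. Let $I_0\in\cp_{L_n}$ be the component of the first-return domain whose closure has $p_n'$ as its right endpoint; it exists because, for $y\in L_n$ close enough to $p_n'$, shadowing with the periodic orbit of $p_n'$ forces $R_{L_n}(y)=\period(p_n')$ and $f^{\period(p_n')}(y)\in L_n$. Since $c\in L_n$ lies at positive distance from $p_n'$, $c\notin\partial I_0$, so Lemma~\ref{Lemma8388881a}(3) gives $F(I_0)=L_n$, and continuous extension at the endpoints yields an orientation-preserving $C^1$ diffeomorphism $\overline{I_0}\to\overline{L_n}$ fixing $p_n'$ with derivative greater than $1$ there. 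Its inverse $F^{-1}\colon\overline{L_n}\to\overline{I_0}\subset\overline{L_n}$ is then a contraction with attracting fixed point $p_n'$, and iterating from $p_n\in\overline{L_n}$ produces a sequence $(F^{-k}(p_n))_k\subset\co_f^-(p_n)$ converging to $p_n'$; hence $p_n'\in\alpha_f(p_n)$, which rules out $J_{p_n}=I_n$ and completes the proof. The main obstacle is the claim that $\co_n$ is not attracting: an essential attractor containing $\co_n$ would trap the critical orbit and preclude further renormalization, contradicting the existence of $I_{n+1}$, while inessential attractors are forbidden by $Sf<0$.
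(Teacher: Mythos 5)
Your skeleton is the same as the paper's: apply Lemma~\ref{LemmaPrimeiroEspectro} to $p_n\in I_n\setminus K_{n+1}$ to get $J_{p_n}\in\{I_n,I_{n+1}\}$, and reduce everything to ruling out $J_{p_n}=I_n$, i.e.\ to producing a point of $\alpha_f(p_n)$ inside $I_n$. But here the paper needs only one line: $p_n$ is \emph{periodic}, so $p_n\in\alpha_f(p_n)$, and $p_n\in I_n$; hence $J_{p_n}\ne I_n$, so $J_{p_n}=I_{n+1}$ and $\partial I_{n+1}\subset\alpha_f(p_n)$ (Lemma~\ref{jotaxrenormaliza}). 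Your detour through $p_n'$ is therefore unnecessary, and its key step as written has a genuine gap.

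The gap is the sentence asserting that, because $DF(p_n')>1$ for the full branch $F=F|_{I_0}$ of the first return map to $L_n$ adjacent to $p_n'$, the inverse $F^{-1}:\overline{L_n}\to\overline{I_0}$ ``is a contraction with attracting fixed point $p_n'$'', whence $F^{-k}(p_n)\to p_n'$. Expansion at one endpoint fixed point does not make the inverse a contraction on all of $\overline{L_n}$, and the branch may have further fixed points in the interior of $I_0$ (periodic points of $f$ inside $L_n$ with return time $R|_{I_0}$); in that case the monotone sequence $F^{-k}(p_n)$ converges to the smallest fixed point of $F|_{I_0}$ above $p_n$, not to $p_n'$, so $p_n'\in\alpha_f(p_n)$ is not established. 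The step can be repaired in two ways: either invoke $Sf<0$ and the Minimum Principle (Proposition~\ref{MinP}), together with the absence of attracting or neutral periodic points inside $L_n$, to exclude interior fixed points of the branch once $DF(p_n')>1$ (note that this, your claim that $\co_n$ is hyperbolic repelling, and the inclusion $I_{n+1}\subset L_n$ via Lemma~\ref{semnomeSCH} all lean on the standing assumption that $I_n$ is not contained in the basin of a periodic attractor, which is not explicit in the statement); or, more simply, observe that whatever the limit $z$ of the increasing sequence $F^{-k}(p_n)$ is, it lies in $\overline{I_0}\subset(c,p_n']\subset I_n$ and is a limit of genuine preimages of $p_n$ with times tending to infinity, so $z\in\alpha_f(p_n)\cap I_n$, which already rules out $J_{p_n}=I_n$. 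Either patch completes your argument, but the periodicity of $p_n$ renders the whole construction superfluous.
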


\dem
Observe that $J_{p_{n}}=J_{p_{n}'}$ and $J_{p_{n}}\ne I_n$. Indeed, as $p_{n}$ is periodic, $p_{n}\in \alpha_{f}(p_{n})$. This way, the connected component of the complement of $\alpha_{f}(x)$ containing $c$, $J_{p_{n}}$, can't be $I_n$, then it has to be $I_{n+1}$. And then $\alpha_{f}(p_{n})=\alpha(p_{n}')\supset \partial J_{p_{n}}=\partial I_{n+1}$.

\cqd

\begin{Lemma}
Let $f:[0,1]\setminus\{c\}\to[0,1]$ be a $C^3$ contracting Lorenz map with negative Schwarzian derivative. Suppose that $I_n\supset I_{n+1}$ are two consecutive intervals of the nested sequence of renormalization intervals $\cR_{f}$ and that  $I_{n}$ is regular. If $L_{n}=I_{n+1}$ then $\Omega_{n}(f)=\co_{f}^{+}(p_{n})$. In particular, $\Omega_{n}(f)$ is transitive.
\end{Lemma}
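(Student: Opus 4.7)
The plan is to prove $\Omega_n(f) = Q := \co_f^+(p_n)$ via two inclusions; the ``in particular'' clause then follows since a single periodic orbit is trivially topologically transitive. For the forward inclusion $Q \subset \Omega_n(f)$: as a periodic orbit $Q \subset \Omega(f)$; since $p_n \in I_n \subset K_n$ and $K_n$ is forward invariant, $Q \subset K_n$; and the hypothesis $L_n = I_{n+1}$ forces $p_n \in \partial I_{n+1} \subset \Lambda_{I_{n+1}}$, so by invariance $Q \subset \Lambda_{I_{n+1}}$, whence $Q \cap K_{n+1} = \emptyset$.

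For the reverse inclusion, take $x \in \Omega_n(f)$. I first show $x \in \Lambda_{I_{n+1}}$: otherwise $x$ lies in some gap $G$ of $\Lambda_{I_{n+1}}$ disjoint from $K_{n+1}$ (since $x \notin K_{n+1}$ and each gap is either entirely inside $K_{n+1}$ or disjoint from it). Proposition~\ref{Prop765091} then gives $f^{\theta(G)}(G) = I_{n+1} \subset K_{n+1}$, and for $0 < j < \theta(G)$ each $f^j(G)$ is a gap of $\Lambda_{I_{n+1}}$ distinct from $G$; hence $f^j(G) \cap G = \emptyset$ for every $j \geq 1$, making $G$ wandering and contradicting $x \in G \cap \Omega(f)$. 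Since $x \in K_n$, some iterate $y = f^k(x)$ lies in $I_n$, and invariance of $\Lambda_{I_{n+1}}$ places $y \in I_n \setminus I_{n+1}$; Lemma~\ref{semnomeSCH} applied with $L_n = I_{n+1}$ then forces $y \in \co_f^-(p_n)$, hence $x \in \co_f^-(p_n)$.

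To upgrade $x \in \co_f^-(p_n)$ to $x \in Q$, note that the hypothesis that $I_n$ is not in the basin of a periodic attractor, together with $Sf < 0$ (no weak repellers, by Corollary~\ref{weakrepellers}), implies that $p_n$ is hyperbolic repelling. Suppose $x \notin Q$ and let $j \geq 1$ be minimal with $f^j(x) \in Q$; writing $p := f^j(x)$, pick a small $U \ni x$ with $U \subset K_n$, $U \cap Q = \emptyset$, and $f^j|_U$ a homeomorphism onto a small neighborhood $V$ of $p$. Using the expansion of $f^{\period(p_n)}$ near $Q$ together with the fact (from the previous paragraph) that every orbit confined to $K_n \setminus K_{n+1}$ is eventually absorbed by $Q$, any sequence $y_k \to x$ with $f^{m_k}(y_k) \in U$ forces $m_k$ to be bounded; passing to a subsequence $m_k \equiv m$, continuity gives $f^m(x) = x$, making $x \in \omega_f(x) = Q$---a contradiction. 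The main obstacle is handling the case $x \in \partial K_{n+1}$ (which can occur when $x$ is an iterate of $c$ on the boundary of a cycle gap); there every neighborhood of $x$ meets $K_{n+1}$, but forward invariance of $K_{n+1}$ ensures orbits entering it never return to $x \in K_n \setminus K_{n+1}$, so the relevant returns still come only from $U \cap \Lambda_{I_{n+1}} \subset \co_f^-(p_n) \cap K_n$, to which the hyperbolic-repelling argument applies.
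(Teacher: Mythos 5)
Your first two paragraphs are essentially a correct, more detailed version of what the paper itself does: the paper's proof is a two-line reduction, identifying $I_{n}=\xi I_{n+1}$ via Theorem~\ref{imperfeito} and invoking Corollary~\ref{Corollaryimperfeito}, while you reach $\Omega_{n}(f)\subset\co^{-}_{f}(p_{n})$ through Lemma~\ref{semnomeSCH}, whose proof rests on that same corollary. The only slip there is that for $0<j<\theta(G)$ the set $f^{j}(G)$ is merely \emph{contained} in a gap, not equal to one; but this is easily repaired, since $f^{j}(G)\subset G$ would give $f^{kj}(G)\subset G$ for all $k$ while $f^{m}(G)\subset K_{n+1}$ for $m\ge\theta(G)$, contradicting $G\cap K_{n+1}=\emptyset$. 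The forward inclusion $\co^{+}_{f}(p_{n})\subset\Omega_{n}(f)$ is also fine.

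The genuine gap is in the last step, the upgrade from $x\in\co^{-}_{f}(p_{n})$ to $x\in\co^{+}_{f}(p_{n})$ (a step the paper silently skips). First, the claim that ``expansion of $f^{\period(p_{n})}$ near $Q$'' forces the return times $m_{k}$ to be bounded is never proved, and expansion at $Q$ is not the operative mechanism: a returning point spends almost all of its orbit far from $Q$. What actually bounds return times (when $x\notin\partial K_{n+1}$) is that $x$ is an \emph{isolated} point of $\Lambda_{I_{n+1}}$ (it is a regular preimage of $p_{n}$, which is isolated because $I_{n+1}$ abuts it on one side and the gap $A_{-1}$ of Theorem~\ref{imperfeito} on the other); hence a small $U$ meets $\Lambda_{I_{n+1}}$ only at $x$ and is otherwise covered by the two adjacent gaps, each of which enters $I_{n+1}\subset K_{n+1}$ at its fixed time $\theta$ (Proposition~\ref{Prop765091}) and is trapped thereafter. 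No hyperbolicity of $p_{n}$ is needed, and your assertion that $p_{n}$ is hyperbolic repelling is itself unjustified. Second, and more seriously, your treatment of the case $x\in\partial K_{n+1}$ confuses returns to the \emph{point} $x$ with returns to a \emph{neighborhood} of $x$: non-wandering only requires $f^{m}(U)\cap U\ne\emptyset$, and points of $U\cap K_{n+1}$ may perfectly well return to $U\cap K_{n+1}$; forward invariance of $K_{n+1}$ does nothing to exclude this. This is not a removable technicality: if $x$ lies in the closure of the renormalization cycle $U_{I_{n+1}}$ --- for instance $x=f^{i}(c_{-})$ when $f^{\period(p_{n})}(c_{-})=p_{n}'$, i.e.\ the critical orbit is pre-periodic onto $\co_{n}$ --- then points of the adjacent cycle gap can recur to every neighborhood of $x$ (e.g.\ when the renormalized map has a chaotic attractor accumulating on its critical point), and such an $x$ is non-wandering, lies in $K_{n}\setminus K_{n+1}$, but is not on $\co^{+}_{f}(p_{n})$. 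So this case must either be excluded by showing $x\notin\overline{U_{I_{n+1}}}$, or be handled separately as the degenerate pre-periodic configuration that the statement of Theorem~\ref{spdecnotacao} itself singles out; your argument as written does neither.
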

\dem
If $L_{n}= I_{n+1}$, that is $I_{n+1}=(p_{n},p_{n}')$, then it follows from Theorem \ref{imperfeito} that $I_{n}=\xi (p_{n},p_{n}')$ (note that $\co_{f}^{+}(p_{n})=\co_{f}^{+}(p_{n}')$). Using Corollary~\ref{Corollaryimperfeito}, we get $\Omega_{n}(f)=\co_{f}^{+}(p_{n})$.
\cqd

\begin{definition}{} Given a regular renormalization interval $I_{n}$, define $$P_n:=\big( f^{period(p_{n}')}(c_+),f^{period(p_n)}(c_-) \big) \setminus J_{p_{n+1}}.$$
\end{definition}


\begin{Lemma}
\label{anelarmadilha}
Let $f:[0,1]\setminus\{c\}\to[0,1]$ be a $C^3$ contracting Lorenz map with negative Schwarzian derivative. If $I_n\supset I_{n+1}$ are two consecutive intervals of the nested sequence of renormalization intervals $\cR_{f}$ with $I_{n}$ regular then $\forall x \in P_n$ 
, $\alpha_f(x) \cap \partial I_{n+1} \ne \emptyset$.
\end{Lemma}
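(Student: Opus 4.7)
The plan is to show that for every $x \in P_n$, at least one of the periodic points $p_n$ or $p_n'$ lies in $\alpha_f(x)$; once this is established, the closedness and backward invariance of $\alpha_f(x)$ give $\alpha_f(p_n) \subseteq \alpha_f(x)$ (recalling that $\alpha_f(p_n) = \alpha_f(p_n')$ from Lemma~\ref{bordos}), and that same lemma yields the desired conclusion $\partial I_{n+1} \subseteq \alpha_f(p_n) \subseteq \alpha_f(x)$. The key observation is that the interval $(f^{\period(p_n')}(c_+), f^{\period(p_n)}(c_-))$ is precisely the union of the images of the two monotone branches of $f^{\period(p_n)}$ emanating from the endpoints of $L_n$: the left branch $(p_n, c) \to (p_n, f^{\period(p_n)}(c_-))$ fixes $p_n$, and the right branch $(c, p_n') \to (f^{\period(p_n')}(c_+), p_n')$ fixes $p_n'$. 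Hence every $x$ in this interval admits a pre-image $y \in (p_n, c) \cup (c, p_n')$ through one of these branches.

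For the backward iteration, I would invoke that $f$ has no weak repellers (Corollary~\ref{weakrepellers}) and that $p_n$ is the periodic orbit of minimum period inside $I_n \setminus I_{n+1}$ by the adapted Variational Principle (Corollary~\ref{VarPrinII}); thus $p_n$ is a hyperbolic repelling fixed point of $g := f^{\period(p_n)}$ restricted to the branch $(p_n, c)$, and the Variational Principle also rules out other fixed points of $g$ in $(p_n, c)$, so that $g(t) > t$ throughout that interval. Consequently the inverse branch $g^{-1}$ is a strict contraction toward $p_n$ on its image $(p_n, f^{\period(p_n)}(c_-))$, and iterating it on $y \in (p_n, c)$ produces a decreasing sequence $y_k \to p_n$ with $f^{k \cdot \period(p_n)}(y_k) = x$. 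This exhibits $p_n$ as a limit of pre-images of $x$, establishing $p_n \in \alpha_f(x)$; the symmetric argument on the right branch yields $p_n' \in \alpha_f(x)$ whenever $y \in (c, p_n')$.

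The role of the hypothesis $x \notin J_{p_{n+1}}$ is to exclude points for which the iterative pre-image construction would collapse into the deeper critical gap: by Lemma~\ref{jotaxrenormaliza} the set $J_{p_{n+1}}$ is itself a renormalization interval, and by the Variational Principle applied to $I_{n+1}$ it is contained in the next renormalization interval $I_{n+2}$ (whenever the latter exists), so removing $J_{p_{n+1}}$ discards precisely those $x$ whose natural inverse branches could fail to land in a fundamental domain of $p_n$. The main technical obstacle is verifying that $f^{\period(p_n)}$ is genuinely a monotone homeomorphism on the full branch $(p_n, c)$ onto $(p_n, f^{\period(p_n)}(c_-))$; this amounts to showing $f^j((p_n, c)) \not\ni c$ for every $0 \le j < \period(p_n)$. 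When $L_n = I_{n+1}$ this follows immediately from $I_{n+1}$ being a renormalization interval, but in the case $L_n \supsetneq I_{n+1}$ it requires combining the niceness of $L_n$ (so that the forward orbit of $p_n$ avoids $L_n$) with the minimality of $\period(p_n)$ coming from the Variational Principle to rule out any earlier intermediate iterate of $(p_n, c)$ crossing $c$.
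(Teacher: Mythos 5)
Your argument is correct in substance but takes a genuinely different route from the paper's. The paper works through the critical gap $J_x$ of $\alpha_f(x)$: for $x\in P_n$ it observes that $J_x\supset I_{n+1}$, uses Lemma~\ref{jotaxrenormaliza} together with the nestedness of renormalization intervals to conclude $J_x\in\{I_n,I_{n+1}\}$, and then rules out $J_x=I_n$ by building an infinite backward orbit of $x$ that stays in $\overline{P_n}$ (every point of $P_n$ has a pre-image in $L_n$), so that $\alpha_f(x)$ meets $I_n$ and hence $J_x=I_{n+1}$ and $\partial I_{n+1}\subset\alpha_f(x)$. You instead pull $x$ back once through the full return branches of $L_n$, whose images cover $\big(f^{\period(p_n')}(c_+),f^{\period(p_n)}(c_-)\big)$, and then iterate the inverse of the branch fixing $p_n$ (resp.\ $p_n'$) to produce pre-images of $x$ accumulating on $p_n$ (resp.\ $p_n'$) with times tending to infinity; Lemma~\ref{bordos} together with closedness and the backward invariance $f^{-1}(\alpha_f(x))\subset\alpha_f(x)$ (a fact the paper itself uses in Propositions~\ref{transitivo} and~\ref{transitivo2}) then gives $\partial I_{n+1}\subset\alpha_f(p_n)\subset\alpha_f(x)$. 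Your route avoids Lemma~\ref{jotaxrenormaliza} and the classification of $J_x$ entirely and delivers directly the inclusion $\alpha_f(x)\supset\partial I_{n+1}$ that the later lemmas actually consume; the paper's route avoids having to verify that $f^{\period(p_n)}$ is monotone on all of $(p_n,c)$, needing only that pre-images can be chosen inside $P_n$. Both arguments ultimately rest on the same overflowing picture of the first return to $L_n$, and the monotonicity verification you flag (via niceness of $L_n$ and the minimality of $\period(p_n)$) is indeed where the real work sits and is carried out correctly by your sketch.

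Two minor caveats. First, your derivation of $f^{\period(p_n)}(t)>t$ on $(p_n,c)$ from ``no weak repellers plus uniqueness of the minimizing orbit'' excludes interior and neutral fixed points but not, by itself, the one-sided attracting alternative $f^{\period(p_n)}(t)<t$; to rule that out you should either invoke the standing hypothesis of Corollary~\ref{VarPrinII} that $I_n$ is not contained in the basin of a periodic attractor (which you are already implicitly assuming), or note that the periodic point $a_{n+1}\in[p_n,c)$ cannot be attracted to $p_n$. Second, your construction never actually uses $x\notin J_{p_{n+1}}$ — the removal of $J_{p_{n+1}}$ in the definition of $P_n$ matters for the later trapping-region arguments, not for this lemma — so your explanation of that hypothesis is speculative, though harmless to the proof.
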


\dem
As $\alpha_{f}(x)\cap I_{n+1}=\emptyset$ $\forall x \in P_n$, we must have $J_x \supset I_{n+1}$.
Then $J_x$ necessarily will be $I_n$ or $I_{n+1}$.
For any point inside $P_n$ we can have at least one pre-image inside $P_n$ (indeed, inside $L_{n}\subset P_{n}$). Then, given $x \in P_n$, we can construct an infinite pre-orbit of $x$, all inside the compact $P_n$, then $\alpha_{f}(x) \cap P_n \ne \emptyset$. So, $J_x$ cannot be  $I_{n}$, it has to be $I_{n+1}$ and $\alpha_f(x) \cap \partial I_{n+1} \ne \emptyset$ as stated.

\cqd

Using Lemma~\ref{LemmaPrimeiroEspectro}, we get from Lemma~\ref{anelarmadilha} the following result.

\begin{Corollary}
\label{reciproca}
Suppose that $f:[0,1]\setminus\{c\}\to[0,1]$ is a $C^3$ contracting Lorenz map with negative Schwarzian derivative. If $I_n\supset I_{n+1}$ are two consecutive intervals of the nested sequence of renormalization intervals $\cR_{f}$ with $I_{n}$ regular then  then  
$\alpha_f(x) \supset \Omega_{n}(f)$ $ \forall x \in P_n$.
\end{Corollary}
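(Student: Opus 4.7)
My plan is to deduce this corollary in essentially one line from the two results that precede it. Given any $x \in P_n$, Lemma~\ref{anelarmadilha} already supplies $\alpha_f(x) \cap \partial I_{n+1} \neq \emptyset$; the conclusion $\alpha_f(x) \supset \Omega_n(f)$ will then follow immediately from Lemma~\ref{2162}, provided I can establish the inclusion $P_n \subseteq I_n \setminus K_{n+1}$.

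For $P_n \subseteq I_n$, I would set $\ell := \period(p_n) = \period(p_n')$ (the two periods agree since $p_n, p_n'$ both lie in the orbit $\co_n$) and observe that $f^\ell((p_n,c)) = (p_n, f^\ell(c_-))$ and $f^\ell((c,p_n')) = (f^\ell(c_+), p_n')$, because $p_n, p_n'$ are fixed by $f^\ell$. Since $I_n$ is a regular renormalization interval and $L_n \subseteq I_n$ is a nice interval containing $c$, the first-return images of the two halves of $L_n$ remain in $\overline{L_n} \subseteq I_n$, so the ambient interval $(f^\ell(c_+), f^\ell(c_-))$ containing $P_n$ lies in $I_n$. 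For $P_n \cap K_{n+1} = \emptyset$, I would note that $I_{n+1}$ is a nice interval with $p_{n+1} \in I_{n+1}$ periodic, forcing $\alpha_f(p_{n+1}) \cap I_{n+1} = \emptyset$ and therefore $J_{p_{n+1}} \supseteq I_{n+1}$; within the ambient annular region the only component of the nice trapping region $K_{n+1}$ that can appear is the one that contains $I_{n+1}$ itself (components of the renormalization cycle $U_{I_{n+1}}$ that lie outside this region of $I_n$ are irrelevant), and this component is already removed when we subtract $J_{p_{n+1}}$.

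The delicate step, which I would verify carefully, is the second one: one has to show that no additional component of $K_{n+1}$ slips into the ambient interval $(f^\ell(c_+), f^\ell(c_-))$ beyond the central one carved out by $J_{p_{n+1}}$. This should follow from the niceness of $L_n$ together with Proposition~\ref{Prop765091}, since any gap of $\Lambda_{I_{n+1}}$ that meets $L_n$ returns to $I_{n+1}$ via an iterate whose domain lies in $L_n$, making it the ``natural'' component around $I_{n+1}$. Once $P_n \subseteq I_n \setminus K_{n+1}$ is established, Lemma~\ref{2162} applied pointwise to $P_n$ yields $\alpha_f(x) \supset \Omega_n(f)$, completing the proof.
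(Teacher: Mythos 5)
Your overall strategy is the paper's own: the paper proves this corollary in one line by chaining Lemma~\ref{anelarmadilha} with Lemma~\ref{LemmaPrimeiroEspectro}/Lemma~\ref{2162}, leaving the inclusion $P_n\subset I_n\setminus K_{n+1}$ implicit, and your plan is exactly that chaining plus an attempt to justify the inclusion. The difficulty is that both of your bridging sub-arguments fail as written. First, the first-return images of the two halves of $L_n$ do \emph{not} in general stay in $\overline{L_n}$: that would make $L_n$ itself a renormalization interval, i.e. $L_n=I_{n+1}$, which is the special case. When $(p_n,c)$ contains more than one component of the domain of the first return map to $L_n$, the component adjacent to $p_n$ already covers all of $L_n$ (Lemma~\ref{Lemma8388881a}), so $f^{\period(p_n)}(c_-)>p_n'$. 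The statement you actually need is $f^{\period(p_n)}((p_n,c))\subset\overline{I_n}$, and it has a different proof: no iterate $f^j((p_n,c))$ with $0\le j<\period(p_n)$ can contain $c$ (otherwise one produces a periodic point in $I_n$ of period less than $\period(p_n)$, contradicting the minimality defining $\co_n$), hence $f^{\period(p_n)}|_{(p_n,c)}$ is a composition of return branches of the renormalization interval $I_n$, each of which maps into $\overline{I_n}$.

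Second, and more seriously, your argument for $P_n\cap K_{n+1}=\emptyset$ rests on the false claim $\alpha_f(p_{n+1})\cap I_{n+1}=\emptyset$: the point $p_{n+1}$ is a periodic point lying inside $I_{n+1}$ (it is an endpoint of $L_{n+1}\subset I_{n+1}$), so $p_{n+1}\in\alpha_f(p_{n+1})\cap I_{n+1}$, and consequently $J_{p_{n+1}}$, being a component of $[0,1]\setminus\alpha_f(p_{n+1})$, can never contain $I_{n+1}$; when $I_{n+1}$ is regular and a deeper renormalization exists, Lemma~\ref{bordos} at level $n+1$ gives $J_{p_{n+1}}=I_{n+2}$. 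So the set removed in the definition of $P_n$ is not what you take it to be, and the step you yourself flag as delicate --- that no piece of $K_{n+1}$ other than the central one meets $(f^{\period(p_n')}(c_+),f^{\period(p_n)}(c_-))$ --- is never proved: invoking niceness of $L_n$ and Proposition~\ref{Prop765091} is not an argument, because the relevant components of $K_{n+1}$ are the gaps of $\Lambda_{I_{n+1}}$ carrying the cycle pieces $f^j((a_{n+1},c))$, $0<j<\period(a_{n+1})$, and one must show these avoid $P_n$; this is precisely what is needed to have $\co^{-}_{f}(x)\cap I_{n+1}=\emptyset$ for $x\in P_n$ and it remains the missing ingredient. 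Finally, Lemma~\ref{2162} carries the hypothesis that $I_n$ is not contained in the basin of a periodic attractor; you use it silently, so it must either be added to the statement or discharged.
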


For a Lorenz map $f$ with negative Schwarzian derivative with two consecutive renormalization intervals $I_n\supset I_{n+1}$, let us define $$\EE_n = \{x \in I_n ; \alpha_{f}(x) \ni \partial I_{n+1}  \}.$$
If follows from Lemma~\ref{LemmaPrimeiroEspectro}~and~\ref{anelarmadilha} that $\EE_n$ contains $P_n \cup I_{n+1}$. Let $(\widehat{a}_{n},\widehat{b}_{n})$ be the maximal open interval of such that $$P_{n}\cup J_{n+1}\subset(\widehat{a}_{n},\widehat{b}_{n})\subset I_{n}.$$

\begin{Lemma}Let $f:[0,1]\setminus\{c\}\to[0,1]$ be a $C^3$ contracting Lorenz map with negative Schwarzian derivative. If $I_n\supset I_{n+1}$ are two consecutive intervals of the nested sequence of renormalization intervals $\cR_{f}$ with $I_{n}$ regular then
$\exists \ell_{n}$ and $r_{n}>0$ such that $f^{\ell_{n}}((\widehat{a}_{n},c))\subset(\widehat{a}_{n},\widehat{b}_{n})\supset f^{r_{n}}((c,\widehat{b}_{n})).$ 
\end{Lemma}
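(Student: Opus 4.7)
This is the $n$-th-level analogue of the trapping-region construction carried out earlier for $\EE = \{x \in (0,1) : c \in \alpha_f(x)\}$ just before the definition of $\UU$ in~(\ref{regiaoarmadilha}). Now $I_{n+1}$ plays the role of a neighborhood of $c$, and $\EE_n$ plays the role of $\EE$. I describe the construction of $\ell_n$; the construction of $r_n$ on $(c,\widehat{b}_n)$ is entirely symmetric.

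\emph{Step 1 (return time via Mañé).} Since $I_n$ is regular and is not contained in the basin of a periodic attractor, the first-return dynamics to $\overline{I_n}$ is conjugate to a contracting Lorenz map without periodic attractors or weak repellers. Applying Lemma~\ref{Remark98671oxe} with critical neighborhood $I_{n+1}$ shows that the set of points of $(\widehat{a}_n,c)$ whose forward $f$-orbit never enters $I_{n+1}$ is uniformly expanding and hence of zero Lebesgue measure. As $(\widehat{a}_n,c)$ has positive measure, the integer
\[
\ell_n \;:=\; \min\{\,k \ge 1 \,:\, f^k((\widehat{a}_n,c))\cap I_{n+1} \ne \emptyset\,\}
\]
is finite.

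\emph{Step 2 (the image is a single interval).} By the minimality of $\ell_n$, $c \notin f^j((\widehat{a}_n,c))$ for every $0 \le j < \ell_n$ (otherwise $I_{n+1}\ni c$ would be met strictly before time $\ell_n$). Hence $f^{\ell_n}|_{(\widehat{a}_n,c)}$ is a homeomorphism and $V:=f^{\ell_n}((\widehat{a}_n,c))$ is a single open interval meeting $I_{n+1}\subset(\widehat{a}_n,\widehat{b}_n)$.

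\emph{Step 3 ($V \subset \EE_n$).} For any $w$ and $k\ge 0$ one has $\co_f^{-}(w)\subset \co_f^{-}(f^k(w))$, and therefore $\alpha_f(f^k(w))\supset \alpha_f(w)$. Since $(\widehat{a}_n,c)\subset(\widehat{a}_n,\widehat{b}_n)\subset \EE_n$, every point of $V$ has $\alpha$-limit set containing $\partial I_{n+1}$. Combined with the fact that $V$, being a connected interval that meets $I_{n+1}\subset I_n$, must lie inside $I_n$ (the iterates of $(\widehat{a}_n,c)$ travel inside the renormalization cycle $U_{I_n}$ and the niceness of $\partial I_n$ prevents $V$ from crossing $\partial I_n$ before first landing in $I_{n+1}$), we conclude $V\subset \EE_n$.

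\emph{Step 4 (maximality).} The union $V\cup(\widehat{a}_n,\widehat{b}_n)$ is then an open interval, contained in $\EE_n$ and still containing $P_n\cup I_{n+1}$. The maximality of $(\widehat{a}_n,\widehat{b}_n)$ forces this union to coincide with $(\widehat{a}_n,\widehat{b}_n)$; in particular $V=f^{\ell_n}((\widehat{a}_n,c))\subset(\widehat{a}_n,\widehat{b}_n)$, as required. The same argument applied on $(c,\widehat{b}_n)$ produces $r_n$, completing the lemma.

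\emph{Expected main difficulty.} The delicate point is the containment $V\subset I_n$ asserted in Step 3, which is needed for $V$ to qualify as a subset of $\EE_n$ (by definition $\EE_n\subset I_n$). The essential input is the renormalization structure of $I_n$ together with the niceness of $\partial I_n$: iterates of $(\widehat{a}_n,c)$ follow the cycle $U_{I_n}$, and the first time an iterate re-enters $I_n$ (namely at time $\ell_n$) it does so through $I_{n+1}$, which rules out the image interval crossing the periodic endpoints $a_n$ or $b_n$.
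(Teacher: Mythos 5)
Your Steps 2--4 are sound and in fact reproduce the paper's own argument in a slightly different packaging: both proofs rest on the monotonicity $\alpha_f(f^k(w))\supset\alpha_f(w)$ for $w\in(\widehat{a}_n,c)\subset\EE_n$ together with the maximality of $(\widehat{a}_n,\widehat{b}_n)$ inside $\EE_n$. The paper argues by contradiction (if the image were not contained in $(\widehat{a}_n,\widehat{b}_n)$ it would contain $\widehat{a}_n$ or $\widehat{b}_n$, hence a point $y\notin\EE_n$, whose preimage $w\in(\widehat{a}_n,c)$ forces $\alpha_f(y)\ni\partial I_{n+1}$, absurd), while you argue directly that $V=f^{\ell_n}((\widehat{a}_n,c))$ is an interval contained in $\EE_n\cap I_n$ that overlaps $(\widehat{a}_n,\widehat{b}_n)$, so maximality absorbs it. Your explicit verification that $V\subset I_n$ (the intermediate pieces of the renormalization cycle of $I_n$ are disjoint from $I_n\supset I_{n+1}$, so the first entry into $I_{n+1}$ happens at a return time to $I_n$, and $c$ is not swallowed before time $\ell_n$) is a point the paper passes over silently, and it is correctly handled.

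The genuine defect is Step 1. The lemma assumes only that $I_n$ is regular; you add the hypothesis that $I_n$ is not contained in the basin of a periodic attractor, and even granting it, your assertion that the first-return map to $\overline{I_n}$ is a contracting Lorenz map \emph{without periodic attractors} does not follow: $f$ could have a periodic attractor whose basin contains only a one-sided neighbourhood of $c$ inside $I_n$ without containing all of $I_n$, and then Lemma~\ref{Remark98671oxe} (the Ma\~n\'e argument) is not applicable as you invoke it. Fortunately the conclusion of Step 1 needs none of this: since $I_{n+1}=(a_{n+1},b_{n+1})$ is itself a renormalization interval and $(a_{n+1},c)\subset(\widehat{a}_n,c)$, one has $f^{\period(a_{n+1})}((\widehat{a}_n,c))\cap I_{n+1}\supset f^{\period(a_{n+1})}((a_{n+1},c))\ne\emptyset$, so $\ell_n\le\period(a_{n+1})$ is finite with no extra hypotheses. (The paper instead takes the first entry time into $(\widehat{a}_n,\widehat{b}_n)$, produced by a periodic point of $(\widehat{a}_n,c)$ returning to itself; either choice of entry time feeds into your maximality argument.) With Step 1 replaced by this observation, your proof is correct and essentially the paper's.
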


\dem
We will show that $\exists \ell_{n}$ such that $f^{\ell_{n}}((\widehat{a}_{n},c))\subset(\widehat{a}_{n},\widehat{b}_{n})$ (the proof of the other inclusion is analogous).  As $Per(f)\cap(\widehat{a}_{n},c)\ne\emptyset\ne(c,\widehat{b}_{n})\cap Per(f)$, there is a minimum k such that $f^k((\widehat{a}_{n},c))\cap (\widehat{a}_{n},\widehat{b}_{n}) \ne \emptyset$.
Suppose that $f^k((\widehat{a}_{n},c))\not\subset (\widehat{a}_{n},\widehat{b}_{n})$. For example, $\widehat{b}_{n} \in  f^k((\widehat{a}_{n},c))$. Then $\exists y \in f^k((\widehat{a}_{n},c))$ such that $y \not \in \EE_n$. But in this case there is a pre-image $w$ of $y$ in $(\widehat{a}_{n},\widehat{b}_{n})$, that is a subset of $\EE_n$, so, as $\alpha_f(w) \ni \partial I_{n+1}$, $\alpha_f(y) \ni \partial I_{n+1}$, absurd. 
\cqd

For a Lorenz map $f$ with negative Schwarzian derivative with two consecutive renormalization intervals $I_n\supset I_{n+1}$ and $\ell_{n}$, with $J_{0}$ regular and $r_{n}$ as given by the former lemma, we define 
\begin{equation}\label{regiaoarmadilha2}
\UU_n = (\widehat{a}_{n},\widehat{b}_{n}) \cup \big(\bigcup_{j=1}^{\ell_{n}-1}f^j((\widehat{a}_{n},c))\big) \cup \big(\bigcup_{j=1}^{r_{n}-1}f^j((c,\widehat{b}))\big)
\end{equation}
and we have that $\UU_n$ is a trapping region, that is, $f( \UU_n \setminus \{c\}) \subset \UU_n$


\begin{Corollary}
\label{sobrealfa2}
Suppose that $f:[0,1]\setminus\{c\}\to[0,1]$ is a $C^3$ contracting Lorenz map with negative Schwarzian derivative. Let  $I_n\supset I_{n+1}$ be two consecutive intervals of the nested sequence of renormalization intervals $\cR_{f}$. If $I_{n}$ is regular and it is not contained in the basin of a periodic attractor. Then 
$$\alpha_{f}(x) \supset \Omega_{n}(f)\hspace{.3cm}\forall x \in \UU_n.$$
\end{Corollary}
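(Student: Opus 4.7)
The proof will follow the pattern used for Corollary~\ref{sobrealfa}: I would show that $\UU_n$ is contained in $\EE_n$ in the strong sense that $\alpha_f(x)\supset\partial I_{n+1}$ for every $x\in\UU_n$, and then invoke Lemma~\ref{2162} to promote this to $\alpha_f(x)\supset\Omega_n(f)$.

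The containment $\alpha_f(x)\supset\partial I_{n+1}$ for $x\in\UU_n$ has two pieces. By the maximality in the definition of $(\widehat a_n,\widehat b_n)$, this interval already lies inside $\EE_n$, so Lemma~\ref{LemmaPrimeiroEspectro} upgrades $\alpha_f(z)\cap\partial I_{n+1}\ne\emptyset$ to $\alpha_f(z)\supset\partial I_{n+1}$ for every $z\in(\widehat a_n,\widehat b_n)$. This property is preserved under forward iteration: if $w\in f^{-k}(z)$ then $f^{k+j}(w)=f^j(z)$, so $\co_f^-(z)\subset\co_f^-(f^j(z))$, which gives $\alpha_f(f^j(z))\supset\alpha_f(z)\supset\partial I_{n+1}$. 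Since by construction
$$\UU_n=(\widehat a_n,\widehat b_n)\cup\bigcup_{j=1}^{\ell_n-1}f^j((\widehat a_n,c))\cup\bigcup_{j=1}^{r_n-1}f^j((c,\widehat b_n)),$$
every point of $\UU_n$ is either in $(\widehat a_n,\widehat b_n)$ or is a forward image of a point therein, so the containment extends to all of $\UU_n$.

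With this in hand, Lemma~\ref{2162} applies directly to $x\in\UU_n\setminus K_{n+1}$ and yields $\alpha_f(x)\supset\Omega_n(f)$. For the remaining case $x\in\UU_n\cap K_{n+1}$ I would pass to a preimage: choose $x'\in\co_f^-(x)\cap(\UU_n\setminus K_{n+1})$ sufficiently close to a point $p\in\partial I_{n+1}\subset\alpha_f(x)$. Such a preimage exists because $\alpha_f(x)\supset\partial I_{n+1}$ and the transport-of-gaps machinery of Section~\ref{phobic} applied to the nice interval $L_n\supsetneq I_{n+1}$ populates $\UU_n\setminus K_{n+1}$ with preimages of $x$ accumulating on $\partial I_{n+1}$ from the outer side. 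Then Lemma~\ref{2162} applied to $x'$ together with the monotonicity $\alpha_f(x)\supset\alpha_f(x')$ completes the argument.

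The main obstacle I expect is the case $x\in\UU_n\cap K_{n+1}$: here the orbit of $x$ is trapped deep inside the smaller renormalization region, and one must use the gap structure of $\Lambda_{L_n}$ (or equivalently $\Lambda_{I_{n+1}}$) to guarantee that preimages of $x$ genuinely escape $K_{n+1}$ and approach $\partial I_{n+1}$. Once this accessibility point is settled, the whole argument reduces cleanly to Lemma~\ref{2162}, exactly mirroring how Lemma~\ref{216} was used for Corollary~\ref{sobrealfa}.
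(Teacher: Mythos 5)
Your first step is sound and is exactly what the paper does: by the maximality of $(\widehat a_n,\widehat b_n)$ inside $\EE_n$, every $z\in(\widehat a_n,\widehat b_n)$ satisfies $\alpha_{f}(z)\supset\partial I_{n+1}$, and since $\alpha_{f}(f^j(z))\supset\alpha_{f}(z)$ this propagates to all of $\UU_n$; the paper's own proof is just the one-line remark that $\UU_n$ sits inside $\EE_n$ and then cites Lemma~\ref{2162}. The genuine gap is in your treatment of $x\in\UU_n\cap K_{n+1}$. Your detour requires a preimage $x'\in\co_{f}^{-}(x)$ lying in $I_n\setminus K_{n+1}$ (so that the literal hypothesis of Lemma~\ref{2162} holds; note in passing that $\UU_n\setminus K_{n+1}$ is not contained in $I_n$ either, so one should really take $x'\in(\widehat a_n,\widehat b_n)\setminus K_{n+1}$) and accumulating near $\partial I_{n+1}$ from the outer side. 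You assert that the transport-of-gaps machinery ``populates $\UU_n\setminus K_{n+1}$ with preimages of $x$'' but never prove it, and you yourself flag it as the unresolved obstacle. It is not automatic: from $\partial I_{n+1}\subset\alpha_{f}(x)$ you only know that preimages of $x$ accumulate on $a_{n+1}$ and $b_{n+1}$, and nothing said so far excludes that all of them approach these points from inside $K_{n+1}$ — from within $I_{n+1}$ itself, or from an adjacent gap of the renormalization cycle sharing the boundary point, as happens in the ``imperfect'' situation of Theorem~\ref{imperfeito}. As written, the proposal therefore does not establish the statement for the trapped points.

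The detour is also unnecessary, and this is where your route diverges from the paper's. In the proof of Lemma~\ref{2162}, the hypothesis $x\in I_n\setminus K_{n+1}$ is used only once: to invoke Lemma~\ref{LemmaPrimeiroEspectro} and upgrade $\alpha_{f}(x)\cap\partial I_{n+1}\ne\emptyset$ to $\alpha_{f}(x)\supset\partial I_{n+1}$. The remainder of that proof (producing, inside an arbitrary neighborhood $V$ of $y\in\Omega_{n}(f)$, an interval $(z,f^j(z))$ whose image $f^{jk}((z,f^j(z)))$ straddles a point of $\partial I_{n+1}$, and intersecting that image with $\co_{f}^{-}(x)$) makes no further use of where $x$ lives. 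Hence the conclusion of Lemma~\ref{2162} holds for every $x$ with $\alpha_{f}(x)\supset\partial I_{n+1}$, which is precisely the form in which the paper cites it. Since your first step already gives $\alpha_{f}(x)\supset\partial I_{n+1}$ on all of $\UU_n$, including $\UU_n\cap K_{n+1}$, you can conclude directly, with no case split and no accessibility claim.
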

\dem
As Lemma \ref{2162} says $\alpha_f(x) \supset \Omega_{n}(f)$ to any $x$ such that $\alpha_f(x) \ni \partial I_{n+1}$, this holds for any $x$ in $\UU_n$, as this is contained in $\EE_n$.
\cqd

\begin{Lemma}\label{LemmaUU78} Suppose that $f:[0,1]\setminus\{c\}\to[0,1]$ is a $C^3$ contracting Lorenz map with negative Schwarzian derivative. Let $I_n\supset I_{n+1}$ be consecutive intervals of the nested sequence of renormalization intervals $\cR_{f}$, with $I_{n}$  being a regular renormalization interval not contained in the basin of a periodic attractor. Given $x \in I_n\setminus K_{n+1}$, if $\alpha_f(x) \supset \partial I_{n+1}$ then $\alpha_f(x)\cap\UU_n \subset \Omega_{n}(f)\cap\UU_n$. 
\end{Lemma}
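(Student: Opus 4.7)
I will adapt the proof of the (unnumbered) lemma immediately preceding Corollary~\ref{poiupoiu} to the present renormalized setting, performing three systematic substitutions: the critical point $c$ in the source is replaced by the finite set $\{c\}\cup\partial I_{n+1}$; the global trapping region $\UU$ is replaced by $\UU_n$; and the role played in the source by the global ``no periodic attractor'' hypothesis is taken over by Corollary~\ref{Corollary9988}, which, under the hypothesis that $I_n$ is not contained in the basin of any periodic attractor, forbids the formation of monotone self-maps on sub-intervals of $I_n\setminus\overline{I_{n+1}}$. Before entering the main argument, I observe that $\UU_n\subset K_n$ follows from the construction (since $(\widehat a_n,\widehat b_n)\subset I_n$ and iterates of its two halves generate the components of $K_n$), and that $y\notin K_{n+1}$: forward invariance of $K_{n+1}$ together with $x\in I_n\setminus K_{n+1}$ rules out any pre-image of $x$ in the open set $K_{n+1}$, so $\alpha_f(x)\cap\mathring K_{n+1}=\emptyset$, and a small boundary argument (using that $\partial K_{n+1}\subset\co_f^-(\partial I_{n+1})$) rules out $y\in\partial K_{n+1}\cap\UU_n$. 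It then suffices to prove $y\in\Omega(f)$, for this together with the preceding inclusions yields $y\in\Omega_n(f)$.

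Fix a neighborhood $V\subset\UU_n$ of $y$ disjoint from $c$ and from $\overline{K_{n+1}}$; one must produce $t\ge 1$ with $f^t(V)\cap V\ne\emptyset$. First I would verify that $y\in\overline{(V\setminus\{y\})\cap\co_f^-(x)}$ by the same contradiction as in the model. If the only pre-image of $x$ in a small ball $B_\varepsilon(y)$ were $y$ itself, then either no forward iterate of $B_\varepsilon(y)$ ever meets $\{c\}\cup\partial I_{n+1}$---in which case the iterates of an appropriate sub-interval under $f^{n_2-n_1}$ form a monotone homeomorphism into itself contained in $I_n\setminus\overline{I_{n+1}}$, producing via Corollary~\ref{Corollary9988} a periodic attractor whose basin contains $I_n$ and contradicting the hypothesis---or some iterate does meet $\{c\}\cup\partial I_{n+1}$, in which case the hypothesis $\partial I_{n+1}\subset\alpha_f(x)$ combined with Lemma~\ref{Lemma657h6} (which, via iterates of $f^{\period(a_{n+1})}$ and $f^{\period(b_{n+1})}$, transports the accumulation of $\co_f^-(x)$ at $\partial I_{n+1}$ across the intervals $(a_{n+1},c)$ and $(c,b_{n+1})$) yields infinitely many pre-images of $x$ in that iterate and hence in $B_\varepsilon(y)$, contradicting isolation. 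Having this accumulation, choose $x_2<x_1$ in $V\cap\co_f^-(x)$ with $f^{n_1}(x_1)=x=f^{n_2}(x_2)$ and $n_1<n_2$.

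The same dichotomy now applied to $[x_1,x_2)$ produces $s\ge 1$ with $f^s([x_1,x_2))\cap(\{c\}\cup\partial I_{n+1})\ne\emptyset$: otherwise, iterating $[f^{n_2-n_1}(x),x)$ under $f^{n_2-n_1}$ builds a homeomorphic self-map on an interval of $I_n\setminus\overline{I_{n+1}}$, and Corollary~\ref{Corollary9988} supplies the same forbidden attractor. At any intersection point $p\in\{c\}\cup\partial I_{n+1}$, pre-images of $x$ accumulate on $f^s([x_1,x_2))$---directly from $\partial I_{n+1}\subset\alpha_f(x)$ when $p\in\partial I_{n+1}$, and by transporting that accumulation through Lemma~\ref{Lemma657h6} when $p=c$---so $\co_f^-(x)\cap f^s([x_1,x_2))\ne\emptyset$; pulling back via the homeomorphism $f^s|_{[x_1,x_2)}$ yields $w\in\co_f^-(x_1)\cap[x_1,x_2)\subset V$, hence $f^t(V)\cap V\ne\emptyset$ for the corresponding $t$, and $y\in\Omega(f)$. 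The main technical obstacle is precisely the case $p=c$ in this concluding step: in the non-renormalized source lemma one invokes Lemma~\ref{Lemma01928373} to get $\co_f^-(x_1)$ accumulating on both sides of $c$ directly from $c\in\alpha_f(x)$, whereas here $c$ need not belong to $\alpha_f(x)$, and the pre-images must be routed through the boundary $\partial I_{n+1}$ using the uniform expansion supplied by Lemma~\ref{Lemma657h6}.
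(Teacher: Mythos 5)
Your proposal follows the same skeleton as the paper's proof (reduce to $y\in\Omega(f)$ with $y\in K_n\setminus K_{n+1}$; show $y$ is accumulated by $\co_f^{-}(x)$ inside $V$ via the dichotomy whose ``never hits'' branch is killed by Corollary~\ref{Corollary9988}; choose $x_1,x_2\in V\cap\co_f^{-}(x)$; force an iterate of $[x_1,x_2)$ into the critical region; pull a pre-image back into $V$). However, the two points where you deviate — and which you yourself single out as the main technical obstacle — contain genuine gaps. First, your handling of the case where an iterate meets $c$ rests on a mechanism that cannot work: you propose to use Lemma~\ref{Lemma657h6} to ``transport the accumulation of $\co_f^{-}(x)$ at $\partial I_{n+1}$ across $(a_{n+1},c)$ and $(c,b_{n+1})$'', but there are no pre-images of $x$ in those intervals at all, since $x\in I_n\setminus K_{n+1}$ and $K_{n+1}\supset I_{n+1}$ is forward invariant, so $\co_f^{-}(x)\cap K_{n+1}=\emptyset$; moreover Lemma~\ref{Lemma657h6} requires the renormalization interval to be regular, and $I_{n+1}$ need not be regular (it may be the maximal non-regular or degenerate one). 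The case you are trying to treat in fact dissolves: the interval in question always contains a point whose forward orbit avoids $K_{n+1}$ (an iterate of $y$, or of $x_1$), so if it reaches $c\in I_{n+1}$ it must already straddle a point of $\partial I_{n+1}$, where pre-images of $x$ accumulate from outside $I_{n+1}$ by hypothesis; no routing through the interior of $I_{n+1}$ is needed or possible.

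Second, your concluding recurrence step does not close. From $\co_f^{-}(x)\cap f^s([x_1,x_2))\ne\emptyset$, pulling back through the homeomorphism $f^s|_{[x_1,x_2)}$ produces a point $w\in\co_f^{-}(x)\cap V$, not a point of $\co_f^{-}(x_1)$ as you assert; and knowing only $w\in\co_f^{-}(x)$ gives no iterate of $V$ meeting $V$, because $x$ need not lie in $V$. What is actually needed — and what the paper uses — is that $\co_f^{-}(x_1)$ accumulates on \emph{both} points of $\partial I_{n+1}$; this holds because $x_1\in V\subset\UU_n$ and every point of $\UU_n$ has $\alpha$-limit containing $\partial I_{n+1}$ (by the construction of $\UU_n$ inside $\EE_n$ together with the fact that $\alpha_f(f^j(z))\supset\alpha_f(z)$). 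With that, $\co_f^{-}(x_1)\cap f^s([x_1,x_2))\ne\emptyset$, its pullback gives $x_1'\in f^{-t}(x_1)\cap[x_1,x_2)\subset V$, and $f^t(V)\cap V\ne\emptyset$ since $x_1\in V$. You never establish $\alpha_f(x_1)\supset\partial I_{n+1}$, and the Lemma~\ref{Lemma657h6} detour does not substitute for it; until these two points are repaired (both repairs are available inside the paper's framework), the proof is incomplete.
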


\dem
Consider $x$ such that $\alpha_f(x) \supset \partial I_{n+1}$. Given $y \in \alpha_f(x)\cap\UU_{n}$ consider any neighborhood $V$ of $y$. We may assume $V \subset \UU_n$. Also, as $y \in \alpha_f(x)$ and $x \in I_n\setminus K_{n+1}$, then $y$ must also be in $\UU_n\setminus K_{n+1}$. Note that there is some $0\le j\le\max\{\ell_{n},r_{n}\}$ such that $f^{j}(y)\in I_n\setminus K_{n+1}$. As $f^{j}(y)\in\Omega_{n}(f)\implies y\in\Omega_{n}(f)$, we may assume (changing $y$ by $f^{j}(y)$ if necessary) that $y\in I_n\setminus K_{n+1}$.

\begin{Claim}\label{Claimxixixixix1}
$y \in \overline{(V\setminus\{y\})\cap\co^-_f(x)}$.
\end{Claim}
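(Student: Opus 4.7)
The proof of this Claim mirrors the analogous claim in the earlier lemma (the one showing $\alpha_f(x)\cap\UU\subset\Omega(f)\cap\UU$ under $\alpha_f(x)\ni c$), with the role of the critical point $c$ replaced by the set $\partial I_{n+1}$. I will argue by contradiction: suppose there exists $\varepsilon>0$ with $B_\varepsilon(y)\subset V$ such that $B_\varepsilon(y)\cap\co^-_f(x)=\{y\}$. Since $y\in\alpha_f(x)$, there must exist times $n_1<n_2<\cdots\to\infty$ with $f^{n_j}(y)=x$. Then
$$x=f^{n_2}(y)=f^{n_2-n_1}(f^{n_1}(y))=f^{n_2-n_1}(x),$$
so $x$ is periodic.

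The argument now splits according to whether the forward orbit of $B_\varepsilon(y)$ meets $\partial I_{n+1}$. Note that $y\ne c$, because $y\in I_n\setminus K_{n+1}$ while $c\in K_{n+1}$, so for $\varepsilon$ small enough $B_\varepsilon(y)$ avoids $c$. Suppose first that there exists a minimal $s\ge 0$ with $f^s(B_\varepsilon(y))\cap\partial I_{n+1}\ne\emptyset$, chosen so that $f^s|_{B_\varepsilon(y)}$ is a homeomorphism (shrinking $\varepsilon$ if necessary to avoid intermediate visits to $c$). By Lemma~\ref{LemmaPrimeiroEspectro}, since $\alpha_f(x)\supset\partial I_{n+1}$, every point of $\partial I_{n+1}$ is accumulated by points of $\co^-_f(x)$; pulling such points back through the local homeomorphism $f^s$ produces infinitely many elements of $\co^-_f(x)$ inside $B_\varepsilon(y)$, contradicting the isolation assumption.

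The alternative case is that $f^s(B_\varepsilon(y))\cap\partial I_{n+1}=\emptyset$ for every $s\ge 0$. Writing $(\alpha,\beta)=f^{n_1}(B_\varepsilon(y))$, we have $x\in(\alpha,\beta)$ and $f^{k(n_2-n_1)}((\alpha,\beta))\cap\partial I_{n+1}=\emptyset$ for every $k\ge 1$. Setting
$$(x,\gamma)=\bigcup_{k\ge 1}f^{k(n_2-n_1)}((x,\beta)),$$
one verifies as in the earlier proof that $f^{n_2-n_1}|_{(x,\gamma)}$ is a homeomorphism with $f^{n_2-n_1}((x,\gamma))\subset(x,\gamma)$, and that this interval is contained in $I_n\setminus K_{n+1}$. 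Applying Corollary~\ref{Corollary9988} (or equivalently the homterval lemma combined with the Schwarzian argument) to this configuration forces the existence of an attracting periodic orbit whose basin contains $I_n$. This contradicts the hypothesis that $I_n$ is not contained in the basin of a periodic attractor.

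The main technical obstacle is the case analysis combined with ensuring that iterates $f^j|_{B_\varepsilon(y)}$ remain monotone/continuous until the moment $\partial I_{n+1}$ is first hit; this is handled by shrinking $\varepsilon$ and using $y\ne c$, together with the fact that intermediate preimages of $c$ accumulate only along $\co_f^-(c)$, which is countable. Once both cases yield contradictions, the claim follows: $y$ cannot be an isolated point of $\co^-_f(x)\cup\{y\}$ in $V$, hence $y\in\overline{(V\setminus\{y\})\cap\co^-_f(x)}$.
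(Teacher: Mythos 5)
Your proof is correct and follows essentially the same route as the paper's: assume $y$ is isolated in $\co^-_f(x)\cap V$, deduce $x$ is periodic, and split into the two cases according to whether some forward image of $B_\varepsilon(y)$ meets $\partial I_{n+1}$ (pulling back the accumulating preimages of $x$ near $\partial I_{n+1}$ for the contradiction) or never does (building the $f^{n_2-n_1}$-invariant interval $(x,\gamma)$ and invoking Corollary~\ref{Corollary9988} against the hypothesis that $I_n$ is not in the basin of a periodic attractor). The only differences are the order of the cases and some added monotonicity bookkeeping, which do not change the argument.
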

\dem
On the contrary, $\exists \epsilon >0$ such that $B_\epsilon(y)\cap\co^-_f(x)=\{y\}$. In this case, we have that $\exists n_1<n_2<...<n_j \to \infty$ such that $f^{n_j}(y)=x$. 
Then $$x=f^{n_2}(y)=f^{n_2-n_1}(f^{n_1}(y))=f^{n_2-n_1}(x).$$

Observe that if $f^s(B_\epsilon(y)) \cap \partial I_{n+1} = \emptyset$ $ \forall s$ then writing $(\alpha, \beta)=f^{n_1}(B_\epsilon(y))$ we have
$$x \in (\alpha,\beta) \text{ and } f^{k(n_2-n_1)}((\alpha,\beta))\cap \partial I_{n+1} = \emptyset\; \forall k.$$
Taking $(x,\gamma)=\bigcup_{k\ge1}f^{k(n_2-n_1)}((x,\beta))=\bigcup_{k\ge1}(x,f^{k(n_2-n_1)}(\beta))$
we have $f^{n_2-n_1}|_{(x,\gamma)}$ homeomorphism and $f^{n_2-n_1}((x,\gamma))\subset(x,\gamma).$
But this would imply, by Corollary~\ref{Corollary9988}, that $J_{n}$ is containing in the basin of a periodic attractor, contradicting our hypothesis. Then, we necessarily have that $\exists s$ such that $f^s(B_\epsilon(y))\cap \partial I_{n+1} \ne \emptyset$. 
As $\alpha_{f}(x)\cap \partial I_{n+1} \ne \emptyset$ we would have that $\#\co_f^-(x)\cap B_\epsilon(y)=\infty$. Again an absurd, proving the claim.
\cqd
Because of the Claim~\ref{Claimxixixixix1} above, we may assume that $y\in \overline{(y,1)\cap V\cap \co^-_f(x)}$ (the case when we have that $y\in \overline{(0,y)\cap V\cap \co^-_f(x)}$ is analogous).
We may take $x_1<x_2\in(y,1)\cap V \cap\co_f^-(x)$ such that $f^{n_2}(x_2)=x=f^{n_1}(x_1)$ with $n_1<n_2$.

\begin{Claim}\label{Claimxixixixix2}
$\exists\,s\ge0$ such that $c\in f^s([x_1,x_2))$.
\end{Claim}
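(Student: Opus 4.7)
The plan is to argue by contradiction, in the same spirit as Claim~2 in the proof of Corollary~\ref{poiupoiu}, but replacing the ``no periodic attractor'' hypothesis used there by the weaker ``$I_n$ not contained in the basin of a periodic attractor'' hypothesis available now and concluding via Corollary~\ref{Corollary9988} rather than by direct contradiction.

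Suppose, for contradiction, that $c \notin f^s([x_1, x_2))$ for every $s \ge 0$. Then each $f^s|_{[x_1, x_2)}$ is a monotone homeomorphism. Since $f$ preserves orientation where monotone, and using $f^{n_1}(x_1) = x = f^{n_2}(x_2)$ with $x_1 < x_2$ and $n_1 < n_2$, applying $f^{n_2}$ to $[x_1, x_2)$ gives
\begin{equation*}
f^{n_2}([x_1, x_2)) = [f^{n_2 - n_1}(x),\, x),
\end{equation*}
the left endpoint arising as $f^{n_2-n_1}(f^{n_1}(x_1)) = f^{n_2-n_1}(x)$. In particular $f^{n_2-n_1}(x) < x$. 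Iterating $f^{n_2-n_1}$ on this interval, the hypothesis still forces the orbit to miss $c$, so $f^{n_2-n_1}|_{[f^{n_2-n_1}(x),x)}$ is again a homeomorphism; orientation preservation yields a strictly decreasing sequence $\{f^{k(n_2-n_1)}(x)\}_{k\ge 0}$ converging to some $\gamma \in [0,x)$. Setting $T := (\gamma, x)$, the map $f^{n_2-n_1}$ is a homeomorphism on $T$ with $f^{n_2-n_1}(T) \subset T$, and the forward $f$-iterates of $T$ never meet $c$, since they are subsumed by the iterates of $[x_1, x_2)$.

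To close the argument, I would verify that $T \subset I_n \setminus I_{n+1}$, so that Corollary~\ref{Corollary9988} applies (with $J_0 = I_n$, $J_1 = I_{n+1}$, $\ell = n_2-n_1$, and the interval~$T$ in place of~$I$), producing an attracting periodic orbit $\Lambda$ with $I_n \subset \beta(\Lambda)$ and contradicting the hypothesis on $I_n$. The inclusion $T \subset \UU_n$ follows because the iterates of $[x_1, x_2) \subset V \subset \UU_n$ remain inside the trapping region $\UU_n$; to rule out $T \cap I_{n+1} \ne \emptyset$, I would use that any point landing in $I_{n+1} \subset K_{n+1}$ gets trapped by $K_{n+1}$ and, by the renormalization structure of $I_{n+1}$ (via its first return map), must eventually visit~$c$, contradicting our running assumption that the $f$-iterates of $[x_1,x_2)$ avoid~$c$. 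Confinement of $T$ inside $I_n$ itself follows from a similar argument applied to the periodic endpoints of $I_n$ together with the homeomorphism property on $T$.

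The principal obstacle I foresee is the rigorous verification that $T$ lies inside $I_n \setminus I_{n+1}$: intuitively clear but requiring careful bookkeeping of the trapping property of $\UU_n$ and $K_{n+1}$, the nice-interval character of $I_{n+1}$, and the orientation-preserving monotone dynamics of $f^{n_2-n_1}$ near $\partial I_n$. Once this is in place, invocation of Corollary~\ref{Corollary9988} produces the desired contradiction and concludes the claim.
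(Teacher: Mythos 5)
Your argument is essentially the paper's own proof of this claim: assuming every iterate of $[x_1,x_2)$ misses $c$, orientation preservation gives the strictly decreasing sequence $f^{k(n_2-n_1)}(x)$, hence the interval $(\gamma,x)$ on which $f^{n_2-n_1}$ is a homeomorphism with $f^{n_2-n_1}((\gamma,x))\subset(\gamma,x)$, and Corollary~\ref{Corollary9988} then forces $I_n$ into the basin of a periodic attractor, contradicting the standing hypothesis. The auxiliary verification you flag (that $(\gamma,x)$ and its iterates up to time $n_2-n_1$ stay in $I_n\setminus I_{n+1}$, as the hypotheses of Corollary~\ref{Corollary9988} require) is not carried out in the paper's proof either, so you are not missing anything the paper supplies; be aware, though, that your proposed justification is not correct as stated, since a point that enters $I_{n+1}$ is indeed trapped in $K_{n+1}$ but need not ever visit $c$ (periodic orbits and other invariant sets inside a renormalization interval avoid $c$), so that step would require a different argument, e.g.\ exploiting that every point of $(\gamma,x)$ converges under $f^{n_2-n_1}$ to the orbit of $\gamma$, which is incompatible with $(\gamma,x)$ containing periodic boundary points of $I_{n+1}$ or points of $\Lambda_{I_{n+1}}$.
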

\dem 

If $c\notin f^s([x_1x_2))$ $\forall\,s\ge0$ then $c\notin f^{k(n_2-n_1)}([f^{n_2-n_1}(x),x))$ $=$ $f^{k(n_2-n_1)+n_2}([x_1,x_2))$ $\forall k \in \NN.$
As $f$ preserves orientation, $f^{k(n_2-n_1)}|_{[f^{n_2-n_1}(x),x)}$ is a homeomorphism $\forall x$, so we have $$f^{(k+1)(n_2-n_1)}(x)<f^{k(n_2-n_1)}(x)\,\,\forall k.$$
Then, $\bigcup_{k\ge0}f^{k(n_2-n_1)}([f^{n_2-n_1}(x),x))$ is an interval $(\gamma,x)$. Besides that, $f^{n_2-n_1}|_{(\gamma,x)}$ is a homeomorphism and $f^{n_2-n_1}((\gamma,x))\subset(\gamma,x)$. 
But this is an absurd, because it would imply, by Corollary~\ref{Corollary9988}, that $J_{0}$ is containing in the basin of a periodic attractor.

\cqd

By Claim~\ref{Claimxixixixix2} there is a smaller integer $s\ge0$ such that $c\in f^s([x_1,x_2))$. Notice that $x_1,x_2\notin I_{n+1}$, as $x_1,x_2\in\co_f^-(x)$ and $x\in I_n\setminus K_{n+1}$. Thus, $\overline{ I_{n+1}}$ $\subset$ $\interior f^s([x_1,x_2))$.
As $x_1 \in \UU_n$, we have that $\co^-_f(x_1)$ accumulates in both points of $\partial I_{n+1}$. Then, $\co_f^-(x_1)\cap f^s([x_1,x_2))\ne \emptyset$.
This implies that $\exists x_1' \in \co^-_f(x_1)\cap[x_1,x_2)\subset V$, say $x_1' \in f^{-t}(x_1)\cap V$. Then,
$$
f^t(V)\cap V \ne \emptyset
$$

As $V$ is a neighborhood of $y \in \UU_n$ that was arbitrarily taken, we may conclude that $y \in \Omega(f)$. Because $y\in\UU_{n}\setminus K_{n+1}\subset K_{n}\setminus K_{n+1}$, we get $y\in\Omega_{n}(f)$.

\cqd

\begin{Corollary}
\label{poiupoiu2} Let $f:[0,1]\setminus\{c\}\to[0,1]$ is a $C^3$ contracting Lorenz map with negative Schwarzian derivative and $I_n\supset I_{n+1}$ be two consecutive intervals of the nested sequence of renormalization intervals $\cR_{f}$. If $I_{n}$ is regular and it is not contained in the basin of a periodic attractor then
$$\alpha_{f}(x)\cap \UU_n = \Omega_{n}(f)\;\; \forall x \in I_n \setminus K_{n+1}.$$
\end{Corollary}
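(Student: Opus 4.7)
The plan is to derive the corollary as an immediate consequence of the two preceding results: Lemma~\ref{LemmaUU78} will give the inclusion $\alpha_f(x)\cap\UU_n\subseteq\Omega_n(f)$, while Corollary~\ref{sobrealfa2} will give the reverse inclusion $\Omega_n(f)\subseteq\alpha_f(x)\cap\UU_n$. The only real work is to verify that, for every $x\in I_n\setminus K_{n+1}$, the hypotheses of those two statements are satisfied.

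First I would check the inclusion $\alpha_f(x)\cap\UU_n\subseteq\Omega_n(f)$. By Lemma~\ref{LemmaPrimeiroEspectro}, for $x\in I_n\setminus K_{n+1}$ the critical gap $J_x$ is either $I_n$ or $I_{n+1}$; moreover, since $I_n$ is regular and not contained in the basin of a periodic attractor, the preorbit of $x$ cannot remain in $I_n\setminus I_{n+1}$ forever (otherwise Corollary~\ref{Corollary9988} would produce a periodic attractor with $J_0=I_n$ in its basin, contradicting the hypothesis). This forces $J_x=I_{n+1}$, and hence $\alpha_f(x)\supseteq\partial I_{n+1}$. Lemma~\ref{LemmaUU78} now applies and delivers $\alpha_f(x)\cap\UU_n\subseteq\Omega_n(f)\cap\UU_n\subseteq\Omega_n(f)$.

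For the reverse inclusion $\Omega_n(f)\subseteq\alpha_f(x)\cap\UU_n$, I would first verify that $\Omega_n(f)\subseteq\UU_n$: any non-wandering point $y\in K_n\setminus K_{n+1}$ whose forward orbit avoided $\UU_n$ would define a forward-invariant complement, and combining this with the maximality property in the construction of $(\hat a_n,\hat b_n)$ one sees that such $y$ must in fact lie inside the trapping region $\UU_n$. Then I would argue that $x\in\UU_n$ itself: the trapping region $\UU_n$ was built precisely as the union of the forward iterates of $(\hat a_n,\hat b_n)$, which contains $P_n\cup I_{n+1}$, and any point of $I_n\setminus K_{n+1}$ either lies in $(\hat a_n,\hat b_n)$ or is mapped into it by some iterate of $f$; since $\alpha$-limits and the property we wish to establish are invariant under forward iteration by $f$, we can replace $x$ by such an iterate. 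Corollary~\ref{sobrealfa2} then gives $\alpha_f(x)\supseteq\Omega_n(f)$, and combined with $\Omega_n(f)\subseteq\UU_n$ this yields $\alpha_f(x)\cap\UU_n\supseteq\Omega_n(f)$.

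The main obstacle I expect is the verification that $x\in I_n\setminus K_{n+1}$ really forces $\alpha_f(x)\supseteq\partial I_{n+1}$; the technical content is exactly the dichotomy of Lemma~\ref{LemmaPrimeiroEspectro} together with Corollary~\ref{Corollary9988} ruling out the case $J_x=I_n$ under the regularity and no-attractor assumption on $I_n$. Once this is secured, the two inclusions fall out of Lemma~\ref{LemmaUU78} and Corollary~\ref{sobrealfa2} with essentially no further computation.
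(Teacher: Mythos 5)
Your overall skeleton is the same as the paper's: the equality is obtained by combining Lemma~\ref{LemmaUU78} (for $\subseteq$) with Corollary~\ref{sobrealfa2} (for $\supseteq$), plus the fact that $\Omega_{n}(f)\subset\UU_{n}$. For that last fact the paper argues cleanly via Lemma~\ref{LemmaofTr}: $\UU_{n}$ is an open trapping region containing $\overline{I_{n+1}}$, so $\co_{f}^{+}(x)\cap\UU_{n}\ne\emptyset$ for every $x\in I_{n}$, and a non-wandering point of $K_{n}\setminus K_{n+1}$ must then lie in $\UU_{n}$; your ``maximality of $(\widehat a_{n},\widehat b_{n})$'' argument is vaguer but aims at the same conclusion. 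The genuine problems are in the two hypothesis verifications you add. First, your reduction for the inclusion $\Omega_{n}(f)\subseteq\alpha_{f}(x)$ --- ``replace $x$ by a forward iterate lying in $\UU_{n}$'' --- goes the wrong way: if $x_j\in f^{-n_j}(x)$ converges to $y$, then $x_j\in f^{-(n_j+k)}(f^{k}(x))$, so $\alpha_{f}(x)\subseteq\alpha_{f}(f^{k}(x))$. Hence knowing $\Omega_{n}(f)\subseteq\alpha_{f}(f^{k}(x))$ (from Corollary~\ref{sobrealfa2} applied to the iterate) does not descend to $\alpha_{f}(x)$; $\alpha$-limits only grow under forward iteration, they are not ``invariant'' in the direction you need.

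Second, your claim that $J_{x}=I_{n+1}$ (equivalently $\alpha_{f}(x)\supset\partial I_{n+1}$) for \emph{every} $x\in I_{n}\setminus K_{n+1}$ is not established by the appeal to Corollary~\ref{Corollary9988}: the alternative $J_{x}=I_{n}$ means that the preimages of $x$ do not accumulate inside $I_{n}$, which is not the assertion that some backward branch ``remains in $I_{n}\setminus I_{n+1}$ forever'', and it produces no interval $I$ with $f^{\ell}(I)\subset I$ avoiding $I_{n+1}$, which is what Corollary~\ref{Corollary9988} requires. Worse, this alternative cannot be excluded for arbitrary $x\in I_{n}\setminus K_{n+1}$: write $F$ for the first return map to $I_{n}$ and consider $x$ in the collar $(a_{n},f^{\period(b_{n})}(c_{+}))$ whose forward orbit avoids $I_{n+1}$ (for instance a deep preimage of $a_{n+1}$ under the left branch of $F$). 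By Remark~\ref{moduloretorno} every preimage of such an $x$ that lies in $I_{n}$ is an $F$-preimage, and since $x$ lies below the image of the right return branch these form a single monotone chain sliding down to $a_{n}$; hence $\alpha_{f}(x)$ meets neither $\partial I_{n+1}$ nor the interior of $I_{n}$ at all. This is precisely why the paper proves the $\alpha$-limit property only on $P_{n}$, and then on $\EE_{n}\supset\UU_{n}$ (Lemma~\ref{anelarmadilha}, Corollary~\ref{reciproca}), rather than on all of $I_{n}\setminus K_{n+1}$; so the dichotomy of Lemma~\ref{LemmaPrimeiroEspectro} cannot be resolved in your favour by any soft argument of the kind you propose, and your route to verifying the hypothesis of Lemma~\ref{LemmaUU78} for every $x\in I_{n}\setminus K_{n+1}$ breaks down at this point.
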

\dem
It follows from Lemma~\ref{LemmaUU78} and Lemma~\ref{sobrealfa2} that $$\alpha_{f}(x)\cap \UU_n = \Omega_{n}(f)\cap \UU_n\;\; \forall x \in I_n \setminus K_{n+1}.$$
As $\UU_{n}$ is an open trapping set with $\UU_{n}\supset\overline{I_{n+1}}$, we can apply Lemma~\ref{LemmaofTr} and conclude that $\co_{f}^{+}(x)\cap\UU_{n}$ for all $x\in I_{n}$. This implies that $\Omega_{n}(f)\subset\UU_{n}$. That is $\Omega_{n}(f)\cap\UU_{n}=\Omega_{n}(f)$.

\cqd

\begin{Proposition} \label{transitivo2} Let $f:[0,1]\setminus\{c\}\to[0,1]$ is a $C^3$ contracting Lorenz map with negative Schwarzian derivative. If $I_n\supset I_{n+1}$ are two consecutive intervals of the nested sequence of renormalization intervals $\cR_{f}$ with $J_{n}$ being regular and not contained in the basin of a periodic attractor, then $f|_{\Omega_{n}(f)}$ is strongly transitive.
In particular, $$f|_{\Omega_{n}(f)}\text{ is transitive.}$$
\end{Proposition}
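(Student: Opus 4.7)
The proof will follow the template of Proposition~\ref{transitivo} verbatim, with the trapping region $\UU_n$ of (\ref{regiaoarmadilha2}) playing the role of $\UU$ and with $\Omega_n(f)$ playing the role of $\Omega(f)\cap\UU$. The two structural inputs are Corollary~\ref{sobrealfa2} (which gives $\alpha_f(x)\supset\Omega_n(f)$ for every $x\in\UU_n$) and Corollary~\ref{poiupoiu2} (which gives $\alpha_f(x)\cap\UU_n=\Omega_n(f)$ for every $x\in I_n\setminus K_{n+1}$), together with the standing fact $f^{-1}(\alpha_f(\cdot))\subset\alpha_f(\cdot)$. The plan is to first establish the backward-invariance inclusion
\begin{equation*}
f^{-1}(\Omega_n(f))\cap\UU_n\subset\Omega_n(f),
\end{equation*}
which is the engine driving the rest of the argument, and then to mimic the conclusion of the proof of Proposition~\ref{transitivo}.

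Given an open set $V\subset[0,1]$ with $V\cap\Omega_n(f)\ne\emptyset$, I replace $V$ by $V\cap\UU_n$; the intersection still meets $\Omega_n(f)$ because $\Omega_n(f)\subset\UU_n$ (as shown in the proof of Corollary~\ref{poiupoiu2}). Fix any target $y\in\Omega_n(f)$. By Corollary~\ref{sobrealfa2}, $\alpha_f(y)\supset\Omega_n(f)$, hence $\alpha_f(y)\cap V\ne\emptyset$ and there exist $t\geq 1$ together with a pre-image $y_t\in f^{-t}(y)\cap V$. Put $y_k:=f^{t-k}(y_t)$ for $0\leq k\leq t$, so that $y_0=y$ and $f(y_k)=y_{k-1}$; the whole chain lies in $\UU_n$ by the trapping property. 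A descending induction from $k=0$ to $k=t$, using the backward-invariance inclusion, shows $y_k\in\Omega_n(f)$ for every $k$; in particular $y_t\in V\cap\Omega_n(f)$ with $y=f^t(y_t)\in(f|_{\Omega_n(f)})^t(V)$. Since $y$ was arbitrary, $\bigcup_{j\geq 0}(f|_{\Omega_n(f)})^j(V)=\Omega_n(f)$, yielding strong transitivity; ordinary transitivity follows at once.

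The main obstacle is the backward-invariance inclusion itself. Suppose $z\in\UU_n$ with $f(z)\in\Omega_n(f)\subset K_n\setminus K_{n+1}$. The containment $\UU_n\subset K_n$ and the trapping property of $K_n$ immediately give $z\in K_n$, while the trapping property of $K_{n+1}$ together with $f(z)\notin K_{n+1}$ forces $z\notin K_{n+1}$; so $z\in K_n\setminus K_{n+1}$ and only $z\in\Omega(f)$ remains. Corollary~\ref{sobrealfa2} applied at $f(z)\in\UU_n$ gives $f(z)\in\Omega_n(f)\subset\alpha_f(f(z))$, and then $f^{-1}$-invariance of the $\alpha$-limit places $z\in\alpha_f(f(z))\cap\UU_n$. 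The delicate point is that Corollary~\ref{poiupoiu2} identifies $\alpha_f(\cdot)\cap\UU_n$ with $\Omega_n(f)$ only when the base point lies in $I_n\setminus K_{n+1}$, whereas $f(z)$ could sit in $\UU_n\setminus I_n$. I plan to overcome this by pushing $f(z)$ forward through at most $\max\{\ell_n,r_n\}$ iterations into $I_n\setminus K_{n+1}$, invoking Corollary~\ref{poiupoiu2} at the image, and then transporting the resulting equality back to $f(z)$ using the forward $f$-invariance of the $\alpha$-limit and the local-homeomorphism property of the intermediate iterates (valid away from the critical orbit, which is a countable set one can avoid by a standard perturbation of $z$). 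Once $\alpha_f(f(z))\cap\UU_n\subset\Omega_n(f)$ is known, the argument closes exactly as in Proposition~\ref{transitivo}.
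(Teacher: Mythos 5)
Your skeleton is the paper's own argument: the same two inputs (Corollaries~\ref{sobrealfa2} and~\ref{poiupoiu2}), the same backward-invariance inclusion $f^{-1}(\Omega_{n}(f)\cap\UU_n)\cap\UU_n\subset\Omega_{n}(f)\cap\UU_n$, and the same pull-back chain $x_t\mapsto x_{t-1}\mapsto\cdots\mapsto x_0$ inside the trapping region $\UU_n$ closed by induction, so the main body of your proposal is correct and matches the paper.

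The one place you diverge is the derivation of the inclusion itself, and there your workaround is more complicated than necessary and partly misstated. The paper does not apply Corollary~\ref{poiupoiu2} at the moving point $f(z)$ at all: it fixes \emph{one} arbitrary base point $x\in I_n\setminus K_{n+1}$, notes that Corollary~\ref{poiupoiu2} gives $\Omega_{n}(f)=\alpha_{f}(x)\cap\UU_n$ for that fixed $x$, and then the inclusion is immediate from $f^{-1}(\alpha_{f}(x))\subset\alpha_{f}(x)$; the difficulty you describe (that $f(z)$ may lie in $\UU_n\setminus I_n$) simply never arises. Your alternative route — push $f(z)$ forward into $I_n\setminus K_{n+1}$ and pull the identification back — does work, but not for the reason you give: the needed containment is just $\alpha_{f}(y)\subset\alpha_{f}(f^{m}(y))$, which holds by the definition of the $\alpha$-limit and requires no local-homeomorphism property, and the forward iterates of $f(z)\in\Omega_{n}(f)$ can never hit $c$ anyway, since $c\in I_{n+1}\subset K_{n+1}$ and an orbit of a non-wandering point outside the open trapping region $K_{n+1}$ cannot enter it. In particular the phrase ``a standard perturbation of $z$'' is not legitimate here: $z$ is the specific point whose membership in $\Omega_{n}(f)$ you are proving and cannot be perturbed. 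So either drop that sentence and justify the push-forward as above, or adopt the paper's device of a fixed base point, which makes the inclusion a one-line consequence of Corollary~\ref{poiupoiu2}.
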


\dem
We will show that $\bigcup_{j\ge0}f^j(V\cap\Omega_{n}(f)) = \Omega_{n}(f), \forall V  \subset \UU_n \setminus I_{n+1}$, $V$ open and $V \cap \Omega_{n}(f) \ne \emptyset$.

As we know that $f^{-1}(\alpha_{f}(x))\subset\alpha_{f}(x)$ for any $x \in [0,1]$, it follows from the Corollary~\ref{poiupoiu2} that 
\begin{equation}
\label{inclusaoestrela2}
f^{-1}(\Omega_{n}(f)\cap \UU_n)\cap \UU_n \subset\Omega_{n}(f)\cap \UU_n.
\end{equation}

Let $V \subset \UU_n \setminus I_{n+1}$, $V$ any open set with $V \cap \Omega_{n}(f) \ne \emptyset$. Given $x_0 \in \Omega_{n}(f)\cap \UU_n$ we have that $\alpha_{f}(x)\cap V\ne \emptyset$ and then $\co^-_f(x)\cap V \ne \emptyset$. Pick $x_t \in f^{-t}(x) \cap V$. Define $x_k=f^{t-k}(x_t)$ for $0 \le k \le t$. 
$$
x_t \stackrel{f}{\to} x_{t-1} \stackrel{f}{\to} \dots \stackrel{f}{\to} x_0=f^t(x_t) 
$$
As $\UU_n$ is a trapping region, we have that $x_k$ in $\UU_n, \forall 0 \le k \le t$. 

We claim that indeed $x_t \in \Omega_{n}(f)\cap \UU_n$. We will prove this claim by induction. We have that $x_0 \in \Omega_{n}(f)\cap \UU_n$. Suppose it also worths for $k-1$, that is, $x_{k-1} \in \Omega_{n}(f)\cap \UU_n$. We have that $x_k \in \UU_n$. Then $x_k \in f^{-1}(x_{k-1})\cap \UU_n$ and by (\ref{inclusaoestrela2}) we have that $x_k \in \Omega_{n}(f)\cap \UU_n$.
It follows by induction that $x_t \in \Omega_{n}(f)\cap \UU_n$.

\cqd

\begin{Remark}
In the Section~\ref{SecAnSpDeNonReMap} of the Appendix, we describe the possible decomposition to a non renormalizable $f$.
\end{Remark}

\dem[Proof of the Spectral Decomposition Theorem, Theorem \ref{spdecnotacao}]

We begin by setting $I_0:=(0,1)$. 
For $n\ge1$, $I_n$ is the $n$-th interval of the nested sequence of renormalization intervals $\cR_{f}$ (see the beginning of Section~\ref{SecSepDECOM} and Definition~\ref{NestRI}). If $\cR_{f}$ is a finite sequence, let $n_{f}-1$ be the biggest $n\ge1$ such that $I_{n}$ is not contained in the basin of periodic attractors. Otherwise, set $n_{f}=\infty$. Note that $\#\cR_{f}-1\le n_{f}\le\#\cR_{f}$.

Then, we found a sequence of consecutive renormalization intervals, $\{I_j\}_{j\ge 0}$, and to each $n\ge1$ we may associate $K_n:=K_{I_{n}}$, the nice trapping region associated to $I_n$. Set $K_0=(0,1)$.

Obviously if $n_{j}$ is finite, we have that
$$(0,1)=(K_0\setminus  K_1)\cup (K_1\setminus  K_2)\cup\cdots\cup(K_{n_f-1}\setminus  K_{n_f})\cup K_{n_{f}}.$$
If not, $f$ is a $\infty$-renormalizable map and
$$(0,1)=\bigg(\bigcup_{0\le n<n_{f}}K_n\setminus  K_{n+1}\bigg)\cup\bigg(\bigcap_{0\le n<n_{f}}K_{n}\bigg),$$
where $\Lambda:=\bigcap_{0\le n<n_{f}}K_{n}$ is the Solenoid Attractor of $f$.

Defining, $\Omega_{0}(f)=\{0,1\}$, 
$\Omega_{n}(f)= \overline{\Omega(f)\cap (K_n\setminus K_{n+1})}$ for $1\le n<n_{f}$ and
$$\Omega_{n_{j}}(f)=
\begin{cases}
\overline{\Omega(f)\cap K_{n_{f}}}&\text{ if }n_{f}<\infty\\
\bigcap_{0\le n<n_{f}}K_{n}&\text{ if }n_{f}=\infty
\end{cases},$$
we get
\begin{equation}\label{Eg5DS}
\Omega(f)=\bigcup_{0\le n \le n_f}\Omega_{n}(f).
\end{equation}

As $K_{n}$ is a trapping region, $\co_{f}^{+}(x)\subset K_{n}$ $\forall\,x\in K_{n}$ $\forall\,n$.

If $f$ does not have periodic attractors then every renormalization interval is regular (Remark~\ref{Remark5466}). Thus, it follows from Proposition~\ref{transitivo2} that $\Omega_{n}(f)$ is transitive $\forall n$, $\,1\le n<n_{f}$. Furthermore, as $f$ does not have periodic attractor, 
we can use Theorem~\ref{cicloint} to conclude that $\Omega_{n_{f}}$ is also transitive.
Indeed, the trapping region $U$ given by the Theorem has to be inside $K_{n_{f}}$ and by Lemma~\ref{LemmaofTr}, $\co_{f}^{+}(x)\cap U$ $\forall\,x\in I_{n_{f}}$. This implies that $\Omega(f)\cap K_{n_{f}}\subset \Omega(f)\cap U$, which is transitive.
Furthermore,$\forall\,1\le n\le n_{f}-1$, $\Omega_{n}(f)$ is a compact forward invariant uniformly expanding set (the expansion follows, for instance, from Ma\~ne's Theorem (Theorem~\ref{ThMane})).
Finally, if $\Omega_{n}(f)$ is not a periodic orbit, then $\Omega_{f}$ is a Cantor set (Lemma~\ref{aeroporto} in the Appendix provides $\Omega_{f}$ to be perfect and Lemma~\ref{lebzero} to have Lebesgue zero measure and so, to be totally disconnected).

If $f$ has a non-regular (or a degenerate) renormalization interval, then there is a finite attractor $\Lambda$ such that $I_{n_{f}}\subset\beta(\Lambda)$, where $\Lambda$ can be one periodic attractor or a union of two periodic attractors. Indeed, either $I_{n_{f}}$ is $J_{max}$, where $J_{max}$ is the maximal non-regular (or degenerate) renormalization interval, or $I_{n_{f}}$ is a regular renormalization interval contained in the basin of a periodic attractor (as in Lemma~\ref{Lemma9988}, see Figure~\ref{RRper.png}). As $J_{max}$ is always contained in the basin of a periodic attractor (Lemma~\ref{LemmaSSSS} and Lemma~\ref{LemmaSSDeg}), we get that $I_{n_{f}}\subset\beta(\Lambda)$. If $I_{n_{f}}=J_{max}$, the $\Omega_{n_{f}}(f)$ can be given by Lemma~\ref{LemmaSSSS} and \ref{LemmaSSDeg}. 
Moreover, $\forall\,1\le n\le n_{f}-1$, we have that $I_{n}$ is a regular renormalization interval not contained in the basin of a periodic attractor $\forall\,1\le n\le n_{f}-1$. Thus, $\Omega_{n}(f)$ is a transitive compact forward invariant uniformly expanding set (Proposition~\ref{transitivo2}, Lemma~\ref{aeroporto} and Lemma~\ref{lebzero}).

Finally, let us prove the last item of the theorem.

Let $0<j<n_{f}$. Thus, $I_n=(a_n,b_n)\supset I_{n+1}=(a_{n+1},b_{n+1})$ are consecutive intervals of the nested sequence of renormalization intervals $\cR_{f}$ with $I_{n}$ being regular.
Let $\co_{n}$ the periodic orbit with minimum period that intersects $I_{n}$, and $L_n=(p_n,p_n')$ be the connected component of $[0,1]\setminus\co_{n}$ containing $c$.

As $I_n$ is a renormalization interval (and $n<n_f$), there is only a finite number of gaps $J\in C_{I_n}$ of $\Lambda_{I_n}$ such that $\co_f^+(I_n)\cap J\ne\emptyset$.
Furthermore, by Theorem~\ref{imperfeito} and Corollary~\ref{Corollaryimperfeito}, the number of gaps of $\Lambda_{L_n}$ that intersect $\co_f^+(I_n)$ is also finite.
That is, $\#\XX_n<\infty$, where $\XX_n=\{I\in C_{L_n}\,;\,\co_f^+(I_n)\cap I\ne\emptyset\}$ is the set of gaps of
$\Lambda_{L_n}$ that intersect the positive orbit of the renormalization interval $I_n$.
Of course that $(p_n,p_n')\in\XX_n$. Set $X_{0,n}:=(p_n,p_n')$ and write $\XX_n=\{ X_{0,n}, X_{1,n},\cdots,X_{\ell_n,n}\}$, where $\ell_n=\#\XX_n-1$.

Of course that $\Omega_n= X_{0,n}\cup X_{1,n}\cup\cdots\cup X_{\ell_n,n}$,  $\# ( X_{a,n}\cap X_{b,n} )\le1$, when $a\ne b$, and that for each $i\in\{1,\cdots,\ell_{n}\}$ there is  $1\le s_{i,n}\le \ell_{n}$ such that $f^{s_{i,n}}(X_{i,n})\subset X_{0,n}$. So we only have to prove that the first return map to $X_{0,n}$ is topologically exact. 

If $L_n$ is a renormalization interval, $L_n=I_{n+1}$ and $\Omega_{n+1}=\co^+(p_n)$ (Corollary~\ref{Corollaryimperfeito}). In this case there is nothing to prove. Thus, suppose then that $L_n$ is not a renormalization interval, i.e., $L_n \ne I_{n+1}$. As $L_n$ is not a renormalization interval, the first return map $F_n$ to $(p_n,p_n')$ is defined in a domain that has more than two connected components. Then, at least one of the periodic points, let's say $p_n$ (the other case is analogous), returns in such a way that the branch of its return  covers the whole interval $(p_{n},p_{n}')$, as in Figure~\ref{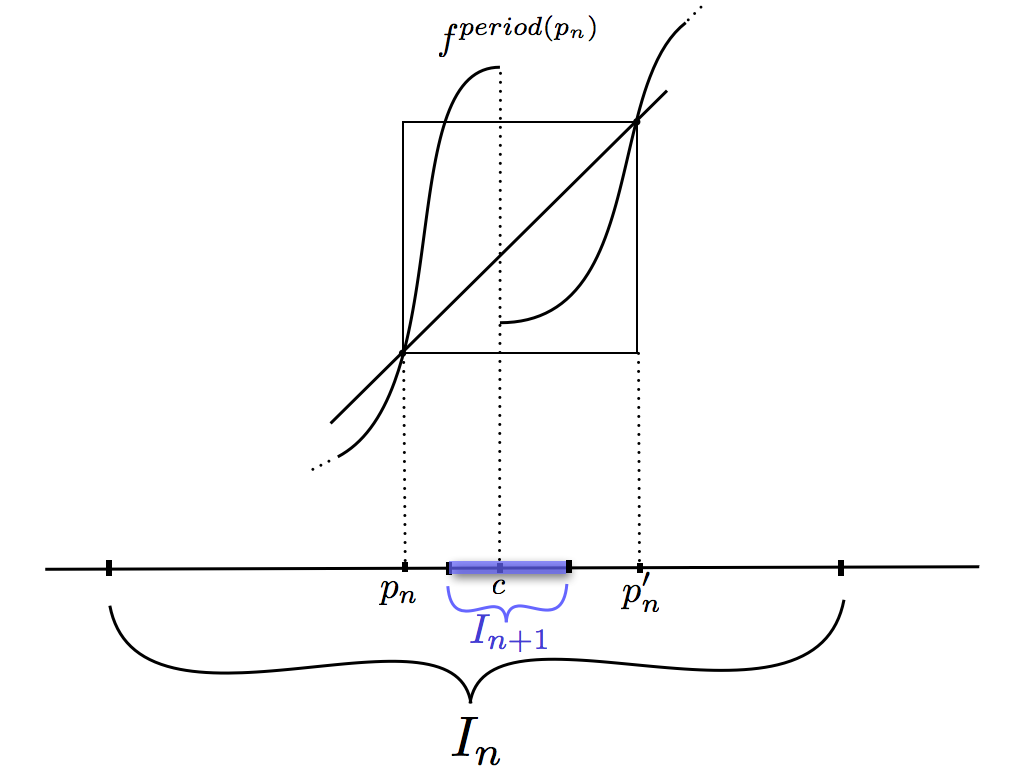}.

\begin{figure}
\begin{center}\label{FigFig.png}
\includegraphics[scale=.27]{FigFig.png}\\
Figure~\ref{FigFig.png}
\end{center}
\end{figure}

\begin{Remark}\label{moduloretorno}
Let $F:(a,b)^{*}\to (a,b)$ be the first return map to $(a,b)$ by $f$. If $f^n(x)\in(a,b)$ for $x\in(a,b)$ then $\exists m$, $1 \le m \le n$, such that $F ^m (x)=f^n(x)$.
\end{Remark}

\begin{Claim}\label{cheganobordo}
Given $y \in \Omega_n$ and $\epsilon>0$, $\exists j$ such that $f^j((y-\epsilon,y+\epsilon))\cap\partial I_{n+1}\ne\emptyset$.
\end{Claim}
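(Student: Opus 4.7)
The plan is to deduce the claim directly from the strong transitivity of $f|_{\Omega_n}$ established in Proposition \ref{transitivo2}, using $a_{n+1}\in\partial I_{n+1}$ as the target point.

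First I would verify that $\partial I_{n+1}\subset\Omega_n$. Both $a_{n+1}$ and $b_{n+1}$ are periodic, hence non-wandering; they sit in $K_n$ because $I_{n+1}\subset I_n\subset K_n$; and they fail to sit in $K_{n+1}$ because $K_{n+1}$ is a union of open gaps of $\Lambda_{I_{n+1}}$, while the niceness of $I_{n+1}$ forces $\{a_{n+1},b_{n+1}\}\subset\Lambda_{I_{n+1}}$, so these points are disjoint from every such gap. Therefore $\{a_{n+1},b_{n+1}\}\subset\Omega(f)\cap(K_n\setminus K_{n+1})=\Omega_n$.

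Next, if $y\in\partial I_{n+1}$ the claim is immediate with $j=0$; otherwise $y\notin\overline{I_{n+1}}$, since $\Omega_n\cap I_{n+1}=\emptyset$. Since the claim at $\epsilon$ follows from the claim at any smaller $\epsilon'$, I may shrink $\epsilon$ to guarantee that $V:=(y-\epsilon,y+\epsilon)$ is contained in $\UU_n$ and disjoint from $\overline{I_{n+1}}$; this is possible because $\Omega_n\subset\UU_n$ (as noted in the proof of Corollary \ref{poiupoiu2}), $\UU_n$ is open, and $y$ has positive distance from the closed set $\overline{I_{n+1}}$. Then $V$ is an open subset of $\UU_n\setminus I_{n+1}$ meeting $\Omega_n$ (since $y\in V\cap\Omega_n$), and Proposition \ref{transitivo2} yields $\bigcup_{j\ge0}f^j(V\cap\Omega_n)=\Omega_n$. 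Because $a_{n+1}\in\Omega_n$, there exist $j\ge0$ and $z\in V\cap\Omega_n$ with $f^j(z)=a_{n+1}$, whence $a_{n+1}\in f^j((y-\epsilon,y+\epsilon))\cap\partial I_{n+1}$, proving the claim.

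The main obstacle is the preliminary set-theoretic step of pinning down $\partial I_{n+1}\subset\Omega_n\subset\UU_n\setminus I_{n+1}$ and choosing a neighborhood that meets the precise hypothesis of Proposition \ref{transitivo2}; this forces one to carefully unwind the definitions of $K_n$, $K_{n+1}$ and $\UU_n$ and to invoke the niceness of $I_{n+1}$. Once these inclusions are in place, strong transitivity of $f|_{\Omega_n}$ delivers the conclusion in a single step.
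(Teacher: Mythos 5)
Your proof is correct, but it takes a genuinely different route from the paper's. The paper argues directly: since $c\in I_{n+1}$, the set of points whose orbit avoids $I_{n+1}$ has zero Lebesgue measure (Lemma~\ref{Remark98671oxe}), so there is a smallest $j$ with $f^j((y-\epsilon,y+\epsilon))\cap I_{n+1}\ne\emptyset$; because $y\in\Omega_n$, the orbit of $y$ itself never enters $\interior(I_{n+1})$ (otherwise a small neighborhood of $y$ would be carried into the trapping region $K_{n+1}$, incompatible with $y\in K_n\setminus K_{n+1}$); and since the earlier images avoid $I_{n+1}\ni c$, the image at time $j$ is an interval joining $f^j(y)\notin\interior(I_{n+1})$ to a point of $I_{n+1}$, hence it meets $\partial I_{n+1}$. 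You instead note that $\partial I_{n+1}\subset\Omega_n$ and pull back $a_{n+1}$ into the ball via the strong transitivity of $f|_{\Omega_n}$ (Proposition~\ref{transitivo2}); this is legitimate (the proposition precedes the claim, so there is no circularity) and even gives a slightly stronger conclusion, namely an exact preimage of a boundary point inside $(y-\epsilon,y+\epsilon)$. The trade-off is that your route leans on the heavier Proposition~\ref{transitivo2} and on the bookkeeping inclusions $\partial I_{n+1}\subset\Omega_n\subset\UU_n$, none of which the paper's Ma\~n\'e-plus-trapping argument needs; the paper's version also produces the ``first entry'' picture (a monotone image straddling $\partial I_{n+1}$) that fits naturally with the next claim. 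Two small repairs to your bookkeeping: to place $a_{n+1},b_{n+1}$ in $K_n$ you need $\overline{I_{n+1}}\subset I_n$ (they are boundary points, so $I_{n+1}\subset I_n\subset K_n$ alone does not cover them), which follows from Lemma~\ref{renormalinksSCH} since $I_n$ is regular; and the inclusion $\Omega_n\subset\UU_n$ that lets you shrink the ball is exactly what is established in the proof of Corollary~\ref{poiupoiu2} for $\Omega_n(f)=\Omega(f)\cap(K_n\setminus K_{n+1})$ — if one instead uses the closure $\overline{\Omega(f)\cap(K_n\setminus K_{n+1})}$, as in the proof of the theorem, one should still check that $y$ lies in the open set $\UU_n$, a point to which the paper's own proof of the claim is insensitive.
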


\dem[Proof of the claim]
As the set of points that don't visit $I_{n+1}$ has zero Lebesgue measure, $\exists j$, $j$ minimum such that $f^j((y-\epsilon,y+\epsilon))\cap I_{n+1} \ne \emptyset$. But $y \in \Omega_n$, then $\forall j$, $ f^j(y) \not\in \interior(I_{n+1})$ for otherwise it would be trapped into $K_{n+1}$ with any sufficiently small neighborhood of it. So, there is a point that is sent to $\partial I_{n+1}$. \cqd

\begin{Claim}\label{cobretudo}
$\exists$ sequence $\co^-(p_{n})\ni y_k\nearrow a_{n+1}$, $n_k\to \infty$ and $ \epsilon_k >0$ such that $y_k<y_k+\varepsilon_k<a_{n+1}$, $f^{n_k}|_{(y_k,y_k+\epsilon_k)}$ homeomorphism and $f^{n_k}(y_k,y_k+\epsilon_k)=(p_{n},p_{n}')$.
\end{Claim}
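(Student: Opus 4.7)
The plan is to exploit the expanding dynamics at the repelling periodic point $a_{n+1}$ together with a full-branch structure of the first return map $F_n$ to $L_n$. Since $f$ has no periodic attractors and no weak repellers, the period-$r$ point $a_{n+1}$ is repelling: on some neighborhood $W$ of $a_{n+1}$ the map $f^r|_W$ is a homeomorphism with $Df^r(a_{n+1})>1$, and its inverse branch $g$ fixing $a_{n+1}$ is an orientation-preserving contraction satisfying $g((a_{n+1}-\delta,a_{n+1}))\subsetneq(a_{n+1}-\delta,a_{n+1})$ for all sufficiently small $\delta>0$. Once I produce a single base interval $(z_0,w_0)\subset(a_{n+1}-\delta,a_{n+1})$ with $z_0\in\co_{f}^{-}(p_n)$ and $f^{N_0}|_{(z_0,w_0)}$ an orientation-preserving homeomorphism onto $L_n=(p_n,p_n')$, the sequence $y_k:=g^k(z_0)$, $\varepsilon_k:=g^k(w_0)-g^k(z_0)$, $n_k:=N_0+kr$ will satisfy all the required properties: $y_k\nearrow a_{n+1}$ (contraction toward the left-side fixed point), $y_k\in\co_{f}^{-}(p_n)$ (since $g^k$ is an inverse branch of $f^{kr}$), and $f^{n_k}|_{(y_k,y_k+\varepsilon_k)}=f^{N_0}|_{(z_0,w_0)}\circ g^{-k}$ is a composition of homeomorphisms onto $L_n$.

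Producing the base interval is the heart of the argument. I first apply Corollary~\ref{Corollary8388881b} to the nice interval $L_n$: since $p_n$ is periodic, there exists a component $A^*=(p_n,q_0)\in\cp_{L_n}$ of the domain of $F_n$ with $p_n\in\partial A^*$, and Lemma~\ref{Lemma8388881a}(1)(b) then yields $F_n(A^*)=L_n$, i.e., $A^*$ is a full branch. Writing $F_n|_{A^*}=f^m$, iterated pullback $(f^m|_{A^*})^{-j}(A^*)$ produces a nested sequence of intervals $(u_j,v_j)\subset A^*$ with $u_j\in\co_{f}^{-}(p_n)$ and $f^{jm}|_{(u_j,v_j)}$ an orientation-preserving homeomorphism onto $L_n$; their diameters decay geometrically by expansion of $F_n|_{A^*}$. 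By the strong transitivity of $f|_{\Omega_n}$ (Proposition~\ref{transitivo2}) combined with $a_{n+1}\in\Omega_n$, pre-images of $p_n$ are dense in $\Omega_n$ and accumulate at $a_{n+1}$. To force accumulation from the left while preserving the full-branch-onto-$L_n$ property, I combine a sufficiently deep pullback $(u_j,v_j)$ with the local contraction $g$ at $a_{n+1}$: translating $(u_j,v_j)$ via a well-chosen iterate of $f$ (or applying $g$ to a full-branch interval already lying in $W$) places a full-branch pre-image of $L_n$ inside $(a_{n+1}-\delta,a_{n+1})$, providing the base interval $(z_0,w_0)$.

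The main obstacle is this construction of $(z_0,w_0)$: the naive iterated pullback of $A^*$ inside $A^*$ converges to a fixed point of $F_n|_{A^*}$ which need not be $a_{n+1}$, so the full-branch pullback alone does not localize the base interval to the left of $a_{n+1}$; conversely, the local contraction $g$ localizes correctly but does not by itself produce a neighborhood mapping onto all of $L_n$ rather than only a sub-interval. The resolution requires the dual use of the full branch $A^*$ (providing the image $L_n$ under a specific orientation-preserving iterate of $f$) and the inverse branch $g$ (providing localization at $a_{n+1}$ on its left side), together with the density of $\co_{f}^{-}(p_n)$ in $\Omega_n$ to guarantee that full-branch pre-images of $L_n$ are accessible arbitrarily close to $a_{n+1}$.
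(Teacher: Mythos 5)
Your reduction of the claim to a single ``base interval'' $(z_0,w_0)$, propagated toward $a_{n+1}$ by the inverse branch $g$ of $f^{r}$ at $a_{n+1}$, is a legitimate reorganization (the paper instead re-runs the construction at every pre-image $y_k$ of $p_n$). But there is a genuine gap exactly at the step you yourself call the heart of the argument: the construction of $(z_0,w_0)$. Neither of the two mechanisms you offer produces it. The iterated pullbacks $(u_j,v_j)=(f^{m}|_{A^*})^{-j}(A^*)$ all have left endpoint $p_n$ itself, so ``translating'' them by a forward iterate of $f$ only attaches them to the finite orbit $\co_{f}^{+}(p_n)$, which has no reason to visit $(a_{n+1}-\delta,a_{n+1})$; and ``applying $g$ to a full-branch interval already lying in $W$'' presupposes the very interval you are trying to build. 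What is actually needed (and is the paper's proof) is the opposite composition: since $a_{n+1}\in\alpha_{f}(p_n)$ (Lemma~\ref{bordos}), pick $y\in\co_{f}^{-}(p_n)$ just to the left of $a_{n+1}$ with $f^{j}(y)=p_n$ and $f^{j}$ a homeomorphism on $(y,y+\delta)$; its image is only a small interval $(p_n,\eta)$, and one must enlarge the \emph{image}, not the domain, by composing with $s$ further iterates of the full expanding return branch fixing $p_n$, choosing $s$ so that $(f^{\period(p_n)}|_{(p_n,c)})^{-s}(p_n')$ has entered $(p_n,\eta)$; shrinking $(y,y+\delta)$ to the corresponding subinterval makes $f^{j+s\,\period(p_n)}$ a homeomorphism onto $(p_n,p_n')$. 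This ``blow the small image back up to $L_n$ along the branch at $p_n$'' step is absent from your proposal, and nothing in it substitutes for it.

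A second, smaller problem: you justify that $a_{n+1}$ is repelling with $Df^{r}(a_{n+1})>1$ by ``$f$ has no periodic attractors and no weak repellers.'' In the setting of this claim $f$ may well have a periodic attractor (only $I_n$ is assumed regular and not contained in the basin of one), and $a_{n+1}$ could a priori be neutral on one side. The correct justification is Lemma~\ref{Lemma9988}/Corollary~\ref{Corollary9988}: if $a_{n+1}$ attracted a left neighborhood, then $I_n$ would be contained in the basin of a periodic attractor, a contradiction; together with $Sf<0$ this yields $f^{r}(x)<x$ on a left neighborhood of $a_{n+1}$, which is all your $g$-iteration needs. Note also that the accumulation of $\co_{f}^{-}(p_n)$ at $a_{n+1}$ is automatically from the left, since pre-images of $p_n$ cannot enter the trapping region $K_{n+1}\supset I_{n+1}$; this one-sidedness should be recorded, because the claim asserts $y_k\nearrow a_{n+1}$.
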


\dem[Proof of the claim]

Observe that we have already obtained in Lemma \ref{bordos} that as $I_{n}$ is a regular renormalization interval then $\alpha(p_{n})=\alpha(p_{n}') \supset \partial I_{n+1}$. Then, there is a sequence of pre-images of $p_n$, $y_n\in \co^-(p_{n})$, converging to $a_{n+1}$. Taking these $\delta_k>0$ small enough to have  $f^{j_k}|_{(y_k,y_k+\delta_k)}$ being a diffeomorphism with $j$ minimum such that $f^{j_k}(y_k)=p_{n}$. Let $s_k=\min\{j>0\,;\,(f^{\period{p_n}}|_{(p_n,c)}^{-1})^s(p_n')\in(p_n,f^{j_k}(y_k+\delta_k))\}$ and $y_k+\varepsilon_k\in(y_k,y_k+\delta_k)$ s.t. $f^{j_k}(y_k+\varepsilon_k)=(f^{\period(p_n)}|_{(p_n,c)}^{-1})^{s_k}(p_n')$. Thus, $f^{s_k+j_k}|_{(y_k,y_k+\varepsilon_k)}$ is a homeomorphism and $$f^{s_k+j_k}((y_k,y_k+\varepsilon_k))=(p_n,p_n').$$\cqd

\begin{claim}
Given $y \in \Omega_n \cap (p_n,p_n')$ and $\epsilon>0$, $\exists I \subset B_{\epsilon}(y)$ and $k\ge0$ such that $f^k|_I$ is a homeomorphism and $f^k(I)=(p_n,p_n')$.
\end{claim}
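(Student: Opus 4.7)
The plan is to combine the two preceding claims: use Claim \ref{cheganobordo} to transport a neighborhood of $y$ to a one-sided neighborhood of a boundary point of $I_{n+1}$, and then use Claim \ref{cobretudo} to pull back a complete pre-image of $(p_n,p_n')$ sitting in that one-sided neighborhood.

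First I would apply Claim \ref{cheganobordo} to pick the minimal $j\ge 0$ with $f^{j}((y-\epsilon,y+\epsilon))\cap I_{n+1}\ne\emptyset$. By the argument already used inside Claim \ref{cheganobordo}, $y\in\Omega_n$ forces $f^j(y)\notin\interior(I_{n+1})$. Let $I'\subset(y-\epsilon,y+\epsilon)$ be the maximal open subinterval containing $y$ on which $f^j$ is continuous and monotone (its endpoints are either endpoints of $(y-\epsilon,y+\epsilon)$ or preimages of $c$ under $f^i$ for some $0\le i<j$; since the latter form a finite set, such $I'$ exists and, after a harmless slight shrinkage of $\epsilon$, has $y$ in its interior). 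Then $f^j|_{I'}$ is a homeomorphism and $f^j(I')$ is an interval containing $f^j(y)$ and meeting the open interval $I_{n+1}$, so it must cross some point of $\partial I_{n+1}=\{a_{n+1},b_{n+1}\}$, say $a_{n+1}$ (the case $b_{n+1}$ is symmetric).

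Next I would argue that $f^j(I')$ contains a left-sided neighborhood $(a_{n+1}-\delta,a_{n+1})$ of $a_{n+1}$. Since $f^j(y)\notin\interior(I_{n+1})$, either $f^j(y)<a_{n+1}$, in which case $f^j(I')\supset[f^j(y),a_{n+1}]$ gives such a neighborhood directly, or $f^j(y)=a_{n+1}$; but $y$ lies in the interior of $I'$, so $a_{n+1}$ lies in the interior of $f^j(I')$ and again a left-sided neighborhood of $a_{n+1}$ is contained in $f^j(I')$. (If instead the relevant boundary point is $b_{n+1}$, one uses the symmetric version of Claim \ref{cobretudo} at $b_{n+1}$, which holds by the same proof, to produce preimages of $p_n$ accumulating at $b_{n+1}$ from the right.)

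Finally, apply Claim \ref{cobretudo} to pick $k$ large enough that the interval $(y_k,y_k+\varepsilon_k)$ is contained in $(a_{n+1}-\delta,a_{n+1})$ and $f^{n_k}|_{(y_k,y_k+\varepsilon_k)}$ is a homeomorphism onto $(p_n,p_n')$. Define
$$I:=(f^j|_{I'})^{-1}\bigl((y_k,y_k+\varepsilon_k)\bigr)\subset I'\subset(y-\epsilon,y+\epsilon).$$
Then $f^{j+n_k}|_{I}=f^{n_k}|_{(y_k,y_k+\varepsilon_k)}\circ f^{j}|_{I}$ is a composition of two homeomorphisms and maps $I$ onto $(p_n,p_n')$, so the claim holds with $k=j+n_k$. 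The main obstacle is the second step: ruling out the pathological possibility that $f^j(I')\subset\overline{I_{n+1}}$ just touches $\partial I_{n+1}$ from inside $I_{n+1}$, where Claim \ref{cobretudo} cannot be invoked; this is exactly why the non-wandering hypothesis $y\in\Omega_n$ (together with the minimality of $j$) is essential, guaranteeing $f^j(y)\notin\interior(I_{n+1})$ and so forcing $f^j(I')$ to stick out of $I_{n+1}$ on the correct side.
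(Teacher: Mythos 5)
This is correct and is precisely the argument the paper intends: the paper's own proof of this claim is nothing more than a one-line appeal to Claims~\ref{cheganobordo} and~\ref{cobretudo}, which you have fleshed out (transport a neighborhood of $y$ to a one-sided outer neighborhood of a point of $\partial I_{n+1}$, then pull back one of the intervals $(y_k,y_k+\varepsilon_k)$ covering $(p_n,p_n')$). The only assertion you leave unjustified --- that the monotone branch $I'$ through $y$ really satisfies $f^j(I')\cap I_{n+1}\ne\emptyset$, since the point entering $I_{n+1}$ at time $j$ could a priori lie in a different branch of $f^j|_{(y-\epsilon,y+\epsilon)}$ --- is automatic: $c\in I_{n+1}$, so by minimality of $j$ no point of $(y-\epsilon,y+\epsilon)$ can hit $c$ before time $j$, hence $f^j$ is monotone on the whole ball, $I'=(y-\epsilon,y+\epsilon)$, and its image meets $I_{n+1}$ by the choice of $j$.
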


\dem[Proof of the claim]
By claims \ref{cheganobordo} and \ref{cobretudo} we can obtain this I of the statement. \cqd

By Remark \ref{moduloretorno} we have that given $y \in \Omega_n \cap (p_n,p_n')$ and $\epsilon > 0$, $\exists I \subset B_\epsilon(y)$ and $m$ such that $F_n^m(I)=(p_n,p_n')$

As $F_n((p_n,p_n'))=(p_n,p_n')$ it follows that given $y \in \Omega_n \cap (p_n,p_n')$ and $V$ a neighborhood of $y$, we will have that $\exists m$ such that $F_n^s(V)=(p_n,p_n')$, $\forall s \ge m$. That is, given an open set $V$ with $V\cap\Omega_n\ne\emptyset$, we have that $F_n^s(V \cap\Omega_n)=(p_n,p_n')\cap\Omega_n$, $\forall s \ge m$, for some $m \ge 0$.

\cqd

\newpage

\section{Appendix}

\begin{Lemma}\label{dicotomia} (Dichotomy of transitivity) 
If $f:U \to \XX$ is a continuous map  defined in an open and dense subset $U$ of  compact metric space $\XX$ then either $\nexists x \in U$ such that $\omega_{f}(x)=\XX$ or $\omega (x)=\XX$ for a residual set of $x \in \XX$.
\end{Lemma}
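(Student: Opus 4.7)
\medskip

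\noindent\textbf{Proof plan.} The natural strategy is a Baire category argument based on a countable basis of the topology of $\XX$. Let me describe the two cases and the key steps.

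First I would assume that some $x_{0}\in U$ satisfies $\omega_{f}(x_{0})=\XX$ (otherwise we are in the first alternative and there is nothing to prove). Under this assumption, the whole forward orbit $\co_{f}^{+}(x_{0})$ must exist, i.e., $f^{k}(x_{0})\in U$ for every $k\ge0$. I would therefore introduce the $G_{\delta}$ set $U_{\infty}:=\bigcap_{n\ge0}f^{-n}(U)$, the set of points whose full forward orbit is defined. A quick preliminary step is to show $U_{\infty}$ is dense in $\XX$: given any non-empty open $W\subset\XX$, since $\omega_{f}(x_{0})=\XX$, there is $m\ge0$ with $f^{m}(x_{0})\in W$, and such a point lies in $U_{\infty}$ because it is on the orbit of $x_{0}$. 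Hence $U_{\infty}$ is a dense $G_{\delta}$, so residual.

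Next I would fix a countable basis $\{V_{n}\}_{n\in\NN}$ of open sets of $\XX$ and express the set of points with dense orbit as the countable intersection
\begin{equation*}
\{x\in U_{\infty}\,;\,\omega_{f}(x)=\XX\}\;=\;U_{\infty}\cap\bigcap_{n\in\NN}\bigcap_{N\in\NN}A_{n,N},
\qquad A_{n,N}:=\bigcup_{k\ge N}f^{-k}(V_{n}).
\end{equation*}
Each $A_{n,N}$ is open by continuity of $f^{k}$ on its domain. The content of the proof is to show that each $A_{n,N}$ is dense in $\XX$. For this, given any non-empty open $W$ I would again use $\omega_{f}(x_{0})=\XX$ to pick $m$ with $f^{m}(x_{0})\in W$; since $\omega_{f}(f^{m}(x_{0}))=\omega_{f}(x_{0})=\XX$, the orbit of $f^{m}(x_{0})$ visits $V_{n}$ infinitely often, so $f^{m}(x_{0})\in A_{n,N}\cap W$. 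Thus each $A_{n,N}$ is open and dense.

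Finally I would invoke the Baire category theorem in the compact metric space $\XX$ to conclude that $\bigcap_{n,N}A_{n,N}$, and hence its intersection with the residual set $U_{\infty}$, is residual. This is exactly $\{x\,;\,\omega_{f}(x)=\XX\}$, finishing the second alternative. The only mild obstacle is the bookkeeping with the domain of the iterates $f^{k}$, since $f$ is only defined on $U$; this is cleanly handled by the $G_{\delta}$ set $U_{\infty}$, whose density is forced the moment one dense orbit exists.
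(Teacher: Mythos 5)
Your argument is correct, and it implements the Baire category idea with a different decomposition than the paper. The paper fixes the reference point $p$ with dense orbit, and for each $n$ builds an open dense set out of small balls $B_{r_{n,\ell}}(f^{\ell}(p))$ along the reference orbit, chosen by continuity so that every point of such a ball has a $(1/n)$-dense initial orbit segment; the residual set is then $\bigcap_{n}\bigcup_{\ell}B_{r_{n,\ell}}(f^{\ell}(p))$, contained in the points whose orbits are $(1/n)$-dense for every $n$. You instead take a countable basis $\{V_{n}\}$ and the open sets $A_{n,N}=\bigcup_{k\ge N}f^{-k}(V_{n})$, with density of each $A_{n,N}$ witnessed directly by points $f^{m}(x_{0})$ of the reference orbit. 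The two routes buy slightly different things: your preimage sets encode ``visits $V_{n}$ at some time $\ge N$'', so the intersection gives $\omega_{f}(x)=\XX$ on the nose (visits at arbitrarily large times), whereas the paper's sets $\XX_{n}$ a priori only give dense forward orbits and the identification $\bigcap_{n}\XX_{n}=\{x\,;\,\omega_{f}(x)=\XX\}$ is left implicit; on the other hand the paper's construction yields the quantitative extra information that the residual points have $(1/n)$-dense \emph{finite initial} orbit segments, obtained from continuity along finite pieces of the orbit of $p$, without invoking a countable basis. Your treatment of the partially defined dynamics via the dense $G_{\delta}$ set $U_{\infty}=\bigcap_{n\ge0}f^{-n}(U)$ is also a clean way to make precise what the paper handles implicitly by assuming $p\in\bigcap_{j\ge0}f^{-j}(U)$; the only cosmetic point is to discard empty basis elements $V_{n}$ so that each $A_{n,N}$ is indeed dense.
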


\dem
Suppose that $\co^{+}_{f}(p)$ is dense in $\XX$ for some $p\in \bigcap_{j\ge0}f^{-j}(U)$. Write $p_{\ell}=f^{\ell}(p)$. For each $\ell\in\NN$ there is some $k_{n,\ell}$ such that $\{p_{\ell},$ $\cdots,$ $f^{k_{n,\ell}}(p_{\ell})\}$ is $(1/2n)$-dense. As $f$ is continuous and $U$ open, there is some $r_{n,\ell}>0$ such that $f^{j}(B_{r_{n,\ell}}(p_{\ell}))\subset B_{1/2n}(f^{j}(p_{\ell}))$ $\forall\,0\le j\le k_{n,\ell}$. Thus, $\{y,\cdots,f^{k_{n,\ell}}(y)\}$ is $(1/n)$-dense $\forall\,y\in B_{r_{n,\ell}}(p_{\ell})$. Let $$\XX_{n}=\{x\in\XX\,;\,\co^{+}_{f}(x)\text{ is }(1/n)-\text{dense}\}.$$ Therefore $\bigcup_{\ell\in\NN}B_{r_{n,\ell}}(p_{\ell})\subset\XX_{n}$ is a open and dense set. Furthermore, $$\bigcap_{n\in\NN}\bigcup_{\ell\in\NN}B_{r_{n,\ell}}(p_{\ell})$$ is a residual set contained in $\bigcap_{n\in\NN}\XX_{n}=\{x\in\XX$ $;$ $\omega_{f}(x)=\XX\}$.
\cqd

\begin{Lemma}
\label{aeroporto}
Let $\XX$ be a compact metric space and $f:U\to\XX$ be a continuous map defined in a subset $U$. If $x\in\bigcap_{n\ge0}f^{-n}(U)$ and $x\in\omega_f(x)$ then either $\co_f^+(x)$ is a periodic orbit (in this case $\omega_f(x)=\co_f^+(x)$) or $\omega_{f}(x)$ is a perfect set.
\end{Lemma}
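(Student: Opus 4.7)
The plan is to establish a structural dichotomy by analyzing, for each $y\in\omega_f(x)$, whether the sequence of iterates converging to $y$ hits $y$ infinitely often or only finitely often.

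First I would observe that $\omega_f(x)$ is closed in $\XX$ (hence compact) and that the hypothesis $x\in\bigcap_{n\ge 0}f^{-n}(U)$ guarantees that every iterate $f^m$ is defined and continuous at $x$. Using this continuity together with $x\in\omega_f(x)$, I would show that $\co_f^+(x)\subset\omega_f(x)$: pick $n_k\to\infty$ with $f^{n_k}(x)\to x$; then for any fixed $m$, $f^{n_k+m}(x)=f^m(f^{n_k}(x))\to f^m(x)$ by continuity of $f^m$ at $x$, so $f^m(x)\in\omega_f(x)$. In particular $\omega_f(x)=\overline{\co_f^+(x)}$.

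Next, fix any $y\in\omega_f(x)$ and choose $n_k\to\infty$ with $f^{n_k}(x)\to y$. Split into two cases.

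Case A: the set $\{k:f^{n_k}(x)\neq y\}$ is infinite. Passing to this subsequence gives points $f^{n_k}(x)\in\co_f^+(x)\setminus\{y\}\subset\omega_f(x)\setminus\{y\}$ converging to $y$, so $y$ is not isolated in $\omega_f(x)$.

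Case B: $f^{n_k}(x)=y$ for all but finitely many $k$. Then $y\in\co_f^+(x)$, say $y=f^N(x)$, and there exist indices $n_k>N$ with $f^{n_k-N}(y)=y$, so $y$ is periodic, say of period $p$. Consequently $\co_f^+(x)$ is eventually periodic and $\omega_f(x)$ equals the finite periodic orbit $\{y,f(y),\dots,f^{p-1}(y)\}$. Since $x\in\omega_f(x)$ by hypothesis, $x$ itself lies on this periodic orbit, so $\co_f^+(x)$ is a periodic orbit and $\omega_f(x)=\co_f^+(x)$.

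Combining the two cases yields the dichotomy: either we fall into Case B for some $y$, in which case $\co_f^+(x)$ is a periodic orbit and $\omega_f(x)=\co_f^+(x)$; or Case A applies for every $y\in\omega_f(x)$, in which case no point of $\omega_f(x)$ is isolated, i.e.\ $\omega_f(x)$ is perfect. The main (but modest) technical point is to make sure the invariance $\co_f^+(x)\subset\omega_f(x)$ is rigorous; once that is in place, the dichotomy follows by an elementary subsequence argument.
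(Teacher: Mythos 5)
Your proposal is correct and follows essentially the same route as the paper: both rest on showing $\co_f^+(x)\subset\omega_f(x)$ via continuity along the orbit together with $x\in\omega_f(x)$, and then observing that a point of $\omega_f(x)$ which the orbit does not approach through distinct points must be hit repeatedly, forcing periodicity of that point and hence of $x$. The only difference is presentational — you phrase it as a direct Case A/Case B dichotomy, while the paper argues by supposing an isolated point exists — so there is nothing substantive to add.
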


\dem
Suppose $\exists p \in \omega_{f}(x)$ an isolated point, say $B_{\varepsilon}(p)\cap\omega_{f}(x)=\{p\}$, with $\varepsilon>0$. As $x\in\omega_{f}(x)$ and $f$ is continuous on $\co^{+}_{f}(x)$, we have $\co^{+}_{f}(x)\subset\omega_{f}(x)$. Thus, $\co^{+}_{f}(x)\cap(B_{\varepsilon}(p)\setminus\{p\})=\emptyset$.
As $p \in \omega_{f}(x) \Rightarrow \exists$ sequence $n_{j}\nearrow \infty$ such that $f^{n_{j}}(x)\to p$.  Taking $j$ big enough we have $f^{n_{j}}(x)\in B_{\varepsilon}(p)$ then $f^{n_{j}}(x)=p$, $\forall j$ big and then $f^{n_{j+1}-n_{j}}(f^{n_{j}}(x))=p=f^{n_{j}}(x)$, that is, $f^{n_{j}}(x)$ is periodic. As $x \in \omega_{f}(x)=\omega(f^{n_{j}}(x))=\co^{+}(f^{n_{j}}(x))$, we have that $x$ is periodic.
\cqd

\begin{Corollary}
\label{CORaeroporto}
Let $f:[0,1]\setminus\{c\}\to[0,1]$ be a contracting Lorenz map. If $c_-\in\omega_f(c_-)$ then either $f$ has a super-attractor containing $c_-$ or $\omega_f(c_-)$ is a perfect set. Analogously,   If $c_+\in\omega_f(c_+)$ then either $f$ has a super-attractor containing $c_+$ or $\omega_f(c_+)$ is a perfect set.
\end{Corollary}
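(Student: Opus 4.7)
\textbf{Proof plan for Corollary \ref{CORaeroporto}.} The plan is to reduce to Lemma \ref{aeroporto} by replacing the symbolic point $c_-$ with the genuine point $y:=f(c_-)\in[0,1]$ and then interpreting the hypothesis $c_-\in\omega_f(c_-)$ as the assertion that the orbit of $c_-$ accumulates on $c$ from the left. I will handle only $c_-$, the $c_+$ case being symmetric.

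First I would dispose of the super-attractor case. If there exists a smallest $n\ge 1$ with $f^{n}(c_-)=c$, then setting $p_i:=f^{i}(c_-)$ for $1\le i\le n-1$ one has $f(c_-)=p_1$, $f(p_i)=p_{i+1}$ for $1\le i<n-1$, and $f(p_{n-1})=c$. Since the hypothesis $c_-\in\omega_f(c_-)$ says that $c$ is approached from the left by iterates of $c_-$, the forward dynamics of $c_-$ closes up into the cycle $p_1\mapsto p_2\mapsto\cdots\mapsto p_{n-1}\mapsto c\mapsto c_-\mapsto p_1$, so $\Lambda=\{p_1,\dots,p_{n-1},c\}$ satisfies the definition of a super-attractor containing $c_-$ and we are done.

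From now on I may assume $f^{n}(c_-)\ne c$ for every $n\ge 1$, so the full orbit of $y=f(c_-)$ lies in $[0,1]\setminus\{c\}$, i.e.\ $y\in\bigcap_{n\ge0}f^{-n}([0,1]\setminus\{c\})$. Note that $\omega_f(c_-)=\omega_f(y)$ since the two orbits differ by a single point. Next I verify that $y\in\omega_f(y)$: by the hypothesis there is $n_j\to\infty$ with $f^{n_j}(c_-)\uparrow c$, and then by the very definition $f(c_-)=\lim_{x\uparrow c}f(x)$ one obtains $f^{n_j+1}(c_-)\to f(c_-)=y$, so $y\in\omega_f(y)$.

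Now I apply Lemma \ref{aeroporto} with $\XX=[0,1]$, $U=[0,1]\setminus\{c\}$ and $x=y$: either $\co_f^+(y)$ is a periodic orbit, or $\omega_f(y)$ is a perfect set. In the second alternative, $\omega_f(c_-)=\omega_f(y)$ is perfect and we are done. In the first alternative, $\omega_f(y)=\co_f^+(y)\subset[0,1]\setminus\{c\}$ is a finite set avoiding $c$; but the hypothesis $c_-\in\omega_f(c_-)$ forces $c\in\omega_f(c_-)=\omega_f(y)$, a contradiction. Thus this alternative cannot arise (in the non super-attractor case) and the proof is complete. The only delicate point is the correct reading of the symbolic statement $c_-\in\omega_f(c_-)$ as ``$c$ is accumulated by the orbit of $c_-$ from the left'', which is what lets the one-sided limit $f(c_-)$ survive the limit process and give $y\in\omega_f(y)$.
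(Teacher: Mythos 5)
Your argument is correct and follows essentially the same route as the paper: you reduce to Lemma~\ref{aeroporto} applied to the genuine critical value $v_1=f(c_-)$, treating the case where the orbit of $c_-$ hits $c$ as the super-attractor alternative, showing $v_1\in\omega_f(v_1)$ by one-sided continuity at $c$, and ruling out the periodic alternative because the orbit must accumulate on $c$. The only difference is cosmetic (you exclude periodicity a posteriori by contradiction, while the paper notes it a priori), so nothing further is needed.
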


\dem
Suppose that $f$ does not have a super-attractor containing $c_-$. Thus, $v_1:=f(c_-)\notin\co_f^-(c)$. In this case, $\co_f^+(c_-)=\{c\}\cup\co_f^+(v_1)$ (recall the definition of $\co_f^+(c_-)$ in the beginning of Section~\ref{MainResults}). Note that $v_1\in\omega_f(v_1)$, because $c_-\in\omega_f(c_-)$. As $v_1$ can not be a periodic orbit and as $v_1\in\bigcup_{n\ge0}f^{-n}([0,1]\setminus\{c\})$, it follows from Lemma~\ref{aeroporto} that $\omega_f(v_1)$ is a perfect set. As $\omega_f(c_-)=\omega_f(v_1)$  (because $c\in\overline{\omega_f(v_1)\cap(0,c)}$), we finish the proof.\cqd

\begin{Corollary}
\label{CORaeroporto2}
Let $f:[0,1]\setminus\{c\}\to[0,1]$ be a contracting Lorenz map without periodic attractors. Suppose $\omega_f(c_-)\ni c\in\omega_{f}(c_{+})$. If $\overline{\co_{f}^{+}(p)\cap(0,c)}\ni c\in\overline{(c,1)\cap\co_{f}^{+}(p)}$, $p\in(0,1)\setminus\{c\}$, then $\omega_{f}(p)$ is a perfect set and $\overline{\omega_{f}(p)\cap(0,c)}\ni c\in\overline{(c,1)\cap\omega_{f}(p)}$.\end{Corollary}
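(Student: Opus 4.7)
First, by continuity of the two monotone branches of $f$ at $c$, the two-sided-approach hypothesis immediately yields $v_0:=f(c_+),\,v_1:=f(c_-)\in\omega_f(p)$: apply $f$ to a one-sided subsequence $f^{n_j}(p)\to c$ and use $f^{n_j+1}(p)\to v_1$ (for approach from the left) or $\to v_0$ (from the right). Since $c\in\omega_f(v_0)\cap\omega_f(v_1)$ by hypothesis, each orbit $\co^{+}_{f}(v_i)$ is infinite and therefore avoids $c$; iterating this continuity argument and using closedness of $\omega_f(p)$ gives $\omega_f(v_0)\cup\omega_f(v_1)\subset\omega_f(p)$.

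Next I would analyse the side from which $c$ is accumulated inside $\omega_f(v_i)$. Pick $f^{n_j}(v_1)\to c$. If the approach is from the left, $f^{n_j+1}(v_1)\to v_1$, so $v_1\in\omega_f(v_1)$, which is the hypothesis $c_-\in\omega_f(c_-)$ of Corollary~\ref{CORaeroporto}; since ``no periodic attractor'' excludes super-attractors, the Corollary forces $\omega_f(v_1)$ to be perfect. If the approach is only from the right, $f^{n_j+1}(v_1)\to v_0$, so $v_0\in\omega_f(v_1)$ and $\omega_f(v_0)\subset\omega_f(v_1)$. A symmetric dichotomy applies to $\omega_f(v_0)$. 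In the ``crossed'' alternative we obtain $\omega_f(v_0)=\omega_f(v_1)$ containing $v_0$ as a recurrent point, and Lemma~\ref{aeroporto} gives perfectness again (the orbit cannot be periodic, since $c\in\omega_f(v_0)$ forces it to be infinite). Thus in every case at least one of $\omega_f(v_0),\omega_f(v_1)$ is a perfect set $\Omega^*\subset\omega_f(p)$ containing $c$. Moreover, from any convergence $f^{n_j}(v_i)\to c$ one extracts, by compactness, a subsequential limit $q^*$ of the predecessors $f^{n_j-1}(v_i)$ with $f(q^*)=c$, so $q^*\in\omega_f(v_i)\subset\Omega^*$ is a pre-image of $c$ in $\Omega^*$. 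Combining this backward step with the hypothesis that $\co^{+}_{f}(p)$ approaches $c$ from both sides, and tracking through the pre-images of $(c-\epsilon,c)$ and $(c,c+\epsilon)$ via each monotone branch of $f$, yields points of $\Omega^*\subset\omega_f(p)$ in both $(c-\delta,c)$ and $(c,c+\delta)$ for every $\delta>0$, proving the bilateral-accumulation part.

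Finally, for perfectness of all of $\omega_f(p)$ I would reduce to showing $\omega_f(p)\subset\Omega^*$. Given $q\in\omega_f(p)$ with $f^{n_j}(p)\to q$, let $m_j\le n_j$ be the largest index with $f^{m_j}(p)\in B_{1/j}(c)$, which exists for large $j$ by hypothesis. If the gap $n_j-m_j$ stays bounded along a subsequence, then $q\in\co^{+}_{f}(v_0)\cup\co^{+}_{f}(v_1)\subset\Omega^*$ by continuity applied on bounded iteration times from $c^{\pm}$. If $n_j-m_j\to\infty$, passing to a subsequence so that $f^{m_j}(p)\to c$ from a fixed side, a shadowing/compactness argument identifies $q$ as an accumulation point of iterates of $v_0$ or $v_1$, hence $q\in\omega_f(v_0)\cup\omega_f(v_1)\subset\Omega^*$. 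The main obstacle will be this second (unbounded-gap) case: orbits of $p$ starting near $c$ could in principle drift from the corresponding critical orbit on long time scales; however, in the absence of periodic attractors any long homeomorphic excursion must eventually re-enter a neighbourhood of $c$ (cf.\ Lemma~\ref{homtervals}), and the same pre-image-plus-compactness analysis as in the previous paragraph then places $q$ inside $\Omega^*$, completing the proof of perfectness.
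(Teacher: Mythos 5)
Your first half follows the same route as the paper's own proof: from the two-sided accumulation of $\co_{f}^{+}(p)$ at $c$ you get $v_{0},v_{1}\in\omega_{f}(p)$, hence (no super-attractors, so the orbits of $v_{0},v_{1}$ avoid $c$) $\co_{f}^{+}(v_{i})\cup\omega_{f}(v_{i})\subset\omega_{f}(p)$, and Corollary~\ref{CORaeroporto}/Lemma~\ref{aeroporto} give perfectness of the critical $\omega$-limit sets; your side-of-approach dichotomy is a reasonable way to handle the recurrence hypothesis. One caveat even here: your ``tracking through the pre-images'' sentence for the bilateral accumulation is not yet an argument. A single limit $q^{*}\in\Omega^{*}$ with $f(q^{*})=c$ only yields accumulation of $\Omega^{*}$ at $c$ from the side determined by the side on which $\Omega^{*}$ accumulates $q^{*}$; getting points of $\omega_{f}(p)$ in both $(c-\delta,c)$ and $(c,c+\delta)$ needs to be written out (the paper gets it from the critical orbits themselves, since $\co_{f}^{+}(v_{0})\cup\co_{f}^{+}(v_{1})\subset\omega_{f}(p)$).

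The genuine gap is your last step. You reduce perfectness of $\omega_{f}(p)$ to the inclusion $\omega_{f}(p)\subset\omega_{f}(v_{0})\cup\omega_{f}(v_{1})$, and in the unbounded-gap case you appeal to a ``shadowing/compactness argument''. No such argument is available at this level of generality: a point entering $B_{1/j}(c)$ follows the critical orbit only for a number of iterates depending on $j$, and after that its orbit is unconstrained, so its limits need not lie in $\omega_{f}(v_{0})\cup\omega_{f}(v_{1})$; Lemma~\ref{homtervals} only excludes arbitrarily long monotone excursions avoiding $c$, which is not what obstructs the containment. Worse, the inclusion itself is false under the hypotheses of the corollary: if $f$ is transitive on a cycle of intervals, has no periodic attractor, and its critical orbits are recurrent ($c\in\omega_{f}(c_{\pm})$) but not dense (recurrence does not imply density; $\omega_{f}(c_{\pm})$ can be a minimal proper Cantor subset), then any $p$ with dense orbit satisfies all hypotheses while $\omega_{f}(p)$ strictly contains $\omega_{f}(c_{-})\cup\omega_{f}(c_{+})$ — the conclusion of the corollary still holds there, but your route to it collapses. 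The paper argues perfectness differently, by contradiction: if $q\in\omega_{f}(p)$ were isolated, take the connected component $J=(a,b)$ of $[0,1]\setminus\big(\omega_{f}(p)\setminus\{q\}\big)$ containing $q$ (its endpoints lie in $\omega_{f}(p)\cup\{0,1\}$), pick two returns $f^{n_{1}}(p),f^{n_{2}}(p)\in J$ near $q$, and show that either all iterates of the interval they span avoid $c$ — impossible without a periodic attractor, by the homterval-type argument — or some iterate covers a one-sided neighbourhood of $c$, in which case the already-established bilateral accumulation of $\omega_{f}(p)$ at $c$ pulls back to points of $\omega_{f}(p)$ in $J\setminus\{q\}$, a contradiction (this is the scheme of Lemma~\ref{aeroporto} and of the claims in the Cherry-like lemma of the Appendix; the paper's written proof of this corollary only sets this up and is visibly truncated, but that is the intended mechanism). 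That idea — working directly with an isolated point and a covering return, rather than trying to confine $\omega_{f}(p)$ inside the critical $\omega$-limit sets — is what your proposal is missing.
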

\begin{proof}
It follows from Corollary~\ref{CORaeroporto} that $\omega_f(c_-)$ and $\omega_f(c_+)$ are perfect sets. Furthermore, $\overline{\omega_f(c_-)\cap(0,c)}\ni c\in\overline{(c,1)\cap\omega_f(c_+)}$.
If $\overline{\co_{f}^{+}(p)\cap(0,c)}\ni c\in\overline{(c,1)\cap\co_{f}^{+}(p)}$
then $\omega_f(p)\supset\omega_f(c_-)\cup\omega_f(c_+)$ and so,
$\overline{\omega_f(p)\cap(0,c)}\ni c\in\overline{(c,1)\cap\omega_f(p)}$.

Now suppose that $\omega_{f}(p)$ is not perfect. Thus, there is $q\in\omega_{f}(p)$ and $\delta>0$ such that $B_{\delta}(q)\cap\omega_{f}(p)=\{q\}$. Let $J=(a,b)$ be the connect component of $[0,1]\setminus\big(\omega_{f}(p)\setminus\{q\}\big)$ containing $q$. Note that $a,b\subset\big(\omega_{f}(x)\cup\{0,1\})$.

\end{proof}

\subsubsection{Topological entropy of the Solenoid Attractor}

To compute the topological entropy of the Solenoid attractor we will need to compactify the $[0,1]\setminus\{c\}$ in such way that the contracting Lorenz map $f$ can be extended as a continuous map. For this, we may assume that $f$ does not have a super-attractor (we want to study the $\infty$-renormalizable case). Thus, $c\notin\co_{f}^{+}(c)$. Let $\cc:=\co_{f}^{-}(c)\times\{-1,1\}$ and $$\XX_{f}:=\bigg(\big([0,1]\setminus\co_{f}^{-}(c)\big)\times\{0\}\bigg)\cup\bigg(\co_{f}^{-}(c)\times\{-1,1\}\bigg).$$ Let $\pi:\XX_{f}\to[0,1]$ be the projection in the first coordinate , i.e., $\pi((x,j))=x$.

Given $(x,j)\in\XX_{f}$, define $B^{f}_{r}((x,j))$, the ``ball'' of radius $r>0$ with center in $x$, by 
$$B^{f}_{r}((x,j))=\begin{cases}
\pi^{-1}((x-r,x))\cup\{(x,-1)\}&\text{ if }j=-1\\
\pi^{-1}((x-r,x+r))&\text{ if }j=0\\
\pi^{-1}((x,x+r))\cup\{(x,1)\}&\text{ if }j=1
\end{cases}
$$

Let $\widehat{f}:\XX_{f}\to\XX_{f}$ be the lift of $f$ to $\XX_{f}$ given by 
$$\widehat{f}((x,j))=\begin{cases}
(f(x),j)&\text{ if }x\ne c\\
(f(c_{-}),0)&\text{ if $x=c$ and $j=-1$}\\
(f(c_{+}),0)&\text{ if $x=c$ and $j=1$}
\end{cases}
$$

Of course $\XX_{f}=\bigcup_{r>0,x\in\XX_{f}}B^{f}_{r}(x)$. Moreover, if $y\in B^{f}_{r}(x)\cap B^{f}_{r'}(x')$ then $B^{f}_{\delta}(y)\subset B^{f}_{r}(x)\cap B^{f}_{r'}(x')$, for any sufficiently small $\delta>0$. Thus, there exists an unique topology on $\XX_{f}$ generated by the balls $\{B^{f}_{r}((x,j))\,;\,r>0$ and $(x,j)\in\XX_{f}\}$. With this topology $\XX_{f}$ is a $T_{1}$-space and it has a countable dense subset and it is compact (note that every sequence $(p_{n},j_{n})\in\XX_{f}$ has a convergent subsequence). So, $\XX_{f}$ is metrizable. For instance, if we write $\{q_{1},q_{2},q_{3},\cdots\}=\co_{f}^{-}(c)\times\{-1,1\}\cup\pi^{-1}(\QQ)$ and $\{r_{1},r_{2},r_{3},\cdots\}=(0,1)\cap\QQ$ then we can define the distance $$d((x,i),(y,j))=\sum_{n,m\in\NN}\frac{1}{2^{n+m}}\bigg|\chi_{{B^{f}_{r_{n}}(q_{m})}}((x,i))-\chi_{{B^{f}_{r_{n}}(q_{m})}}((y,j))\bigg|,$$ which is compatible with the topology. It is easy to check that $\pi$ and $\widehat{f}$ are continuous in this  topology. Furthermore, $\pi$ is a semi-conjugation between $\widehat{f}$ and $f$, that is, the diagram below commutes.
{\large
$$\begin{CD}
\XX_{f}\setminus\{(c,\pm 1)\}     @>\widehat{f}>>  \XX_{f}\setminus\{(c,\pm 1)\}\\
@VV\pi V        @VV\pi V\\
[0,1]\setminus\{c\}     @>f>>  [0,1]\setminus\{c\}
\end{CD}$$
}

\begin{Lemma}Let $f:\XX\to\XX$ and $g:\YY\to\YY$ be continuos maps defined in compact metrical spaces. Let $\Psi\in C^0(\Omega(f),\YY)$ be such that  $\#\Psi^{-1}(x)\le M$ $\forall\,x\in\YY$, for some $M\ge1$. If $g\circ\Psi=\Psi\circ f$ then $h(f)\le h(g)$.
\end{Lemma}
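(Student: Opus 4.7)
The plan is to reduce to Bowen's classical factor-map inequality and then exploit the finiteness of the fibers. First, I would appeal to the standard fact that for any continuous self-map of a compact metric space the topological entropy concentrates on the non-wandering set: $h(f)=h(f|_{\Omega(f)})$. Since $\Omega(f)$ is compact and $f$-invariant, the image $\Psi(\Omega(f))\subset\YY$ is compact (it is the continuous image of a compact set) and forward $g$-invariant, because the semiconjugacy gives $g(\Psi(\Omega(f)))=\Psi(f(\Omega(f)))\subset\Psi(\Omega(f))$. Consequently $h(g|_{\Psi(\Omega(f))})\le h(g)$, and $\Psi$ restricts to a continuous surjection $\Omega(f)\to\Psi(\Omega(f))$ intertwining $f|_{\Omega(f)}$ and $g|_{\Psi(\Omega(f))}$.

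Next, I would invoke Bowen's factor-map inequality: if $\pi\colon(X',T)\to(Y',S)$ is a continuous surjection between compact metric dynamical systems with $\pi\circ T=S\circ\pi$, then
\[
h(T)\;\le\;h(S)\;+\;\sup_{y\in Y'}h(T,\pi^{-1}(y)),
\]
where $h(T,K)=\lim_{\varepsilon\to 0}\limsup_{n\to\infty}\frac{1}{n}\log r_{n}(\varepsilon,K)$ is the Bowen entropy of a (not necessarily invariant) subset $K\subset X'$ and $r_{n}(\varepsilon,K)$ denotes the minimal cardinality of an $(n,\varepsilon)$-spanning subset of $X'$ for $K$ with respect to $T$. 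Applying this with $T=f|_{\Omega(f)}$, $S=g|_{\Psi(\Omega(f))}$ and $\pi=\Psi$ yields
\[
h(f|_{\Omega(f)})\;\le\;h(g|_{\Psi(\Omega(f))})\;+\;\sup_{y\in\Psi(\Omega(f))}h(f,\Psi^{-1}(y)).
\]

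Finally, the hypothesis $\#\Psi^{-1}(y)\le M$ makes every fiber a finite set, hence for every $n$ and every $\varepsilon>0$ one trivially has $r_{n}(\varepsilon,\Psi^{-1}(y))\le M$ (each of its points is a spanning set for itself), and therefore $h(f,\Psi^{-1}(y))=0$ for every $y$. Chaining the three inequalities gives $h(f)=h(f|_{\Omega(f)})\le h(g|_{\Psi(\Omega(f))})\le h(g)$, as desired. The only delicate ingredient is the Bowen fiber-entropy inequality; it is a classical result and its quotation here is strictly standard. In our uniformly-finite-fiber setting one can also prove it by hand: fix $\varepsilon>0$, use the uniform continuity of $\Psi$ on $\Omega(f)$ to choose $\delta>0$ so that $d_{\XX}(x,x')<\delta$ implies $d_{\YY}(\Psi(x),\Psi(x'))<\varepsilon$, take an $(n,\varepsilon)$-separated set $E\subset\Omega(f)$, and observe that if $\Psi(x)=\Psi(x')$ for $x,x'\in E$ with $x\ne x'$ then the whole $\Psi$-image orbits agree, so $E$ decomposes into at most $|E|/M$ classes each of which projects to a distinct point; the image of a transversal is then an $(n,\delta')$-separated set for $g$ on $\Psi(\Omega(f))$ after a further (uniform) resolution adjustment. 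This hand argument would be the main thing to verify carefully, should one wish to bypass the citation.
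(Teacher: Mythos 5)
Your argument is correct, and it reaches the conclusion by a route that parallels the paper's but rests on a different key result. Both proofs restrict to $\Lambda=\Omega(f)$ via $h(f)=h(f|_{\Omega(f)})$, regard $g|_{\Psi(\Lambda)}$ as a factor of $f|_{\Lambda}$ through $\Psi$, and kill the fiber contribution using the uniform bound $\#\Psi^{-1}(y)\le M$; the difference lies in how that last step is formalized. The paper works with topological conditional entropy in Downarowicz's sense: it bounds $H(\cu\mid\Psi^{-1}(y))$ by $\log M$ for any open cover $\cu$, concludes that the conditional entropy of $f|_{\Lambda}$ given the factor vanishes, and then invokes Ledrappier's variational principle to obtain $h(f|_{\Lambda})=h(g|_{\Psi(\Lambda)})$. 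You instead quote Bowen's factor inequality $h(T)\le h(S)+\sup_{y}h\big(T,\pi^{-1}(y)\big)$ and observe that the Bowen entropy of a fiber with at most $M$ points is zero because $r_n(\varepsilon,\Psi^{-1}(y))\le M$ for all $n,\varepsilon$. Your version is lighter: it uses a single classical theorem phrased in terms of spanning sets, yields exactly the one-sided inequality that is asserted (the paper's route proves the stronger equality of the restricted entropies), and avoids the conditional-entropy formalism altogether. One caution: the optional ``by hand'' verification you sketch at the end does not work as written. An $(n,\varepsilon)$-separated set $E\subset\Omega(f)$ whose points lie in distinct fibers need not project to an $(n,\delta')$-separated set for $g$, because separation of the $f$-orbits upstairs gives no lower bound on the distances between the $\Psi$-images downstairs; also the counting should read ``at least $|E|/M$ classes,'' not ``at most.'' Since your actual proof rests on the citation of Bowen's inequality, this does not affect correctness, but the sketch should be deleted or replaced by Bowen's genuine argument.
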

\begin{proof} Note that $\Lambda=\Omega(f)$ and $\Psi(\Lambda)$ are compact metrical spaces with theirs induced metrics, as $g(\Psi(\Lambda))=\Psi(\Lambda)$, we can consider the following commutative diagram.
$$\begin{CD}
\Lambda     @>f|_{\Lambda}>>  \Lambda\\
@VV\Psi V        @VV\Psi V\\
\Psi(\Lambda)     @>g|_{\Psi(\Lambda)}>> \Psi(\Lambda)
\end{CD}$$
Of course $g|_{\Psi(\Lambda)}$ is a factor of $f|_{\Lambda}$. As $\#\Psi^{-1}(x)\le M$ $\forall\,x$, 
the fiber entropy is smaller or equal to $\log M$, i.e., $H(\cu|\Psi^{-1}(y))=\log\min\big\{\#\cu'\,;\,\cu'\subset\cu\text{ and }\Psi^{-1}(y)\subset\bigcup_{P\in\cu'}P\big\}\le\log M$, where $\cu$ is cover of $\Lambda$ by open sets (see, for instance,  pages 167, 170 (Section 6.3) and 179 (Section 6.7) of \cite{TD} for more details). As a consequence, the factor conditional entropy is zero, that is, $$h\big(f|_{\Lambda}\big|g|_{\Psi(\Lambda)}\big)=\sup_{\cu}\inf_n\sup_{y\in\Psi(\Lambda)}\frac{1}{n}H\bigg(\bigvee_{j=0}^{n-1}f|_{\Lambda}^{-j}\cu\bigg|\Psi^{-1}(y)\bigg)=0$$ (see Lemma 6.8.2, page 182 of \cite{TD}). Thus, it follows from Ledrappier~\cite{Le1979} that $h(f)=h(f|_{\Lambda})=h(g|_{\Psi(\Lambda)})\le h(g)$ (see also , see also Corollary~6.4.15 in pp. 172 of \cite{TD}).

\end{proof}

\begin{Corollary}\label{Corollry09g87b4}
$h(\widehat{f}\,)\le\log 2$ for all contracting Lorenz map $f$.
\end{Corollary}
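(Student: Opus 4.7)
The plan is to apply the preceding Lemma by constructing a continuous semi-conjugacy from $(\XX_f,\widehat{f})$ to the full 2-shift $(\Sigma_2,\sigma)$, where $\Sigma_2=\{0,1\}^\NN$, and then noting that $h(\sigma)=\log 2$.

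First, I would define the partition $P_0:=\pi^{-1}([0,c))\cup\{(c,-1)\}$ and $P_1:=\pi^{-1}((c,1])\cup\{(c,+1)\}$ of $\XX_f$. Using the explicit definition of the neighborhood basis $B_r^f((x,j))$, both $P_0$ and $P_1$ are clopen in $\XX_f$: the only subtlety occurs at $(c,\pm1)$, but a basic neighborhood of $(c,-1)$ is one-sided and lies entirely in $P_0$, and similarly for $(c,+1)$ in $P_1$. I would then define the itinerary map $\Psi:\XX_f\to\Sigma_2$ by $\Psi((x,j))_n=i$ whenever $\widehat{f}^n((x,j))\in P_i$. Since each coordinate is the characteristic function of a clopen set pulled back through the continuous map $\widehat{f}^n$, $\Psi$ is continuous. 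By construction $\sigma\circ\Psi=\Psi\circ\widehat{f}$.

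Second, I would prove the uniform fiber bound $\#\Psi^{-1}(s)\cap\Omega(\widehat{f})\le M$ for some universal constant $M$ (one can aim for $M=2$ in the ``generic'' setting). Suppose three distinct points $(x_i,j_i)\in\Omega(\widehat{f})\cap\Psi^{-1}(s)$ existed, ordered so that $\pi(x_1)\le\pi(x_2)\le\pi(x_3)$. If $\pi(x_1)=\pi(x_3)$ then the three points would all project to a single point of $[0,1]$, and $\pi^{-1}$ of that point has at most two elements in $\XX_f$ (and the two copies over $c$ are separated by $\Psi$ at step $0$, while copies over any $q\in\co_f^{-}(c)$ are separated at the first time $\widehat{f}^n(q,\pm)=(c,\pm)$). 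Otherwise $\pi(x_1)<\pi(x_3)$, and the hypothesis that all three share the itinerary $s$ means no iterate $f^n$ separates $[\pi(x_1),\pi(x_3)]$, so it is a homterval containing the interior non-wandering point $\pi(x_2)$. By the homterval lemma (Lemma~\ref{homtervals}), such a homterval must either be eventually mapped onto a set containing $c$ (contradicting the shared itinerary), be a wandering interval (contradicting $x_2\in\Omega(\widehat{f})$, since interior points of a wandering interval are wandering), or lie in the basin of a periodic attractor (which forces the interior non-wandering point $x_2$ to belong to the periodic orbit itself, contributing only finitely many points to any fiber).

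Third, having established that $\Psi$ satisfies the hypotheses of the previous Lemma, I would conclude
\[
h(\widehat{f})=h(\widehat{f}|_{\Omega(\widehat{f})})\le h(\sigma|_{\Psi(\Omega(\widehat{f}))})\le h(\sigma)=\log 2.
\]
The main obstacle is the uniform fiber bound: clean counting yields $M=2$ under the absence of periodic attractors and weak repellers, but the corollary is stated for arbitrary contracting Lorenz maps, so care is required to absorb the contribution of periodic orbits lying inside an attracted homterval into a uniform constant (observing that distinct points of a given periodic orbit have distinct itineraries, so each fiber $\Psi^{-1}(s)$ meets each periodic orbit in at most one point).
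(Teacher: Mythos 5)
Your route is the same as the paper's: the paper also takes the itinerary map from $\Omega(\widehat f\,)$ into the full shift on $\{0,1\}^{\NN}$ (with the same convention at $(c,\pm1)$), invokes the preceding bounded-fiber factor lemma, and concludes $h(\widehat f\,)\le h(\sigma)=\log 2$. The only difference is one of detail: the paper disposes of the fiber estimate with ``it is easy to check that $\#i^{-1}(y)\le 2$'', whereas you actually try to prove it, via the two-lifts count over points of $\co_{f}^{-}(c)$ and the homterval lemma for three points with distinct projections. Your continuity/semiconjugacy step and your observation that distinct points of one periodic orbit have distinct itineraries are both correct.

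The one genuine gap is the case you leave out of the homterval alternatives. Lemma~\ref{homtervals} has four conclusions, not three: besides ``$c$ enters the image'', ``wandering interval'' and ``meets the basin of a periodic attractor'', there is ``$f$ has a weak repeller''. This is not vacuous here, because the corollary assumes nothing beyond $f$ being a contracting Lorenz map (no Schwarzian condition), so nothing forbids an interval $T$, with all forward images avoiding a neighbourhood of $c$, on which some iterate $f^{j}$ is the identity. In that situation every point of $T$ is periodic, hence lies in $\Omega(\widehat f\,)$, and all points of $T$ share a single itinerary; the fiber over that itinerary is uncountable, so no uniform constant $M$ exists and the hypothesis of the factor lemma fails outright (this is also a case the paper's ``easy to check'' silently ignores). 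To close it you need an extra ingredient rather than more careful counting: for instance, note that any fiber meeting $\Omega(\widehat f\,)$ in more than two points is contained in a closed homterval on which every iterate is monotone, so the entropy of $\widehat f$ on each fiber is zero, and then use Bowen's inequality $h(\widehat f\,)\le h(\sigma|_{\Psi(\Omega(\widehat f\,))})+\sup_{y} h\big(\widehat f,\Psi^{-1}(y)\big)$ in place of the finite-to-one lemma; alternatively, collapse such interval fibers and check the quotient still factors over the shift with fibers of cardinality at most two. With that modification your argument (and the paper's assertion) is complete.
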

\begin{proof}
Let $i:\Omega(\widehat{f}\,)\to\sum_{2}:=\{0,1\}^{\NN}$ be the itinerary map. That is, $$i((x,j))(n)=\begin{cases}
0&\text{ if }(f^{n}(x)<c)\text{ or }(f^{n}(x)=c\text{ and }j=-1)\\
1&\text{ if }(f^{n}(x)>c)\text{ or }(f^{n}(x)=c\text{ and }j=1)
\end{cases}$$
 It is easy to check that $\#i^{-1}(y)\le 2$ $\forall\,y\in\sum_{2}$. As $\widehat{f}\circ i=i\circ \sigma$, where $\sigma:\sum_{2}\to\sum_{2}$ is the shift, it follows for the Lemma above the $h(\widehat{f}\,)\le h(\sigma)=\log 2$.
\end{proof}

\begin{Lemma}[The Solenoid attractor has zero entropy]\label{LemmaZeroEntropy}
Let $f:[0,1]\setminus\{c\}\to[0,1]$ be a $C^{2}$ non-flat contracting Lorenz map without inessential periodic attractors or weak repellers. If $f$ is $\infty$-renormalizable then $h(f|_{\Lambda})=0$, where $\Lambda$ is the solenoid attractor of $f$.
\end{Lemma}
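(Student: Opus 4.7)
\medskip

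\noindent\textbf{Proof plan.} The plan is to pass to the continuous extension $\widehat{f}:\XX_{f}\to\XX_{f}$ constructed above, so that the standard tools (variational principle, Abramov's formula, Kac's lemma) are available on a genuine compact metric space, and then exploit the fact that each renormalization level forces the invariant measures on $\Lambda$ to concentrate on sets of small $\widehat{f}$-measure.

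First I would set $\widehat{\Lambda}=\pi^{-1}(\Lambda)\subset\XX_{f}$. Since $\pi$ is continuous with $\#\pi^{-1}(x)\le2$ and intertwines $\widehat{f}$ and $f$, the Lemma preceding Corollary~\ref{Corollry09g87b4} yields $h(f|_{\Lambda})\le h(\widehat{f}|_{\widehat{\Lambda}})$, so it suffices to prove that $h(\widehat{f}|_{\widehat{\Lambda}})=0$. Next, for each renormalization interval $J_{n}=(a_{n},b_{n})$ of the nested chain, let $\cR_{n}$ be the first return map of $f$ to $\overline{J_{n}}$ and let $\widehat{\cR_{n}}$ be its lift to $\pi^{-1}(\overline{J_{n}})$; because $\cR_{n}$ is conjugate to a contracting Lorenz map, $\widehat{\cR_{n}}$ is conjugate to the continuous extension of such a map, so Corollary~\ref{Corollry09g87b4} gives the uniform bound $h(\widehat{\cR_{n}})\le\log 2$. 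Setting $\ell_{n}=\period(a_{n})$, $r_{n}=\period(b_{n})$ and $\tau_{n}=\min\{\ell_{n},r_{n}\}$, the return time $R_{n}$ of $\widehat{f}$ to $\pi^{-1}(\overline{J_{n}})$ satisfies $R_{n}\equiv\ell_{n}$ on the left branch and $R_{n}\equiv r_{n}$ on the right, hence $R_{n}\ge\tau_{n}$ everywhere.

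To combine these facts, I would apply the variational principle to $\widehat{f}|_{\widehat{\Lambda}}$. Fix any ergodic $\widehat{f}$-invariant Borel probability $\mu$ on $\widehat{\Lambda}$. Since $\widehat{\Lambda}\subset\pi^{-1}(\overline{K_{J_{n}}})$ and every point of $K_{J_{n}}$ eventually visits $J_{n}$, Birkhoff's theorem gives $\mu(\pi^{-1}(\overline{J_{n}}))>0$, so Abramov's formula applies:
\begin{equation*}
h_{\mu}(\widehat{f}\,)=\mu(\pi^{-1}(\overline{J_{n}}))\cdot h_{\mu_{n}}(\widehat{\cR_{n}})\le\mu(\pi^{-1}(\overline{J_{n}}))\cdot\log 2,
\end{equation*}
where $\mu_{n}$ is $\mu$ normalized on $\pi^{-1}(\overline{J_{n}})$. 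On the other hand, Kac's lemma gives $\int R_{n}\,d\mu=1$, and $R_{n}\ge\tau_{n}$ implies $\mu(\pi^{-1}(\overline{J_{n}}))\le 1/\tau_{n}$. Therefore $h_{\mu}(\widehat{f}\,)\le(\log 2)/\tau_{n}$ for every $n$.

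The final step is to argue $\tau_{n}\to\infty$. This follows because the chain $\{J_{n}\}$ is strictly nested (Lemma~\ref{renormalinksSCH}) with $\bigcap_{n}J_{n}=\{c\}$ (Proposition~\ref{Lemma1110863}), and any periodic point of $f$ lying strictly inside $J_{n-1}$, together with Corollary~\ref{Corollary545g55}, forces its $f$-orbit to visit both $(a_{n-1},c)$ and $(c,b_{n-1})$, so $\period_{f}(a_{n}),\period_{f}(b_{n})\ge\ell_{n-1}+r_{n-1}$; thus $\tau_{n}$ is strictly increasing, and unboundedness of $\tau_{n}$ follows from $\#\{J_{n}\}=\infty$. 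Letting $n\to\infty$ yields $h_{\mu}(\widehat{f}\,)=0$ for every ergodic $\mu$, whence the variational principle gives $h(\widehat{f}|_{\widehat{\Lambda}})=0$ and finally $h(f|_{\Lambda})=0$. The main obstacle I expect is the technical verification that $\widehat{\cR_{n}}$ is conjugate to the continuous extension of a contracting Lorenz map (so that the bound from Corollary~\ref{Corollry09g87b4} really applies to $\widehat{\cR_{n}}$), together with the careful passage between $\widehat{f}$-invariant measures on $\widehat{\Lambda}$ and their normalizations on the return domain required to invoke Abramov cleanly.
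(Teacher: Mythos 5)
Your proposal is correct and follows essentially the same route as the paper's proof: pass to the compactified lift $\widehat{f}$ on $\XX_{f}$, bound the entropy of the lifted return maps to the renormalization intervals by $\log 2$ via the two-to-one itinerary factor, and then use Abramov's formula (equivalently, the return-time integral, i.e.\ Kac) together with the fact that the boundary periods of the nested renormalization intervals tend to infinity, finally transferring the conclusion back through $\pi$. The only difference is cosmetic — you run the variational principle over ergodic measures on $\pi^{-1}(\Lambda)$ rather than on $\overline{K_{\widehat{I}_{n}}}$, and you spell out why $\tau_{n}\to\infty$, a point the paper asserts without detail.
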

\begin{proof}
Let suppose that $f$ is $\infty$-renormalizable and $I_{1}\supset I_{2}\supset I_{3}\supset\cdots$ be the nested sequence of renormalization intervals of $f$.
Observe that $\widehat{I}_{n}:=\pi^{-1}I_{n}$ is a renormalization interval to $\widehat{f}$, i.e., the first return map to $\widehat{I}_{n}$ by $\widehat{f}$ is conjugated with some $\widehat{g}$ where $g$ is a contracting Lorenz map.

Let $f_{n}$ be the first return map to $\overline{I_{n}}$ by $f$ and $\widehat{f}_{n}$ be the first return map to $\overline{\widehat{I}_{n}}$ by $\widehat{f}$. As $\XX_{f}$ is compact metrical space and $\widehat{f}$ continuous, we can apply the Variational Principle for the entropy to $\widehat{f}$ and also to $\widehat{f}_{n}$ $\forall\,n$. That is, $h(\widehat{f}_{n})=\sup\{h_{\mu}(\widehat{f}_{n})$ $;$ $\mu$ is a $\widehat{f}_{n}$-invariant probability$\}$. So, using Corollary~\ref{Corollry09g87b4} we get 
\begin{equation}\label{Eqhat}
h_{\mu}(\widehat{f}_{n})\le h(\widehat{f}_{n})\le\log2,
\end{equation}
for all $\mu$ $\widehat{f}_{n}$-invariant probability and all $n\ge1$.

Furthermore, if $\mu$ is $\widehat{f}$ an ergodic invariant probability with $\mu(\widehat{I}_{n})>0$ then $\nu:=\frac{1}{\mu(\widehat{I}_{n})}\mu|_{\widehat{I}_{n}}$ is a $f_{n}$-invariant probability and $h_{\mu}(\widehat{f})=\frac{h_{\nu}(\widehat{f}_{n})}{\int R_{n} d\nu}$, where $R_{n}$ is the first return time to $\widehat{I}_{n}$. As $\int R_{n} d\nu$ $\ge$ $\min\{\period(p)$ $;$ $p\in\partial \widehat{I}_{n}\}=:\gamma_{n}$ $\to$ $\infty$ and as $h_{\nu}(\widehat{f}_{n})\le\log2$, we get (applying the  Variational Principle to $\widehat{f}|_{K_{\widehat{I}_{n}}}$) that
$$h\bigg(\widehat{f}|_{\overline{K_{\widehat{I}_{n}}}}\bigg)\le\frac{\log2}{\gamma_{n}},$$ where $K_{\widehat{I}_{n}}$ is the nice trapping region generated by $\widehat{I}_{n}$. Indeed, $K_{\widehat{I}_{n}}=\pi^{-1}(K_{I_{n}})$.
Thus, as $h(f|_{\Lambda})\le h(\widehat{f}|_{\pi^{-1}(\Lambda)})\le h(\widehat{f}|_{\overline{K_{\widehat{I}_{n}}}})\le\frac{\log2}{\gamma_{n}}\,\,\,\forall\,n\ge1,$
we get that the topological entropy of $f$ restricted to the Solenoid attractor is zero. That is, $h(f|_{\Lambda})=0.$
\end{proof}

\subsubsection{Misiurewicz contracting Lorenz maps}
\label{SecMisContLM}

A contracting Lorenz map $f:[0,1]\setminus\{c\}\to[0,1]$ is called {\em Misiurewicz} if the critical values do not belong to the basin of a periodic attractor and ``the critical point is not recurrent''. Precisely: if neither $f(c_{-})$ nor $f(c_{+})$ is contained in the basin of a periodic attractor and there is some $\delta>0$ such that $\big(\co_f^+(c_{-})\cup\co_f^+(c_{+})\big)\cap(c-\delta,c+\delta)=\emptyset$.

By definition, a Misiurewicz contracting Lorenz map can not have a super-attractor, because this implies that at least one of the critical values belongs to the periodic attractor (in particular, to its basin). Furthermore, by Singer Theorem (see Theorem~\ref{ThSinger} and Corollary~\ref{CoSinger}), if $f$ has negative Schwarzian derivative then the existence of a periodic orbit implies that one of the critical values of $f$ belongs to its basin. Thus, if $f$ is a Misiurewicz contracting Lorenz map with negative Schwarzian derivative then $f$ does not admit periodic attractors.

Note that if $I=(a,b)$ is a renormalization interval to $f$ then $f^{\period(a)}(c_{-})\in(a,b)\ni f^{\period(b)}(c_{+})$, that is, $\co_{f}^{+}(c_{-})\cap I\ne\emptyset\ne I\cap \co_{f}^{+}(c_{+})$. Thus, if $f$ is $\infty$-renormalizable then $c$ is accumulated by $\co_{f}^{+}(c_{-})$ and $\co_{f}^{+}(c_{+})$. This implies that $f$ cannot be Misiurewicz. That is, a Misiurewicz map is only finitely many times renormalizable.

Finally, a Misiurewicz contracting Lorenz map cannot be a Cherry-like map. To check this, let $\delta>0$ be as in the definition of Misiurewicz map and consider a sequence $c-\delta<x_1<x_2<x_3<\cdots<x_n\uparrow c$. If $f$ is Cherry-like, we may suppose that $c\in\omega_f(x_n)$ $\forall\,n$. In this case, let $s_n$ be such that  $|f^{s_n}(x_n)-c|<|x_n-c|$ $\forall\,n$. Let $I_n\ni x_n$ be the maximal interval contained in $[x_n,c)$ such that $f^{s_n}|_{I_n}$ is a homeomorphism. It is easy to see that $f^{s_n}(I_n)\supset[x_n,c)$ $\forall\,n$. Thus for each $n\ge1$ there is a periodic point $p_n\in[x_n,c)$. That is, $c\in\overline{Per(f)}$, but this is impossible for a Cherry-like map. So, we get a contradiction.

A map $f:V\subset\RR\to\RR$ is called {\em ergodic} with respect to the Lebesgue measure if either $\leb(U)=0$ or $\leb(V\setminus U)=0$ for every $f$-invariant set $U\subset V$ ($f$-invariant means $f^{-1}(U)=U$). 

\begin{Lemma}[Ergodicity of Misiurewicz contracting Lorenz maps]
\label{LemmaErgMisCLM}
If a $C^3$ contracting Lorenz map $f:[0,1]\setminus\{c\}\to[0,1]$ with $Sf<0$ is Misiurewicz then $f$ is ergodic with respect to Lebesgue measure. In particular, $f$ does not admit wandering interval.
\end{Lemma}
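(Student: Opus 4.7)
The plan is to induce $f$ on a small nice neighborhood $V\ni c$ and show the first-return map $F=\cf_V\colon V^*\to V$ fits the hypotheses of the Folklore Theorem (full branches, uniform expansion, bounded distortion), from which Lebesgue ergodicity of $f$ and the absence of wandering intervals follow. Let $\delta>0$ be given by the Misiurewicz condition. Since the preceding discussion established that $f$ has no periodic attractors and is not Cherry-like, Lemma~\ref{AcumulacaoDePer} places periodic points arbitrarily close to $c$ from both sides, and Lemma~\ref{LemmaHGFGH54} then produces a nice interval $V=(a,b)\ni c$ with $\overline V\subset(c-\delta/2,c+\delta/2)$. For any branch $J$ of $F$ with $c\notin\partial J$, Lemma~\ref{Lemma8388881a}(3) gives $F(J)=V$; for an adjacent branch $J=(p,c)$, Lemma~\ref{Lemma8388881a}(1)(a) yields $F(J)=(a,f^{R_J}(c^-))$ with $f^{R_J}(c^-)\in\overline V\cap\co_f^+(c_-)$. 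The Misiurewicz condition forces $\co_f^+(c_-)\cap(a,b)=\emptyset$, so $f^{R_J}(c^-)\in\{a,b\}$, and orientation-preservation rules out $a$; hence $f^{R_J}(c^-)=b$ and this branch is also full. The right-adjacent branch is symmetric, so every branch of $F$ is onto $V$.

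Next I would establish uniform Koebe space by maximally extending each branch monotonically: the extension $\hat J\supset J$ stops precisely when some intermediate iterate hits $c$, and the corresponding endpoint of $f^{R_J}(\hat J)$ then lies in $\co_f^+(c_-)\cup\co_f^+(c_+)$, at distance at least $\delta/2$ from $\overline V$ by the Misiurewicz condition (sides where the extension does not stop reach $\{0\}$ or $\{1\}$, giving even more space). The Koebe principle, applied with $Sf<0$, then yields uniformly bounded distortion of $F$ on each branch; Mañé's Theorem~\ref{ThMane}, valid because there are no weak repellers (Corollary~\ref{weakrepellers}) and no periodic attractors, supplies uniform exponential expansion $|Df^{R_J}|\ge C\lambda^{R_J}$. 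Thus $F$ is a piecewise-smooth uniformly expanding Markov map with only full branches and uniformly bounded distortion, and the classical Folklore Theorem (cf.\ Chapter~V of~\cite{MvS}) furnishes a unique $F$-invariant probability $\mu_F$ absolutely continuous with respect to Lebesgue, with density bounded above and below on $V$, and with $(F,\mu_F)$ exact and, in particular, Lebesgue ergodic on $V$.

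To lift ergodicity to $f$: for any $f$-invariant $A\subset[0,1]$ with $\leb(A)>0$, Lemma~\ref{Remark98671oxe} gives $\leb(A\cap V)>0$ (a.e.\ point of $A$ enters $V$); since $A\cap V^*$ is $F$-invariant, ergodicity of $F$ forces $\leb(V\setminus A)=0$, and then nonsingularity of $f$ together with $\leb(\bigcup_n f^{-n}(V))=1$ yields $\leb(A)=1$. For the absence of wandering intervals: if $I$ were one, by Lemma~\ref{Remark98671oxe} some iterate $f^k(I)$ would meet $V$ in an interval $I'$, whose $F$-orbit lies in pairwise disjoint subsets of distinct $f^n(I)$, making $I'$ wandering for $F$; but uniform expansion and bounded distortion on full branches preclude wandering intervals for $F$, since $|F^n(I')|$ must either grow exponentially or cover a full branch after finitely many steps, contradicting $\sum_n|F^n(I')|\le|V|$. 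The main technical obstacle I anticipate is the Koebe-space bookkeeping for the $c$-adjacent branches, where the extension $\hat J$ must be prevented from crossing $c$ on the $c$-side; this requires careful use of the Misiurewicz bound on $\dist(\co_f^+(c_\pm),\overline V)$ to guarantee a uniform Koebe margin on that side.
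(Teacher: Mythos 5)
There is a genuine gap, and it sits exactly where your argument needs the most: the uniform expansion of the return map $F=\cf_V$. Mañé's Theorem~\ref{ThMane} gives $|Df^{n}(y)|\ge C\lambda^{n}$ only for orbit segments that stay \emph{outside} a fixed neighborhood of $c$; a return branch $J\subset V$ starts \emph{inside} the critical neighborhood, so what Mañé actually yields for $x\in J$ is $|Df^{R_J}(x)|=|Df(x)|\cdot|Df^{R_J-1}(f(x))|\ge |Df(x)|\,C\lambda^{R_J-1}$, and the factor $|Df(x)|$ tends to $0$ as $x\to c$ (this is the defining feature of a contracting Lorenz map). Your asserted bound $|Df^{R_J}|\ge C\lambda^{R_J}$, and hence $|DF|\ge\kappa>1$, does not follow; to get it you must show that $|Df(x)|$ is at worst comparable to $\lambda^{-R_J(x)}$, which requires the non-flatness exponent at $c$ together with a quantitative shadowing estimate (points near $c$ follow $\co_f^+(c_\pm)$, which lies in the Mañé region, for a time tied to $|x-c|$). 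That comparison is the analytic heart of the matter and is entirely missing. Since both your invocation of the Folklore Theorem (ergodicity and exactness of $F$) and your exclusion of wandering intervals for $F$ ("$|F^n(I')|$ grows exponentially") rest on this expansion, the proof as written does not close. (The full-branch and Koebe-space parts are fine: the Misiurewicz gap between $B_{\delta}(c)$ and $\overline V\subset B_{\delta/2}(c)$ does give uniformly bounded distortion, exactly as in the paper; and the $c$-adjacent branch you worry about typically does not even occur, since return times blow up near $c$ when $\co_f^+(c_\pm)$ avoids $B_\delta(c)$.)

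It is worth noting that the paper's proof avoids expansion altogether, which is why it is shorter and needs no induced Markov/Folklore machinery. It takes a nested sequence of nice intervals $J_n\subset B_{\delta/2}(c)$ shrinking to $\{c\}$, uses the Misiurewicz space to get one distortion constant $K$ for the maps $f^{\theta(I)}:I\to J_n$ on all gaps $I$ of the phobic sets $\Lambda_{J_n}$, deduces from distortion alone that $\sup\{|I|:I\in C_{J_n}\}\to0$, and then runs a Lebesgue density point argument on an alleged invariant set of intermediate measure; no lower bound on $|DF|$ is ever needed. If you want to salvage your route, you must either prove the uniform expansion of the induced map (essentially a Misiurewicz--implies--Collet--Eckmann-type estimate using non-flatness), or restructure along the paper's lines; also note that once ergodicity is in hand, "no wandering intervals" follows in one line (a wandering interval generates an invariant set of Lebesgue measure strictly between $0$ and $1$), so your separate expansion-based argument for that step is unnecessary.
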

\begin{proof}
Let $\delta>0$ be such that $\big(\co_f^+(c_{-})\cup\co_f^+(c_{+})\big)\cap B_{\delta}(c)=\emptyset$, where $B_r(c)=(c-r,c+r)$ denotes the ball of radius $r$ centered at $c$. Let $B_{\delta/2}(c)\supset J_1\supset J_2\supset J_3\supset\cdots$ be a nested sequence of nice intervals with $\bigcap_n J_n=\{c\}$. Let us verify that such sequence exists, i.e., verify the existence of arbitrarily small nice intervals. As  $\overline{Per(f)\cap[0,c)}\ni c\in\overline{(c,1]\cap Per(f)}$ (Lemma~\ref{AcumulacaoDePer}), given $\varepsilon>0$ one can choose $p,q\in Per(f)$ with $c-\varepsilon<p<c<q<c+\varepsilon$ and consider $J_{\varepsilon}$ as the connected component of $[0,1]\setminus\big(\co_f^+(p)\cup\co_f^+(q)\big)$. Note that $\emptyset\ne J_{\varepsilon}\subset B_{\varepsilon}(c)$ is a nice interval.

For each $n$, let $C_{J_n}$ be the gap of $\Lambda_{J_n}$, that is, the collection of connected components of $[0,1]\setminus\Lambda_{J_n}$ (see Definition~\ref{Def765091} and \ref{Def535353}). Given $I\in C_{J_n}$ let $\widehat{I}$ be the maximal open interval containing $I$ such that $f^{\theta(I)}|_{\widehat{I}}$ is a homeomorphism (see Proposition~\ref{Prop765091} and Section~\ref{SecCyBrEx}). As $\big(\co_f^+(c_{-})\cup\co_f^+(c_{+})\big)\cap B_{\delta}(c)=\emptyset$, we get $f^{\theta(I)}\big(\widehat{I}\,\big)\supset B_{\delta}(c)$ $\forall\,I\in C_{J_n}$ $\forall\,n\ge1$.

By Koebe Lemma, there is a constant $K$  such that
\begin{equation}\label{eq99}
\frac{1}{K}\le\frac{Df^{\theta(I)}(x)}{Df^{\theta(I)}(y)}\le K\,\,\forall\,x,y\in (f^{\theta(I)}|_{\widehat{I}})^{-1}(B_{\delta/2}(c)),\,\,\forall\,I\in C_{J_n}\text{ and }\forall\,n\ge1.
\end{equation}
In particular, because $I\subset(f^{\theta(I)}|_{\widehat{I}})^{-1}(B_{\delta/2}(c))$,
\begin{equation}\label{eq999}
\frac{1}{K}\le\frac{Df^{\theta(I)}(x)}{Df^{\theta(I)}(y)}\le K\,\,\forall\,x,y\in I,\,\,\forall\,I\in C_{J_n}\text{ and }\forall\,n\ge1.
\end{equation}
We emphasize  that $K$ depends only on the relative space between $B_{\delta}(c)$ and $B_{\delta/2}(c)$, it does not depend on $n$ or $I\in C_{J_n}$.

\begin{Claim}\label{ClaimMis}$\lim_{n\to\infty}\sup\{|I|\,;\,I\in C_{J_n}\}=0$
\end{Claim}
\begin{proof}[Proof of the claim]
It follows from (\ref{eq99}) that $\frac{|I|}{|(f^{\theta(I)}|_{\widehat{I}})^{-1}(B_{\delta/2}(c))|}\le K\frac{|J_n|}{B_{\delta}(c)}$ $\forall\,I\in C_{I_n}$. Thus, $$|I|\le K \big|(f^{\theta(I)}|_{\widehat{I}})^{-1}(B_{\delta/2}(c))\big|\frac{|J_n|}{B_{\delta}(c)}\le  K\frac{|J_n|}{B_{\delta}(c)}.$$
So, $\sup\{|I|\,;\,I\in C_{I_n}\}\le K\frac{|J_n|}{B_{\delta}(c)}\to0$, when $n\to\infty$.
\end{proof}
Now, suppose that there exist a $f$-invariant set $U\subset[0,1]\setminus\{c\}$ such that $0<\leb(U)<1$. Let $p\in U$ and $q\in U^{\complement}$ be Lebesgue density points for $U$ and $U^{\complement}$ respectively, where $ U^{\complement}:=([0,1]\setminus\{c\})\setminus U$. As we are working in $\RR$, this means that given any sequences $A_j,A'_j$ of intervals with $A_j\ni p$, $A'_j\ni q$ and $|A_j|,|A'_j|\to0$ we have
\begin{equation}\label{eq4411}\lim_{j\to\infty}\frac{\leb(U\cap A_j)}{\leb(A_j)}=1=\lim_{j\to\infty}\frac{\leb(U^{\complement}\cap A'_j)}{\leb(A'_j)}.
\end{equation}

As Lebesgue almost every point of $U$ and $U^{\complement}$ are density points, we may assume that $p,q\notin\bigcup_{n}\Lambda_{J_n}$ (indeed, $\leb(\bigcup_{n}\Lambda_{J_n})=0$ by Lemma~\ref{Remark98671oxe}). That is, for every $n\in\NN$, there are $I_n(p),I_n(q)\in C_{J_n}$ such that $p\in I_n(p)$ and $q\in I_n(q)$.

By (\ref{eq4411}) and Claim~\ref{ClaimMis}, we can choose $n$ big enough so that $\frac{\leb(U\cap I_n(p))}{\leb(I_n(p))}$ and $\frac{\leb(U^{\complement}\cap I_n(q))}{\leb(I_n(q))}>\frac{2}{3 K}$. Thus, $\frac{\leb(U^{\complement}\cap I_n(p))}{\leb(I_n(p))}$ and $\frac{\leb(U\cap I_n(q))}{\leb(I_n(q))}<\frac{1}{3 K}$. From (\ref{eq999}) it follows that
$$\frac{\leb(U^{\complement}\cap J_n)}{\leb(J_n)}\le K\frac{\leb(U^{\complement}\cap I_n(p))}{\leb(I_n(p))}<\frac{1}{3}>K\frac{\leb(U\cap I_n(p))}{\leb(I_n(q))}\ge\frac{\leb(U\cap J_n)}{\leb(J_n)}.$$
But this is an absurd, as $(U\cup U^{\complement})\cap J_n=J_n$. So $f$ does not admit an invariant set with Lebesgue measure different from $0$ or $1$. That is, $f$ is ergodic with respect to Lebesgue measure.

Note that if $W=(a,b)$ is a wandering interval then $U=\bigcup_{n\in\ZZ}f^n((a,\frac{1}{2}(a+b))$ is a $f$-invariant set. Moreover, as $U\cap((\frac{1}{2}(a+b),b))=\emptyset$ (because $W$ is wandering interval), we have $0<\leb(U)<1$. An absurd, as $f$ is ergodic. Thus, $f$ does not admit wandering intervals.
\end{proof}

\subsubsection{Embedding the dynamics of the symmetric unimodal maps into the dynamics of  the contracting Lorenz maps}\label{SecEmbeding}

An unimodal map $f:[0,1]\to[0,1]$ with critical point $c$ is called symmetric if $f(x)=f(1-x)$. If $f$ is symmetric then $f'(x)=-f'(1-x)$ and so, $f'(1/2)=-f'(1/2)$. Thus $f'(1/2)=0$, i.e., $c=1/2$.

Given a symmetric unimodal map $f:[0,1]\to[0,1]$ we can associate it to the contracting Lorenz map $L_f:[0,1]\setminus\{\frac{1}{2}\}\to[0,1]$ given by
$$L_f(x)=
\begin{cases}
f(x) & \text{ if }x<\frac{1}{2}\\
1-f(x)& \text{ if }x>\frac{1}{2}
\end{cases}.
$$
Note that $$f\circ L_f(x)=
\begin{cases}
f(f(x)) & \text{ if }x<\frac{1}{2}\\
f(1-f(x))& \text{ if }x>\frac{1}{2}
\end{cases}\,\,=\,f(f(x)),$$  as $f(1-f(x))=f((fx))$ by the symmetry of $f$.  That is, the diagram below commutes.
$$\begin{CD}
[0,1]\setminus\{\frac{1}{2}\}     @>L_f>>  [0,1]\setminus\{\frac{1}{2}\}\\
@VV f V        @VV f V\\
[0,1]     @>f>>  [0,1]
\end{CD}$$

Thus, $f\circ L_f^n=f^n\circ f=f\circ f^n$ $\forall\,n\ge1$. As $\big($ $f(x)=f(y)$ $\big)$ $\iff$ $\big($ $x=y$ or $1-y$ $\big)$, it follows from $f(L_f^n(x))=f(f^n(x))$ that $L_f^n(x)=f^n(x)$ or $1-f^n(x)$. In particular,
\begin{equation}\label{EqLoUnD}|(L_f^n)'(x)|=|(f^n)'(x)|\,\,\forall\,x\,\forall\,n.
\end{equation}

Furthermore, as $(L_f)'(x)>0$, we get that $(L_f^n)'(x)=f^n(x)$ for a given $x\in[0,1]\setminus\{1/2\}$ if and only if $(f^n)'(x)>0$. That is,
\begin{equation}\label{EqLoUn}L_f^n(x)=
\begin{cases}
f^n(x) & \text{if }(f^n)'(x)>0\\
1-f^n(x) & \text{if }(f^n)'(x)<0\\
\end{cases}.
\end{equation}
The formulas (\ref{EqLoUnD}) and (\ref{EqLoUn}) above show that most of the features of the orbits of $L_f$ and $f$ are the same. In particular, $\co_f(p)$ and $\co_{L_f}(p)$ have the same Lyapunov exponent and $\co_f(p)$ is finite iff $\co_{L_f}(p)$ is finite. Furthermore, it is not difficult to check that the type of topological and metrical attractors of $f$ and $L_f$ are always the same.

\subsubsection{The attractor for the Cherry-like maps}\label{SecAtCherry-like}

\begin{Lemma}
Let $f:[0,1]\setminus\{c\}\to[0,1]$ be a contracting Lorenz map without super-attractors. If $c\in\omega_f(x)$ $\forall\,x\in(0,1)$ then there exists a compact set $\Lambda\subset(0,1)$ such that $\omega_f(x)=\Lambda$ $\forall\,x\in(0,1)$. In particular, $\Lambda$ is a minimal set.
\end{Lemma}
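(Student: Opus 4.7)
The plan is to show that under the hypothesis $f$ is topologically semi-conjugate to an irrational rotation of the circle, and to construct $\Lambda$ as the complement of the collapsed intervals of that semi-conjugacy.

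Two rigidity facts come first. Any periodic $p\in(0,1)$ would satisfy $\omega_f(p)=\co_f^+(p)\not\ni c$ (since $c$ is not periodic, as $f$ has no super-attractor), contradicting the hypothesis, so $\mathrm{Per}(f)\cap(0,1)=\emptyset$. Next, $f(c_-)>c>f(c_+)$: if instead $f(c_-)\le c$, the monotonicity of $f|_{(0,c)}$ together with $f(0)=0$ would make $[0,f(c_-)]\subset(0,c)$ forward invariant and force $\omega_f(x)\subset[0,f(c_-)]\not\ni c$ for every $x\in(0,c)$, a contradiction; the symmetric argument handles $f(c_+)$. Together these facts place $f$ in the Cherry-like class of the paper.

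Since $\mathrm{Per}(f)\cap(0,1)=\emptyset$, Lemma~\ref{rotirra} together with the Gambaudo--Tresser extension of the rotation number to discontinuous non-injective maps (\cite{NPT83,CGT84,GT85}) provides an irrational rotation number $\rho$ and a continuous non-decreasing degree-one surjection $h:[0,1]\to\RR/\ZZ$ satisfying $h\circ f=R_\rho\circ h$ at every continuity point of $f$, where $R_\rho(\theta)=\theta+\rho$. The non-trivial level sets of $h$ are closed intervals; since $\rho$ is irrational their $f$-iterates are pairwise disjoint, so their interiors form an $f$-invariant open set $\cw$ made of (iterates of) wandering intervals. I would then set
\[\Lambda:=\{\,y\in(0,1)\,;\,h^{-1}(h(y))=\{y\}\,\},\]
the points of $(0,1)$ not interior to any constant fibre of $h$; this $\Lambda$ is closed, contains $c$, and satisfies $f(\Lambda\setminus\{c\})\subset\Lambda$.

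The equality $\omega_f(x)=\Lambda$ for every $x\in(0,1)$ then follows routinely. By minimality of the irrational rotation, $(h(x)+n\rho)_{n\ge0}$ is dense in $\RR/\ZZ$, and for any $y\in\Lambda$ a suitable subsequence $f^{n_k}(x)$ lies in shrinking $h$-preimages of neighbourhoods of $h(y)$, forcing $f^{n_k}(x)\to y$; hence $\Lambda\subset\omega_f(x)$. Conversely, if $z\in\omega_f(x)$ were interior to a wandering interval $I$, the set $\{n\,;\,f^n(x)\in I\}$ would be finite, contradicting $f^{n_k}(x)\to z\in I$. Minimality of $\Lambda$ is obtained by applying the same argument with $x\in\Lambda$. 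The main obstacle is transferring the Gambaudo--Tresser semi-conjugacy with the required properties to the contracting Lorenz setting, particularly the identification of the collapsed intervals of $h$ as exactly (iterates of) wandering intervals of $f$; once that is secured, the rest reduces to the density of irrational-rotation orbits and the defining property of wandering intervals.
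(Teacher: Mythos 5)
Your route is genuinely different from the paper's: the paper argues elementarily that $Per(f)\cap(0,1)=\emptyset$, uses Lemma~\ref{Lemma545g55} (with $(a,b)=(0,1)$) and Corollary~\ref{CORaeroporto2} to show every $\omega_f(x)$ is perfect and accumulates on $c$ from both sides, and then shows two $\omega$-limit sets cannot differ because a return branch over a gap of one of them would cover that gap and produce a periodic point in $(0,1)$. No rotation theory enters. Your proposal instead tries to import a Poincar\'e-type structure theorem, and that is precisely where it has a genuine gap.

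Concretely: everything in your argument rests on the existence of a continuous monotone degree-one map $h$ with $h\circ f=R_\rho\circ h$, $\rho\notin\QQ$, whose non-trivial fibres are exactly (closures of) wandering intervals. Nothing you cite supplies this. Lemma~\ref{rotirra} only records the equivalence between being Cherry-like and the irrationality of the rotation set of a first return map; it produces no semi-conjugacy. The Newhouse--Palis--Takens/Gambaudo--Tresser machinery gives a rotation \emph{interval} for the induced circle map obtained by gluing $f(c_+)\sim f(c_-)$, which is discontinuous at the glued point and need not be injective (the two branch images may overlap); before any semi-conjugacy can be invoked you must show the rotation set is a single point and irrational. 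That does not follow immediately from $Per(f)\cap(0,1)=\emptyset$: periodic orbits of the glued map may pass through the identification point (e.g.\ when some $f^k(f(c_+))$ hits $f(c_-)$), and such orbits correspond to critical-value orbits of $f$, not to periodic points of $f$ in $(0,1)$; ruling out a rational rotation number therefore needs an extra argument using the hypothesis $c\in\omega_f(x)$ and the absence of super-attractors, which you do not give. Degenerate configurations such as $f(c_+)=0$ or $f(c_-)=1$ (where the glued endpoint meets a fixed point of $f$) are likewise not addressed, and your preliminary dichotomy ``$f(c_-)\le c$ forces $\omega_f(x)\not\ni c$'' breaks down in the boundary case $f(c_-)=c$. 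Since you yourself defer ``transferring the Gambaudo--Tresser semi-conjugacy with the required properties'' as the main obstacle, the central step of the proof is missing; the density and wandering-interval considerations that follow are the easy part, and they cannot be assessed as a proof of the lemma until the semi-conjugacy (and the identification of its collapsed fibres) is actually established in this setting.
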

\begin{proof}
As $f$ does not have super-attractor and $c\in\omega_f(x)$ $\forall\,x\in(0,1)$, we get $$Per(f)=\{0,1\}.$$
Note also that $f([0,c))\ni c\in f((c,1])$, because $c\in\omega_f(x)$ $\forall\,x\in(0,1)$.
Taking in Lemma~\ref{Lemma545g55} $(a,b)=(0,1)$, we conclude that $\co_f^+(x)\cap(0,c)\ne\emptyset\ne(c,1)\cap\co_f^+(x)$ $\forall\,x\in(0,1)$.
So, we get
\begin{equation}\label{EqCL}
\overline{\co_f^+(x)\cap(0,c)}\ni c\in \overline{(c,1)\cap\co_f^+(x)}\,\;\forall\,x\in(0,1).
\end{equation}
As a consequence,
\begin{equation}\label{Eq323130}\omega_f(x)\supset\omega_f(c_-)\cup\omega_f(c_+)\,\;\forall\,x\in(0,1).
\end{equation}
In particular, $$c_-\in\omega_f(c_-)\text{ and }c_+\in\omega_f(c_+).$$

Thus, it follows from Corollary~\ref{CORaeroporto2} that
\begin{equation}\label{EqUU21}
\overline{\omega_f(x)\cap(0,c)}\ni c\in\overline{(c,1)\cap\omega_f(x)}\,\,\;\forall\,x\in(0,1).
\end{equation}

Now we will prove that $\omega_f(p)=\omega_f(q)$ $\forall\,p,q\in(0,1)$. If this is not true then there exist $p,q\in(0,1)$ such that $\omega_f(p)\setminus\omega_f(q)\ne\emptyset$. Let $y\in \omega_f(p)\setminus\omega_f(q)$. Set $[\alpha,\beta]=\big[\min\omega_f(p),\max\omega_f(p)\big]$ (indeed, $[\alpha,\beta]=[f(c_{+}),f(c_{-})]$).  It is easy to see that $f([\alpha,\beta])=[\alpha,\beta]$. As $c\in (\alpha,\beta)$ (by (\ref{EqCL})) and as $c\in\omega_f(x)$ $\forall\,x$, we get $\omega_f(x)\subset[\alpha,\beta]$ $\forall\,x\in(0,1)$. As a consequence, $y\in(\alpha,\beta)$.

Let $J=(a,b)$ be the connected component of $[0,1]\setminus\omega_f(p)$ containing $q$. As $y\in(\alpha,\beta)$, we get $a,b\in\omega_f(p)$. As $y\in \omega_f(q)\cap J$, one can find $0\le n_1<n_2$ such that $f^{n_1}(q),f^{n_2}(q)\in(a,b)$. We may suppose that $f^{n_1}(q)<f^{n_2}(q)$ (the case $f^{n_1}(q)>f^{n_2}(q)$ is analogous).

Let $T:=(t,f^{n_1}(q)]$ be the maximal interval contained in $(a,f^{n_1}(q)]$ such that $f^{n_2-n_1}|_T$ is a homeomorphism and that $f^{n_2-n_1}(T)\subset (a,f^{n_2}(q)]$.

\begin{Claim}
$f^{n_2-n_1}(T)=(a,f^{n_2}(q)]$
\end{Claim}
\begin{proof}[Proof of the claim]
If not, there are two possibles cases: (1) $f^{s}(t)=c$ for some $0\le s<n_2-n_1$ or (2) $t=a$ and $a<f^{n_2-n_1}(a)<f^{n_{2}}(q)$. As $a<f^{n_{2}-n_{1}}(a)<f^{n_{2}}(q)$ will implies that $\omega_{f}(p)\cap J\ne\emptyset$, and this contradicts the fact that $J\subset[0,1]\setminus\omega_{f}(p)$, we have only to analyze the first case.

Thus $f^s(T)\cap\omega_f(p)=(c,f^s(f^{n_2-n1})(q))\cap\omega_f(p)\ne\emptyset$ (because of (\ref{EqUU21})). But this implies that  $J\cap\omega_f(x)\supset f^{n_2-n_1}(T)\cap\omega_f(x)\supset f^{n_2-n_1-s}(f^s(T)\cap\omega_f(x))\ne\emptyset$. An absurd, as $J\subset[0,1]\setminus\omega_f(x)$.\end{proof}

It follows from the claim above that $f^{n_2-n_1}(T)=(a,f^{n_2}(q)]\supset T$. This implies that $f$ has a periodic point in $\overline{T}$ (because $f^{n_2-n_1}|_{T}$ is a homeomorphism). But this is a contradiction with the fact that $Per(f)\cap(0,1)=\emptyset$.
\end{proof}

\subsubsection*{Analyzing the Spectral Decomposition to a non renormalizable map}
\label{SecAnSpDeNonReMap}

Let $f:[0,1]\setminus\{c\}\to[0,1]$ be a non renormalizable $C^3$ contracting Lorenz map with negative Schwarzian derivative.
\begin{enumerate}
\item (Figure~\ref{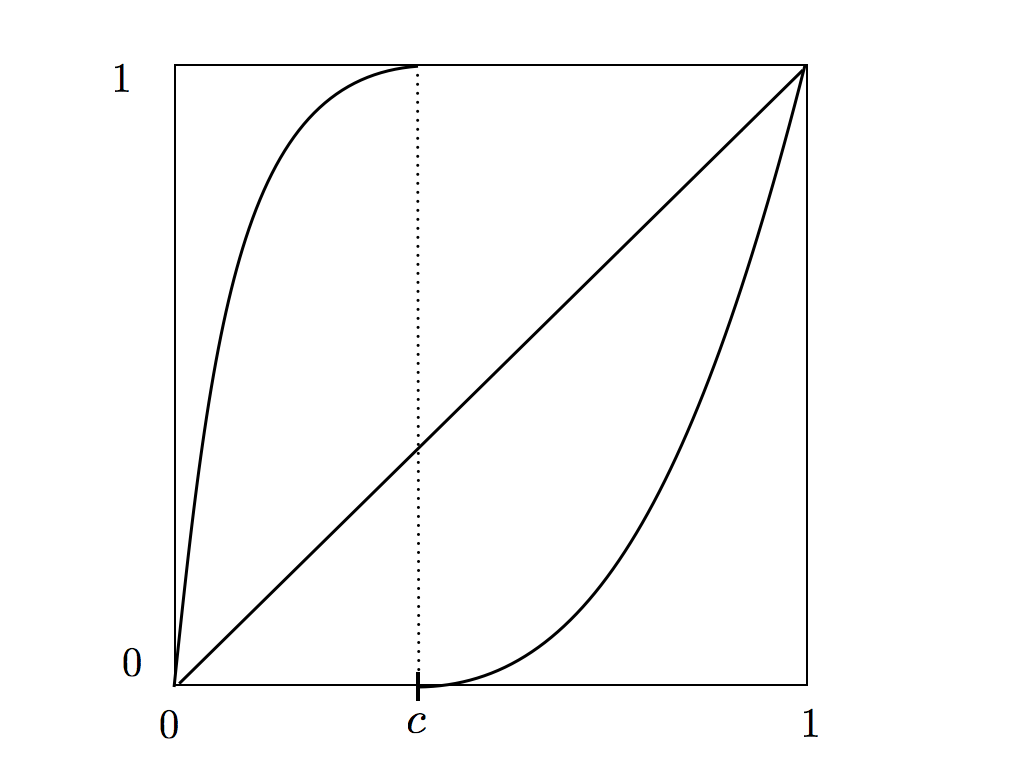}) If $0=f(c_+)$ and $f(c_-)=1$ then $f$ is transitive. In this case $n_f=0$ and $\Omega_0=[0,1]$. this is the Misiurewicz case (see Section~\ref{SecMisContLM} of the Appendix). In this case $f$ is ergodic with respect to Lebesgue measure and it does not admit wandering interval (Lemma~\ref{LemmaErgMisCLM} of the Appendix). Thus, as $f$ is neither Cherry-like nor $\infty$-renormalizable and as $f$ does not have periodic attractors (see details in Section~\ref{SecMisContLM} of the Apendix), we can apply Theorem~\ref{atratortopologico} and conclude that $f$ $[0,1]=[f(c_+,f(c_-)]$ is the attractor of $f$. In particular, $[0,1]$ is transitive.
\begin{figure}[H]
\begin{center}\label{SemRerNor011.png}
\includegraphics[scale=.2]{SemRerNor011.png}\\
Figure~\ref{SemRerNor011.png}
\end{center}
\end{figure}
\item  (Figure~\ref{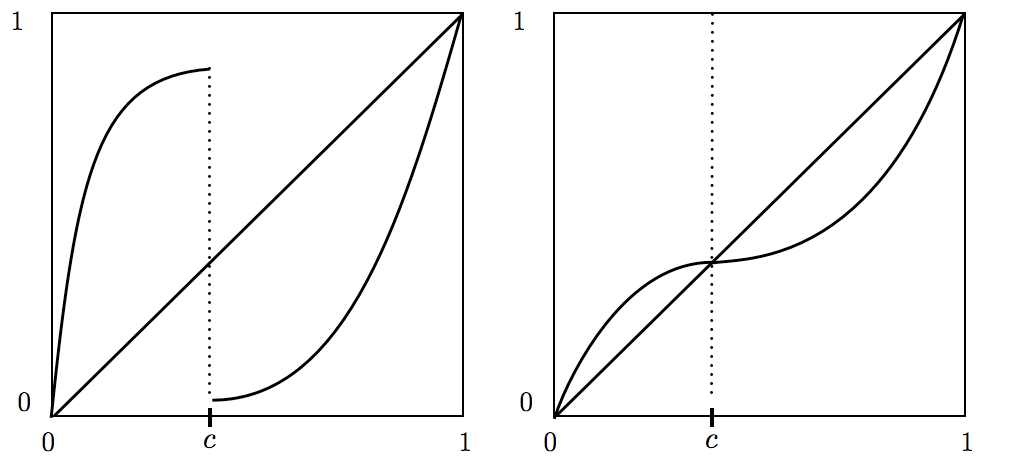}) If $0<f(c_+)\le c\le f(c_-)<1$ then $n_f=1$, $\Omega_0=\{0,1\}$ and $\Omega_1$ is transitive. In this case, $\Omega_1$ is the topological (and metrical) attractor and it can be $\{c\}$ ($\{c\}$ is a super-attractor) or a Cherry attractor or $[f(c_+),f(c_-)]$ (a chaotic attractor) or a chaotic Cantor set (with the existence of wandering intervals).
\begin{figure}[H]
\begin{center}\label{SemRerNor0111.png}
\includegraphics[scale=.35]{SemRerNor0111.png}\\
Figure~\ref{SemRerNor0111.png}
\end{center}
\end{figure}
\item  (Figure~\ref{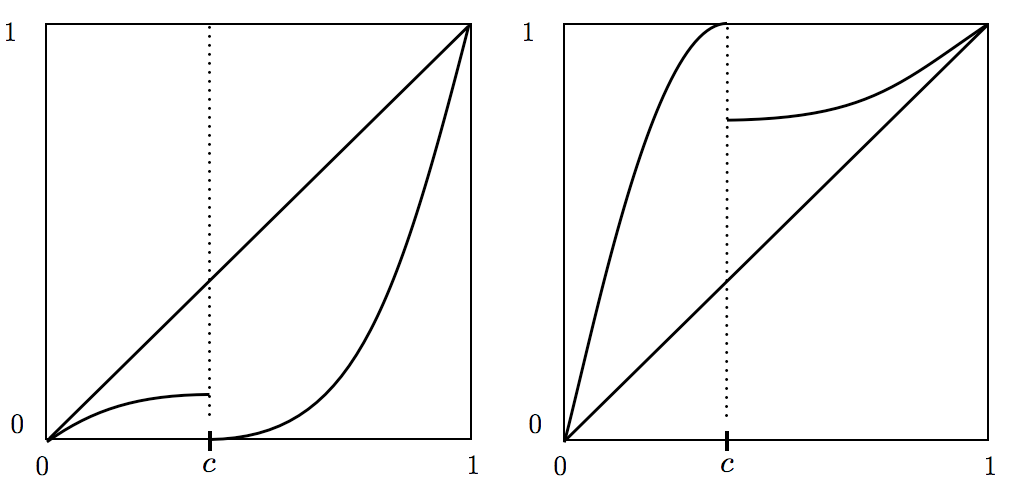}) If $0=f(c_+)$ and $0$ is an attracting fixed point or if $f(c_-)=1$ and $1$ is an attracting fixed point then $n_f=0$ and $\Omega_0=\{0,1\}$.
\begin{figure}[H]
\begin{center}\label{SemRerNor015.png}
\includegraphics[scale=.30]{SemRerNor015.png}\\
Figure~\ref{SemRerNor015.png}
\end{center}
\end{figure}
\item  (Figure~\ref{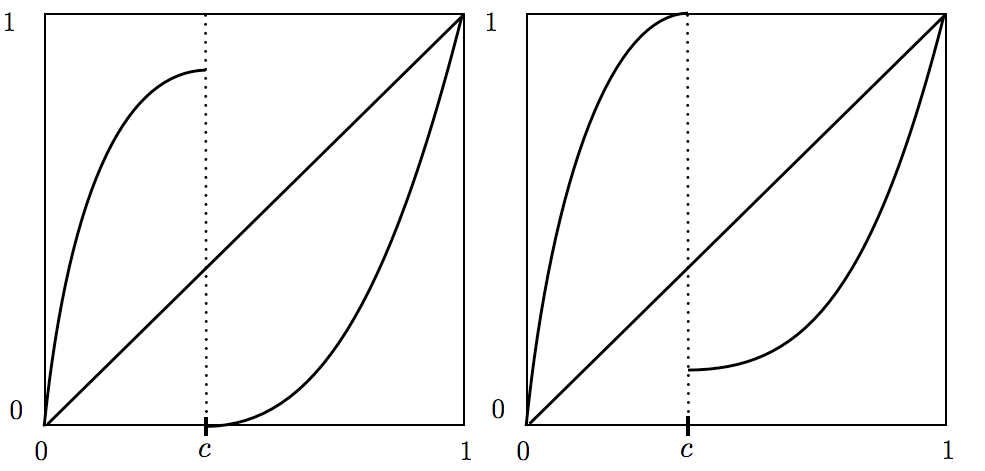}) If $0=f(c_+)<c<f(c_-)<1$ and $f$ does not have a periodic attractor then $n_f=1$, $\Omega_0=\{1\}$ and $\Omega_1$ can be $[0,f(c_-)]$ (chaotic attractor) or a chaotic Cantor set (with the existence of wandering intervals).\\
(Figure~\ref{SemRerNor0133.png}) If $0<f(c_+)<c<f(c_-)=1$ and $f$ does not have a periodic attractor then $n_f=1$, $\Omega_0=\{0\}$ and $\Omega_1$ can be $[f(c_+),1]$ (chaotic attractor) or a chaotic Cantor set (with the existence of wandering intervals).
\begin{figure}[H]
\begin{center}\label{SemRerNor0133.png}
\includegraphics[scale=.30]{SemRerNor0133.png}\\
Figure~\ref{SemRerNor0133.png}
\end{center}
\end{figure}
\item (Figure~\ref{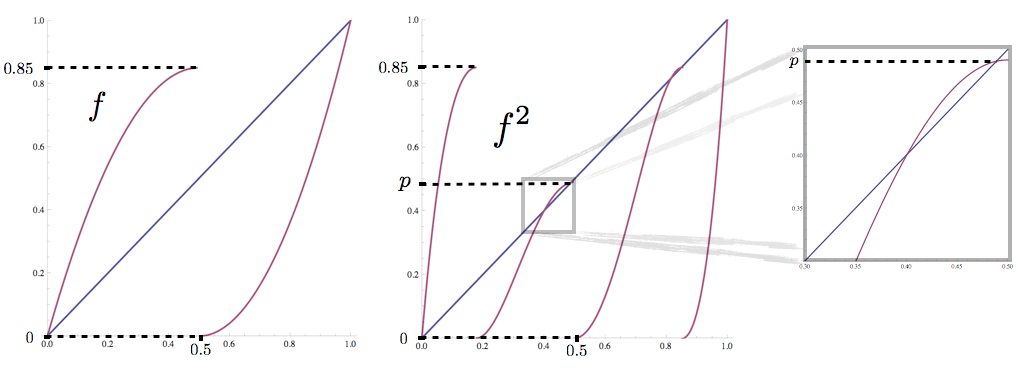}) If $0=f(c_+)<c<f(c_-)<1$ and $f$ has an attracting hyperbolic periodic orbit then $n_f=2$, $\Omega_0=\{1\}$ and $\Omega_1$ is a transitive expanding Cantor set and $\Omega_2$ is the attracting hyperbolic periodic orbit.\\
(Analogous to Figure~\ref{SemRerNor080.png}) If $0<f(c_+)<c<f(c_-)=1$ and $f$ has an attracting hyperbolic periodic orbit then $n_f=2$, $\Omega_0=\{0\}$ and $\Omega_1$ is a transitive expanding cantor set and $\Omega_2$ is the attracting hyperbolic periodic orbit.
\begin{Remark}\label{ReM66}
These cases are studied  in Section~\ref{SecDegRenInt}. In these cases, although there are proper renormalization intervals, there is a degenerate renormalization interval $I$ in the form $(p,c)$, $p\in Per(f)$, and it is the local basin of the periodic attractor. $\Omega_{1}=\Omega(f)\cap((0,1)\setminus I)$ $=$ $\{x\in(0,1)\,;\,\co_{f}^{+}(x)\cap I=\emptyset\}$.
\end{Remark}
\begin{figure}[H]
\begin{center}\label{SemRerNor080.png}
\includegraphics[scale=.41]{SemRerNor080.png}\\
Figure~\ref{SemRerNor080.png}
\end{center}
\end{figure}
\item (Figure~\ref{SemRerNor080.png}) If $0=f(c_+)<c<f(c_-)<1$ and $f$ has an non-hyperbolic attracting periodic orbit (a saddle-node) or a super-attractor then $n_f=1$, $\Omega_0=\{1\}$ and $\Omega_1$ is a transitive non-hyperbolic Cantor set.\\
(Analoguous to Figure~\ref{SemRerNor080.png}) If $0<f(c_+)<c<f(c_-)=1$ and $f$ has an non-hyperbolic attracting periodic orbit (a saddle-node) or a super-attractor then $n_f=1$, $\Omega_0=\{0\}$ and $\Omega_1$ is a transitive non-hyperbolic cantor set.

The Remark~\ref{ReM66} is also valid to theses cases.
\begin{figure}[H]
\begin{center}\label{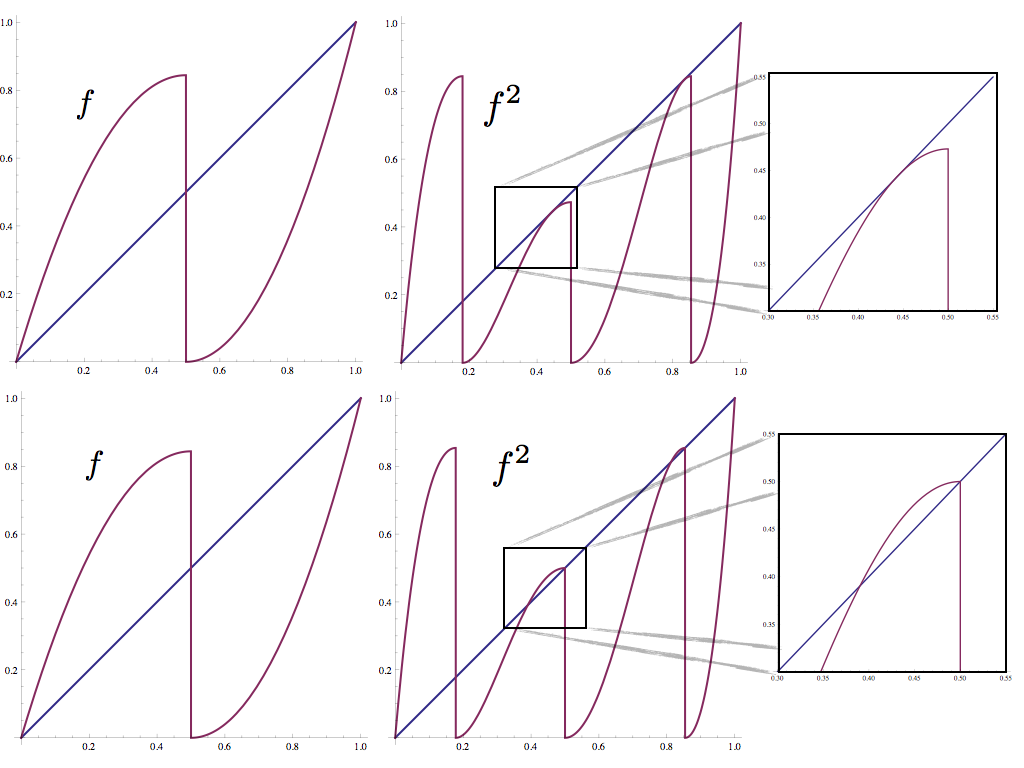}
\includegraphics[scale=.4]{SemRerNor083.png}\\
Figure~\ref{SemRerNor083.png}
\end{center}
\end{figure}
\end{enumerate}

\newpage

\end{document}